\DeclareMathAlphabet{\EuRoman}{U}{eur}{sb}{n}
\SetMathAlphabet{\EuRoman}{bold}{U}{eur}{sb}{n}
\newcommand{\MSComp}{\nobreak\hspace{-.2ex}}
\newcounter{intro}
\titleformat*{\section}{\large\bfseries\sffamily} 
\titleformat{\chapter}[display]{\thispagestyle{empty}\Large\bfseries\sffamily}{\large Chapter\ \ \thechapter}{0.5ex}{}
\titleformat{\part}[display]{\thispagestyle{empty}\Huge\bfseries\sffamily}{\Large Part\ \ \thepart}{0.5ex}{}
\definecolor{theoremcaption}{rgb}{0,0,0}		
\definecolor{proofcaption}{rgb}{0,0,0}				
\newtheoremstyle{captionstyle}%
{2ex}{}
{\sffamily}{0pt}
{\bfseries\sffamily}     
{}                  
{\newline}
{\color{theoremcaption}\bfseries\thmnumber{#2} \thmname{#1}\hfill\thmnote{#3} } 
\theoremstyle{captionstyle}  
\newtheorem{theorem}{\sffamily Theorem}[section]
\newtheorem{definition}[theorem]{Definition}
\newtheorem{proposition}[theorem]{Proposition}
\newtheorem{corollary}[theorem]{Corollary}
\newtheorem{lemma}[theorem]{Lemma}
\newtheorem{example}[theorem]{Example}
\newtheorem{remark}[theorem]{Remark}
\newtheorem{notation}[theorem]{Notation}
\newtheorem{convention}[theorem]{Convention}
\newtheorem{terminology}[theorem]{Terminology}
\renewenvironment{proof}[1][\proofname]{\vspace{-2ex}\par
  \pushQED{\qed}%
	\sffamily\topsep6\p@\@plus6\p@\relax
  \trivlist
  \item[\hskip\labelsep
        \color{proofcaption}\bfseries\sffamily
    #1\@addpunct{\quad}]\ignorespaces
}{%
  \popQED\endtrivlist\@endpefalse
}
\newcommand{\NoProof}{{\unskip\nobreak\hfil\penalty 50\hskip 2em\hbox{}
 \nobreak\hfil$\lozenge$\parfillskip=0pt\finalhyphendemerits=0\par}}
\definecolor{lightgreen}{rgb}{.98,1,.98}
\newenvironment{subordinate}{%
\MakeFramed{\hsize=0.95\linewidth\advance\hsize-\width\FrameRestore
\small{ }}
}
{\endMakeFramed}
\definecolor{light-gray}{gray}{0.95}
\newenvironment{colbox}{%
   \MakeFramed{\advance\hsize-\width \FrameRestore}}
 {\endMakeFramed}
\definecolor{deepmagenta}{rgb}{0.8, 0.0, 0.8}
\definecolor{chestnut}{rgb}{0.8, 0.36, 0.36}
\definecolor{blanchedalmond}{rgb}{1.0, 0.92, 0.8}
\definecolor{Maroon}{cmyk}{0,0.87,0.68,0.32}
\definecolor{ForestGreen}{cmyk}{0.91,0,0.88,0.12}
\definecolor{OliveGreen}{cmyk}{0.64,0,0.95,0.40}
\newcommand{\Defn}[1]{{\em #1}}	
\newcommand{\SqBr}[1]{\left[#1\right]}						
\newcommand{\DefEq}{\coloneq}   										
\newcommand{\EqDef}{\eqcolon}   										
\newcommand{\XRA}[1]{\xrightarrow{\ #1\ }}
\newcommand{\prdct}{\hskip -.1em\times\hskip -.1em}  				
\newcommand{\Prdct}[2]{#1\hskip -.1em\times\hskip -.1em #2} 
\newcommand{\FamPrdct}[2]{\prod_{#1}#2}							
\newcommand{\PrjctnOnto}[1]{\textit{pr}_{#1}} 			
\newcommand{\Set}[1]{\left\{#1\right\}}		
\newcommand{\union}{\cup}									
\newcommand{\FamUnion}[2]{\bigcup_{#1}#2}	
\newcommand{\intrsctn}{\cap}							
\newcommand{\SetIntrsctn}[2]{#1\cap #2}			
\newcommand{\FamIntrsctn}[2]{\bigcap_{#1}#2}	
\newcommand{\from}{\colon}				
\newcommand{\Comp}{\circ}					
\newcommand{\Img}[1]{\mathit{im}\left(#1\right)}		
\newcommand{\IdMap}{\mathit{Id}}						
\newcommand{\IdMapOn}[1]{\mathit{Id}_{#1}}		
\newcommand{\InclsnOf}[1]{\mathit{inc}_{#1}}	
\newcommand{\NNr}[1][]{\mathbb{N}^{#1}}			
\newcommand{\ZNr}[1][]{\mathbb{Z}^{#1}}			
\newcommand{\QNr}{\mathbb{Q}}			
\newcommand{\RNrSpc}[1]{\mathbb{R}^{#1}}		
\newcommand{\CPrSpc}[1]{\mathbb{C}P^{#1}}
\newcommand{\ZMod}[1]{\mathbb{Z}\hskip -.2em/\hskip -.1em #1}	
\newcommand{\ZPoly}[1]{\mathbb{Z}\hskip -.2em\SqBr{#1}}	
\newcommand{\CyclcGrp}[1]{C_{#1}}					
\newcommand{\GLGrp}[2]{\mathit{GL}_{#1}(#2)}		
\newcommand{\Ker}[1]{\mathit{ker}\left(#1\right)}			
\newcommand{\CoKer}[1]{\mathit{coker}\left(#1\right)}	
\newcommand{\DtrmnntOf}[1]{\mathit{det}(#1)}			
\newcommand{\DtrmnntOfMtrx}[1]{\mathit{det}\, #1}	
\newcommand{\Dtrmnnt}{\mathit{det}} 							
\newcommand{\uSphr}[1]{\mathbb{S}^{#1}}		
\newcommand{\ZDiagImg}[3]{I^{#1}_{#2}#3}        
\newcommand{\ZDiagQuo}[3]{Q^{#1}_{#2}#3}		
\newcommand{\ZDiagKer}[3]{K^{#1}_{#2}#3}		
\newcommand{\ZDiagCoKer}[3]{R^{#1}_{#2}#3}		
\newcommand{\SpecSeqsInModOver}[1]{\mathit{SpecSeq}(#1)}
\newcommand{\SSPage}[1]{E^{#1}}
\newcommand{\SSObjct}[3]{E^{#1}_{#2,#3}}
\newcommand{\SSObjctBB}[2]{E^{#1}_{#2}}
\newcommand{\SSDffrntl}[1]{d^{#1}}
\newcommand{\SSDffrntlBB}[2]{d^{#1}_{#2}}
\newcommand{\SSDffrntlAt}[3]{d^{#1}_{#2,#3}}
\newcommand{\SSBidegreeVect}[2]{\mathbf{#1}_{#2}}
\newcommand{\SSMap}[2]{{#1}^{#2}}
\newcommand{\SSMapBB}[3]{{#1}^{#2}_{#3}}
\newcommand{\SSMapAt}[4]{{#1}^{#2}_{#3,#4}}
\newcommand{\ExctCpl}[1]{\mathcal{#1}}
\newcommand{\ExctCplEPage}[1]{E^{#1}}
\newcommand{\ExctCplEObjct}[3]{E^{#1}_{#2,#3}}
\newcommand{\ExctCplEObjctBB}[2]{E^{#1}_{#2}}
\newcommand{\ExctCplStblEObjct}{\bar{E}}
\newcommand{\ExctCplStblEObjctBB}[1]{\bar{E}_{#1}}
\newcommand{\ExctCplDPage}[1]{D^{#1}}
\newcommand{\ExctCplDObjctBB}[2]{D^{#1}_{#2}}
\newcommand{\ExctCplDObjct}[3]{D^{#1}_{#2,#3}}
\newcommand{\ExctCplIMap}[1]{i^{#1}}
\newcommand{\ExctCplIMapBB}[2]{i^{#1}_{#2}}
\newcommand{\ExctCplIBdg}[1]{\mathbf{#1}}
\newcommand{\ExctCplIMapItrtdBB}[2]{i^{#1}_{#2}}
\newcommand{\ExctCplJMap}[1]{j^{#1}}
\newcommand{\ExctCplJMapBB}[2]{j^{#1}_{#2}}
\newcommand{\ExctCplJBdg}[1]{\mathbf{#1}}
\newcommand{\ExctCplKMap}[1]{k^{#1}}
\newcommand{\ExctCplKMapBB}[2]{k^{#1}_{#2}}
\newcommand{\ExctCplKBdg}[1]{\mathbf{#1}}
\newcommand{\ExctCplCycles}[3]{Z^{#1}_{#2,#3}}
\newcommand{\ExctCplCyclesBB}[2]{Z^{#1}_{#2}}
\newcommand{\ExctCplBndrs}[3]{B^{#1}_{#2,#3}}
\newcommand{\ExctCplBndrsBB}[2]{B^{#1}_{#2}}
\newcommand{\ExctCplCoLimAbut}[1]{L_{#1}}
\newcommand{\ExctCplCoLimAbutMapBB}[1]{\pi_{#1}}
\newcommand{\ExctCplCoLimAbutFltrtn}[2]{F_{#1,#2}}
\newcommand{\ExctCplCoLimAbutFltrtnBB}[1]{F_{#1}}
\newcommand{\ExctCplCoLimAbutFltrtnQtntBB}[1]{\varepsilon_{#1}}
\newcommand{\ExctCplLimAbut}[1]{L^{#1}}
\newcommand{\ExctCplLimAbutMapBB}[1]{\rho^{#1}}
\newcommand{\ExctCplLimAbutFltrtn}[2]{F^{#1,#2}}
\newcommand{\ExctCplLimAbutFltrtnBB}[1]{F^{#1}}
\newcommand{\ExctCplLimAbutFltrtnQtntBB}[1]{\varepsilon^{#1}}
\newcommand{\LModules}[1]{#1\text{-$\cal M$\hskip -.09em\it od\hskip .1em}} 
\newcommand{\ModulesOver}[1]{\EuRoman{M{\kern-0.15ex}od}\!\left(#1\right)}					
\newcommand{\OrdOmega}{\omega}
\newcommand{\OrdOmegaOp}{\bar{\omega}}
\newcommand{\ZCat}{\mathcal{Z}}
\newcommand{\ExctCplsCatModulesOver}[1]{\mathcal{EC}(#1)}
\newcommand{\ZImgDiagItrtd}[2]{I^{#2}#1}
\newcommand{\ZImgDiagItrtdAt}[3]{I^{#2}_{#3}#1}
\newcommand{\ZStableImg}[1]{\bar{I}#1}
\newcommand{\ZStableImgAt}[2]{\bar{I}_{#2}#1}
\newcommand{\ZPointImg}[2]{J^{#2}#1}
\newcommand{\ZPointKer}[2]{K^{#2}#1}
\newcommand{\ZPointKerAt}[3]{K^{#2}_{#3}#1}
\newcommand{\Vect}[1]{\mathbf{#1}}        
\newcommand{\Sum}[2]{#1\oplus #2}											
\newcommand{\FamSum}[2]{\bigoplus_{#1} #2}						
\newcommand{\GrpHmlgyDimOf}[2]{H_{#1}#2}
\newcommand{\GrpHmlgyDimOfCoeffs}[3]{H_{#1}\left(#2;#3\right)}
\newcommand{\FuncCat}[2]{\mathit{Func}\left(#1,#2\right)}			
\newcommand{\CoLim}{\mathit{colim}}					
\newcommand{\CoLimOf}[1]{\mathit{colim}\left( #1\right)}		
\newcommand{\CoLimOver}[1]{\mathit{colim}^{#1}}
\newcommand{\CoLimOfOver}[2]{\mathit{colim}^{#2}\!\left( #1\right)}		
\newcommand{\Lim}{\mathit{lim}}														
\newcommand{\LimOf}[1]{\mathit{lim}\left( #1 \right)}			
\newcommand{\LimOver}[1]{\mathit{lim}_{#1}}								
\newcommand{\LimOfOver}[2]{\mathit{lim}_{#2}\left( #1 \right)}	
\newcommand{\LimDrvdOfOver}[3]{\mathit{lim}^{#1}_{#3}\ #2}		
\newcommand{\LimOne}{\mathit{lim}^{1}}
\newcommand{\LimOneOf}[1]{\mathit{lim}^{1}\left(#1\right)}
\newcommand{\LimOneOver}[1]{\mathit{lim}^{1}_{#1}}
\newcommand{\LimOneOfOver}[2]{\mathit{lim}^{1}_{#2}\left(#1\right)} 
\newcommand{\PullLU}[1]{\ar@{}[#1]|-{%
\begin{picture}(10,10)%
\put(1,1){\line(1,0){8}}%
\put(9,1){\line(0,1){8}}%
\put(1,1){\circle*{2}}%
\put(9,1){\circle*{2}}%
\put(9,9){\circle*{2}}%
\put(1,9){\circle{2}}
\end{picture} } }
\newcommand{\PullRU}[1]{\ar@{}[#1]|-{%
\begin{picture}(10,10)%
\put(1,1){\line(1,0){8}}%
\put(1,1){\line(0,1){8}}%
\put(1,1){\circle*{2}}%
\put(9,1){\circle*{2}}%
\put(9,9){\circle{2}}%
\put(1,9){\circle*{2}}
\end{picture} } }
\newcommand{\PullLD}[1]{\ar@{}[#1]|-{%
\begin{picture}(10,10)%
\put(1,9){\line(1,0){8}}%
\put(9,1){\line(0,1){8}}%
\put(1,1){\circle{2}}%
\put(9,1){\circle*{2}}%
\put(9,9){\circle*{2}}%
\put(1,9){\circle*{2}}
\end{picture} } }
\newcommand{\PullRD}[1]{\ar@{}[#1]|-{%
\begin{picture}(10,10)%
\put(1,9){\line(1,0){8}}%
\put(1,1){\line(0,1){8}}%
\put(1,1){\circle*{2}}%
\put(9,1){\circle{2}}%
\put(9,9){\circle*{2}}%
\put(1,9){\circle*{2}}
\end{picture} } }
\newcommand{\PushLU}[1]{\ar@{}[#1]|-{%
\begin{picture}(10,10)%
\put(1,1){\line(1,0){8}}%
\put(9,1){\line(0,1){8}}%
\put(1,1){\circle*{2}}%
\put(9,1){\circle*{2}}%
\put(9,9){\circle*{2}}%
\put(1,9){\circle{2}}
\end{picture} } }
\newcommand{\PushRU}[1]{\ar@{}[#1]|-{%
\begin{picture}(10,10)%
\put(1,1){\line(1,0){8}}%
\put(1,1){\line(0,1){8}}%
\put(1,1){\circle*{2}}%
\put(9,1){\circle*{2}}%
\put(9,9){\circle{2}}%
\put(1,9){\circle*{2}}
\end{picture} } }
\newcommand{\PushLD}[1]{\ar@{}[#1]|-{%
\begin{picture}(10,10)%
\put(1,9){\line(1,0){8}}%
\put(9,1){\line(0,1){8}}%
\put(1,1){\circle{2}}%
\put(9,1){\circle*{2}}%
\put(9,9){\circle*{2}}%
\put(1,9){\circle*{2}}
\end{picture} } }
\newcommand{\PushRD}[1]{\ar@{}[#1]|-{%
\begin{picture}(10,10)%
\put(1,9){\line(1,0){8}}%
\put(1,1){\line(0,1){8}}%
\put(1,1){\circle*{2}}%
\put(9,1){\circle{2}}%
\put(9,9){\circle*{2}}%
\put(1,9){\circle*{2}}
\end{picture} } }
\newcommand{\BiCart}[1]{\ar@{}[#1]|-{%
\begin{picture}(10,10)%
\put(1,1){\line(1,0){8}}%
\put(1,9){\line(1,0){8}}%
\put(1,1){\line(0,1){8}}%
\put(9,1){\line(0,1){8}}%
\put(1,1){\circle*{2}}%
\put(9,1){\circle*{2}}%
\put(9,9){\circle*{2}}%
\put(1,9){\circle*{2}}
\end{picture} } }
\begin{document}
%
\title{\Huge $\EuRoman{Exact\ \ Couples\ \ and}$ \\ \vskip 1ex $\EuRoman{Their\ \  Spectral\ \  Sequences}$}
\vskip 2ex

\author{\Large $\EuRoman{George\ \ Peschke}$}

\date{\sffamily{\tiny May 21, 2022}}
\maketitle
\sffamily{

\begin{center}
\bfseries{Abstract}
\end{center}

\small
Given a bigraded exact couple of modules over some ring, we determine the meaning of the $E^{\infty}$-terms of its associated spectral sequence: Let $L^{\ast}$ and $L_{\ast}$ denote the limit and colimit abutting objects of the exact couple, filtered by the kernel and image objects to the associated cone and cocone diagrams. Then the unstable E-infinite extension theorem states how adjacent filtration quotients of the colimit filtration are extended by $E^{\infty}$ objects over corresponding adjacent filtration quotients of the kernel filtration. Depending on the exact couple, this extension may involve a $\LimOne$-corrective term.
 
The stable E-infinity extension theorem is based on the fact that the derivation process of the exact couple admits a transfinite recursion which is beyond the scope of the traditional spectral sequence perspective. The transfinite recursion always stabilizes at some ordinal. The resulting stable $E$-objects are (a) always subobjects of $E^{\infty}$ and (b) extend adjacent filtration quotients of the colimit filtration over corresponding adjacent filtration quotients of the kernel filtration {\em without} the need for $\LimOne$ corrective terms.

From the perspective of these E-infinity extensions, classical convergence results focus on sufficient conditions under which one of the end terms of these extensions vanishes: So $E^{\infty}$ objects are isomorphic to adjacent filtration quotients of exactly one $L_{\ast}$, or $L^{\ast}$ - in the latter case, with potential $\LimOne$-complication. If $\LimOne$-terms are needed for a particular spectral sequence, then this is because its E-infinity objects are unstable.

The E-infinity extension theorems enable conclusions about the filtered limit/colimit abutments even in cases where the spectral sequence is far from converging in any traditional sense. We develop such results in the context of 'comparing' the spectral sequences via the morphism that is induced by a morphism of underlying exact couples.

We also contribute to 'reverse comparison' in a spectral sequence; that is using information about the universal abutment(s) of the underlying exact couple to extract information about one or more pages of the spectral sequence. These results overlap with Zeeman's comparison theorems \cite{ECZeeman1957} in a generalizing fashion.

\vfill
\begin{center}
{\bfseries E-infinity extensions}
\end{center}

\begin{equation*}
\resizebox{0.35\textwidth}{!}{$
\xymatrix@R=.5ex@C=3em{
\ExctCplCoLimAbutFltrtnBB{\Vect{x}-\ExctCplIBdg{a}} \ar@{{ |>}->}[r] &
	\ExctCplCoLimAbutFltrtnBB{\Vect{x}} \ar@{-{ >>}}[r] &
  \ExctCplCoLimAbutFltrtnQtntBB{\Vect{x}} \\
}
$}
\end{equation*}

\begin{equation*}
\resizebox{0.7\textwidth}{!}{$
\xymatrix@R=5ex@C=5em{
\ExctCplCoLimAbutFltrtnQtntBB{\Vect{x}} \ar@{{ |>}->}[r] \ar@{=}[d] &
	\ExctCplStblEObjctBB{\Vect{x}+\ExctCplJBdg{b}} \ar@{-{ >>}}[r] \ar@{{ |>}->}[d] \PullLU{rd}&
	\ExctCplLimAbutFltrtnQtntBB{\Vect{x}+\ExctCplJBdg{b}+\ExctCplKBdg{c}} \ar@{{ |>}->}[d]^{M} \\
\ExctCplCoLimAbutFltrtnQtntBB{\Vect{x}} \ar@{{ |>}->}[r] &
	\ExctCplEObjctBB{\infty}{\Vect{x}+\ExctCplJBdg{b}} \ar@{-{ >>}}[r] &
	\frac{ \ExctCplCyclesBB{\infty}{\Vect{x}+\ExctCplJBdg{b} } }{ \Img{ \ExctCplJMapBB{}{\Vect{x}} } }
}
$}
\end{equation*}

\begin{equation*}
\resizebox{0.5\textwidth}{!}{$
\xymatrix@R=.5ex@C=3em{
\ExctCplLimAbutFltrtnBB{\Vect{x}+\ExctCplJBdg{b}+\ExctCplKBdg{c}} \ar@{{ |>}->}[r] &
	\ExctCplLimAbutFltrtnBB{\Vect{x}+\ExctCplIBdg{a}+\ExctCplJBdg{b}+\ExctCplKBdg{c}} \ar@{-{ >>}}[r] &
	\ExctCplLimAbutFltrtnQtntBB{\Vect{x}+\ExctCplJBdg{b}+\ExctCplKBdg{c}}
}
$}
\end{equation*}
\begin{equation*}
\xymatrix@R=5ex@C=2.5em{
\ExctCplLimAbutFltrtnBB{\Vect{x}+\ExctCplJBdg{b}+\ExctCplKBdg{c}} \ar@{{ |>}->}[r] &
	\ExctCplLimAbutFltrtnBB{\Vect{x}+\ExctCplIBdg{a}+\ExctCplJBdg{b}+\ExctCplKBdg{c}} \ar[r]^-{t} &
	\frac{ \ExctCplCyclesBB{\infty}{\Vect{x}+\ExctCplJBdg{b} } }{ \Img{ \ExctCplJMapBB{}{\Vect{x}} } } \ar[r] &
	\LimOneOfOver{ \Ker{ \ExctCplIMapItrtdBB{r}{\Vect{x}+\ExctCplJBdg{b}+\ExctCplKBdg{c}-r\ExctCplIBdg{a} } } }{\ZCat} \ar[r] &
	\LimOneOfOver{ \Ker{ \ExctCplIMapItrtdBB{r+1}{\Vect{x}+\ExctCplJBdg{b}+\ExctCplKBdg{c} -r\ExctCplIBdg{a} } } }{\ZCat} 
}
\end{equation*}
\vfill\vfill

\thispagestyle{empty}
\fancyhead[LO]{\sffamily\bfseries Introduction}
\fancyhead[LE]{\sffamily\bfseries Introduction}
\fancyhead[RO]{}
\fancyhead[RE]{}
\fancyfoot[C]{\sffamily\bfseries\footnotesize $\blacktriangleleft$\qquad \thepage\qquad $\blacktriangleright$}
\begin{footnotesize}
\tableofcontents
\end{footnotesize}
\thispagestyle{empty}
%
%
\pagenumbering{Roman}
\setcounter{page}{0}
\newpage
\pagestyle{fancy}   

\newpage
\newpage
\section*{Introduction}
\label{sec:Introduction-Global}

Spectral sequences were invented by Jean Leray in the 1940's as a tool to compute sheaf cohomology; see \cite{JLeray2_1946} and, for a historic perspective of this development, \cite{HMiller2000}. Subsequently, spectral sequences have become a standard tool in homological/homotopical algebra and related subjects. By now, a rich supply of works on spectral sequences are available for introduction and for reference; to mention just a few: \cite{JMBoardman1999} \cite{HPCartanSEilenberg1956} \cite{PJHiltonUStammbach1971} \cite{SMacLane1975-Homology} \cite{JMcCleary1985.1} \cite{JJRotman2009} \cite{CAWeibel1994}. An account of the early history of the subject can be found in \cite{JMcCleary1999}.

The emphasis of these works is on applications of spectral sequences. In contrast, here we explore in greater depth the relationship between bigraded exact couples and their associated spectral sequences. To explain what we found, let's just work with Abelian groups for now.

The purpose of a spectral sequence is to help describe an abelian group $X$ via a filtration. So, consider an ascending sequence of subgroups indexed by the integers:
\begin{equation*}
\tag{\text{\sffamily F}}\label{eq:Filtration}%
\cdots \subseteq F_{p-1} \subseteq F_p \subseteq F_{p+1}\subseteq \cdots \subseteq X
\end{equation*}
Each pair of adjacent filtration stages determines a short exact sequence
\begin{equation*}
\xymatrix@R=5ex@C=4em{
F_{p-1} \ar@{{ |>}->}[r] &
	F_{p} \ar@{-{ >>}}[r] &
	\varepsilon_{p}
}
\end{equation*}
Whenever a spectral sequence is `matched' to such a filtration, its purpose is to provide information about the adjacent filtration quotients $\varepsilon_{p}$. Consequence: whenever we know something about $F_{p-1}$, such information helps inferring properties of $F_{p}$. Thus a spectral sequence matched to the given filtration of $X$ enables us to work toward information about $X$ inductively.

The process we just sketched out is the nucleus of working with spectral sequences. Unfortunately, in practice one might have to deal with significant complicating issues such as (a) the quality of the filtration of $X$, and (b) the quality of the match between spectral sequence and filtration. Here is a snapshot of what is involved.

{\bfseries Quality of the filtration}\quad Let's begin with a really nice filtration of $X$:
\begin{equation*}
0=F_0 \subseteq F_1\subseteq \cdots \subseteq F_{p-1}\subseteq F_{p}\subseteq \cdots \subseteq F_n=X
\end{equation*}
Here, the $0$-group forms a canonical entry point for an inductive analysis of $X$, and one reaches $X$ in finitely many extension steps. In general, a given filtration of $X$ may not present a canonical entry point. The intersection of the filtration stages $F_p$ need not be $0$, and the union of the filtration stages need not be $X$. - If a spectral sequence is matched to the given filtration of $X$, then it only provides information about the terms $\varepsilon_{\ast}$. So, a discussion pertaining to the quality of the filtration is beyond the scope of the spectral sequence, hence requires separate attention.

{\bfseries Quality of the match between spectral sequence and filtration}\quad Every spectral sequence delivers objects denoted $E^{\infty}_{\ast}$. A really nice situation is when $\varepsilon_{p}\cong E^{\infty}_{p}$. This is the case for many spectral sequences, especially the onces which have been used to great effect in the past. However, in general, the relationship between the E-infinity terms computed from the spectral sequence and adjacent filtration quotients can be more complicated.

{\bfseries Sources of spectral sequences / matched filtered objects}\quad How does one obtain filtered objects with a spectral sequence matched to them? - One standard source of such sets of interlinked data are exact couples, introduced and discussed by W.S. Massey in \cite{WSMassey1952},\cite{WSMassey1953},\cite{WSMassey1954}, see Chapter \ref{chap:ExactCouples} for details.  Attractive is that standard constructions in algebraic topology, homological algebra, homotopical algebra, etc provide a rich supply of exact couples.

\newpage
Most common are $(\Prdct{\ZNr}{\ZNr})$-bigraded exact couples; see Chapter \ref{chap:ExactCouples}. Such an exact couple determines at once:
\begin{enumerate}[(I)]
\item Two $\ZNr$-graded objects, each canonically filtered. These are (a) the colimit abutment $L_{\ast}$, and (b) the limit abutment $L^{\ast}$. Thus, each object $L_n$ respectively $L^n$ is filtered as in diagram \eqref{eq:Filtration}, and each of these filtrations is subject to potential filtration quality complications as outlined above. Furthermore: %
\index{colimit abutment}\index{limit abutment}
\item An exact couple determines a spectral sequence which, in turn, yields objects $\ExctCplEObjct{\infty}{p}{q}$. These objects are matched to these filtrations, however with quality complications, as outlined above.
\end{enumerate}
Well documented in the literature are situations where one of the two universal abutments vanishes, and the other coincides with the (co-)homology/homotopy groups/etc associated to underlying objects of interest. In this situation the nonzero universal abutment is known as  \Defn{the abutment} of the spectral sequence. %
\index{abutment}%

Here, we ask in general: Given a $(\Prdct{\ZNr}{\ZNr})$-bigraded exact couple, what is the meaning of the E-infinity objects of its spectral sequence. This question is answered Chapter \ref{chap:ExactCouples}. As immediate applications we offer an assortment of comparison theorems.

{\bfseries Organization}\quad In Chapter \ref{chap:SpectralSequences} we discuss spectral sequences as an algebraic structure in its own right. It is quite remarkable that the material of that section alone enables the use of theorems which read like this: 'Under <conditions> there is a spectral sequence which converges to <objects> via a filtration whose adjacent filtration quotients are isomorphic to <E-infinity objects of the spectral sequence>'. This said, we echo a recommendation commonly found in the literature: Appreciation of spectral sequences benefits from using them. Accordingly, we offer some recommendations in this direction; see Section \ref{sec:SS-Computations}.

On the other hand, setting up a spectral sequence and analyzing its properties is an entirely different story. Here, we rely on background from Chapter \ref{chap:ExactCouples}. We recall the concept of exact couple, and introduce its limit/colimit abutments, their canonical filtrations, and a spectral sequence matched to these filtrations. We discuss the quality of these filtrations, and we explain completely how the E-infinity objects of the spectral sequence are matched to the appropriate adjacent filtration quotients of these filtrations. The main results here are the stable extension theorem (\ref{thm:Stable-E-Extension}) and the E-infinity extension theorem (\ref{thm:E-InfinityExtensionThm}).

Then we turn to applications of this development by establishing two types of comparison mechanisms\footnote{\sffamily The use of the term `comparison' in this context goes back at least as far as Zeeman in \cite{ECZeeman1957}}. In both cases, we start from a morphism of exact couples.
\begin{enumerate}
\item In one type of comparison, we analyze how properties of the induced map of spectral sequences affect properties of the induced maps of universal abutments; see Section \ref{sec:SpecSeq-Comparison-I}.
\item An opposite type of comparison is named after Zeeman. In one variant, we assume that the limit abutments vanish, and that the induced map of colimit abutments is an isomorphism. If now certain initial conditions in the spectral sequence are met, then we conclude that the induced map of spectral sequences is an isomorphism; see Section \ref{sec:SpecSec-Comparison-II}.
\end{enumerate}
The discussion in \ref{chap:ExactCouples} relies heavily on understanding the sometimes subtle properties of certain constructions on $\ZCat$-shaped diagrams
\begin{equation*}
\cdots \longrightarrow X_{-2} \longrightarrow X_{-1} \longrightarrow X_0\longrightarrow X_1 \longrightarrow X_2 \longrightarrow \cdots
\end{equation*}
We collect suitable background in an appendix; see Chapter \ref{chap:CoLimsInModuleCategories}. This material is based on Boardman's \cite{JMBoardman1999}.

An earlier rendition of some of the results presented here was used by S. Rahmati to show how spectral sequence methods can be applied to  work with transfinitely filtered objects; see \cite{SRahmati2013}.

{\bfseries Contact}\footnote{\sffamily George Peschke, University of Alberta, Edmonton, Canada T6G 2G1,  george.peschke@ualberta.ca}
\newpage
\renewcommand{\sectionmark}[1]{\markright{ #1}}
\renewcommand{\chaptermark}[1]{\markboth{ #1}{}}
\fancyhead{}
\fancyfoot{}
\fancyhead[LO]{\sffamily\bfseries\footnotesize \thechapter\ \leftmark}
\fancyhead[LE]{\sffamily\bfseries\footnotesize \thechapter\ \leftmark}
\fancyhead[C]{}
\fancyhead[RO]{\sffamily\bfseries\footnotesize \thesection\ \rightmark}
\fancyhead[RE]{\sffamily\bfseries\footnotesize \thesection\ \rightmark}
\fancyfoot[C]{\sffamily\bfseries\footnotesize $\blacktriangleleft$\qquad \thepage\qquad $\blacktriangleright$}
\setcounter{page}{0}
\newpage
\part[Exact Couples and Their Spectral Sequences]{Exact Couples \vspace{2ex} \\  \hphantom{\ } and  \vspace{2ex} \\ Their Spectral Sequences}
\label{part:ExactCouples-Spectral Sequences}
\pagenumbering{arabic}
\newpage
%
\chapter{Spectral Sequences}
\label{chap:SpectralSequences}

Given a ring $R$, a $(\ZNr\prdct \ZNr)$-bigraded $R$-module is a family of $R$-modules $A=(A_{p,q} | (p,q)\in \ZNr\prdct \ZNr)$. A morphism $f\from A\to B$ of $(\ZNr\prdct \ZNr)$-bigraded $R$-modules of bidegree $(\mu,\nu)\in (\ZNr\prdct \ZNr)$ is given by a family of $R$-module maps %
\index[not]{$\ZNr$ - integers}%
\begin{equation*}
f_{p,q}\from A_{p,q} \longrightarrow B_{p+\mu,q+\nu},\qquad (p,q)\in \ZNr\prdct \ZNr.
\end{equation*}
With $\Vect{x}=(p,q)$ and $\Vect{u}= (\mu,\nu)\in (\ZNr\prdct \ZNr)$, we compress the notation for a morphism of bigraded $R$-modules to
\begin{equation*}
f_{\Vect{x}}\from A_{\Vect{x}} \longrightarrow B_{\Vect{x}+\Vect{u}}.
\end{equation*}
A spectral sequence (\ref{def:SpectralSequence}) in the category of $R$-modules is an algebraic structure given by a family $(E^r,d^r)$ of $(\ZNr\prdct \ZNr)$-bigraded $R$-modules, and endomorphisms $d^r\from E^r\to E^r$, which act as differentials; i.e. $d^rd^r=0$. The members of the family $(E^r)$ are linked by the requirement that $E^{r+1}$ is the homology of $E^r$ with respect to $d^r$.

We show that a spectral sequence determines a bigraded $R$-module $\SSPage{\infty}$; see (\ref{def:SpectralSequence-LimitPage}). To explain the role of these $\SSPage{\infty}$-terms, let us consider the following situation encountered in many classical  applications. We are dealing with $\ZNr$\MSComp-graded $R$-module $L_n$,  which is filtered by submodules:
\begin{equation*}
\cdots\subseteq F_{\Vect{x}(n)-\Vect{a}}(L_n) \subseteq F_{\Vect{x}(n)}(L_n)\subseteq F_{\Vect{x}(n)+\Vect{a}}(L_n) \subseteq \cdots \subseteq L_n=\FamUnion{r\in \ZNr}{F_{\Vect{x}(n)+r\Vect{a}}}
\end{equation*}
Up to now, these two structures (SS) spectral sequence, and (FO) filtered objects were unrelated. However, they are linked:
each adjacent filtration stage of a filtered object fits into a short exact sequence like the following:
\begin{equation*}
\xymatrix@R=5ex@C=2em{
0 \ar[r] &
	F_{\Vect{x}(n)+(r-1)\Vect{a}}(L_n) \ar@{{ |>}->}[rr] &&
	F_{\Vect{x}(n)+r\Vect{a}}(L_n) \ar@{-{ >>}}[rr] &&
	\SSObjctBB{\infty}{\Vect{x}(n) + r\Vect{a}} \ar[r] &
	0
}
\end{equation*}
Remarkable is that, in many applications of spectral sequences, this structural relationship between (SS) and (FO) alone is sufficient to extract valuable information. In contrast, the hard work needed to set up the spectral sequence, to analyze its properties,  and to establish its link to the filtered objects is a one-time effort which, once completed, tends to dwell in the background.

As a consequence, based on the information provided here, the reader is ready for computations with any of the classical spectral sequences; e.g. the Serre spectral sequence, the spectral sequence of a filtered chain complex, the spectral sequences of a double chain complex, etc.

\section{A Spectral Sequence and its Limit Page}
\label{sec:SpecSeq-E-infinity}

In this section we introduce the algebraic structure of  $(\Prdct{\ZNr}{\ZNr})$-bigraded spectral sequence, and we show how to compute its E-infinity objects. For notation and basic facts on occurring (co-)limits, we refer the reader to the Appendix, \ref{chap:CoLimsInModuleCategories}.

\begin{definition}[Spectral Sequence]
\label{def:SpectralSequence}%
A $(\Prdct{\ZNr}{\ZNr})$-bigraded \Defn{spectral sequence} of $R$-modules consists of %
\index{spectral sequence}\index[not]{$\ExctCplEObjctBB{r}{\Vect{x}}$ - spec seq object: page $r$, position $\Vect{x}\in \Prdct{\ZNr}{\ZNr}$}%
\begin{enumerate}[(a)]
\item a sequence of $(\Prdct{\ZNr}{\ZNr})$-bigraded $R$-modules $\SSPage{r} = (\SSObjctBB{r}{\Vect{x}}\, |\, \Vect{x}\in \ZNr \prdct \ZNr)$, $0\leq r_0\leq r \in \NNr$;
\item for each $r\geq r_0$, a family of $R$-module maps $\SSDffrntlBB{r}{\Vect{x}}\from \SSPage{r}_{\Vect{x}}\to \SSPage{r}_{\Vect{x}+\SSBidegreeVect{v}{r}}$ of bidegree $\SSBidegreeVect{v}{r}$.
\end{enumerate}
These are to satisfy the following conditions:
\begin{enumerate}[(1)]
\item $\SSDffrntl{r}$ is a differential on $\SSPage{r}$; i.e. any composite $\SSObjctBB{r}{\Vect{x}-\SSBidegreeVect{v}{r}} \XRA{\SSDffrntlBB{r}{\Vect{x}-\SSBidegreeVect{v}{r}} } \SSObjctBB{r}{\Vect{x}} \XRA{ \SSDffrntlBB{r}{\Vect{x}} } \SSObjctBB{r}{\Vect{x}+\Vect{v}_r}$ vanishes. %
\index{differential!of a spectral sequence}
\item $\SSPage{r+1}$ is the homology of $\SSPage{r}$ with respect to the differential $\SSDffrntl{r}$; i.e. for each $\Vect{x}\in \Prdct{\ZNr}{\ZNr}$,
\begin{equation*}
\SSObjctBB{r+1}{\Vect{x}} = \dfrac{\Ker{\SSDffrntlBB{r}{\Vect{x}}}}{\Img{\SSDffrntlBB{r}{\Vect{x}-\SSBidegreeVect{v}{r}} } }
\end{equation*}
\end{enumerate}
\end{definition}

\begin{terminology}[Naming conventions]
\label{term:SpectralSequence-Namings}
The following naming of terms associated to a spectral sequence $(\SSPage{r},\SSDffrntl{r} | r\geq r_0)$ are found in the literature:
\begin{enumerate}[$\bullet$]
\item The bigraded module $\SSPage{r}$, together with its differential $\SSDffrntl{r}$ form the $r$-th \Defn{layer/page/sheet/stage/step} of the spectral sequence. %
\index{spectral sequence!page}%
\index{spectral sequence!layer}%
\index{spectral sequence!stage}%
\index{spectral sequence!step}%
\index{spectral sequence!sheet}%
\index{page of a spectral sequence}
\item $(\SSPage{r},\SSDffrntl{r} | r\geq r_0)$ is a \Defn{first quadrant spectral sequence} if $\SSObjctBB{r_0}{p,q}\neq 0$ implies $p,q\geq 0$. - Second/third/fourth quadrant spectral sequences are defined analogously. %
\index{spectral sequence!first quadrant} %
\index{spectral sequence!second quadrant} %
\index{spectral sequence!third quadrant} %
\index{spectral sequence!fourth quadrant} %
\index{first quadrant spectral sequence}
\item $(\SSPage{r},\SSDffrntl{r} | r\geq r_0)$ is an \Defn{upper half plane spectral sequence} if $\SSObjctBB{r_0}{p,q}\neq 0$ implies $q\geq 0$. - Lower/left/right half plane spectral sequences are defined analogously. Any one of these spectral sequences is called a \Defn{half plane spectral sequence}. %
\index{spectral sequence!upper half plane}
\index{spectral sequence!half plane}
\index{spectral sequence!lower half plane}
\index{spectral sequence!right half plane}
\index{spectral sequence!left half plane}
\index{half plane spectral sequence}
\item $(\SSPage{r},\SSDffrntl{r} | r\geq r_0)$ is called a \Defn{full plane spectral sequence} if, in any position $(p,q)$, the object $\SSObjctBB{r_0}{p,q}$ is potentially nonzero.%
\index{spectral sequence!full plane}%
\index{full plane!spectral sequence}%
\item $(\SSPage{r},\SSDffrntl{r} | r\geq r_0)$ is called \Defn{homological} if its differential $\SSDffrntl{r}$ on page $\SSPage{r}$ has bidegree $(-r,r-1)$. It is called  \Defn{cohomological} if its differential $\SSDffrntl{r}$ has bidegree $(r,-r+1)$. %
\index{homological spectral sequence}\index{spectral sequence!homological}%
\index{cohomological spectral sequence}\index{spectral sequence!cohomological}%
\end{enumerate}
\end{terminology}

Thus the $r$-th page of a spectral sequence, whose differential has bidegree $\SSBidegreeVect{v}{r}=(\mu_r,\nu_r)$, may be visualized via this diagram:

\begin{equation*}
\begin{xy}
\xymatrix@!R=3ex@C=3ex{
& \vdots && \vdots & \vdots & \vdots && \vdots \\
\cdots & A_{p+\mu_r,q+\nu_r} & \cdots & A_{p-1,q+\nu_r} & A_{p,q+\nu_r} & A_{p+1,q+\nu_r} & \cdots & A_{p-\mu_r,q+\nu_r} & \cdots \\
	& \vdots && \vdots & \vdots & \vdots && \vdots \\
\cdots & A_{p+\mu_r,q} & \cdots & A_{p-1,q} & A_{p,q} \ar[llluu]^(.6){\SSDffrntlAt{r}{p}{q}} & A_{p+1,q} & \cdots & A_{p-\mu_r,q} \ar[llluu]_(.4){\SSDffrntlAt{r}{p-\mu_r}{q} } & \cdots \\
	& \vdots && \vdots & \vdots & \vdots && \vdots \\
\cdots & A_{p+\mu_r,q-\nu_r} & \cdots & A_{p-1,q-\nu_r} & A_{p,q-\nu_r} \ar[llluu]^(.6){\SSDffrntlAt{r}{p}{q-\nu_r} } & A_{p+1,q-\nu_r} & \cdots & A_{p-\mu_r,q-\nu_r} \ar[llluu]_(.4){\SSDffrntlAt{r}{p-\mu_r}{q-\nu_r}} & \cdots \\
& \vdots && \vdots & \vdots & \vdots && \vdots \\
}
\end{xy}
\end{equation*}

Every spectral sequence has, what is called its limit page $\SSPage{\infty}$. We use the following lemma to compute it.

\begin{lemma}[Collecting `cycles' and `boundaries' of a spectral sequence]
\label{thm:E-infinityPreparation}%
A spectral sequence $(\SSPage{r},\SSDffrntl{r} | r\geq r_0)$ determines in each position $\Vect{x}\in \ZNr\prdct \ZNr$ a nesting of submodules of $\SSObjctBB{r_0}{\Vect{x}}$:
\begin{equation*}
0\EqDef B^{r_0-1}_{\Vect{x}} \subseteq \cdots\cdots \subseteq B^{r}_{\Vect{x}}  \subseteq \cdots \subseteq \FamUnion{r}{B^{r}_{\Vect{x}}}\subseteq \dots  \subseteq\FamIntrsctn{r}{Z^{r}_{\Vect{x}}}\subseteq\cdots \subseteq Z^{r}_{\Vect{x}}  \subseteq\cdots\cdots \subseteq Z^{r_0-1}_{\Vect{x}}\DefEq \SSObjctBB{r_0}{\Vect{x}}
\end{equation*}
such that $\SSObjctBB{r+1}{\Vect{x}}\cong Z^{r}_{\Vect{x}}/B^{r}_{\Vect{x}}$.
\end{lemma}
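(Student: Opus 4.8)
The plan is to carry out the standard inductive construction of the iterated kernel and image submodules, making precise the identification $\SSObjctBB{r+1}{\Vect{x}}\cong Z^{r}_{\Vect{x}}/B^{r}_{\Vect{x}}$ at each stage and then pulling everything back to live inside the single fixed module $\SSObjctBB{r_0}{\Vect{x}}$. First I would set up the recursion. At stage $r_0$ we have nothing to quotient yet, so I declare $Z^{r_0-1}_{\Vect{x}}\DefEq \SSObjctBB{r_0}{\Vect{x}}$ and $B^{r_0-1}_{\Vect{x}}\DefEq 0$, which makes the claimed nesting trivially correct at the bottom. By Definition \ref{def:SpectralSequence}(2), $\SSObjctBB{r_0+1}{\Vect{x}}=\Ker{\SSDffrntlBB{r_0}{\Vect{x}}}/\Img{\SSDffrntlBB{r_0}{\Vect{x}-\SSBidegreeVect{v}{r_0}}}$, so I set $Z^{r_0}_{\Vect{x}}\DefEq \Ker{\SSDffrntlBB{r_0}{\Vect{x}}}$ and $B^{r_0}_{\Vect{x}}\DefEq \Img{\SSDffrntlBB{r_0}{\Vect{x}-\SSBidegreeVect{v}{r_0}}}$, both visibly submodules of $\SSObjctBB{r_0}{\Vect{x}}$ with $B^{r_0}_{\Vect{x}}\subseteq Z^{r_0}_{\Vect{x}}$ (since $d^{r_0}d^{r_0}=0$).

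The inductive step is where the one genuine subtlety lies. Suppose $B^{r-1}_{\Vect{x}}\subseteq Z^{r-1}_{\Vect{x}}\subseteq \SSObjctBB{r_0}{\Vect{x}}$ have been constructed with $\SSObjctBB{r}{\Vect{x}}\cong Z^{r-1}_{\Vect{x}}/B^{r-1}_{\Vect{x}}$. The differential $\SSDffrntlBB{r}{\Vect{x}}$ is defined on $\SSObjctBB{r}{\Vect{x}}$, i.e. on the \emph{quotient} $Z^{r-1}_{\Vect{x}}/B^{r-1}_{\Vect{x}}$, so its kernel and the incoming image are submodules of that quotient, not of $\SSObjctBB{r_0}{\Vect{x}}$ directly. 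The key move is to define $Z^{r}_{\Vect{x}}$ and $B^{r}_{\Vect{x}}$ as the \emph{preimages} in $\SSObjctBB{r_0}{\Vect{x}}$ of $\Ker{\SSDffrntlBB{r}{\Vect{x}}}$ and $\Img{\SSDffrntlBB{r}{\Vect{x}-\SSBidegreeVect{v}{r}}}$ respectively, under the composite quotient map $Z^{r-1}_{\Vect{x}}\twoheadrightarrow Z^{r-1}_{\Vect{x}}/B^{r-1}_{\Vect{x}}\cong \SSObjctBB{r}{\Vect{x}}$. Because these are preimages of submodules of a quotient of $Z^{r-1}_{\Vect{x}}$, they automatically satisfy $B^{r-1}_{\Vect{x}}\subseteq B^{r}_{\Vect{x}}$ and $Z^{r}_{\Vect{x}}\subseteq Z^{r-1}_{\Vect{x}}$, which furnishes exactly the two nesting chains in the statement.

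With these definitions the isomorphism falls out of the third isomorphism theorem: since $B^{r}_{\Vect{x}}$ and $Z^{r}_{\Vect{x}}$ are the preimages of $\Img{\SSDffrntlBB{r}{\Vect{x}-\SSBidegreeVect{v}{r}}}$ and $\Ker{\SSDffrntlBB{r}{\Vect{x}}}$ under the surjection $Z^{r-1}_{\Vect{x}}\twoheadrightarrow \SSObjctBB{r}{\Vect{x}}$, both containing the kernel $B^{r-1}_{\Vect{x}}$ of that surjection, I get
\begin{equation*}
\frac{Z^{r}_{\Vect{x}}}{B^{r}_{\Vect{x}}}\cong \frac{Z^{r}_{\Vect{x}}/B^{r-1}_{\Vect{x}}}{B^{r}_{\Vect{x}}/B^{r-1}_{\Vect{x}}}\cong \frac{\Ker{\SSDffrntlBB{r}{\Vect{x}}}}{\Img{\SSDffrntlBB{r}{\Vect{x}-\SSBidegreeVect{v}{r}}}}=\SSObjctBB{r+1}{\Vect{x}},
\end{equation*}
using Definition \ref{def:SpectralSequence}(2) for the last equality. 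The remaining middle terms $\FamUnion{r}{B^{r}_{\Vect{x}}}\subseteq \FamIntrsctn{r}{Z^{r}_{\Vect{x}}}$ require only the observation that every $B^{s}_{\Vect{x}}$ sits below every $Z^{t}_{\Vect{x}}$ (any element surviving to page $t$ maps to a cycle, hence was not yet a boundary at any stage $s$ beyond which it persists); this inequality of submodules is what legitimizes writing the union of boundaries inside the intersection of cycles. The main obstacle, to be handled carefully rather than by computation, is the bookkeeping of the preimage construction across stages — making sure the quotient map at step $r$ is genuinely the one whose domain is $Z^{r-1}_{\Vect{x}}$ and whose kernel is $B^{r-1}_{\Vect{x}}$, so that the nesting and the isomorphism propagate cleanly through the induction.
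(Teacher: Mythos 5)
Your proposal is correct and follows essentially the same route as the paper: the paper's proof performs exactly this induction, defining $Z^{r}_{\Vect{x}}$ and $B^{r}_{\Vect{x}}$ as preimages of $\Ker{\SSDffrntlBB{r}{\Vect{x}}}$ and $\Img{\SSDffrntlBB{r}{\Vect{x}-\SSBidegreeVect{v}{r}}}$ under the quotient map $\tau^{r-1}_{\Vect{x}}\from Z^{r-1}_{\Vect{x}}\twoheadrightarrow \SSObjctBB{r}{\Vect{x}}$, which is precisely your composite surjection. Your explicit appeal to the third isomorphism theorem and your verification that $\FamUnion{s}{B^{s}_{\Vect{x}}}\subseteq \FamIntrsctn{t}{Z^{t}_{\Vect{x}}}$ merely spell out steps the paper leaves implicit in its diagram.
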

\begin{proof}
If $\SSDffrntl{r}$ has bidegree $\Vect{v}_r$, then we inductively construct the submodules $\ExctCplCyclesBB{r}{\Vect{x}}$ and $\ExctCplBndrsBB{r}{\Vect{x}}$: Let
\begin{equation*}
\ExctCplBndrsBB{r_0-1}{\Vect{x}}\DefEq 0  \qquad \text{and}\qquad \ExctCplCyclesBB{r_0-1}{\Vect{x}}\DefEq \SSObjctBB{r_0}{\Vect{x}}
\end{equation*}
and assume that, for $r_0-1\leq k\leq r-1$, and $\Vect{x}\in \Prdct{\ZNr}{\ZNr}$, we have short exact sequences plotted vertically in black in this diagram:
\begin{equation*}
\xymatrix@R=5ex@C=0em{
0& = & \ExctCplBndrsBB{r_0-1}{\Vect{x}} \ar@{{ |>}->}[d] & \subseteq & \ExctCplBndrsBB{r_0}{\Vect{x}} \ar@{{ |>}->}[d] & \subseteq & \cdots & \subseteq & \ExctCplBndrsBB{r-1}{\Vect{x}} \ar@{{ |>}->}[d]  & \subseteq & {\color{Maroon}\ExctCplBndrsBB{r}{\Vect{x}}} \ar@{{ |>}->}@[Maroon][d] & {\color{Maroon}\DefEq} & {\color{Maroon}\left(\tau^{r-1}_{\Vect{x}}\right)^{-1}\Img{\SSDffrntlBB{r}{\Vect{x}-\Vect{v}_r}}} \\
\SSObjctBB{r_0}{\Vect{x}} & = & \ExctCplCyclesBB{r_0-1}{\Vect{x}} \ar@{-{ >>}}[d]^{\tau^{r_0-1}_{\Vect{x}}}_{\IdMap} & \supseteq & \ExctCplCyclesBB{r_0}{\Vect{x}} \ar@{-{ >>}}[d]^{\tau^{r_0}_{\Vect{x}}} & \supseteq & \cdots & \supseteq & \ExctCplCyclesBB{r-1}{\Vect{x}} \ar@{-{ >>}}[d]^{\tau^{r-1}_{\Vect{x}}} & \supseteq & {\color{Maroon}\ExctCplCyclesBB{r}{\Vect{x}}} \ar@{-{ >>}}@[Maroon][d]^{\color{Maroon} \tau^{r}_{\Vect{x}}}& {\color{Maroon}\DefEq} & {\color{Maroon} \left(\tau^{r-1}_{\Vect{x}}\right)^{-1}\Ker{\SSDffrntlBB{r}{\Vect{x}} }} \\
&& \SSObjctBB{r_0}{\Vect{x}} && \SSObjctBB{r_0+1}{\Vect{x}} &&&& \SSObjctBB{r}{\Vect{x}} && {\color{Maroon}\SSObjctBB{r+1}{\Vect{x}}} & {\color{Maroon}\cong} & {\color{Maroon}\ExctCplCyclesBB{r}{\Vect{x}}/\ExctCplBndrsBB{r}{\Vect{x}}}
}
\end{equation*}
Then the induction is completed via the colored portion of the diagram on the right.
\end{proof}

\begin{definition}[Limit page of a spectral sequence]
\label{def:SpectralSequence-LimitPage}%
In the setting of (\ref{thm:E-infinityPreparation}), put $B^{\infty}_{\Vect{x}}\DefEq \FamUnion{r}{ \ExctCplBndrsBB{r}{\Vect{x}}} \subseteq \FamIntrsctn{r}{ \ExctCplCyclesBB{r}{\Vect{x}} } \EqDef Z^{\infty}_{\Vect{x}}$. Then the limit page of $(\SSPage{r},\SSDffrntl{r})$ is the $(\ZNr\prdct \ZNr)$-bigraded $R$-module $\SSPage{\infty}$ with %
\index{spectral sequence!limit page}\index[not]{$\SSPage{r}$}
\begin{equation*}
\SSObjctBB{\infty}{\Vect{x}} \DefEq \dfrac{ \ExctCplCyclesBB{\infty}{\Vect{x}}}{ \ExctCplBndrsBB{\infty}{\Vect{x}} }\ .
\end{equation*}
\end{definition}

In categorial terms, $\ExctCplBndrsBB{\infty}{\Vect{x}}\cong \CoLimOf{\ExctCplBndrsBB{r}{\Vect{x}} }$, and $\ExctCplCyclesBB{\infty}{\Vect{x}}\cong \LimOf{\ExctCplCyclesBB{r}{\Vect{x}} }$. While limits and colimits do not commute in general, they do so in the construction of $\SSPage{\infty}$:

\begin{proposition}[$\SSPage{\infty}$ as a (co-)limit]
\label{thm:E-infinityAs(Co-)Limit}%
The $\SSPage{\infty}$-terms of a spectral sequence $(\SSPage{r},\SSDffrntl{r})$ satisfy for each $\Vect{x}\in \ZNr\prdct \ZNr$:
\begin{enumerate}[(i)]
\item For each $s\geq r_0$, $\ExctCplCyclesBB{s}{\Vect{x}}/\ExctCplBndrsBB{\infty}{\Vect{x}} \cong \CoLimOfOver{\ExctCplCyclesBB{s}{\Vect{x}}/\ExctCplBndrsBB{r}{\Vect{x}} }{r}$ is a colimit of epimorphisms.
\item For each $r\geq r_0$, $\ExctCplCyclesBB{\infty}{\Vect{x}}/\ExctCplBndrsBB{r}{\Vect{x}} \cong \LimOfOver{\ExctCplCyclesBB{s}{\Vect{x}}/\ExctCplBndrsBB{r}{\Vect{x}}}{s}$ is a limit of monomorphisms.
\end{enumerate}
Consequently,
\begin{equation*}
\LimOfOver{\CoLimOfOver{\ExctCplCyclesBB{s}{\Vect{x}}/\ExctCplBndrsBB{r}{\Vect{x}} }{r}}{s} \cong \SSObjctBB{\infty}{\Vect{x}} \cong \CoLimOfOver{ \LimOfOver{\ExctCplCyclesBB{s}{\Vect{x}}/\ExctCplBndrsBB{r}{\Vect{x}} }{s} }{r}
\end{equation*}
\end{proposition}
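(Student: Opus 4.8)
The plan is to exploit the one structural feature that makes this proposition work: by Lemma~\ref{thm:E-infinityPreparation} and Definition~\ref{def:SpectralSequence-LimitPage} every module in sight --- the $\ExctCplBndrsBB{r}{\Vect{x}}$, the $\ExctCplCyclesBB{s}{\Vect{x}}$, and their limiting cases $\ExctCplBndrsBB{\infty}{\Vect{x}}=\FamUnion{r}{\ExctCplBndrsBB{r}{\Vect{x}}}$ and $\ExctCplCyclesBB{\infty}{\Vect{x}}=\FamIntrsctn{s}{\ExctCplCyclesBB{s}{\Vect{x}}}$ --- is a submodule of the single ambient module $\SSObjctBB{r_0}{\Vect{x}}$, nested so that $\ExctCplBndrsBB{r}{\Vect{x}}\subseteq \ExctCplBndrsBB{\infty}{\Vect{x}}\subseteq \ExctCplCyclesBB{\infty}{\Vect{x}}\subseteq \ExctCplCyclesBB{s}{\Vect{x}}$ for all $r,s\geq r_0$. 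Hence each $\ExctCplCyclesBB{s}{\Vect{x}}/\ExctCplBndrsBB{r}{\Vect{x}}$ is a genuine subquotient of $\SSObjctBB{r_0}{\Vect{x}}$, the $r$-direction giving a chain of canonical epimorphisms (the denominator grows) and the $s$-direction a chain of canonical monomorphisms (the numerator shrinks). I would record this lattice picture first and then handle the two directions independently.

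For part~(i) I would fix $s$ and apply the filtered colimit over $r$ to the short exact sequences $\ExctCplBndrsBB{r}{\Vect{x}}\hookrightarrow \ExctCplCyclesBB{s}{\Vect{x}}\twoheadrightarrow \ExctCplCyclesBB{s}{\Vect{x}}/\ExctCplBndrsBB{r}{\Vect{x}}$. Since filtered colimits are exact in a module category (the AB5 property collected in Chapter~\ref{chap:CoLimsInModuleCategories}), the colimit sequence reads $\FamUnion{r}{\ExctCplBndrsBB{r}{\Vect{x}}}\hookrightarrow \ExctCplCyclesBB{s}{\Vect{x}}\twoheadrightarrow \CoLimOfOver{\ExctCplCyclesBB{s}{\Vect{x}}/\ExctCplBndrsBB{r}{\Vect{x}}}{r}$, identifying the colimit with $\ExctCplCyclesBB{s}{\Vect{x}}/\ExctCplBndrsBB{\infty}{\Vect{x}}$; the bonding maps being the surjections above, it is visibly a colimit of epimorphisms. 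For part~(ii) I would dualize the bookkeeping: fixing $r$, the tower $\{\ExctCplCyclesBB{s}{\Vect{x}}/\ExctCplBndrsBB{r}{\Vect{x}}\}_s$ is an inverse system whose bonding maps are inclusions of submodules of $\SSObjctBB{r_0}{\Vect{x}}/\ExctCplBndrsBB{r}{\Vect{x}}$. The inverse limit of a tower of inclusions is the intersection of the submodules, and quotienting by the fixed $\ExctCplBndrsBB{r}{\Vect{x}}$ (contained in every term) commutes with intersection, so $\LimOfOver{\ExctCplCyclesBB{s}{\Vect{x}}/\ExctCplBndrsBB{r}{\Vect{x}}}{s}\cong (\FamIntrsctn{s}{\ExctCplCyclesBB{s}{\Vect{x}}})/\ExctCplBndrsBB{r}{\Vect{x}}=\ExctCplCyclesBB{\infty}{\Vect{x}}/\ExctCplBndrsBB{r}{\Vect{x}}$, a limit of monomorphisms.

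The ``consequently'' statement then follows by iterating these two computations, and this is where I would be most careful. Feeding~(i) into the outer $\Lim_s$ turns the left-hand expression into $\LimOfOver{\ExctCplCyclesBB{s}{\Vect{x}}/\ExctCplBndrsBB{\infty}{\Vect{x}}}{s}$; since $\ExctCplBndrsBB{\infty}{\Vect{x}}$ is a fixed submodule contained in every $\ExctCplCyclesBB{s}{\Vect{x}}$, the intersection argument of~(ii) applies verbatim with denominator $\ExctCplBndrsBB{\infty}{\Vect{x}}$ and returns $\ExctCplCyclesBB{\infty}{\Vect{x}}/\ExctCplBndrsBB{\infty}{\Vect{x}}=\SSObjctBB{\infty}{\Vect{x}}$. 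Symmetrically, feeding~(ii) into the outer $\CoLim_r$ gives $\CoLimOfOver{\ExctCplCyclesBB{\infty}{\Vect{x}}/\ExctCplBndrsBB{r}{\Vect{x}}}{r}$, and the exact-colimit argument of~(i) with fixed numerator $\ExctCplCyclesBB{\infty}{\Vect{x}}$ returns $\ExctCplCyclesBB{\infty}{\Vect{x}}/\FamUnion{r}{\ExctCplBndrsBB{r}{\Vect{x}}}=\SSObjctBB{\infty}{\Vect{x}}$ as well. The conceptual obstacle worth flagging is that iterated limits and colimits do not commute in general; what rescues the argument here is precisely that all four corner-operations reduce to unions and intersections inside the single submodule lattice of $\SSObjctBB{r_0}{\Vect{x}}$, so the potential $\LimOne$-obstruction to exactness of the inverse limits never appears --- every relevant tower consists of monomorphisms.
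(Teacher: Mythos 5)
Your proof is correct and follows the same decomposition as the paper's: fix one index, identify the inner (co)limit by applying the (co)limit functor to the short exact sequence $\ExctCplBndrsBB{r}{\Vect{x}}\to \ExctCplCyclesBB{s}{\Vect{x}}\to \ExctCplCyclesBB{s}{\Vect{x}}/\ExctCplBndrsBB{r}{\Vect{x}}$, then feed each of (i) and (ii) into the outer (co)limit to obtain the double-(co)limit identity. The one place you genuinely diverge is part (ii): the paper fixes $r$ and invokes the $6$-term $\Lim$-$\LimOne$ sequence of the $\OrdOmegaOp$-diagram, noting that the kernel diagram is constant at $\ExctCplBndrsBB{r}{\Vect{x}}$ and therefore has vanishing $\LimOne$, so that $\Lim$ is exact on this sequence; you instead bypass $\LimOne$ entirely by observing that $\{\ExctCplCyclesBB{s}{\Vect{x}}/\ExctCplBndrsBB{r}{\Vect{x}}\}_{s}$ is a tower of inclusions inside $\SSObjctBB{r_0}{\Vect{x}}/\ExctCplBndrsBB{r}{\Vect{x}}$, whose limit is the intersection of its terms, and that intersection commutes with quotienting by the common submodule $\ExctCplBndrsBB{r}{\Vect{x}}$ (lattice isomorphism theorem). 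Both justifications are sound and equally short; yours is the more elementary (no derived functor appears), and it makes explicit \emph{why} the paper's $\LimOne$-obstruction vanishes here --- every relevant inverse system lives as a tower of monomorphisms in a single submodule lattice --- which is precisely the point of your closing remark about the non-commutation of iterated limits and colimits in general.
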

\begin{proof}
To see (i), fix $s\geq r_0$, and consider the short exact sequence of $\OrdOmega$-diagrams $\ExctCplBndrsBB{r}{\Vect{x}}\to \ExctCplCyclesBB{s}{\Vect{x}} \to  \ExctCplCyclesBB{s}{\Vect{x}}/\ExctCplBndrsBB{r}{\Vect{x}}$. The claim follows because $\CoLimOver{\OrdOmega}$ is exact; see (\ref{thm:CoLim^ZExactFunctor}). To see (ii), fix $r\geq r_0$, and consider the short exact sequence of $\OrdOmegaOp$-diagrams $\ExctCplBndrsBB{r}{\Vect{x}} \to \ExctCplCyclesBB{s}{\Vect{x}} \to \ExctCplCyclesBB{s}{\Vect{x}} / \ExctCplBndrsBB{r}{\Vect{x}}$. The kernel-diagram has vanishing $\LimOne$. So $\Lim$ applied to this sequence is exact, and this implies (ii).

Applying similar reasoning to the $\omega$-diagram of short exact sequences $\ExctCplBndrsBB{r}{\Vect{x}} \to \ExctCplCyclesBB{\infty}{\Vect{x}} \to \ExctCplCyclesBB{\infty}{\Vect{x}}/\ExctCplBndrsBB{r}{\Vect{x}}$, respectively the $\bar{\omega}$-diagram of short exact sequences $\ExctCplBndrsBB{\infty}{\Vect{x}}\to \ExctCplCyclesBB{s}{\Vect{x}} \to \ExctCplCyclesBB{s}{\Vect{x}} / \ExctCplBndrsBB{\infty}{\Vect{x}}$, yields the two descriptions of $\SSObjctBB{\infty}{\Vect{x}}$.
\end{proof}

\begin{definition}[Collapsing spectral sequence]
\label{def:SpectralSequence-Collapse}%
A spectral sequence $(\SSPage{r},\SSDffrntl{r}\, |\, r\geq r_0)$ collapses on page $c\geq r_0$ if $\SSDffrntl{c+k}=0$ for all $k\geq 0$. %
\index{collapsing spectral sequence}%
\index{spectral sequence!collapse}%
\end{definition}

\begin{proposition}[$\SSPage{\infty}$ of a collapsing spectral sequence]
\label{thm:E-infinityOfCollapsingSpectralSequence}%
If a spectral sequence $(\SSPage{r},\SSDffrntl{r})$ collapses on page $c$, then $\SSPage{c}\cong \SSPage{c+1}\cong \cdots \cong \SSPage{c+k}\cong \cdots \cong \SSPage{\infty}$. \NoProof
\end{proposition}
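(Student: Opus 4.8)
The plan is to split the claim into two parts: first that the pages are constant from $\SSPage{c}$ onward, and then that this common stable page coincides with $\SSPage{\infty}$.

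First I would establish $\SSObjctBB{c+k}{\Vect{x}}\cong \SSObjctBB{c+k+1}{\Vect{x}}$ for every $\Vect{x}$ and every $k\geq 0$, reading the isomorphism straight off the homology formula in Definition \ref{def:SpectralSequence}(2). Since the spectral sequence collapses on page $c$ we have $\SSDffrntlBB{c+k}{\Vect{y}}=0$ for all $\Vect{y}$; in particular $\Ker{\SSDffrntlBB{c+k}{\Vect{x}}}=\SSObjctBB{c+k}{\Vect{x}}$ and $\Img{\SSDffrntlBB{c+k}{\Vect{x}-\SSBidegreeVect{v}{c+k}}}=0$, so that
\[
\SSObjctBB{c+k+1}{\Vect{x}}=\frac{\Ker{\SSDffrntlBB{c+k}{\Vect{x}}}}{\Img{\SSDffrntlBB{c+k}{\Vect{x}-\SSBidegreeVect{v}{c+k}}}}=\SSObjctBB{c+k}{\Vect{x}}.
\]
A trivial induction on $k$ then yields the whole chain $\SSPage{c}\cong \SSPage{c+1}\cong\cdots$.

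Next I would identify this stable page with $\SSPage{\infty}$ by tracking the nested `cycle' and `boundary' submodules of Lemma \ref{thm:E-infinityPreparation}. For every $r\geq c$ the differential $\SSDffrntl{r}$ vanishes, so in the recursive formulas $\ExctCplBndrsBB{r}{\Vect{x}}=\left(\tau^{r-1}_{\Vect{x}}\right)^{-1}\Img{\SSDffrntlBB{r}{\Vect{x}-\Vect{v}_r}}$ and $\ExctCplCyclesBB{r}{\Vect{x}}=\left(\tau^{r-1}_{\Vect{x}}\right)^{-1}\Ker{\SSDffrntlBB{r}{\Vect{x}}}$ the image is $0$ while the kernel is all of $\SSObjctBB{r}{\Vect{x}}$. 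Recalling from the diagram in that lemma that $\tau^{r-1}_{\Vect{x}}\from \ExctCplCyclesBB{r-1}{\Vect{x}}\to \SSObjctBB{r}{\Vect{x}}$ is the canonical surjection with kernel $\ExctCplBndrsBB{r-1}{\Vect{x}}$, I would conclude $\ExctCplBndrsBB{r}{\Vect{x}}=\ker \tau^{r-1}_{\Vect{x}}=\ExctCplBndrsBB{r-1}{\Vect{x}}$ and $\ExctCplCyclesBB{r}{\Vect{x}}=\left(\tau^{r-1}_{\Vect{x}}\right)^{-1}\bigl(\SSObjctBB{r}{\Vect{x}}\bigr)=\ExctCplCyclesBB{r-1}{\Vect{x}}$ for all $r\geq c$. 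Hence both nested families are eventually constant, stabilizing at $\ExctCplBndrsBB{c-1}{\Vect{x}}$ and $\ExctCplCyclesBB{c-1}{\Vect{x}}$ respectively.

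Feeding this into Definition \ref{def:SpectralSequence-LimitPage}, the ascending union $\ExctCplBndrsBB{\infty}{\Vect{x}}=\FamUnion{r}{\ExctCplBndrsBB{r}{\Vect{x}}}$ collapses to $\ExctCplBndrsBB{c-1}{\Vect{x}}$ and the descending intersection $\ExctCplCyclesBB{\infty}{\Vect{x}}=\FamIntrsctn{r}{\ExctCplCyclesBB{r}{\Vect{x}}}$ collapses to $\ExctCplCyclesBB{c-1}{\Vect{x}}$, so that
\[
\SSObjctBB{\infty}{\Vect{x}}=\frac{\ExctCplCyclesBB{\infty}{\Vect{x}}}{\ExctCplBndrsBB{\infty}{\Vect{x}}}=\frac{\ExctCplCyclesBB{c-1}{\Vect{x}}}{\ExctCplBndrsBB{c-1}{\Vect{x}}}\cong \SSObjctBB{c}{\Vect{x}},
\]
the last isomorphism being the $r=c-1$ instance of $\SSObjctBB{r+1}{\Vect{x}}\cong \ExctCplCyclesBB{r}{\Vect{x}}/\ExctCplBndrsBB{r}{\Vect{x}}$ from Lemma \ref{thm:E-infinityPreparation}. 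Together with the first step this gives the full string of isomorphisms. There is no genuine obstacle here: the only point that wants a moment of care is the bookkeeping verifying that the eventual constancy of the $\ExctCplBndrsBB{r}{\Vect{x}}$ and $\ExctCplCyclesBB{r}{\Vect{x}}$ forces the colimit and limit defining $\SSPage{\infty}$ to be attained already at the finite stage $c-1$, rather than only in the limit.
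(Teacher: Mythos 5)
Your proof is correct. The paper states this proposition without proof (the $\lozenge$ marker indicates the argument is omitted as evident), and your two-step argument is exactly the intended one: the vanishing of the differentials $\SSDffrntl{c+k}$ makes the homology formula of Definition \ref{def:SpectralSequence}(2) return the same page at each step, and simultaneously forces the nested modules $\ExctCplBndrsBB{r}{\Vect{x}}$ and $\ExctCplCyclesBB{r}{\Vect{x}}$ of Lemma \ref{thm:E-infinityPreparation} to become constant from stage $c-1$ on, so the union and intersection defining $\SSObjctBB{\infty}{\Vect{x}}$ in Definition \ref{def:SpectralSequence-LimitPage} are attained at that finite stage and $\SSObjctBB{\infty}{\Vect{x}}\cong \ExctCplCyclesBB{c-1}{\Vect{x}}/\ExctCplBndrsBB{c-1}{\Vect{x}}\cong \SSObjctBB{c}{\Vect{x}}$.
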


The information provided in this section already supports successful computations with some spectral sequences. The interested reader is encouraged to take a look at Chapter \ref{sec:SS-Computations}.

\section[Morphisms of Spectral Sequences]{Morphisms of Spectral Sequences}
\label{sec:SpectralSequences-Morphisms}

A morphism $(f^r\from (\SSPage{r}(1),\SSDffrntl{r}(1)) \to (\SSPage{r}(2),\SSDffrntl{r}(2)\, |\, r\geq r_0)$ of spectral sequences, see (\ref{def:Morphism-SpectralSequences}), is given by a family $(f^r)$ of morphisms of bigraded $R$-modules which are required to (a) commute with differentials and (b) $f^r$ induces $f^{r+1}$ via passage to homology. A morphism of spectral sequences induces a morphism of $\SSPage{\infty}$-limit pages; see (\ref{thm:EInfinityMapFromMorphismOfSpecSeqs}). We are particularly interested in conditions under which a morphism of spectral sequences induces an isomorphism of $\SSPage{\infty}$-pages; see (\ref{thm:ErIsoGivesEInftyIso}).

\begin{definition}[Morphism of spectral sequences]
\label{def:Morphism-SpectralSequences}%
Let $(\SSPage{r}(1),\SSDffrntl{r}(1)\, |\, r\geq r_0)$ and $(\SSPage{r}(2),\SSDffrntl{r}(2)\, |\, r\geq r_0)$ be spectral sequences whose differentials $\SSDffrntl{r}(1)$ and $\SSDffrntl{r}(2)$ have the same bidegree  $\SSBidegreeVect{v}{r}$, $r\geq r_0$. A \Defn{morphism of spectral sequences} from $(\SSPage{r}(1),\SSDffrntl{r}(1))$ to $(\SSPage{r}(2),\SSDffrntl{r}(2))$ is given by a family of morphisms of bigraded modules $(\SSMap{f}{r}\from \SSPage{r}(1)\to \SSPage{r}(2) | r\geq r_0)$ such that the diagram below commutes for each $r\geq r_0$ and $\Vect{x}\in \Prdct{\ZNr}{\ZNr}$: %
\index{morphism!of spectral sequences}%
\index{spectral sequence!morphism}
\begin{equation*}
\begin{xy}
\xymatrix@R=5ex@C=5em{
\SSObjctBB{r}{\Vect{x}}(1) \ar[r]^-{\SSDffrntlBB{r}{\Vect{x}}(1)} \ar[d]_{\SSMapBB{f}{r}{\Vect{x}} } &
	\SSObjctBB{r}{\Vect{x}+\Vect{v}_r}(1) \ar[d]^{\SSMapBB{f}{r}{\Vect{x}+\Vect{v}_r} } \\
\SSObjctBB{r}{\Vect{x}}(2) \ar[r]_-{\SSDffrntlBB{r}{\Vect{x}}(2) } &
	\SSObjctBB{r}{\Vect{x}+\SSBidegreeVect{v}{r}}(2)
}
\end{xy}
\end{equation*}
Moreover, $\SSMap{f}{r}$ is to induce $\SSMap{f}{r+1}$ in homology; i.e. $\SSMapBB{f}{r+1}{}=H(\SSMapBB{f}{r}{})$.
\end{definition}

\begin{subordinate}
Thus the spectral sequences in $\ModulesOver{R}$ form a category $\SpecSeqsInModOver{R}$ whose connected components are determined by the collection of families $(r_0;\SSBidegreeVect{b}{r}\, |\, r\in \ZNr,\ r\geq r_0)$. Given such a family, the associated connected component $\SpecSeqsInModOver{R}(r_0;\SSBidegreeVect{b}{r}\, |\, r\geq r_0)$ is additive with sums defined positionwise.

More generally, it is possible to describe the category of all $(\Prdct{\ZNr}{\ZNr})$-bigraded $R$-modules with a differential of a given bidegree as a full subcategory of a functor category. It follows that the category of spectral sequences of $R$-modules is an additive category which is closed under those (co-)limits which are exact, hence commute with homology.
\end{subordinate}%
\index[not]{$\SpecSeqsInModOver{R}$ - cat of spectral sequences in $\ModulesOver{R}$}\index[not]{$\SpecSeqsInModOver{R}(r_0;\SSBidegreeVect{b}{r})$}%

\begin{proposition}[$E^{\infty}$-map from morphism of spectral sequences]
\label{thm:EInfinityMapFromMorphismOfSpecSeqs}%
A morphism $(\SSMap{f}{r}\from \SSPage{r}(1)\to \SSPage{r}(2) | r\geq r_0)$ of spectral sequences functorially induces a morphism of $\SSPage{\infty}$-pages. %
\begin{equation*}
\SSMap{f}{\infty}\from \SSPage{\infty}(1) \longrightarrow \SSPage{\infty}(2)
\end{equation*}
\end{proposition}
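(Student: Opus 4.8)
The plan is to exploit the concrete presentation of $\SSObjctBB{\infty}{\Vect{x}}$ as the subquotient $\ExctCplCyclesBB{\infty}{\Vect{x}}/\ExctCplBndrsBB{\infty}{\Vect{x}}$ of $\SSObjctBB{r_0}{\Vect{x}}$ supplied by (\ref{thm:E-infinityPreparation}) and (\ref{def:SpectralSequence-LimitPage}), and to show that the single initial map $\SSMapBB{f}{r_0}{\Vect{x}}$ already carries all the data required to build $\SSMap{f}{\infty}$. Concretely, I would show that $\SSMapBB{f}{r_0}{\Vect{x}}$ respects the whole tower of cycle- and boundary-submodules, pass to the quotient $\ExctCplCyclesBB{\infty}{\Vect{x}}/\ExctCplBndrsBB{\infty}{\Vect{x}}$, and finally check functoriality.

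First I would prove, by induction on $r$, that for every $\Vect{x}$ the fixed map $\SSMapBB{f}{r_0}{\Vect{x}}$ carries $\ExctCplCyclesBB{r}{\Vect{x}}(1)$ into $\ExctCplCyclesBB{r}{\Vect{x}}(2)$ and $\ExctCplBndrsBB{r}{\Vect{x}}(1)$ into $\ExctCplBndrsBB{r}{\Vect{x}}(2)$, and that under the identification $\SSObjctBB{r}{\Vect{x}}(i)\cong \ExctCplCyclesBB{r-1}{\Vect{x}}(i)/\ExctCplBndrsBB{r-1}{\Vect{x}}(i)$ the abstract map $\SSMapBB{f}{r}{\Vect{x}}$ coincides with the map induced by $\SSMapBB{f}{r_0}{\Vect{x}}$ on that subquotient. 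The identification is a tautology at $r=r_0$, where $\ExctCplBndrsBB{r_0-1}{\Vect{x}}=0$ and $\ExctCplCyclesBB{r_0-1}{\Vect{x}}=\SSObjctBB{r_0}{\Vect{x}}$. Granting it at stage $r$, the fact that $\SSMap{f}{r}$ commutes with $\SSDffrntl{r}$ sends $\Ker{\SSDffrntlBB{r}{\Vect{x}}(1)}$ into $\Ker{\SSDffrntlBB{r}{\Vect{x}}(2)}$ and $\Img{\SSDffrntlBB{r}{\Vect{x}-\Vect{v}_r}(1)}$ into $\Img{\SSDffrntlBB{r}{\Vect{x}-\Vect{v}_r}(2)}$; since by hypothesis $\SSMapBB{f}{r}{\Vect{x}}$ is the map induced by $\SSMapBB{f}{r_0}{\Vect{x}}$ through the surjection $\tau^{r-1}_{\Vect{x}}$ of (\ref{thm:E-infinityPreparation}), and that surjection exhibits $\ExctCplCyclesBB{r}{\Vect{x}}$ and $\ExctCplBndrsBB{r}{\Vect{x}}$ as the preimages of this kernel and image, the two inclusions pull back to $\SSMapBB{f}{r_0}{\Vect{x}}\bigl(\ExctCplCyclesBB{r}{\Vect{x}}(1)\bigr)\subseteq \ExctCplCyclesBB{r}{\Vect{x}}(2)$ and $\SSMapBB{f}{r_0}{\Vect{x}}\bigl(\ExctCplBndrsBB{r}{\Vect{x}}(1)\bigr)\subseteq \ExctCplBndrsBB{r}{\Vect{x}}(2)$. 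The hypothesis $\SSMap{f}{r+1}=H(\SSMap{f}{r})$ then identifies $\SSMapBB{f}{r+1}{\Vect{x}}$ with the map induced by $\SSMapBB{f}{r_0}{\Vect{x}}$ on $\ExctCplCyclesBB{r}{\Vect{x}}/\ExctCplBndrsBB{r}{\Vect{x}}\cong \SSObjctBB{r+1}{\Vect{x}}$, advancing the induction.

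With the inclusions in hand for all finite $r$, passage to the limit page is purely formal. Because $\ExctCplCyclesBB{\infty}{\Vect{x}}=\FamIntrsctn{r}{\ExctCplCyclesBB{r}{\Vect{x}}}$, an element of $\ExctCplCyclesBB{\infty}{\Vect{x}}(1)$ lies in every $\ExctCplCyclesBB{r}{\Vect{x}}(1)$, so its $\SSMapBB{f}{r_0}{\Vect{x}}$-image lies in every $\ExctCplCyclesBB{r}{\Vect{x}}(2)$, hence in $\ExctCplCyclesBB{\infty}{\Vect{x}}(2)$; dually, because $\ExctCplBndrsBB{\infty}{\Vect{x}}=\FamUnion{r}{\ExctCplBndrsBB{r}{\Vect{x}}}$, the map carries $\ExctCplBndrsBB{\infty}{\Vect{x}}(1)$ into $\ExctCplBndrsBB{\infty}{\Vect{x}}(2)$. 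Thus $\SSMapBB{f}{r_0}{\Vect{x}}$ restricts to a map $\ExctCplCyclesBB{\infty}{\Vect{x}}(1)\to \ExctCplCyclesBB{\infty}{\Vect{x}}(2)$ sending $\ExctCplBndrsBB{\infty}{\Vect{x}}(1)$ into $\ExctCplBndrsBB{\infty}{\Vect{x}}(2)$, and therefore descends to a well-defined $\SSMapBB{f}{\infty}{\Vect{x}}\from \SSObjctBB{\infty}{\Vect{x}}(1)\to \SSObjctBB{\infty}{\Vect{x}}(2)$. Functoriality is then immediate: the identity morphism has $\SSMap{\IdMap}{r_0}$ equal to the identity, which induces the identity on every subquotient, and for a composable pair the page-$r_0$ map of the composite is the composite $\SSMap{g}{r_0}\Comp \SSMap{f}{r_0}$, whose induced map on $\ExctCplCyclesBB{\infty}{\Vect{x}}/\ExctCplBndrsBB{\infty}{\Vect{x}}$ is by construction the composite of the individually induced maps.

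The step I expect to require the most care is the compatibility assertion inside the induction, namely that the maps $\SSMap{f}{r}$ — each defined from its predecessor abstractly, by passage to homology — agree with the concrete maps induced by the one fixed map $\SSMapBB{f}{r_0}{\Vect{x}}$ on the nested subquotients $\ExctCplCyclesBB{r-1}{\Vect{x}}/\ExctCplBndrsBB{r-1}{\Vect{x}}$ of $\SSObjctBB{r_0}{\Vect{x}}$. Pinning this identification down is what lets the kernel/image inclusions for $\SSMap{f}{r}$ be translated into the submodule inclusions for $\SSMapBB{f}{r_0}{\Vect{x}}$ via $\tau^{r-1}_{\Vect{x}}$; once it is secured, the stability of the inclusions under intersection and union, and hence the existence and functoriality of $\SSMap{f}{\infty}$, follow with no further difficulty.
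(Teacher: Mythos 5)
Your proposal is correct and follows essentially the same route as the paper: the paper's proof also restricts the single map $f^{r_0}$ to the nested towers of cycle and boundary submodules by an (unspelled-out) induction and then passes to the quotient $Z^{\infty}_{\Vect{x}}/B^{\infty}_{\Vect{x}}$ to obtain $f^{\infty}$. Your write-up merely makes explicit the compatibility step the paper leaves implicit, namely that each abstract homology-induced map $\SSMap{f}{r}$ agrees with the map induced by $f^{r_0}$ on the subquotient $\ExctCplCyclesBB{r-1}{\Vect{x}}/\ExctCplBndrsBB{r-1}{\Vect{x}}$ via $\tau^{r-1}_{\Vect{x}}$, which is exactly the content of the paper's "inductive argument."
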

\begin{proof}
An inductive argument shows that $f^{r_0}$ restricts, for each $\Vect{x}\in \ZNr\prdct \ZNr$, to the vertical arrows in the diagram below.
\begin{equation*}
\begin{xy}
\xymatrix@C=0mm{
B^{r_0}_{\Vect{x}}(1) \ar[d] &
	\subseteq &
	B^{r_0+1}_{\Vect{x}}(1) \ar[d] &
  \subseteq\ \cdots\ \subseteq &
  B^{\infty}_{\Vect{x}}(1) \ar@{..>}[d] &
  \subseteq &
  Z^{\infty}_{\Vect{x}}(1) \ar@{..>}[d] &
  \subseteq\ \cdots\ \subseteq &
  Z^{r_0+1}_{\Vect{x}}(1) \ar[d] &
  \subseteq &
  Z^{r_0}_{\Vect{x}}(1) \ar[d] \\
B^{r_0}_{\Vect{x}}(2) &
  \subseteq &
  B^{r_0+1}_{\Vect{x}}(2) &
  \subseteq\ \cdots\ \subseteq &
  B^{\infty}_{\Vect{x}}(2) &
  \subseteq &
  Z^{\infty}_{\Vect{x}}(2) &
  \subseteq\ \cdots\ \subseteq &
  Z^{r_0+1}_{\Vect{x}}(2) &
  \subseteq &
  Z^{r_0}_{\Vect{x}}(2)
}
\end{xy}
\end{equation*}
On cokernels, $f^{r_0}$ induces this morphism of $E^{\infty}$-objects: $Z^{\infty}_{\Vect{x}}(1)/B^{\infty}_{\Vect{x}}(1)\cong \SSPage{\infty}_{\Vect{x}}(1) \XRA{f^{\infty}_{\Vect{x}}} \SSPage{\infty}_{\Vect{x}}(2)\cong Z^{\infty}_{\Vect{x}}(2)/B^{\infty}_{\Vect{x}}(2)$. This was to be shown.
\end{proof}

We turn to conditions under which a morphism of spectral sequence induces an isomorphism of $\SSPage{\infty}$-pages.

\begin{lemma}[Epi/mono/iso propagation]
\label{thm:E^rEpiMonoIsoPropagation}%
Given a morphism $(\SSMap{f}{r}\from \SSPage{r}(1)\to \SSPage{r}(2)\, |\, r\geq r_0)$ of spectral sequences, consider the induced diagram below.
\begin{equation*}
\begin{xy}
\xymatrix@R=6ex@C=4em{
\SSObjctBB{r}{\Vect{x}-\Vect{b}_r}(1) \ar[d]_{\SSMapBB{f}{r}{\Vect{x}-\Vect{b}_r}} \ar[r]^-{\SSDffrntl{r}(1)} &
    \SSObjctBB{r}{\Vect{x}}(1) \ar[d]_{f^{r}_{\Vect{x}}} \ar[r]^-{d^{r}(1)} &
    \SSObjctBB{r}{\Vect{x}+\Vect{b}_r}(1) \ar[d]^{\SSMapBB{f}{r}{\Vect{x}+\Vect{b}_r}} \\
\SSObjctBB{r}{\Vect{x}-\Vect{b}_r}(2) \ar[r]_-{\SSDffrntl{r}(2)} &
    \SSObjctBB{r}{\Vect{x}}(2) \ar[r]_-{d^{r}(2)} &
    \SSObjctBB{r}{\Vect{x}+\Vect{b}_r}(2) 
     }
\end{xy}
\end{equation*}
Then the following hold:
\begin{enumerate}[(i)]
\item $\SSMapBB{f}{r+1}{\Vect{x}}$ is a monomorphism whenever $\SSMapBB{f}{r}{\Vect{x}-\Vect{b}_r}$ is an epimorphism and $\SSMapBB{f}{r}{\Vect{x}}$ is a monomorphism. 
\item $\SSMapBB{f}{r+1}{\Vect{x}}$ is an epimorphism whenever $\SSMapBB{f}{r}{\Vect{x}}$ is an epimorphism and $\SSMapBB{f}{r}{\Vect{x}+\Vect{b}_r}$ is a monomorphism.
\item $\SSMapBB{f}{r+1}{\Vect{x}}$ is an isomorphism whenever $\SSMapBB{f}{r}{\Vect{x}-\Vect{b}_r}$ is an epimorphism, $\SSMapBB{f}{r}{\Vect{x}}$ is an isomorphism, and $\SSMapBB{f}{r}{\Vect{x}+\Vect{b}_r}$ is a monomorphism.
\end{enumerate}
\end{lemma}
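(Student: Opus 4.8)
The plan is to recognize $\SSMapBB{f}{r+1}{\Vect{x}}$ as simply the map induced on homology, at the middle spot, by the map of three-term complexes displayed in the statement, and then to run the two standard diagram chases (the third assertion being their formal consequence). To fix notation I would abbreviate the bottom two rows as $A_i\to B_i\to C_i$ for $i=1,2$, writing $A_i=\SSObjctBB{r}{\Vect{x}-\Vect{b}_r}(i)$, $B_i=\SSObjctBB{r}{\Vect{x}}(i)$, $C_i=\SSObjctBB{r}{\Vect{x}+\Vect{b}_r}(i)$, with horizontal maps the differentials $\SSDffrntl{r}(i)$ and vertical maps $a=\SSMapBB{f}{r}{\Vect{x}-\Vect{b}_r}$, $b=\SSMapBB{f}{r}{\Vect{x}}$, $c=\SSMapBB{f}{r}{\Vect{x}+\Vect{b}_r}$. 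By Definition \ref{def:SpectralSequence} one has $\SSObjctBB{r+1}{\Vect{x}}(i)=\Ker{\SSDffrntlBB{r}{\Vect{x}}(i)}/\Img{\SSDffrntlBB{r}{\Vect{x}-\Vect{b}_r}(i)}$, and $\SSMapBB{f}{r+1}{\Vect{x}}$ is the well-defined map $[z]\mapsto[b(z)]$; its being well defined is exactly the commutativity of the two squares, which record the identities $b\Comp \SSDffrntl{r}(1)=\SSDffrntl{r}(2)\Comp a$ (left square) and $c\Comp \SSDffrntl{r}(1)=\SSDffrntl{r}(2)\Comp b$ (right square).

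For (i) I would chase an element of $\Ker{\SSMapBB{f}{r+1}{\Vect{x}}}$. Take a cycle $z$ (so $\SSDffrntl{r}(1)(z)=0$) whose class satisfies $\SSMapBB{f}{r+1}{\Vect{x}}[z]=0$; then $b(z)$ is a boundary, say $b(z)=\SSDffrntl{r}(2)(w)$ for some $w\in A_2$. Since $a$ is an epimorphism we may write $w=a(u)$, and the left-square identity rewrites $\SSDffrntl{r}(2)(a(u))$ as $b(\SSDffrntl{r}(1)(u))$. As $b$ is a monomorphism this forces $z=\SSDffrntl{r}(1)(u)$, so $z$ is itself a boundary and $[z]=0$; hence $\SSMapBB{f}{r+1}{\Vect{x}}$ is mono.

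For (ii) I would establish surjectivity. Starting from a cycle $z'$ in $B_2$ representing a class of $\SSObjctBB{r+1}{\Vect{x}}(2)$, surjectivity of $b$ gives $z'=b(z)$ for some $z\in B_1$. Applying $\SSDffrntl{r}(2)$ and using the right-square identity yields $c(\SSDffrntl{r}(1)(z))=\SSDffrntl{r}(2)(z')=0$; since $c$ is a monomorphism, $\SSDffrntl{r}(1)(z)=0$, i.e. $z$ is a cycle. Its class then maps to $[z']$, so $\SSMapBB{f}{r+1}{\Vect{x}}$ is epi. Finally (iii) is immediate: under its hypotheses $b$ is in particular both epi and mono, so (i) applies (using $a$ epi, $b$ mono) to give injectivity and (ii) applies (using $b$ epi, $c$ mono) to give surjectivity, whence $\SSMapBB{f}{r+1}{\Vect{x}}$ is an isomorphism.

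I do not expect a genuine obstacle here: the statement is a homology-level \emph{sharp five lemma}, and all of the content lies in the two chases above. The only point demanding care is bookkeeping — remembering which of the two squares to invoke at each step, and keeping the roles of cycles and boundaries straight — rather than any structural difficulty, since the differentials enter only through $\SSDffrntl{r}\SSDffrntl{r}=0$ and the two commuting squares.
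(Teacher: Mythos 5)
Your proposal is correct and matches the paper's argument in substance: both reduce the claim to the observations that $f^{r+1}_{\Vect{x}}$ is the map induced on homology, that it is monic when the map of boundaries is epic (your use of $a$ epi) and the map of cycles is monic (your use of $b$ mono), and that it is epic when the map of cycles is epic (your use of $b$ epi together with $c$ mono). The only difference is presentational: the paper packages these steps as snake-lemma applications to morphisms of short exact sequences of cycles, boundaries, and $E^{r+1}$-objects, whereas you run the equivalent element chases directly.
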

\begin{proof}
Consider this morphism of defining short exact sequences for the objects $\SSObjctBB{r+1}{\Vect{x}}(i)$:
\begin{equation*}
\xymatrix@R=5ex@C=4em{
\Img{\SSDffrntlBB{r}{\Vect{x}-\Vect{v}_r}}(1) \ar@{{ |>}->}[r] \ar[d]_{\SSMapBB{f}{r}{\Vect{x}|}} &
	\Ker{\SSDffrntlBB{r}{\Vect{x}}}(1) \ar@{-{ >>}}[r] \ar[d]_{\SSMapBB{f}{r}{\Vect{x}|}} &
	\ExctCplEObjctBB{r+1}{\Vect{x}}(1) \ar[d]^{\SSMapBB{f}{r+1}{\Vect{x}}} \\
\Img{\SSDffrntlBB{r}{\Vect{x}-\Vect{v}_r}}(2) \ar@{{ |>}->}[r] &
	\Ker{\SSDffrntlBB{r}{\Vect{x}}}(2) \ar@{-{ >>}}[r] &
	\ExctCplEObjctBB{r+1}{\Vect{x}}(2)
}
\end{equation*}
With the snake lemma, we see that $\SSMapBB{f}{r+1}{\Vect{x}}$ is
\begin{enumerate}[$\bullet$]
\item monic if the map of boundaries is epic, and the map of cycles is monic;
\item epic if the map of cycles is epic.
\end{enumerate}
To analyze these properties, consider these morphisms of exact sequences:
\begin{equation*}
\xymatrix@R=5ex@C=4em{
\Ker{\SSDffrntlBB{r}{\Vect{x}}}(1) \ar@{{ |>}->}[r] \ar[d]_{\SSMapBB{f}{r}{\Vect{x}}|} &
    \SSObjctBB{r}{\Vect{x}}(1) \ar[r]^-{d^r(1)} \ar[d]_{\SSMapBB{f}{r}{\Vect{x}} } &
    \SSObjctBB{r}{\Vect{x}+\Vect{b}_r}(1) \ar[d]^{\SSMapBB{f}{r}{\Vect{x}+\Vect{b}_r} } \\
\Ker{\SSDffrntlBB{r}{\Vect{x}}}(2) \ar@{{ |>}->}[r] &
    \SSObjctBB{r}{\Vect{x}}(2) \ar[r]_-{\SSDffrntl{r}(2)} &
    \SSObjctBB{r}{\Vect{x}+\Vect{b}_r}(2)
}\qquad\qquad
\xymatrix@R=5ex@C=3em{
\SSObjctBB{r}{\Vect{x}-\Vect{b}_r}(1)  \ar@{-{ >>}}[r]^-{\SSDffrntlBB{r}{}(1)} \ar[d]_{\SSMapBB{f}{r}{\Vect{x}-\Vect{b}_r}} &
    \Img{\SSDffrntlBB{r}{\Vect{x}-\Vect{v}_r}}(1) \ar[d]^{\SSMapBB{f}{r}{\Vect{x}}| } \\
\SSObjctBB{r}{\Vect{x}-\Vect{b}_r}(2) \ar@{-{ >>}}[r]_-{\SSDffrntlBB{r}{}(2)} &
    \Img{\SSDffrntlBB{r}{\Vect{x}-\Vect{v}_r}}(2)
}
\end{equation*}
We find: The map of cycles is a monomorphism whenever $\SSMapBB{f}{r}{\Vect{x}}$ is a monomorphism, and is an epimorphism whenever $\SSMapBB{f}{r}{\Vect{x}}$ is an epimorphism and $\SSMapBB{f}{r}{\Vect{x}+\Vect{b}_r}$ is a monomorphism. Similarly, the map of boundaries is an epimorphism whenever $\SSMapBB{f}{r}{\Vect{x}-\Vect{b}_r}$ is an epimorphism. - This implies the claim.
\end{proof}

\begin{corollary}[$\SSMap{f}{r}\from \SSPage{r}(1)\xrightarrow{\cong}\SSPage{r}(2)$ gives $\SSMap{f}{\infty}\from \SSPage{\infty}(1)\xrightarrow{\cong} \SSPage{\infty}(2)$]
\label{thm:ErIsoGivesEInftyIso}%
Suppose for a morphism $(\SSMap{f}{r}\from \SSPage{r}(1)\to \SSPage{r}(2)\, |\, r\geq r_0)$ of spectral sequences there exists $r\geq r_0$ such that $\SSMap{f}{r}\from \SSPage{r}(1)\xrightarrow{\ \cong\ } \SSPage{r}(2)$ is an isomorphism. Then $\SSMap{f}{r+k}$ is an isomorphism for each $0\leq k\leq \infty$.
\end{corollary}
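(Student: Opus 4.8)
The plan is to separate the finite range $0\le k<\infty$ from the endpoint $k=\infty$, handling the former with the propagation Lemma \ref{thm:E^rEpiMonoIsoPropagation} and the latter with the co/limit description of Proposition \ref{thm:E-infinityAs(Co-)Limit}.

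First I would dispose of the finite case by a one-step induction. Assume $\SSMap{f}{s}$ is an isomorphism, i.e. $\SSMapBB{f}{s}{\Vect{x}}$ is bijective for every $\Vect{x}\in\ZNr\prdct\ZNr$. Then at each fixed $\Vect{x}$ all three hypotheses of Lemma \ref{thm:E^rEpiMonoIsoPropagation}(iii) hold automatically, since $\SSMapBB{f}{s}{\Vect{x}-\Vect{b}_s}$ is epic, $\SSMapBB{f}{s}{\Vect{x}}$ is iso, and $\SSMapBB{f}{s}{\Vect{x}+\Vect{b}_s}$ is monic, each merely because the relevant component of $\SSMap{f}{s}$ is an isomorphism. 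Hence $\SSMapBB{f}{s+1}{\Vect{x}}$ is an isomorphism, and as $\Vect{x}$ is arbitrary, $\SSMap{f}{s+1}$ is an isomorphism. Starting at $s=r$ and iterating yields $\SSMap{f}{r+k}$ iso for every finite $k\ge0$.

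For the endpoint $k=\infty$ I would argue positionwise, the crux being an intermediate claim on subquotients: for all $r\le t\le s$ the map induced by $\SSMap{f}{r_0}$ on $\ExctCplCyclesBB{s}{\Vect{x}}/\ExctCplBndrsBB{t}{\Vect{x}}$ is an isomorphism at every $\Vect{x}$. I would fix $t\ge r$ and induct on $s\ge t$. The base case $s=t$ is $\SSObjctBB{t+1}{\Vect{x}}\cong\ExctCplCyclesBB{t}{\Vect{x}}/\ExctCplBndrsBB{t}{\Vect{x}}$ from Lemma \ref{thm:E-infinityPreparation}, on which $f$ is iso by the finite case. For the step I use the short exact sequence $\ExctCplCyclesBB{s+1}{\Vect{x}}/\ExctCplBndrsBB{t}{\Vect{x}}\hookrightarrow\ExctCplCyclesBB{s}{\Vect{x}}/\ExctCplBndrsBB{t}{\Vect{x}}\twoheadrightarrow\ExctCplCyclesBB{s}{\Vect{x}}/\ExctCplCyclesBB{s+1}{\Vect{x}}$, where by the construction in Lemma \ref{thm:E-infinityPreparation} the right-hand term is isomorphic to $\Img{\SSDffrntlBB{s+1}{\Vect{x}}}$; on this image $f$ is an isomorphism because $\SSMap{f}{s+1}$ is one and commutes with $\SSDffrntl{s+1}$, and the middle term is iso by the induction hypothesis. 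The snake lemma then forces $f$ to be iso on the kernel $\ExctCplCyclesBB{s+1}{\Vect{x}}/\ExctCplBndrsBB{t}{\Vect{x}}$, closing the induction.

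Finally I would promote the claim to $\SSObjctBB{\infty}{\Vect{x}}$ by two passages to a (co)limit of isomorphisms. Fixing $t\ge r$ and letting $s\to\infty$, Proposition \ref{thm:E-infinityAs(Co-)Limit}(ii) gives $\ExctCplCyclesBB{\infty}{\Vect{x}}/\ExctCplBndrsBB{t}{\Vect{x}}\cong\LimOfOver{\ExctCplCyclesBB{s}{\Vect{x}}/\ExctCplBndrsBB{t}{\Vect{x}}}{s}$, so $f$ is iso there; then letting $t\to\infty$, the directed union $\ExctCplBndrsBB{\infty}{\Vect{x}}=\FamUnion{t}{\ExctCplBndrsBB{t}{\Vect{x}}}$ yields $\SSObjctBB{\infty}{\Vect{x}}=\ExctCplCyclesBB{\infty}{\Vect{x}}/\ExctCplBndrsBB{\infty}{\Vect{x}}\cong\CoLimOfOver{\ExctCplCyclesBB{\infty}{\Vect{x}}/\ExctCplBndrsBB{t}{\Vect{x}}}{t}$ as in the proof of Proposition \ref{thm:E-infinityAs(Co-)Limit}, and a colimit of isomorphisms is an isomorphism. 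Thus $\SSMapBB{f}{\infty}{\Vect{x}}$ is iso for all $\Vect{x}$, which (together with Proposition \ref{thm:EInfinityMapFromMorphismOfSpecSeqs} identifying $\SSMap{f}{\infty}$) finishes the case $k=\infty$. I expect the main obstacle to be precisely this last step: verifying that the maps induced by the single morphism $\SSMap{f}{r_0}$ on all the subquotients are genuinely compatible with the limit and colimit systems, so that the interchanged (co)limits transport the isomorphism honestly; the finite case and the snake-lemma induction are routine by comparison.
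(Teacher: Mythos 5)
Your proof is correct, and while its finite part coincides with the paper's (componentwise application of Lemma (\ref{thm:E^rEpiMonoIsoPropagation}.iii)), your passage to $E^{\infty}$ uses a genuinely different decomposition. The paper runs two upward inductions anchored at the fixed floor $B^{r-1}_{\Vect{x}}$: first on the boundary quotients $B^{r+l}_{\Vect{x}}/B^{r-1}_{\Vect{x}}$, with cokernel $\Img{\SSDffrntlBB{r+l}{\Vect{x}-\Vect{v}_{r+l}}}$, which after applying the exact $\CoLimOver{\omega}$ gives the isomorphism on $B^{\infty}_{\Vect{x}}/B^{r-1}_{\Vect{x}}$; then on the cycle quotients $Z^{r+k}_{\Vect{x}}/B^{r-1}_{\Vect{x}}$, with cokernel $\Ker{\SSDffrntlBB{r+k}{\Vect{x}}}$; it then divides out $B^{\infty}_{\Vect{x}}/B^{r-1}_{\Vect{x}}$ and passes to the limit over $k$, i.e.\ it realizes $E^{\infty}_{\Vect{x}}$ through the $\Lim$-of-$\CoLim$ description in (\ref{thm:E-infinityAs(Co-)Limit}). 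You instead anchor at the ceiling: your base case $Z^{t}_{\Vect{x}}/B^{t}_{\Vect{x}}\cong \SSObjctBB{t+1}{\Vect{x}}$ is already covered by the finite case, and a single downward induction on $s$ through the extensions $Z^{s+1}_{\Vect{x}}/B^{t}_{\Vect{x}}\rightarrowtail Z^{s}_{\Vect{x}}/B^{t}_{\Vect{x}}\twoheadrightarrow Z^{s}_{\Vect{x}}/Z^{s+1}_{\Vect{x}}\cong\Img{\SSDffrntlBB{s+1}{\Vect{x}}}$ covers all cycle quotients at once; you then take $\Lim$ over $s$ first and $\CoLim$ over $t$ second, i.e.\ the $\CoLim$-of-$\Lim$ description in (\ref{thm:E-infinityAs(Co-)Limit}). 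Your route needs one induction instead of two and only ever restricts $f$ to images of differentials, never to kernels; the paper's order has the mild advantage that its intermediate objects $Z^{r+k}_{\Vect{x}}/B^{\infty}_{\Vect{x}}$ are precisely the terms of the inverse system whose limit is $E^{\infty}_{\Vect{x}}$, so its final step needs no appeal to the colimit description. Finally, the obstacle you flag at the end is not a real one: every map in sight is induced by the single restriction behaviour of $f^{r_0}$ on the nested submodules $B^{\bullet}_{\Vect{x}}\subseteq Z^{\bullet}_{\Vect{x}}$ established in (\ref{thm:EInfinityMapFromMorphismOfSpecSeqs}), so compatibility with the inclusions $Z^{s+1}_{\Vect{x}}/B^{t}_{\Vect{x}}\subseteq Z^{s}_{\Vect{x}}/B^{t}_{\Vect{x}}$ and with the projections $Z^{\infty}_{\Vect{x}}/B^{t}_{\Vect{x}}\twoheadrightarrow Z^{\infty}_{\Vect{x}}/B^{t+1}_{\Vect{x}}$ is automatic; the paper's proof relies on exactly the same silent naturality.
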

\begin{proof}
Via (\ref{thm:E^rEpiMonoIsoPropagation}) we see that $\SSMap{f}{r+k}$ is an isomorphism for $0\leq k<\infty$. It remains to show that $f^{\infty}$ is an isomorphism. First, the diagram below represents the induction step toward showing that the vertical arrow in the middle is an isomorphism for all $l\geq 0$:
\begin{equation*}
\xymatrix@R=5ex@C=4em{
B^{r+l-1}_{\Vect{x}}(1)/B^{r-1}_{\Vect{x}}(1) \ar@{{ |>}->}[r] \ar[d]_{\cong} &
	B^{r+l}_{\Vect{x}}(1)/B^{r-1}_{\Vect{x}}(1) \ar@{-{ >>}}[r] \ar[d] &
	\Img{d^{r+l}_{\Vect{x}-\Vect{v}_{r+l}}(1)} \ar[d]^{\cong}_{f^{r+l}_{\Vect{x}}|} \\
B^{r+l-1}_{\Vect{x}}(2)/B^{r-1}_{\Vect{x}}(2) \ar@{{ |>}->}[r] &
	B^{r+l}_{\Vect{x}}(2)/B^{r-1}_{\Vect{x}}(2) \ar@{-{ >>}}[r] &
	\Img{d^{r+l}_{\Vect{x}-\Vect{v}_{r+l}}(2)}
}
\end{equation*}
With the exactness of $\CoLimOver{\omega}$ we conclude
\begin{equation*}
\dfrac{B^{\infty}_{\Vect{x}}(1)}{B^{r-1}_{\Vect{x}}(1)} \cong \CoLimOfOver{\dfrac{B^{r+l}_{\Vect{x}}(1)}{B^{r-1}_{\Vect{x}}(1)}}{\omega} \cong \CoLimOfOver{\dfrac{B^{r+l}_{\Vect{x}}(2)}{B^{r-1}_{\Vect{x}}(2)}}{\omega} \cong \dfrac{B^{\infty}_{\Vect{x}}(2)}{B^{r-1}_{\Vect{x}}(2)}
\end{equation*}
Next, the diagram below shows that the vertical arrow in the middle is an isomorphism for every $k\geq 0$.
\begin{equation*}
\xymatrix@R=5ex@C=4em{
B^{r+k-1}_{\Vect{x}}(1)/B^{r-1}_{\Vect{x}}(1) \ar@{{ |>}->}[r] \ar[d]_{\cong} &
	Z^{r+k}_{\Vect{x}}(1)/B^{r-1}_{\Vect{x}}(1) \ar@{-{ >>}}[r] \ar[d] &
	\Ker{d^{r+k}_{\Vect{x}}(1)} \ar[d]^{\cong}_{f^{r+k}_{\Vect{x}}|} \\
B^{r+k-1}_{\Vect{x}}(2)/B^{r-1}_{\Vect{x}}(2) \ar@{{ |>}->}[r] &
	Z^{r+k}_{\Vect{x}}(2)/B^{r-1}_{\Vect{x}}(2) \ar@{-{ >>}}[r] &
	\Ker{d^{r+k}_{\Vect{x}}(2)}
}
\end{equation*}
Combined, for every $k\geq 0$, we obtain isomorphisms
\begin{equation*}
\dfrac{Z^{r+k}_{\Vect{x}}(1)}{B^{\infty}_{\Vect{x}}(1)} \cong \dfrac{Z^{r+k}_{\Vect{x}}(1)/B^{r-1}_{\Vect{x}}(1)}{B^{\infty}_{\Vect{x}}(1)/B^{r-1}_{\Vect{x}}(1)} \cong \dfrac{Z^{r+k}_{\Vect{x}}(2)/B^{r-1}_{\Vect{x}}(2)}{B^{\infty}_{\Vect{x}}(2)/B^{r-1}_{\Vect{x}}(2)} \cong \dfrac{Z^{r+k}_{\Vect{x}}(2)}{B^{\infty}_{\Vect{x}}(2)}
\end{equation*}
Finally, the limit description (\ref{thm:E-infinityAs(Co-)Limit}) of $\SSPage{\infty}$ yields isomorphisms $\SSObjctBB{\infty}{\Vect{x}}(1)\cong \SSObjctBB{\infty}{\Vect{x}}(1)$ for $\Vect{x}\in \Prdct{\ZNr}{\ZNr}$.
\end{proof}

\begin{subordinate}
\begin{remark}[Mono- / epimorphisms don't pass through spectral sequence pages]
\label{rem:Mono/Epi-NoPassThroughPages}
We can not expect a component-wise monomorphism $f^r$ to induce a component-wise monomorphism $f^{r+1}$. This is so because a monomorphism of chain complexes need not induce a monomorphism in homology. Similarly, a component-wise epimorphism $f^r$ need not propagate to a component-wise epimorphism $f^{r+1}$.
\end{remark}
\end{subordinate}

\section{Computing with Spectral Sequences}
\label{sec:SS-Computations}

Appreciation for spectral sequences develops with using them. To facilitate the opportunity for practice, we present here selected classical spectral sequences. Assuming only relevant background knowledge from the subject they are taken from, we suggest immediate applications.  More applications of spectral sequences can be found in the literature, notably \cite{JMcCleary1999}.

In the following sample computations with spectral sequences, notice that the algebraic structures governing computations in the spectral sequence alone suffice. No further subject knowledge whatsoever is involved! 

{\bfseries Background from group homology}\quad The homology of a group $G$ with coefficients in an abelian group $A$ is given by the family of abelian groups $H_n(G;A)$, with $n\geq 0$. These groups depend functorially on $G$ and $A$ and satisfy the following properties:
\begin{enumerate}[(i)]
\item $H_0(G;A)=A$  for every group $G$
\item $H_1(G;A)\cong G_{\mathit{ab}}\otimes_{\ZNr} A$
\item $H_{\geq 2}(\ZNr;A)=0$
\end{enumerate}
It is customary to  use the shorthand $H_nG \DefEq H_n(G;\ZNr)$.

\begin{theorem}[Lyndon-Hochschild-Serre spectral sequence]
\label{thm:Lyndon-Hochschild-Serre-SS}%
Let  $A\to E\to G$  be a central group extension. Then $H_nE$  has a filtration $F_{p,q}$, $p+q=n$, satisfying
\begin{equation*}
0=F_{-1,n+1}\subseteq F_{0,n}\subseteq \cdots \subseteq F_{p-1,q+1}\subseteq F_{p,q}\subseteq \cdots \subseteq F_{n,0}=H_nE
\end{equation*}
There is a spectral sequence $(E^r,d^r)$, $r\geq 2$, whose differential $d^r$  has bidegree $(-r,r-1)$. Moreover:
\begin{equation*}
E^{2}_{p,q} = H_p(G;H_qA)\qquad \text{and}\qquad  \xymatrix{
F_{p-1,q+1} \ar@{{ |>}->}[r] &
    F_{p,q} \ar@{-{ >>}}[r] &
    E^{\infty}_{p,q} }
\end{equation*}
All of these data depend functorially morphisms between central extensions. \NoProof
\end{theorem}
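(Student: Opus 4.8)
The plan is to realize $H_*E$ as the homology of a filtered chain complex and then hand that filtration to the exact-couple machinery of Chapter~\ref{chap:ExactCouples}, which manufactures the spectral sequence together with its matching to the filtration. Concretely, I would start from the central extension $A\rightarrowtail E\twoheadrightarrow G$ and build a first-quadrant double complex of free $\mathbb{Z}[E]$-modules resolving the trivial module $\mathbb{Z}$, with the two grading directions separating the ``fibre'' $A$ and the ``base'' $G$; such a resolution exists because a set-theoretic section of $E\to G$ exhibits $\mathbb{Z}[E]$ as free over $\mathbb{Z}[A]$ with $\mathbb{Z}[E]\otimes_{\mathbb{Z}[A]}\mathbb{Z}\cong\mathbb{Z}[G]$. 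Applying $-\otimes_{\mathbb{Z}[E]}\mathbb{Z}$ and forming the total complex yields a chain complex whose homology is $H_*E$, and filtering by the base degree $p$ produces the ascending, first-quadrant, finite filtration recorded in the statement, with $0=F_{-1,n+1}$ and $F_{n,0}=H_nE$.

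Next I would feed this filtered complex into Chapter~\ref{chap:ExactCouples} to obtain its exact couple and, via the construction reviewed there, the spectral sequence $(E^r,d^r)$ in the sense of Definition~\ref{def:SpectralSequence}. Because the filtration shifts the base degree while total degree is preserved, the differential $d^r$ carries the homological bidegree $(-r,r-1)$ claimed. The identification of the $E^2$-page is the subject-specific heart of the argument: taking homology first in the fibre direction computes, column by column, the groups $H_qA$, and here centrality enters decisively—since $A$ lies in the centre of $E$, conjugation acts trivially, so $G=E/A$ acts trivially on $H_qA$; taking homology in the remaining base direction then gives $E^2_{p,q}\cong H_p(G;H_qA)$.

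For the $E^\infty$-identification I would use that the spectral sequence is first-quadrant, so for each fixed position $(p,q)$ all incoming and outgoing differentials vanish once $r$ is large and the pages stabilize positionwise, in the spirit of Proposition~\ref{thm:E-infinityOfCollapsingSpectralSequence}. The E-infinity extension theorem~\ref{thm:E-InfinityExtensionThm} then yields the short exact sequences $F_{p-1,q+1}\rightarrowtail F_{p,q}\twoheadrightarrow E^\infty_{p,q}$ with no $\lim^1$-correction, since finiteness of the filtration kills the derived-limit term. Functoriality I would obtain from the comparison theorem for projective resolutions: a morphism of central extensions lifts to a map of the double-complex resolutions, unique up to chain homotopy, hence induces a morphism of filtered complexes and of the associated exact couples; Proposition~\ref{thm:EInfinityMapFromMorphismOfSpecSeqs} then propagates this to a natural map of $E^\infty$-pages compatible with the induced maps on $H_*$ and on $E^2$.

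The main obstacle is the second step: constructing a bicomplex resolution that cleanly separates the $A$- and $G$-directions and verifying that the iterated homology computes $H_p(G;H_qA)$ with the correct (here trivial) coefficient action. This is where genuine group-homological input is required—choosing the set-theoretic section, checking independence of that choice up to homotopy, and tracking the induced $G$-module structure on $H_qA$—whereas everything downstream is formal and is handled by the exact-couple and $E^\infty$ formalism already in place.
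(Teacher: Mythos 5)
You cannot be measured against the paper here, because the paper contains no proof of Theorem \ref{thm:Lyndon-Hochschild-Serre-SS}: it is stated as imported background from group homology and closed with the no-proof lozenge, in Section \ref{sec:SS-Computations}, whose declared purpose is to practice computing with spectral sequences whose construction is taken for granted. So your proposal must be judged as a free-standing argument, not compared with an internal one.

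Judged that way, your outline is the classical Cartan--Eilenberg double-complex construction, and nothing in it would fail. The ingredients you name are the right ones: $\mathbb{Z}[E]$ is free over $\mathbb{Z}[A]$ on any set of coset representatives, with $\mathbb{Z}\otimes_{\mathbb{Z}[A]}\mathbb{Z}[E]\cong\mathbb{Z}[G]$; centrality of $A$ makes the conjugation action of $G$ on $H_qA$ trivial (inner automorphisms act trivially on homology, and conjugation is already trivial on a central subgroup), which is what allows untwisted coefficients in $H_p(G;H_qA)$; and finiteness of the filtration in each total degree makes the relevant $\ZCat$-diagrams originally vanishing and eventually stable, so in the language of Chapter \ref{chap:ExactCouples} the $E^{\infty}$-objects are stable and match the colimit abutment filtration with no $\LimOne$-correction --- your appeal to Theorem \ref{thm:E-InfinityExtensionThm} (or to \ref{thm:CoLimitAbutmentMatching-Conditions}) is apt, though citing the collapse result \ref{thm:E-infinityOfCollapsingSpectralSequence} is slightly off target, since a first-quadrant spectral sequence stabilizes positionwise without collapsing.

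The two steps you defer, however, are exactly where the theorem lives, and as written they remain undone. For the $E^2$-page you must actually produce the bicomplex and run the iterated-homology computation: concretely, take a free $\mathbb{Z}[G]$-resolution $P_*$ and a free $\mathbb{Z}[E]$-resolution $F_*$ of $\mathbb{Z}$ and set $C_{p,q}=P_p\otimes_{\mathbb{Z}[G]}(F_q)_A$ where $(F_q)_A=\mathbb{Z}\otimes_{\mathbb{Z}[A]}F_q$ (or use $F_*\otimes_{\mathbb{Z}}P_*$ with the diagonal $E$-action, as your ``tensor down over $\mathbb{Z}[E]$'' phrasing suggests). Freeness of $\mathbb{Z}[E]$ over $\mathbb{Z}[A]$ is then used twice: once so that $(F_*)_A$ computes $H_*A$, and once so that each $(F_q)_A$ is $\mathbb{Z}[G]$-free, making one iterated homology give $H_*E$ and the other give $H_p(G;H_qA)$. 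For functoriality, ``unique up to chain homotopy'' is not by itself enough: two filtered maps that are merely chain homotopic can induce different maps on early pages of a spectral sequence, so you need the comparison-theorem lifts and homotopies to be chosen compatibly with the bicomplex (hence filtration) structure --- which is possible, but must be said --- before Proposition \ref{thm:EInfinityMapFromMorphismOfSpecSeqs} can be invoked. In short: a correct and standard blueprint, honest about its gaps, but the load-bearing computation is named rather than performed.
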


\begin{theorem}[Homology of cyclic groups]
\label{thm:Homology-C_n}
For $k\geq 2$, the homology of the cyclic group $\CyclcGrp{k}$  of order $k$ is given by
\begin{equation*}
\GrpHmlgyDimOf{n}{\CyclcGrp{k}}\ \cong\ \left\{
\begin{array}{rcl}
\ZNr & \text{if} & n=0 \\
\ZMod{k} & \text{if} & n \quad \text{odd} \\
0 & \text{if} & n\geq 2 \quad \text{even}
\end{array}\right.
\end{equation*}
\end{theorem}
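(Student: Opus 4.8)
The plan is to feed the Lyndon--Hochschild--Serre spectral sequence (\ref{thm:Lyndon-Hochschild-Serre-SS}) the central extension
\[
\ZNr \XRA{\times k} \ZNr \longrightarrow \CyclcGrp{k},
\]
whose middle group $\ZNr$ is abelian, so the extension is automatically central and $\CyclcGrp{k}$ acts trivially on the homology of the kernel $\ZNr$. First I would read off the $E^2$-page. The cited facts (i)--(iii) give $H_0\ZNr=\ZNr$, $H_1\ZNr\cong\ZNr_{\mathit{ab}}\otimes_\ZNr\ZNr\cong\ZNr$, and $H_q\ZNr=0$ for $q\geq 2$, so the coefficients are concentrated in $q\in\{0,1\}$, each copy being $\ZNr$ (trivial module). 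Hence
\[
E^2_{p,q}=H_p(\CyclcGrp{k};H_q\ZNr)=H_p\CyclcGrp{k}\quad\text{for } q\in\{0,1\},\qquad E^2_{p,q}=0\quad\text{for } q\geq 2,
\]
a spectral sequence living in exactly the two rows $q=0$ and $q=1$, each equal to $H_p\CyclcGrp{k}$.

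Because only two rows survive and $d^r$ has bidegree $(-r,r-1)$, the single map $d^2\colon E^2_{p,0}\to E^2_{p-2,1}$, that is $d^2\colon H_p\CyclcGrp{k}\to H_{p-2}\CyclcGrp{k}$, is the only possibly-nonzero differential, and $E^3=E^\infty$. Thus $E^\infty_{p,0}=\Ker{d^2\colon H_p\CyclcGrp{k}\to H_{p-2}\CyclcGrp{k}}$ and $E^\infty_{p,1}=\CoKer{d^2\colon H_{p+2}\CyclcGrp{k}\to H_p\CyclcGrp{k}}$. The abutment is $H_n\ZNr$, which equals $\ZNr$ for $n=0,1$ and $0$ for $n\geq 2$. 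On each antidiagonal $p+q=n$ only the spots $(n,0)$ and $(n-1,1)$ can be nonzero, so the filtration of $H_n\ZNr$ has just two stages and yields a short exact sequence
\[
0\longrightarrow E^\infty_{n-1,1}\longrightarrow H_n\ZNr \longrightarrow E^\infty_{n,0}\longrightarrow 0.
\]

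Next I would extract the recursion. For $n\geq 3$ the outer terms vanish because $H_n\ZNr=0$, and unwinding the kernel/cokernel descriptions this says precisely that $d^2\colon H_m\CyclcGrp{k}\to H_{m-2}\CyclcGrp{k}$ is an \emph{isomorphism} for every $m\geq 3$; this periodicity drives the induction. The base cases come from the hypotheses and the low-degree sequences: $H_0\CyclcGrp{k}=\ZNr$ by (i), and $H_1\CyclcGrp{k}\cong(\CyclcGrp{k})_{\mathit{ab}}\otimes_\ZNr\ZNr=\ZMod{k}$ by (ii), since $\CyclcGrp{k}$ is abelian. For $H_2$ I would combine the $n=2$ sequence, which forces $d^2\colon H_2\CyclcGrp{k}\to\ZNr$ to be monic, with the $n=1$ sequence
\[
0\longrightarrow \CoKer{d^2\colon H_2\CyclcGrp{k}\to\ZNr}\longrightarrow \ZNr \longrightarrow \ZMod{k}\longrightarrow 0,
\]
which shows $\ZNr/\Img{d^2}$ is infinite cyclic; this forces $\Img{d^2}=0$, and with monicity gives $H_2\CyclcGrp{k}=0$. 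Finally the isomorphism $H_m\CyclcGrp{k}\cong H_{m-2}\CyclcGrp{k}$ for $m\geq 3$ propagates the values $\ZMod{k}$ (odd) and $0$ (even) upward by induction, yielding the stated answer.

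The main obstacle is the low-degree bookkeeping in the range $n\leq 2$, where the edge terms of the two rows interact and the abutment $H_*\ZNr$ must be leveraged carefully; in particular, pinning down $H_2\CyclcGrp{k}=0$ via the interplay of the $n=1$ and $n=2$ sequences is the delicate step, after which the clean $d^2$-periodicity makes the induction routine.
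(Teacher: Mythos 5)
Your proposal is correct and takes essentially the same route as the paper's proof: the Lyndon--Hochschild--Serre spectral sequence of $\ZNr \XRA{\times k} \ZNr \to \CyclcGrp{k}$, the two-row $E^2$-page with both rows equal to $H_p\CyclcGrp{k}$, the low-degree interplay of the $n=1$ and $n=2$ filtrations forcing $H_2\CyclcGrp{k}=0$, and the vanishing abutment in degrees $\geq 3$ forcing $d^2\from H_m\CyclcGrp{k}\to H_{m-2}\CyclcGrp{k}$ to be an isomorphism. The only (cosmetic) difference is that you state the $d^2$-periodicity for all $m\geq 3$ as a single clean claim, where the paper computes $H_3$ and $H_4$ explicitly and then says ``continue inductively.''
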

\begin{proof}
To apply the LHS-spectral sequence, use the short exact sequence
\begin{equation*}
\xymatrix@R=5ex@C=4em{
\ZNr \ar@{{ |>}->}[r]^-{\times k} &
    \ZNr \ar@{-{ >>}}[r]^-{f} &
   \CyclcGrp{k}	}
\end{equation*}
Then the $E^2$-page of the SS satisfies $\SSObjct{2}{p}{q} = \GrpHmlgyDimOfCoeffs{p}{ \ZMod{k} }{ \GrpHmlgyDimOf{q}{\ZNr} }$. With the information provided above, we conclude:
\begin{equation*}
\SSObjct{2}{p}{0} = \GrpHmlgyDimOf{p}{\CyclcGrp{k}} = \SSObjct{2}{p}{1}  \qquad \text{and}\qquad \SSObjct{2}{p}{q}=0\quad \text{if}\quad q\neq 0,1\quad\text{or}\quad p\leq -1
\end{equation*}
Thus the $E^2$-page of the spectral sequence takes the following form:
$$
\begin{array}{c|cccccc}
q & 0 & 0 & \cdots & \cdots & 0 & \cdots \\
1 & \colorbox{blanchedalmond}{$\color{blue} \ZNr$} & \colorbox{blanchedalmond}{$\color{red} \ZMod{k}$} & {\color{ForestGreen} \GrpHmlgyDimOf{2}{\CyclcGrp{k}}} & \GrpHmlgyDimOf{3}{\CyclcGrp{k}} & \GrpHmlgyDimOf{4}{\CyclcGrp{k}} & \cdots \\
0 & \colorbox{blanchedalmond}{$\ZNr$} & \colorbox{blanchedalmond}{$\ZMod{k}$} & {\color{blue} \GrpHmlgyDimOf{2}{\CyclcGrp{k}}} & {\color{red} \GrpHmlgyDimOf{3}{\CyclcGrp{k}}} & {\color{ForestGreen} \GrpHmlgyDimOf{4}{\CyclcGrp{k}} } & \cdots \\ \hline
-1/-1 & 0 & 1 & 2 & 3 & 4 & p
\end{array}
$$
Now let's take a look at the differentials: We know that the bidegree of $d^r$ is $(-r,r-1)$. This implies:
\begin{enumerate}
\item On $E^2$: $d^{2}_{p,q}\neq 0$, is only possible if $p\geq 2$, and $q=0$.
\item For $r\geq 3$, $d^r=0$; that is: the spectral  sequence collapses on page $3$.
\end{enumerate}
With this information, will compute  $\GrpHmlgyDimOf{n}{\CyclcGrp{k}}$  via the following steps:
\begin{enumerate}[(a)]
\item We use our knowledge of $\GrpHmlgyDimOf{n}{\ZNr}$  to compute $\SSObjct{\infty}{p}{q}$.
\item Knowledge of the $\SSObjct{\infty}{p}{q}$  will determine all differentials on $\SSPage{2}$.
\item Knowledge of the differentials on $\SSPage{2}$, combined with the known terms (colored background) will enable us to complete the computation.
\end{enumerate}
To find the terms $E^{\infty}_{p,q}$, recall how they determine $\GrpHmlgyDimOf{n}{\ZNr}$:
\begin{equation*}
\xymatrix@R=4ex@C=2em{
0=F_{-1,n+1} \ar@{{ |>}->}[r] &
    F_{0,n} \ar@{-{ >>}}[d] \ar@{{ |>}->}[r] &
    \cdots \ar@{{ |>}->}[r] &
    F_{p-1,q+1} \ar@{{ |>}->}[r] \ar@{-{ >>}}[d] &
    F_{p,q} \ar@{{ |>}->}[r] \ar@{-{ >>}}[d] &
    \cdots \ar@{{ |>}->}[r] &
    F_{n,0} = \GrpHmlgyDimOf{n}{\ZNr} \ar@{-{ >>}}[d] \\
& E^{\infty}_{0,n} & &
    E^{\infty}_{p-1,q+1} &
    E^{\infty}_{p,q} &&
    E^{\infty}_{n,0}
}
\end{equation*}
Thus, for a fixed $n$, terms contributing to $\GrpHmlgyDimOf{n}{\ZNr}$ lie on the antidiagonal joining $\SSObjct{\infty}{0}{n}$ to $\SSObjct{\infty}{n}{0}$.
\begin{equation*}
\begin{array}{c|ccccccccc}
n & {\color{red} \SSObjct{\infty}{0}{n} } &&&&&&&&  \\
n-1 & {\color{blue} \SSObjct{\infty}{0}{n-1} } & {\color{red} \SSObjct{\infty}{1}{n-1} } & &&&&&& \\
\vdots & \vdots  & {\color{blue} \SSObjct{\infty}{1}{n-2} } & {\color{red} \ddots } & &&&&& \\
\vdots & \vdots  & & {\color{blue} \ddots } & {\color{red} \ddots } &&&&& \\
p & {\color{ForestGreen} \SSObjct{\infty}{0}{p} } & & & {\color{blue} \ddots } & {\color{red} \SSObjct{\infty}{p}{n-p} } &&&& \\
\vdots & \vdots  & & &&  {\color{blue} \SSObjct{\infty}{p}{n-p-1} } & {\color{red} \ddots } &&& \\
2 & {\color{deepmagenta} \SSObjct{\infty}{0}{2} } & \SSObjct{\infty}{1}{2} & &&& & && \\
1 & {\color{chestnut} \SSObjct{\infty}{0}{1} } & {\color{deepmagenta} \SSObjct{\infty}{1}{1} } & &&&& {\color{blue} \ddots } & {\color{red} \SSObjct{\infty}{n-1}{1} } & \\
0 & {\color{red} \SSObjct{\infty}{0}{0} } & {\color{chestnut} \SSObjct{\infty}{1}{0} } & {\color{deepmagenta} \SSObjct{\infty}{2}{0} } & \cdots & {\color{ForestGreen} \SSObjct{\infty}{p}{0} } & \cdots & \cdots & {\color{blue} \SSObjct{\infty}{n-1}{0} } & {\color{red} \SSObjct{\infty}{n}{0} } \\ \hline
  & 0 & 1 & 2 & \cdots & p & \cdots & \cdots & n-1 & n \\ \hline
  & {\color{red} \ZNr } & {\color{chestnut} \ZNr } & {\color{deepmagenta} 0 } & \cdots & {\color{ForestGreen} 0 } & \cdots & \cdots & {\color{blue} 0 } & {\color{red} 0 }
\end{array}
\end{equation*}
As $H_n\ZNr=0$ for $n\geq 0$, most $E^{\infty}$-objects must vanish as indicated here.
\begin{equation*}
\begin{array}{c|ccccc}
0 & \vdots  & \vdots & \vdots & \SSObjct{\infty}{p}{q} \\
2 & 0 & 0 & \ddots & \cdots \\
1 & ? & 0 & 0 & \cdots \\
0 & \ZNr & \ZMod{k} & 0 & \cdots  \\ \hline
  & 0 & 1 & 2 & \cdots & p \\ \hline
  & \ZNr & \ZNr & 0 & \cdots
\end{array}
\end{equation*}
Now, let's put the pieces together: First of all:
\begin{equation*}
\ZNr \cong E^{2}_{0,0} \cong E^{\infty}_{0,0}\cong \GrpHmlgyDimOf{0}{\CyclcGrp{k}}
\end{equation*}
Next, we have $\ZMod{k}\cong \SSObjct{2}{1}{0}\cong \SSObjct{\infty}{1}{0}\cong \GrpHmlgyDimOf{1}{\CyclcGrp{k}}$ because $\SSDffrntlAt{2}{1}{0}=0$. To see how this fact constrains the action of the differentials, consider the diagram below (Beginning with the short exact sequence on the right, already established facts are color coded blue. New facts, inferred using this diagram are color coded red.):
\begin{equation*}
\xymatrix@R=4ex@C=3em{
& {\color{blue} H_2\CyclcGrp{k}} & {\color{blue} \ZNr} & {\color{red} \ZNr} & {\color{blue} \ZNr} & {\color{blue} \ZMod{k}} \\
E^{\infty}_{2,0} \ar@{=}@[blue][d] \ar@{{ |>}->}[r] &
    E^{2}_{2,0} \ar[r]^-{d^{2}_{2,0}}_-{\color{red} 0}  \ar@{=}@[red][d] \ar@{=}@[blue][u] &
    E^{2}_{0,1} \ar@{=}@[blue][u] \ar@{-{ >>}}[d] &
    F_{0,1} \ar@{=}@[red][u] \ar@{=}[d] \ar@{{ |>}->}[r]^-{\color{red} \times k} &
    F_{1,0} \ar@{=}@[blue][d] \ar@{=}@[blue][u] \ar@{-{ >>}}[r] &
    E^{\infty}_{1,0} \ar@{=}@[blue][d] \ar@{=}@[blue][u] \\
{\color{blue} 0} &
    {\color{red} 0} &
    E^{3}_{0,1} \ar@{=}[r] &
    E^{\infty}_{0,1} &
    {\color{blue} H_1\ZNr} &
    {\color{blue} E^{2}_{1,0}}
}
\end{equation*}
Combining that $F_{1,0}=H_1\ZNr = \ZNr$ and that $F_{0,1}$ is a quotient of $\ZNr$ which must map in $F_{1,0}$ with cokernel $\ZMod{k}$ forces $d^{2}_{2,0}=0$ and, hence, $E^{\infty}_{2,0}=\Ker{d^{2}_{2,0}}=0$. But then $H_2\CyclcGrp{k}=E^{2}_{2,0}=0$.
It follows that  $E^{2}_{2,1}\cong E^{2}_{2,0}=0$. To compute  $\GrpHmlgyDimOf{3}{\ZMod{k}}$, consider the exact sequence
\begin{equation*}
\xymatrix@R=5ex@C=3em{
{\color{blue} 0}=\SSObjct{\infty}{3}{0} \ar@{{ |>}->}[r] &
    \SSObjct{2}{3}{0}=\GrpHmlgyDimOf{3}{\ZMod{k}} \ar[r]^-{d^{2}_{3,0}}_-{\color{red} \cong} &
    \SSObjct{2}{1}{1}=\ZMod{k} \ar@{-{ >>}}[r] &
    \SSObjct{\infty}{1}{1}=0	}
\end{equation*}
We conclude that $d^{2}_{3,0}$ is an isomorphism and, hence,  $\GrpHmlgyDimOf{3}{\ZMod{k}}\cong \ZMod{k}$. This implies:  $\SSObjct{2}{3}{1}=\SSObjct{2}{3}{0}\cong \ZMod{k}$. To compute $\GrpHmlgyDimOf{4}{\ZMod{k}}$, consider the exact sequence
\begin{equation*}
\xymatrix@R=5ex@C=3em{
{\color{blue} 0}=\SSObjct{\infty}{4}{0} \ar@{{ |>}->}[r] &
    \SSObjct{2}{4}{0}=\GrpHmlgyDimOf{4}{\ZMod{k}} \ar[r]^-{d^{2}_{4,0}}_-{\color{red} \cong} &
    \SSObjct{2}{2}{1}=0 \ar@{-{ >>}}[r] &
    \SSObjct{\infty}{2}{1}=0	}
\end{equation*}
We conclude $d^{2}_{4,0}$ is an isomorphism and, hence, $\GrpHmlgyDimOf{4}{\ZMod{k}}\cong 0$, along with $\SSObjct{2}{4}{1}=\SSObjct{2}{4}{0}\cong 0$. Continuing inductively in this fashion completes the proof of the theorem.
\end{proof}

\begin{theorem}[$5$-term exact sequence of a group extension]
\label{thm:GroupExtension-5-TermExactSequence}
Associated to a central group extension:
\begin{equation*}
\xymatrix@R=5ex@C=3em{
A \ar@{{ |>}->}[r]^-{i} &
    E \ar@{-{ >>}}[r]^-{\pi} &
    G
}
\end{equation*}
is the following exact sequence of homology groups. It is functorial with respect to morphisms of central extensions.
\begin{equation*}
\xymatrix@R=5ex@C=3em{
\GrpHmlgyDimOf{2}{E} \ar[r]^-{\pi_{\ast}} &
	\GrpHmlgyDimOf{2}{G} \ar[r]^-{\partial} &
	A \ar[r]^-{i_{\ast}} &
	\GrpHmlgyDimOf{1}{E} \ar@{-{ >>}}[r]^-{\pi_{\ast}} &
	\GrpHmlgyDimOf{1}{G}
}
\end{equation*}
\end{theorem}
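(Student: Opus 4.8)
The plan is to read the sequence off the low-degree terms of the Lyndon--Hochschild--Serre spectral sequence of Theorem \ref{thm:Lyndon-Hochschild-Serre-SS}. Since the extension $A\to E\to G$ is central, $G$ acts trivially on $A$, and as $A$ is abelian the coefficient modules are $\GrpHmlgyDimOf{0}{A}=\ZNr$ and $\GrpHmlgyDimOf{1}{A}=A$, both with trivial $G$-action. Using $\SSObjct{2}{p}{q}=\GrpHmlgyDimOfCoeffs{p}{G}{\GrpHmlgyDimOf{q}{A}}$ together with property (i) of group homology, I would identify the relevant corner entries of the first-quadrant $E^2$-page: $\SSObjct{2}{1}{0}=\GrpHmlgyDimOf{1}{G}$, $\SSObjct{2}{2}{0}=\GrpHmlgyDimOf{2}{G}$, and $\SSObjct{2}{0}{1}=\GrpHmlgyDimOfCoeffs{0}{G}{A}=A$. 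Because the differential has bidegree $(-r,r-1)$, the only differential acting on these terms is the transgression $\SSDffrntlAt{2}{2}{0}\from \GrpHmlgyDimOf{2}{G}\to A$, which I will take as the connecting map $\partial$.

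Next I would compute the three relevant $E^{\infty}$-terms. After page two, every differential into or out of the positions $(2,0)$, $(1,0)$, $(0,1)$ starts from or lands in an entry with negative $p$ or negative $q$, hence vanishes; so the spectral sequence is already stable there. This yields $\SSObjct{\infty}{2}{0}=\Ker{\SSDffrntlAt{2}{2}{0}}$, $\SSObjct{\infty}{0}{1}=\CoKer{\SSDffrntlAt{2}{2}{0}}$, and $\SSObjct{\infty}{1}{0}=\GrpHmlgyDimOf{1}{G}$, the last being untouched by any differential.

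Then I would feed these into the filtration data. The filtration of $\GrpHmlgyDimOf{1}{E}$ has only the two nonzero quotients $\SSObjct{\infty}{0}{1}$ and $\SSObjct{\infty}{1}{0}$, giving the short exact sequence $0\to \SSObjct{\infty}{0}{1}\to \GrpHmlgyDimOf{1}{E}\to \SSObjct{\infty}{1}{0}\to 0$; thus the edge map $\pi_{\ast}\from \GrpHmlgyDimOf{1}{E}\to \GrpHmlgyDimOf{1}{G}$ is onto with kernel $\SSObjct{\infty}{0}{1}=\CoKer{\partial}$. The bottom-edge map realizes $i_{\ast}$ as the composite $A=\GrpHmlgyDimOf{1}{A}\twoheadrightarrow \SSObjct{\infty}{0}{1}\hookrightarrow \GrpHmlgyDimOf{1}{E}$, so $\Ker{i_{\ast}}=\Img{\partial}$ and $\Img{i_{\ast}}=\Ker{\pi_{\ast}}$. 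Finally the top-edge epimorphism $\GrpHmlgyDimOf{2}{E}\twoheadrightarrow \SSObjct{\infty}{2}{0}=\Ker{\partial}\hookrightarrow \GrpHmlgyDimOf{2}{G}$ gives $\Img{\pi_{\ast}}=\Ker{\partial}$. Splicing these four statements produces exactly the asserted five-term sequence, and functoriality is inherited from the functoriality clause of Theorem \ref{thm:Lyndon-Hochschild-Serre-SS}.

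The main obstacle I expect is not the diagram chase but the identification of the abstract edge homomorphisms with the genuine maps $i_{\ast}$ and $\pi_{\ast}$ induced by the extension. The filtration and the inclusions $\SSObjct{\infty}{p}{q}\hookrightarrow \SSObjct{2}{p}{q}$ only hand me the edge maps as the canonical projections and inclusions; to know that $\GrpHmlgyDimOf{1}{E}\to \GrpHmlgyDimOf{1}{G}$ really is $\pi_{\ast}$ and that $A\to \GrpHmlgyDimOf{1}{E}$ really is $i_{\ast}$ requires the standard naturality of the LHS construction against the evident morphisms of extensions (collapsing either $G$ or $A$). I would discharge this by invoking that naturality, which is precisely the functoriality already asserted in Theorem \ref{thm:Lyndon-Hochschild-Serre-SS}.
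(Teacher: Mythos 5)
Your proposal is correct and follows essentially the same route as the paper: both read the five-term sequence off the low-degree corner of the Lyndon--Hochschild--Serre spectral sequence, identify $\partial$ with the transgression $d^{2}_{2,0}$, and splice the edge maps $H_2E\twoheadrightarrow E^{\infty}_{2,0}\hookrightarrow H_2G$ and $A\twoheadrightarrow E^{\infty}_{0,1}\hookrightarrow H_1E\twoheadrightarrow E^{\infty}_{1,0}=H_1G$ via the filtration of $H_nE$. Your explicit flagging of the edge-homomorphism identification (that the abstract filtration maps really are $i_{\ast}$ and $\pi_{\ast}$), discharged by the functoriality clause of Theorem \ref{thm:Lyndon-Hochschild-Serre-SS}, is a point the paper's proof leaves implicit in its diagram, so your write-up is if anything slightly more careful.
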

\begin{proof}
We analyze the Lyndon-Hochschild-Serre spectral sequence for small $p$ and $q$.
\begin{equation*}
\begin{array}{cc}
\begin{array}{c|cccc}
q & \vdots & \vdots &  & \cdots  \\
1 & \colorbox{blanchedalmond}{${\color{blue} \SSObjct{2}{0}{1} }$} & & & \\
0 & \SSObjct{2}{0}{0} & \colorbox{blanchedalmond}{ $\SSObjct{2}{1}{0}$ } & {\color{blue} \SSObjct{2}{2}{0} } & \cdots \\ \hline
  & 0 & 1 & 2 & p
\end{array} &\qquad
\begin{array}{c|cccc}
q & \vdots & \vdots &  & \cdots  \\
1 & \colorbox{blanchedalmond}{${\color{blue} A}$} & & & \\
0 & \ZNr & \colorbox{blanchedalmond}{$\GrpHmlgyDimOf{1}{G}$} & {\color{blue} \GrpHmlgyDimOf{2}{G}} & \cdots \\ \hline
-1/-1 & 0 & 1 & 2 & p
\end{array}
\end{array}
\end{equation*}
Now work  through the diagram below, starting from the right:
\begin{equation*}
\xymatrix@R=5ex@C=1em{
{\color{blue} \GrpHmlgyDimOf{2}{E}} \ar@[blue]@{=}[d] &
    & {\color{blue} \GrpHmlgyDimOf{2}{G} } \ar@[blue]@{=}[d] &&
	{\color{blue} \GrpHmlgyDimOf{1}{A}\cong A} \ar@[blue]@{=}[d] &&&
   {\color{blue} \GrpHmlgyDimOf{1}{G}} \ar@[blue]@{=}[d] \\
{\color{deepmagenta} F_{2,0}} \ar[rr] \ar@[deepmagenta]@{-{ >>}}[rd] &&
	{\color{ForestGreen}  \SSObjct{2}{2}{0} } \ar@[ForestGreen][rr]^-{\color{ForestGreen} \SSDffrntlAt{2}{2}{0}}_-{\color{ForestGreen} \partial} &&
	{\color{ForestGreen}  \SSObjct{2}{0}{1} } \ar[rr] \ar@[ForestGreen]@{-{ >>}}[d] &&
	{\color{deepmagenta} F_{1,0}} \ar@[deepmagenta]@{-{ >>}}[r] &
	{\color{deepmagenta} \SSObjct{\infty}{1}{0}} \\
& {\color{ForestGreen} \SSObjct{\infty}{2}{0}=\SSObjct{3}{2}{0}} \ar@[ForestGreen]@{{ |>}->}[ru] &&&
    {\color{ForestGreen}  \SSObjct{3}{0}{1}} \ar@[ForestGreen]@{=}[r]  &
	{\color{deepmagenta} \SSObjct{\infty}{0}{1} \cong F_{0,1}} \ar@[deepmagenta]@{{ |>}->}[ru] &
    {\color{blue} \GrpHmlgyDimOf{1}{E}} \ar@[blue]@{=}[u]
}
\end{equation*}
Using the indicated image factorizations, the diagram of horizontal arrows is seen to be exact, and that's the stated $5$-term exact sequence.
\end{proof}

A parent of the Lyndon-Hochschild-Serre spectral sequence in group homology is the Serre spectral sequence associated to a Hurewicz fibration of topological spaces. One of its most convenient variants is:

\begin{theorem}[Serre homological spectral sequence]
\label{thm:SerreSS-Homology}
Let $F\to E\to B$ be a (Hurewicz) fibration in which $B$ is simply connected. Then the singular homology of $E$ has a filtration
\begin{equation*}
0=F_{-1,n+1}\subseteq F_{0,n}\subseteq \cdots \subseteq F_{p-1,q+1}\subseteq F_{p,q}\subseteq \cdots \subseteq F_{n,0}=H_nE
\end{equation*}
There is a spectral sequence $(E^r,d^r)$, $r\geq 2$ whose differential have bidegree of $d^r$  is $(-r,r-1)$. It satisfies 
\begin{equation*}
E^{2}_{p,q} = H_p(B;H_qF)  \qquad \text{and}\qquad  \xymatrix{
F_{p-1,q+1} \ar@{{ |>}->}[r] &
    F_{p,q} \ar@{-{ >>}}[r] &
    E^{\infty}_{p,q} }
\end{equation*}
All of these data are functorial with respect to morphisms of central extensions. \NoProof
\end{theorem}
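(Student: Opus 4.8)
The plan is to produce this spectral sequence as the one associated, via the exact-couple machinery of Chapter~\ref{chap:ExactCouples}, to the \emph{skeletal filtration} of the total space. First I would equip $B$ with a CW structure and filter $E$ by the preimages of the skeleta, setting $E_p\DefEq \pi^{-1}(B^{(p)})$, so that
\begin{equation*}
\varnothing = E_{-1} \subseteq E_0 \subseteq E_1 \subseteq \cdots \subseteq E = \FamUnion{p}{E_p}.
\end{equation*}
The long exact homology sequences of the pairs $(E_p,E_{p-1})$ then assemble, upon setting $D^1_{p,q}\DefEq H_{p+q}(E_p)$ and $E^1_{p,q}\DefEq H_{p+q}(E_p,E_{p-1})$, into a $(\ZNr\prdct\ZNr)$-bigraded exact couple whose structure maps $i,j,k$ are the inclusion-induced map, the passage to relative homology, and the connecting homomorphism. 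Feeding this exact couple into the derivation process yields at once a spectral sequence whose differential $d^r$ carries the asserted bidegree $(-r,r-1)$, together with the colimit-abutment filtration $F_{p,q}\DefEq \Img{H_{p+q}(E_p)\to H_{p+q}(E)}$ on $H_{p+q}(E)$.

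The heart of the argument is the identification of the $E^1$- and $E^2$-pages. Over each closed $p$-cell of $B$ the fibration is fiber-homotopy trivial, since the cell is contractible; hence excision decomposes $H_{p+q}(E_p,E_{p-1})$ as a direct sum, indexed by the $p$-cells, of relative homology groups of pairs of the form $(D^p\prdct F,\ \partial D^p\prdct F)$. A Künneth/suspension computation then gives
\begin{equation*}
E^1_{p,q} = H_{p+q}(E_p,E_{p-1}) \cong \FamSum{\text{$p$-cells}}{H_q(F)} = C_p(B;H_qF),
\end{equation*}
the cellular $p$-chains of $B$ with coefficients in $H_q(F)$. Next I would verify that the induced differential $d^1$ coincides with the cellular boundary operator, so that passing to homology along $d^1$ produces $E^2_{p,q}\cong H_p(B;H_qF)$. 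The simple-connectivity hypothesis enters exactly here: it forces the local system $b\mapsto H_q(\pi^{-1}(b))$ to have trivial monodromy, so the coefficients are the constant groups $H_q(F)$ and ordinary (untwisted) homology of $B$ results.

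Finally, since the spectral sequence lies in the first quadrant, in each total degree $n$ the filtration $0=F_{-1,n+1}\subseteq F_{0,n}\subseteq\cdots\subseteq F_{n,0}$ is finite; convergence is therefore automatic and no $\LimOne$-correction can appear. I would identify the colimit abutment with $H_{\ast}(E)$ by appealing to the fact that singular homology commutes with the directed colimit $E=\FamUnion{p}{E_p}$, so that $H_n(E_p)\to H_n(E)$ is an isomorphism once $p\geq n+1$; the extension theorems (\ref{thm:Stable-E-Extension}) and (\ref{thm:E-InfinityExtensionThm}) then deliver the stated short exact sequences $F_{p-1,q+1}\rightarrowtail F_{p,q}\twoheadrightarrow E^{\infty}_{p,q}$, with the filtration quotient isomorphic to $E^{\infty}_{p,q}$. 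Functoriality with respect to morphisms of fibrations is automatic: such a morphism, after a compatible cellular replacement, induces a filtered map of total spaces, hence a morphism of the associated exact couples, which propagates to every page by the naturality recorded in (\ref{thm:EInfinityMapFromMorphismOfSpecSeqs}). The step I expect to be the main obstacle is the geometric $E^1/E^2$ identification --- establishing fiber-homotopy triviality over cells, matching $d^1$ with the cellular boundary, and confirming that simple connectivity untwists the coefficient system --- because this is the one place where genuine topology, rather than the purely homological exact-couple formalism, must be supplied; the verification that the colimit abutment really is $H_{\ast}(E)$ is a secondary but still essential point.
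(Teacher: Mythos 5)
The first thing to note is that the paper does not prove this theorem at all: it is stated with the $\lozenge$ marker as classical background for the computational exercises of Section~\ref{sec:SS-Computations}, so there is no in-paper argument to compare yours against. Judged on its own, your sketch is the standard skeletal-filtration construction of the Serre spectral sequence, and its exact-couple core is sound and meshes correctly with the paper's machinery: with $D^1_{p,q}=H_{p+q}(E_p)$ and $E^1_{p,q}=H_{p+q}(E_p,E_{p-1})$ the structure maps $i,j,k$ have bidegrees $\mathbf{a}=(1,-1)$, $\mathbf{b}=(0,0)$, $\mathbf{c}=(-1,0)$, so $\mathit{det}\,[\mathbf{a}\ \ (\mathbf{b}+\mathbf{c})]=-1$ and the couple is regular in the sense of (\ref{def:(ZxZ)-BigradedExactCouple}); by (\ref{thm:SpecSequFromExactCouple-Functoriality}) the $r$-th differential then has bidegree $\mathbf{b}+\mathbf{c}-(r-1)\mathbf{a}=(-r,r-1)$ as required. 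Your convergence claim is also the right one: in each total degree the relevant $\ZCat$-diagrams $D(n)$ are originally vanishing and eventually stable, so stability of $E^{\infty}$ and the matching with the colimit abutment filtration follow from (\ref{thm:SpecSeqsOfOriginally/EventuallyStableExactCouples}) and (\ref{thm:CoLimitAbutmentMatching-Conditions}), with no $\LimOne$ correction possible.

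Two points need repair before this is a proof. First, you cannot simply ``equip $B$ with a CW structure'': an arbitrary simply connected base of a Hurewicz fibration need not be a CW complex. You must take a CW approximation $B'\to B$, pull the fibration back, and verify that the induced map of total spaces is a weak homotopy equivalence (five lemma on the long exact homotopy sequences), so that singular homology is unchanged. This replacement also weakens your functoriality argument: CW approximations and cellular replacements of maps are natural only up to homotopy, so you must check that the resulting morphism of exact couples --- hence of spectral sequences and filtrations, via (\ref{thm:EInfinityMapFromMorphismOfSpecSeqs}) --- is independent of the choices; homotopy invariance of homology gives this, but it has to be said. (Incidentally, the phrase ``morphisms of central extensions'' in the statement is a typo carried over from (\ref{thm:Lyndon-Hochschild-Serre-SS}); you correctly read it as morphisms of fibrations.) Second, what you yourself call the heart of the argument --- fiber-homotopy triviality over each cell, the excision decomposition of $H_{p+q}(E_p,E_{p-1})$, the K\"unneth identification $E^1_{p,q}\cong C_p(B;H_qF)$, the matching of $d^1$ with the cellular boundary including incidence-number bookkeeping, and the use of $\pi_1B=0$ to untwist the coefficient system --- is presented as a list of tasks rather than carried out. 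That block is precisely the genuinely topological content of Serre's theorem; the exact-couple and convergence portions of your proposal are complete, but the proof stands or falls with supplying these details.
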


In the following, $\uSphr{k}$ denotes the unit sphere in $\RNrSpc{k+1}$, and $\CPrSpc{r}$ denotes complex projective space of complex dimension $r$. Adapting the reasoning of the previous two examples, we encourage the reader to prove:

\begin{theorem}[Homology of complex projective spaces]
\label{thm:H_*CP}
For each $r\geq 1$, there is a fibration $\uSphr{1}\to \uSphr{2r+1}\to \CPrSpc{r}$. It enables the computation:
\begin{equation*}
H_n\CPrSpc{r} \cong \left\{
\begin{array}{rcl}
\ZNr & \text{if} & 0\leq n\leq 2r\ \text{is even} \\
0 & \text{else} & 
\end{array}\right.
\end{equation*}
Further, there is a fibration $\uSphr{1}\to \uSphr{\infty}\to \CPrSpc{\infty}$. Using that $\uSphr{\infty}$ is contractible, this enables the computation:
\begin{equation*}
H_n\CPrSpc{\infty} \cong \left\{
\begin{array}{rcl}
\ZNr & \text{if} & 0\leq n\ \text{is even} \\
0 & \text{else} & 
\end{array}\right.
\end{equation*}
\end{theorem}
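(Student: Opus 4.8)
The plan is to run the argument in close parallel with the computation of $\GrpHmlgyDimOf{n}{\CyclcGrp{k}}$ in (\ref{thm:Homology-C_n}), feeding the given $\uSphr{1}$-fibration into the Serre homological spectral sequence (\ref{thm:SerreSS-Homology}). Since $\CPrSpc{r}$ is simply connected, that theorem applies with trivial local coefficients, and because the fiber $\uSphr{1}$ has $\GrpHmlgyDimOf{0}{\uSphr{1}}\cong \GrpHmlgyDimOf{1}{\uSphr{1}}\cong \ZNr$ and $\GrpHmlgyDimOf{q}{\uSphr{1}}=0$ for $q\geq 2$, the $E^2$-page $\SSObjct{2}{p}{q}=\GrpHmlgyDimOfCoeffs{p}{\CPrSpc{r}}{\GrpHmlgyDimOf{q}{\uSphr{1}}}$ is concentrated in the two rows $q=0,1$, each of which is the unknown $\GrpHmlgyDimOf{p}{\CPrSpc{r}}$. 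First I would record that, since $\SSDffrntl{r}$ has bidegree $(-r,r-1)$, every differential with $r\geq 3$ lands in a row $q\geq 2$ and hence vanishes; the only possibly nonzero differential is $\SSDffrntlAt{2}{p}{0}\from \GrpHmlgyDimOf{p}{\CPrSpc{r}}\to \GrpHmlgyDimOf{p-2}{\CPrSpc{r}}$, and the spectral sequence collapses on page $3$ by (\ref{thm:E-infinityOfCollapsingSpectralSequence}).

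Next I would compute the limit page in terms of this single differential. Because the rows $q=-1$ and $q=2$ vanish, no $\SSDffrntl{2}$ enters the bottom row and none leaves the top row, so $\SSObjct{\infty}{p}{0}\cong \Ker{\SSDffrntlAt{2}{p}{0}}$ and $\SSObjct{\infty}{p}{1}\cong \CoKer{\SSDffrntlAt{2}{p+2}{0}}$. The total space $\uSphr{2r+1}$ supplies the constraint coming from its known homology: $\SSObjct{\infty}{p}{q}=0$ whenever $0<p+q<2r+1$, while $\SSObjct{\infty}{0}{0}\cong \ZNr$ and the terms on the antidiagonal $p+q=2r+1$ assemble $\GrpHmlgyDimOf{2r+1}{\uSphr{2r+1}}\cong \ZNr$. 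Reading $\SSObjct{\infty}{p}{0}=0$ for $1\leq p\leq 2r$ gives injectivity of $\SSDffrntlAt{2}{p}{0}$ in that range, and reading $\SSObjct{\infty}{p}{1}=0$ for $0\leq p\leq 2r-1$ gives surjectivity of $\SSDffrntlAt{2}{p}{0}$ for $2\leq p\leq 2r+1$. The crux is the resulting conclusion that $\SSDffrntlAt{2}{p}{0}\from \GrpHmlgyDimOf{p}{\CPrSpc{r}}\XRA{\cong}\GrpHmlgyDimOf{p-2}{\CPrSpc{r}}$ is an isomorphism for $2\leq p\leq 2r$.

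With these isomorphisms in hand, I would anchor the induction at the two known groups $\GrpHmlgyDimOf{0}{\CPrSpc{r}}\cong \ZNr$ (path-connectedness) and $\GrpHmlgyDimOf{1}{\CPrSpc{r}}=0$ (which also falls out of $\SSObjct{\infty}{1}{0}=\Ker{\SSDffrntlAt{2}{1}{0}}=0$), and propagate up the two residue classes modulo $2$ to obtain $\GrpHmlgyDimOf{2k}{\CPrSpc{r}}\cong \ZNr$ for $0\leq k\leq r$ and $\GrpHmlgyDimOf{2k+1}{\CPrSpc{r}}=0$ in range. To close off the top, I would invoke that $\CPrSpc{r}$ is a $2r$-dimensional CW complex, so $\GrpHmlgyDimOf{p}{\CPrSpc{r}}=0$ for $p>2r$; a consistency check against the antidiagonal $p+q=2r+1$ then recovers $\GrpHmlgyDimOf{2r+1}{\uSphr{2r+1}}\cong \ZNr$. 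For $\CPrSpc{\infty}$ I would repeat the analysis with the contractible total space $\uSphr{\infty}$: now $\SSObjct{\infty}{p}{q}=0$ for all $(p,q)\neq (0,0)$, which forces $\SSDffrntlAt{2}{p}{0}$ to be an isomorphism for every $p\geq 2$, and the same propagation from $\GrpHmlgyDimOf{0}{\CPrSpc{\infty}}\cong \ZNr$ and $\GrpHmlgyDimOf{1}{\CPrSpc{\infty}}=0$ yields $\ZNr$ in every even degree and $0$ otherwise, with no dimensional truncation needed. The one genuinely delicate point is the bookkeeping at the edges of the vanishing region: correctly matching the ranges in which injectivity and surjectivity of $\SSDffrntlAt{2}{p}{0}$ overlap to give an isomorphism, and appealing to finite-dimensionality only to terminate the finite case.
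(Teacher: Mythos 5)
Your proposal is correct, and it is precisely the adaptation of the two preceding LHS-spectral-sequence computations that the paper invites — the paper leaves this theorem as an exercise ("we encourage the reader to prove"), so there is no written proof to diverge from, and your bookkeeping of the injectivity range $1\leq p\leq 2r$, the surjectivity range $2\leq p\leq 2r+1$, and their overlap is exactly right. One remark: your appeal to the CW-dimension of $\CPrSpc{r}$ to kill $H_p\CPrSpc{r}$ for $p>2r$ is not merely convenient but genuinely necessary, since the spectral sequence algebra alone is consistent with, say, $H_{2r+1}\CPrSpc{r}\cong \ZMod{m}$ and $H_{2r+2}\CPrSpc{r}=0$ (the antidiagonal $p+q=2r+1$ then carries the extension $\ZNr \XRA{\times m} \ZNr \to \ZMod{m}$, and every $E^{\infty}$-vanishing constraint above that antidiagonal is still satisfied), so flagging that single piece of external topological input is the honest and correct way to terminate the finite case.
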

\chapter{Exact Couples and their Spectral Sequences}
\label{chap:ExactCouples}

{\bfseries Introduction}\quad We owe to  W.S. Massey the ingenious insight which linked earlier work of J.H.C. Whitehead and Chern-Spanier to Leray's approach to spectral sequences; see \cite[p.~7, FN11]{HMiller2000}. This link consists of a new algebraic object, called an exact couple; see \cite{WSMassey1952},\cite{WSMassey1953},\cite{WSMassey1954}. 

In a compressed view of an exact couple we see a triangle of morphisms of $R$-modules which is exact in each corner.
\begin{equation*}
\xymatrix@R=2em@C=1.6em{
\ExctCplDObjctBB{}{-} \ar[rr]^{\ExctCplIMap{}} &&
	\ExctCplDObjctBB{}{-} \ar[ld]^{\ExctCplJMap{}} \\
&	\ExctCplEObjctBB{}{-} \ar[lu]^{\ExctCplKMap{}}
}
\end{equation*}
As advocated by Eckmann-Hilton \cite[Sec.~I]{BEckmannPJHilton1966}, the simplicity of this view is most suitable to present a categorical description of `deriving the exact couple' and, thus, constructing an associated spectral sequence, along with its $E^{\infty}$-object. Their analysis results in the prediction \cite[p.~40]{BEckmannPJHilton1966} that gradation is likely to prove very important in establishing the meaning of the $E^{\infty}$-object.

One decisive step in this direction was taken by Boardman \cite{JMBoardman1999}. He `unrolled' the exact couple into a diagram of $\ZNr$-graded modules:
\begin{equation*}
\xymatrix@R=5ex@C=1em{
\cdots \ar[r] & 
	A^{s+2} \ar[rr]^-{i} &&
	A^{s+1} \ar[rr]^-{i} \ar[ld]^-{j} &&
	A^{s} \ar[rr]^-{i} \ar[ld]^{j} &&
	A^{s-1} \ar[r] \ar[ld]^{j} &
	\cdots \\
&& E^{s+1} \ar[lu]^{k} &&
	E^{s} \ar[lu]^{k} &&
	E^{s} \ar[lu]^{k} &&
}
\end{equation*}
in which each triangle $\to A^{s+1} \to A^s \to E^s\to A^{s+1}\to$ is a long exact sequence. The resulting convergence discussion is very refined and fully adequate for many applications of spectral sequences.

On the other hand, most spectral sequences are bigraded, and come from a bigraded exact couple. In such a setting the maps $i$ and $kj$ may have linearly independent bigradings, and the full effect of that is not easily visible if the exact couple is merely unrolled. If we follow the prescient remark in \cite[p.~40]{BEckmannPJHilton1966} and flatten the exact couple into a plane spanned by $i$ and $kj$, we are able to make further progress toward establishing the meaning of the $E^{\infty}$-objects.

Accordingly, we work with exact couples which are bigraded over $\Prdct{\ZNr}{\ZNr}$. So, the morphisms $\ExctCplIMap{},\ExctCplJMap{},\ExctCplKMap{}$ have bidegrees $\ExctCplIBdg{a}=(a_1,a_2)$, $\ExctCplJBdg{b}=(b_1,b_2)$, $\ExctCplKBdg{c}=(c_1,c_2)$, respectively. If these bidegrees satisfy the regularity condition (\ref{def:(ZxZ)-BigradedExactCouple}), such a bigraded exact couple flattens to a planar diagram, a portion of which is presented below. 

\newpage
A bigraded  regular exact couple presented as a flattened diagram of interlocking long exact sequences, arranged in a staircase: %
\label{fig:ExactCouple-ZxZBigraded}%
\begin{equation*}
\label{fig:ExactCouple-Flattened}%
\resizebox{1\textwidth}{!}{$
\begin{xy}
\xymatrix@R=3em@C=2em{
& {\color{deepmagenta} \ExctCplLimAbut{n}} \ar@[deepmagenta]@{.>}[d]_-{\color{deepmagenta} \ExctCplLimAbutMapBB{\Vect{x} - r\Vect{a}} } & &
  {\color{deepmagenta} \ExctCplLimAbut{n+\sigma}} \ar@[deepmagenta]@{.>}[d]^-{\color{deepmagenta} \ExctCplLimAbutMapBB{\Vect{x} +\Vect{b}+\Vect{c}-r\Vect{a}}} &
     & \\
\ExctCplEObjctBB{}{\Vect{x} - \Vect{c} - r\Vect{a} } \ar[r] &
     \ExctCplDObjctBB{}{\Vect{x} -r\Vect{a}} \ar[d] \ar[r] &
     \ExctCplEObjctBB{}{\Vect{x}+\Vect{b}  -r\Vect{a} } \ar[r] &
     \ExctCplDObjctBB{}{\Vect{x} +\Vect{b}+\Vect{c}-r\Vect{a}} \ar[d] \ar[r] &
     \ExctCplEObjctBB{}{\Vect{x} +2\Vect{b}+\Vect{c}-r\Vect{a}} \\
\ExctCplEObjctBB{}{\Vect{x} - \Vect{c} - (r-1)\Vect{a} } \ar[r] &
     \ExctCplDObjctBB{}{\Vect{x} -(r-1)\Vect{a} } \ar@{..>}[d] \ar[r] &
     \ExctCplEObjctBB{}{\Vect{x} + \Vect{b} - (r-1)\Vect{a} } \ar[r] &
     \ExctCplDObjctBB{}{\Vect{x} + \Vect{b}+\Vect{c} -(r-1)\Vect{a} } \ar@{..>}[d] \ar[r]^{j} &
     \ExctCplEObjctBB{}{\Vect{x} + 2\Vect{b}+\Vect{c} - (r-1)\Vect{a} } \\
%
{\color{Maroon} \ExctCplEObjctBB{}{\Vect{x} - \Vect{c} - \Vect{a} } } \ar@[Maroon][r] &
     {\color{Maroon} \ExctCplDObjctBB{}{\Vect{x} -\Vect{a}} } \ar@[Maroon][d]_{\color{Maroon} i_{\Vect{x} - \Vect{a}}} \ar[r]^{j} &
     \ExctCplEObjctBB{}{\Vect{x} + \Vect{b} - \Vect{a} } \ar[r]^{k} &
     \ExctCplDObjctBB{}{\Vect{x} + \Vect{b}+\Vect{c} - \Vect{a} } \ar[r] \ar[d]^{i_{\Vect{x} +\Vect{b}+\Vect{c} -\Vect{a} }} &
     \ExctCplEObjctBB{}{\Vect{x}+ 2\Vect{b}+\Vect{c} -\Vect{a} } \\
%
\ExctCplEObjctBB{}{\Vect{x} -\Vect{c}} \ar@[blue]@<+2pt> `d[rr] `[rr]_(.3){\color{blue} d^{1}_{\Vect{x} - \Vect{c}}} \ar[r] &
	{\color{Maroon} \ExctCplDObjctBB{}{\Vect{x}} } \ar@[Maroon][r]^{\color{Maroon} j_{\Vect{x}}} \ar[d]|(.35)\hole^(.6){i_{\Vect{x}}} &
     {\color{Maroon} \ExctCplEObjctBB{}{\Vect{x} + \Vect{b}} } \ar@[blue]@<+2pt> `d[rr] `[rr]_(.75){\color{blue} d^{1}_{\Vect{x} +\Vect{b}}=\bigtriangleup^{1}_{\Vect{x} + \Vect{b}}} \ar@[Maroon][r]^{\color{Maroon} k_{\Vect{x}+\Vect{b}} } \ar@[red]@{.>}@/^25pt/[rruu]_(.65){\color{red} \bigtriangleup^{r}_{\Vect{x} +\Vect{b}}} &
     {\color{Maroon} \ExctCplDObjctBB{}{\Vect{x} + \Vect{b}+\Vect{c}} } \ar@[Maroon][d]|(.38)\hole \ar[r] &
     \ExctCplEObjctBB{}{\Vect{x}\,+2\Vect{b}+\Vect{c}}  \\
%
\ExctCplEObjctBB{}{\Vect{x} - \Vect{c} +\Vect{a} } \ar[r] &
     \ExctCplDObjctBB{}{\Vect{x}+\Vect{a}} \ar[r] \ar@{..>}[d] &
     \ExctCplEObjctBB{}{\Vect{x}+\Vect{b} +\Vect{a} } \ar[r] &
     {\color{Maroon} \ExctCplDObjctBB{}{\Vect{x} + \Vect{b}+\Vect{c} +\Vect{a} } } \ar@[Maroon][r] \ar@{..>}[d] &
     {\color{Maroon} \ExctCplEObjctBB{}{\Vect{x}+ 2\Vect{b}+\Vect{c} +\Vect{a} } } \\
%
\ExctCplEObjctBB{}{\Vect{x}- \Vect{c} +(r-1)\Vect{a} } \ar[r] \ar@[red]@{.>}@/_25pt/[rruu]_(.55){\color{red} \bigtriangleup^{r}_{\Vect{x} - (r-1)\Vect{a}-\Vect{c}}} &
     \ExctCplDObjctBB{}{\Vect{x}+(r-1)\Vect{a}} \ar[d]^{i_{\Vect{x}+(r-1)\Vect{a}}} \ar[r] &
     \ExctCplEObjctBB{}{\Vect{x}+\Vect{b} +(r-1)\Vect{a} } \ar[r] &
     \ExctCplDObjctBB{}{\Vect{x}+\Vect{b}+\Vect{c} +(r-1)\Vect{a} } \ar[d] \ar[r] &
     \ExctCplEObjctBB{}{\Vect{x}+ 2\Vect{b}+\Vect{c} +(r-1)\Vect{a} } \\
%
\ExctCplEObjctBB{}{\Vect{x} - \Vect{c} +r\Vect{a} } \ar[r] &
     \ExctCplDObjctBB{}{\Vect{x}+r\Vect{a}} \ar[r] \ar@[deepmagenta]@{.>}[d]_{\color{deepmagenta} \ExctCplCoLimAbutMapBB{\Vect{x}+r\Vect{a}}} &
     \ExctCplEObjctBB{}{\Vect{x}+\Vect{b} +r\Vect{a} } \ar[r] &
     \ExctCplDObjctBB{}{\Vect{x} + \Vect{b}+\Vect{c} +r\Vect{a} } \ar[r] \ar@[deepmagenta]@{.>}[d]^-{\color{deepmagenta} \ExctCplCoLimAbutMapBB{\Vect{x}+ \Vect{b} + \Vect{c} + r\Vect{a}}} &
     \ExctCplEObjctBB{}{\Vect{x}+2\Vect{b}+\Vect{c} +r\Vect{a} }  \\
& {\color{deepmagenta} \ExctCplCoLimAbut{n}} &&
  {\color{deepmagenta} \ExctCplCoLimAbut{n+\sigma}} &
}
\end{xy}
$}
\end{equation*}

Above, $n\DefEq \DtrmnntOfMtrx{[\ExctCplIBdg{a}\ \ \Vect{x}]}\in \ZNr$. - Associated to such an exact couple are the two $\ZNr$-graded filtered objects:
\begin{enumerate}[$\bullet$]
\item $\ExctCplCoLimAbut{n} \DefEq \CoLimOfOver{ \ExctCplDObjctBB{}{\Vect{x}+r\Vect{a}} }{r\in\ZNr}$: \Defn{colimit abutment}, filtered by  $\ExctCplCoLimAbutFltrtnBB{\Vect{x}+r\Vect{a}}\DefEq\Img{\ExctCplCoLimAbutMapBB{\Vect{x}+r\Vect{a}} \from \ExctCplDObjctBB{}{\Vect{x}+r\Vect{a}}\to \ExctCplCoLimAbut{n} }$%
\index[not]{$\ExctCplCoLimAbut{\ast}$ - colimit abutment}%
\item $\ExctCplLimAbut{n} \DefEq \LimOfOver{ \ExctCplDObjctBB{}{\Vect{x}+r\Vect{a}} }{r\in\ZNr}$: \Defn{limit abutment}, filtered by  $\ExctCplLimAbutFltrtnBB{\Vect{x}+r\Vect{a}}\DefEq\Ker{ \ExctCplLimAbutMapBB{\Vect{x}+r\Vect{a}}\from \ExctCplLimAbut{n} \to \ExctCplDObjctBB{}{\Vect{x}+r\Vect{a}}}$%
\index[not]{$\ExctCplLimAbut{\ast}$ - limit abutment}%
\end{enumerate}
In Section \ref{sec:SS-From-EC} we explain how a spectral sequence $(\ExctCplEPage{r},\SSDffrntl{r})$ is derived from an exact couple. This spectral sequence has a limit page $\SSPage{\infty}$ which may be computed as is explained in Section \ref{sec:SpecSeq-E-infinity}. The $\SSObjctBB{\infty}{}$-objects of the spectral sequence are related to the quotients of adjacent filtration stages of the limit and colimit abutment objects in a non-obvious way. We describe this relationship completely from two mutually complementary points of view:
\begin{enumerate}[(A)]
\item the E-infinity extension theorem (\ref{thm:E-InfinityExtensionThm}): It is based on spectral sequence computations which are directly mapped into constructions on the underlying exact couple, and are carried through the relevant (co-)limit constructions.
\item the stable E-infinity extension theorem (\ref{thm:Stable-E-Extension}): It is based upon the observation that the recursive passage between the pages of the spectral sequence of an exact couple admits transfinite continuation. By its nature, this transfinite continuation is beyond the scope of classical spectral sequence computations. It results in a stable E-infinity object whose meaning is beautifully simple.
\end{enumerate}

We digress briefly to explain how the E-infinity extension theorem interacts with what are called convergence results for spectral sequences. In many classical spectral sequence constructions, we start from a graded set $X_n$ of $R$-modules about which we want information; e.g. homotopy, (co-)homology groups of an object of interest. The $X_n$ are canonically filtered in a manner which yields an exact couple and, with it, a spectral sequence. The objects $X_n$ of original interest act as the \Defn{abutment} for the spectral sequence, and one hopes that the E-infinity objects of the spectral sequence match adjacent filtration quotients of the filtered abutment.

In terms of the flattened exact couple above, we always find ourselves in one of the following two situations:
\begin{enumerate}[(1)]
\item Either each $X_n$ is the terminal vertex of a cocone from an appropriate $D$-column of the exact couple, 
\item or each $X_n$ is the initial vertex of a cone into an appropriate $D$-column of the exact couple.
\end{enumerate}
If each $X_n$ is the terminal vertex of such a cocone, then there is a unique comparison map from the appropriate colimit abutting object into the abutting object $X_n$. Conversely, if each $X_n$ is the initial vertex of such a cone, then there is a unique comparison map from the abutting object $X_n$ into the appropriate limit abutting object of the exact couple.

From this perspective classical convergence results, notably Boardman's in \cite{JMBoardman1999}, see also \cite[Sec. 5.5]{CAWeibel1994}, provide sufficient conditions under which (a) the abutment of the spectral sequence equals either the exact couple's limit abutment or its colimit abutment, and (b) E-infinity objects from the spectral sequence are isomorphic to appropriate adjacent filtration quotients of the objects $X_n$.

Thus, our approach, differs from classical developments in that we {\em do not} start from a given graded set of filtered objects. Rather, we start from an exact couple and explain the relationship between its E-infinity objects and the filtrations of the universal limit/colimit abutments. If this exact couple is constructed from a given abutment, then we respond with these two steps:
\begin{enumerate}[(S.1)]
\item We investigate the relationship between the given abutment and whichever one of the universal limit/colimit abutments is relevant.
\item We combine the outcome of the investigation in (A) with what the E-infinity extension theorem tells us about the meaning of the  E-infinity objects of the spectral sequence.
\end{enumerate}
The homotopy spectral sequence of a tower of fibrations provides a nice example in which the steps in this approach can be observed; see Example \ref{exa:HomotopySS-Fibrations}.

Thus many classical convergence results can be seen as applications of the E-infinity extension theorems; see Section \ref{sec:ConvergenceII}. To demonstrate the utility of the approach taken here, we identify a class of spectral sequences which do not converge in any classical sense, yet they support meaningful comparison results. We close this chapter with Zeeman style (reverse) comparison results; see \ref{sec:SpecSec-Comparison-II}.

\section[Exact Couples]{Exact Couples}
\label{sec:ExactCouples}

From Massey \cite{WSMassey1952} we recall the notion of a $(\Prdct{\ZNr}{\ZNr})$-bigraded  exact couple (\ref{def:(ZxZ)-BigradedExactCouple}). Such an exact couple has an associated spectral sequence and our ultimate objective is to clarify the meaning of its objects $\ExctCplEObjctBB{r}{\Vect{x}}$ for all $r$. For reasons explained in the introduction to this chapter, we specialize here to $(\ZNr\prdct\ZNr)$-bigraded exact couples which flatten out as shown in diagram (\ref{fig:ExactCouple-ZxZBigraded}). This is achieved by imposing a `regularity condition' upon the bidegrees of its structure maps $i,j,k$.

We observe that exact couples whose structure maps $\ExctCplIMap{},\ExctCplJMap{},\ExctCplKMap{}$ have matching bidegrees form  category, with morphisms as defined in (\ref{def:ExactCouplesMorphism}). This category is additive, and admits arbitrary sums and products; see (\ref{thm:ExactCouplesAdditiveCat}). It fails to be an abelian category as few morphisms admit kernels and cokernels.

Every exact couple has an associated spectral sequence. We present two ways of constructing this spectral sequence. First, via the recursive process of exact couple derivation (\ref{thm:DerivedExactCouples}) and, second, via the construction internal to the exact couple described in Theorem \ref{thm:SpecSequFromExactCouple}. Both approaches are classic well document; see for example \cite{WSMassey1952} and \cite[XI.5]{SMacLane1975-Homology}.

The recursive process of exact couple derivation is formally elegant. However, for the most part, we work with the internal exact couple construction of the spectral sequence objects. New here is that this internal construction extends {\em transfinitely} to arbitrary ordinals. We show that it stabilizes at a sufficiently large ordinal $\alpha$ so as to yield objects $\ExctCplStblEObjct\DefEq \ExctCplEObjctBB{\alpha}{}$; see (\ref{thm:E^tau,tau>omega}). Examples show that the stabilizing ordinal $\alpha$ can be arbitrarily large. The stable E-objects $\ExctCplStblEObjct$ are always subobjects of $\ExctCplEPage{\infty} = \ExctCplEPage{\omega+1}$.

We will see in the next section (\ref{sec:Convergence-I}) on `convergence'  that the stable objects $\ExctCplStblEObjct$ are what one would actually want from the spectral sequence. With this hindsight perspective, all classical convergence considerations involve the search for conditions under which stabilizations happens at the ordinal $\omega$. Accordingly, in (\ref{thm:SpecSeqs-(Co-)Mittag-Leffler}) we tackle more generally conditions under which $\ExctCplStblEObjct=\ExctCplEPage{\infty}$.

We close this section with a brief discussion on eligible bidegrees of the structure maps of a regular exact couple. Even though regular exact couples impose considerable constraints upon those bidegrees, there is still a wide range of possibilities. However, via the canonical $\GLGrp{2}{\ZNr}$-action on $\Prdct{\ZNr}{\ZNr}$, it is possible to re-index any regular exact couple so that its associated spectral sequence is homological or, alternatively, cohomological; see (\ref{exa:ExactCouple-ReindexTo(Co-)Homological}).

\begin{definition}[$(\ZNr\prdct \ZNr)$-bigraded exact couple]
\label{def:(ZxZ)-BigradedExactCouple}%
A $(\ZNr\prdct \ZNr)$-\Defn{bigraded exact couple} $\ExctCpl{C}=\left(\ExctCplEPage{}, \ExctCplDPage{},\ExctCplIMap{},\ExctCplJMap{},\ExctCplKMap{}\right)$ in $\ModulesOver{R}$ consists of %
\index{exact couple}%
\vspace{-1.7ex}
\begin{enumerate}[({EC-}1)\ ]
\setlength{\itemindent}{6ex}
\item A $(\Prdct{\ZNr}{\ZNr})$-bigraded module $\ExctCplDObjctBB{}{-}$,
\item A $(\Prdct{\ZNr}{\ZNr})$-bigraded module $\ExctCplEObjctBB{}{-}$,
\item A morphism $\ExctCplIMap{}\from \ExctCplDObjctBB{}{-}\to \ExctCplDObjctBB{}{-}$ of bidegree $\ExctCplIBdg{a}=(a_1,a_2)\in \Prdct{\ZNr}{\ZNr}$. %
\index[not]{$\ExctCplIBdg{a}$ - bidegree of EC-map $i$}
\item A morphism $\ExctCplJMap{}\from \ExctCplDObjctBB{}{-}\to \ExctCplEObjctBB{}{-}$ of bidegree $\ExctCplJBdg{b}=(b_1,b_2)\in \Prdct{\ZNr}{\ZNr}$. %
\index[not]{$\ExctCplJBdg{b}$ - bidegree of EC-map $j$}
\item A morphism $\ExctCplKMap{}\from \ExctCplEObjctBB{}{-}\to \ExctCplDObjctBB{}{-}$ of bidegree $\ExctCplKBdg{c}=(c_1,c_2)\in \Prdct{\ZNr}{\ZNr}$. %
\index[not]{$\ExctCplKBdg{c}$ - bidegree of EC-map $k$}
\end{enumerate}
The \Defn{structure maps} $\ExctCplIMap{},\ExctCplJMap{},\ExctCplKMap{}$ are to form a triangle which is exact in each corner:
\begin{equation*}
\begin{xy}
\xymatrix@R=2em@C=1.6em{
\ExctCplDObjctBB{}{-} \ar[rr]^{\ExctCplIMap{}} &&
	\ExctCplDObjctBB{}{-} \ar[ld]^{\ExctCplJMap{}} \\
&	\ExctCplEObjctBB{}{-} \ar[lu]^{\ExctCplKMap{}}
}
\end{xy}
\end{equation*}
We say that $\ExctCpl{C}$ is \Defn{regular} if the bidegrees of its structure maps satisfy $\sigma\DefEq \DtrmnntOfMtrx{[\ExctCplIBdg{a}\ \ (\ExctCplJBdg{b}+\ExctCplKBdg{c})]} \in \Set{\pm 1}$. %
\index{regular exact couple}\index{exact couple!regular}\index{exact couple!structure maps}\index[not]{$\sigma=\DtrmnntOfMtrx{[\ExctCplIBdg{a}\ \ (\ExctCplJBdg{b}+\ExctCplKBdg{c})]}$}
\end{definition}

\begin{subordinate}
\begin{remark}[Purpose of the constraint $\sigma\DefEq \DtrmnntOfMtrx{[\ExctCplIBdg{a}\ \ (\Vect{b}+\Vect{c})]} \in \Set{\pm 1}$.]
\label{rem:Sigma-Constraint}%
In \cite[Sec.~6]{WSMassey1952}, Massey briefly addresses $(\ZNr\prdct \ZNr)$-bigraded exact couples. There, he does not impose any constraints on the bidegrees of their structure maps. Via (\ref{thm:BigradingPropertiesOfExactCouple}), the regularity condition ensures that the exact couple is connected via its structure maps and, furthermore, that it flattens out to a diagram like the one displayed in (\ref{fig:ExactCouple-ZxZBigraded}).

To see the potential differences between an arbitrary $(\Prdct{\ZNr}{\ZNr})$-bigraded exact couple and a regular one, the reader may contemplate an exact couple all of whose structure maps have bidegree $(0,0)$: Such an exact couple decomposes into a disjoint union of a $(\Prdct{\ZNr}{\ZNr})$-indexed family of exact couples.
\end{remark}
\end{subordinate}

\begin{lemma}[Bigrading properties of an exact couple]
\label{thm:BigradingPropertiesOfExactCouple}%
For an exact couple $\ExctCpl{C}$ whose structure maps have bidegrees $\ExctCplIBdg{a},\ExctCplJBdg{b}, \ExctCplKBdg{c}$ satisfying $\DtrmnntOfMtrx{[\ExctCplIBdg{a}\ \ (\ExctCplJBdg{b}+\ExctCplKBdg{c})]} = \xi\in \ZNr$ the following hold:
\begin{enumerate}[(i)]
\item \label{thm:BigradingPropertiesOfExactCouple-UniqueConnect}%
For arbitrary $\Vect{x},\Vect{x}'\in \ZNr\prdct \ZNr$, there exist unique $\alpha,\beta\in\ZNr$ with
\begin{equation*}
\Vect{x}' = \Vect{x} + \alpha \ExctCplIBdg{a} + \beta(\ExctCplJBdg{b}+\ExctCplKBdg{c})
\end{equation*}
if and only if $\xi\in\Set{\pm 1}$.
\item \label{thm:BigradingPropertiesOfExactCouple-UniqueZDiagram}%
Suppose $\xi\in \Set{\pm 1}$. Given $n\in\ZNr$, put $\Vect{x}(n)\DefEq \xi n (\ExctCplJBdg{b}+\ExctCplKBdg{c})$. Then $\Vect{x}\in \Prdct{\ZNr}{\ZNr}$ satisfies $\DtrmnntOfMtrx{[\ExctCplIBdg{a}\ \ \Vect{x}]} = n$ if and only if there exists $r\in \ZNr$ with %
\index[not]{$\Vect{x}(n) \DefEq \xi n (\ExctCplJBdg{b}+\ExctCplKBdg{c})$}%
\begin{equation*}
\Vect{x} = \Vect{x}(n) + r\ExctCplIBdg{a}
\end{equation*}
\end{enumerate}
\end{lemma}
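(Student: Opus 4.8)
The plan is to treat the entire statement as elementary linear algebra over $\ZNr$, reducing both parts to the single fact that $\Set{\ExctCplIBdg{a},\,\ExctCplJBdg{b}+\ExctCplKBdg{c}}$ is a $\ZNr$-basis of $\Prdct{\ZNr}{\ZNr}$ precisely when $\xi\in\Set{\pm1}$. Writing $\Vect{d}\DefEq \ExctCplJBdg{b}+\ExctCplKBdg{c}$, let $M$ be the integer matrix $[\ExctCplIBdg{a}\ \ \Vect{d}]$ with columns $\ExctCplIBdg{a}$ and $\Vect{d}$, so that $\DtrmnntOfMtrx{M}=\xi$, and let $\phi\from\Prdct{\ZNr}{\ZNr}\to\Prdct{\ZNr}{\ZNr}$ be the associated $\ZNr$-linear map $(\alpha,\beta)\mapsto \alpha\ExctCplIBdg{a}+\beta\Vect{d}$.

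For (i), I would first observe that the assertion ``for every $\Vect{x}'$ there are unique $\alpha,\beta$ with $\Vect{x}'=\Vect{x}+\phi(\alpha,\beta)$'' is, after subtracting $\Vect{x}$ and letting $\Vect{x}'$ range over all of $\Prdct{\ZNr}{\ZNr}$, exactly the statement that $\phi$ is a bijection. I then invoke the standard criterion that an integer $2\times2$ matrix induces a bijection of $\Prdct{\ZNr}{\ZNr}$ iff it lies in $\GLGrp{2}{\ZNr}$, i.e. iff its determinant is a unit of $\ZNr$. For the forward implication, $\xi\in\Set{\pm1}$ makes the adjugate formula $M^{-1}=\xi^{-1}\,\mathrm{adj}(M)$ integral, so $\phi$ has a two-sided inverse. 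For the converse I argue by contraposition using the trichotomy for $\xi$: if $\xi=0$ the columns are $\QNr$-dependent, so clearing denominators produces a nonzero integer vector in $\Ker{\phi}$ and uniqueness fails; if $\Abs{\xi}\ge2$ then $\phi$ is injective but its image is a sublattice of index $\Abs{\xi}\ge2$, so existence fails for some $\Vect{x}'$. In either case $\xi\notin\Set{\pm1}$ breaks the property.

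For (ii), with $\xi\in\Set{\pm1}$ fixed, the ``if'' direction is a one-line determinant computation: substituting $\Vect{x}=\xi n\Vect{d}+r\ExctCplIBdg{a}$ and expanding by linearity in the second column gives $\DtrmnntOfMtrx{[\ExctCplIBdg{a}\ \ \Vect{x}]}=\xi n\,\DtrmnntOfMtrx{[\ExctCplIBdg{a}\ \ \Vect{d}]}+r\,\DtrmnntOfMtrx{[\ExctCplIBdg{a}\ \ \ExctCplIBdg{a}]}=\xi^2 n=n$, using $\xi^2=1$. For ``only if'', I would use part (i) to write $\Vect{x}=\alpha\ExctCplIBdg{a}+\beta\Vect{d}$ uniquely, then compute $\DtrmnntOfMtrx{[\ExctCplIBdg{a}\ \ \Vect{x}]}=\beta\xi$; the hypothesis $\DtrmnntOfMtrx{[\ExctCplIBdg{a}\ \ \Vect{x}]}=n$ forces $\beta=\xi^{-1}n=\xi n$, whence $\Vect{x}=\xi n\Vect{d}+\alpha\ExctCplIBdg{a}=\Vect{x}(n)+r\ExctCplIBdg{a}$ with $r\DefEq\alpha$.

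The computations here are all routine; the only points demanding care are the bookkeeping with the sign of $\xi$ (handled uniformly by $\xi^2=1$ and $\xi^{-1}=\xi$ for $\xi\in\Set{\pm1}$) and, in the converse of (i), keeping the two failure modes $\xi=0$ (loss of uniqueness) and $\Abs{\xi}\ge2$ (loss of existence) distinct. I expect that trichotomy to be the only genuinely delicate step, since it is where the biconditional would silently collapse if one conflated ``no solution'' with ``non-unique solution''.
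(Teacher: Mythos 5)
Your proof is correct, and it is exactly the "elementary computation" that the paper leaves to the reader: part (i) is the standard fact that $[\ExctCplIBdg{a}\ \ (\ExctCplJBdg{b}+\ExctCplKBdg{c})]$ induces a bijection of $\Prdct{\ZNr}{\ZNr}$ precisely when its determinant is a unit, and part (ii) follows from bilinearity of the determinant together with the basis expansion from (i). Your care in separating the two failure modes in the converse of (i) ($\xi=0$ killing uniqueness, $\Abs{\xi}\geq 2$ killing existence) is exactly the right bookkeeping, since the quantifier ranges over all $\Vect{x},\Vect{x}'$.
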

\begin{proof}
An elementary computation.
\end{proof}

\begin{corollary}[$\ZCat$-diagrams of a regular exact couple]
\label{thm:ExactCouple-ZGradedZ-Diagrams}%
Let $\ExctCpl{C}$ be a regular exact couple whose structure maps have bidegrees $\ExctCplIBdg{a},\ExctCplJBdg{b},\ExctCplKBdg{c}$. Then every $n\in \ZNr$ determines a $\ZCat$-diagram
\begin{equation*}
D(n)\qquad \qquad \cdots \longrightarrow \ExctCplDObjctBB{}{\Vect{x}(n) - \ExctCplIBdg{a}} \XRA{ i } \ExctCplDObjctBB{}{\Vect{x}(n) } \XRA{ i } \ExctCplDObjctBB{}{\Vect{x}(n) + \ExctCplIBdg{a}} \longrightarrow \cdots
\end{equation*}
Moreover, $\Vect{x}(n')$ belongs to $D(n)$ if and only if $n=n'$. \NoProof%
\index[not]{$D(n)$ - $n$-th $\ZCat$-diagram of regular exact couple}%
\end{corollary}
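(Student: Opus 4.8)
The plan is to read everything off from Lemma~\ref{thm:BigradingPropertiesOfExactCouple}(\ref{thm:BigradingPropertiesOfExactCouple-UniqueZDiagram}), specialized to the regular case in which $\xi = \sigma \in \Set{\pm 1}$. First I would fix $n\in\ZNr$ and note that, since $i$ has bidegree $\ExctCplIBdg{a}$, each structure map $i\from \ExctCplDObjctBB{}{\Vect{x}(n)+r\ExctCplIBdg{a}} \to \ExctCplDObjctBB{}{\Vect{x}(n)+(r+1)\ExctCplIBdg{a}}$ lands in the next member of the listed family. Composing these maps exhibits the assignment $r\mapsto \ExctCplDObjctBB{}{\Vect{x}(n)+r\ExctCplIBdg{a}}$ as a functor $\ZCat \to \ModulesOver{R}$, which is exactly the asserted diagram $D(n)$.

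For this assignment to be an honest $\ZCat$-diagram, rather than a collapsed or periodic one, I would verify that the positions $\Vect{x}(n)+r\ExctCplIBdg{a}$, $r\in\ZNr$, are pairwise distinct. This holds because $\ExctCplIBdg{a}\neq 0$: regularity gives $\DtrmnntOfMtrx{[\ExctCplIBdg{a}\ \ (\ExctCplJBdg{b}+\ExctCplKBdg{c})]} = \sigma \in \Set{\pm 1}$, and a determinant with a zero column vanishes, so $\ExctCplIBdg{a}$ cannot be the zero vector. Hence $r\mapsto r\ExctCplIBdg{a}$ is injective and the positions are distinct, so $D(n)$ genuinely has the order type of $\ZCat$.

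The `moreover' clause is then a one-line determinant computation combined with Lemma~\ref{thm:BigradingPropertiesOfExactCouple}(\ref{thm:BigradingPropertiesOfExactCouple-UniqueZDiagram}). By construction $\Vect{x}(n')$ belongs to $D(n)$ precisely when $\Vect{x}(n') = \Vect{x}(n)+r\ExctCplIBdg{a}$ for some $r\in\ZNr$, which by that lemma is equivalent to $\DtrmnntOfMtrx{[\ExctCplIBdg{a}\ \ \Vect{x}(n')]} = n$. Using $\Vect{x}(n') = \sigma n'(\ExctCplJBdg{b}+\ExctCplKBdg{c})$ and linearity of the determinant in its second column,
\begin{equation*}
\DtrmnntOfMtrx{[\ExctCplIBdg{a}\ \ \Vect{x}(n')]} = \sigma n' \DtrmnntOfMtrx{[\ExctCplIBdg{a}\ \ (\ExctCplJBdg{b}+\ExctCplKBdg{c})]} = \sigma^2 n' = n',
\end{equation*}
since $\sigma^2 = 1$. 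Thus $\Vect{x}(n')$ lies in $D(n)$ if and only if $n' = n$, as claimed.

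I expect no serious obstacle here: the substantive work has been front-loaded into Lemma~\ref{thm:BigradingPropertiesOfExactCouple}, and what remains is bookkeeping. The only points meriting care are (a) confirming $\ExctCplIBdg{a}\neq 0$, so that $D(n)$ has the order type of $\ZCat$, and (b) keeping the two determinant quantities straight—the generic $\xi$ of the lemma specializes to $\sigma$ here, and it is precisely the identity $\sigma^2 = 1$ that makes $\Vect{x}(n')$ detect $n'$ exactly.
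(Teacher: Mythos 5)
Your proposal is correct and follows exactly the route the paper intends: the corollary is stated with no proof precisely because it is immediate from Lemma~\ref{thm:BigradingPropertiesOfExactCouple}(\ref{thm:BigradingPropertiesOfExactCouple-UniqueZDiagram}), which is what you invoke, and your determinant computation $\DtrmnntOfMtrx{[\ExctCplIBdg{a}\ \ \Vect{x}(n')]}=\sigma^2 n'=n'$ is the intended one-line verification of the ``moreover'' clause. Your extra observations---that regularity forces $\ExctCplIBdg{a}\neq 0$ so the positions $\Vect{x}(n)+r\ExctCplIBdg{a}$ are pairwise distinct---are sound and make explicit a point the paper leaves tacit.
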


Next, we state basic categorical features of exact couples. 

\begin{definition}[Morphism of exact couples]
\label{def:ExactCouplesMorphism}%
Consider exact couples $\ExctCpl{C}(1)=\left(\ExctCplEPage{}(1), \ExctCplDPage{}(1),\ExctCplIMap{}(1),\ExctCplJMap{}(1),\ExctCplKMap{}(1)\right)$ and $\ExctCpl{C}(2)=\left(\ExctCplEPage{}(2), \ExctCplDPage{}(2),\ExctCplIMap{}(2),\ExctCplJMap{}(2),\ExctCplKMap{}(2)\right)$ in $\ModulesOver{R}$ whose structure maps have matching bidegrees: $\ExctCplIBdg{a}(1)=\ExctCplIBdg{a}(2)$, $\ExctCplJBdg{b}(1)=\ExctCplJBdg{b}(2)$, and $\ExctCplKBdg{c}(1)=\ExctCplKBdg{c}(2)$. A \Defn{morphism $(f,g)\from \ExctCpl{C}(1)\to \ExctCpl{C}(2)$ of exact couples} is given by a family of $R$-module maps  %
\index{morphism!of exact couples}\index{exact couple!morphism}
\begin{equation*}
f_{\Vect{x}}\from \ExctCplDObjctBB{}{\Vect{x}}(1) \longrightarrow \ExctCplDObjctBB{}{\Vect{x}}(2)\qquad \text{and}\qquad g_{\Vect{x}}\from \ExctCplEObjctBB{}{\Vect{x}}(1)\longrightarrow \ExctCplEObjctBB{}{\Vect{x}}(2)
\end{equation*}
which render the resulting diagram of exact couples commutative.
\end{definition}

Thus the $(\Prdct{\ZNr}{\ZNr})$-bigraded exact couples in $\ModulesOver{R}$ form a category $\ExctCplsCatModulesOver{R}$ in which each triple of bidegrees $(\ExctCplIBdg{a},\ExctCplJBdg{b},\ExctCplKBdg{c})$ determines a connected component. Given a triple of bidegrees $(\ExctCplIBdg{a},\ExctCplJBdg{b},\ExctCplKBdg{c})$, let  $\ExctCplsCatModulesOver{R}(\ExctCplIBdg{a},\ExctCplJBdg{b},\ExctCplKBdg{c})$ denote the full subcategory of $\ExctCplsCatModulesOver{R}$ of all those exact couples whose structure maps $i,j,k$ have bidegrees $(\ExctCplIBdg{a},\ExctCplJBdg{b},\ExctCplKBdg{c})$. %
\index{$\ExctCplsCatModulesOver{R}$ - cat of exact couples in $\ModulesOver{R}$}%
\index{$\ExctCplsCatModulesOver{R}(\ExctCplIBdg{a},\ExctCplJBdg{b},\ExctCplKBdg{c})$ - EC's in $\ModulesOver{R}$ with structure map bidegrees $\ExctCplIBdg{a},\ExctCplJBdg{b},\ExctCplKBdg{c}$}

\begin{proposition}[Exact couples form an additive category]
\label{thm:ExactCouplesAdditiveCat}%
The category $\ExctCplsCatModulesOver{R}(\ExctCplIBdg{a},\ExctCplJBdg{b},\ExctCplKBdg{c})$ is additive with objectwise defined addition of morphisms. Moreover, it has arbitrary sums and products which are defined objectwise. \NoProof
\end{proposition}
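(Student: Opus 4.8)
The plan is to verify the three ingredients of an additive category — a preadditive ($\mathbf{Ab}$-enriched) hom-structure, a zero object, and finite biproducts — and then to promote the biproducts to arbitrary objectwise sums and products. First I would record the preadditive structure. Given morphisms $(f,g),(f',g')\colon \ExctCpl{C}(1)\to \ExctCpl{C}(2)$, set $(f,g)+(f',g')\DefEq (f+f',g+g')$, using the addition available in $\ModulesOver{R}$. The defining commutativity conditions for a morphism of exact couples, namely $f\Comp \ExctCplIMap{}(1)=\ExctCplIMap{}(2)\Comp f$, $g\Comp \ExctCplJMap{}(1)=\ExctCplJMap{}(2)\Comp f$, and $f\Comp \ExctCplKMap{}(1)=\ExctCplKMap{}(2)\Comp g$, are each additive in the pair $(f,g)$, so the sum is again a morphism; associativity, commutativity, inverses, and bilinearity of composition are inherited positionwise from $\ModulesOver{R}$. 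Thus every hom-set is an abelian group and the category is preadditive. The zero exact couple, with $\ExctCplDObjctBB{}{-}=0$, $\ExctCplEObjctBB{}{-}=0$, and all structure maps zero, is trivially exact in each corner and is simultaneously initial and terminal, hence a zero object.

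Next, for a family $(\ExctCpl{C}(\lambda))_{\lambda}$ in $\ExctCplsCatModulesOver{R}(\ExctCplIBdg{a},\ExctCplJBdg{b},\ExctCplKBdg{c})$, I would form the candidate product objectwise, taking $D\DefEq \prod_{\lambda}\ExctCplDObjctBB{}{-}(\lambda)$ and $E\DefEq \prod_{\lambda}\ExctCplEObjctBB{}{-}(\lambda)$ with structure maps $\prod_{\lambda}\ExctCplIMap{}(\lambda)$, $\prod_{\lambda}\ExctCplJMap{}(\lambda)$, $\prod_{\lambda}\ExctCplKMap{}(\lambda)$, which carry the prescribed bidegrees $\ExctCplIBdg{a},\ExctCplJBdg{b},\ExctCplKBdg{c}$ since they are assembled from maps of those bidegrees; the candidate sum is formed identically with $\bigoplus$ in place of $\prod$. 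The objectwise projections (resp. injections) commute with these structure maps by construction, so they are morphisms of exact couples, and the universal comparison map into (resp. out of) any test couple is the objectwise one furnished by $\ModulesOver{R}$, which then automatically commutes with the structure maps. For a finite family the product and sum coincide objectwise, so in the preadditive setting they are biproducts, which completes the additive structure; for arbitrary families they supply the asserted arbitrary products and sums.

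The one genuine point to check, and the main obstacle, is that the objectwise product and sum are again \emph{exact} couples, i.e. that exactness at each corner of the defining triangle survives $\prod$ and $\bigoplus$. Concretely, exactness in each corner is the exactness of the bi-infinite sequence $\cdots\to \ExctCplDObjctBB{}{-}\xrightarrow{\ExctCplIMap{}}\ExctCplDObjctBB{}{-}\xrightarrow{\ExctCplJMap{}}\ExctCplEObjctBB{}{-}\xrightarrow{\ExctCplKMap{}}\ExctCplDObjctBB{}{-}\to\cdots$, and here I would invoke the module-theoretic input that in $\ModulesOver{R}$ both arbitrary products and arbitrary direct sums are exact functors: kernels and images are computed positionwise and commute with $\prod$ and with $\bigoplus$. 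Hence the kernel equals the image at each corner of the assembled triangle exactly when this holds in every factor (resp. summand), which is true by hypothesis. This exactness of products and coproducts is the only place where a genuine property of $\ModulesOver{R}$ is used; everything else is formal linearity together with the objectwise universal properties.
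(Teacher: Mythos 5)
Your proof is correct. Note that the paper states this proposition without proof (it is marked as not requiring one), so there is nothing to compare against; your argument supplies exactly the routine verification being omitted, and it correctly isolates the one non-formal ingredient, namely that in $\ModulesOver{R}$ arbitrary products as well as arbitrary direct sums are exact (kernels and images are computed positionwise), which is a genuine property of module categories and is what guarantees that the objectwise triangle remains exact in each corner.
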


\section{Spectral Sequence From an Exact Couple}
\label{sec:SS-From-EC}%

Let us now turn to the process of extracting a spectral sequence from an exact couple $\ExctCpl{C}$. The key is that its $(\ZNr\prdct\ZNr)$-graded $R$-module of $E$-objects is canonically equipped with the differential $d\DefEq j\Comp k$. The homology of this differential yields the next page in the spectral sequence. There are two ways to compute subsequent pages of the spectral sequence:
\begin{enumerate}[(1)]
\item {\em Recursive derivation of $\ExctCpl{C}$:} Every exact couple $\ExctCpl{C}$ admits two constructions called 'derivation' which result in a new exact couple $\ExctCpl{C}'$. Iterating either derivation yields a sequence of $\ExctCplEObjctBB{-}{}$-objects with associated differentials. This is the spectral sequence associated to the exact couple. The essence of this construction is stated as Proposition (\ref{thm:DerivedExactCouples}).
\item {\em Direct computation:} As outlined in \cite[XI.5]{SMacLane1975-Homology}, \cite{GWWhitehead1978}, it is possible to obtain the recursively constructed spectral sequence from (1) above, by working `internal' to a given exact couple, with additive relations. This approach delivers the desired spectral sequence, but the underlying construction may be extended {\em transfinitely}. Beyond the desired spectral sequence, we obtain a new reference point for the meaning of its E-infinity objects; see the combination of the stable E-extension theorem \ref{thm:Stable-E-Extension} and the E-infinity extension theorem \ref{thm:E-InfinityExtensionThm}.
\end{enumerate}

\begin{proposition}[Derived exact couples]
\label{thm:DerivedExactCouples}%
An exact couple $\ExctCpl{C}=\left(\ExctCplEPage{}, \ExctCplDPage{},\ExctCplIMap{},\ExctCplJMap{},\ExctCplKMap{}\right)$ whose structure maps have bidegrees $\ExctCplIBdg{a},\ExctCplJBdg{b},\ExctCplKBdg{c}$, yields two new exact couples denoted $Q\ExctCpl{C}$ and $I\ExctCpl{C}$ defined as follows:
\begin{center}
\renewcommand{\arraystretch}{2.1}
\begin{tabular}{|l|l|}
\hline 
$Q\ExctCpl{C}=\left(Q\ExctCplEPage{}, Q\ExctCplDPage{}, Q\ExctCplIMap{}, Q\ExctCplJMap{}, Q\ExctCplKMap{}\right)$ & 
	$I\ExctCpl{C}=\left(I\ExctCplEPage{}, I\ExctCplDPage{}, I\ExctCplIMap{}, I\ExctCplJMap{}, I\ExctCplKMap{}\right)$ \\ 
\hline 
$Q\ExctCplEObjctBB{}{\Vect{x}+\ExctCplJBdg{b}} \DefEq \dfrac{\Ker{ \ExctCplJMapBB{}{\Vect{x}+\ExctCplJBdg{b}+\ExctCplKBdg{c} }\Comp \ExctCplKMapBB{}{\Vect{x}+\ExctCplJBdg{b} } } }{ \Img{ \ExctCplJMapBB{}{\Vect{x}} \Comp \ExctCplKMapBB{}{\Vect{x}-\ExctCplKBdg{c} } } }$ & $I\ExctCplEObjctBB{}{\Vect{x}+\ExctCplJBdg{b}} \DefEq \dfrac{\Ker{ \ExctCplJMapBB{}{\Vect{x}+\ExctCplJBdg{b}+\ExctCplKBdg{c} }\Comp \ExctCplKMapBB{}{\Vect{x}+\ExctCplJBdg{b} } } }{ \Img{ \ExctCplJMapBB{}{\Vect{x}} \Comp \ExctCplKMapBB{}{\Vect{x}-\ExctCplKBdg{c} } } }$ \\
\hline
$Q\ExctCplDObjctBB{}{\Vect{x}}\DefEq \Img{ \ExctCplIMapBB{}{\Vect{x}}\from \ExctCplDObjctBB{}{\Vect{x}}\to \ExctCplDObjctBB{}{\Vect{x} + \ExctCplIBdg{a}}}$ &
 $I\ExctCplDObjctBB{}{\Vect{x} +\ExctCplIBdg{a} }\DefEq \Img{ \ExctCplIMapBB{}{\Vect{x}}\from \ExctCplDObjctBB{}{\Vect{x}}\to \ExctCplDObjctBB{}{\Vect{x} + \ExctCplIBdg{a}}}$ \\ 
\hline 
$Q\ExctCplIMapBB{}{\Vect{x}} \DefEq \ExctCplIMapBB{}{\Vect{x}+ \ExctCplIBdg{a} }|$ &
 $I\ExctCplIMapBB{}{\Vect{x}} \DefEq \ExctCplIMapBB{}{\Vect{x}+ \ExctCplIBdg{a} }|$ \\ 
\hline 
$Q\ExctCplJMapBB{}{\Vect{x}}( \ExctCplIMapBB{}{\Vect{x}}(t) ) \DefEq [\ExctCplJMapBB{}{\Vect{x}}(t)] \in{} Q\ExctCplEObjctBB{}{\Vect{x}+\ExctCplJBdg{b}}$ &
 $I\ExctCplJMapBB{}{\Vect{x}+\ExctCplIBdg{a} }( \ExctCplIMapBB{}{\Vect{x}}(t) ) \DefEq [\ExctCplJMapBB{}{\Vect{x}}(t)] \in I\ExctCplEObjctBB{}{\Vect{x}+\ExctCplJBdg{b}}$ \\ 
\hline 
$Q\ExctCplKMapBB{}{\Vect{x}+\ExctCplJBdg{b} }[z] \DefEq \ExctCplKMapBB{}{\Vect{x}+\ExctCplJBdg{b} }(z) \in Q\ExctCplDObjctBB{}{\Vect{x} +\ExctCplJBdg{b} + \ExctCplKBdg{c} - \ExctCplIBdg{a} }$ & 
	$I\ExctCplKMapBB{}{\Vect{x}+\ExctCplJBdg{b} }[z] \DefEq \ExctCplKMapBB{}{\Vect{x}+\ExctCplJBdg{b} }(z) \in I\ExctCplDObjctBB{}{\Vect{x}+\ExctCplJBdg{b} + \ExctCplKBdg{c} }$ \\ 
\hline  
\end{tabular} \renewcommand{\arraystretch}{1}
\end{center}
\smallskip
The structure maps of $Q\ExctCpl{C}$ and $I\ExctCpl{C}$ have bidegrees
\begin{equation*}
Q\ExctCplIBdg{a}= \ExctCplIBdg{a} = I\ExctCplIBdg{a},\quad Q\ExctCplJBdg{b}=\ExctCplJBdg{b}\ \ \text{and}\ \ I\ExctCplJBdg{b}=\ExctCplJBdg{b}- \ExctCplIBdg{a},\quad  Q\ExctCplKBdg{c} = \ExctCplJBdg{c}- \ExctCplIBdg{a}\ \ \text{and}\ \ I\ExctCplKBdg{c} = \ExctCplKBdg{c}
\end{equation*}
and satisfy $\DtrmnntOfMtrx{[\ExctCplIBdg{a}\ (\ExctCplJBdg{b}+\ExctCplKBdg{c})]} =  \DtrmnntOfMtrx{[Q\ExctCplIBdg{a}\ (Q\ExctCplJBdg{b}+Q\ExctCplKBdg{c})]} = \DtrmnntOfMtrx{[I\ExctCplIBdg{a}\ (I\ExctCplJBdg{b}+I\ExctCplKBdg{c})]}$. If $\mathcal{C}$ is regular, then so are $Q\mathcal{C}$ and $I\mathcal{C}$.
\end{proposition}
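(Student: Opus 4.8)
The plan is to treat $Q\ExctCpl{C}$ and $I\ExctCpl{C}$ uniformly. Observe first that the two derived couples carry identical $E$-objects, and that their $D$-objects coincide as modules: both are the images $\Img{\ExctCplIMapBB{}{\Vect{x}}}$, the only difference being whether this image is indexed by the source $\Vect{x}$ (for $Q$) or by the target $\Vect{x}+\ExctCplIBdg{a}$ (for $I$) of $\ExctCplIMapBB{}{\Vect{x}}$. Consequently the exact-couple axioms --- well-definedness of the structure maps together with exactness at each corner --- need only be verified once; the two indexing conventions then produce two honest exact couples differing by a degree shift of the $D$-part by $\ExctCplIBdg{a}$. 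Throughout I use that the $E$-differential is $d\DefEq \ExctCplJMap{}\Comp\ExctCplKMap{}$ with $d\Comp d=0$, which holds because exactness of $\ExctCpl{C}$ at the $E$-corner gives $\ExctCplKMap{}\Comp\ExctCplJMap{}=0$; thus the numerator and denominator in the displayed formula for the derived $E$-object are precisely $\Ker{d}$ and $\Img{d}$ in the relevant position, so the derived $E$-object is the homology $\Ker{d}/\Img{d}$.

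First I would settle well-definedness of $j'$ and $k'$. For $j'$, the assignment $\ExctCplIMapBB{}{\Vect{x}}(t)\mapsto [\ExctCplJMapBB{}{\Vect{x}}(t)]$ is legitimate because $\ExctCplJMapBB{}{\Vect{x}}(t)$ is a $d$-cycle (again using $\ExctCplKMap{}\Comp\ExctCplJMap{}=0$) and because any two preimages of a fixed element under $\ExctCplIMapBB{}{\Vect{x}}$ differ by an element of $\Ker{\ExctCplIMap{}}=\Img{\ExctCplKMap{}}$, whose image under $\ExctCplJMap{}$ lies in $\Img{d}$ and hence vanishes in homology. For $k'$, a $d$-cycle $z$ satisfies $\ExctCplKMap{}(z)\in \Ker{\ExctCplJMap{}}=\Img{\ExctCplIMap{}}$, so $\ExctCplKMap{}(z)$ genuinely lands in the derived $D$-object; and a $d$-boundary maps to $0$ under $\ExctCplKMap{}$ since $\ExctCplKMap{}\Comp\ExctCplJMap{}=0$, so $k'$ descends to homology. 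The remaining map $i'$ is a restriction of $\ExctCplIMap{}$ and needs no further comment.

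Next I would verify exactness at the three corners of the derived triangle, namely $\Img{i'}=\Ker{j'}$, $\Img{j'}=\Ker{k'}$ and $\Img{k'}=\Ker{i'}$, by the standard diagram chase that invokes only the three exactness relations of $\ExctCpl{C}$; keeping the source/target index conventions of $Q$ and $I$ straight is the one place that demands care. The bidegree formulas then follow by reading off source and target indices: since $Q$ indexes $\Img{\ExctCplIMapBB{}{\Vect{x}}}$ by $\Vect{x}$ while $I$ indexes it by $\Vect{x}+\ExctCplIBdg{a}$, the map $\ExctCplJMap{}$ loses $\ExctCplIBdg{a}$ of degree in $I$ but not in $Q$, whereas $\ExctCplKMap{}$ loses $\ExctCplIBdg{a}$ in $Q$ but not in $I$; this yields $Q\ExctCplJBdg{b}=\ExctCplJBdg{b}$, $Q\ExctCplKBdg{c}=\ExctCplKBdg{c}-\ExctCplIBdg{a}$, $I\ExctCplJBdg{b}=\ExctCplJBdg{b}-\ExctCplIBdg{a}$, $I\ExctCplKBdg{c}=\ExctCplKBdg{c}$, and $Q\ExctCplIBdg{a}=I\ExctCplIBdg{a}=\ExctCplIBdg{a}$.

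Finally, the determinant identity is pure bookkeeping: in both cases one has $Q\ExctCplJBdg{b}+Q\ExctCplKBdg{c}=I\ExctCplJBdg{b}+I\ExctCplKBdg{c}=\ExctCplJBdg{b}+\ExctCplKBdg{c}-\ExctCplIBdg{a}$, so column-linearity of the determinant together with $\DtrmnntOfMtrx{[\ExctCplIBdg{a}\ \ \ExctCplIBdg{a}]}=0$ gives $\DtrmnntOfMtrx{[\ExctCplIBdg{a}\ \ (\ExctCplJBdg{b}+\ExctCplKBdg{c}-\ExctCplIBdg{a})]}=\DtrmnntOfMtrx{[\ExctCplIBdg{a}\ \ (\ExctCplJBdg{b}+\ExctCplKBdg{c})]}$, which is the asserted equality for both $Q\ExctCpl{C}$ and $I\ExctCpl{C}$. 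Regularity is then immediate, since $\ExctCpl{C}$ regular means this common determinant lies in $\Set{\pm 1}$, so the same holds for the two derived couples. I expect the exactness chase at the middle corner, carried out simultaneously for the two indexing conventions, to be the only genuinely delicate step; everything else is well-definedness bookkeeping and elementary determinant arithmetic.
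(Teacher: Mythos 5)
Your proposal is correct, and it takes a genuinely different route from the paper: the paper offers no argument for (\ref{thm:DerivedExactCouples}) at all, only a comment deferring the verification to Massey \cite{WSMassey1952} and to \cite{SMacLane1975-Homology}, \cite{JJRotman2009}, \cite{CAWeibel1994}. Your verification is self-contained, and every step you actually carry out is sound: the identification of the displayed numerator and denominator with $\Ker{d}$ and $\Img{d}$ for $d=\ExctCplJMap{}\Comp\ExctCplKMap{}$; well-definedness of the derived $j$ (two $i$-preimages differ by an element of $\Ker{\ExctCplIMap{}}=\Img{\ExctCplKMap{}}$, which dies in homology) and of the derived $k$ (a cycle has $k$-image in $\Ker{\ExctCplJMap{}}=\Img{\ExctCplIMap{}}$, which is exactly the derived $D$-object, while boundaries die because $\ExctCplKMap{}\Comp\ExctCplJMap{}=0$); the bidegree bookkeeping, whose outcome agrees with the table; and the determinant identity via column-linearity of $\Dtrmnnt$, which gives preservation of regularity. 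Your organizing observation --- that $Q\ExctCpl{C}$ and $I\ExctCpl{C}$ have literally the same underlying modules and structure maps and differ only in how the $D$-objects are indexed, so the exact-couple axioms need be checked only once --- is precisely the content of the paper's Remark \ref{rem:ExactCoupleDerivation-Comment}, and it is a genuine economy. The one step you leave implicit is the three-corner exactness chase; it does close as you predict (at the corner receiving the derived $i$, absorb the error term using $\Img{\ExctCplKMap{}}=\Ker{\ExctCplIMap{}}$; at the $E$-corner, use $\Ker{\ExctCplKMap{}}=\Img{\ExctCplJMap{}}$; at the corner receiving the derived $k$, use $\Ker{\ExctCplIMap{}}=\Img{\ExctCplKMap{}}$ together with $\ExctCplJMap{}\Comp\ExctCplIMap{}=0$ to check that the chosen preimage is a $d$-cycle), but since this chase is exactly what the cited literature supplies, a complete write-up should spell it out rather than label it standard. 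As for what each approach buys: the paper's citation keeps the exposition lean and credits the classical provenance, whereas your direct proof records which exactness relation of $\ExctCpl{C}$ enters at which step --- information that the paper itself ends up needing again, for instance in the internal construction of the spectral sequence in (\ref{thm:SpecSequFromExactCouple}).
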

\begin{proof}[Comment on proof]
The derivation $I\ExctCpl{C}$ of $\ExctCpl{C}$ is the one originally proposed by Massey in \cite[Sec.~6]{WSMassey1952}. It has been widely adopted in contemporary literature; see for example \cite[XI.5]{SMacLane1975-Homology},\cite[10.2]{JJRotman2009},\cite{CAWeibel1994}.  Abelian category perspectives go back at least as far as  Eilenberg-Moore \cite[Sec.~5]{SEilenbergJCMoore1961} and Eckmann-Hilton \cite{BEckmannPJHilton1966}.
\end{proof}

\begin{remark}[On exact couple derivation]
\label{rem:ExactCoupleDerivation-Comment}%
While the derived exact couples $Q\ExctCpl{C}$ and $I\ExctCpl{C}$ have identical objects, they play quite different roles: $Q\ExctCpl{C}$ is based upon the cokernels in the image factorizations of $i$, and $I\ExctCpl{C}$ is based on their kernels. The difference between the two manifests itself under (co-)limit processes; see Section (\ref{sec:ImageFactorization-Z-Diagrams}).

Both exact couples are related via the commutative diagram below. Visibly, neither index positions nor bidegrees of the structure maps match up. Therefore, both derived exact couples belong to distinct categories. In particular, the commutative diagram below is not part of an isomorphism of exact couples.
\begin{equation*}
\begin{xy}
\xymatrix@R=6ex@C=4.5em{
{Q\ExctCplDObjctBB{}{\Vect{x}}} \ar[r]^-{Q\ExctCplJMapBB{}{\Vect{x}} } \ar@{=}[d] &
	{Q\ExctCplEObjctBB{}{\Vect{x} +\ExctCplJBdg{b} }} \ar[r]^-{Q\ExctCplKMapBB{}{\Vect{x} + \ExctCplJBdg{b} } } \ar@{=}[d] &
	{Q\ExctCplDObjctBB{}{\Vect{x} + \ExctCplJBdg{b}+\ExctCplKBdg{c} - \ExctCplIBdg{a} }} \ar[r]^-{ Q\ExctCplIMapBB{}{\Vect{x} + \ExctCplJBdg{b}+\ExctCplKBdg{c} - \ExctCplIBdg{a} } } \ar@{=}[d] &
	{Q\ExctCplDObjctBB{}{ \Vect{x} + \ExctCplJBdg{b}+\ExctCplKBdg{c} }} \ar@{=}[d] \\
I\ExctCplDObjctBB{}{\Vect{x} + \ExctCplIBdg{a} } \ar[r]_-{I\ExctCplJMapBB{}{\Vect{x}+\ExctCplIBdg{a} } } &
	I\ExctCplEObjctBB{}{\Vect{x} +\ExctCplJBdg{b} } \ar[r]_-{I\ExctCplKMapBB{}{\Vect{x} + \ExctCplJBdg{b} } } &
	I\ExctCplDObjctBB{}{\Vect{x} + \ExctCplJBdg{b} +\ExctCplKBdg{c} } \ar[r]_-{ I\ExctCplIMapBB{}{\Vect{x} + \ExctCplJBdg{b}+\ExctCplKBdg{c} } } &
	I\ExctCplDObjctBB{}{\Vect{x} + \ExctCplJBdg{b}+\ExctCplKBdg{c} + \ExctCplIBdg{a} }
}
\end{xy}
\end{equation*}
\end{remark}

As an alternate to exact couple derivation we describe a \Defn{transfinitely recursive process} by which we compute objects $\ExctCplEPage{\tau}$, for every ordinal $\tau$. For cardinality reasons this process stabilizes at some ordinal $\alpha$, hence yields stable E-objects $\ExctCplStblEObjct\DefEq \ExctCplEPage{\alpha}$. The stabilization ordinal of the E-objects matches the stabilization ordinal of the image subdiagrams in the $\ZCat$-diagrams $D(n)$, hence can be arbitrarily large; see (\ref{sec:ImageFactorization-Z-Diagrams}).

For finite ordinals $r\geq 1$, the objects $\ExctCplEPage{r}$ are the ones computed via a spectral sequence from exact couple derivation (\ref{thm:DerivedExactCouples}). Thus, for finite $r$, $\SSPage{r+1}_{\Vect{x}}$ is a subquotient $\SSPage{r}_{\Vect{x}}$, for infinite ordinals we have
\begin{equation*}
\ExctCplStblEObjct \subseteq \cdots \subseteq \ExctCplEPage{\omega+\kappa}\subseteq\cdots \subseteq \ExctCplEPage{\omega+1} = \ExctCplEPage{\infty}
\end{equation*}
The following constructions rely on the image/quotient subdiagrams of a $\ZCat$-diagram; see Section (\ref{sec:ImageFactorization-Z-Diagrams}).

\begin{notation}[Exact couple objects]
\label{def:ExactCoupleObjects}%
\label{def:ExactCoupleItems}
Let $\ExctCpl{C}=(\ExctCplEPage{},\ExctCplDPage{})$ be a regular exact couple whose structure maps $\ExctCplIMap{},\ExctCplJMap{},\ExctCplKMap{}$ have bidegrees $\ExctCplIBdg{a},\ExctCplIBdg{b},\ExctCplIBdg{c}$. For $n\in \ZNr$ and $\Vect{x}\in \Prdct{\ZNr}{\ZNr}$ with $\DtrmnntOfMtrx{[\ExctCplIBdg{a}\ \Vect{x}]}=n$\*, we know  (\ref{thm:BigradingPropertiesOfExactCouple}) that the object $\ExctCplDObjctBB{}{\Vect{x}}$ belongs to the $\ZCat$-diagram $D(n)$. For an ordinal $\tau$, define:
\vspace{-1.7ex}%
\begin{enumerate}[$\bullet$]
\item The \Defn{image quotient object of order $\tau$} of $D(n)$ in position $\Vect{x}$: $\ZDiagQuo{\tau}{\Vect{x}}{} \DefEq  \ZDiagQuo{\tau}{\Vect{x}}{D(n)}$
\item The \Defn{image subobject of order $\tau$} of $D(n+\sigma)$ in position $\Vect{x}+\ExctCplJBdg{b} + \ExctCplKBdg{c}$: $\ZDiagImg{\tau}{\Vect{x}+\ExctCplJBdg{b} + \ExctCplKBdg{c}}{} \DefEq \ZDiagImg{\tau}{\Vect{x}+\ExctCplJBdg{b} + \ExctCplKBdg{c}}{D(n+\sigma)}$
\item The \Defn{object of cycles} of order $\tau$ in position $\Vect{x}+\ExctCplJBdg{b}$: $\ExctCplCyclesBB{\tau}{\Vect{x}+\ExctCplJBdg{b}}\DefEq \left( \ExctCplKMapBB{}{\Vect{x}+\ExctCplJBdg{b}}\right)^{-1} \ZDiagImg{\tau}{\Vect{x}+\ExctCplJBdg{b} + \ExctCplKBdg{c}}{}$
\index{object!of cycles}
\item the \Defn{object of boundaries} of order $\tau$ in position  $\Vect{x}+\ExctCplJBdg{b}$: $\ExctCplBndrsBB{\tau}{\Vect{x}+\ExctCplJBdg{b}}\DefEq  \ExctCplJMapBB{}{\Vect{x}} \Ker{ \ExctCplDObjctBB{}{\Vect{x}}\to \ZDiagQuo{\tau}{\Vect{x}}{} }$ %
\index{object!of boundaries}
\end{enumerate}
For ordinals $\tau<\lambda$ we then have the following inclusions:
\begin{equation*}
\ExctCplBndrsBB{\tau}{\Vect{x}+\ExctCplJBdg{b}}\subseteq \ExctCplBndrsBB{\lambda}{\Vect{x}+\ExctCplJBdg{b}}\subseteq \Img{\ExctCplJMapBB{}{\Vect{x}} } = \Ker{\ExctCplKMapBB{}{\Vect{x}+\ExctCplJBdg{b}}} \subseteq \ExctCplCyclesBB{\lambda}{\Vect{x}+\ExctCplJBdg{b}} \subseteq \ExctCplCyclesBB{\tau}{\Vect{x}+\ExctCplJBdg{b}}.
\end{equation*}
Thus, for any successor ordinal $\tau+1$, the object below is defined
\begin{equation*}
\SSObjctBB{\tau+1}{\Vect{x}+\Vect{b}} \DefEq \dfrac{ \ExctCplCyclesBB{\tau}{\Vect{x}+\ExctCplJBdg{b}} }{ \ExctCplBndrsBB{\tau}{\Vect{x}+\ExctCplJBdg{b}} }
\end{equation*}
\end{notation}

\begin{lemma}[$\ExctCplEPage{\tau}$ for $\tau>\omega$]
\label{thm:E^tau,tau>omega}
In the setting of (\ref{def:ExactCoupleItems}) the following hold for ordinals $\lambda>\tau\geq \omega$
\begin{enumerate}[(i)]
\item $\ExctCplBndrsBB{\lambda}{} = \ExctCplBndrsBB{\tau}{} = \ExctCplBndrsBB{\omega}{}$
\item $\ExctCplEPage{\lambda}\subseteq\ExctCplEPage{\tau}\subseteq \ExctCplEPage{\omega}$
\item There exists an ordinal $\alpha$ such that, for $\lambda\geq \alpha$, $\ExctCplEPage{\lambda}=\ExctCplEPage{\alpha}$. 
\end{enumerate}
\end{lemma}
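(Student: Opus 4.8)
The plan is to deduce all three claims from the transfinite behaviour of the image subdiagrams $\ZDiagImg{\tau}{-}{}$ and the quotient subdiagrams $\ZDiagQuo{\tau}{-}{}$ of the $\ZCat$-diagrams $D(n)$ recorded in Section \ref{sec:ImageFactorization-Z-Diagrams}. The point is that, by (\ref{def:ExactCoupleItems}), the boundaries $\ExctCplBndrsBB{\tau}{}$ see the diagram only through $\Ker{\ExctCplDObjctBB{}{\Vect{x}} \to \ZDiagQuo{\tau}{\Vect{x}}{}}$, whereas the cycles $\ExctCplCyclesBB{\tau}{}$ see $D(n+\sigma)$ only through the image objects $\ZDiagImg{\tau}{\Vect{x}+\ExctCplJBdg{b}+\ExctCplKBdg{c}}{}$; the two halves of the lemma then rest on the sharply different transfinite behaviour of these two constructions.

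For (i) I would first observe that the quotient subdiagram stabilizes already at $\omega$. Indeed $\ZDiagQuo{\omega}{\Vect{x}}{} = \ExctCplDObjctBB{}{\Vect{x}}/\FamUnion{r}{\Ker{\ExctCplIMapItrtdBB{r}{\Vect{x}}}}$, and the structure map induced by $i$ from position $\Vect{x}$ to position $\Vect{x}+\ExctCplIBdg{a}$ is a monomorphism: an element of $\ExctCplDObjctBB{}{\Vect{x}}$ whose $i$-image lies in $\FamUnion{r}{\Ker{\ExctCplIMapItrtdBB{r}{\Vect{x}+\ExctCplIBdg{a}}}}$ is itself annihilated by a finite power of $i$, hence already trivial in $\ZDiagQuo{\omega}{\Vect{x}}{}$. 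Because forming the image-quotient of a diagram with monic structure maps changes nothing, $\ZDiagQuo{\tau}{\Vect{x}}{} = \ZDiagQuo{\omega}{\Vect{x}}{}$ for every $\tau \geq \omega$. Consequently $\Ker{\ExctCplDObjctBB{}{\Vect{x}} \to \ZDiagQuo{\tau}{\Vect{x}}{}}$ is independent of $\tau \geq \omega$, and applying $\ExctCplJMapBB{}{\Vect{x}}$ yields $\ExctCplBndrsBB{\lambda}{} = \ExctCplBndrsBB{\tau}{} = \ExctCplBndrsBB{\omega}{}$.

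For (ii) I combine (i) with the dual fact from the appendix that the image subobjects shrink monotonically, $\ZDiagImg{\lambda}{}{} \subseteq \ZDiagImg{\tau}{}{}$ for $\lambda > \tau$. Pulling these inclusions back along $\ExctCplKMapBB{}{\Vect{x}+\ExctCplJBdg{b}}$ shows the cycle objects are nested, $\ExctCplCyclesBB{\lambda}{} \subseteq \ExctCplCyclesBB{\tau}{} \subseteq \ExctCplCyclesBB{\omega}{}$. Since by (i) all these ordinals share the single denominator $\ExctCplBndrsBB{\omega}{}$, which sits inside every $\ExctCplCyclesBB{\tau}{}$ by the inclusion chain of (\ref{def:ExactCoupleItems}), the pages for $\tau \geq \omega$ are subquotients of the form (decreasing numerator)$/\ExctCplBndrsBB{\omega}{}$, and at limit ordinals they are the corresponding intersections; this gives the nesting $\ExctCplEPage{\lambda} \subseteq \ExctCplEPage{\tau} \subseteq \ExctCplEPage{\omega}$. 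For (iii) I then argue by cardinality: by (ii) the pages $\ExctCplEPage{\tau}$, $\tau \geq \omega$, form a decreasing transfinite chain of sub-bigraded-modules of the fixed object $\ExctCplEPage{\omega}$. In each position the submodules of $\SSObjctBB{\omega}{\Vect{x}}$ form a set, and $\Prdct{\ZNr}{\ZNr}$ is countable, so the subobjects of $\ExctCplEPage{\omega}$ form a set; a strictly decreasing chain cannot be indexed cofinally by the proper class of ordinals, so it must stabilize at some ordinal $\alpha$, which is the asserted $\alpha$.

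The main obstacle is the asymmetry at $\omega$ that makes (i) and (iii) pull in opposite directions: one must justify precisely why the quotient direction collapses at $\omega$, forcing the boundaries constant, while the image direction may keep strictly shrinking past $\omega$, so that the stabilization ordinal $\alpha$ of (iii) can genuinely exceed $\omega$ (indeed be arbitrarily large). The delicate points are the monomorphism argument pinning down $\ZDiagQuo{\omega}{\Vect{x}}{}$ and the index bookkeeping relating the page $\ExctCplEPage{\tau+1}$ to the pair $(\ExctCplCyclesBB{\tau}{}, \ExctCplBndrsBB{\tau}{})$ at limit ordinals; once the $\ZCat$-diagram facts of Section \ref{sec:ImageFactorization-Z-Diagrams} are in hand, the rest is formal.
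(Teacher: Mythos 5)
Your proof is correct and follows essentially the same route as the paper's: part (i) rests on the stabilization of the image-quotient diagrams at $\omega$ (you re-derive the content of (\ref{thm:ImageQuotient-Props}) and (\ref{thm:Q^omega(A)-Stable}) inline, with the same monomorphism argument, rather than citing them), part (ii) on pulling the nested image subobjects back along $\ExctCplKMap{}$ with the now-constant boundaries as denominator, and part (iii) on a cardinality/replacement argument. The only cosmetic difference is in (iii): you apply the set-theoretic stabilization argument directly to the decreasing chain of pages inside $\ExctCplEPage{\omega}$, whereas the paper applies it to the image subdiagrams $\ZDiagImg{\alpha}{}{D(n+\sigma)}$ and then transports stabilization to the cycles and pages; both are valid, the paper's version additionally tying the page-stabilization ordinal to the stabilization ordinal of the underlying $\ZCat$-diagrams.
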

\begin{proof}
Part (i) follows from (\ref{thm:Q^(omega)_pA-All-tau}). Then the inclusions $\ZDiagImg{\lambda}{\Vect{x}+\ExctCplJBdg{b} + \ExctCplKBdg{c}}{}\subseteq \ZDiagImg{\tau}{\Vect{x}+\ExctCplJBdg{b} + \ExctCplKBdg{c}}{}\subseteq \ZDiagImg{\omega}{\Vect{x}+\ExctCplJBdg{b} + \ExctCplKBdg{c}}{}$ yield $\ExctCplCyclesBB{\lambda}{\Vect{x}+\ExctCplJBdg{b}} \subseteq \ExctCplCyclesBB{\tau}{\Vect{x}+\ExctCplJBdg{b}} \subseteq \ExctCplCyclesBB{\omega}{\Vect{x}+\ExctCplJBdg{b}}$, which implies (ii). To see why the objects $\ExctCplEPage{\tau}$ stabilize, recall from Section \ref{sec:ImageFactorization-Z-Diagrams} that, for cardinality reasons, there is an ordinal $\alpha$ such that $\ZDiagImg{\alpha}{}{D(n+\sigma)} = \ZDiagImg{\lambda}{}{D(n+\sigma)}$, whenever $\alpha\leq \lambda$. Consequently, the descending sequence of cycle objects $\ExctCplCyclesBB{\tau}{}$ stabilizes at $\alpha$. Choosing $\alpha>\omega$ sufficiently large, we see via (i), that the objects $\ExctCplEPage{\tau}$ stabilize at $\alpha$ as well.
\end{proof}

\begin{terminology}[Stable E-objects]
\label{def:Stable-E-Objects}%
The \Defn{stable E-objects} of a regular exact couple, are the objects $\ExctCplStblEObjct\DefEq \ExctCplEPage{\alpha}$ established in (\ref{thm:E^tau,tau>omega}.iii). %
\index{stable!E-objects of exact couple}\index[not]{$\ExctCplStblEObjctBB{\Vect{x}}$ -- stable E-object of exact couple}%
\end{terminology}

Let us now turn to matching the objects $\ExctCplEPage{r}$, $r<\infty$, to the familiar spectral sequence objects associated with a given exact couple. We use the following notation.

\begin{notation}[Additive relation differentials]
\label{def:AdditiveRelationDifferential}
The \Defn{additive relation differentials} $\bigtriangleup^{r}_{\Vect{x}+\ExctCplJBdg{b}}\from \ExctCplCyclesBB{r-1}{\Vect{x} + \ExctCplJBdg{b} } \longrightarrow \ExctCplCyclesBB{r-1}{\Vect{x} +2\ExctCplJBdg{b} + \ExctCplKBdg{c} - (r-1)\ExctCplIBdg{a} }$ are composites %
\index{additive relation!differentials}\index[not]{$\bigtriangleup^{r}_{\Vect{x}+\ExctCplJBdg{b}}$ - additive relation differentials}
\begin{equation*}
\xymatrix@R=5ex@C=7em{
\ExctCplCyclesBB{r-1}{\Vect{x} + \ExctCplJBdg{b} } \ar[r]^-{\ExctCplKMapBB{}{\Vect{x} + \ExctCplJBdg{b} }} \ar[r] \ar@{-<} `d[rrr]`[rrr]_{\bigtriangleup^{r}_{\Vect{x}+\ExctCplJBdg{b}} }[rrr] &
	\ExctCplDObjctBB{}{\Vect{x} + \ExctCplJBdg{b} + \ExctCplKBdg{c}} \ar[r]^-{\left( \ExctCplIMapItrtdBB{r-1}{\Vect{x} +\ExctCplJBdg{b} + \ExctCplKBdg{c} - (r-1)\ExctCplIBdg{a} }\right)^{-1} } &
	\ExctCplDObjctBB{}{\Vect{x} + \ExctCplJBdg{b} + \ExctCplKBdg{c}-(r-1)\ExctCplIBdg{a}} \ar[r]^-{\ExctCplJMapBB{}{\Vect{x} +\ExctCplJBdg{b} + \ExctCplKBdg{c} - (r-1)\ExctCplIBdg{a} }} &
	\ExctCplCyclesBB{r-1}{\Vect{x} + 2\ExctCplJBdg{b} + \ExctCplKBdg{c}-(r-1)\ExctCplIBdg{a} }
}
\end{equation*}
\end{notation}

Referring to the additive relations $\bigtriangleup^r$ as differentials is justified by the following theorem. For finite ordinals, it identifies the objects $\ExctCplEPage{r}$, as  constructed in (\ref{def:ExactCoupleItems}), with the spectral sequence objects that are classically constructed from the exact couple.

\begin{theorem}[Spectral sequence of an exact couple]
\label{thm:SpecSequFromExactCouple}%
For a regular exact couple, see (\ref{def:(ZxZ)-BigradedExactCouple}), the additive relation differentials $\bigtriangleup^{r}$ factor to $R$-module morphisms as shown in the commutative diagram below.
\begin{equation*}
\xymatrix@R=5ex@C=7em{
\ExctCplCyclesBB{r-1}{\Vect{x}- \ExctCplKBdg{c}+(r-1)\ExctCplIBdg{a}} \ar@{-{ >>}}[d] \ar[r]^-{\bigtriangleup^{r}_{\Vect{x}- \ExctCplKBdg{c}+(r-1)\ExctCplIBdg{a} } } &
	\ExctCplCyclesBB{r-1}{\Vect{x}+\ExctCplJBdg{b}} \ar@{-{ >>}}[d]_{\tau^{r-1}_{\Vect{x}+\ExctCplJBdg{b}}} \ar[r]^-{\bigtriangleup^{r}_{\Vect{x}+\Vect{b}} }  &
	\ExctCplCyclesBB{r-1}{\Vect{x}+2\ExctCplJBdg{b} + \ExctCplKBdg{c}-(r-1)\ExctCplIBdg{a}} \ar@{-{ >>}}[d] \\
\SSObjctBB{r}{\Vect{x}- \ExctCplKBdg{c}+(r-1)\ExctCplIBdg{a} } \ar[r]_-{\SSDffrntlBB{r}{\Vect{x}- \ExctCplKBdg{c}+(r-1)\ExctCplIBdg{a} } } &
	\SSObjctBB{r}{\Vect{x}+\Vect{b}} \ar[r]_-{\SSDffrntlBB{r}{\Vect{x}+\ExctCplJBdg{b}}} &
	\SSObjctBB{r}{\Vect{x}+2\Vect{b}+\Vect{c}-(r-1)\Vect{a}}
}
\end{equation*}
Further, the morphisms $\SSDffrntl{r}$, $1\leq r< \infty$, form the differentials of a spectral sequence which is related to the construction (\ref{def:ExactCoupleItems}) via the identities
\begin{equation*}
\ExctCplCyclesBB{r}{\Vect{x}+\ExctCplJBdg{b}} = (\tau^{r-1}_{\Vect{x}+\ExctCplJBdg{b}})^{-1}\Ker{\SSDffrntlBB{r}{\Vect{x}+\Vect{b}}}  \qquad \text{and}\qquad  \ExctCplBndrsBB{r}{\Vect{x}+\ExctCplJBdg{b}} = (\tau^{r-1}_{\Vect{x}+\ExctCplJBdg{b}})^{-1}\Img{\SSDffrntlBB{r}{\Vect{x} -\ExctCplKBdg{c}+(r-1)\ExctCplIBdg{a}}}.
\end{equation*}
Both, the differential $\SSDffrntl{r}$ and the additive relation differential $\bigtriangleup^{r}$, have bidegree $\SSBidegreeVect{v}{r}=\ExctCplKBdg{c}-(r-1)\ExctCplIBdg{a}+\ExctCplJBdg{b}$.
\end{theorem}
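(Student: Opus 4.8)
The plan is to reduce everything to exactness of the couple at its three corners, treating the additive relation $\bigtriangleup^r$ as an honest span once one restricts to cycles. For finite $r$ the subobjects of (\ref{def:ExactCoupleObjects}) collapse to the classical ones, namely $\ExctCplCyclesBB{r-1}{\Vect{x}+\ExctCplJBdg{b}} = \ExctCplKMap{}^{-1}\Img{\ExctCplIMapItrtd{r-1}}$ and $\ExctCplBndrsBB{r-1}{\Vect{x}+\ExctCplJBdg{b}} = \ExctCplJMap{}\,\Ker{\ExctCplIMapItrtd{r-1}}$, with $\tau^{r-1}_{\Vect{x}+\ExctCplJBdg{b}}$ the quotient map onto $\SSObjctBB{r}{\Vect{x}+\ExctCplJBdg{b}} = \ExctCplCyclesBB{r-1}{\Vect{x}+\ExctCplJBdg{b}}/\ExctCplBndrsBB{r-1}{\Vect{x}+\ExctCplJBdg{b}}$. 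I would first establish the factorization giving the commutative square. Given a cycle $z$, by definition $\ExctCplKMap{}(z)\in\Img{\ExctCplIMapItrtd{r-1}}$, so there is $w$ with $\ExctCplIMapItrtd{r-1}(w)=\ExctCplKMap{}(z)$, and $\bigtriangleup^r(z)$ is represented by $\ExctCplJMap{}(w)$. Two observations make $z\mapsto[\ExctCplJMap{}(w)]$ a well-defined $R$-module map into the target $\SSPage{r}$: first, since $\ExctCplKMap{}\Comp\ExctCplJMap{}=0$, the element $\ExctCplJMap{}(w)$ automatically lies in the target cycle object; second, the indeterminacy of $w$ is exactly $\Ker{\ExctCplIMapItrtd{r-1}}$ in the relevant $D$-module, so $\ExctCplJMap{}(w)$ is determined modulo $\ExctCplJMap{}\,\Ker{\ExctCplIMapItrtd{r-1}}$, which is precisely the target boundary object. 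Applying the same reasoning to a representative of a source boundary (for which one may take $w=0$) shows $\bigtriangleup^r$ carries boundaries into boundaries, so it descends to $\SSDffrntl{r}$ on $\SSPage{r}$-quotients; additivity of $\SSDffrntl{r}$ is inherited from that of $\ExctCplJMap{},\ExctCplKMap{},\ExctCplIMapItrtd{r-1}$.

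The heart of the proof is the pair of identities, which I would prove by a direct chase using exactness. For the cycle identity I would show $z\in \ExctCplCyclesBB{r}{\Vect{x}+\ExctCplJBdg{b}}$ (i.e. $\ExctCplKMap{}(z)\in\Img{\ExctCplIMapItrtd{r}}$) if and only if $\SSDffrntl{r}[z]=0$: if $\ExctCplKMap{}(z)=\ExctCplIMapItrtd{r}(v)$, then $\ExctCplIMapItrtd{r-1}(w-\ExctCplIMap{}(v))=0$, and $\ExctCplJMap{}\Comp\ExctCplIMap{}=0$ forces $\ExctCplJMap{}(w)=\ExctCplJMap{}(w-\ExctCplIMap{}(v))\in \ExctCplBndrsBB{r-1}{}$; conversely, if $\ExctCplJMap{}(w)\in\ExctCplJMap{}\,\Ker{\ExctCplIMapItrtd{r-1}}$, then $w$ differs from an element of $\Ker{\ExctCplIMapItrtd{r-1}}$ by something in $\Ker{\ExctCplJMap{}}=\Img{\ExctCplIMap{}}$, and pushing through $\ExctCplIMapItrtd{r-1}$ gives $\ExctCplKMap{}(z)\in\Img{\ExctCplIMapItrtd{r}}$. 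For the boundary identity I would compute $\Img{\bigtriangleup^r}$ directly: tracing the source $\SSObjctBB{r}{\Vect{x}-\ExctCplKBdg{c}+(r-1)\ExctCplIBdg{a}}$, a value $\ExctCplJMap{}(w)$ lies in the image exactly when $\ExctCplIMapItrtd{r-1}(w)\in\Img{\ExctCplKMap{}}=\Ker{\ExctCplIMap{}}$, i.e. when $\ExctCplIMapItrtd{r}(w)=0$; hence $\Img{\bigtriangleup^r}=\ExctCplJMap{}\,\Ker{\ExctCplIMapItrtd{r}}=\ExctCplBndrsBB{r}{\Vect{x}+\ExctCplJBdg{b}}$. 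Taking $(\tau^{r-1})^{-1}$ of $\Ker{\SSDffrntl{r}}$ and of $\Img{\SSDffrntl{r}}$ then recovers $\ExctCplCyclesBB{r}{}$ and $\ExctCplBndrsBB{r}{}$; since both contain $\ExctCplBndrsBB{r-1}{}=\Ker{\tau^{r-1}}$, the correspondence between subobjects of $\SSPage{r}$ and subobjects of $\ExctCplCyclesBB{r-1}{}$ containing $\ExctCplBndrsBB{r-1}{}$ yields $\SSObjctBB{r+1}{\Vect{x}+\ExctCplJBdg{b}}=\ExctCplCyclesBB{r}{}/\ExctCplBndrsBB{r}{}\cong\Ker{\SSDffrntl{r}}/\Img{\SSDffrntl{r}}$. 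The relation $\SSDffrntl{r}\Comp\SSDffrntl{r}=0$ is immediate from $\ExctCplKMap{}\Comp\ExctCplJMap{}=0$: a second application of $\bigtriangleup^r$ starts from $\ExctCplKMap{}(\ExctCplJMap{}(w))=0$, so the preimage $0$ is admissible.

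Finally the bidegree is additive along the span: $\ExctCplKMap{}$, the relation $(\ExctCplIMapItrtd{r-1})^{-1}$, and $\ExctCplJMap{}$ contribute $\ExctCplKBdg{c}$, $-(r-1)\ExctCplIBdg{a}$, and $\ExctCplJBdg{b}$, summing to $\SSBidegreeVect{v}{r}=\ExctCplKBdg{c}-(r-1)\ExctCplIBdg{a}+\ExctCplJBdg{b}$, which is shared by $\bigtriangleup^r$ and its factor $\SSDffrntl{r}$. I expect the only genuine obstacle to be the bookkeeping: confirming that each position index lands in the exact $D$- or $E$-module the chase requires, and verifying that for finite $r$ the image/quotient subobjects $\ZImgDiagItrtd{}{r-1}$ and $\ZDiagQuo{r-1}{}{}$ of (\ref{def:ExactCoupleObjects}) coincide with $\Img{\ExctCplIMapItrtd{r-1}}$ and $\Ker{\ExctCplIMapItrtd{r-1}}$, so that the classical arguments above apply verbatim.
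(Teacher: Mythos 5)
Your proposal is correct and follows essentially the same route as the paper's proof: both treat $\bigtriangleup^{r}=j\Comp (i^{r-1})^{-1}\Comp k$ as an additive relation on cycles, show the induced map on $E^r$-quotients is well defined because the indeterminacy of the lift $w$ is exactly $\Ker{i^{r-1}}$ (so the ambiguity in $j(w)$ is absorbed by target boundaries), derive the kernel and image identities from the exactness relations $\Ker{j}=\Img{i}$, $\Img{k}=\Ker{i}$, $\Ker{k}=\Img{j}$, obtain $d^{r}d^{r}=0$ from $k\Comp j=0$, and read off the bidegree additively along the span. The only cosmetic difference is that you argue element-wise where the paper computes with subobjects (e.g.\ $(\bigtriangleup^{r})^{-1}B^{r-1}=k^{-1}\Img{i^{r}}$ and $\bigtriangleup^{r}(Z^{r-1})=j\Ker{i^{r}}$), and the bookkeeping you flag at the end---that for finite $r$ the objects of (\ref{def:ExactCoupleItems}) reduce to $k^{-1}\Img{i^{r-1}}$ and $j\Ker{i^{r-1}}$---is exactly what (\ref{thm:ImageQuotient-Props}.ii) and (\ref{thm:ImageSubDiagram-Props}.ii) supply.
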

\begin{proof}
We include the lengthy, but straight forward argument because Theorem (\ref{thm:SpecSequFromExactCouple}) is the link between objects of primary interest to us: the spectral sequence construction of the objects $\ExctCplEPage{r}$\MSComp, $r<\infty$, and the recursive, exact couple based construction, of the same objects in (\ref{def:ExactCoupleItems}). We begin by showing  that
\begin{equation*}
\SSDffrntlBB{r}{\Vect{x}+\ExctCplJBdg{b}}\from \ExctCplEObjctBB{r}{\Vect{x}+\ExctCplJBdg{b}} \longrightarrow \ExctCplEObjctBB{r}{\Vect{x}+2\ExctCplJBdg{b}+\ExctCplKBdg{c}-(r-1)\ExctCplIBdg{a}}
\end{equation*}
is a morphism of $R$-modules. It suffices to show that the additive relation $\bigtriangleup$ factors to a well defined set theoretic function. If $e\in \ExctCplCyclesBB{r-1}{\Vect{x}+\ExctCplJBdg{b}}$, then $\left(\tau^{r-1}_{\Vect{x}+\ExctCplJBdg{b}}\right)^{-1}(\tau^{r-1}_{\Vect{x}+\ExctCplJBdg{b}}(e)) = e + \ExctCplBndrsBB{r-1}{\Vect{x}+\ExctCplJBdg{b}}$ and, omitting some obvious subscripts,
\begin{equation*}
\begin{array}{rcl}
\bigtriangleup^{r}_{\Vect{x}+\ExctCplJBdg{b}}(e + \ExctCplBndrsBB{r-1}{\Vect{x}+\ExctCplJBdg{b}}) & = & j(i^{r-1})^{-1}k (e + \ExctCplBndrsBB{r-1}{\Vect{x}+\ExctCplJBdg{b}}) \\
	& = & j(i^{r-1})^{-1}k(e) \\
	& = & j(e' + \Ker{i^{r-1}}) \\
	& = & j(e') + \ExctCplBndrsBB{r-1}{\Vect{x}+2\ExctCplJBdg{b}+\ExctCplKBdg{c}-(r-1)\ExctCplIBdg{a}}
\end{array}
\end{equation*}
Here, $e'$ is arbitrary with $i^{r-1}(e') =k(e)$; such $e'$ exists by the construction of $\ExctCplCyclesBB{r-1}{\Vect{x}+\ExctCplJBdg{b}}$. It follows that  $\bigtriangleup^r$ factors to a well defined morphism of modules. Next, we show that $\ExctCplCyclesBB{r}{\Vect{x}+\ExctCplJBdg{b}} = (\tau^{r-1}_{\Vect{x}+\ExctCplJBdg{b}})^{-1}\Ker{\SSDffrntlBB{r}{\Vect{x}+\Vect{b}}}$. By commutativity:
\begin{equation*}
\begin{array}{rcl}
(\tau^{r-1}_{\Vect{x}+\ExctCplJBdg{b}})^{-1}\Ker{\SSDffrntlBB{r}{\Vect{x}+\Vect{b}}} & = & \left( \bigtriangleup^{r}_{\Vect{x}+\ExctCplJBdg{b} } \right)^{-1} \ExctCplBndrsBB{r-1}{\Vect{x} + 2\ExctCplJBdg{b}+\ExctCplKBdg{c} - (r-1)\ExctCplIBdg{a}} \\
	& = & k^{-1} i^{r-1} j^{-1}\left( j(\Ker{i^{r-1}}) \right) \\
	& = & k^{-1} i^{r-1}\left( \Ker{i^{r-1} + \Ker{j}}\right) \\
	& = & k^{-1} i^{r-1}\left( \Ker{i^{r-1} + \Img{\ExctCplIMapBB{}{\Vect{x}+\ExctCplJBdg{b}+\ExctCplKBdg{c} - r\ExctCplIBdg{a}}}} \right) \\
	& = & k^{-1}\Img{\ExctCplIMapBB{r}{\Vect{x}+\ExctCplJBdg{b}+\ExctCplKBdg{c} - r\ExctCplIBdg{a}}} \\
	& = & \ExctCplCyclesBB{r}{\Vect{x}+\ExctCplJBdg{b}}
\end{array}
\end{equation*}
Then, we need to know that $\ExctCplBndrsBB{r}{\Vect{x}+\ExctCplJBdg{b}} = (\tau^{r-1}_{\Vect{x}+\ExctCplJBdg{b}})^{-1}\Img{\SSDffrntlBB{r}{\Vect{x} -\ExctCplKBdg{c}+(r-1)\ExctCplIBdg{a}}}$. By commutativity,
\begin{equation*}
\begin{array}{rcl}
(\tau^{r-1}_{\Vect{x}+\ExctCplJBdg{b}})^{-1}\Img{\SSDffrntlBB{r}{\Vect{x} -\ExctCplKBdg{c}+(r-1)\ExctCplIBdg{a}}} & = &  \bigtriangleup^{r}_{\Vect{x} -\ExctCplKBdg{c}+ (r-1)\ExctCplIBdg{a} }\left( \ExctCplCyclesBB{r-1}{\Vect{x} -\ExctCplKBdg{c}+ (r-1)\Vect{a}} \right)  \\
	& = &  j \left( \ExctCplIMapBB{r-1}{\Vect{x}}\right)^{-1} k \ExctCplCyclesBB{r-1}{\Vect{x}-\ExctCplKBdg{c} + (r-1)\Vect{a} }  \\
	& = &  j \left( \ExctCplIMapBB{r-1}{\Vect{x}}\right)^{-1} k k^{-1} \Img{\ExctCplIMapBB{r-1}{\Vect{x}}}   \\
	& = &  j \left( \ExctCplIMapBB{r-1}{\Vect{x}}\right)^{-1} \left( \Img{\ExctCplIMapBB{r-1}{\Vect{x}}} \intrsctn \Img{k}\right)  \\
	& = & j \left( \ExctCplIMapBB{r-1}{\Vect{x}}\right)^{-1} \left( \Img{\ExctCplIMapBB{r-1}{\Vect{x}}} \intrsctn \Ker{ \ExctCplIMapBB{}{\Vect{x} + (r-1)\ExctCplIBdg{a}}}\right)  \\
	& = & j \Ker{\ExctCplIMapBB{r}{\Vect{x}}} \\
	& = & \ExctCplBndrsBB{r}{\Vect{x}+\ExctCplJBdg{b}}
\end{array}
\end{equation*}
Finally, we confirm that  $\SSDffrntl{r}\SSDffrntl{r} = 0$. Setting $\tau\DefEq \tau^{r-1}_{\Vect{x} + 2\ExctCplJBdg{b} +\ExctCplKBdg{c} - (r-1)\ExctCplIBdg{a}}$, by commutativity:
\begin{equation*}
\begin{array}{rcl}
\Img{\SSDffrntlBB{r}{\Vect{x}+\ExctCplJBdg{b} } \Comp \SSDffrntlBB{r}{\Vect{x} -\ExctCplKBdg{c}+ (r-1) \ExctCplIBdg{a} }} 
	& = & \tau\Comp \bigtriangleup^{r}_{\Vect{x}+\ExctCplJBdg{b}}\Comp \bigtriangleup^{r}_{\Vect{x}-\ExctCplKBdg{c}+(r-1)\ExctCplIBdg{a}} \left(\ExctCplCyclesBB{r-1}{ \Vect{x}-\ExctCplKBdg{c}+(r-1) \ExctCplIBdg{a} } \right)  \\
	& \subseteq & \tau\Comp \ExctCplJMapBB{}{ \Vect{x}+\ExctCplJBdg{b}+\ExctCplKBdg{c}- (r-1)\ExctCplIBdg{a}}  (i^{r-1}_{\Vect{x}+\ExctCplJBdg{b}+\ExctCplKBdg{c}-(r-1)\ExctCplIBdg{a}})^{-1} k_{\Vect{x}+\ExctCplJBdg{b}} j_{\Vect{x}} ( \ExctCplDObjctBB{}{\Vect{x} })  \\
	& = &  \tau\Comp \ExctCplJMapBB{}{ \Vect{x}+\ExctCplJBdg{b}+\ExctCplKBdg{c}- (r-1)\ExctCplIBdg{a}}  (i^{r-1}_{\Vect{x}+\ExctCplJBdg{b}+\ExctCplKBdg{c}-(r-1)\ExctCplIBdg{a}})^{-1}(0) \\
	& = &  \tau \left(\ExctCplBndrsBB{r-1}{\Vect{x}+2\ExctCplJBdg{b}+\ExctCplKBdg{c}- (r-1)\ExctCplIBdg{a}  }\right) = 0
\end{array}
\end{equation*}
This implies the claim.
\end{proof}

\begin{corollary}[$\ExctCplEPage{\omega+1} = \SSObjctBB{\infty}{}$]
\label{thm:SSE-infty=E^(omega+1)}
Given a regular exact couple $\ExctCpl{C}$, the object $\SSObjctBB{\infty}{}$ of its associated spectral sequence equals the object $\ExctCplEPage{\omega+1}$ as constructed in (\ref{def:ExactCoupleItems}). \NoProof
\end{corollary}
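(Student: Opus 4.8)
The plan is to reduce the asserted equality, position by position, to matching the two pieces that define the limit page in Definition (\ref{def:SpectralSequence-LimitPage}) --- the boundary union $\ExctCplBndrsBB{\infty}{\Vect{x}+\ExctCplJBdg{b}} = \FamUnion{r}{\ExctCplBndrsBB{r}{\Vect{x}+\ExctCplJBdg{b}}}$ and the cycle intersection $\ExctCplCyclesBB{\infty}{\Vect{x}+\ExctCplJBdg{b}} = \FamIntrsctn{r}{\ExctCplCyclesBB{r}{\Vect{x}+\ExctCplJBdg{b}}}$ --- with the order-$\omega$ cycle and boundary objects of Notation (\ref{def:ExactCoupleItems}). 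The essential preliminary is already supplied by Theorem (\ref{thm:SpecSequFromExactCouple}), which identifies, for every finite $r$, the exact-couple objects $\ExctCplCyclesBB{r}{\Vect{x}+\ExctCplJBdg{b}}$ and $\ExctCplBndrsBB{r}{\Vect{x}+\ExctCplJBdg{b}}$ with the spectral-sequence cycles and boundaries of Lemma (\ref{thm:E-infinityPreparation}), as the very same submodules $(\tau^{r-1}_{\Vect{x}+\ExctCplJBdg{b}})^{-1}\Ker{\SSDffrntlBB{r}{\Vect{x}+\ExctCplJBdg{b}}}$ and $(\tau^{r-1}_{\Vect{x}+\ExctCplJBdg{b}})^{-1}\Img{\SSDffrntlBB{r}{\Vect{x}-\ExctCplKBdg{c}+(r-1)\ExctCplIBdg{a}}}$ of the original $E$-module. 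Since $\SSObjctBB{\omega+1}{\Vect{x}+\ExctCplJBdg{b}} = \ExctCplCyclesBB{\omega}{\Vect{x}+\ExctCplJBdg{b}}/\ExctCplBndrsBB{\omega}{\Vect{x}+\ExctCplJBdg{b}}$ by Notation (\ref{def:ExactCoupleItems}), it then suffices to prove the two stage-$\omega$ equalities $\ExctCplBndrsBB{\omega}{\Vect{x}+\ExctCplJBdg{b}} = \FamUnion{r<\omega}{\ExctCplBndrsBB{r}{\Vect{x}+\ExctCplJBdg{b}}}$ and $\ExctCplCyclesBB{\omega}{\Vect{x}+\ExctCplJBdg{b}} = \FamIntrsctn{r<\omega}{\ExctCplCyclesBB{r}{\Vect{x}+\ExctCplJBdg{b}}}$, after which passing to quotients gives $\SSObjctBB{\omega+1}{\Vect{x}+\ExctCplJBdg{b}} = \ExctCplCyclesBB{\infty}{\Vect{x}+\ExctCplJBdg{b}}/\ExctCplBndrsBB{\infty}{\Vect{x}+\ExctCplJBdg{b}} = \SSObjctBB{\infty}{\Vect{x}+\ExctCplJBdg{b}}$.

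For the boundaries I would unwind $\ExctCplBndrsBB{\omega}{\Vect{x}+\ExctCplJBdg{b}} = \ExctCplJMapBB{}{\Vect{x}}\Ker{\ExctCplDObjctBB{}{\Vect{x}}\to \ZDiagQuo{\omega}{\Vect{x}}{}}$. The image-factorization theory of $\ZCat$-diagrams (Section \ref{sec:ImageFactorization-Z-Diagrams}, specifically the result (\ref{thm:Q^(omega)_pA-All-tau}) already invoked for Lemma (\ref{thm:E^tau,tau>omega}).i) exhibits $\ZDiagQuo{\omega}{\Vect{x}}{}$ as the colimit of the finite image-quotient objects $\ZDiagQuo{r}{\Vect{x}}{}$, so that $\Ker{\ExctCplDObjctBB{}{\Vect{x}}\to \ZDiagQuo{\omega}{\Vect{x}}{}}$ is the increasing union $\FamUnion{r<\omega}{\Ker{\ExctCplDObjctBB{}{\Vect{x}}\to \ZDiagQuo{r}{\Vect{x}}{}}}$. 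As $\ExctCplJMapBB{}{\Vect{x}}$ is $R$-linear and carries an increasing union of submodules onto the union of their images, this yields $\ExctCplBndrsBB{\omega}{\Vect{x}+\ExctCplJBdg{b}} = \FamUnion{r<\omega}{\ExctCplBndrsBB{r}{\Vect{x}+\ExctCplJBdg{b}}}$ at once.

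For the cycles I would instead use $\ExctCplCyclesBB{\omega}{\Vect{x}+\ExctCplJBdg{b}} = (\ExctCplKMapBB{}{\Vect{x}+\ExctCplJBdg{b}})^{-1}\ZDiagImg{\omega}{\Vect{x}+\ExctCplJBdg{b}+\ExctCplKBdg{c}}{}$. Here the relevant appendix fact is the dual one: the order-$\omega$ image subobject $\ZDiagImg{\omega}{\Vect{x}+\ExctCplJBdg{b}+\ExctCplKBdg{c}}{}$ is the limit --- that is, the intersection $\FamIntrsctn{r<\omega}{\ZDiagImg{r}{\Vect{x}+\ExctCplJBdg{b}+\ExctCplKBdg{c}}{}}$ --- of the descending chain of finite image subobjects of $D(n+\sigma)$. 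Since taking preimages under the fixed map $\ExctCplKMapBB{}{\Vect{x}+\ExctCplJBdg{b}}$ commutes with intersections of submodules, this gives $\ExctCplCyclesBB{\omega}{\Vect{x}+\ExctCplJBdg{b}} = \FamIntrsctn{r<\omega}{(\ExctCplKMapBB{}{\Vect{x}+\ExctCplJBdg{b}})^{-1}\ZDiagImg{r}{\Vect{x}+\ExctCplJBdg{b}+\ExctCplKBdg{c}}{}} = \FamIntrsctn{r<\omega}{\ExctCplCyclesBB{r}{\Vect{x}+\ExctCplJBdg{b}}}$.

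The manipulations of images, preimages, unions and intersections above are routine; the step I expect to be the main obstacle is instead the bookkeeping of pinning down, from the appendix, the precise (co)limit descriptions of $\ZDiagQuo{\omega}{\Vect{x}}{}$ and $\ZDiagImg{\omega}{\Vect{x}+\ExctCplJBdg{b}+\ExctCplKBdg{c}}{}$, and confirming that ``order $\omega$'' there denotes exactly the colimit, respectively the limit, over the finite stages rather than some later transfinite stage. These are precisely the identifications already underlying Lemma (\ref{thm:E^tau,tau>omega}); once they are cited in the correct form, the two computations above close the argument and establish $\ExctCplEPage{\omega+1} = \SSObjctBB{\infty}{}$ in every position.
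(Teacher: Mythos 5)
Your proposal is correct and is exactly the argument the paper intends: the corollary carries no proof (it is marked $\lozenge$) precisely because it is meant to follow immediately from Theorem (\ref{thm:SpecSequFromExactCouple}) together with the definitions, and your two stage-$\omega$ identifications --- $\ExctCplBndrsBB{\omega}{\Vect{x}+\ExctCplJBdg{b}}=\bigcup_{r<\omega}\ExctCplBndrsBB{r}{\Vect{x}+\ExctCplJBdg{b}}$ via the colimit description of $\ZDiagQuo{\omega}{\Vect{x}}{}$ from (\ref{thm:ImageQuotient-Props}), and $\ExctCplCyclesBB{\omega}{\Vect{x}+\ExctCplJBdg{b}}=\bigcap_{r<\omega}\ExctCplCyclesBB{r}{\Vect{x}+\ExctCplJBdg{b}}$ via the intersection description of $\ZDiagImg{\omega}{\Vect{x}+\ExctCplJBdg{b}+\ExctCplKBdg{c}}{}$ --- are just the bookkeeping that makes this immediate, and both are correctly reduced to facts the paper already supplies. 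No gap; your argument matches the paper's (implicit) route.
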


\begin{corollary}[Functoriality of spectral sequence construction]
\label{thm:SpecSequFromExactCouple-Functoriality}%
The spectral sequence construction in (\ref{thm:SpecSequFromExactCouple}) establishes an additive functor
\begin{equation*}
\ExctCplsCatModulesOver{R}(\ExctCplIBdg{a},\ExctCplJBdg{b},\ExctCplKBdg{c}) \longrightarrow \SpecSeqsInModOver{R}(1,\SSBidegreeVect{v}{r}\, |\, r\geq 1)
\end{equation*}
Here $(\ExctCplIBdg{a},\ExctCplJBdg{b},\ExctCplKBdg{c})$ are the fixed bidegrees of structure maps of exact couples meeting the regularity condition and, for $r\geq 1$, the bidegree of the differential $\SSDffrntl{r}$ is $\SSBidegreeVect{v}{r}\DefEq \ExctCplJBdg{b}+\ExctCplKBdg{c}-(r-1)\ExctCplIBdg{a}$.
\end{corollary}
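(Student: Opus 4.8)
\quad The object part of the functor is already in hand: by Theorem~\ref{thm:SpecSequFromExactCouple} every regular exact couple $\ExctCpl{C}$ with structure-map bidegrees $(\ExctCplIBdg{a},\ExctCplJBdg{b},\ExctCplKBdg{c})$ carries a spectral sequence $(\ExctCplEPage{r},\SSDffrntl{r})$, $1\leq r<\infty$, whose differential $\SSDffrntl{r}$ has bidegree $\SSBidegreeVect{v}{r}=\ExctCplJBdg{b}+\ExctCplKBdg{c}-(r-1)\ExctCplIBdg{a}$. Since this bidegree depends only on $(\ExctCplIBdg{a},\ExctCplJBdg{b},\ExctCplKBdg{c})$, all such couples are sent into the single connected component $\SpecSeqsInModOver{R}(1,\SSBidegreeVect{v}{r}\,|\,r\geq 1)$, so that targets and sums live in one additive category. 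It remains to define the functor on morphisms and to check functoriality and additivity.

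\textbf{Morphisms.}\quad Given a morphism $(f,g)\from\ExctCpl{C}(1)\to\ExctCpl{C}(2)$ in the sense of (\ref{def:ExactCouplesMorphism}), I would set $\SSMap{g}{r}$ equal to the corestriction of $g$ to the subquotient description $\SSObjctBB{r}{\Vect{x}}=\ExctCplCyclesBB{r-1}{\Vect{x}}/\ExctCplBndrsBB{r-1}{\Vect{x}}$ of (\ref{def:ExactCoupleItems}); for $r=1$ this is just $g$ itself. The content is that $g$ respects cycles and boundaries. Restricting $f$ gives, for each $n$, a morphism of the $\ZCat$-diagrams $D(n)(1)\to D(n)(2)$, and the image-subobject and image-quotient constructions are functorial in $\ZCat$-diagrams (Section~\ref{sec:ImageFactorization-Z-Diagrams}); hence $f$ maps $\ZDiagImg{r}{\Vect{x}+\ExctCplJBdg{b}+\ExctCplKBdg{c}}{}(1)$ into $\ZDiagImg{r}{\Vect{x}+\ExctCplJBdg{b}+\ExctCplKBdg{c}}{}(2)$ and $\Ker{\ExctCplDObjctBB{}{\Vect{x}}(1)\to \ZDiagQuo{r}{\Vect{x}}{}(1)}$ into $\Ker{\ExctCplDObjctBB{}{\Vect{x}}(2)\to \ZDiagQuo{r}{\Vect{x}}{}(2)}$. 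Combining this with the commutation relations $f\Comp k(1)=k(2)\Comp g$ and $g\Comp j(1)=j(2)\Comp f$ yields $g(\ExctCplCyclesBB{r-1}{\Vect{x}+\ExctCplJBdg{b}}(1))\subseteq \ExctCplCyclesBB{r-1}{\Vect{x}+\ExctCplJBdg{b}}(2)$ and $g(\ExctCplBndrsBB{r-1}{\Vect{x}+\ExctCplJBdg{b}}(1))\subseteq \ExctCplBndrsBB{r-1}{\Vect{x}+\ExctCplJBdg{b}}(2)$, so each $\SSMap{g}{r}$ is a well-defined morphism of bigraded modules of bidegree $(0,0)$.

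\textbf{The two spectral-sequence axioms.}\quad Next I would verify the requirements of (\ref{def:Morphism-SpectralSequences}). Commutation of $\SSMap{g}{r}$ with $\SSDffrntl{r}$ follows from the factorization of $\SSDffrntl{r}$ through the additive-relation differential in Theorem~\ref{thm:SpecSequFromExactCouple}, once one observes that $(f,g)$ intertwines the differentials $\bigtriangleup^{r}$ of $\ExctCpl{C}(1)$ and $\ExctCpl{C}(2)$: reading the defining composite of (\ref{def:AdditiveRelationDifferential}) in parallel for the two couples, this is immediate from $f$ commuting with $i,j,k$ (the inverse relation built from $i^{r-1}$ is intertwined precisely because $f$ commutes with $i$). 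The identity $\SSMap{g}{r+1}=H(\SSMap{g}{r})$ is where I expect the only real friction: one must match the map induced by $g$ on the subquotient $\ExctCplCyclesBB{r}{\Vect{x}+\ExctCplJBdg{b}}/\ExctCplBndrsBB{r}{\Vect{x}+\ExctCplJBdg{b}}$ with the map it induces on the homology $\Ker{\SSDffrntlBB{r}{\Vect{x}+\ExctCplJBdg{b}}}/\Img{\SSDffrntlBB{r}{}}$. Because the quotient maps $\tau^{r-1}$ are corestrictions of identities, the square relating $\tau^{r-1}(1)$, $\tau^{r-1}(2)$, $\SSMap{g}{r}$, $\SSMap{g}{r+1}$ commutes; chasing it through the identities $\ExctCplCyclesBB{r}{\Vect{x}+\ExctCplJBdg{b}}=(\tau^{r-1})^{-1}\Ker{\SSDffrntlBB{r}{\Vect{x}+\ExctCplJBdg{b}}}$ and $\ExctCplBndrsBB{r}{\Vect{x}+\ExctCplJBdg{b}}=(\tau^{r-1})^{-1}\Img{\SSDffrntlBB{r}{}}$ from Theorem~\ref{thm:SpecSequFromExactCouple} identifies the two induced maps.

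\textbf{Functoriality and additivity.}\quad These are formal. The identity of $\ExctCpl{C}$ corestricts to the identity on every subquotient, and a composite of exact-couple morphisms corestricts to the composite $g'\Comp g$, so $(g'g)^{r}=(g')^{r}\SSMap{g}{r}$; thus the assignment preserves identities and composition. For additivity, morphisms of exact couples are added objectwise by (\ref{thm:ExactCouplesAdditiveCat}), corestriction to a fixed subquotient is additive in $g$, and sums in $\SpecSeqsInModOver{R}(1,\SSBidegreeVect{v}{r}\,|\,r\geq 1)$ are computed positionwise, whence $(\SSMap{g}{r})$ depends additively on $(f,g)$. The principal point of care throughout is the $\SSMap{g}{r+1}=H(\SSMap{g}{r})$ compatibility, i.e. the naturality of the identification between the exact-couple subquotients and the spectral-sequence homology.
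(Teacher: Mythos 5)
Your proposal is correct and follows exactly the route the paper intends: the paper states this corollary without any proof, treating it as an immediate consequence of Theorem \ref{thm:SpecSequFromExactCouple}, and your argument --- inducing $g^{r}$ on the subquotients $Z^{r-1}_{\Vect{x}}/B^{r-1}_{\Vect{x}}$ of (\ref{def:ExactCoupleItems}), checking preservation of cycles and boundaries from the commutation of $(f,g)$ with $i,j,k$, intertwining the additive-relation differentials, and verifying $g^{r+1}=H(g^{r})$ via the identities $\ExctCplCyclesBB{r}{\Vect{x}+\ExctCplJBdg{b}}=(\tau^{r-1}_{\Vect{x}+\ExctCplJBdg{b}})^{-1}\Ker{\SSDffrntlBB{r}{\Vect{x}+\ExctCplJBdg{b}}}$ and $\ExctCplBndrsBB{r}{\Vect{x}+\ExctCplJBdg{b}}=(\tau^{r-1}_{\Vect{x}+\ExctCplJBdg{b}})^{-1}\Img{\SSDffrntlBB{r}{}}$ --- is precisely the routine verification being left to the reader. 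Nothing is missing; the functoriality and additivity checks are as formal as you say.
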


We will explain the meaning of the objects $\ExctCplEPage{\tau}$ in the next section. For now, we identify some exact couples whose $E$-objects are related in a simple manner.

\begin{proposition}[Spectral sequences and (co-)Mittag-Leffler condition]
\label{thm:SpecSeqs-(Co-)Mittag-Leffler}
For the $E$\MSComp-objects of a regular exact couple $\ExctCpl{C}$ the following hold:
\vspace{-1.5ex}
\begin{enumerate}[(i)]
\item If all of the $\ZCat$-diagrams $D(n)$, $n\in \ZNr$, satisfy the co-Mittag-Leffler condition (\ref{def:Mittag-Leffler/CoMittag-Leffler-Condition}), then each $\Vect{x}\in \Prdct{\ZNr}{\ZNr}$ admits $1\leq r< \infty$, along with an an associated sequence of $E$\MSComp-subobjects
\begin{equation*}
\xymatrix@R=5ex@C=2em{
\ExctCplStblEObjctBB{\Vect{x}+\ExctCplJBdg{b}} \ar@{{ |>}->}[r] & \cdots 
	\ar@{{ |>}->}[r] &
	\ExctCplEObjctBB{\tau}{\Vect{x}+\ExctCplJBdg{b}} \ar@{{ |>}->}[r] &
	\cdots \ar@{{ |>}->}[r] & \ExctCplEObjctBB{\omega+1}{\Vect{x}+\ExctCplJBdg{b}}=\ExctCplEObjctBB{\infty}{\Vect{x}+\ExctCplJBdg{b}} \ar@{{ |>}->}[r] & \cdots \ar@{{ |>}->}[r] &
	\ExctCplEObjctBB{r+k}{\Vect{x}+\ExctCplJBdg{b}} \ar@{{ |>}->}[r] &
	\cdots \ar@{{ |>}->}[r] &
	\ExctCplEObjctBB{r}{\Vect{x}+\ExctCplJBdg{b}}.
}
\end{equation*}
\begin{equation*}
\ExctCplStblEObjctBB{\Vect{x}+\ExctCplJBdg{b}}\subseteq \cdots \subseteq \ExctCplEObjctBB{\tau}{\Vect{x}+\ExctCplJBdg{b}}\subseteq \cdots \subseteq \ExctCplEObjctBB{\omega+1}{\Vect{x}+\ExctCplJBdg{b}}=\ExctCplEObjctBB{\infty}{\Vect{x}+\ExctCplJBdg{b}} \subseteq \cdots \subseteq \ExctCplEObjctBB{r+k}{\Vect{x}+\ExctCplJBdg{b}}\subseteq \cdots \subseteq \ExctCplEObjctBB{r}{\Vect{x}+\ExctCplJBdg{b}}.
\end{equation*}
\item If all of the $\ZCat$-diagrams $D(n)$, $n\in \ZNr$, satisfy the Mittag-Leffler condition (\ref{def:Mittag-Leffler/CoMittag-Leffler-Condition}), then each $\Vect{x}\in \Prdct{\ZNr}{\ZNr}$ admits $1\leq r< \infty$, along with an an associated sequence of $E$\MSComp-quotient objects
\begin{equation*}
\xymatrix@R=5ex@C=1.5em{
\ExctCplEObjctBB{r}{\Vect{x}+\ExctCplJBdg{b}} \ar@{-{ >>}}[r] &
	\cdots \ar@{-{ >>}}[r] &
	\ExctCplEObjctBB{r+k}{\Vect{x}+\ExctCplJBdg{b}}\ar@{-{ >>}}[r] &
	\cdots \ar@{-{ >>}}[r] &
	\ExctCplEObjctBB{\infty}{\Vect{x}+\ExctCplJBdg{b}} = \ExctCplStblEObjctBB{\Vect{x}+\ExctCplJBdg{b}}.
}
\end{equation*}
\item If all of the $\ZCat$-diagrams $D(n)$, $n\in \ZNr$, satisfy both the Mittag-Leffler condition and the co-Mittag-Leffler condition, then each $\Vect{x}\in \Prdct{\ZNr}{\ZNr}$ admits $1\leq r< \infty$, such that $\ExctCplStblEObjctBB{\Vect{x}+\ExctCplJBdg{b}}=	\ExctCplEObjctBB{\lambda}{\Vect{x}+\ExctCplJBdg{b}} = \ExctCplEObjctBB{r}{\Vect{x}+\ExctCplJBdg{b}}$ for each successor ordinal $\lambda\geq r$.
\end{enumerate}
\end{proposition}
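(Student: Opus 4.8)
The three parts all run on a single engine furnished by (\ref{def:ExactCoupleItems}) and (\ref{thm:SpecSequFromExactCouple}): each successor page is the subquotient $\ExctCplEObjctBB{\tau+1}{\Vect{x}+\ExctCplJBdg{b}} = \ExctCplCyclesBB{\tau}{\Vect{x}+\ExctCplJBdg{b}}/\ExctCplBndrsBB{\tau}{\Vect{x}+\ExctCplJBdg{b}}$, where the cycles $\ExctCplCyclesBB{\tau}{\Vect{x}+\ExctCplJBdg{b}} = (\ExctCplKMapBB{}{\Vect{x}+\ExctCplJBdg{b}})^{-1}\ZDiagImg{\tau}{\Vect{x}+\ExctCplJBdg{b}+\ExctCplKBdg{c}}{}$ descend with $\tau$ because the image subobjects descend, while the boundaries $\ExctCplBndrsBB{\tau}{\Vect{x}+\ExctCplJBdg{b}} = \ExctCplJMapBB{}{\Vect{x}}\Ker{\ExctCplDObjctBB{}{\Vect{x}}\to \ZDiagQuo{\tau}{\Vect{x}}{}}$ ascend as the quotient objects stabilize, subject throughout to $\ExctCplBndrsBB{\tau}{\Vect{x}+\ExctCplJBdg{b}}\subseteq \Img{\ExctCplJMapBB{}{\Vect{x}}} = \Ker{\ExctCplKMapBB{}{\Vect{x}+\ExctCplJBdg{b}}}\subseteq \ExctCplCyclesBB{\tau}{\Vect{x}+\ExctCplJBdg{b}}$. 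First I would record the resulting dichotomy for a comparison between two successor pages $\tau<\lambda$: if the boundaries are constant on $[\tau,\lambda]$ the comparison is the inclusion $\ExctCplEObjctBB{\lambda+1}{\Vect{x}+\ExctCplJBdg{b}}\hookrightarrow\ExctCplEObjctBB{\tau+1}{\Vect{x}+\ExctCplJBdg{b}}$ induced by $\ExctCplCyclesBB{\lambda}{}\subseteq \ExctCplCyclesBB{\tau}{}$, whereas if the cycles are constant it is the surjection $\ExctCplEObjctBB{\tau+1}{\Vect{x}+\ExctCplJBdg{b}}\twoheadrightarrow\ExctCplEObjctBB{\lambda+1}{\Vect{x}+\ExctCplJBdg{b}}$ induced by $\ExctCplBndrsBB{\tau}{}\subseteq \ExctCplBndrsBB{\lambda}{}$. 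Thus the whole statement reduces to locating, position by position, a finite ordinal past which one of the two chains is constant.

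The bridge to the hypotheses is the appendix dictionary (\ref{def:Mittag-Leffler/CoMittag-Leffler-Condition}): I would use that the Mittag--Leffler condition on the $\ZCat$-diagrams $D(n)$ is exactly the finite stabilization of the image subdiagrams $\ZDiagImg{\tau}{}{}$, while the co-Mittag--Leffler condition is exactly the finite stabilization of the quotient subdiagrams $\ZDiagQuo{\tau}{}{}$. For (i), co-Mittag--Leffler makes the relevant $\ZDiagQuo{\tau}{\Vect{x}}{}$ constant from a finite stage on; since for a fixed $\Vect{x}$ only finitely many diagram positions enter $\ExctCplBndrsBB{\tau}{\Vect{x}+\ExctCplJBdg{b}}$, I can pick one finite $r$ (depending on $\Vect{x}$) past which the boundaries are constant. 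By the dichotomy every transition with $\tau\geq r$ is then an inclusion, which yields the finite tail $\ExctCplEObjctBB{r}{\Vect{x}+\ExctCplJBdg{b}}\supseteq\cdots$. Beyond $\omega$, (\ref{thm:E^tau,tau>omega}.i,ii) already gives constant boundaries and descending pages, and (\ref{thm:SSE-infty=E^(omega+1)}) identifies $\ExctCplEObjctBB{\omega+1}{}=\ExctCplEObjctBB{\infty}{}$; the cycles may keep shrinking transfinitely until the images finally stabilize at the ordinal $\alpha$ of (\ref{thm:E^tau,tau>omega}.iii), giving $\ExctCplStblEObjctBB{\Vect{x}+\ExctCplJBdg{b}}=\ExctCplEObjctBB{\alpha}{}$. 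Splicing these pieces yields the stated chain of monomorphisms.

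For (ii) I would argue dually. The Mittag--Leffler condition makes the image subdiagrams, hence the cycles $\ExctCplCyclesBB{\tau}{\Vect{x}+\ExctCplJBdg{b}}$, constant from a finite $r$ on, so by the dichotomy every transition with $\tau\geq r$ is a surjection, giving the chain of epimorphisms $\ExctCplEObjctBB{r}{\Vect{x}+\ExctCplJBdg{b}}\twoheadrightarrow\cdots$. Because the cycles are then constant at all ordinals $\tau\geq r$, finite and transfinite, while the boundaries stabilize at $\omega$ by (\ref{thm:E^tau,tau>omega}.i), the quotient $\ExctCplCyclesBB{r}{}/\ExctCplBndrsBB{\omega}{}$ equals both $\ExctCplEObjctBB{\omega+1}{}=\ExctCplEObjctBB{\infty}{}$ and $\ExctCplEObjctBB{\alpha}{}=\ExctCplStblEObjctBB{\Vect{x}+\ExctCplJBdg{b}}$; hence $\ExctCplEObjctBB{\infty}{}=\ExctCplStblEObjctBB{\Vect{x}+\ExctCplJBdg{b}}$ and the epimorphism chain terminates exactly there. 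Part (iii) is the conjunction of the two: both chains are constant from a common finite $r$, so after a harmless reindexing $\ExctCplEObjctBB{\lambda}{\Vect{x}+\ExctCplJBdg{b}}=\ExctCplEObjctBB{r}{\Vect{x}+\ExctCplJBdg{b}}=\ExctCplStblEObjctBB{\Vect{x}+\ExctCplJBdg{b}}$ for every successor ordinal $\lambda\geq r$.

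The main obstacle is the appendix dictionary itself: verifying that the (co-)Mittag--Leffler conditions are genuinely equivalent to finite stabilization of the image, respectively quotient, subdiagrams, and that this stabilization transports through the formulas $\ExctCplCyclesBB{\tau}{}=(\ExctCplKMapBB{}{})^{-1}\ZDiagImg{\tau}{}{}$ and $\ExctCplBndrsBB{\tau}{}=\ExctCplJMapBB{}{}\Ker{\cdots\to\ZDiagQuo{\tau}{}{}}$ to finite stabilization of the cycles, respectively boundaries, at a single $\Vect{x}$-dependent $r$. The remaining care is purely bookkeeping: matching the page index $r$, where $\ExctCplEObjctBB{r}{}=\ExctCplCyclesBB{r-1}{}/\ExctCplBndrsBB{r-1}{}$, to the stabilization stage so that the displayed chains begin exactly at $\ExctCplEObjctBB{r}{\Vect{x}+\ExctCplJBdg{b}}$.
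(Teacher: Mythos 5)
Your proposal is correct and follows essentially the same route as the paper's proof: the co-Mittag-Leffler condition stabilizes the image quotient objects, hence the boundaries $\ExctCplBndrsBB{\tau}{\Vect{x}+\ExctCplJBdg{b}}$, at a finite stage (giving the chain of monomorphisms), while the Mittag-Leffler condition stabilizes the image subobjects, hence the cycles $\ExctCplCyclesBB{\tau}{\Vect{x}+\ExctCplJBdg{b}}$, at a finite stage (giving the chain of epimorphisms), with (\ref{thm:E^tau,tau>omega}) supplying the transfinite identification $\ExctCplBndrsBB{\lambda}{}=\ExctCplBndrsBB{\omega}{}$ and $\ExctCplEObjctBB{\infty}{}=\ExctCplEObjctBB{\omega+1}{}$. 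Your explicit dichotomy (constant boundaries yield inclusions, constant cycles yield surjections) is precisely what the paper's terser argument leaves implicit, and your worry about the "appendix dictionary" is moot since finite stabilization of $\ZDiagImg{\tau}{}{}$, respectively $\ZDiagQuo{\tau}{}{}$, is the literal content of Definition (\ref{def:Mittag-Leffler/CoMittag-Leffler-Condition}).
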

\begin{proof}
(i)\quad If the $\ZCat$-diagram $\ExctCplDObjctBB{}{\Vect{x} + s\ExctCplIBdg{a}}$, $s\in \ZNr$, satisfies the co-Mittag-Leffler condition, then $\ZDiagQuo{r}{\Vect{x}}{} = \ZDiagQuo{\tau}{\Vect{x}}{}$ for sufficiently large $r\leq \tau$. On boundary objects, the constructions in (\ref{def:ExactCoupleItems}) respond by $\ExctCplBndrsBB{\tau}{\Vect{x}+\ExctCplJBdg{b}}=\ExctCplBndrsBB{r}{\Vect{x}+\ExctCplJBdg{b}}$, for $\tau\geq r$. This implies (i).

(ii)\quad If the $\ZCat$-diagram $\ExctCplDObjctBB{}{\Vect{x} + \ExctCplJBdg{b}+\ExctCplKBdg{c} + s\ExctCplIBdg{a}}$, $s\in \ZNr$, satisfies the Mittag-Leffler condition, then $\ZDiagImg{\tau}{\Vect{x}+\ExctCplJBdg{b}+\ExctCplKBdg{c}}{}=\ZDiagImg{r}{\Vect{x}+\ExctCplJBdg{b}+\ExctCplKBdg{c}}{}$ for sufficiently large $r\leq \tau$. On cycle objects, the constructions in (\ref{def:ExactCoupleItems}) respond by $\ExctCplCyclesBB{r}{\Vect{x}+\ExctCplJBdg{b}}=\ExctCplCyclesBB{\tau}{\Vect{x}+\ExctCplJBdg{b}}$, for $r\leq \tau$. Recalling (\ref{thm:E^tau,tau>omega}) that, for $\lambda\geq \omega$, $\ExctCplBndrs{\lambda}{\Vect{x}+\ExctCplJBdg{b}}{}=\ExctCplBndrs{\omega}{\Vect{x}+\ExctCplJBdg{b}}{}$, claim (ii) follows.

(iii)\quad follows by combining parts (i) and (ii).
\end{proof}

\begin{corollary}[Spectral sequences of originally / eventually stable exact couples]
\label{thm:SpecSeqsOfOriginally/EventuallyStableExactCouples}
For the $E$\MSComp-objects of a regular exact couple $\ExctCpl{C}$ the following hold: %
\index{originally!stable exact couple}\index{eventually!stable exact couple}\index{spectral sequence!orginally stable exact couple} \index{spectral sequence!eventually stable exact couple}%
\begin{enumerate}[(i)]
\item Suppose the $\ZCat$-diagrams $D(n)$, $n\in \ZNr$, are all eventually stable (\ref{def:Z-DiagramTypes}). Then, for each $\Vect{x}\in \ZNr\prdct\ZNr$, there exists $r\geq 1$ such that, for all $k\geq 0$, the differential $\SSDffrntlBB{r+k}{\Vect{x}-\ExctCplKBdg{c}+(r-1+k)\ExctCplIBdg{a}}$ arriving at $\ExctCplEObjctBB{r+k}{\Vect{x}+\ExctCplJBdg{b}}$ has domain $0$. Consequently, we obtain a sequence of subobjects
\begin{equation*}
\ExctCplStblEObjctBB{\Vect{x}+\ExctCplJBdg{b}}\subseteq \cdots \subseteq \ExctCplEObjctBB{\infty}{\Vect{x}+\ExctCplJBdg{b}} \subseteq \cdots \subseteq \ExctCplEObjctBB{r+k}{\Vect{x}+\ExctCplJBdg{b}}\subseteq \cdots \subseteq \ExctCplEObjctBB{r}{\Vect{x}+\ExctCplJBdg{b}}.
\end{equation*}
\item Suppose the $\ZCat$-diagrams $D(n)$, $n\in\ZNr$, are all originally stable. Then for each $\Vect{x}\in \ZNr\prdct\ZNr$, there exists $r\geq 1$ such that, for all $k\geq 0$, the differential $\SSDffrntlBB{r+k}{\Vect{x}+\ExctCplJBdg{b}}$ exiting from $\ExctCplEObjctBB{r+k}{\Vect{x}+\ExctCplJBdg{b}}$ has codomain $0$. Consequently, we obtain a sequence of quotient objects
\begin{equation*}
\xymatrix@R=5ex@C=2em{
\ExctCplEObjctBB{r}{\Vect{x}+\ExctCplJBdg{b}} \ar@{-{ >>}}[r] &
	\cdots \ar@{-{ >>}}[r] &
	\ExctCplEObjctBB{r+k}{\Vect{x}+\ExctCplJBdg{b}}\ar@{-{ >>}}[r] &
	\cdots \ar@{-{ >>}}[r] &
	\ExctCplEObjctBB{\infty}{\Vect{x}+\ExctCplJBdg{b}} = \ExctCplStblEObjctBB{\Vect{x}+\ExctCplJBdg{b}}.
} \tag*{$\lozenge$}
\end{equation*}
\end{enumerate}
\end{corollary}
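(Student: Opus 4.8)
The plan is to derive both parts from one vanishing principle: if all $\mathcal{Z}$-diagrams $D(n)$ are eventually stable, then the couple's object $\ExctCplEObjctBB{1}{\Vect{w}}$ is $0$ for every $\Vect{w}$ lying far enough in the $+\ExctCplIBdg{a}$ direction; dually, original stability forces vanishing far in the $-\ExctCplIBdg{a}$ direction. Granting this, the domains of the incoming differentials in (i), respectively the codomains of the outgoing differentials in (ii), are $0$, and the asserted sub-/quotient sequences drop out of the identity $\ExctCplEObjctBB{r+1}{}=\Ker{\SSDffrntl{r}}/\Img{\SSDffrntl{r}}$ furnished by (\ref{thm:SpecSequFromExactCouple}).

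To prove the vanishing principle I would isolate the stretch of the couple's long exact sequence straddling $\ExctCplEObjctBB{1}{\Vect{w}}$,
\begin{equation*}
\ExctCplDObjctBB{}{\Vect{w}-\ExctCplJBdg{b}-\ExctCplIBdg{a}} \XRA{\ExctCplIMap{}} \ExctCplDObjctBB{}{\Vect{w}-\ExctCplJBdg{b}} \XRA{\ExctCplJMap{}} \ExctCplEObjctBB{1}{\Vect{w}} \XRA{\ExctCplKMap{}} \ExctCplDObjctBB{}{\Vect{w}+\ExctCplKBdg{c}} \XRA{\ExctCplIMap{}} \ExctCplDObjctBB{}{\Vect{w}+\ExctCplKBdg{c}+\ExctCplIBdg{a}},
\end{equation*}
whose exactness yields $0\to \CoKer{\ExctCplIMapBB{}{\Vect{w}-\ExctCplJBdg{b}-\ExctCplIBdg{a}}}\to \ExctCplEObjctBB{1}{\Vect{w}}\to \Ker{\ExctCplIMapBB{}{\Vect{w}+\ExctCplKBdg{c}}}\to 0$. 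The $\ExctCplJMap{}$-side map lives in $D(m)$ and the $\ExctCplKMap{}$-side map in $D(m+\sigma)$, where $m=\DtrmnntOfMtrx{[\ExctCplIBdg{a}\ \ \Vect{w}-\ExctCplJBdg{b}]}$, using (\ref{thm:BigradingPropertiesOfExactCouple}). If both diagrams are eventually stable, then by (\ref{def:Z-DiagramTypes}) their transition maps $\ExctCplIMap{}$ are isomorphisms once $\Vect{w}$ is pushed far enough in the $+\ExctCplIBdg{a}$ direction, so both outer terms vanish and $\ExctCplEObjctBB{1}{\Vect{w}}=0$; as every later page is a subquotient, $\ExctCplEObjctBB{r}{\Vect{w}}=0$ for all finite $r$. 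The originally stable case is verbatim with $-\ExctCplIBdg{a}$ in place of $+\ExctCplIBdg{a}$.

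Fixing $\Vect{x}$, the source of the incoming differential at $\ExctCplEObjctBB{r+k}{\Vect{x}+\ExctCplJBdg{b}}$ is $\ExctCplEObjctBB{r+k}{\Vect{x}-\ExctCplKBdg{c}+(r+k-1)\ExctCplIBdg{a}}$, and the decisive observation is that $\DtrmnntOfMtrx{[\ExctCplIBdg{a}\ \ \Vect{x}-\ExctCplKBdg{c}+(r+k-1)\ExctCplIBdg{a}]}=\DtrmnntOfMtrx{[\ExctCplIBdg{a}\ \ \Vect{x}-\ExctCplKBdg{c}]}$ is independent of $r$ and $k$: all these sources lie in one fixed diagram and run to $+\infty$ along $\ExctCplIBdg{a}$, while the two diagrams feeding the vanishing principle are fixed as well. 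Hence a single threshold works — choosing $r$ with $r-1$ beyond it makes the domain $0$ for all $k\geq 0$, so that incoming differential is zero and $\ExctCplEObjctBB{r+k+1}{\Vect{x}+\ExctCplJBdg{b}}=\Ker{\SSDffrntlBB{r+k}{\Vect{x}+\ExctCplJBdg{b}}}$ embeds in $\ExctCplEObjctBB{r+k}{\Vect{x}+\ExctCplJBdg{b}}$, which is the chain of monomorphisms of (i). Part (ii) is dual: the target $\ExctCplEObjctBB{r+k}{\Vect{x}+2\ExctCplJBdg{b}+\ExctCplKBdg{c}-(r+k-1)\ExctCplIBdg{a}}$ of the outgoing differential keeps its determinant constant while running to $-\infty$ along $\ExctCplIBdg{a}$, so original stability kills the codomain, $\SSDffrntlBB{r+k}{\Vect{x}+\ExctCplJBdg{b}}=0$, and $\ExctCplEObjctBB{r+k}{\Vect{x}+\ExctCplJBdg{b}}$ surjects onto $\ExctCplEObjctBB{r+k+1}{\Vect{x}+\ExctCplJBdg{b}}$. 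For the terminal identification $\ExctCplEObjctBB{\infty}{\Vect{x}+\ExctCplJBdg{b}}=\ExctCplStblEObjctBB{\Vect{x}+\ExctCplJBdg{b}}$ in (ii), I would note that vanishing outgoing differentials freeze the cycle objects at a finite stage, so the transfinite tower of (\ref{thm:E^tau,tau>omega}) has already stabilized and $\ExctCplEObjctBB{\omega+1}{}=\ExctCplEObjctBB{\alpha}{}$ via (\ref{thm:SSE-infty=E^(omega+1)}).

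The main obstacle is the bookkeeping in the second paragraph: correctly identifying the two neighbouring diagrams $D(m)$ and $D(m+\sigma)$ of a given $\ExctCplEObjctBB{1}{\Vect{w}}$ and translating eventual/original stability from (\ref{def:Z-DiagramTypes}) into simultaneous surjectivity of the one $\ExctCplIMap{}$ and injectivity of the other. After that the argument is a uniform-threshold bookkeeping in the single index $r+k$. As a sanity check one may instead route through (\ref{thm:SpecSeqs-(Co-)Mittag-Leffler}) — eventual stability implies the co-Mittag-Leffler condition and original stability the Mittag-Leffler condition — which already yields the sub-/quotient chains; the sharper content here, that the pertinent differentials vanish because a domain or codomain is literally $0$, is exactly what the vanishing principle adds.
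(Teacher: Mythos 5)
Your proposal is essentially correct, and it supplies an argument the paper itself omits: Corollary \ref{thm:SpecSeqsOfOriginally/EventuallyStableExactCouples} carries the no-proof mark and is positioned as an immediate consequence of Proposition \ref{thm:SpecSeqs-(Co-)Mittag-Leffler} combined with (\ref{thm:CoMittagLefflerExamples}.i) and (\ref{thm:MittagLefflerExamples}.i) (eventual stability implies the co-Mittag-Leffler condition, original stability the Mittag-Leffler condition). That implicit route stabilizes the boundary objects $\ExctCplBndrsBB{r}{\Vect{x}+\ExctCplJBdg{b}}$, respectively the cycle objects $\ExctCplCyclesBB{r}{\Vect{x}+\ExctCplJBdg{b}}$, and hence shows that the relevant differentials are zero \emph{as maps}; it does not yield the sharper assertion of the corollary, namely that the domains of the incoming differentials, respectively the codomains of the outgoing ones, are literally the zero module. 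Your vanishing principle is exactly the missing mechanism: the five-term exact stretch of the couple around $\ExctCplEObjctBB{1}{\Vect{w}}$ gives $0\to \CoKer{\ExctCplIMapBB{}{\Vect{w}-\ExctCplJBdg{b}-\ExctCplIBdg{a}}}\to \ExctCplEObjctBB{1}{\Vect{w}}\to \Ker{\ExctCplIMapBB{}{\Vect{w}+\ExctCplKBdg{c}}}\to 0$, both ends die once $\Vect{w}$ passes the stability thresholds of the two fixed neighbouring diagrams $D(m)$ and $D(m+\sigma)$, every finite page at such a position vanishes as a subquotient of $\ExctCplEObjctBB{1}{\Vect{w}}$, and your determinant bookkeeping correctly secures a single finite threshold $r$ valid for all pages $r+k$, since the moving source (target) positions stay in fixed diagrams while marching along $+\ExctCplIBdg{a}$ (resp.\ $-\ExctCplIBdg{a}$).

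Two points need more care than you give them. First, in (i) the asserted chain also contains $\ExctCplStblEObjctBB{\Vect{x}+\ExctCplJBdg{b}}\subseteq\cdots\subseteq \ExctCplEObjctBB{\infty}{\Vect{x}+\ExctCplJBdg{b}}\subseteq\cdots\subseteq \ExctCplEObjctBB{r+k}{\Vect{x}+\ExctCplJBdg{b}}$, and you only produce the finite-page inclusions; this is easily repaired, because zero incoming differentials freeze the boundaries, $\ExctCplBndrsBB{s}{\Vect{x}+\ExctCplJBdg{b}}=\ExctCplBndrsBB{r-1}{\Vect{x}+\ExctCplJBdg{b}}$ for $s\geq r$, hence $\ExctCplBndrsBB{\infty}{\Vect{x}+\ExctCplJBdg{b}}=\ExctCplBndrsBB{r-1}{\Vect{x}+\ExctCplJBdg{b}}$ and $\ExctCplEObjctBB{\infty}{\Vect{x}+\ExctCplJBdg{b}}=\ExctCplCyclesBB{\infty}{\Vect{x}+\ExctCplJBdg{b}}/\ExctCplBndrsBB{r-1}{\Vect{x}+\ExctCplJBdg{b}}\subseteq \ExctCplEObjctBB{r+k+1}{\Vect{x}+\ExctCplJBdg{b}}$, while $\ExctCplStblEObjctBB{\Vect{x}+\ExctCplJBdg{b}}\subseteq\cdots\subseteq \ExctCplEObjctBB{\infty}{\Vect{x}+\ExctCplJBdg{b}}$ is generic by (\ref{thm:E^tau,tau>omega}.ii). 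Second, and more substantively, your stated reason in (ii) for the terminal identification $\ExctCplEObjctBB{\infty}{\Vect{x}+\ExctCplJBdg{b}}=\ExctCplStblEObjctBB{\Vect{x}+\ExctCplJBdg{b}}$ --- that vanishing outgoing differentials freeze the cycle objects at a finite stage --- controls only the finite-stage cycles. The transfinite objects $\ExctCplEObjctBB{\tau}{\Vect{x}+\ExctCplJBdg{b}}$, $\tau>\omega$, are built from the image subobjects $\ZDiagImg{\tau}{\Vect{x}+\ExctCplJBdg{b}+\ExctCplKBdg{c}}{}$ of $D(n+\sigma)$, not from differentials, and finite-page differential vanishing alone does not freeze those. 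What does freeze them is original stability itself: in an initial segment the structure maps are isomorphisms, so $\ZDiagImg{\tau}{\Vect{x}+\ExctCplJBdg{b}+\ExctCplKBdg{c}}{}=\ZDiagImg{r}{\Vect{x}+\ExctCplJBdg{b}+\ExctCplKBdg{c}}{}$ for \emph{every} ordinal $\tau\geq r$ --- precisely the Mittag-Leffler condition of (\ref{def:Mittag-Leffler/CoMittag-Leffler-Condition}) --- and combined with $\ExctCplBndrsBB{\lambda}{\Vect{x}+\ExctCplJBdg{b}}=\ExctCplBndrsBB{\omega}{\Vect{x}+\ExctCplJBdg{b}}$ from (\ref{thm:E^tau,tau>omega}.i) this yields $\ExctCplEObjctBB{\lambda}{\Vect{x}+\ExctCplJBdg{b}}=\ExctCplEObjctBB{\infty}{\Vect{x}+\ExctCplJBdg{b}}$ for all successor $\lambda>\omega$. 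So the route you relegate to a ``sanity check'' is in fact the required step for that equality, not an optional cross-check.
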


Criteria which may help recognize the presence of the (co-)Mittag-Leffler property in the $\ZCat$-diagrams $D(n)$ are collected in (\ref{thm:MittagLefflerExamples}) and (\ref{thm:CoMittagLefflerExamples}).

We close this section with information about re-indexing a given exact couple. Indeed, such re-indexing may be advantageous so that the differentials of the associated  spectral sequence acquire familiar bidegrees. On a regular exact couple this may be accomplished via the action of the general linear group $\GLGrp{2}{\ZNr}$ on $(\ZNr\prdct\ZNr)$: %
\index[not]{$\GLGrp{2}{\ZNr}$ - general linear group on $\Prdct{\ZNr}{\ZNr}$}%

\begin{lemma}[Re-indexing an exact couple]
\label{thm:ExactCouple-ReIndexed}
The general linear group $\GLGrp{2}{\ZNr}$ right acts on the category of $(\ZNr\prdct\ZNr)$-bigraded exact couples by precomposition: if $T\in \GLGrp{2}{\ZNr}$, and $\ExctCpl{C}=(D,E,i,j,k)$ is an exact couple whose structure maps $i,j,k$ have bidegrees $\ExctCplIBdg{a},\ExctCplJBdg{b},\ExctCplKBdg{c}$, respectively, then $\ExctCpl{C}.T$ is given by: %
\index{exact couple!re-indexing}\index{regular exact couple!re-indexing}%
\begin{enumerate}[(i)]
\item $\ZNr\prdct \ZNr \XRA{T} \ZNr\prdct \ZNr \XRA{D,E} \LModules{R}$, that is $(D.T)_{\Vect{x}}\DefEq D_{T\Vect{x}}$ and $(E.T)_{\Vect{x}}\DefEq E_{T\Vect{x}}$.
\item $i.T\DefEq i\Comp T$, $j.T\DefEq j\Comp T$, and $k.T\DefEq k\Comp T$  have bidegrees $T(\Vect{a})$,  $T(\Vect{b})$, and  $T(\Vect{c})$, respectively.
\end{enumerate}
Moreover, $\ExctCpl{C}$ is regular if and only if $\ExctCpl{C}.T$ is regular.
\end{lemma}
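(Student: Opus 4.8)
The plan is to treat the $\GLGrp{2}{\ZNr}$-action as nothing more than a \emph{relabelling} of the index lattice $\Prdct{\ZNr}{\ZNr}$ by the bijection $T$, and to exploit the fact that every defining feature of an exact couple --- exactness in each corner, and commutativity of morphism squares --- is a \emph{positionwise} property that is therefore transported unchanged along $T$. Concretely, I would first check that $\ExctCpl{C}.T$ is a well-defined exact couple and read off the bidegrees of its structure maps, then verify the two right-action axioms together with functoriality in $\ExctCpl{C}$, and finally settle the regularity equivalence by a one-line determinant computation.

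\textbf{$\ExctCpl{C}.T$ is an exact couple.} Since $T\in\GLGrp{2}{\ZNr}$ is a bijection of $\Prdct{\ZNr}{\ZNr}$, the families $D.T$ and $E.T$ are again $(\Prdct{\ZNr}{\ZNr})$-bigraded modules, and the relabelled structure maps have components $(i.T)_{\Vect{x}}=\ExctCplIMapBB{}{T\Vect{x}}$, and likewise for $j.T$ and $k.T$. The triangle of $\ExctCpl{C}.T$ read at position $\Vect{x}$ is \emph{literally} the triangle of $\ExctCpl{C}$ read at position $T\Vect{x}$; as kernels and images are computed positionwise, exactness of $\ExctCpl{C}$ at $T\Vect{x}$ is exactness of $\ExctCpl{C}.T$ at $\Vect{x}$, so $\ExctCpl{C}.T$ is an exact couple. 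The bidegrees are then pure bookkeeping: using that $T$ is $\ZNr$-linear one has $(D.T)_{\Vect{x}+\Vect{u}}=\ExctCplDObjctBB{}{T\Vect{x}+T\Vect{u}}$, and comparing this with the codomain $\ExctCplDObjctBB{}{T\Vect{x}+\ExctCplIBdg{a}}$ of $\ExctCplIMapBB{}{T\Vect{x}}$ yields the bidegrees $T\ExctCplIBdg{a},T\ExctCplJBdg{b},T\ExctCplKBdg{c}$ recorded in the statement.

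\textbf{Right action and functoriality.} I would verify $\ExctCpl{C}.\IdMap=\ExctCpl{C}$ (immediate) and $(\ExctCpl{C}.T).S=\ExctCpl{C}.(TS)$, which holds because precomposition of index bijections is associative: $(D\Comp T)\Comp S=D\Comp(TS)$ on objects, and the identical relabelling is applied simultaneously to $i,j,k$. For functoriality, a morphism $(f,g)\from \ExctCpl{C}(1)\to\ExctCpl{C}(2)$ relabels to $(f.T,g.T)$, whose defining squares at $\Vect{x}$ are exactly the squares of $(f,g)$ at $T\Vect{x}$ (and remain of bidegree $(0,0)$, since $T\cdot(0,0)=(0,0)$); thus the rule $\ExctCpl{C}\mapsto\ExctCpl{C}.T$ is an endofunctor of $\ExctCplsCatModulesOver{R}$, and these endofunctors realize the asserted right action of $\GLGrp{2}{\ZNr}$ on the category.

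\textbf{Regularity.} This is the only genuinely computational point, and it is short. Since $[T\ExctCplIBdg{a}\ \ T(\ExctCplJBdg{b}+\ExctCplKBdg{c})]=T\cdot[\ExctCplIBdg{a}\ \ (\ExctCplJBdg{b}+\ExctCplKBdg{c})]$ as a matrix product, the regularity determinant of $\ExctCpl{C}.T$ is
\begin{equation*}
\DtrmnntOfMtrx{[T\ExctCplIBdg{a}\ \ T(\ExctCplJBdg{b}+\ExctCplKBdg{c})]} = \DtrmnntOf{T}\cdot\DtrmnntOfMtrx{[\ExctCplIBdg{a}\ \ (\ExctCplJBdg{b}+\ExctCplKBdg{c})]} = \DtrmnntOf{T}\cdot\sigma .
\end{equation*}
Because $T\in\GLGrp{2}{\ZNr}$ forces $\DtrmnntOf{T}\in\Set{\pm 1}$, the reindexed determinant lies in $\Set{\pm 1}$ if and only if $\sigma$ does, giving the equivalence. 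I expect the only obstacle to be clerical rather than conceptual: keeping the index/bidegree bookkeeping consistent (in particular the covariance of bidegrees along $T$, as opposed to the contravariant convention that would introduce $T^{-1}$), and confirming that positionwise exactness and commutativity transfer verbatim. Note that this last worry does not affect the conclusion, since $\DtrmnntOf{T^{-1}}=\DtrmnntOf{T}\in\Set{\pm1}$ as well; the determinant identity above carries all of the actual content.
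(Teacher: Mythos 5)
Your proposal is correct and follows essentially the same route as the paper: the right-action (and functoriality) properties are exactly the formal consequences of precomposition of functors, and the regularity equivalence rests on the identical determinant identity $\DtrmnntOfMtrx{[T(\ExctCplIBdg{a})\ \ T(\ExctCplJBdg{b}+\ExctCplKBdg{c})]} = \DtrmnntOf{T}\cdot \DtrmnntOfMtrx{[\ExctCplIBdg{a}\ \ (\ExctCplJBdg{b}+\ExctCplKBdg{c})]}$ together with $\DtrmnntOf{T}$ being a unit in $\ZNr$. You simply spell out the positionwise bookkeeping (well-definedness, bidegrees, action axioms) that the paper compresses into one sentence.
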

\begin{proof}
The right action properties follow from properties of composition of functors. To see that $T$ preserves regular exact couples, set $\Vect{z}\DefEq \ExctCplJBdg{b} + \ExctCplKBdg{c}$. Then:
\begin{equation*}
\DtrmnntOfMtrx{[T(\ExctCplIBdg{a})\ \ T(\Vect{z})]} = \DtrmnntOf{T}\cdot \DtrmnntOfMtrx{[\ExctCplIBdg{a}\ \ \Vect{z}]}
\end{equation*}
As $\DtrmnntOf{T}$ is a unit in $\ZNr$, the product on the right is a unit in $\ZNr$ if and only if $\DtrmnntOfMtrx{[\ExctCplIBdg{a}\ \ \Vect{z}]}$ is a unit in $\ZNr$; i.e. if and only if $\ExctCpl{C}$ is regular.
\end{proof}

Via (\ref{thm:ExactCouple-ReIndexed}) we may re-index any regular exact couple so that its associated spectral sequence is homological or cohomological (\ref{term:SpectralSequence-Namings}).

\begin{example}[Re-indexing to (co-)homological spectral sequence]
\label{exa:ExactCouple-ReindexTo(Co-)Homological}
If $\ExctCpl{C}=(D,E,i,j,k)$ is a regular exact couple whose structure maps have bidegrees $\ExctCplIBdg{a}=(a_1,a_2)$, $\ExctCplJBdg{b}=(b_1,b_2)$, $\ExctCplKBdg{c}=(c_1,c_2)$, with $\Vect{z}\DefEq \ExctCplIBdg{a}+\ExctCplJBdg{b}$, then the $\GLGrp{2}{\ZNr}$ matrix
\begin{equation*}
T\DefEq \DtrmnntOfMtrx{[\ExctCplIBdg{a}\ \ \Vect{z}]}\cdot \left[
\begin{array}{cc}
a_2+z_2 & -(a_1+z_1) \\
-z_2 & z_1
\end{array}
\right]
\end{equation*}
transforms $\ExctCpl{C}$ into an exact couple whose spectral sequence is homological, and $-T$ transforms $\ExctCpl{C}$ into an exact couple whose spectral sequence is cohomological.
\end{example}
\begin{proof}
One checks that $T(\ExctCplIBdg{a})=(1,-1)$ and that $T(\Vect{z})=(-1,0)$. Consequently, the differential on the $r$-th page of the spectral sequence of $\ExctCpl{C}.T$ has bidegree
\begin{equation*}
\Vect{v}_r = T(\Vect{z}) - (r-1)T(\ExctCplIBdg{a}) = (-r,r-1)
\end{equation*}
So, $\ExctCpl{C}.T$ has a homological spectral sequence. This also implies the proposed re-indexing of a regular exact couple so that its associated spectral sequence is cohomological.
\end{proof}

\section[Convergence I]{Convergence I: The E-Infinity Extension Theorems}
\label{sec:Convergence-I}

When working with the spectral sequence associated to an exact couple, we need to understand the meaning of the $\SSObjctBB{\infty}{}$-objects. We take the inquiry about such meaning to be the question about \Defn{convergence} of spectral sequences in the most general sense. This question has two components: (I) What is the meaning of each individual object $\SSObjctBB{\infty}{\Vect{x}+\ExctCplJBdg{b}}$? and (II) What is the combined meaning of the objects $\SSObjctBB{\infty}{}$?

In this section, we describe the meaning of each  individual object $\SSObjctBB{\infty}{\Vect{x}+\ExctCplJBdg{b}}$. This is the content of the E-infinity extension theorem (\ref{thm:E-InfinityExtensionThm}). It applies to the spectral sequence of any regular $(\Prdct{\ZNr}{\ZNr})$-bigraded exact couple  (\ref{def:(ZxZ)-BigradedExactCouple}). Let us explain how the  $\ExctCplEPage{\infty}$\MSComp-objects are related to the universal abutment objects of the exact couple, and what the underlying phenomena are. %
\index{convergence!of a spectral sequence}

In subsequent sections we develop consequences of the E-infinity extension theorem. Via comparison theorems we approach question (II) about the combined meaning of the objects $\SSObjctBB{\infty}{}$.

{\em Outline of results}\quad We use notation as in Diagram \eqref{fig:ExactCouple-Flattened}, the flattened diagram of a $(\ZNr\prdct \ZNr)$-bigraded exact couple. Such an exact couple has structure maps $i,j,k$ with bidegrees $\ExctCplIBdg{a},\ExctCplJBdg{b},\ExctCplKBdg{c}$,  respectively, and it has limit and colimit abutment objects defined in (\ref{def:ExactCouple-UniversalLim/CoLimAbutment}): %
\index[not]{$\ExctCplLimAbut{n+\sigma}$ - colimit abutment of exact couple}\index[not]{$\ExctCplCoLimAbut{n}$ - limit abutment of exact couple}%
\begin{equation*}
\begin{array}{rrcll}
\text{colimit abutment}\qquad & \ExctCplCoLimAbut{n} & \DefEq & \CoLimOfOver{\ExctCplDObjctBB{}{\Vect{x}+r\ExctCplIBdg{a}}}{r} &\qquad n\DefEq \DtrmnntOf{[\ExctCplIBdg{a}\ \ \Vect{x}]} \\
\text{limit abutment}\qquad & \ExctCplLimAbut{n+\sigma} & \DefEq & \LimOfOver{\ExctCplDObjctBB{}{\Vect{x} +\ExctCplJBdg{b} + \ExctCplKBdg{c} + r\ExctCplIBdg{a}}}{r} &\qquad \sigma\DefEq \DtrmnntOf{[\ExctCplIBdg{a}\ \ (\ExctCplJBdg{b} + \ExctCplKBdg{c})]} \in \Set{\pm 1}
\end{array}
\end{equation*}
For $r\in \ZNr$, let $\ExctCplCoLimAbutMapBB{\Vect{x}+r\ExctCplIBdg{a}}\from \ExctCplDObjctBB{}{\Vect{x}+r\ExctCplIBdg{a}}\to \ExctCplCoLimAbut{n}$ and $\ExctCplLimAbutMapBB{\Vect{x}+\ExctCplJBdg{b}+\ExctCplKBdg{c}+r\ExctCplIBdg{a}}\from \ExctCplLimAbut{n+\sigma} \to \ExctCplDObjctBB{}{\Vect{x}+\ExctCplJBdg{b}+\ExctCplKBdg{c}+r\ExctCplIBdg{a}}$ denote the maps of the associate cocone and cone, respectively. The universal abutment objects are canonically filtered by %
\index[not]{$\ExctCplCoLimAbutMapBB{\Vect{x}}\from \ExctCplDObjctBB{}{\Vect{x}}\to \ExctCplCoLimAbut{n}$}%
\index[not]{$\ExctCplLimAbutMapBB{\Vect{x}+\ExctCplJBdg{b}+\ExctCplKBdg{c}}\from \ExctCplLimAbut{n+\sigma} \to \ExctCplDObjctBB{}{\Vect{x}+\ExctCplJBdg{b}+\ExctCplKBdg{c}}$}
\begin{equation*}
\ExctCplCoLimAbutFltrtnBB{\Vect{x}+r\ExctCplIBdg{a}} \DefEq \Img{ \ExctCplCoLimAbutMapBB{\Vect{x}+r\ExctCplIBdg{a} }\from \ExctCplDObjctBB{}{\Vect{x}+r\ExctCplIBdg{a}}\to \ExctCplCoLimAbut{n} }  \qquad \text{and}\qquad \ExctCplLimAbutFltrtnBB{\Vect{x} +\ExctCplJBdg{b} + \ExctCplKBdg{c} +r\ExctCplIBdg{a}} \DefEq \Ker{\rho^{\Vect{x}+\ExctCplJBdg{b} + \ExctCplKBdg{c} +r\ExctCplIBdg{a} }\from  \ExctCplLimAbut{n+\sigma}\to \ExctCplDObjctBB{}{\Vect{x}+\ExctCplJBdg{b} + \ExctCplKBdg{c}+r\ExctCplIBdg{a} } }
\end{equation*}
Next, we tie quotients of adjacent stages of these filtrations into short exact sequences:
\begin{equation*}
\xymatrix@R=.5ex@C=3em{
\ExctCplCoLimAbutFltrtnBB{\Vect{x}-\ExctCplIBdg{a}} \ar@{{ |>}->}[r] &
	\ExctCplCoLimAbutFltrtnBB{\Vect{x}} \ar@{-{ >>}}[r] &
  \ExctCplCoLimAbutFltrtnQtntBB{\Vect{x}} \\
\ExctCplLimAbutFltrtnBB{\Vect{x}+\ExctCplJBdg{b}+\ExctCplKBdg{c}} \ar@{{ |>}->}[r] &
	\ExctCplLimAbutFltrtnBB{\Vect{x}+\ExctCplIBdg{a}+\ExctCplJBdg{b}+\ExctCplKBdg{c}} \ar@{-{ >>}}[r] &
	\ExctCplLimAbutFltrtnQtntBB{\Vect{x}+\ExctCplJBdg{b}+\ExctCplKBdg{c}}
}
\end{equation*}
Notice that the objects $\ExctCplCoLimAbutFltrtnQtntBB{\Vect{x}}$ are constructed from the $\ZCat$-diagram $\ExctCplDObjctBB{}{\Vect{x}+r\ExctCplIBdg{a}}$, $r\in \ZNr$, while the objects $\ExctCplLimAbutFltrtnQtntBB{\Vect{x}+\ExctCplJBdg{b}+\ExctCplKBdg{c}}$ are constructed from the neighboring $\ZCat$-diagram $\ExctCplDObjctBB{}{\Vect{x}+\ExctCplJBdg{b}+\ExctCplKBdg{c}+r\ExctCplIBdg{a}}$, $r\in \ZNr$. As opposite as these adjacent filtration stages may appear, they {\em are always related via the stable E-extension Theorem} (\ref{thm:Stable-E-Extension}). It asserts that the top row in the commutative diagram below is short exact.
\begin{equation*}
\xymatrix@R=5ex@C=5em{
\ExctCplCoLimAbutFltrtnQtntBB{\Vect{x}} \ar@{{ |>}->}[r] \ar@{=}[d] &
	\ExctCplStblEObjctBB{\Vect{x}+\ExctCplJBdg{b}} \ar@{-{ >>}}[r] \ar@{{ |>}->}[d] \PullLU{rd}&
	\ExctCplLimAbutFltrtnQtntBB{\Vect{x}+\ExctCplJBdg{b}+\ExctCplKBdg{c}} \ar@{{ |>}->}[d]^{M} \\
\ExctCplCoLimAbutFltrtnQtntBB{\Vect{x}} \ar@{{ |>}->}[r] &
	\ExctCplEObjctBB{\infty}{\Vect{x}+\ExctCplJBdg{b}} \ar@{-{ >>}}[r] &
	\frac{ \ExctCplCyclesBB{\infty}{\Vect{x}+\ExctCplJBdg{b} } }{ \Img{ \ExctCplJMapBB{}{\Vect{x}} } }
}
\end{equation*}
We recall from (\ref{def:Stable-E-Objects}) that the stable E-objects $\ExctCplStblEObjctBB{\Vect{x}+\ExctCplJBdg{b}}$ are constructed from the underlying exact couple via a transfinite a process. Below the least infinite ordinal this process delivers the spectral sequence objects $\SSObjctBB{r}{\Vect{x}+\ExctCplJBdg{b}}$ from which we obtain $\SSObjctBB{\infty}{\Vect{x}+\ExctCplJBdg{b}}=\SSObjctBB{\omega+1}{\Vect{x}+\ExctCplJBdg{b}}$. For ordinals $\lambda>\omega+1$, $\SSObjctBB{\lambda}{\Vect{x}+\ExctCplJBdg{b}}$ is, in general, a proper subobject of $\SSObjctBB{\infty}{\Vect{x}+\ExctCplJBdg{b}}$. The vertical map in the middle of the diagram factors through $\SSObjctBB{\infty}{\Vect{x}+\ExctCplJBdg{b}}$.

In the E-infinity extension theorem (\ref{thm:E-InfinityExtensionThm}), we show that the bottom row of the diagram is short exact, and we express the cokernel of $M$ using $\Lim/\LimOne$-methods. On this foundation, we tackle the following two questions:
\begin{enumerate}[(D.1)]
\setlength{\itemindent}{2ex}
\item Under which condition(s) is the submodule $\ExctCplStblEObjctBB{\Vect{x}+\ExctCplJBdg{b}}$ equal to the spectral sequence object $\ExctCplEObjctBB{\infty}{\Vect{x}+\ExctCplJBdg{b}}$? - As we explain, this happens if and only if $\ExctCplEObjctBB{\infty}{\Vect{x}+\ExctCplJBdg{b}}$ is stable. If so, then it fits into the module extension
\begin{equation*}
\xymatrix@R=.5ex@C=3em{
\ExctCplCoLimAbutFltrtnQtntBB{\Vect{x}} \ar@{{ |>}->}[r] &
	\SSObjctBB{\infty}{\Vect{x}+\ExctCplJBdg{b}} \ar@{-{ >>}}[r] &
	\ExctCplLimAbutFltrtnQtntBB{\Vect{x}+\ExctCplJBdg{b}+\ExctCplKBdg{c}} 
}
\end{equation*}
\item If $\ExctCplEObjctBB{\infty}{\Vect{x}+\ExctCplJBdg{b}}$ is stable, under which condition(s) is it isomorphic to only one of $\ExctCplCoLimAbutFltrtnQtntBB{\Vect{x}}$ or $\ExctCplLimAbutFltrtnQtntBB{\Vect{x}+\ExctCplJBdg{b}+\ExctCplKBdg{c}}$? - In this situation Boardman \cite[5.2]{JMBoardman1999} speaks of \Defn{weak convergence} of the spectral sequence, with $\ExctCplCoLimAbut{n}$ respectively $\ExctCplLimAbut{n+\sigma}$ being the filtered target object. %
\index{weak convergence of a spectral sequence}\index{spectral sequence!weakly convergent}%
Many classical spectral sequences exhibit one of these two properties.
\end{enumerate}
{\em Stable $\ExctCplEPage{\infty}$}\quad  The answer to question (D.1) is given in the E-infinity extension theorem (\ref{thm:E-InfinityExtensionThm}). It tells us that 	$\SSObjctBB{\infty}{\Vect{x}+\ExctCplJBdg{b}}$ is stable exactly when $\Lim$, applied to this short exact sequence of $\ZCat$-diagrams, is exact:
\begin{equation*}
\xymatrix@R=5ex@C=4em{
\{ \Ker{\ExctCplIMapItrtdBB{r}{\Vect{x}+\ExctCplJBdg{b}+\ExctCplKBdg{c}-r\ExctCplIBdg{a}}}\} \ar@{{ |>}->}[r] &
	\{ \Ker{\ExctCplIMapItrtdBB{r+1}{\Vect{x}+\ExctCplJBdg{b}+\ExctCplKBdg{c}-r\ExctCplIBdg{a}}}\} \ar@{-{ >>}}[r] &
	\left\{ \tfrac{\Ker{\ExctCplIMapItrtdBB{r+1}{\Vect{x}+\ExctCplJBdg{b}+\ExctCplKBdg{c}-r\ExctCplIBdg{a}}}}{\Ker{\ExctCplIMapItrtdBB{r}{\Vect{x}+\ExctCplJBdg{b}+\ExctCplKBdg{c}-r\ExctCplIBdg{a}}}} \right\}
}
\end{equation*}
The associated sequence of limits is exact if and only if this map of $\LimOne$-objects is a monomorphism:
$$
\LimOneOf{\Ker{\ExctCplIMapItrtdBB{r}{\Vect{x}+\ExctCplJBdg{b}+\ExctCplKBdg{c}-r\ExctCplIBdg{a}}}}\longrightarrow \LimOneOf{\Ker{\ExctCplIMapItrtdBB{r+1}{\Vect{x}+\ExctCplJBdg{b}+\ExctCplKBdg{c}-r\ExctCplIBdg{a}}}}
$$
The question as to when this happens is classic and, in general, difficult to answer. The approach commonly found in the literature focuses on conditions under which the $\LimOne$-term of the leftmost $\ZCat$-diagram vanishes, e.g. via the Mittag-Leffler condition (\ref{thm:Mittag-Leffler-Lim1=Zero}). While sufficient, this condition leaves a lot of room for refinement, and this is exactly what we achieve in Section \ref{sec:Filtrations}, with conclusions presented in  Section \ref{sec:ConvergenceII}.

{\em Weak convergence}\quad As to question (D.2), we see immediately:
\begin{enumerate}[(WC.1)]
\setlength{\itemindent}{4ex}
 \item The monomorphism $\ExctCplCoLimAbutFltrtnQtntBB{\Vect{x}}\to \SSObjctBB{\infty}{\Vect{x}+\ExctCplJBdg{b}}$ is an isomorphism if and only if $\SSObjctBB{\infty}{\Vect{x}+\ExctCplJBdg{b}}$ is stable, and $\ExctCplLimAbutFltrtnQtntBB{\Vect{x}+\ExctCplJBdg{b}+\ExctCplKBdg{c}}=0$. If so, we say that $\SSObjctBB{\infty}{\Vect{x}+\ExctCplJBdg{b}}$ \Defn{matches the colimit abutment filtration}; see (\ref{def:E-InfinityRelations}). %
 \index{matches!colimit abutment filtration}%
 \item $\SSObjctBB{\infty}{\Vect{x}+\ExctCplJBdg{b}}$ maps isomorphically to $\ExctCplLimAbutFltrtnQtntBB{\Vect{x}+\ExctCplJBdg{b}+\ExctCplKBdg{c}}$ if and only if $\SSObjctBB{\infty}{\Vect{x}+\ExctCplJBdg{b}}$ is stable and  $\ExctCplCoLimAbutFltrtnQtntBB{\Vect{x}}=0$. If so, we say that $\SSObjctBB{\infty}{\Vect{x}+\ExctCplJBdg{b}}$ \Defn{matches the limit abutment filtration}. %
  \index{matches!limit abutment filtration}%
 \end{enumerate}
 We discuss both scenarios in greater detail in Section \ref{sec:ConvergenceII}. Let us now explain why the E-infinity extension theorem is true.

 {\em The E-infinity extension theorem explained}\quad Elementary computations show that each of the spectral sequence objects $\SSObjctBB{r+1}{\Vect{x}+\ExctCplJBdg{b}}$ fits into the short exact sequence $S(r+1)$; see (\ref{thm:E(r+1)Extension}), compare \cite{BEckmannPJHilton1966}.
\begin{equation*}
\xymatrix@C=4em{
\dfrac{ \Img{ \ExctCplIMapItrtdBB{r}{\Vect{x}} } }{ \Img{ \ExctCplIMapItrtdBB{r+1}{\Vect{x}-\ExctCplIBdg{a}} } } \cong \dfrac{\Ker{ \ExctCplKMapBB{}{\Vect{x} +\ExctCplJBdg{b}} } }{ \ExctCplBndrsBB{r}{\Vect{x} +\ExctCplJBdg{b}} } \ar@{{ |>}->}[r] &
	\dfrac{\ExctCplCyclesBB{r}{\Vect{x}+\ExctCplJBdg{b}}}{\ExctCplBndrsBB{r}{\Vect{x}+\ExctCplJBdg{b}}}=\SSObjctBB{r+1}{\Vect{x}+\ExctCplJBdg{b}} \ar@{-{ >>}}[r] &
	\dfrac{ \ExctCplCyclesBB{r}{\Vect{x} +\ExctCplJBdg{b}} }{ \Ker{ \ExctCplKMapBB{}{\Vect{x} +\ExctCplJBdg{b} } } } \cong \dfrac{ \Ker{ \ExctCplIMapItrtdBB{r+1}{\Vect{x}+\ExctCplJBdg{b} + \ExctCplKBdg{c} - r\ExctCplIBdg{a}} } }{ \Ker{ \ExctCplIMapItrtdBB{r}{\Vect{x}+\ExctCplJBdg{b} + \ExctCplKBdg{c} - r\ExctCplIBdg{a}} } }
}\tag*{$S(r+1)$:}
\end{equation*}
The burning question then is what happens to this short exact sequence as $r\to\infty$. On its kernel side, this requires taking colimits. As $\CoLimOver{\omega}$ is exact, we find immediately the short exact sequence.
\begin{equation*}
\xymatrix@C=3.5em{
\ExctCplCoLimAbutFltrtnQtntBB{\Vect{x}}\cong \dfrac{ \Img{ \ExctCplCoLimAbutMapBB{\Vect{x}} } }{ \Img{ \ExctCplCoLimAbutMapBB{\Vect{x}-\ExctCplIBdg{a} } } } \cong \dfrac{\Ker{ \ExctCplKMapBB{}{\Vect{x} +\ExctCplJBdg{b}} } }{ \ExctCplBndrsBB{\infty}{\Vect{x} +\ExctCplJBdg{b}} } \ar@{{ |>}->}[r] &
	\dfrac{\ExctCplCyclesBB{\infty}{\Vect{x}+\ExctCplJBdg{b}}}{\ExctCplBndrsBB{\infty}{\Vect{x}+\ExctCplJBdg{b}}}=\ExctCplEObjctBB{\infty}{\Vect{x}+\ExctCplJBdg{b} } \ar@{-{ >>}}[r] &
	\dfrac{ \ExctCplCyclesBB{\infty}{\Vect{x}+\ExctCplJBdg{b}} }{ \Ker{ \ExctCplKMapBB{}{\Vect{x} +\ExctCplJBdg{b} } } } \overset{?}{--} \dfrac{ \Ker{ \rho^{\Vect{x}+\ExctCplJBdg{b} + \ExctCplKBdg{c} +\ExctCplIBdg{a} } } }{ \Ker{ \rho^{\Vect{x}+\ExctCplJBdg{b} + \ExctCplKBdg{c} } } } = \ExctCplLimAbutFltrtnQtntBB{\Vect{x}+\ExctCplJBdg{b} + \ExctCplKBdg{c}}
}
\end{equation*}
This computation leaves mysterious the relationship between $\ExctCplLimAbutFltrtnQtntBB{\Vect{x}+\ExctCplJBdg{b} + \ExctCplKBdg{c}}$ and $ \ExctCplCyclesBB{\infty}{\Vect{x}+\ExctCplJBdg{b}} / \Ker{ \ExctCplKMapBB{}{\Vect{x} +\ExctCplJBdg{b} } }$.  We work with two views of this relationship. First, from the right hand side of $S(r+1)$, we have the short exact sequence of $\ZCat$-diagrams:
\begin{equation*}
\xymatrix@R=5ex@C=4em{
\Ker{ \ExctCplIMapItrtdBB{r}{\Vect{x}+\ExctCplJBdg{b} + \ExctCplKBdg{c} - r\ExctCplIBdg{a}} } \ar@{{ |>}->}[r] &
	\Ker{ \ExctCplIMapItrtdBB{r+1}{\Vect{x}+\ExctCplJBdg{b} + \ExctCplKBdg{c} - r\ExctCplIBdg{a}} } \ar@{-{ >>}}[r] &
	\tfrac{ \ExctCplCyclesBB{r}{\Vect{x} +\ExctCplJBdg{b}} }{ \Ker{ \ExctCplKMapBB{}{\Vect{x} +\ExctCplJBdg{b} } } }	
}
\end{equation*}
Passing to limits, in (\ref{thm:E-InfinityExtensionThm}) we identify $\ExctCplLimAbutFltrtnQtntBB{\Vect{x}+\ExctCplJBdg{b} + \ExctCplKBdg{c}}$ as the image factorization in the diagram below.
\begin{equation*}
\xymatrix@R=6ex@C=4em{
\LimOf{\Ker{ \ExctCplIMapItrtdBB{r}{\Vect{x}+\ExctCplJBdg{b} + \ExctCplKBdg{c} - r\ExctCplIBdg{a}} }} \ar@{{ |>}->}[r] \ar@{=}[d] &
	\LimOf{ \Ker{ \ExctCplIMapItrtdBB{r+1}{\Vect{x}+\ExctCplJBdg{b} + \ExctCplKBdg{c} - r\ExctCplIBdg{a}} } } \ar[r] \ar@{=}[d] &
	\LimOf{  \tfrac{ \ExctCplCyclesBB{r}{\Vect{x} +\ExctCplJBdg{b}} }{ \Ker{ \ExctCplKMapBB{}{\Vect{x} +\ExctCplJBdg{b} } } }	 } \cong \tfrac{ \ExctCplCyclesBB{\infty}{\Vect{x} +\ExctCplJBdg{b}} }{ \Ker{ \ExctCplKMapBB{}{\Vect{x} +\ExctCplJBdg{b} } } } \\
\ExctCplLimAbutFltrtnBB{\Vect{x}+\ExctCplJBdg{b} + \ExctCplKBdg{c}} \ar@{{ |>}->}[r] &
	\ExctCplLimAbutFltrtnBB{\Vect{x}+\ExctCplJBdg{b} + \ExctCplKBdg{c}+\ExctCplIBdg{a}} \ar@{-{ >>}}[r] &
	\ExctCplLimAbutFltrtnQtntBB{\Vect{x}+\ExctCplJBdg{b} + \ExctCplKBdg{c}} \ar@{{ |>}->}@<+2ex>[u]
}
\end{equation*}
For an alternate view of the relationship between $\ExctCplLimAbutFltrtnQtntBB{\Vect{x}+\ExctCplJBdg{b} + \ExctCplKBdg{c}}$ and $ \ExctCplCyclesBB{\infty}{\Vect{x}+\ExctCplJBdg{b}} / \Ker{ \ExctCplKMapBB{}{\Vect{x} +\ExctCplJBdg{b} } }$ we identify the objects involved as certain submodules of $\ExctCplDObjctBB{}{\Vect{x}+\ExctCplJBdg{b} + \ExctCplKBdg{c}}$:
\begin{equation*}
\xymatrix@R=6ex@C=4em{
\tfrac{ \ExctCplCyclesBB{\infty}{\Vect{x} +\ExctCplJBdg{b}} }{ \Ker{ \ExctCplKMapBB{}{\Vect{x} +\ExctCplJBdg{b} } } } \ar[r]^-{\cong} &
	\SetIntrsctn{ I^{\omega}_{\Vect{x}+\ExctCplJBdg{b} + \ExctCplKBdg{c}} }{ \Ker{\ExctCplIMapBB{}{\Vect{x}+\ExctCplJBdg{b} + \ExctCplKBdg{c} } } } \\
\ExctCplLimAbutFltrtnQtntBB{\Vect{x}+\ExctCplJBdg{b} + \ExctCplKBdg{c}} \ar@{{ |>}->}[u] \ar[r]_-{\cong} &
	\SetIntrsctn{ \bar{I}_{\Vect{x}+\ExctCplJBdg{b} + \ExctCplKBdg{c}} }{ \Ker{\ExctCplIMapBB{}{\Vect{x}+\ExctCplJBdg{b} + \ExctCplKBdg{c} } } } \ar@{{ |>}->}[u]
}
\end{equation*}
Here $I^{\omega}_{\Vect{x}+\ExctCplJBdg{b} + \ExctCplKBdg{c}} = \FamIntrsctn{r\geq 1}{\Img{\ExctCplIMapItrtdBB{r}{\Vect{x}+\ExctCplJBdg{b} + \ExctCplKBdg{c} - r\ExctCplIBdg{a}}}}$, which contains $\bar{I}_{\Vect{x}+\ExctCplJBdg{b} + \ExctCplKBdg{c}} = \Img{\ExctCplLimAbut{\Vect{x}+\sigma} \to \ExctCplDObjctBB{}{\Vect{x}+\ExctCplJBdg{b} + \ExctCplKBdg{c}} }$, the stable image subobject; see (\ref{def:ImageQuotientSubDiagrams}). Thus $\ExctCplEPage{\infty}$ is stable if and only if the vertical arrow on the right is an isomorphism. Sufficient for this to happen is $I^{\omega}_{\Vect{x}+\ExctCplJBdg{b} + \ExctCplKBdg{c}} = \bar{I}_{\Vect{x}+\ExctCplJBdg{b} + \ExctCplKBdg{c}}$, a significant weakening of the Mittag-Leffler condition.

Let us now turn to the details of the development outlined above.

\begin{definition}[Universal limit/colimit abutment of an exact couple]
\label{def:ExactCouple-UniversalLim/CoLimAbutment}
Consider a regular exact couple $\ExctCpl{C}$\MSComp, with $\sigma=\DtrmnntOfMtrx{[\ExctCplIBdg{a}\ \ (\ExctCplJBdg{b}+\ExctCplKBdg{c})]}\in \Set{\pm 1}$. As in (\ref{thm:ExactCouple-ZGradedZ-Diagrams}), for $n\in \ZNr$\MSComp, let $D(n)$ be the $\ZCat$-diagram: %
\begin{equation*}
\cdots \longrightarrow \ExctCplDObjctBB{}{\Vect{x} - \ExctCplIBdg{a}} \XRA{ i } \ExctCplDObjctBB{}{\Vect{x} } \XRA{ i } \ExctCplDObjctBB{}{\Vect{x} + \ExctCplIBdg{a}} \longrightarrow \cdots \qquad \text{with}\qquad \Vect{x}\DefEq \sigma n (\ExctCplJBdg{b}+\ExctCplKBdg{c}) 
\vspace{-\baselineskip}
\end{equation*}
\index[not]{$D(n)$ - $n$-th $\ZCat$-diagram of regular exact couple}%
\index{exact couple!colimit abutment}\index{exact couple!limit abutment}%
\vspace{-1.7ex}
\begin{enumerate}[(i)]
\item The \Defn{colimit abutment} of $\ExctCpl{C}$ in position $n$ is $\ExctCplCoLimAbut{n} \DefEq \CoLimOfOver{ \ExctCplDObjctBB{}{}(n) }{\ZCat}$. %
\index[not]{$\ExctCplCoLimAbut{n}$ - colimit abutment of exact couple}%
\item The \Defn{limit abutment} of $\ExctCpl{C}$ in position $n$ is  $\ExctCplLimAbut{n} \DefEq \LimOfOver{ \ExctCplDObjctBB{}{}(n) }{\ZCat}$. %
\index[not]{$\ExctCplLimAbut{n}$ - limit abutment of exact couple}%
\end{enumerate}
\end{definition}

\begin{lemma}[Adjacent images of $i$-iterations]
\label{thm:I-Iterations-AdjacentImages}%
For a regular exact couple $(\ExctCplEPage{},\ExctCplDPage{},\ExctCplIMap{},\ExctCplJMap{},\ExctCplKMap{})$ and $1\leq r<\infty$, there is a functorial isomorphism
\begin{equation*}
\xymatrix@C=5ex{
\dfrac{\Img{\ExctCplIMapItrtdBB{r}{\Vect{x}}} }{\Img{ \ExctCplIMapItrtdBB{r+1}{\Vect{x}-\ExctCplIBdg{a}}} }
          \ar[rr]^-{\alpha^{r}_{\Vect{x}} }_-{\cong} & &
     \dfrac{\Ker{ \ExctCplKMapBB{}{\Vect{x} +\ExctCplJBdg{b}}} }{ \ExctCplBndrsBB{r}{\Vect{x} +\ExctCplJBdg{b}}}
}
\end{equation*}
\end{lemma}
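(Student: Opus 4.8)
The plan is to realize both quotients in the lemma as one and the same quotient of $\ExctCplDObjctBB{}{\Vect{x}}$, and then to read off $\alpha^{r}_{\Vect{x}}$ as the resulting composite isomorphism. Concretely, the candidate map should send the class of $\ExctCplIMapItrtdBB{r}{\Vect{x}}(t)$ to the class of $\ExctCplJMapBB{}{\Vect{x}}(t)$; to see that this is a well-defined isomorphism I would exhibit two surjections out of $\ExctCplDObjctBB{}{\Vect{x}}$ that have the same kernel, one landing on the left-hand quotient and one on the right-hand quotient.

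First I would analyze the right-hand side. Exactness of the couple at $E$ gives $\Ker{\ExctCplKMapBB{}{\Vect{x}+\ExctCplJBdg{b}}} = \Img{\ExctCplJMapBB{}{\Vect{x}}}$, and exactness at $D$ gives $\Ker{\ExctCplJMapBB{}{\Vect{x}}} = \Img{\ExctCplIMapBB{}{\Vect{x}-\ExctCplIBdg{a}}}$. The boundary identity established in Theorem (\ref{thm:SpecSequFromExactCouple}) reads $\ExctCplBndrsBB{r}{\Vect{x}+\ExctCplJBdg{b}} = \ExctCplJMapBB{}{\Vect{x}}\Ker{\ExctCplIMapItrtdBB{r}{\Vect{x}}}$. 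Hence the corestriction $\ExctCplJMapBB{}{\Vect{x}}\from \ExctCplDObjctBB{}{\Vect{x}} \to \Ker{\ExctCplKMapBB{}{\Vect{x}+\ExctCplJBdg{b}}}$, followed by the quotient by $\ExctCplBndrsBB{r}{\Vect{x}+\ExctCplJBdg{b}}$, is onto the right-hand side, and a diagram chase shows its kernel is $\Ker{\ExctCplIMapItrtdBB{r}{\Vect{x}}} + \Img{\ExctCplIMapBB{}{\Vect{x}-\ExctCplIBdg{a}}}$.

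Next I would treat the left-hand side. Since $\ExctCplIMapItrtdBB{r+1}{\Vect{x}-\ExctCplIBdg{a}} = \ExctCplIMapItrtdBB{r}{\Vect{x}} \Comp \ExctCplIMapBB{}{\Vect{x}-\ExctCplIBdg{a}}$, one has $\Img{\ExctCplIMapItrtdBB{r+1}{\Vect{x}-\ExctCplIBdg{a}}} = \ExctCplIMapItrtdBB{r}{\Vect{x}}(\Img{\ExctCplIMapBB{}{\Vect{x}-\ExctCplIBdg{a}}})$. Thus the corestriction $\ExctCplIMapItrtdBB{r}{\Vect{x}}\from \ExctCplDObjctBB{}{\Vect{x}} \to \Img{\ExctCplIMapItrtdBB{r}{\Vect{x}}}$, followed by the quotient by $\Img{\ExctCplIMapItrtdBB{r+1}{\Vect{x}-\ExctCplIBdg{a}}}$, is onto the left-hand side, and its kernel is the preimage under $\ExctCplIMapItrtdBB{r}{\Vect{x}}$ of $\ExctCplIMapItrtdBB{r}{\Vect{x}}(\Img{\ExctCplIMapBB{}{\Vect{x}-\ExctCplIBdg{a}}})$, namely $\Ker{\ExctCplIMapItrtdBB{r}{\Vect{x}}} + \Img{\ExctCplIMapBB{}{\Vect{x}-\ExctCplIBdg{a}}}$. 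As the two kernels coincide, both quotients are canonically isomorphic to $\ExctCplDObjctBB{}{\Vect{x}} / (\Ker{\ExctCplIMapItrtdBB{r}{\Vect{x}}} + \Img{\ExctCplIMapBB{}{\Vect{x}-\ExctCplIBdg{a}}})$, and $\alpha^{r}_{\Vect{x}}$ is defined to be their composite.

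The remaining point is functoriality. For a morphism $(f,g)$ of exact couples I would note that $f$ commutes with every iterate $\ExctCplIMapItrtdBB{r}{}$ and with $\ExctCplJMapBB{}{}$, so it carries the two defining surjections of the source couple to those of the target couple and respects the common kernel; the induced square therefore commutes, which is the asserted naturality. I expect the only place demanding real care is the bookkeeping that identifies the two kernels — in particular the identity $\Img{\ExctCplIMapItrtdBB{r+1}{\Vect{x}-\ExctCplIBdg{a}}} = \ExctCplIMapItrtdBB{r}{\Vect{x}}(\Img{\ExctCplIMapBB{}{\Vect{x}-\ExctCplIBdg{a}}})$ together with the boundary identity $\ExctCplBndrsBB{r}{\Vect{x}+\ExctCplJBdg{b}} = \ExctCplJMapBB{}{\Vect{x}}\Ker{\ExctCplIMapItrtdBB{r}{\Vect{x}}}$ — while everything else is a routine application of the isomorphism theorems.
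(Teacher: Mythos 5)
Your proposal is correct, and it reaches the isomorphism by a different elementary mechanism than the paper. The paper's proof maps the exact row $\Ker{\ExctCplIMapBB{}{\Vect{x}-\ExctCplIBdg{a}}}\to \ExctCplDObjctBB{}{\Vect{x}-\ExctCplIBdg{a}} \XRA{\ExctCplIMapBB{}{\Vect{x}-\ExctCplIBdg{a}}} \ExctCplDObjctBB{}{\Vect{x}} \XRA{\ExctCplJMapBB{}{\Vect{x}}} \Img{\ExctCplJMapBB{}{\Vect{x}}}$ onto the row $0\to \Img{\ExctCplIMapItrtdBB{r+1}{\Vect{x}-\ExctCplIBdg{a}}} \to \Img{\ExctCplIMapItrtdBB{r}{\Vect{x}}} \to \Img{\ExctCplIMapItrtdBB{r}{\Vect{x}}}/\Img{\ExctCplIMapItrtdBB{r+1}{\Vect{x}-\ExctCplIBdg{a}}}$ via the corestrictions of $\ExctCplIMapItrtdBB{r+1}{\Vect{x}-\ExctCplIBdg{a}}$ and $\ExctCplIMapItrtdBB{r}{\Vect{x}}$, and applies the snake lemma: the induced epimorphism $\Img{\ExctCplJMapBB{}{\Vect{x}}}\to \Img{\ExctCplIMapItrtdBB{r}{\Vect{x}}}/\Img{\ExctCplIMapItrtdBB{r+1}{\Vect{x}-\ExctCplIBdg{a}}}$ then has kernel $\ExctCplJMapBB{}{\Vect{x}}(\Ker{\ExctCplIMapItrtdBB{r}{\Vect{x}}})$, and the same two definitional identities you invoke finish the argument. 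You replace the snake lemma by the first isomorphism theorem, realizing both sides as quotients of $\ExctCplDObjctBB{}{\Vect{x}}$ by the common submodule $\Ker{\ExctCplIMapItrtdBB{r}{\Vect{x}}}+\Img{\ExctCplIMapBB{}{\Vect{x}-\ExctCplIBdg{a}}}$. The inputs are identical in the two proofs (exactness of the couple at $\ExctCplDObjctBB{}{\Vect{x}}$ and at $\ExctCplEObjctBB{}{\Vect{x}+\ExctCplJBdg{b}}$, plus $\ExctCplBndrsBB{r}{\Vect{x}+\ExctCplJBdg{b}}=\ExctCplJMapBB{}{\Vect{x}}\Ker{\ExctCplIMapItrtdBB{r}{\Vect{x}}}$), so neither route proves more than the other; what yours buys is the explicit formula $\alpha^{r}_{\Vect{x}}\bigl[\ExctCplIMapItrtdBB{r}{\Vect{x}}(t)\bigr]=\bigl[\ExctCplJMapBB{}{\Vect{x}}(t)\bigr]$, which in turn makes functoriality a one-line verification, whereas the paper's packaging keeps the map anonymous and its naturality implicit in the naturality of the snake-lemma construction.

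Two small corrections to your bookkeeping. First, the boundary identity is really the definition (\ref{def:ExactCoupleItems}) combined with (\ref{thm:ImageQuotient-Props}); it does surface inside the proof of (\ref{thm:SpecSequFromExactCouple}), so your citation is serviceable, but it is not a conclusion of that theorem's statement. Second, in the functoriality step the induced map on $\Ker{\ExctCplKMapBB{}{\Vect{x}+\ExctCplJBdg{b}}}/\ExctCplBndrsBB{r}{\Vect{x}+\ExctCplJBdg{b}}$ is given by the $E$-component $g$ of the morphism of exact couples, not by $f$; the commuting square uses the relation $g\Comp\ExctCplJMapBB{}{\Vect{x}}(1)=\ExctCplJMapBB{}{\Vect{x}}(2)\Comp f$, which is exactly what your formula-level description of $\alpha^{r}_{\Vect{x}}$ requires.
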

\begin{proof}
The two bottom rows of the diagram below are exact. So, we may apply the snake lemma to obtain the exact top row of kernels:
\begin{equation*}
\xymatrix@R=4ex@C=3em{
& \Ker{\ExctCplIMapItrtdBB{r+1}{\Vect{x} - \ExctCplIBdg{a}} } \ar[r] \ar@{{ |>}->}[d] &
	\Ker{\ExctCplIMapItrtdBB{r}{\Vect{x}} } \ar@{-{ >>}}[r] \ar@{{ |>}->}[d] &
	\ExctCplJMapBB{}{\Vect{x}}\left( \Ker{\ExctCplIMapItrtdBB{r}{\Vect{x}} } \right) \ar@{{ |>}->}[d] \\
\Ker{\ExctCplIMapBB{}{\Vect{x} - \ExctCplIBdg{a}} } \ar@{{ |>}->}[r] \ar[d] &
	\ExctCplDObjctBB{}{\Vect{x}- \ExctCplIBdg{a} } \ar@{-{ >>}}[d] \ar[r]^-{\ExctCplIMapBB{}{\Vect{x} - \ExctCplIBdg{a}} } &
	\ExctCplDObjctBB{}{\Vect{x}} \ar@{-{ >>}}[d] \ar@{-{ >>}}[r]^-{\ExctCplJMapBB{}{\Vect{x}} } &
	\Img{ \ExctCplJMapBB{}{\Vect{x}} } \ar@{.{ >>}}[d] \\
0 \ar[r] &
	\Img{\ExctCplIMapBB{r+1}{\Vect{x} - \ExctCplIBdg{a}} } \ar@{{ |>}->}[r] &
	\Img{\ExctCplIMapBB{r}{\Vect{x}} } \ar@{-{ >>}}[r] &
	\dfrac{\Img{\ExctCplIMapBB{r}{\Vect{x}}}}{ \Img{ \ExctCplIMapBB{r+1}{\Vect{x} - \ExctCplIBdg{a}}} }
}
\end{equation*}
Thus $\frac{\Img{\ExctCplIMapBB{r}{\Vect{x}}}}{ \Img{ \ExctCplIMapBB{r+1}{\Vect{x} - \ExctCplIBdg{a}}} } \cong \frac{ \Img{ \ExctCplJMapBB{}{\Vect{x}} } }{ \ExctCplJMapBB{}{\Vect{x}} \left( \Ker{\ExctCplIMapItrtdBB{r}{\Vect{x}} } \right) }$. Recalling (\ref{def:ExactCoupleItems}) that $\ExctCplBndrsBB{r}{\Vect{x} +\ExctCplJBdg{b}} = \ExctCplJMapBB{}{\Vect{x}}\left( \Ker{\ExctCplIMapItrtdBB{r}{\Vect{x}} }\right)$ and that $\Ker{ \ExctCplKMapBB{}{\Vect{x} +\ExctCplJBdg{b}} } = \Img{ \ExctCplJMapBB{}{\Vect{x}} }$ completes the proof.
\end{proof}

With similar reasoning we obtain:

\begin{lemma}[Adjacent kernels of $i$-iterations]
\label{thm:I-Iterations-AdjacentKernels}%
For an exact couple $(\ExctCplEPage{},\ExctCplDPage{},\ExctCplIMap{},\ExctCplJMap{},\ExctCplKMap{})$ and $1\leq r<\infty$, the following terms are isomorphic.
\begin{equation*}
\xymatrix@C=-2em@R=5ex{
\dfrac{ \Ker{\ExctCplIMapItrtdBB{r+1}{\Vect{x}+\ExctCplJBdg{b}+\ExctCplKBdg{c}-r\ExctCplIBdg{a} } } }{ \Ker{\ExctCplIMapItrtdBB{r}{\Vect{x}+\ExctCplJBdg{b}+\ExctCplKBdg{c}-r\ExctCplIBdg{a} } } } \ar[rr]^{\beta^{r}_{\Vect{x}+\ExctCplJBdg{b} } } \ar[rd]^(.6){\cong}_(.35){ \overline{\ExctCplIMapItrtdBB{r}{\Vect{x}+\ExctCplJBdg{b}+\ExctCplKBdg{c}-r\ExctCplIBdg{a} } } } &&
  \dfrac{ \ExctCplCyclesBB{r}{ \Vect{x} +\ExctCplJBdg{b} } }{ \Ker{ \ExctCplKMapBB{}{\Vect{x} +\ExctCplJBdg{b} } } } \ar[ld]_(.6){\cong}^(.4){ \overline{ \ExctCplKMapBB{}{\Vect{x}+\ExctCplJBdg{b} } } } \\
& \SetIntrsctn{ \Ker{ \ExctCplIMapBB{1}{\Vect{x}+\ExctCplJBdg{b}+\ExctCplKBdg{c} } } }{ \Img{ \ExctCplIMapBB{r}{\Vect{x}+\ExctCplJBdg{b}+\ExctCplKBdg{c}-r\ExctCplIBdg{a} } } }
} \tag*{$\lozenge$}
\end{equation*}
\end{lemma}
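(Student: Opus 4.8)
The plan is to realize both quotients in the statement as the single submodule $\SetIntrsctn{ \Ker{ \ExctCplIMapItrtdBB{1}{\Vect{x}+\ExctCplJBdg{b}+\ExctCplKBdg{c}} } }{ \Img{ \ExctCplIMapItrtdBB{r}{\Vect{x}+\ExctCplJBdg{b}+\ExctCplKBdg{c}-r\ExctCplIBdg{a}} } }$ of $\ExctCplDObjctBB{}{\Vect{x}+\ExctCplJBdg{b}+\ExctCplKBdg{c}}$, exactly as the apex of the triangle in the statement demands, and then to recover $\beta^{r}_{\Vect{x}+\ExctCplJBdg{b}}$ as the composite $(\overline{ \ExctCplKMapBB{}{\Vect{x}+\ExctCplJBdg{b}} })^{-1}\Comp \overline{ \ExctCplIMapItrtdBB{r}{\Vect{x}+\ExctCplJBdg{b}+\ExctCplKBdg{c}-r\ExctCplIBdg{a}} }$ of the two diagonal isomorphisms. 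Throughout write $\Vect{y}\DefEq \Vect{x}+\ExctCplJBdg{b}+\ExctCplKBdg{c}-r\ExctCplIBdg{a}$, so that $\Vect{y}+r\ExctCplIBdg{a}=\Vect{x}+\ExctCplJBdg{b}+\ExctCplKBdg{c}$, and record the two exactness identities of the couple at the relevant corners: $\Img{ \ExctCplKMapBB{}{\Vect{x}+\ExctCplJBdg{b}} }=\Ker{ \ExctCplIMapItrtdBB{1}{\Vect{x}+\ExctCplJBdg{b}+\ExctCplKBdg{c}} }$ and $\Ker{ \ExctCplKMapBB{}{\Vect{x}+\ExctCplJBdg{b}} }=\Img{ \ExctCplJMapBB{}{\Vect{x}} }$.

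First I would analyze $\overline{ \ExctCplKMapBB{}{\Vect{x}+\ExctCplJBdg{b}} }$. Since $\ExctCplCyclesBB{r}{\Vect{x}+\ExctCplJBdg{b}}=( \ExctCplKMapBB{}{\Vect{x}+\ExctCplJBdg{b}} )^{-1}\Img{ \ExctCplIMapItrtdBB{r}{\Vect{y}} }$ by (\ref{def:ExactCoupleItems}), the map $\ExctCplKMapBB{}{\Vect{x}+\ExctCplJBdg{b}}$ carries $\ExctCplCyclesBB{r}{\Vect{x}+\ExctCplJBdg{b}}$ into $\SetIntrsctn{ \Img{ \ExctCplKMapBB{}{\Vect{x}+\ExctCplJBdg{b}} } }{ \Img{ \ExctCplIMapItrtdBB{r}{\Vect{y}} } }=\SetIntrsctn{ \Ker{ \ExctCplIMapItrtdBB{1}{\Vect{x}+\ExctCplJBdg{b}+\ExctCplKBdg{c}} } }{ \Img{ \ExctCplIMapItrtdBB{r}{\Vect{y}} } }$. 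Its kernel is $\SetIntrsctn{ \Ker{ \ExctCplKMapBB{}{\Vect{x}+\ExctCplJBdg{b}} } }{ \ExctCplCyclesBB{r}{\Vect{x}+\ExctCplJBdg{b}} }=\Ker{ \ExctCplKMapBB{}{\Vect{x}+\ExctCplJBdg{b}} }$, using the inclusion $\Ker{ \ExctCplKMapBB{}{\Vect{x}+\ExctCplJBdg{b}} }\subseteq \ExctCplCyclesBB{r}{\Vect{x}+\ExctCplJBdg{b}}$ recorded in (\ref{def:ExactCoupleItems}); and it is onto the intersection, because any element $w$ of it lies in $\Img{ \ExctCplKMapBB{}{\Vect{x}+\ExctCplJBdg{b}} }$, say $w=\ExctCplKMapBB{}{\Vect{x}+\ExctCplJBdg{b}}(e)$, whereupon $w\in \Img{ \ExctCplIMapItrtdBB{r}{\Vect{y}} }$ forces $e\in \ExctCplCyclesBB{r}{\Vect{x}+\ExctCplJBdg{b}}$. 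Hence $\overline{ \ExctCplKMapBB{}{\Vect{x}+\ExctCplJBdg{b}} }$ is an isomorphism onto the intersection.

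Next I would analyze $\overline{ \ExctCplIMapItrtdBB{r}{\Vect{y}} }$, the corestriction of $\ExctCplIMapItrtdBB{r}{\Vect{y}}$ to $\Ker{ \ExctCplIMapItrtdBB{r+1}{\Vect{y}} }$. For $d\in \Ker{ \ExctCplIMapItrtdBB{r+1}{\Vect{y}} }$ one has $\ExctCplIMapItrtdBB{1}{\Vect{x}+\ExctCplJBdg{b}+\ExctCplKBdg{c}}( \ExctCplIMapItrtdBB{r}{\Vect{y}}(d) )=0$, so $\ExctCplIMapItrtdBB{r}{\Vect{y}}(d)$ lands in $\SetIntrsctn{ \Ker{ \ExctCplIMapItrtdBB{1}{\Vect{x}+\ExctCplJBdg{b}+\ExctCplKBdg{c}} } }{ \Img{ \ExctCplIMapItrtdBB{r}{\Vect{y}} } }$; the kernel of this corestriction is exactly $\Ker{ \ExctCplIMapItrtdBB{r}{\Vect{y}} }$ (as $\Ker{ \ExctCplIMapItrtdBB{r}{\Vect{y}} }\subseteq \Ker{ \ExctCplIMapItrtdBB{r+1}{\Vect{y}} }$); and surjectivity is immediate, since any $w=\ExctCplIMapItrtdBB{r}{\Vect{y}}(d)$ with $w\in\Ker{ \ExctCplIMapItrtdBB{1}{\Vect{x}+\ExctCplJBdg{b}+\ExctCplKBdg{c}} }$ gives $\ExctCplIMapItrtdBB{r+1}{\Vect{y}}(d)=\ExctCplIMapItrtdBB{1}{\Vect{x}+\ExctCplJBdg{b}+\ExctCplKBdg{c}}(w)=0$, so $d\in\Ker{ \ExctCplIMapItrtdBB{r+1}{\Vect{y}} }$. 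Thus $\overline{ \ExctCplIMapItrtdBB{r}{\Vect{y}} }$ is an isomorphism onto the same intersection.

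Finally, both induced maps being isomorphisms onto the common target, their composite $\beta^{r}_{\Vect{x}+\ExctCplJBdg{b}}$ is an isomorphism and the displayed triangle commutes by construction; naturality in the exact couple follows because every arrow involved is a structure map or a canonical (co)restriction, paralleling the functoriality already established in (\ref{thm:I-Iterations-AdjacentImages}). I expect the only real care to be the bidegree bookkeeping that makes $\ExctCplIMapItrtdBB{r+1}{\Vect{y}}=\ExctCplIMapItrtdBB{1}{\Vect{y}+r\ExctCplIBdg{a}}\Comp \ExctCplIMapItrtdBB{r}{\Vect{y}}$ land in the asserted positions, together with the two containments $\Ker{ \ExctCplKMapBB{}{\Vect{x}+\ExctCplJBdg{b}} }\subseteq \ExctCplCyclesBB{r}{\Vect{x}+\ExctCplJBdg{b}}$ and $\Ker{ \ExctCplIMapItrtdBB{r}{\Vect{y}} }\subseteq \Ker{ \ExctCplIMapItrtdBB{r+1}{\Vect{y}} }$; all are already recorded in (\ref{def:ExactCoupleItems}), so this is precisely the ``similar reasoning'' dual to the snake-lemma argument of (\ref{thm:I-Iterations-AdjacentImages}), with no genuine obstacle remaining.
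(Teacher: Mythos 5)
Your proof is correct and takes essentially the same route as the paper's: both arguments triangulate through the common submodule $\SetIntrsctn{\Ker{\ExctCplIMapBB{}{\Vect{x}+\ExctCplJBdg{b}+\ExctCplKBdg{c}}}}{\Img{\ExctCplIMapItrtdBB{r}{\Vect{x}+\ExctCplJBdg{b}+\ExctCplKBdg{c}-r\ExctCplIBdg{a}}}}$, identifying it with the kernel quotient via the map induced by $\ExctCplIMapItrtdBB{r}{}$ and with $\ExctCplCyclesBB{r}{\Vect{x}+\ExctCplJBdg{b}}/\Ker{\ExctCplKMapBB{}{\Vect{x}+\ExctCplJBdg{b}}}$ via the map induced by $\ExctCplKMapBB{}{}$, using exactness $\Img{\ExctCplKMapBB{}{}}=\Ker{\ExctCplIMapBB{}{}}$ and the definition $\ExctCplCyclesBB{r}{\Vect{x}+\ExctCplJBdg{b}}=(\ExctCplKMapBB{}{\Vect{x}+\ExctCplJBdg{b}})^{-1}\Img{\ExctCplIMapItrtdBB{r}{}}$. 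The only difference is presentational: the paper compresses your two element-level verifications into two applications of the general isomorphism $\Ker{\beta\alpha}/\Ker{\alpha}\cong \SetIntrsctn{\Img{\alpha}}{\Ker{\beta}}$ stated for a composable pair $A\XRA{\alpha}B\XRA{\beta}C$.
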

\begin{proof}
We begin with the top left corner and compute, using structural properties of the exact couple, to encounter the claimed isomorphisms in clockwise order.
\begin{equation*}
\begin{array}{rcl}
\dfrac{\Ker{ \ExctCplIMapItrtdBB{r+1}{\Vect{x}+ \ExctCplJBdg{b}+\ExctCplKBdg{c}- r\ExctCplIBdg{a} }} }{ \Ker{ \ExctCplIMapItrtdBB{r}{\Vect{x}+ \ExctCplJBdg{b}+\ExctCplKBdg{c}- r\ExctCplIBdg{a} }} } & = & \SetIntrsctn{ \Ker{ \ExctCplIMapBB{1}{\Vect{x}+\ExctCplJBdg{b}+\ExctCplKBdg{c}} } }{ \Img{ \ExctCplIMapItrtdBB{r}{\Vect{x}+\ExctCplJBdg{b}+\ExctCplKBdg{c} - r\ExctCplIBdg{a} } } } \\
	& = & \SetIntrsctn{ \Img{ \ExctCplKMapBB{}{\Vect{x} +\ExctCplJBdg{b} } } }{ \Img{ \ExctCplIMapItrtdBB{r}{ \Vect{x}+\ExctCplJBdg{b}+\ExctCplKBdg{c} - r\ExctCplIBdg{a} } } } \\
	& = & \dfrac{ \left( \ExctCplKMapBB{}{ \Vect{x}+\ExctCplJBdg{b} } \right)^{-1} \Img{ \ExctCplIMapItrtdBB{r}{ \Vect{x}+\ExctCplJBdg{b}+\ExctCplKBdg{c} - r\ExctCplIBdg{a} } } }{ \Ker{ \ExctCplKMapBB{}{ \Vect{x} +\ExctCplJBdg{b} } } } \\
	& \cong & \dfrac{ \ExctCplCyclesBB{r}{ \Vect{x}+ \ExctCplJBdg{b} } }{\Ker{ \ExctCplKMapBB{}{ \Vect{x} +\ExctCplJBdg{b} } } }
\end{array}
\end{equation*}
In the above computation, we have used the isomorphism in the diagram below. Consider a pair of composable maps $A \xrightarrow{\alpha} B \xrightarrow{\beta} C$:
\begin{equation*}
\xymatrix@C=4em@R=5ex{
\Ker{\alpha} \ar@{=}[d] \ar@{{ |>}->}[r] &
	\Ker{\beta\alpha} \ar@{-{ >>}}[r] \ar@{=}[d] &
	\dfrac{\Ker{\beta\alpha}}{\Ker{\alpha}} \ar[d]^{\cong} \\
\Ker{\alpha} \ar@{{ |>}->}[r] &
	\alpha^{-1}\Ker{\beta} \ar@{-{ >>}}[r]_-{\alpha} &
	\SetIntrsctn{\Img{\alpha} }{ \Ker{\beta} }
}
\end{equation*}
The claim follows.
\end{proof}

\begin{corollary}[$\ExctCplEPage{r+1}$-extension]
\label{thm:E(r+1)Extension}
For a regular exact couple $(\ExctCplEPage{},\ExctCplDPage{},\ExctCplIMap{},\ExctCplJMap{},\ExctCplKMap{})$ and $1\leq r<\infty$, the terms $\ExctCplEPage{r+1}$ fit into $1$-step module extensions as follows.
\begin{equation*}
\xymatrix@C=4em{
\dfrac{ \Img{ \ExctCplIMapItrtdBB{r}{\Vect{x}} } }{ \Img{ \ExctCplIMapItrtdBB{r+1}{\Vect{x}-\ExctCplIBdg{a}} } } \cong \dfrac{\Ker{ \ExctCplKMapBB{}{\Vect{x} +\ExctCplJBdg{b}} } }{ \ExctCplBndrsBB{r}{\Vect{x} +\ExctCplJBdg{b}} } \ar@{{ |>}->}[r] &
	\dfrac{\ExctCplCyclesBB{r}{\Vect{x}+\ExctCplJBdg{b}}}{\ExctCplBndrsBB{r}{\Vect{x}+\ExctCplJBdg{b}}}=\ExctCplEObjctBB{r+1}{\Vect{x}+\ExctCplJBdg{b} } \ar@{-{ >>}}[r] &
	\dfrac{ \ExctCplCyclesBB{r}{ \Vect{x} +\ExctCplJBdg{b} } }{ \Ker{ \ExctCplKMapBB{}{\Vect{x} +\ExctCplJBdg{b} } } } \cong \dfrac{ \Ker{ \ExctCplIMapItrtdBB{r+1}{\Vect{x}+\ExctCplJBdg{b} + \ExctCplKBdg{c} - r\ExctCplIBdg{a}} } }{ \Ker{ \ExctCplIMapItrtdBB{r}{\Vect{x}+\ExctCplJBdg{b} + \ExctCplKBdg{c} - r\ExctCplIBdg{a}} } }
}
\end{equation*}
\end{corollary}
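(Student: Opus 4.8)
The plan is to obtain the extension directly from the tautological quotient presentation of $\ExctCplEObjctBB{r+1}{\Vect{x}+\ExctCplJBdg{b}}$ and then to rename its two end terms using the two preceding lemmas. First I would record, from (\ref{def:ExactCoupleItems}), the chain of submodule inclusions
\[
\ExctCplBndrsBB{r}{\Vect{x}+\ExctCplJBdg{b}} \subseteq \Ker{\ExctCplKMapBB{}{\Vect{x}+\ExctCplJBdg{b}}} = \Img{\ExctCplJMapBB{}{\Vect{x}}} \subseteq \ExctCplCyclesBB{r}{\Vect{x}+\ExctCplJBdg{b}}.
\]
Applying the third isomorphism theorem to the two outer inclusions produces the canonical short exact sequence
\[
\dfrac{\Ker{\ExctCplKMapBB{}{\Vect{x}+\ExctCplJBdg{b}}}}{\ExctCplBndrsBB{r}{\Vect{x}+\ExctCplJBdg{b}}} \hookrightarrow \dfrac{\ExctCplCyclesBB{r}{\Vect{x}+\ExctCplJBdg{b}}}{\ExctCplBndrsBB{r}{\Vect{x}+\ExctCplJBdg{b}}} \twoheadrightarrow \dfrac{\ExctCplCyclesBB{r}{\Vect{x}+\ExctCplJBdg{b}}}{\Ker{\ExctCplKMapBB{}{\Vect{x}+\ExctCplJBdg{b}}}},
\]
whose middle term is by definition $\ExctCplEObjctBB{r+1}{\Vect{x}+\ExctCplJBdg{b}}$. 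This already displays $\ExctCplEObjctBB{r+1}{\Vect{x}+\ExctCplJBdg{b}}$ as a one-step module extension, so only the renaming of the two end terms remains.

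For the submodule (left) term I would invoke the isomorphism $\alpha^r_{\Vect{x}}$ of Lemma (\ref{thm:I-Iterations-AdjacentImages}), which identifies $\Ker{\ExctCplKMapBB{}{\Vect{x}+\ExctCplJBdg{b}}}/\ExctCplBndrsBB{r}{\Vect{x}+\ExctCplJBdg{b}}$ with $\Img{\ExctCplIMapItrtdBB{r}{\Vect{x}}}/\Img{\ExctCplIMapItrtdBB{r+1}{\Vect{x}-\ExctCplIBdg{a}}}$. For the quotient (right) term I would invoke the isomorphism $\beta^r_{\Vect{x}+\ExctCplJBdg{b}}$ of Lemma (\ref{thm:I-Iterations-AdjacentKernels}), which identifies $\ExctCplCyclesBB{r}{\Vect{x}+\ExctCplJBdg{b}}/\Ker{\ExctCplKMapBB{}{\Vect{x}+\ExctCplJBdg{b}}}$ with $\Ker{\ExctCplIMapItrtdBB{r+1}{\Vect{x}+\ExctCplJBdg{b}+\ExctCplKBdg{c}-r\ExctCplIBdg{a}}}/\Ker{\ExctCplIMapItrtdBB{r}{\Vect{x}+\ExctCplJBdg{b}+\ExctCplKBdg{c}-r\ExctCplIBdg{a}}}$. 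Substituting both identifications into the canonical sequence produces exactly the displayed extension.

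Since the two nontrivial isomorphisms are already in hand, I expect no genuine obstacle: the entire content of the corollary is the assembly of (\ref{thm:I-Iterations-AdjacentImages}) and (\ref{thm:I-Iterations-AdjacentKernels}) around the tautological quotient sequence of $\ExctCplEObjctBB{r+1}{\Vect{x}+\ExctCplJBdg{b}}$. The only point that deserves a remark is functoriality: the quotient sequence is manifestly natural in the exact couple, and $\alpha^r$, $\beta^r$ were constructed functorially in the two lemmas, so the resulting extension is functorial as well.
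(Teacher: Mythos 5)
Your proposal is correct and is essentially the paper's own proof: the paper likewise takes the tautological short exact sequence arising from the chain $\ExctCplBndrsBB{r}{\Vect{x}+\ExctCplJBdg{b}}\subseteq \Ker{\ExctCplKMapBB{}{\Vect{x}+\ExctCplJBdg{b}}}\subseteq \ExctCplCyclesBB{r}{\Vect{x}+\ExctCplJBdg{b}}$ and simply cites (\ref{thm:I-Iterations-AdjacentImages}) and (\ref{thm:I-Iterations-AdjacentKernels}) to identify the two end terms. Your write-up just makes explicit the submodule inclusions and the third-isomorphism-theorem step that the paper leaves implicit, plus the functoriality remark, which is a faithful expansion of the same argument.
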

\begin{proof}
The isomorphisms at the ends of the short exact sequence are from (\ref{thm:I-Iterations-AdjacentImages}) and  (\ref{thm:I-Iterations-AdjacentKernels}).
\end{proof}

We wish to describe how the module extension in (\ref{thm:E(r+1)Extension}) responds to the passage $r\to \infty$. This is accomplished in the E-infinity extension theorem (\ref{thm:E-InfinityExtensionThm}). First, we deal with adjacent filtration quotients of the colimit abutment, and the key to doing so is (\ref{thm:F_pA-CoLimit}). It asserts that the filtration stages of the colimit abutment satisfy $L_{\Vect{x}}\cong \CoLimOfOver{ \Img{\ExctCplIMapItrtdBB{r}{\Vect{x}} } }{r}$ and implies Proposition  \ref{thm:ImageFiltrationQuotients-Identification}.

\begin{proposition}[Identification of image filtration quotients]
\label{thm:ImageFiltrationQuotients-Identification}
Given an exact couple $(\ExctCplEPage{},\ExctCplDPage{},\ExctCplIMap{},\ExctCplJMap{},\ExctCplKMap{})$, the image filtration of its colimit abutment has the following adjacent filtration quotients
\begin{equation*}
\xymatrix@C=4em{
\ExctCplCoLimAbutFltrtnBB{\Vect{x}-\Vect{a}} \ar@{{ |>}->}[r] &
	\ExctCplCoLimAbutFltrtnBB{\Vect{x}} \ar@{-{ >>}}[r] &
	\ExctCplCoLimAbutFltrtnQtntBB{\Vect{x}} \cong \dfrac{ \Ker{ \ExctCplKMapBB{}{\Vect{x}+\ExctCplJBdg{b}} } }{ \ExctCplBndrsBB{\infty}{\Vect{x}+\ExctCplJBdg{b}} }
}
\end{equation*}
\end{proposition}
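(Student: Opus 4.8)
The plan is to transport the finite-stage extension of Lemma \ref{thm:I-Iterations-AdjacentImages} through the colimit that computes the filtration stages of $\ExctCplCoLimAbut{n}$. First I would record the two colimit descriptions of the relevant filtration stages. By \ref{thm:F_pA-CoLimit}, $\ExctCplCoLimAbutFltrtnBB{\Vect{x}}\cong \CoLimOfOver{\Img{\ExctCplIMapItrtdBB{r}{\Vect{x}}}}{r}$, where the $\OrdOmega$-diagram $\{\Img{\ExctCplIMapItrtdBB{r}{\Vect{x}}}\}$ carries as connecting maps the restrictions of $\ExctCplIMap{}$. Applying the same result at position $\Vect{x}-\ExctCplIBdg{a}$ and dropping the (cofinal) initial term gives $\ExctCplCoLimAbutFltrtnBB{\Vect{x}-\ExctCplIBdg{a}}\cong \CoLimOfOver{\Img{\ExctCplIMapItrtdBB{r+1}{\Vect{x}-\ExctCplIBdg{a}}}}{r}$. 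Since $\ExctCplIMapItrtdBB{r+1}{\Vect{x}-\ExctCplIBdg{a}} = \ExctCplIMapItrtdBB{r}{\Vect{x}}\Comp \ExctCplIMapBB{}{\Vect{x}-\ExctCplIBdg{a}}$, the submodule $\Img{\ExctCplIMapItrtdBB{r+1}{\Vect{x}-\ExctCplIBdg{a}}}$ sits inside $\Img{\ExctCplIMapItrtdBB{r}{\Vect{x}}}$, both living in $\ExctCplDObjctBB{}{\Vect{x}+r\ExctCplIBdg{a}}$.

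Next I would form the short exact sequence of $\OrdOmega$-diagrams
\begin{equation*}
\{\Img{\ExctCplIMapItrtdBB{r+1}{\Vect{x}-\ExctCplIBdg{a}}}\} \hookrightarrow \{\Img{\ExctCplIMapItrtdBB{r}{\Vect{x}}}\} \twoheadrightarrow \left\{ \frac{\Img{\ExctCplIMapItrtdBB{r}{\Vect{x}}}}{\Img{\ExctCplIMapItrtdBB{r+1}{\Vect{x}-\ExctCplIBdg{a}}}} \right\}
\end{equation*}
with all connecting maps induced by $\ExctCplIMap{}$. Because $\CoLimOver{\OrdOmega}$ is exact (\ref{thm:CoLim^ZExactFunctor}), applying it produces exactly the short exact sequence asserted in the statement, the first two terms being $\ExctCplCoLimAbutFltrtnBB{\Vect{x}-\ExctCplIBdg{a}}$ and $\ExctCplCoLimAbutFltrtnBB{\Vect{x}}$ (with their canonical inclusion), and
\begin{equation*}
\ExctCplCoLimAbutFltrtnQtntBB{\Vect{x}} \cong \CoLimOfOver{ \frac{\Img{\ExctCplIMapItrtdBB{r}{\Vect{x}}}}{\Img{\ExctCplIMapItrtdBB{r+1}{\Vect{x}-\ExctCplIBdg{a}}}} }{r}.
\end{equation*}

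Then I would feed in Lemma \ref{thm:I-Iterations-AdjacentImages}: its isomorphisms $\alpha^r_{\Vect{x}}$ identify the above $\OrdOmega$-diagram with $\{\Ker{\ExctCplKMapBB{}{\Vect{x}+\ExctCplJBdg{b}}}/\ExctCplBndrsBB{r}{\Vect{x}+\ExctCplJBdg{b}}\}$. Here the numerator $\Ker{\ExctCplKMapBB{}{\Vect{x}+\ExctCplJBdg{b}}}=\Img{\ExctCplJMapBB{}{\Vect{x}}}$ does not depend on $r$, whereas $\ExctCplBndrsBB{r}{\Vect{x}+\ExctCplJBdg{b}} = \ExctCplJMapBB{}{\Vect{x}}(\Ker{\ExctCplIMapItrtdBB{r}{\Vect{x}}})$ increases with $r$, so the connecting maps reduce to the canonical quotients. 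Its colimit is therefore $\Ker{\ExctCplKMapBB{}{\Vect{x}+\ExctCplJBdg{b}}}/\FamUnion{r}{\ExctCplBndrsBB{r}{\Vect{x}+\ExctCplJBdg{b}}} = \Ker{\ExctCplKMapBB{}{\Vect{x}+\ExctCplJBdg{b}}}/\ExctCplBndrsBB{\infty}{\Vect{x}+\ExctCplJBdg{b}}$, which is the claimed identification.

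The main obstacle is establishing that the isomorphisms $\alpha^r_{\Vect{x}}$ of Lemma \ref{thm:I-Iterations-AdjacentImages} are natural in $r$, i.e. that they commute with the two families of connecting maps (restriction of $\ExctCplIMap{}$ on the image side, canonical quotient on the boundary side); this is precisely what the word ``functorial'' in that lemma is meant to supply. If a standalone verification is wanted, I would trace an element through the identification of the proof of \ref{thm:I-Iterations-AdjacentImages}: a class in $\Img{\ExctCplJMapBB{}{\Vect{x}}}/\ExctCplBndrsBB{r}{\Vect{x}+\ExctCplJBdg{b}}$ is represented by $\ExctCplJMapBB{}{\Vect{x}}(t)$ for some $t\in\ExctCplDObjctBB{}{\Vect{x}}$ and corresponds to $[\ExctCplIMapItrtdBB{r}{\Vect{x}}(t)]$; applying $\ExctCplIMap{}$ and raising $r$ by one both yield $[\ExctCplIMapItrtdBB{r+1}{\Vect{x}}(t)]$, so the naturality square commutes. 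Once this naturality is secured, exactness of $\CoLimOver{\OrdOmega}$ and cofinality do the rest, and no further computation is required.
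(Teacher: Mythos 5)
Your proposal is correct and follows essentially the same route as the paper's proof: both take the short exact sequence of $\omega$-diagrams supplied by Lemma (\ref{thm:I-Iterations-AdjacentImages}), apply exactness of $\CoLimOver{\omega}$ (\ref{thm:CoLim^ZExactFunctor}), and identify the resulting end terms via (\ref{thm:F_pA-CoLimit}) together with $\ExctCplBndrsBB{\infty}{\Vect{x}+\ExctCplJBdg{b}}\cong \CoLimOf{\ExctCplBndrsBB{r}{\Vect{x}+\ExctCplJBdg{b}}}$. Your explicit element-chase confirming that the isomorphisms $\alpha^{r}_{\Vect{x}}$ commute with the connecting maps makes precise a naturality point the paper leaves implicit, but it does not change the argument.
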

\begin{proof}
Via (\ref{thm:I-Iterations-AdjacentImages}) we obtain this short exact sequence of $\omega$\MSComp-diagrams:
\begin{equation*}
\xymatrix@R=5ex@C=4em{
\Img{\ExctCplIMapItrtdBB{r+1}{\Vect{x}-\Vect{a}}} \ar@{{ |>}->}[r] &
	\Img{\ExctCplIMapItrtdBB{r}{\Vect{x}} } \ar@{-{ >>}}[r] &
	\dfrac{ \Ker{ \ExctCplKMapBB{}{\Vect{x}+\ExctCplJBdg{b}} } }{ \ExctCplBndrsBB{r}{\Vect{x}+\ExctCplJBdg{b}} }
}
\end{equation*}
As $\CoLimOver{\omega}$ is exact (\ref{thm:CoLim^ZExactFunctor}), the upper sequence in the diagram below is short exact.
\begin{equation*}
\xymatrix@R=5ex@C=4em{
\CoLimOfOver{\Img{\ExctCplIMapItrtdBB{r+1}{\Vect{x}-\Vect{a}}}}{r} \ar@{{ |>}->}[r] \ar[d]_{\cong} &
	\CoLimOfOver{\Img{\ExctCplIMapItrtdBB{r}{\Vect{x}} }}{r} \ar@{-{ >>}}[r] \ar[d]_{\cong} &
	\CoLimOfOver{\dfrac{ \Ker{ \ExctCplKMapBB{}{\Vect{x}+\ExctCplJBdg{b}} } }{ \ExctCplBndrsBB{r}{\Vect{x}+\ExctCplJBdg{b}} }}{r} \ar[d]^{\cong} \\
\ExctCplCoLimAbutFltrtnBB{\Vect{x}-\Vect{a}} \ar@{{ |>}->}[r] &
	\ExctCplCoLimAbutFltrtnBB{\Vect{x}} \ar@{-{ >>}}[r] &
	\dfrac{ \Ker{ \ExctCplKMapBB{}{\Vect{x}+\ExctCplJBdg{b}} } }{ \ExctCplBndrsBB{\infty}{\Vect{x}+\ExctCplJBdg{b}} }
}
\end{equation*}
The left and center vertical arrows are isomorphisms by (\ref{thm:F_pA-CoLimit}). Further, via the construction of iterated boundary terms in a spectral sequence (\ref{thm:E-infinityPreparation}), $\ExctCplBndrsBB{\infty}{\Vect{x}+\Vect{b}}\cong \CoLimOfOver{\ExctCplBndrsBB{r}{\Vect{x}}}{r}$. So, the vertical arrow on the right is an isomorphism as well. This proves the claim.
\end{proof}

For every regular exact couple, we are now ready to describe the relationship between $\ExctCplCoLimAbutFltrtnQtntBB{\Vect{x}}$ and the transfinite objects $\ExctCplEObjctBB{\tau+1}{\Vect{x}+\ExctCplJBdg{b}}$.

\begin{lemma}[Transfinite E-extension lemma]
\label{thm:Transfinite-E-Extension}
Let $\ExctCpl{C}$ be a regular exact couple whose structure maps $\ExctCplIMap{},\ExctCplJMap{},\ExctCplKMap{}$ have bidegrees $\ExctCplIBdg{a},\ExctCplJBdg{b},\ExctCplKBdg{c}$, respectively. Then for any ordinal $\tau\geq \omega$, the objects $\ExctCplEObjctBB{\tau+1}{\Vect{x}+\ExctCplJBdg{b}}$ form module extensions.
\begin{equation*}
\xymatrix@R=5ex@C=4em{
\ExctCplCoLimAbutFltrtnQtntBB{\Vect{x}} \ar@{{ |>}->}[r] &
	\ExctCplEObjctBB{\tau+1}{\Vect{x}+\ExctCplJBdg{b}} \ar@{-{ >>}}[r] &
	\dfrac{\ExctCplCyclesBB{\tau}{\Vect{x}+\ExctCplJBdg{b}}}{\Ker{\ExctCplKMapBB{}{\Vect{x} + \ExctCplJBdg{b}}}} \cong \ZDiagImg{\tau}{\Vect{x}+\ExctCplJBdg{b}+\ExctCplKBdg{c}}{} \intrsctn \Ker{\ExctCplIMapBB{}{\Vect{x}+\ExctCplJBdg{b} + \ExctCplKBdg{c}}}
}
\end{equation*}
Moreover, these extensions are functorial with respect to morphisms of regular exact couples.
\end{lemma}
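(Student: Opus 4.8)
The plan is to recognize that the asserted extension is, at its core, nothing more than the short exact sequence determined by the chain of submodules $\ExctCplBndrsBB{\tau}{\Vect{x}+\ExctCplJBdg{b}} \subseteq \Ker{\ExctCplKMapBB{}{\Vect{x}+\ExctCplJBdg{b}}} \subseteq \ExctCplCyclesBB{\tau}{\Vect{x}+\ExctCplJBdg{b}}$ already recorded in (\ref{def:ExactCoupleItems}), followed by an identification of its two end terms. Since $\ExctCplEObjctBB{\tau+1}{\Vect{x}+\ExctCplJBdg{b}} = \ExctCplCyclesBB{\tau}{\Vect{x}+\ExctCplJBdg{b}}/\ExctCplBndrsBB{\tau}{\Vect{x}+\ExctCplJBdg{b}}$ by definition, the third isomorphism theorem furnishes at once
\begin{equation*}
\Ker{\ExctCplKMapBB{}{\Vect{x}+\ExctCplJBdg{b}}}/\ExctCplBndrsBB{\tau}{\Vect{x}+\ExctCplJBdg{b}} \rightarrowtail \ExctCplEObjctBB{\tau+1}{\Vect{x}+\ExctCplJBdg{b}} \twoheadrightarrow \ExctCplCyclesBB{\tau}{\Vect{x}+\ExctCplJBdg{b}}/\Ker{\ExctCplKMapBB{}{\Vect{x}+\ExctCplJBdg{b}}}.
\end{equation*}
This is the transfinite analogue of the sequence $S(r+1)$ from (\ref{thm:E(r+1)Extension}); no fresh computation inside the exact couple is required beyond the inclusions already in hand, and the sequence in fact holds for every ordinal $\tau$.

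For the kernel term I would invoke the stabilization of the boundary objects. By (\ref{thm:E^tau,tau>omega}.i), for every $\tau \geq \omega$ one has $\ExctCplBndrsBB{\tau}{\Vect{x}+\ExctCplJBdg{b}} = \ExctCplBndrsBB{\omega}{\Vect{x}+\ExctCplJBdg{b}} = \ExctCplBndrsBB{\infty}{\Vect{x}+\ExctCplJBdg{b}}$, the last equality because $\ExctCplBndrsBB{\infty}{\Vect{x}+\ExctCplJBdg{b}} = \CoLim \ExctCplBndrsBB{r}{\Vect{x}+\ExctCplJBdg{b}}$ is already reached at $\omega$. Proposition (\ref{thm:ImageFiltrationQuotients-Identification}) then identifies $\Ker{\ExctCplKMapBB{}{\Vect{x}+\ExctCplJBdg{b}}}/\ExctCplBndrsBB{\infty}{\Vect{x}+\ExctCplJBdg{b}}$ with the colimit filtration quotient $\ExctCplCoLimAbutFltrtnQtntBB{\Vect{x}}$. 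Combining these gives the stated left-hand term for all $\tau\geq\omega$; this is the one place where the hypothesis $\tau\geq\omega$ is genuinely used.

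For the quotient term I would use the defining equality $\ExctCplCyclesBB{\tau}{\Vect{x}+\ExctCplJBdg{b}} = (\ExctCplKMapBB{}{\Vect{x}+\ExctCplJBdg{b}})^{-1}\ZDiagImg{\tau}{\Vect{x}+\ExctCplJBdg{b}+\ExctCplKBdg{c}}{}$. Restricting $\ExctCplKMapBB{}{\Vect{x}+\ExctCplJBdg{b}}$ to $\ExctCplCyclesBB{\tau}{\Vect{x}+\ExctCplJBdg{b}}$ yields a surjection onto $\Img{\ExctCplKMapBB{}{\Vect{x}+\ExctCplJBdg{b}}} \intrsctn \ZDiagImg{\tau}{\Vect{x}+\ExctCplJBdg{b}+\ExctCplKBdg{c}}{}$ with kernel $\Ker{\ExctCplKMapBB{}{\Vect{x}+\ExctCplJBdg{b}}}$ (the "preimage modulo kernel equals image intersect" isomorphism used already in the proof of (\ref{thm:I-Iterations-AdjacentKernels})). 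Exactness of the couple at the relevant $D$-corner gives $\Img{\ExctCplKMapBB{}{\Vect{x}+\ExctCplJBdg{b}}} = \Ker{\ExctCplIMapBB{}{\Vect{x}+\ExctCplJBdg{b}+\ExctCplKBdg{c}}}$, whence the right-hand term $\ZDiagImg{\tau}{\Vect{x}+\ExctCplJBdg{b}+\ExctCplKBdg{c}}{} \intrsctn \Ker{\ExctCplIMapBB{}{\Vect{x}+\ExctCplJBdg{b}+\ExctCplKBdg{c}}}$.

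Functoriality should then come for free: each object in sight — $\ExctCplCyclesBB{\tau}{\Vect{x}+\ExctCplJBdg{b}}$, $\ExctCplBndrsBB{\tau}{\Vect{x}+\ExctCplJBdg{b}}$, $\ZDiagImg{\tau}{\Vect{x}+\ExctCplJBdg{b}+\ExctCplKBdg{c}}{}$, and the relevant kernels and images — is obtained by applying kernel/image/preimage constructions to the structure maps, all of which a morphism of exact couples respects, and the two identifying isomorphisms are instances of natural isomorphisms while (\ref{thm:ImageFiltrationQuotients-Identification}) is itself functorial. The main obstacle is not the algebra, which is routine, but the bookkeeping of stabilization indices: one must confirm that the colimit-level identification in (\ref{thm:ImageFiltrationQuotients-Identification}), proved by passing $r\to\infty$ through the finite stages, lands exactly on $\ExctCplBndrsBB{\omega}{\Vect{x}+\ExctCplJBdg{b}}$ and hence equals $\ExctCplBndrsBB{\tau}{\Vect{x}+\ExctCplJBdg{b}}$ for the transfinite $\tau$ at hand, so that the finite-stage extension $S(r+1)$ and the transfinite extension genuinely share the same left-hand term.
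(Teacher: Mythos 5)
Your proposal is correct and follows essentially the same route as the paper's own proof: the extension comes from the chain $\ExctCplBndrsBB{\tau}{\Vect{x}+\ExctCplJBdg{b}}\subseteq \Ker{\ExctCplKMapBB{}{\Vect{x}+\ExctCplJBdg{b}}}\subseteq \ExctCplCyclesBB{\tau}{\Vect{x}+\ExctCplJBdg{b}}$ of (\ref{def:ExactCoupleItems}), the left term is identified via the boundary stabilization (\ref{thm:E^tau,tau>omega}) together with (\ref{thm:ImageFiltrationQuotients-Identification}), and the right term via $\ExctCplCyclesBB{\tau}{\Vect{x}+\ExctCplJBdg{b}} = (\ExctCplKMapBB{}{\Vect{x}+\ExctCplJBdg{b}})^{-1}\ZDiagImg{\tau}{\Vect{x}+\ExctCplJBdg{b}+\ExctCplKBdg{c}}{}$ and exactness of the couple. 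Your observation that $\tau\geq\omega$ enters only through the equality $\ExctCplBndrsBB{\tau}{\Vect{x}+\ExctCplJBdg{b}}=\ExctCplBndrsBB{\infty}{\Vect{x}+\ExctCplJBdg{b}}$ matches the paper's reasoning exactly.
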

\begin{proof}
We begin by examining what we know at the least infinite ordinal $\omega$. Via (\ref{thm:ImageFiltrationQuotients-Identification}) we are able to splice two short exact sequences as in this commutative diagram.
\begin{equation*}
\xymatrix@R=5ex@C=3em{
{\color{OliveGreen} \ExctCplCoLimAbutFltrtnBB{\Vect{x}-\ExctCplIBdg{a}} } \ar@[OliveGreen]@{{ |>}->}[r] &
	{\color{OliveGreen} \ExctCplCoLimAbutFltrtnBB{\Vect{x}}} \ar[rr] \ar@[OliveGreen]@{-{ >>}}[d] &&
	{\color{deepmagenta} \SSObjctBB{\omega+1}{\Vect{x}+\ExctCplJBdg{b}} } \ar@[deepmagenta]@{-{ >>}}[r] &
	{\color{deepmagenta} \dfrac{ \ExctCplCyclesBB{\omega}{\Vect{x}+\ExctCplJBdg{b}}}{ \Ker{ \ExctCplKMapBB{}{\Vect{x} +\ExctCplJBdg{b}}} } \cong \dfrac{ \ExctCplCyclesBB{\omega}{\Vect{x}+\ExctCplJBdg{b}}}{ \Img{ \ExctCplJMapBB{}{\Vect{x}} } } } \\
& {\color{OliveGreen} \ExctCplCoLimAbutFltrtnQtntBB{\Vect{x}}{} } \ar[rr]_-{\cong}^-{\text{(\ref{thm:ImageFiltrationQuotients-Identification})} } &&
	{\color{deepmagenta} \dfrac{ \Ker{ \ExctCplKMapBB{}{\Vect{x}+ \ExctCplJBdg{b} } } }{ \ExctCplBndrsBB{\omega}{\Vect{x} + \ExctCplJBdg{b}} } }  \ar@[deepmagenta]@{{ |>}->}[u]
}
\end{equation*}
The sequence rendered in {\color{OliveGreen} green} on the left is exact by definition. The sequence rendered in {\color{deepmagenta} purple} on the right is seen to be exact upon recalling (\ref{thm:E-infinityAs(Co-)Limit}) that $\SSObjctBB{\omega+1}{\Vect{x}+\ExctCplJBdg{b}}=\SSObjctBB{\infty}{\Vect{x}+\ExctCplJBdg{b}} \cong \ExctCplCyclesBB{\infty}{\Vect{x}+\ExctCplJBdg{b}}/ \ExctCplBndrsBB{\infty}{\Vect{x}+\ExctCplJBdg{b}}$, while
\begin{equation*}
\ExctCplBndrsBB{\omega}{\Vect{x}+ \ExctCplJBdg{b}}=\ExctCplBndrsBB{\infty}{\Vect{x}+ \ExctCplJBdg{b}}\subseteq \Ker{ \ExctCplKMapBB{}{\Vect{x}+\ExctCplJBdg{b}} }\subseteq \ExctCplCyclesBB{\infty}{\Vect{x}+\ExctCplJBdg{b}}=\ExctCplCyclesBB{\omega}{\Vect{x}+\ExctCplJBdg{b}}
\end{equation*}
Now, consider an ordinal $\tau\geq \omega$. We have the following commutative diagram.
\begin{equation*}
\xymatrix@R=5ex@C=3em{
\dfrac{\Ker{\ExctCplKMapBB{}{\Vect{x}+\ExctCplJBdg{b}}}}{\ExctCplBndrsBB{\omega}{\Vect{x}+\ExctCplJBdg{b}}} = \dfrac{\Ker{\ExctCplKMapBB{}{\Vect{x}+\ExctCplJBdg{b}}}}{\ExctCplBndrsBB{\tau}{\Vect{x}+\ExctCplJBdg{b}}} \ar@{{ |>}->}[r] &
	\dfrac{\ExctCplCyclesBB{\tau}{\Vect{x}+\ExctCplJBdg{b}}}{\ExctCplBndrsBB{\tau}{\Vect{x}+\ExctCplJBdg{b}}}=\ExctCplEObjctBB{\tau+1}{\Vect{x}+\ExctCplJBdg{b}} \ar@{-{ >>}}[r] &
		\dfrac{\ExctCplCyclesBB{\tau}{\Vect{x}+\ExctCplJBdg{b}}}{\Ker{\ExctCplKMapBB{}{\Vect{x} + \ExctCplJBdg{b}}}} \ar[r]_-{\cong} &
		\ZDiagImg{\tau}{\Vect{x}+\ExctCplJBdg{b}+\ExctCplKBdg{c}}{} \intrsctn \Ker{\ExctCplIMapBB{}{\Vect{x}+\ExctCplJBdg{b} +\ExctCplKBdg{c}}}
}
\end{equation*}
Here, we used that (\ref{thm:E^tau,tau>omega}), for ordinals $\tau\geq \omega$, $\ExctCplBndrsBB{\tau}{\Vect{x}+\ExctCplJBdg{b}} = \ExctCplBndrsBB{\omega}{\Vect{x}+\ExctCplJBdg{b}}$.  Recalling then the construction of $\ExctCplCyclesBB{\tau}{\Vect{x}+\ExctCplJBdg{b}}$ from (\ref{def:ExactCoupleItems})  yields the claim.
\end{proof}

\begin{theorem}[Stable E-extension theorem]
\label{thm:Stable-E-Extension}
Let $\ExctCpl{C}$ be a regular exact couple whose structure maps $\ExctCplIMap{},\ExctCplJMap{},\ExctCplKMap{}$ have bidegrees $\ExctCplIBdg{a},\ExctCplJBdg{b},\ExctCplKBdg{c}$, respectively. Then the stable $E$\MSComp-objects of $\ExctCpl{C}$ form module extensions:
\begin{equation*}
\xymatrix@R=5ex@C=4em{
\ExctCplCoLimAbutFltrtnQtntBB{\Vect{x}} \ar@{{ |>}->}[r] &
	\ExctCplStblEObjctBB{\Vect{x}+\ExctCplJBdg{b}} \ar@{-{ >>}}[r] &
	\ExctCplLimAbutFltrtnQtntBB{\Vect{x}+\ExctCplJBdg{b}+\ExctCplKBdg{c}}
}
\end{equation*}
Moreover, these extensions are functorial with respect to morphisms of regular exact couples.
\end{theorem}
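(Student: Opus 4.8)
The plan is to read off the asserted short exact sequence by evaluating the Transfinite E-extension lemma (\ref{thm:Transfinite-E-Extension}) at the stabilization ordinal and then identifying its cokernel term with the limit abutment filtration quotient. By (\ref{def:Stable-E-Objects}) together with (\ref{thm:E^tau,tau>omega}), there is an ordinal $\alpha>\omega$ with $\ExctCplStblEObjctBB{\Vect{x}+\ExctCplJBdg{b}}=\ExctCplEObjctBB{\lambda}{\Vect{x}+\ExctCplJBdg{b}}$ for every $\lambda\geq\alpha$. Since these objects have stabilized, I may choose a successor ordinal $\tau+1\geq\alpha$ with $\tau\geq\omega$, so that $\ExctCplEObjctBB{\tau+1}{\Vect{x}+\ExctCplJBdg{b}}=\ExctCplStblEObjctBB{\Vect{x}+\ExctCplJBdg{b}}$. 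Applying (\ref{thm:Transfinite-E-Extension}) at this $\tau$ then yields the short exact sequence
\begin{equation*}
\xymatrix@C=3em{
\ExctCplCoLimAbutFltrtnQtntBB{\Vect{x}} \ar@{{ |>}->}[r] &
	\ExctCplStblEObjctBB{\Vect{x}+\ExctCplJBdg{b}} \ar@{-{ >>}}[r] &
	\ZDiagImg{\tau}{\Vect{x}+\ExctCplJBdg{b}+\ExctCplKBdg{c}}{} \intrsctn \Ker{\ExctCplIMapBB{}{\Vect{x}+\ExctCplJBdg{b}+\ExctCplKBdg{c}}}
}
\end{equation*}
so that only the right-hand term remains to be identified.

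Next I would invoke the stabilization of image subdiagrams from Section \ref{sec:ImageFactorization-Z-Diagrams}: the very ordinal $\alpha$ at which the cycle objects stabilize is, by the proof of (\ref{thm:E^tau,tau>omega}), one at which the transfinite image subdiagram $\ZDiagImg{\tau}{\Vect{x}+\ExctCplJBdg{b}+\ExctCplKBdg{c}}{}$ has attained its stable value $\bar{I}_{\Vect{x}+\ExctCplJBdg{b}+\ExctCplKBdg{c}}=\Img{\ExctCplLimAbutMapBB{\Vect{x}+\ExctCplJBdg{b}+\ExctCplKBdg{c}}}$, the image of the cone map from the limit abutment. Hence the cokernel term above equals $\bar{I}_{\Vect{x}+\ExctCplJBdg{b}+\ExctCplKBdg{c}}\intrsctn\Ker{\ExctCplIMapBB{}{\Vect{x}+\ExctCplJBdg{b}+\ExctCplKBdg{c}}}$.

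It then remains to identify this intersection with $\ExctCplLimAbutFltrtnQtntBB{\Vect{x}+\ExctCplJBdg{b}+\ExctCplKBdg{c}}$ by a diagram chase dual to (\ref{thm:ImageFiltrationQuotients-Identification}). Writing $\rho_k$ for the cone map $\ExctCplLimAbutMapBB{\Vect{x}+\ExctCplJBdg{b}+\ExctCplKBdg{c}+k\ExctCplIBdg{a}}$, the cone relation $\ExctCplIMapBB{}{\Vect{x}+\ExctCplJBdg{b}+\ExctCplKBdg{c}}\Comp \rho_0=\rho_1$ gives $\ExctCplLimAbutFltrtnBB{\Vect{x}+\ExctCplIBdg{a}+\ExctCplJBdg{b}+\ExctCplKBdg{c}}=\Ker{\rho_1}=\rho_0^{-1}\Ker{\ExctCplIMapBB{}{\Vect{x}+\ExctCplJBdg{b}+\ExctCplKBdg{c}}}$, while $\ExctCplLimAbutFltrtnBB{\Vect{x}+\ExctCplJBdg{b}+\ExctCplKBdg{c}}=\Ker{\rho_0}\subseteq\Ker{\rho_1}$. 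Passing to the quotient by $\Ker{\rho_0}$ and applying $\rho_0$ therefore induces an isomorphism $\ExctCplLimAbutFltrtnQtntBB{\Vect{x}+\ExctCplJBdg{b}+\ExctCplKBdg{c}}\cong \Img{\rho_0}\intrsctn\Ker{\ExctCplIMapBB{}{\Vect{x}+\ExctCplJBdg{b}+\ExctCplKBdg{c}}}=\bar{I}_{\Vect{x}+\ExctCplJBdg{b}+\ExctCplKBdg{c}}\intrsctn\Ker{\ExctCplIMapBB{}{\Vect{x}+\ExctCplJBdg{b}+\ExctCplKBdg{c}}}$. Substituting this into the sequence above produces the stated extension. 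Functoriality is then inherited: (\ref{thm:Transfinite-E-Extension}) is already natural in the exact couple, and both the passage to the stabilization ordinal and the cone-map identification are natural with respect to morphisms of regular exact couples.

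The step I expect to be the main obstacle is the second one: pinning down that the transfinitely iterated image subdiagram $\ZDiagImg{\tau}{}{}$ has, at the E-stabilization ordinal, reached exactly the stable image $\bar{I}=\Img{\rho}$ rather than merely the intersection $I^{\omega}=\FamIntrsctn{r\geq 1}{\Img{\ExctCplIMapItrtdBB{r}{}}}$, which can be strictly larger. This is precisely the point at which the transfinite refinement beyond $\omega$ does genuine work — it is what separates $\ExctCplStblEObjct$ from $\ExctCplEPage{\infty}=\ExctCplEPage{\omega+1}$ — and it rests entirely on the stabilization analysis of $\ZCat$-diagrams in Section \ref{sec:ImageFactorization-Z-Diagrams}; once that is in hand, the first and third steps are essentially formal.
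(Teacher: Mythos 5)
Your proof is correct and follows essentially the same route as the paper's: apply the transfinite E-extension lemma at a stabilization ordinal and identify $\ZDiagImg{\alpha}{\Vect{x}+\ExctCplJBdg{b}+\ExctCplKBdg{c}}{}\intrsctn\Ker{\ExctCplIMapBB{}{\Vect{x}+\ExctCplJBdg{b}+\ExctCplKBdg{c}}}$ with $\ExctCplLimAbutFltrtnQtntBB{\Vect{x}+\ExctCplJBdg{b}+\ExctCplKBdg{c}}$ using that the stable image subobject equals $\Img{\ExctCplLimAbut{n+\sigma}\to \ExctCplDObjctBB{}{\Vect{x}+\ExctCplJBdg{b}+\ExctCplKBdg{c}}}$ by (\ref{thm:StableImageSubDiagram-Properties}); the paper performs this last identification via the snake lemma applied to the two short exact sequences $\ExctCplLimAbutFltrtnBB{\cdot}\rightarrowtail\ExctCplLimAbut{n+\sigma}\twoheadrightarrow\ZDiagImg{\alpha}{\cdot}{}$, which is the same computation as your element chase. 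The obstacle you flag at the end is resolved in the paper simply by choosing $\alpha$ from the outset as an ordinal at which the image subdiagrams of $\ExctCplDPage{}$ are stable --- which by (\ref{thm:E^tau,tau>omega}) forces the E-objects to be stable there as well --- rather than deducing image-stability from E-stability; your appeal to the proof of (\ref{thm:E^tau,tau>omega}) amounts to the same choice, since the ordinal constructed there is precisely an image-stabilization ordinal.
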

\begin{proof}
Let $\alpha$ be an infinite ordinal at which the image subobjects $\ZDiagImg{\alpha}{}{}$ of $\ExctCplDPage{}$ are stable in the sense of (\ref{sec:ImageFactorization-Z-Diagrams}). Further, let $n=\DtrmnntOf{[\ExctCplIBdg{a}\ \Vect{x}]}$ and $\sigma = \DtrmnntOf{[\ExctCplIBdg{a}\ (\ExctCplJBdg{b}+\ExctCplKBdg{c})]}$. Then we have this morphism of short exact sequences:
\begin{equation*}
\xymatrix@R=5ex@C=4em{
\ExctCplLimAbutFltrtnBB{\Vect{x}+\ExctCplJBdg{b}+\ExctCplKBdg{c}} \ar@{{ |>}->}[r] \ar@{{ |>}->}[d] &
	\ExctCplLimAbut{n+\sigma} \ar@{-{ >>}}[r] \ar@{=}[d] &
	\ZDiagImg{\alpha}{\Vect{x}+\ExctCplJBdg{b}+\ExctCplKBdg{c}}{} \ar@{-{ >>}}[d] \\
\ExctCplLimAbutFltrtnBB{\Vect{x}+\ExctCplJBdg{b}+\ExctCplKBdg{c} + \ExctCplIBdg{a}} \ar@{{ |>}->}[r] &
	\ExctCplLimAbut{n+\sigma} \ar@{-{ >>}}[r] &
	\ZDiagImg{\alpha}{\Vect{x}+\ExctCplJBdg{b}+\ExctCplKBdg{c} + \ExctCplIBdg{a}}{}
}
\end{equation*}
The snake lemma yields a functorial isomorphism
\begin{equation*}
\ExctCplLimAbutFltrtnQtntBB{\Vect{x}+\ExctCplJBdg{b}+\ExctCplKBdg{c}} = 	\CoKer{\ExctCplLimAbutFltrtnBB{\Vect{x}+\ExctCplJBdg{b}+\ExctCplKBdg{c}}\to \ExctCplLimAbutFltrtnBB{\Vect{x}+\ExctCplJBdg{b}+\ExctCplKBdg{c} + \ExctCplIBdg{a}}} \cong  \Ker{\ZDiagImg{\alpha}{\Vect{x}+\ExctCplJBdg{b}+\ExctCplKBdg{c}}{}\to \ZDiagImg{\alpha}{\Vect{x}+\ExctCplJBdg{b}+\ExctCplKBdg{c} + \ExctCplIBdg{a}}{}}  =  \ZDiagImg{\alpha}{\Vect{x}+\ExctCplJBdg{b}+\ExctCplKBdg{c}}{} \intrsctn \Ker{\ExctCplIMapBB{}{\Vect{x}+\ExctCplJBdg{b}+\ExctCplKBdg{c}}}
\end{equation*}
Now the transfinite E-extension lemma (\ref{thm:Transfinite-E-Extension}) yields this short exact sequence
\begin{equation*} 
\xymatrix@R=5ex@C=4em{
\ExctCplCoLimAbutFltrtnQtntBB{\Vect{x}} \ar@{{ |>}->}[r] &
	\ExctCplStblEObjctBB{\Vect{x}+\ExctCplJBdg{b}}=\ExctCplEObjctBB{\alpha+1}{\Vect{x}+\ExctCplJBdg{b}} \ar@{-{ >>}}[r] &
	\ZDiagImg{\alpha}{\Vect{x}+\ExctCplJBdg{b}+\ExctCplKBdg{c}}{} \intrsctn \Ker{\ExctCplIMapBB{}{\Vect{x}+\ExctCplJBdg{b} + \ExctCplKBdg{c}}} \cong \ExctCplLimAbutFltrtnQtntBB{\Vect{x}+\ExctCplJBdg{b}+\ExctCplKBdg{c}}
}
\end{equation*}
This is what was to be shown.
\end{proof}

\begin{theorem}[E-infinity extension theorem]
\label{thm:E-InfinityExtensionThm}%
For a regular exact couple $(\ExctCplEPage{},\ExctCplDPage{},\ExctCplIMap{},\ExctCplJMap{},\ExctCplKMap{})$, its stable $E$\MSComp-objects $\ExctCplStblEObjct$ and its spectral sequence based $E$\MSComp-infinity objects $\ExctCplEPage{\infty}$ are related via this morphism (pullback) of short exact sequences:
\begin{equation*}
\xymatrix@R=5ex@C=5em{
\ExctCplCoLimAbutFltrtnQtntBB{\Vect{x}} \ar@{{ |>}->}[r] \ar@{=}[d] &
	\ExctCplStblEObjctBB{\Vect{x}+\ExctCplJBdg{b}} \ar@{-{ >>}}[r] \ar@{{ |>}->}[d] \PullLU{rd} &
	\ExctCplLimAbutFltrtnQtntBB{\Vect{x}+\ExctCplJBdg{b}+\ExctCplKBdg{c}} \ar@{{ |>}->}[d]^{M} \\
\ExctCplCoLimAbutFltrtnQtntBB{\Vect{x}} \ar@{{ |>}->}[r] &
	\ExctCplEObjctBB{\infty}{\Vect{x}+\ExctCplJBdg{b}} \ar@{-{ >>}}[r] &
	\frac{ \ExctCplCyclesBB{\infty}{\Vect{x}+\ExctCplJBdg{b} } }{ \Img{ \ExctCplJMapBB{}{\Vect{x}} } }
}
\end{equation*}
The map $M$ is the kernel part of the image factorization of the map $t$ in the $5$-term exact sequence below.
\begin{equation*}
\xymatrix@R=5ex@C=2.5em{
\ExctCplLimAbutFltrtnBB{\Vect{x}+\ExctCplJBdg{b}+\ExctCplKBdg{c}} \ar@{{ |>}->}[r] &
	\ExctCplLimAbutFltrtnBB{\Vect{x}+\ExctCplIBdg{a}+\ExctCplJBdg{b}+\ExctCplKBdg{c}} \ar[r]^-{t} &
	\frac{ \ExctCplCyclesBB{\infty}{\Vect{x}+\ExctCplJBdg{b} } }{ \Ker{ \ExctCplKMapBB{}{\Vect{x}+\ExctCplJBdg{b}} } } \ar[r] &
	\LimOneOfOver{ \Ker{ \ExctCplIMapItrtdBB{r}{\Vect{x}+\ExctCplJBdg{b}+\ExctCplKBdg{c}-r\ExctCplIBdg{a} } } }{\ZCat} \ar[r] &
	\LimOneOfOver{ \Ker{ \ExctCplIMapItrtdBB{r+1}{\Vect{x}+\ExctCplJBdg{b}+\ExctCplKBdg{c} -r\ExctCplIBdg{a} } } }{\ZCat} 
}
\end{equation*}
Both of the above diagrams depend functorially on morphisms of the underlying exact couples.
\end{theorem}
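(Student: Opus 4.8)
The plan is to assemble the two short exact rows independently, fit them into a morphism of short exact sequences with an isomorphism on the left, deduce the pullback property formally, and finally produce the $5$-term $\Lim/\LimOne$ sequence that pins down $M$. The top row I take verbatim from the stable E-extension theorem (\ref{thm:Stable-E-Extension}). For the bottom row I would invoke the inclusions $\ExctCplBndrsBB{\omega}{\Vect{x}+\ExctCplJBdg{b}}\subseteq \Ker{\ExctCplKMapBB{}{\Vect{x}+\ExctCplJBdg{b}}}=\Img{\ExctCplJMapBB{}{\Vect{x}}}\subseteq \ExctCplCyclesBB{\infty}{\Vect{x}+\ExctCplJBdg{b}}$ and apply the third isomorphism theorem. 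This yields a short exact sequence with kernel $\Ker{\ExctCplKMapBB{}{\Vect{x}+\ExctCplJBdg{b}}}\big/\ExctCplBndrsBB{\omega}{\Vect{x}+\ExctCplJBdg{b}}$, middle term $\ExctCplCyclesBB{\infty}{\Vect{x}+\ExctCplJBdg{b}}\big/\ExctCplBndrsBB{\omega}{\Vect{x}+\ExctCplJBdg{b}}=\ExctCplEObjctBB{\infty}{\Vect{x}+\ExctCplJBdg{b}}$ (using \ref{thm:E-infinityAs(Co-)Limit}, \ref{thm:SSE-infty=E^(omega+1)}, and $\ExctCplBndrsBB{\omega}{}=\ExctCplBndrsBB{\infty}{}$ from \ref{thm:E^tau,tau>omega}), and cokernel $\ExctCplCyclesBB{\infty}{\Vect{x}+\ExctCplJBdg{b}}\big/\Img{\ExctCplJMapBB{}{\Vect{x}}}$; the kernel is then identified with $\ExctCplCoLimAbutFltrtnQtntBB{\Vect{x}}$ by (\ref{thm:ImageFiltrationQuotients-Identification}).

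Next I would describe the vertical maps. On the left I take the identity; in the middle, the canonical inclusion $\ExctCplStblEObjctBB{\Vect{x}+\ExctCplJBdg{b}}=\ExctCplCyclesBB{\alpha}{\Vect{x}+\ExctCplJBdg{b}}\big/\ExctCplBndrsBB{\omega}{\Vect{x}+\ExctCplJBdg{b}}\hookrightarrow \ExctCplCyclesBB{\infty}{\Vect{x}+\ExctCplJBdg{b}}\big/\ExctCplBndrsBB{\omega}{\Vect{x}+\ExctCplJBdg{b}}$ supplied by \ref{thm:E^tau,tau>omega} (using $\ExctCplBndrsBB{\alpha}{}=\ExctCplBndrsBB{\omega}{}$ and $\ExctCplCyclesBB{\alpha}{}\subseteq \ExctCplCyclesBB{\infty}{}$); on the right, the map $M$. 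Because $\Ker{\ExctCplKMapBB{}{\Vect{x}+\ExctCplJBdg{b}}}\subseteq \ExctCplCyclesBB{\alpha}{\Vect{x}+\ExctCplJBdg{b}}$, the common kernel $\Ker{\ExctCplKMapBB{}{\Vect{x}+\ExctCplJBdg{b}}}\big/\ExctCplBndrsBB{\omega}{\Vect{x}+\ExctCplJBdg{b}}$ is carried identically between the rows, so the left square commutes over the identity. A short chase of the right square shows that both composites out of $\ExctCplStblEObjctBB{\Vect{x}+\ExctCplJBdg{b}}$ send $z+\ExctCplBndrsBB{\omega}{\Vect{x}+\ExctCplJBdg{b}}$ to $z+\Img{\ExctCplJMapBB{}{\Vect{x}}}$, whose image is $\ExctCplCyclesBB{\alpha}{\Vect{x}+\ExctCplJBdg{b}}\big/\Img{\ExctCplJMapBB{}{\Vect{x}}}$; this forces $M$ to be the inclusion $\ExctCplLimAbutFltrtnQtntBB{\Vect{x}+\ExctCplJBdg{b}+\ExctCplKBdg{c}}\cong \ExctCplCyclesBB{\alpha}{\Vect{x}+\ExctCplJBdg{b}}\big/\Img{\ExctCplJMapBB{}{\Vect{x}}}\hookrightarrow \ExctCplCyclesBB{\infty}{\Vect{x}+\ExctCplJBdg{b}}\big/\Img{\ExctCplJMapBB{}{\Vect{x}}}$, the isomorphism being the one from \ref{thm:Transfinite-E-Extension} evaluated at the stable ordinal $\alpha$, where $\ExctCplCyclesBB{\tau}{}\big/\Ker{\ExctCplKMapBB{}{}}\cong \ZDiagImg{\tau}{\Vect{x}+\ExctCplJBdg{b}+\ExctCplKBdg{c}}{}\intrsctn \Ker{\ExctCplIMapBB{}{\Vect{x}+\ExctCplJBdg{b}+\ExctCplKBdg{c}}}$. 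Thus we have a genuine morphism of short exact sequences whose left leg is an isomorphism, and the standard diagram lemma then gives that the right square is a pullback (the comparison map to the fibre product is monic since $M$ is, and epic since the bottom map is surjective and the kernel leg is onto), which is exactly the corner marked in the statement.

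Finally, to produce the $5$-term sequence and the promised description of $M$, I would apply the left exact functor $\Lim$ together with its first derived functor $\LimOne$ to the short exact sequence of towers (in $r$) coming from the right-hand ends of the $\ExctCplEPage{r+1}$-extensions (\ref{thm:E(r+1)Extension}):
\[
\Ker{\ExctCplIMapItrtdBB{r}{\Vect{x}+\ExctCplJBdg{b}+\ExctCplKBdg{c}-r\ExctCplIBdg{a}}} \hookrightarrow \Ker{\ExctCplIMapItrtdBB{r+1}{\Vect{x}+\ExctCplJBdg{b}+\ExctCplKBdg{c}-r\ExctCplIBdg{a}}} \twoheadrightarrow \ExctCplCyclesBB{r}{\Vect{x}+\ExctCplJBdg{b}}\big/\Ker{\ExctCplKMapBB{}{\Vect{x}+\ExctCplJBdg{b}}} .
\]
The six-term $\Lim/\LimOne$ exact sequence results, of which I retain the first five. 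Unwinding the definition of $\ExctCplLimAbut{n+\sigma}$ and the cone maps $\rho$ shows directly that the two kernel-towers have limits $\ExctCplLimAbutFltrtnBB{\Vect{x}+\ExctCplJBdg{b}+\ExctCplKBdg{c}}$ and $\ExctCplLimAbutFltrtnBB{\Vect{x}+\ExctCplIBdg{a}+\ExctCplJBdg{b}+\ExctCplKBdg{c}}$ (a compatible sequence in $\Ker\rho$ at position $\Vect{x}+\ExctCplJBdg{b}+\ExctCplKBdg{c}$ is exactly a compatible family of elements of $\Ker{\ExctCplIMapItrtdBB{r}{}}$), while the quotient-tower has limit $\ExctCplCyclesBB{\infty}{\Vect{x}+\ExctCplJBdg{b}}\big/\Ker{\ExctCplKMapBB{}{\Vect{x}+\ExctCplJBdg{b}}}$ by the limit-of-monomorphisms reasoning of (\ref{thm:E-infinityAs(Co-)Limit}). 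Exactness then identifies $\Img{t}$ with the cokernel of the first inclusion, namely $\ExctCplLimAbutFltrtnQtntBB{\Vect{x}+\ExctCplJBdg{b}+\ExctCplKBdg{c}}$, so the monic part of the epi-mono factorization of $t$ is precisely the map $M$ already identified.

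The step I expect to be the main obstacle is reconciling the two incarnations of $M$: the inclusion $\ExctCplCyclesBB{\alpha}{\Vect{x}+\ExctCplJBdg{b}}\big/\Img{\ExctCplJMapBB{}{\Vect{x}}}\hookrightarrow \ExctCplCyclesBB{\infty}{\Vect{x}+\ExctCplJBdg{b}}\big/\Img{\ExctCplJMapBB{}{\Vect{x}}}$ forced by the pullback square, and the monic part of $t$ read off from the $5$-term sequence. Showing these name the same submodule comes down to checking, under the isomorphism $\ExctCplCyclesBB{\tau}{}\big/\Ker{\ExctCplKMapBB{}{}}\cong \ZDiagImg{\tau}{}{}\intrsctn \Ker{\ExctCplIMapBB{}{}}$, that the stable image $\ZDiagImg{\alpha}{\Vect{x}+\ExctCplJBdg{b}+\ExctCplKBdg{c}}{}$ agrees with $\Img{\ExctCplLimAbut{n+\sigma}\to \ExctCplDObjctBB{}{\Vect{x}+\ExctCplJBdg{b}+\ExctCplKBdg{c}}}$ inside $\ExctCplCyclesBB{\infty}{}\big/\Ker{\ExctCplKMapBB{}{}}$; this is exactly the discrepancy between $\ExctCplStblEObjctBB{\Vect{x}+\ExctCplJBdg{b}}$ and $\ExctCplEObjctBB{\infty}{\Vect{x}+\ExctCplJBdg{b}}$, and it is the stabilization result (\ref{thm:E^tau,tau>omega}) that does the controlling work. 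Functoriality is then immediate: each building block --- \ref{thm:Stable-E-Extension}, the third-isomorphism sequence, the $\ExctCplEPage{r+1}$-towers, and the $\Lim/\LimOne$ sequence --- is natural in the exact couple, so the entire diagram depends functorially on morphisms of regular exact couples.
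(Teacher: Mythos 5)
Your proposal is correct and follows essentially the same route as the paper's proof: the top row comes from the stable E-extension theorem (\ref{thm:Stable-E-Extension}), the bottom row and the morphism of short exact sequences come from the transfinite E-extension machinery at the ordinal $\omega$ (which you reconstruct via the third isomorphism theorem rather than citing (\ref{thm:Transfinite-E-Extension}) directly, but the content is identical), and the $5$-term sequence is obtained by applying $\Lim/\LimOne$ to the same short exact sequence of kernel towers from (\ref{thm:E(r+1)Extension})/(\ref{thm:I-Iterations-AdjacentKernels}), with $M$ identified as the monic part of the image factorization of $t$. The only differences are presentational: you make explicit the pullback verification and the reconciliation of the two incarnations of $M$, both of which the paper leaves implicit.
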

\begin{proof}
The stable E-extension theorem (\ref{thm:Stable-E-Extension}) yields the short exact sequence at the top.  The transfinite E-extension lemma (\ref{thm:Transfinite-E-Extension}) yields the morphism of short exact sequences, keeping in mind the identities $\ExctCplCyclesBB{\omega}{}=\ExctCplCyclesBB{\infty}{}$ and $\ExctCplEPage{\omega+1}=\ExctCplEPage{\infty}$.

It remains to identify the map $M$. Via (\ref{thm:I-Iterations-AdjacentKernels}) we obtain the short exact sequence of $\omega$-diagrams  below.
\begin{equation*}
\xymatrix@R=5ex@C=3em{
\Ker{ \ExctCplIMapItrtdBB{r}{\Vect{x}+\ExctCplJBdg{b}+\ExctCplKBdg{c} -r\ExctCplIBdg{a} } } \ar@{{ |>}->}[r] &
	\Ker{ \ExctCplIMapItrtdBB{r+1}{\Vect{x}+\ExctCplJBdg{b}+\ExctCplKBdg{c} -r\ExctCplIBdg{a} } } \ar@{-{ >>}}[r] &
	\dfrac{ \ExctCplCyclesBB{r}{\Vect{x}+\ExctCplJBdg{b}}}{ \Ker{\ExctCplKMapBB{}{\Vect{x}+\ExctCplJBdg{b} } } } \cong \dfrac{ \ExctCplCyclesBB{r}{\Vect{x}+\ExctCplJBdg{b}}}{ \Img{\ExctCplJMapBB{}{\Vect{x}} } }
}
\end{equation*}
We will show that its associated $6$\MSComp-term exact $\Lim$-$\LimOne$-sequence contains the claimed  $5$-term exact sequence. Indeed, the first two terms in this 6-term sequence are identified by (\ref{thm:Kernel/ImageSequencesOfZ-Diagram}):
\begin{equation*}
\begin{array}{rcl}
\ExctCplLimAbutFltrtnBB{\Vect{x}+\ExctCplJBdg{b}+\ExctCplKBdg{c}}
	& \cong & \LimOfOver{ \Ker{ \ExctCplIMapItrtdBB{r}{\Vect{x}+\ExctCplJBdg{b}+\ExctCplKBdg{c} -r\ExctCplIBdg{a} } } }{r} \\
\ExctCplLimAbutFltrtnBB{\Vect{x}+\ExctCplIBdg{a}+\ExctCplJBdg{b}+\ExctCplKBdg{c}} 
	& \cong & \LimOfOver{ \Ker{ \ExctCplIMapItrtdBB{r+1}{\Vect{x}+\ExctCplJBdg{b}+\ExctCplKBdg{c} -r\ExctCplIBdg{a} } } }{r}
\end{array}
\end{equation*}
We compute the third term in this $6$-term sequence via the following claim:
\begin{equation*}
\LimOfOver{ \dfrac{\ExctCplCyclesBB{r}{\Vect{x}+\ExctCplJBdg{b}} }{ \Ker{\ExctCplKMapBB{}{\Vect{x}+\ExctCplJBdg{b} } } } }{r} \cong \dfrac{ \ExctCplCyclesBB{\infty}{\Vect{x}+\ExctCplJBdg{b}} }{ \Ker{ \ExctCplKMapBB{}{\Vect{x}+\ExctCplJBdg{b} } } }
\end{equation*}
This is, indeed, the case because we have the short exact sequence of $\omega$-indexed diagrams:
\begin{equation*}
\xymatrix@R=5ex@C=3em{
\Ker{\ExctCplKMapBB{}{\Vect{x}+\ExctCplJBdg{b}} } \ar@{{ |>}->}[r] &
	\ExctCplCyclesBB{r}{\Vect{x}+\ExctCplJBdg{b}} \ar@{-{ >>}}[r] &
	\dfrac{ \ExctCplCyclesBB{r}{\Vect{x}+\ExctCplJBdg{b}}}{ \Ker{\ExctCplKMapBB{}{\Vect{x}+\ExctCplJBdg{b} } } }
}
\end{equation*}
The limit functor, applied to this short exact sequence of $\omega$-diagrams is exact because its kernel objects are constant. So, the third term in the $6$-term sequence in question is as claimed. To see the relationship between this third term and $\ExctCplLimAbutFltrtnQtntBB{\Vect{x}+\ExctCplJBdg{b}+\ExctCplKBdg{c}}$, we recall its defining short exact sequence:
\begin{equation*}
\xymatrix@R=5ex@C=3em{
\ExctCplLimAbutFltrtnBB{\Vect{x}+\ExctCplJBdg{b}+\ExctCplKBdg{c}} \ar@{{ |>}->}[r] &
	\ExctCplLimAbutFltrtnBB{\Vect{x}+\ExctCplIBdg{a}+\ExctCplJBdg{b}+\ExctCplKBdg{c}} \ar@{-{ >>}}[r]^-{q} &
	\ExctCplLimAbutFltrtnQtntBB{\Vect{x}+\ExctCplJBdg{b}+\ExctCplKBdg{c}}
}
\end{equation*}
Accordingly, we  infer the existence of a monic map $M\from \ExctCplLimAbutFltrtnQtntBB{\Vect{x}+\ExctCplJBdg{b}+\ExctCplKBdg{c}}\to \tfrac{ \ExctCplCyclesBB{r}{\Vect{x}+\ExctCplJBdg{b}}}{ \Ker{\ExctCplKMapBB{}{\Vect{x}+\ExctCplJBdg{b} } } }$  so as to obtain this image factorization of $t$: $t=M\Comp q$.

It remains to verify that the diagrams in the E-infinity extension theorem depend functorially on morphisms between exact couples. This is straight forward, and the proof of (\ref{thm:E-InfinityExtensionThm}) is complete.
\end{proof}

Motivated by the relationship between E-infinity objects and adjacent filtration quotients of the limit and colimit abutment objects in (\ref{thm:E-InfinityExtensionThm}), we introduce the following terminology.

\begin{definition}[E-infinity relations]
\label{def:E-InfinityRelations}%
Given an exact couple $\ExctCpl{C}$, we distinguish the following extension types of $\ExctCplEObjctBB{\infty}{\Vect{x}+\ExctCplJBdg{b}}$:
\vspace{-1.7ex}
\begin{enumerate}[(i)]
\item $\ExctCplEObjctBB{\infty}{\Vect{x}+\ExctCplJBdg{b}}$ is \Defn{unstable} if  the inclusion $\ExctCplStblEObjctBB{\Vect{x}+\ExctCplJBdg{b}} \to \ExctCplEObjctBB{\infty}{\Vect{x}+\ExctCplJBdg{b}}$ fails to be an isomorphism. %
\index{unstable E-infinity object}%
\item $\ExctCplEObjctBB{\infty}{\Vect{x}+\ExctCplJBdg{b}}$ forms a \Defn{stable extension} of $\ExctCplCoLimAbutFltrtnQtntBB{\Vect{x}}$ and over $\ExctCplLimAbutFltrtnQtntBB{\Vect{x} + \ExctCplJBdg{b} + \ExctCplKBdg{c}}$ if  $\ExctCplEObjctBB{\infty}{\Vect{x}+\ExctCplJBdg{b}} \cong \ExctCplStblEObjctBB{\Vect{x}+\ExctCplJBdg{b}}$. %
\index{stable!E-infinity extension}%
\item $\ExctCplEObjctBB{\infty}{\Vect{x}+\ExctCplJBdg{b}}$ matches the \Defn{colimit abutment filtration} if the inclusion $\ExctCplCoLimAbutFltrtnQtntBB{\Vect{x}}\to \ExctCplEObjctBB{\infty}{\Vect{x}+\ExctCplJBdg{b}}$ is an isomorphism. %
\index{matches!colimit abutment filtration}%
\item $\ExctCplEObjctBB{\infty}{\Vect{x}+\ExctCplJBdg{b}}$ matches the \Defn{limit abutment filtration} if the maps below are isomorphisms %
\index{matches!limit abutment filtration}%
\begin{equation*}
\xymatrix@R=5ex@C=4em{
\ExctCplEObjctBB{\infty}{\Vect{x}+\ExctCplJBdg{b}}  &
	\ExctCplStblEObjctBB{\Vect{x}+\ExctCplJBdg{b}} \ar@{{ |>}->}[l]_-{\cong} \ar@{-{ >>}}[r]^-{\cong} &
	\ExctCplLimAbutFltrtnQtntBB{\Vect{x}+\ExctCplJBdg{b}+\ExctCplKBdg{c}}
}
\end{equation*}
\end{enumerate}
\end{definition}

Using the terminology defined in (\ref{def:E-InfinityRelations}) we formulate the following:

\begin{corollary}[Conditions for abutment matching]
\label{thm:AbutmentMatching-Recognition}%
In the setting of the E-infinity distribution theorem (\ref{thm:E-InfinityExtensionThm}) the following hold:
\vspace{-1.7ex}
\begin{enumerate}
\item $\ExctCplEObjctBB{\infty}{\Vect{x}+\ExctCplJBdg{b}}$ matches the colimit abutment filtration if and only if it is stable and $\ExctCplLimAbutFltrtnQtntBB{\Vect{x}+\ExctCplJBdg{b} + \ExctCplKBdg{c}}=0$.
\item $\ExctCplEObjctBB{\infty}{\Vect{x}+\ExctCplJBdg{b}}$ matches the limit abutment filtration if and only if it is stable and $\ExctCplCoLimAbutFltrtnQtntBB{\Vect{x}}=0$. \NoProof
\end{enumerate}
\end{corollary}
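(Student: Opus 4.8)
The entire statement should fall out of the single pullback diagram of short exact sequences supplied by the E-infinity extension theorem (\ref{thm:E-InfinityExtensionThm}), so the plan is to extract the two equivalences by reading off that one picture. Recall its content: the top row $\ExctCplCoLimAbutFltrtnQtntBB{\Vect{x}} \hookrightarrow \ExctCplStblEObjctBB{\Vect{x}+\ExctCplJBdg{b}} \twoheadrightarrow \ExctCplLimAbutFltrtnQtntBB{\Vect{x}+\ExctCplJBdg{b}+\ExctCplKBdg{c}}$ and the bottom row $\ExctCplCoLimAbutFltrtnQtntBB{\Vect{x}} \hookrightarrow \ExctCplEObjctBB{\infty}{\Vect{x}+\ExctCplJBdg{b}} \twoheadrightarrow \ExctCplCyclesBB{\infty}{\Vect{x}+\ExctCplJBdg{b}}/\Img{\ExctCplJMapBB{}{\Vect{x}}}$ are short exact, the left vertical is the identity, the middle vertical $\ExctCplStblEObjctBB{\Vect{x}+\ExctCplJBdg{b}} \hookrightarrow \ExctCplEObjctBB{\infty}{\Vect{x}+\ExctCplJBdg{b}}$ and the right vertical $M$ are monomorphisms, and the right-hand square is a pullback.

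First I would record the one fact that drives both parts: $\ExctCplEObjctBB{\infty}{\Vect{x}+\ExctCplJBdg{b}}$ is stable (i.e. the middle vertical is an isomorphism) if and only if $M$ is an isomorphism. This holds because the identical left columns force the middle and right verticals to have isomorphic cokernels — either directly from the pullback property of the right square, or by a short chase comparing the two rows — and a monomorphism is an isomorphism exactly when its cokernel vanishes. Thus stability is equivalent to $M$ being onto $\ExctCplCyclesBB{\infty}{\Vect{x}+\ExctCplJBdg{b}}/\Img{\ExctCplJMapBB{}{\Vect{x}}}$.

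For part (2) I would argue directly from the top row together with Definition (\ref{def:E-InfinityRelations}.iv): matching the limit abutment filtration means that both $\ExctCplStblEObjctBB{\Vect{x}+\ExctCplJBdg{b}} \to \ExctCplEObjctBB{\infty}{\Vect{x}+\ExctCplJBdg{b}}$ and $\ExctCplStblEObjctBB{\Vect{x}+\ExctCplJBdg{b}} \to \ExctCplLimAbutFltrtnQtntBB{\Vect{x}+\ExctCplJBdg{b}+\ExctCplKBdg{c}}$ are isomorphisms. The first is, by Definition (\ref{def:E-InfinityRelations}.i), precisely stability. The second is the top-row epimorphism, which is an isomorphism exactly when its kernel $\ExctCplCoLimAbutFltrtnQtntBB{\Vect{x}}$ vanishes. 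Hence matching the limit filtration is equivalent to $\ExctCplEObjctBB{\infty}{\Vect{x}+\ExctCplJBdg{b}}$ being stable and $\ExctCplCoLimAbutFltrtnQtntBB{\Vect{x}} = 0$, as claimed.

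For part (1) I would instead read the bottom row: by Definition (\ref{def:E-InfinityRelations}.iii), matching the colimit abutment filtration means the monomorphism $\ExctCplCoLimAbutFltrtnQtntBB{\Vect{x}} \to \ExctCplEObjctBB{\infty}{\Vect{x}+\ExctCplJBdg{b}}$ is an isomorphism, which by exactness of the bottom row holds iff the quotient $\ExctCplCyclesBB{\infty}{\Vect{x}+\ExctCplJBdg{b}}/\Img{\ExctCplJMapBB{}{\Vect{x}}}$ is zero. Since $M$ embeds $\ExctCplLimAbutFltrtnQtntBB{\Vect{x}+\ExctCplJBdg{b}+\ExctCplKBdg{c}}$ into that quotient, vanishing of the quotient forces $\ExctCplLimAbutFltrtnQtntBB{\Vect{x}+\ExctCplJBdg{b}+\ExctCplKBdg{c}} = 0$ and makes $M$ an isomorphism, hence $\ExctCplEObjctBB{\infty}{\Vect{x}+\ExctCplJBdg{b}}$ stable by the recorded fact; conversely, stability makes $M$ an isomorphism, so $\ExctCplLimAbutFltrtnQtntBB{\Vect{x}+\ExctCplJBdg{b}+\ExctCplKBdg{c}} = 0$ then pushes the quotient to zero. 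This yields the stated equivalence. The only step that needs care — the genuine, if modest, obstacle — is the cokernel comparison underlying the recorded fact; everything else is reading exact rows, and I expect no difficulty beyond making that pullback/cokernel bookkeeping precise.
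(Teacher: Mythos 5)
Your proof is correct and takes essentially the same route the paper intends: the corollary is stated with no proof because it is exactly the content of (WC.1)--(WC.2) in the discussion preceding it, read off from the pullback diagram of Theorem \ref{thm:E-InfinityExtensionThm} together with Definition \ref{def:E-InfinityRelations}. Your snake-lemma/cokernel comparison (stability of $\ExctCplEObjctBB{\infty}{\Vect{x}+\ExctCplJBdg{b}}$ is equivalent to $M$ being an isomorphism) is precisely the bookkeeping the paper leaves implicit.
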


The E-infinity extension theorem prompts the question: under which condition(s) are the E-infinity objects of an exact couple stable? In raw form, the answer is given by combining the technical lemmas (\ref{thm:Eps-SubModule-D}) and (\ref{thm:Z_(x+b)-SubModule-D}). More refined conditions are given in (\ref{thm:StableE-InfinityExtension-Sufficient}) in Section \ref{sec:ConvergenceII}.

We adopt notation from (\ref{def:ImageQuotientSubDiagrams}):
\begin{equation*}
\begin{array}{rcl}
I^{\omega}_{\Vect{x} +\ExctCplJBdg{b} + \ExctCplKBdg{c}} & \DefEq & \FamIntrsctn{r\geq 1}{\Img{ \ExctCplIMapItrtdBB{r}{\Vect{x} +\ExctCplJBdg{b} + \ExctCplKBdg{c}- r\ExctCplIBdg{a} } \from \ExctCplDObjctBB{}{\Vect{x}  +\ExctCplJBdg{b} + \ExctCplKBdg{c} -r\ExctCplIBdg{a}}\longrightarrow  \ExctCplDObjctBB{}{\Vect{x}  +\ExctCplJBdg{b} + \ExctCplKBdg{c} } }  } \\
\bar{I}_{\Vect{x} +\ExctCplJBdg{b} + \ExctCplKBdg{c}} & \overset{\text{(\ref{thm:StableImageSubDiagram-Properties})} }{\cong} & \Img{ \ExctCplLimAbut{n+\sigma}\to \ExctCplDObjctBB{}{\Vect{x} +\ExctCplJBdg{b} + \ExctCplKBdg{c}} }
\end{array}
\end{equation*}

\begin{lemma}[$\ExctCplLimAbutFltrtnQtntBB{\Vect{x}+\ExctCplJBdg{b}+\ExctCplKBdg{c}}$ as a submodule of $\ExctCplDObjctBB{}{\Vect{x}+\ExctCplJBdg{b}+\ExctCplKBdg{c}}$]
\label{thm:Eps-SubModule-D}%
In the setting of the E-infinity extension theorem (\ref{thm:E-InfinityExtensionThm}), there is a functorial isomorphism
\begin{equation*}
\ExctCplLimAbutFltrtnQtntBB{\Vect{x}+\ExctCplJBdg{b}+\ExctCplKBdg{c} } \cong \SetIntrsctn{\bar{I}_{\Vect{x} +\ExctCplJBdg{b} + \ExctCplKBdg{c}}}{\Ker{\ExctCplIMapBB{}{\Vect{x}+\ExctCplJBdg{b}+\ExctCplKBdg{c}}}}.
\end{equation*}
\end{lemma}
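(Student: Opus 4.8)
The plan is to read this identification straight off the snake-lemma computation already carried out inside the proof of the stable E-extension theorem (\ref{thm:Stable-E-Extension}), and then to translate its output into the stable-image notation $\bar{I}$ fixed immediately above the statement. In other words, I would isolate the isomorphism $\ExctCplLimAbutFltrtnQtntBB{\Vect{x}+\ExctCplJBdg{b}+\ExctCplKBdg{c}}\cong \ZDiagImg{\alpha}{\Vect{x}+\ExctCplJBdg{b}+\ExctCplKBdg{c}}{}\intrsctn \Ker{\ExctCplIMapBB{}{\Vect{x}+\ExctCplJBdg{b}+\ExctCplKBdg{c}}}$ produced there and rephrase its right-hand side via $\ZDiagImg{\alpha}{\Vect{x}+\ExctCplJBdg{b}+\ExctCplKBdg{c}}{}=\bar{I}_{\Vect{x}+\ExctCplJBdg{b}+\ExctCplKBdg{c}}$.

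First I would fix an infinite ordinal $\alpha$ large enough that the image subobjects of $\ExctCplDPage{}$ have stabilized simultaneously in the two adjacent positions $\Vect{x}+\ExctCplJBdg{b}+\ExctCplKBdg{c}$ and $\Vect{x}+\ExctCplIBdg{a}+\ExctCplJBdg{b}+\ExctCplKBdg{c}$; such an $\alpha$ exists by the cardinality argument of (\ref{sec:ImageFactorization-Z-Diagrams}) underlying (\ref{thm:E^tau,tau>omega}). For this $\alpha$ the first isomorphism theorem applied to the structure map $\rho^{\Vect{y}}\from \ExctCplLimAbut{n+\sigma}\to \ExctCplDObjctBB{}{\Vect{y}}$ presents, for $\Vect{y}$ equal to either $\Vect{x}+\ExctCplJBdg{b}+\ExctCplKBdg{c}$ or $\Vect{x}+\ExctCplIBdg{a}+\ExctCplJBdg{b}+\ExctCplKBdg{c}$, a short exact sequence with kernel $\ExctCplLimAbutFltrtnBB{\Vect{y}}=\Ker{\rho^{\Vect{y}}}$ and quotient $\ZDiagImg{\alpha}{\Vect{y}}{}$; the latter because at a stabilizing ordinal $\Img{\rho^{\Vect{y}}}=\ZDiagImg{\alpha}{\Vect{y}}{}=\bar{I}_{\Vect{y}}$, see (\ref{thm:StableImageSubDiagram-Properties}). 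The filtration inclusion, the identity on $\ExctCplLimAbut{n+\sigma}$, and the map on stable images induced by $\ExctCplIMapBB{}{\Vect{x}+\ExctCplJBdg{b}+\ExctCplKBdg{c}}$ assemble into precisely the morphism of short exact sequences displayed in (\ref{thm:Stable-E-Extension}).

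Next I would feed that morphism into the snake lemma. Its left vertical map is a monomorphism and the middle one is the identity, so the six-term snake sequence collapses to an isomorphism between the relevant kernel and cokernel, namely
\begin{equation*}
\ExctCplLimAbutFltrtnQtntBB{\Vect{x}+\ExctCplJBdg{b}+\ExctCplKBdg{c}} = \CoKer{\ExctCplLimAbutFltrtnBB{\Vect{x}+\ExctCplJBdg{b}+\ExctCplKBdg{c}}\to \ExctCplLimAbutFltrtnBB{\Vect{x}+\ExctCplIBdg{a}+\ExctCplJBdg{b}+\ExctCplKBdg{c}}} \cong \Ker{\ZDiagImg{\alpha}{\Vect{x}+\ExctCplJBdg{b}+\ExctCplKBdg{c}}{}\to \ZDiagImg{\alpha}{\Vect{x}+\ExctCplIBdg{a}+\ExctCplJBdg{b}+\ExctCplKBdg{c}}{}}.
\end{equation*}
Since the map on the right is the restriction of $\ExctCplIMapBB{}{\Vect{x}+\ExctCplJBdg{b}+\ExctCplKBdg{c}}$ to the stable image, its kernel is $\SetIntrsctn{\ZDiagImg{\alpha}{\Vect{x}+\ExctCplJBdg{b}+\ExctCplKBdg{c}}{}}{\Ker{\ExctCplIMapBB{}{\Vect{x}+\ExctCplJBdg{b}+\ExctCplKBdg{c}}}}$, and substituting $\bar{I}_{\Vect{x}+\ExctCplJBdg{b}+\ExctCplKBdg{c}}$ for $\ZDiagImg{\alpha}{\Vect{x}+\ExctCplJBdg{b}+\ExctCplKBdg{c}}{}$ yields the asserted formula. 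Functoriality is then inherited from the naturality of the snake connecting isomorphism together with the functoriality of the stable image construction and of the limit abutment filtration already recorded in (\ref{thm:Stable-E-Extension}).

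I do not expect a genuine obstacle here; the statement is essentially a bookkeeping transcription of the computation in (\ref{thm:Stable-E-Extension}). The only point deserving a word of care is the simultaneous choice of $\alpha$ stabilizing the image subobjects at both adjacent positions, so that each row of the comparison diagram splits off the stable image cleanly and the identification $\ZDiagImg{\alpha}{}{}=\bar{I}$ is legitimate in both slots.
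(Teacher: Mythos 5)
Your proposal is correct and is essentially the paper's own argument: the paper proves this lemma by applying the snake lemma to exactly the morphism of short exact sequences you describe (filtration stage $\rightarrowtail$ $\ExctCplLimAbut{n+\sigma}$ $\twoheadrightarrow$ stable image, in the two adjacent positions), citing (\ref{thm:StableImageSubDiagram-Properties}) for the identification $\Img{\rho^{\Vect{y}}}=\bar{I}_{\Vect{y}}$, and then reading off the cokernel--kernel isomorphism. The only difference is cosmetic: you route through the stabilizing ordinal $\alpha$ and then substitute $\ZDiagImg{\alpha}{}{}=\bar{I}$, whereas the paper works with $\bar{I}$ directly, which also disposes of your concern about choosing $\alpha$ simultaneously for both positions (they lie in the single $\ZCat$-diagram $D(n+\sigma)$, whose stable image subdiagram stabilizes all positions at once).
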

\begin{proof}
Via (\ref{thm:StableImageSubDiagram-Properties}), we have this morphism of short exact sequences:
\begin{equation*}
\xymatrix@R=5ex@C=4em{
\ExctCplLimAbutFltrtnBB{\Vect{x}+\ExctCplJBdg{b}+\ExctCplKBdg{c}} \ar@{{ |>}->}[r] \ar@{{ |>}->}[d] &
	\ExctCplLimAbut{n+\sigma} \ar@{=}[d] \ar@{-{ >>}}[r] &
	\bar{I}_{\Vect{x}+\ExctCplJBdg{b}+\ExctCplKBdg{c}} \ar@{-{ >>}}[d] \\
\ExctCplLimAbutFltrtnBB{\Vect{x}+\ExctCplJBdg{b}+\ExctCplKBdg{c} + \ExctCplIBdg{a}} \ar@{{ |>}->}[r] &
	\ExctCplLimAbut{n+\sigma} \ar@{-{ >>}}[r] &
	\bar{I}_{\Vect{x}+\ExctCplJBdg{b}+\ExctCplKBdg{c} + \ExctCplIBdg{a}}
}
\end{equation*}
Then $\ExctCplLimAbutFltrtnQtntBB{\Vect{x}+\ExctCplJBdg{b}+\ExctCplKBdg{c}}$ is the cokernel of the vertical map on the left. By the snake lemma, it is functorially isomorphic to the kernel of the vertical map on the right, and so the claim follows.
\end{proof}

\begin{lemma}[$\tfrac{\ExctCplCyclesBB{\infty}{\Vect{x}+\ExctCplJBdg{b}} }{ \Ker{\ExctCplKMapBB{}{\Vect{x}+\ExctCplJBdg{b}}} }$ as a submodule of $\ExctCplDObjctBB{}{\Vect{x}+\ExctCplJBdg{b}+\ExctCplKBdg{c}}$]
\label{thm:Z_(x+b)-SubModule-D}%
In the setting of the E-infinity extension theorem (\ref{thm:E-InfinityExtensionThm}), there is a functorial isomorphism
\begin{equation*}
\dfrac{\ExctCplCyclesBB{\infty}{\Vect{x}+\ExctCplJBdg{b}} }{ \Ker{\ExctCplKMapBB{}{\Vect{x}+\ExctCplJBdg{b}}} } \cong \SetIntrsctn{I^{\omega}_{\Vect{x}+\ExctCplJBdg{b}+\ExctCplKBdg{c}}}{\Ker{\ExctCplIMapBB{}{\Vect{x}+\ExctCplJBdg{b}+\ExctCplKBdg{c}}}}.
\end{equation*}
\end{lemma}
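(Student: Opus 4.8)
The plan is to realize the claimed isomorphism by the single structure map $\ExctCplKMapBB{}{\Vect{x}+\ExctCplJBdg{b}}$ itself, paralleling the finite-order identification in (\ref{thm:I-Iterations-AdjacentKernels}) but reading off the $\infty$-case directly from the definition of the object of cycles rather than passing through a limit argument. Throughout write $k\DefEq \ExctCplKMapBB{}{\Vect{x}+\ExctCplJBdg{b}}$ and $i\DefEq \ExctCplIMapBB{}{\Vect{x}+\ExctCplJBdg{b}+\ExctCplKBdg{c}}$.

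First I would record the governing description of the $\infty$-object of cycles. By definition (\ref{def:ExactCoupleItems}) one has $\ExctCplCyclesBB{\tau}{\Vect{x}+\ExctCplJBdg{b}} = k^{-1}\bigl(\ZDiagImg{\tau}{\Vect{x}+\ExctCplJBdg{b}+\ExctCplKBdg{c}}{}\bigr)$ for every order $\tau$. Specializing to $\tau=\omega$ and using the identity $\ExctCplCyclesBB{\omega}{}=\ExctCplCyclesBB{\infty}{}$ already invoked in the proof of (\ref{thm:E-InfinityExtensionThm}), together with $\ZDiagImg{\omega}{\Vect{x}+\ExctCplJBdg{b}+\ExctCplKBdg{c}}{}=I^\omega_{\Vect{x}+\ExctCplJBdg{b}+\ExctCplKBdg{c}}$, gives
\begin{equation*}
\ExctCplCyclesBB{\infty}{\Vect{x}+\ExctCplJBdg{b}} = k^{-1}\bigl(I^\omega_{\Vect{x}+\ExctCplJBdg{b}+\ExctCplKBdg{c}}\bigr).
\end{equation*}
Equivalently, this can be seen from the fact that preimages commute with arbitrary intersections, since $\ExctCplCyclesBB{\infty}{}=\FamIntrsctn{r}{\ExctCplCyclesBB{r}{}}$ and $I^\omega_{\Vect{x}+\ExctCplJBdg{b}+\ExctCplKBdg{c}}=\FamIntrsctn{r\geq 1}{\Img{\ExctCplIMapItrtdBB{r}{\Vect{x}+\ExctCplJBdg{b}+\ExctCplKBdg{c}-r\ExctCplIBdg{a}}}}$.

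Next I would exploit exactness of the couple at $\ExctCplDObjctBB{}{\Vect{x}+\ExctCplJBdg{b}+\ExctCplKBdg{c}}$, namely $\Img{k}=\Ker{i}$, together with the elementary identity $f(f^{-1}(S))=S\intrsctn\Img{f}$ valid for any module map $f$ and any submodule $S$ of its codomain. Applied to $f=k$ and $S=I^\omega_{\Vect{x}+\ExctCplJBdg{b}+\ExctCplKBdg{c}}$, this yields
\begin{equation*}
k\bigl(\ExctCplCyclesBB{\infty}{\Vect{x}+\ExctCplJBdg{b}}\bigr) = I^\omega_{\Vect{x}+\ExctCplJBdg{b}+\ExctCplKBdg{c}}\intrsctn \Img{k} = \SetIntrsctn{I^\omega_{\Vect{x}+\ExctCplJBdg{b}+\ExctCplKBdg{c}}}{\Ker{i}}.
\end{equation*}
Since $\Ker{k}=\Img{\ExctCplJMapBB{}{\Vect{x}}}\subseteq \ExctCplCyclesBB{\infty}{\Vect{x}+\ExctCplJBdg{b}}$ by (\ref{def:ExactCoupleItems}), the restriction of $k$ induces a well-defined map on $\ExctCplCyclesBB{\infty}{\Vect{x}+\ExctCplJBdg{b}}/\Ker{k}$ whose kernel is $\Ker{k}/\Ker{k}=0$; the first isomorphism theorem then identifies this quotient with the image computed above, which is precisely the asserted isomorphism.

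Finally, functoriality is immediate, since the isomorphism is induced by the structure map $k$ and all three constructions $\ExctCplCyclesBB{\infty}{}$, $I^\omega$, and $\Ker{i}$ are natural under morphisms of regular exact couples (such a morphism commutes with $i,j,k$, hence with the relevant preimages, images, and kernels). I do not anticipate a genuine obstacle here: the only points requiring care are the bookkeeping identity $\ExctCplCyclesBB{\infty}{}=k^{-1}(I^\omega)$ — that is, reconciling the limit-page meaning of $\ExctCplCyclesBB{\infty}{}$ with the exact-couple preimage description, which rests on preimages commuting with intersections — and verifying $\Ker{k}\subseteq \ExctCplCyclesBB{\infty}{}$, so that the quotient is legitimate and the induced map injective.
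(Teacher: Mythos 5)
Your proof is correct, and it lands on the same identification as the paper: the isomorphism is induced by $\ExctCplKMapBB{}{\Vect{x}+\ExctCplJBdg{b}}$, using $\ExctCplCyclesBB{r}{\Vect{x}+\ExctCplJBdg{b}} = k^{-1}\Img{\ExctCplIMapItrtdBB{r}{\Vect{x}+\ExctCplJBdg{b}+\ExctCplKBdg{c}-r\ExctCplIBdg{a}}}$, the identity $\Img{k}=\Ker{i}$, and the fact that $\Ker{k}$ sits inside every cycle object. The route differs in one structural respect. The paper stays inside its $\Lim$/$\LimOne$ framework: it forms the short exact sequence of $\OrdOmegaOp$-diagrams $\Ker{k} \to \ExctCplCyclesBB{r}{\Vect{x}+\ExctCplJBdg{b}} \to \SetIntrsctn{\Img{\ExctCplIMapItrtdBB{r}{}}}{\Img{k}}$, notes the kernel diagram is constant so that $\Lim$ applied to the sequence is exact, and then commutes the limit with the intersection to land on $\SetIntrsctn{I^{\omega}_{\Vect{x}+\ExctCplJBdg{b}+\ExctCplKBdg{c}}}{\Img{k}}$. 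You bypass limits entirely: since preimages commute with arbitrary intersections, $\ExctCplCyclesBB{\infty}{\Vect{x}+\ExctCplJBdg{b}} = k^{-1}\bigl(I^{\omega}_{\Vect{x}+\ExctCplJBdg{b}+\ExctCplKBdg{c}}\bigr)$ holds outright, and then $k(k^{-1}(S))=\SetIntrsctn{S}{\Img{k}}$ plus the first isomorphism theorem finishes the argument. Your version is shorter and more elementary, needing no exactness property of $\Lim$; the paper's version has the advantage of uniformity, since the same diagram-of-short-exact-sequences technique is what drives the surrounding results (notably the $6$-term sequence in the E-infinity extension theorem), so the two computations are visibly instances of one mechanism. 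Both give functoriality for the same reason you state: every object in sight is built from $i,j,k$ by kernels, images, preimages, and intersections.
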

\begin{proof}
By (\ref{def:ExactCoupleItems}) and (\ref{thm:SpecSequFromExactCouple}), we have $\ExctCplCyclesBB{r}{\Vect{x}+\ExctCplJBdg{b}} = (\ExctCplKMapBB{}{\Vect{x}+\ExctCplJBdg{b}})^{-1}\Img{\ExctCplIMapItrtdBB{r}{\Vect{x}+\ExctCplJBdg{b}+\ExctCplKBdg{c}-r\ExctCplIBdg{a}}}$ for every $r\geq 1$. Accordingly, we have the short exact sequence of $\OrdOmegaOp$-diagrams
\begin{equation*}
\xymatrix@R=5ex@C=4em{
\{\Ker{\ExctCplKMapBB{}{\Vect{x}+\ExctCplJBdg{b}}}\} \ar@{{ |>}->}[r] &
	\{ \ExctCplCyclesBB{r}{\Vect{x}+\ExctCplJBdg{b}} \} \ar@{-{ >>}}[r] &
	\left\{\SetIntrsctn{\Img{\ExctCplIMapItrtdBB{r}{\Vect{x}+\ExctCplJBdg{b}+\ExctCplKBdg{c}-r\ExctCplIBdg{a}}}}{\Img{\ExctCplKMapBB{}{\Vect{x}+\ExctCplJBdg{b}}}} \right\}
}
\end{equation*}
The diagram on the left is constant. So, taking limits yields the short exact sequence
\begin{equation*}
\xymatrix@R=5ex@C=4em{
\Ker{\ExctCplKMapBB{}{\Vect{x}+\ExctCplJBdg{b}}} \ar@{{ |>}->}[r] &
	\ExctCplCyclesBB{\infty}{\Vect{x}+\ExctCplJBdg{b}} \ar@{-{ >>}}[r] &
	\LimOf{\SetIntrsctn{\Img{\ExctCplIMapItrtdBB{r}{\Vect{x}+\ExctCplJBdg{b}+\ExctCplKBdg{c}-r\ExctCplIBdg{a}}}}{\Img{\ExctCplKMapBB{}{\Vect{x}+\ExctCplJBdg{b}}}} }
}
\end{equation*}
As limits commute, we have
\begin{equation*}
\LimOf{\SetIntrsctn{\Img{\ExctCplIMapItrtdBB{r}{\Vect{x}+\ExctCplJBdg{b}+\ExctCplKBdg{c}-r\ExctCplIBdg{a}}}}{\Img{\ExctCplKMapBB{}{\Vect{x}+\ExctCplJBdg{b}}}} } = \SetIntrsctn{ \left( \LimOf{\Img{\ExctCplIMapItrtdBB{r}{\Vect{x}+\ExctCplJBdg{b}+\ExctCplKBdg{c}-r\ExctCplIBdg{a}}} }\right) }{ \Img{\ExctCplKMapBB{}{\Vect{x}+\ExctCplJBdg{b}}}} = \SetIntrsctn{I^{\omega}_{\Vect{x}+\ExctCplJBdg{b}+\ExctCplKBdg{c}}}{ \Img{\ExctCplKMapBB{}{\Vect{x}+\ExctCplJBdg{b}}}}
\end{equation*}
As $\Img{\ExctCplKMapBB{}{\Vect{x}+\ExctCplJBdg{b}}} = \Ker{\ExctCplIMapBB{}{\Vect{x}+\ExctCplJBdg{b}+\ExctCplKBdg{c}}}$, the proof is complete.
\end{proof}

We close this section by stating the relationship between the two types of exact couple derivation (\ref{thm:DerivedExactCouples}), the  limit/colimit abutment objects and their filtrations.

\begin{lemma}[Derived exact couple and universal abutments - I]
\label{thm:ExactCoupleDerivation-UniversalAbutments-I}%
$Q$-derivation of an exact couple $\ExctCpl{C}=(E,D,i,j,k)$ relates universal abutment objects functorially: %
\index{derived exact couple!universal abutments - I}%
\begin{equation*}
\xymatrix@R=6ex@C=3em{
\ExctCplCoLimAbutFltrtnBB{\Vect{x}}(\ExctCpl{C}) \ar[r]^-{\cong} \ar[d] &
	\ExctCplCoLimAbutFltrtnBB{\Vect{x}}({Q\ExctCpl{C}}) \ar[d] &
	\ExctCplLimAbutFltrtnBB{\Vect{x}}(\ExctCpl{C}) \ar[r]^-{\cong} \ar[d] &
	\ExctCplLimAbutFltrtnBB{\Vect{x} - \ExctCplIBdg{a}}(Q\ExctCpl{C}) \ar[d] \\
\ExctCplCoLimAbut{n}(\ExctCpl{C}) \ar[r]_-{\psi_n}^-{\cong} &
	\ExctCplCoLimAbut{n}(Q\ExctCpl{C})  &
	\ExctCplLimAbut{n}(\ExctCpl{C}) \ar[r]_-{\psi^n}^-{\cong} &
	\ExctCplLimAbut{n}(Q\ExctCpl{C})
}
\end{equation*}
The above squares commute, and the horizontal maps  are isomorphisms. \NoProof
\end{lemma}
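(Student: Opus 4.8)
The plan is to exploit that both universal abutments of $\ExctCpl{C}$ are $(\mathrm{co})$limits over the $\ZCat$-diagrams $D(n)$ whose bonding map is $\ExctCplIMap{}$, and that $Q$-derivation merely replaces each $\ExctCplDObjctBB{}{\Vect{x}}$ by $\Img{\ExctCplIMapBB{}{\Vect{x}}}\subseteq \ExctCplDObjctBB{}{\Vect{x}+\ExctCplIBdg{a}}$. First I would factor each bonding map through its image, $\ExctCplDObjctBB{}{\Vect{x}} \XRA{p_{\Vect{x}}} \Img{\ExctCplIMapBB{}{\Vect{x}}} \XRA{\iota_{\Vect{x}}} \ExctCplDObjctBB{}{\Vect{x}+\ExctCplIBdg{a}}$, with $p_{\Vect{x}}$ epic and $\iota_{\Vect{x}}$ monic. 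This yields a morphism of $\ZCat$-diagrams $p\from D(n)\to QD(n)$ and a morphism $\iota\from QD(n)\to D(n)$, the latter shifting the index by $\ExctCplIBdg{a}$; their two composites equal, up to this shift, the bonding map $\ExctCplIMap{}$ itself.

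On the colimit side I would apply $\CoLimOfOver{-}{\ZCat}$ to $p$ and $\iota$. Since a $\ZCat$-indexed colimit is invariant under shifting the index, the bonding map $\ExctCplIMap{}$ induces the identity on $\ExctCplCoLimAbut{n}$; hence $\CoLim p$ and $\CoLim\iota$ are mutually inverse isomorphisms, and I set $\psi_n\DefEq \CoLim p$. Dually, $\Lim$ is shift-invariant, and every coherent sequence $(d_r)$ in $D(n)$ automatically satisfies $d_{r+1}=\ExctCplIMap{}(d_r)\in \Img{\ExctCplIMapBB{}{\Vect{x}(n)+r\ExctCplIBdg{a}}}$, so the inclusion of the image-subsystem induces an isomorphism on limits. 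Reading off components gives $\psi^n\from \ExctCplLimAbut{n}(\ExctCpl{C})\XRA{\cong} \ExctCplLimAbut{n}(Q\ExctCpl{C})$, $(d_r)\mapsto (d_{r+1})$.

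Next I would match the filtrations. On the colimit side, $\ExctCplCoLimAbutFltrtnBB{\Vect{x}}(\ExctCpl{C})$ is the image of the cocone leg $\ExctCplCoLimAbutMapBB{\Vect{x}}$ and, by (\ref{thm:F_pA-CoLimit}), equals $\CoLimOfOver{\Img{\ExctCplIMapItrtdBB{r}{\Vect{x}}}}{r}$; the corresponding stage for $Q\ExctCpl{C}$ is $\CoLimOfOver{\Img{(Q\ExctCplIMap{})^{r}}}{r}$, and since $\Img{(Q\ExctCplIMap{})^{r}}\cong \Img{\ExctCplIMapItrtdBB{r+1}{\Vect{x}}}$ the two systems differ only by a one-step shift, hence have canonically isomorphic colimits compatible with $\psi_n$. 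On the limit side, the shift $\Vect{x}\mapsto \Vect{x}-\ExctCplIBdg{a}$ in the statement is forced by $\psi^n\colon (d_r)\mapsto (d_{r+1})$: a sequence lies in $\ExctCplLimAbutFltrtnBB{\Vect{x}}(\ExctCpl{C})=\Ker{\ExctCplLimAbutMapBB{\Vect{x}}}$ exactly when its $\Vect{x}$-component vanishes, which after the shift is the vanishing of the $(\Vect{x}-\ExctCplIBdg{a})$-component of the image sequence, i.e. membership in $\ExctCplLimAbutFltrtnBB{\Vect{x}-\ExctCplIBdg{a}}(Q\ExctCpl{C})$. Commutativity of both squares is then immediate from the defining formulas for $\psi_n,\psi^n$ together with the cocone/cone legs, and functoriality in $\ExctCpl{C}$ is automatic since every object in sight is built from $i,j,k$ and the $(\mathrm{co})$limit functors.

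I expect the main obstacle to be the careful bookkeeping of the $\ExctCplIBdg{a}$-shifts, in particular isolating precisely where the asymmetry between the colimit stage (no shift) and the limit stage (shift by $-\ExctCplIBdg{a}$) originates, together with a clean justification that $\ExctCplIMap{}$ acts as the identity on the $\ZCat$-$(\mathrm{co})$limit. Once the image factorization $p,\iota$ is in place, the remainder is a routine diagram chase.
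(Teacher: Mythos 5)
The paper states this lemma with the no-proof marker ($\lozenge$), so there is no argument in the paper to compare yours against; judged on its own terms, your proof is correct. It is also the argument the paper implicitly intends: your maps $p$ and $\iota$ are exactly the comparison maps $A\to QA$ and $IA\to A$ of the appendix, and Proposition \ref{thm:ZDiagramStructureMapGivesIsoCoLim-Lim-LimOne} already records that each induces isomorphisms on $\CoLim$, $\Lim$ (and $\LimOne$), so citing it would let you skip re-deriving shift-invariance and the fact that the bonding map induces the identity on the $(\mathrm{co})$limit. Your bookkeeping of the shifts is right where it matters: $\psi_n$ is induced by the bidegree-preserving epimorphism $p$, so the image filtration stages correspond with no index shift (either via naturality of the cocone legs and surjectivity of $p_{\Vect{x}}$, or via (\ref{thm:F_pA-CoLimit}) as you do), whereas $\psi^n$ sends a coherent sequence $(d_r)$ to $(d_{r+1})$, so vanishing of the component at $\Vect{x}$ translates into vanishing of the image sequence's component at $\Vect{x}-\ExctCplIBdg{a}$ --- which is precisely the asymmetry built into the statement. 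The functoriality claim is likewise unproblematic, since a morphism of exact couples commutes with $i$ and hence with the image factorizations $p,\iota$.
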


\begin{lemma}[Derived exact couple and universal abutments - II]
\label{thm:ExactCoupleDerivation-UniversalAbutments-II}%
$I$-derivation of an exact couple $\ExctCpl{C}=(E,D,i,j,k)$ relates universal abutment objects functorially: %
\index{derived exact couple!universal abutments - II}%
\begin{equation*}
\xymatrix@R=6ex@C=3em{
\ExctCplCoLimAbutFltrtnBB{\Vect{x} - \ExctCplIBdg{a}}(\ExctCpl{C}) \ar[r]^-{\cong} \ar[d] &
	\ExctCplCoLimAbutFltrtnBB{\Vect{x}}({I\ExctCpl{C}}) \ar[d] &
	\ExctCplLimAbutFltrtnBB{\Vect{x}}(\ExctCpl{C}) \ar[r]^-{\cong} \ar[d] &
	\ExctCplLimAbutFltrtnBB{\Vect{x}}(I\ExctCpl{C}) \ar[d] \\
\ExctCplCoLimAbut{n}(\ExctCpl{C}) \ar[r]_-{\phi_n}^-{\cong} &
	\ExctCplCoLimAbut{n}(I\ExctCpl{C})  &
	\ExctCplLimAbut{n}(\ExctCpl{C}) \ar[r]_-{\phi^n}^-{\cong} &
	\ExctCplLimAbut{n}(I\ExctCpl{C})
}
\end{equation*}
The above squares commute, and the horizontal maps  are isomorphisms. \NoProof
\end{lemma}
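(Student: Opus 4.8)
The plan is to recognize the $\ZCat$-diagram of $I\ExctCpl{C}$ attached to $n$ as the order-one image subdiagram of the corresponding diagram of $\ExctCpl{C}$, and then to exploit that both the colimit and the limit of a $\ZCat$-diagram are insensitive to passing to its image subdiagram. By (\ref{thm:DerivedExactCouples}) the derived couple $I\ExctCpl{C}$ has $D$-objects $I\ExctCplDObjctBB{}{\Vect{x}+\ExctCplIBdg{a}} = \Img{\ExctCplIMapBB{}{\Vect{x}}}$, its map $I\ExctCplIMap{}$ is the restriction of $\ExctCplIMap{}$, and the determinant $\sigma$ is preserved. Hence, for each $n\in\ZNr$, the diagram of $I\ExctCpl{C}$ in degree $n$ is precisely the image subdiagram $\ZDiagImgItrtd{D(n)}{1}$ of the diagram $D(n)$ of $\ExctCpl{C}$, with transition maps the restricted $i$'s. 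The indexing lemma (\ref{thm:BigradingPropertiesOfExactCouple}) guarantees that the two diagrams occupy the same $\ExctCplIBdg{a}$-line of positions, so the comparison is meaningful; the sole effect of $I$-derivation is an $\ExctCplIBdg{a}$-shift in the labelling, which accounts for the asymmetry between the left and right squares.

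Next I would produce the two abutment isomorphisms from the properties of image subdiagrams collected in Section (\ref{sec:ImageFactorization-Z-Diagrams}). The inclusion $\ZDiagImgItrtd{D(n)}{1}\hookrightarrow D(n)$ induces an isomorphism on colimits, by the cofinality observation that every element of $\ExctCplDObjctBB{}{\Vect{x}+r\ExctCplIBdg{a}}$ maps one step later into the image object; this yields $\phi_n\from \ExctCplCoLimAbut{n}(\ExctCpl{C}) \xrightarrow{\cong} \ExctCplCoLimAbut{n}(I\ExctCpl{C})$. For the limit the point is even more elementary: in a compatible family $(\xi_r)$ representing a member of $\LimOfOver{\ExctCplDObjctBB{}{\Vect{x}+r\ExctCplIBdg{a}}}{r}$ one has $\xi_{r+1}=\ExctCplIMap{}(\xi_r)$, so each entry automatically lies in the image subdiagram; thus the monic comparison $\Lim \ZDiagImgItrtd{D(n)}{1}\to\Lim D(n)$ is onto, yielding $\phi^n\from \ExctCplLimAbut{n}(\ExctCpl{C}) \xrightarrow{\cong} \ExctCplLimAbut{n}(I\ExctCpl{C})$.

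Then I would check that these isomorphisms carry the filtrations as claimed, invoking (\ref{thm:F_pA-CoLimit}) and (\ref{thm:Kernel/ImageSequencesOfZ-Diagram}) to express the filtration stages as the relevant colimits of images and limits of kernels. For the colimit filtration, $\ExctCplCoLimAbutFltrtnBB{\Vect{x}}(I\ExctCpl{C})$ is the image of the cocone map out of $I\ExctCplDObjctBB{}{\Vect{x}}=\Img{\ExctCplIMapBB{}{\Vect{x}-\ExctCplIBdg{a}}}$; the cocone relation $\ExctCplCoLimAbutMapBB{\Vect{x}}\Comp\ExctCplIMapBB{}{\Vect{x}-\ExctCplIBdg{a}}=\ExctCplCoLimAbutMapBB{\Vect{x}-\ExctCplIBdg{a}}$ then identifies it, under $\phi_n$, with $\ExctCplCoLimAbutFltrtnBB{\Vect{x}-\ExctCplIBdg{a}}(\ExctCpl{C})$ — exactly the left square. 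For the limit filtration, $\ExctCplLimAbutFltrtnBB{\Vect{x}}(I\ExctCpl{C})$ is the kernel of the cone map into $I\ExctCplDObjctBB{}{\Vect{x}}\subseteq \ExctCplDObjctBB{}{\Vect{x}}$; since that cone map is $\ExctCplLimAbutMapBB{\Vect{x}}$ followed by a monic inclusion, its kernel is unchanged, giving the right square with no shift.

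Finally, naturality of all four maps follows because $I$-derivation, the image-subdiagram construction, and $\CoLim/\Lim$ are each functorial, and a morphism of exact couples restricts to a morphism of image subdiagrams compatible with the cocone and cone data. The only genuine subtlety — the main obstacle — is the limit half: one must be sure that passing to the image subdiagram introduces no spurious $\LimOne$-correction. The compatible-family argument settles this precisely because the comparison map is literally an equality of submodules inside the ambient product, so no derived-functor term can intervene. For the remaining bookkeeping I would mirror the already-stated $Q$-derivation lemma (\ref{thm:ExactCoupleDerivation-UniversalAbutments-I}), from which this one differs only by the $\ExctCplIBdg{a}$-shift in indexing.
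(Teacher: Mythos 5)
Your proof is correct, and it follows exactly the route the paper intends: the lemma is stated with no proof (\NoProof), but your key identification of the $D$-diagrams of $I\ExctCpl{C}$ as the image subdiagrams $\ZDiagImg{}{}{D(n)}$, followed by the $\CoLim$/$\Lim$-invariance of Proposition~\ref{thm:ZDiagramStructureMapGivesIsoCoLim-Lim-LimOne} and the cocone/cone bookkeeping for the filtration stages, is precisely the machinery of Section~\ref{sec:ImageFactorization-Z-Diagrams} that Remark~\ref{rem:ExactCoupleDerivation-Comment} points to for this purpose. The only blemish is a wording slip in the limit half: the factorization runs the other way (the cone map $\ExctCplLimAbutMapBB{\Vect{x}}$ of $\ExctCpl{C}$ equals the cone map of $I\ExctCpl{C}$ followed by the monic inclusion $I\ExctCplDObjctBB{}{\Vect{x}}\subseteq \ExctCplDObjctBB{}{\Vect{x}}$), which is what makes the kernels literally coincide, as your compatible-family argument already shows.
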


Thus either variant of exact couple derivation (\ref{thm:DerivedExactCouples}) leaves the universal abutment objects unchanged. However, it is necessary to keep track of index adjustments as in  (\ref{thm:ExactCoupleDerivation-UniversalAbutments-I}), respectively (\ref{thm:ExactCoupleDerivation-UniversalAbutments-II}).

Let us close this section by addressing the situation where an abutment for a spectral sequence exists which coincides with neither of the two universal abutments of the underlying exact couple. Expanding the flattened exact couple diagram (\ref{fig:ExactCouple-Flattened}) we then find this kind of a diagram:
\begin{equation*}
\label{fig:ExactCouple-Abutted}%
\begin{xy}
\xymatrix@R=3em@C=2em{
& {\color{deepmagenta} \ExctCplLimAbut{n}} \ar@[deepmagenta]@{.>}[d]_-{\color{deepmagenta} \ExctCplLimAbutMapBB{\Vect{x} - \Vect{a}} } &
	{\color{blue} \Lambda^{n+\sigma}} \ar@[blue][r]^-{\color{blue} u^{n+\sigma}} \ar@[blue]@{.>}[rd]_(.35){\color{blue} \lambda^{n+\sigma}} &
  {\color{deepmagenta} \ExctCplLimAbut{n+\sigma}} \ar@[deepmagenta]@{.>}[d]^-{\color{deepmagenta} \ExctCplLimAbutMapBB{\Vect{x} +\Vect{b}+\Vect{c}-\Vect{a}}} &
     & \\
\ExctCplEObjctBB{}{\Vect{x} - \Vect{c} - \Vect{a} } \ar[r] &
     \ExctCplDObjctBB{}{\Vect{x} -\Vect{a}} \ar[d]_{ i_{\Vect{x} - \Vect{a}}} \ar[r]^{j} &
     \ExctCplEObjctBB{}{\Vect{x} + \Vect{b} - \Vect{a} } \ar[r]^{k} &
     \ExctCplDObjctBB{}{\Vect{x} + \Vect{b}+\Vect{c} - \Vect{a} } \ar[r] \ar[d]^{i_{\Vect{x} +\Vect{b}+\Vect{c} -\Vect{a} }} &
     \ExctCplEObjctBB{}{\Vect{x}+ 2\Vect{b}+\Vect{c} -\Vect{a} } \\
\ExctCplEObjctBB{}{\Vect{x} -\Vect{c}} \ar[r] &
	\ExctCplDObjctBB{}{\Vect{x}} \ar[r]^{j_{\Vect{x}}} \ar@[deepmagenta]@{.>}[d]_{\color{deepmagenta} \ExctCplCoLimAbutMapBB{\Vect{x}}} \ar@{.>}@[blue][rd]^(0.6){\color{blue} \lambda_n} &
     \ExctCplEObjctBB{}{\Vect{x} + \Vect{b}}  \ar[r]^{k_{\Vect{x}+\Vect{b}} } &
     \ExctCplDObjctBB{}{\Vect{x} + \Vect{b}+\Vect{c}} \ar@[deepmagenta]@{.>}[d]^-{\color{deepmagenta} \ExctCplCoLimAbutMapBB{\Vect{x}+ \Vect{b} + \Vect{c}}} \ar[r] &
     \ExctCplEObjctBB{}{\Vect{x}\,+2\Vect{b}+\Vect{c}}  \\
& {\color{deepmagenta} \ExctCplCoLimAbut{n}} \ar@[blue][r]_{\color{blue} u_n} &
	{\color{blue} \Lambda_n} &
  {\color{deepmagenta} \ExctCplCoLimAbut{n+\sigma}} &
}
\end{xy}
\end{equation*}
Typically, one has only one set of abutment objects:
\begin{enumerate}[$\bullet$]
\item either objects $\Lambda^{n+\sigma}$ which form the vertex of a cone into the exact couple,
\item or objects $\Lambda_n$ which form the vertex of a cocone out of the exact couple.
\end{enumerate}
Either way, there is a universal map $u_n\from L_n\to \Lambda_n$, respectively $u^{n+\sigma}\from \Lambda^{n+\sigma}\to L^{n+\sigma}$. Accordingly, the investigation of how the $E^{\infty}$-objects of the spectral sequence are related to adjacent filtration quotients of the abutment involves these two steps: First use the E-infinity extension theorem as a tool for determining how the E-infinity objects of the spectral sequence relate of the filtrations of the universal abutment(s); then carry this information through the maps $u_n$, respectively $u^n$. - The homotopy spectral sequence of a based tower of fibrations is a nice example.

\begin{example}[Homotopy spectral sequence of a tower of fibrations]
\label{exa:HomotopySS-Fibrations}%
Consider a $\ZCat$-diagram of based Hurewicz fibrations of topological spaces:
\begin{equation*}
X\DefEq \LimOf{X_p}\longrightarrow \cdots \longrightarrow X_{p+1} \XRA{f_{p+1}} X_{p} \XRA{f_p} X_{p-1} \XRA{f_{p-1}}\cdots \XRA{f_1} X_0 \XRA{f_0} \cdots 
\end{equation*}
Writing $Y_p$ for the fiber of $f_p$, each fibration $f_p$ yields a long exact sequence of homotopy groups
\begin{equation*}
\cdots\to \pi_{n+1}X_p \longrightarrow \pi_{n}Y_p \longrightarrow \pi_{n}X_{p-1}\longrightarrow \pi_{n}X_p\to \cdots 
\end{equation*}
If the homotopy groups the spaces $X_p$  are `complicated' while those of the fibers  $Y_p$ are `understood', we would like to port homotopy knowledge of the fibers to information about the homotopy groups $\pi_nX$.

First, we observe that the above long exact homotopy sequences form an exact couple.
\begin{equation*}
\label{fig:HomotopySpecSeq-Abutted}%
\begin{xy}
\xymatrix@R=3em@C=4em{
& {\color{deepmagenta} \ExctCplLimAbut{-(n+1)}} \ar@[deepmagenta]@{.>}[d]_-{\color{deepmagenta} \ExctCplLimAbutMapBB{p,q+1} } &
	{\color{blue} \pi_nX} \ar@[blue][r]^-{\color{blue} u^{-n}} \ar@[blue]@{.>}[rd]_(.35){\color{blue} \lambda^{p+1,q-1}} &
  {\color{deepmagenta} \ExctCplLimAbut{-n}} \ar@[deepmagenta]@{.>}[d]^-{\color{deepmagenta} \ExctCplLimAbutMapBB{p+1,q-1}} &
     & \\
\pi_{n+1}Y_{p} \ar[r] &
     *+[F]\txt{$\pi_{n+1}X_p$\\$\ExctCplDObjct{1}{p}{q+1}$} \ar[d]_-{ i}^-{(-1,1)} \ar[r]^-{j}_-{(1,-2)} &
     *+[F]\txt{$\pi_{n}Y_{p+1}$\\$\ExctCplEObjct{1}{p+1}{q-1}$} \ar[r]^-{k}_-{(0,0)} &
     *+[F]\txt{$\pi_{n}X_{p+1}$\\$\ExctCplDObjct{1}{p+1}{q-1}$} \ar[r] \ar[d]^{i} &
     \pi_{n-1}Y_{p+2} \\
\pi_{n+1}Y_{p-1} \ar[r] &
	*+[F]\txt{$\pi_{n+1}X_{p-1}$\\$\ExctCplDObjct{1}{p-1}{q+2}$} \ar[r]^-{j}_-{(1,-2)} \ar@[deepmagenta]@{.>}[d]_{\color{deepmagenta} \ExctCplCoLimAbutMapBB{p-1,q+2}} &
     *+[F]\txt{$\pi_{n}Y_p$\\$\ExctCplEObjct{1}{p}{q}$}  \ar[r]^-{k}_-{(0,0)} &
     *+[F]\txt{$\pi_{n}X_p$\\$\ExctCplDObjct{1}{p}{q}$} \ar@[deepmagenta]@{.>}[d]^-{\color{deepmagenta} \ExctCplCoLimAbutMapBB{p,q}} \ar[r] &
     \pi_{n-1}Y_{p+1}  \\
& {\color{deepmagenta} \ExctCplCoLimAbut{-(n+1)}} & &
  {\color{deepmagenta} \ExctCplCoLimAbut{-n}} &
}
\end{xy}
\end{equation*}
According to Definition \ref{def:(ZxZ)-BigradedExactCouple}, this exact couple has $\sigma = +1$, hence is regular. The homotopy groups $\pi_nX\EqDef \Lambda^{-n}$ act as abutment with universal maps $u^{-n}\from \pi_nX\to L^{-n}$. These non-universal abutting groups are filtered by the kernels of the cone maps $\lambda^{p,q}$.

A significant complication here is that the comparison maps $u^{-n}$ {\em need not be isomorphisms}. So, we need to combine
\begin{enumerate}[(a)]
\item what the spectral sequence tells us about the limit abutment objects $\ExctCplLimAbut{n}$ with
\item properties of the maps $u^{-n}$.
\end{enumerate}
In the case of the homotopy spectral sequence of a tower of fibrations, we are lucky in the sense that information about the maps $u^{-n}$ is available via a {\em separate theorem}; see e.g. \cite{PSHirschhorn_PiFib2015} and the references found there. There is a short exact sequence
\begin{equation*}
\xymatrix@R=5ex@C=4em{
\LimOneOf{\pi_{n+1}X_p} \ar@{{ |>}->}[r] &
	\pi_n \LimOf{X_p} \ar@{-{ >>}}[r] &
	\LimOf{\pi_n X_p}
}
\end{equation*}
Given that $X$ is determined by any initial subdiagram of the diagram $X$, we can achieve $0$-objects for the colimit abutment, for example, by choosing $X_{-1}\DefEq \ast$. Consequently, $Y_0=X_0$ whose homotopy groups appear in the appropriate $E$-positions. If these homotopy groups are sufficiently well understood, then this simplification is viable. On the other hand, if for some reason, this simplification is not desirable, then we might have to deal with nonzero colimit abutment objects, their filtrations, and associated E-infinity extension issues.
\end{example}

\section[Convergence II]{Convergence II: More on Extensions of E-infinity Objects}
\label{sec:ConvergenceII}

Given a regular exact couple $\ExctCpl{C}$, we look for conditions under which one or more of the E-infinity relations of the limit/colimit abutment filtrations of $\ExctCpl{C}$ hold. According to the E-infinity extension theorem \ref{thm:E-InfinityExtensionThm}, we must first  ask whether the E-infinity objects are stable. Conditions under which this happens are presented in (\ref{thm:StableE-InfinityExtension-Lemma}) and its Corollary \ref{thm:StableE-InfinityExtension-Sufficient}.

Special among the cases where the E-infinity objects are stable are those situations where it either matches the colimit abutment, respectively, the limit abutment. Conditions for the former are given in (\ref{thm:CoLimitAbutmentMatching-Conditions}), while conditions for the latter are provided in (\ref{thm:LimitAbutmentMatching-Conditions}).

If a spectral sequence comes from an exact couple whose E-infinity objects are unstable, the situation is far more complicated. To get some minimal handle on it we assemble the 6-term exact sequences in the E-infinity extension theorem (\ref{thm:E-InfinityExtensionThm}) into exact couples, called the \Defn{lim-1 exact couples for non-stability}; see (\ref{thm:Lim1ExactCouple-NonStability}).  These exact couples are again regular, hence can be studied with the tools we just developed; see e.g. (\ref{thm:Lim1ExactCouple-StableE-Infinity}).

To round out the discussion of convergence of the spectral sequence associated with an exact couple, let us ask if there is a way of inferring its convergence properties from `nice' features of the spectral sequence alone. Examples show that this is only possible for the exact couple all of whose objects are $0$; see Example \ref{exa:SameSpecSeq-VariousExtensions}.

\begin{lemma}[Stable E-infinity extension lemma]
\label{thm:StableE-InfinityExtension-Lemma} %
For a regular exact couple $(\ExctCplEPage{},\ExctCplDPage{},\ExctCplIMap{},\ExctCplJMap{},\ExctCplKMap{})$ the following are equivalent: %
\index{stable!E-infinity extension}%
\vspace{-1.5ex}%
\begin{enumerate}[(i)]
\item {\em Stable extension}\quad The inclusion $\ExctCplStblEObjctBB{\Vect{x}+\ExctCplJBdg{b}} \to \ExctCplEObjctBB{\infty}{\Vect{x}+\ExctCplJBdg{b}}$ is an isomorphism.
\item The map $\LimOneOfOver{ \Ker{ \ExctCplIMapItrtdBB{r}{\Vect{x} +\ExctCplJBdg{b}+\ExctCplKBdg{c} -r\ExctCplIBdg{a} } } }{r}  \longrightarrow \LimOneOfOver{ \Ker{ \ExctCplIMapItrtdBB{r+1}{\Vect{x}+\ExctCplJBdg{b}+\ExctCplKBdg{c}-r\ExctCplIBdg{a} } } }{r}$ is a monomorphism.
\item The modules $\SetIntrsctn{\bar{I}_{\Vect{x} +\ExctCplJBdg{b} + \ExctCplKBdg{c}}}{\Ker{\ExctCplIMapBB{}{\Vect{x}+\ExctCplJBdg{b}+\ExctCplKBdg{c}}}}$ and $\SetIntrsctn{I^{\omega}_{\Vect{x}+\ExctCplJBdg{b}+\ExctCplKBdg{c}}}{ \Ker{\ExctCplIMapBB{}{\Vect{x}+\ExctCplJBdg{b}+\ExctCplKBdg{c}}}}$ are equal.
\end{enumerate}
\end{lemma}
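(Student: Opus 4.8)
The plan is to funnel all three conditions through a single pivot, namely the assertion that the monomorphism $M$ appearing in the E-infinity extension theorem (\ref{thm:E-InfinityExtensionThm}) is an isomorphism. So I would first record that each of (i), (ii), (iii) is separately equivalent to ``$M$ is an isomorphism,'' and then the stated equivalences follow by transitivity.

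First I would establish that (i) holds if and only if $M$ is an isomorphism. The theorem (\ref{thm:E-InfinityExtensionThm}) supplies a morphism of short exact sequences whose left vertical map is the identity on $\ExctCplCoLimAbutFltrtnQtntBB{\Vect{x}}$, whose middle vertical map is the inclusion $\ExctCplStblEObjctBB{\Vect{x}+\ExctCplJBdg{b}}\to \ExctCplEObjctBB{\infty}{\Vect{x}+\ExctCplJBdg{b}}$, and whose right vertical map is $M$. Since the left column is an isomorphism, the snake lemma identifies the kernel and cokernel of the middle map with those of $M$. Hence the inclusion $\ExctCplStblEObjctBB{\Vect{x}+\ExctCplJBdg{b}}\to \ExctCplEObjctBB{\infty}{\Vect{x}+\ExctCplJBdg{b}}$ is an isomorphism exactly when $M$ is, which is precisely the equivalence of (i) with ``$M$ is an isomorphism.''

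Next I would show that (ii) holds if and only if $M$ is an isomorphism. By the theorem, $M$ is the kernel part of the image factorization $t=M\Comp q$ of the map $t$ in the displayed $5$-term exact sequence, where $q$ is the canonical surjection onto $\ExctCplLimAbutFltrtnQtntBB{\Vect{x}+\ExctCplJBdg{b}+\ExctCplKBdg{c}}$. As $M$ is always monic, it is an isomorphism if and only if it is surjective, that is, if and only if $\Img{t}=\Img{M}$ fills the target $\ExctCplCyclesBB{\infty}{\Vect{x}+\ExctCplJBdg{b}}/\Img{\ExctCplJMapBB{}{\Vect{x}}}$; equivalently, if and only if $t$ is surjective. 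By exactness of the $5$-term sequence, $\Img{t}$ is the kernel of the connecting map into $\LimOneOfOver{\Ker{\ExctCplIMapItrtdBB{r}{\Vect{x}+\ExctCplJBdg{b}+\ExctCplKBdg{c}-r\ExctCplIBdg{a}}}}{r}$, and that connecting map vanishes precisely when the subsequent map $\LimOneOfOver{\Ker{\ExctCplIMapItrtdBB{r}{\Vect{x}+\ExctCplJBdg{b}+\ExctCplKBdg{c}-r\ExctCplIBdg{a}}}}{r}\to \LimOneOfOver{\Ker{\ExctCplIMapItrtdBB{r+1}{\Vect{x}+\ExctCplJBdg{b}+\ExctCplKBdg{c}-r\ExctCplIBdg{a}}}}{r}$ is a monomorphism. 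This chain of equivalences is exactly (ii).

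Finally I would show that (iii) holds if and only if $M$ is an isomorphism, invoking the two submodule identifications. Lemma (\ref{thm:Eps-SubModule-D}) gives $\ExctCplLimAbutFltrtnQtntBB{\Vect{x}+\ExctCplJBdg{b}+\ExctCplKBdg{c}}\cong \SetIntrsctn{\bar{I}_{\Vect{x}+\ExctCplJBdg{b}+\ExctCplKBdg{c}}}{\Ker{\ExctCplIMapBB{}{\Vect{x}+\ExctCplJBdg{b}+\ExctCplKBdg{c}}}}$, and Lemma (\ref{thm:Z_(x+b)-SubModule-D}) gives $\ExctCplCyclesBB{\infty}{\Vect{x}+\ExctCplJBdg{b}}/\Ker{\ExctCplKMapBB{}{\Vect{x}+\ExctCplJBdg{b}}}\cong \SetIntrsctn{I^{\omega}_{\Vect{x}+\ExctCplJBdg{b}+\ExctCplKBdg{c}}}{\Ker{\ExctCplIMapBB{}{\Vect{x}+\ExctCplJBdg{b}+\ExctCplKBdg{c}}}}$. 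The crux is that under these isomorphisms $M$ becomes the inclusion induced by $\bar{I}_{\Vect{x}+\ExctCplJBdg{b}+\ExctCplKBdg{c}}\subseteq I^{\omega}_{\Vect{x}+\ExctCplJBdg{b}+\ExctCplKBdg{c}}$ (the commuting square displayed in the outline preceding (\ref{thm:E-InfinityExtensionThm})), so that $M$ is an isomorphism if and only if these two intersections coincide, which is (iii). I expect the main obstacle to be exactly this last identification: verifying that the isomorphisms of (\ref{thm:Eps-SubModule-D}) and (\ref{thm:Z_(x+b)-SubModule-D}) are genuinely compatible with $M$ and with the evident inclusion of intersections, rather than merely fitting into a common square up to an automorphism. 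This is a naturality chase, tracking a cycle representative through $\ExctCplKMapBB{}{\Vect{x}+\ExctCplJBdg{b}}$, the iterated $\ExctCplIMap{}$-preimages, and the cone maps; the content is routine once the maps are unwound, but care is needed to keep the identifications consistent through the limit constructions.
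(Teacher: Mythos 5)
Your proof is correct, and the underlying mathematics coincides with the paper's; the difference is one of organization. The paper disposes of the lemma in two citations: (i)$\Leftrightarrow$(ii) is read off from the E-infinity extension theorem (\ref{thm:E-InfinityExtensionThm}) --- which, unpacked, is exactly your snake-lemma argument for (i)$\Leftrightarrow$``$M$ is an isomorphism'' together with your five-term exactness argument for (ii)$\Leftrightarrow$``$M$ is an isomorphism'' --- while (ii)$\Leftrightarrow$(iii) is delegated to the general $\ZCat$-diagram lemma (\ref{thm:Conditions-Lim1(K^pA)->Lim1(K^(p+1)A)Monomorphism}), applied to the diagram $D(n+\sigma)$. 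Your pivot is in fact already present there: condition (ii) of that appendix lemma is precisely the assertion that the canonical map $\ExctCplLimAbutFltrtnQtntBB{\Vect{x}+\ExctCplJBdg{b}+\ExctCplKBdg{c}}\to \SetIntrsctn{I^{\omega}_{\Vect{x}+\ExctCplJBdg{b}+\ExctCplKBdg{c}}}{\Ker{\ExctCplIMapBB{}{\Vect{x}+\ExctCplJBdg{b}+\ExctCplKBdg{c}}}}$, i.e.\ $M$, is an isomorphism. What you do differently is to re-derive the (iii) leg inside the exact couple, via (\ref{thm:Eps-SubModule-D}) and (\ref{thm:Z_(x+b)-SubModule-D}), instead of quoting the appendix lemma; the compatibility you flag as the main obstacle is exactly what that lemma packages once and for all. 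That compatibility does hold, and more cheaply than a representative chase: by (\ref{thm:Kernel/ImageSequencesOfZ-Diagram}) the map $t$ is, under the stated identifications, the restriction of the cone map $\rho^{\Vect{x}+\ExctCplJBdg{b}+\ExctCplKBdg{c}}$ to $\ExctCplLimAbutFltrtnBB{\Vect{x}+\ExctCplIBdg{a}+\ExctCplJBdg{b}+\ExctCplKBdg{c}} = \left(\rho^{\Vect{x}+\ExctCplJBdg{b}+\ExctCplKBdg{c}}\right)^{-1}\Ker{\ExctCplIMapBB{}{\Vect{x}+\ExctCplJBdg{b}+\ExctCplKBdg{c}}}$, so by (\ref{thm:StableImageSubDiagram-Properties}.\ref{thm:StableImageSubDiagram-Properties-ImLim}) its image is $\SetIntrsctn{\bar{I}_{\Vect{x}+\ExctCplJBdg{b}+\ExctCplKBdg{c}}}{\Ker{\ExctCplIMapBB{}{\Vect{x}+\ExctCplJBdg{b}+\ExctCplKBdg{c}}}}$ and $M$ is the evident inclusion of intersections, while the isomorphism of (\ref{thm:Eps-SubModule-D}) is induced by the same $\rho$, so the identifications commute on the nose. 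The trade-off: the paper's route keeps the proof to two lines and reuses a lemma needed elsewhere (e.g.\ behind the scenes in Section \ref{sec:ConvergenceII}); yours stays self-contained at the level of the exact couple and makes explicit that all three conditions are proxies for the single statement that $M$ is an isomorphism.
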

\begin{proof}
The equivalence of (i) and (ii) follows from the E-infinity extension theorem (\ref{thm:E-InfinityExtensionThm}). The equivalence of (ii) and (iii) is established in (\ref{thm:Conditions-Lim1(K^pA)->Lim1(K^(p+1)A)Monomorphism}.iv).
\end{proof}

Via the stable E-infinity extension lemma \ref{thm:StableE-InfinityExtension-Lemma} we immediately obtain the following sufficient conditions for stable E-infinity extension. The formulation of these conditions involves the concepts of Mittag-Leffler condition (\ref{def:Mittag-Leffler/CoMittag-Leffler-Condition}) and $\omega$-Mittag-Leffler condition (\ref{def:Lambda-Mittag-LefflerCondition}).

\begin{corollary}[Stable E-infinity extension - sufficient conditions]
\label{thm:StableE-InfinityExtension-Sufficient}%
In Lemma \ref{thm:StableE-InfinityExtension-Lemma}, the object $\ExctCplEObjctBB{\infty}{\Vect{x}+\ExctCplJBdg{b}}$ is stable whenever at least one of the following conditions is satisfied. %
\index{stable!E-infinity extension, sufficient conditions}%
\vspace{-1.6ex}%
\begin{enumerate}[(i)]
\item \label{thm:StableE-InfinityExtension-Sufficient_OntoIOmega}The structure map $\ExctCplLimAbut{n+\sigma} \to I^{\omega}_{\Vect{x} +\ExctCplJBdg{b} + \ExctCplKBdg{c}}$ is surjective.
\item \label{thm:StableE-InfinityExtension-Sufficient-a_p|Mono} The restriction of $\ExctCplIMapBB{}{\Vect{x} +\ExctCplJBdg{b} + \ExctCplKBdg{c}}$ to $I^{\omega}_{\Vect{x} +\ExctCplJBdg{b} + \ExctCplKBdg{c}}$ is a monomorphism.
\item \label{thm:StableE-InfinityExtension-Sufficient-a_pMono} The map $\ExctCplIMapBB{}{\Vect{x} +\ExctCplJBdg{b} + \ExctCplKBdg{c}}$ is a monomorphism.
\item \label{thm:StableE-InfinityExtension-Sufficient-Omega-MittagLeffler} {\em $\omega$-Mittag-Leffler condition:} \  the $\ZCat$-diagram $\ExctCplDObjctBB{}{\Vect{x}+\ExctCplJBdg{b}+\ExctCplKBdg{c} + r\ExctCplIBdg{a}}$ satisfies the $\omega$-Mittag-Leffler condition. %
\index{$\omega$-Mittag-Leffler condition!stable E-infinity object}%
\item \label{thm:StableE-InfinityExtension-Sufficient-Mittag-Leffler} {\em Mittag-Leffler condition:} \ the $\ZCat$-diagram $\ExctCplDObjctBB{}{\Vect{x}+\ExctCplJBdg{b}+\ExctCplKBdg{c} + r\ExctCplIBdg{a}}$ satisfies the Mittag-Leffler condition. %
\index{Mittag-Leffler condition!stable E-infinity object}%
\end{enumerate}
\end{corollary}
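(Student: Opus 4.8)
The plan is to reduce each of the five conditions to one of the equivalent criteria already furnished by the stable $E$-infinity extension lemma (\ref{thm:StableE-InfinityExtension-Lemma}). The most convenient target is condition (iii) of that lemma, namely the equality of the two submodules $\SetIntrsctn{\bar{I}_{\Vect{x}+\ExctCplJBdg{b}+\ExctCplKBdg{c}}}{\Ker{\ExctCplIMapBB{}{\Vect{x}+\ExctCplJBdg{b}+\ExctCplKBdg{c}}}}$ and $\SetIntrsctn{I^{\omega}_{\Vect{x}+\ExctCplJBdg{b}+\ExctCplKBdg{c}}}{\Ker{\ExctCplIMapBB{}{\Vect{x}+\ExctCplJBdg{b}+\ExctCplKBdg{c}}}}$. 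Throughout I would exploit the always-valid inclusion $\bar{I}_{\Vect{x}+\ExctCplJBdg{b}+\ExctCplKBdg{c}}\subseteq I^{\omega}_{\Vect{x}+\ExctCplJBdg{b}+\ExctCplKBdg{c}}$ of image subobjects, together with the identification $\bar{I}_{\Vect{x}+\ExctCplJBdg{b}+\ExctCplKBdg{c}}\cong \Img{\ExctCplLimAbut{n+\sigma}\to \ExctCplDObjctBB{}{\Vect{x}+\ExctCplJBdg{b}+\ExctCplKBdg{c}}}$ recorded just before (\ref{thm:Eps-SubModule-D}). Because the two modules in criterion (iii) are automatically nested, their equality is equivalent to the reverse inclusion, so in each case it suffices to show that an element of $I^{\omega}_{\Vect{x}+\ExctCplJBdg{b}+\ExctCplKBdg{c}}$ lying in $\Ker{\ExctCplIMapBB{}{\Vect{x}+\ExctCplJBdg{b}+\ExctCplKBdg{c}}}$ already lies in $\bar{I}_{\Vect{x}+\ExctCplJBdg{b}+\ExctCplKBdg{c}}$.

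First I would dispose of the implications internal to the list of conditions. Condition (\ref{thm:StableE-InfinityExtension-Sufficient-a_pMono}) is a special case of (\ref{thm:StableE-InfinityExtension-Sufficient-a_p|Mono}), since a monomorphism $\ExctCplIMapBB{}{\Vect{x}+\ExctCplJBdg{b}+\ExctCplKBdg{c}}$ restricts to a monomorphism on the submodule $I^{\omega}_{\Vect{x}+\ExctCplJBdg{b}+\ExctCplKBdg{c}}$. Likewise, the Mittag-Leffler condition (\ref{thm:StableE-InfinityExtension-Sufficient-Mittag-Leffler}) implies the $\omega$-Mittag-Leffler condition (\ref{thm:StableE-InfinityExtension-Sufficient-Omega-MittagLeffler}) straight from the definitions (\ref{def:Mittag-Leffler/CoMittag-Leffler-Condition}) and (\ref{def:Lambda-Mittag-LefflerCondition}). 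This leaves only three essential cases to verify: (\ref{thm:StableE-InfinityExtension-Sufficient_OntoIOmega}), (\ref{thm:StableE-InfinityExtension-Sufficient-a_p|Mono}), and (\ref{thm:StableE-InfinityExtension-Sufficient-Omega-MittagLeffler}).

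For (\ref{thm:StableE-InfinityExtension-Sufficient_OntoIOmega}), the image of $\ExctCplLimAbut{n+\sigma}\to \ExctCplDObjctBB{}{\Vect{x}+\ExctCplJBdg{b}+\ExctCplKBdg{c}}$ is precisely $\bar{I}_{\Vect{x}+\ExctCplJBdg{b}+\ExctCplKBdg{c}}$, so surjectivity of the structure map onto $I^{\omega}_{\Vect{x}+\ExctCplJBdg{b}+\ExctCplKBdg{c}}$ forces $\bar{I}_{\Vect{x}+\ExctCplJBdg{b}+\ExctCplKBdg{c}}=I^{\omega}_{\Vect{x}+\ExctCplJBdg{b}+\ExctCplKBdg{c}}$; intersecting with $\Ker{\ExctCplIMapBB{}{\Vect{x}+\ExctCplJBdg{b}+\ExctCplKBdg{c}}}$ then yields criterion (iii) verbatim. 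For (\ref{thm:StableE-InfinityExtension-Sufficient-a_p|Mono}), the hypothesis that $\ExctCplIMapBB{}{\Vect{x}+\ExctCplJBdg{b}+\ExctCplKBdg{c}}$ is injective on $I^{\omega}_{\Vect{x}+\ExctCplJBdg{b}+\ExctCplKBdg{c}}$ says exactly that $\SetIntrsctn{I^{\omega}_{\Vect{x}+\ExctCplJBdg{b}+\ExctCplKBdg{c}}}{\Ker{\ExctCplIMapBB{}{\Vect{x}+\ExctCplJBdg{b}+\ExctCplKBdg{c}}}}=0$; since the smaller module in criterion (iii) is contained in this, both are zero and hence equal. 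Finally, for (\ref{thm:StableE-InfinityExtension-Sufficient-Omega-MittagLeffler}) I would unwind the $\omega$-Mittag-Leffler condition (\ref{def:Lambda-Mittag-LefflerCondition}) for the $\ZCat$-diagram $\ExctCplDObjctBB{}{\Vect{x}+\ExctCplJBdg{b}+\ExctCplKBdg{c}+r\ExctCplIBdg{a}}$ to conclude that the canonical map from the limit onto the intersection of images is surjective, i.e. once more $\bar{I}_{\Vect{x}+\ExctCplJBdg{b}+\ExctCplKBdg{c}}=I^{\omega}_{\Vect{x}+\ExctCplJBdg{b}+\ExctCplKBdg{c}}$, which reduces it to the case already treated in (\ref{thm:StableE-InfinityExtension-Sufficient_OntoIOmega}).

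The only step requiring genuine care, and hence the expected main obstacle, is the last one: correctly matching the $\omega$-Mittag-Leffler condition of (\ref{def:Lambda-Mittag-LefflerCondition}) to the equality $\bar{I}=I^{\omega}$, i.e. to the surjectivity of $\ExctCplLimAbut{n+\sigma}$ onto the intersection of the iterated-image subobjects. Everything else is a one-line set-theoretic manipulation built on the nesting $\bar{I}\subseteq I^{\omega}$ and the image description of $\bar{I}$, so once the interpretation of (\ref{def:Lambda-Mittag-LefflerCondition}) is in hand the corollary follows from (\ref{thm:StableE-InfinityExtension-Lemma}) immediately.
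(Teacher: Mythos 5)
Your proposal is correct, and at the level of overall architecture it matches the paper's own proof: both funnel every condition into the equivalences of Lemma \ref{thm:StableE-InfinityExtension-Lemma}, using the same implication chains (v) $\implies$ (iv) $\implies$ (i) and (iii) $\implies$ (ii). The one genuine divergence is case (ii). The paper routes it through criterion (ii) of the lemma, asserting that the hypothesis makes each inclusion of kernels $\Ker{\ExctCplIMapItrtdBB{r}{\Vect{x}+\ExctCplJBdg{b}+\ExctCplKBdg{c}-r\ExctCplIBdg{a}}}\to \Ker{\ExctCplIMapItrtdBB{r+1}{\Vect{x}+\ExctCplJBdg{b}+\ExctCplKBdg{c}-r\ExctCplIBdg{a}}}$ an isomorphism; as stated, that step really requires $\Img{\ExctCplIMapItrtdBB{r}{\Vect{x}+\ExctCplJBdg{b}+\ExctCplKBdg{c}-r\ExctCplIBdg{a}}}\intrsctn\Ker{\ExctCplIMapBB{}{\Vect{x}+\ExctCplJBdg{b}+\ExctCplKBdg{c}}}=0$ for each finite $r$, which is formally stronger than the stated hypothesis $I^{\omega}_{\Vect{x}+\ExctCplJBdg{b}+\ExctCplKBdg{c}}\intrsctn\Ker{\ExctCplIMapBB{}{\Vect{x}+\ExctCplJBdg{b}+\ExctCplKBdg{c}}}=0$, since $I^{\omega}$ is only the intersection of the finite-stage images. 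Your route through criterion (iii) of the lemma --- $\SetIntrsctn{\bar{I}_{\Vect{x}+\ExctCplJBdg{b}+\ExctCplKBdg{c}}}{\Ker{\ExctCplIMapBB{}{\Vect{x}+\ExctCplJBdg{b}+\ExctCplKBdg{c}}}}\subseteq \SetIntrsctn{I^{\omega}_{\Vect{x}+\ExctCplJBdg{b}+\ExctCplKBdg{c}}}{\Ker{\ExctCplIMapBB{}{\Vect{x}+\ExctCplJBdg{b}+\ExctCplKBdg{c}}}}=0$, so both modules vanish and are in particular equal --- sidesteps this entirely. What your argument buys is robustness (no kernel-stabilization claim is needed) and uniformity, since (i), (ii) and (iv) all reduce to the single equality tested by criterion (iii); the paper's route, when its kernel claim is available, yields the slightly stronger conclusion that the kernel diagrams themselves stabilize rather than merely that the $\LimOne$-map is monic.
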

\begin{proof}
If \ref{thm:StableE-InfinityExtension-Sufficient_OntoIOmega} holds, then $\bar{I}_{\Vect{x} +\ExctCplJBdg{b} + \ExctCplKBdg{c}}=I^{\omega}_{\Vect{x}+\ExctCplJBdg{b}+\ExctCplKBdg{c}}$, and so  $\ExctCplEObjctBB{\infty}{\Vect{x}+\ExctCplJBdg{b}}$ is stable by (\ref{thm:StableE-InfinityExtension-Lemma}.i).

If \ref{thm:StableE-InfinityExtension-Sufficient-a_p|Mono} holds, then $\Ker{ \ExctCplIMapItrtdBB{r}{\Vect{x} +\ExctCplJBdg{b}+\ExctCplKBdg{c} -r\ExctCplIBdg{a} } } \to \Ker{ \ExctCplIMapItrtdBB{r+1}{\Vect{x} +\ExctCplJBdg{b}+\ExctCplKBdg{c} -r\ExctCplIBdg{a} } }$ is an isomorphism for every $r\geq 1$. Therefore $\ExctCplEObjctBB{\infty}{\Vect{x}+\ExctCplJBdg{b}}$ is stable by (\ref{thm:StableE-InfinityExtension-Lemma}.ii). Condition \ref{thm:StableE-InfinityExtension-Sufficient-a_pMono} implies \ref{thm:StableE-InfinityExtension-Sufficient-a_p|Mono}, hence is sufficient.

We complete the proof by observing that the following implications hold:
\ref{thm:StableE-InfinityExtension-Sufficient-Mittag-Leffler} $\implies$ \ref{thm:StableE-InfinityExtension-Sufficient-Omega-MittagLeffler} $\implies$ \ref{thm:StableE-InfinityExtension-Sufficient_OntoIOmega}.
\end{proof}

\begin{proposition}[E-infinity matches colimit abutment - conditions]
\label{thm:CoLimitAbutmentMatching-Conditions}%
If $\ExctCpl{C}$ is a exact couple, then the object $\ExctCplEObjctBB{\infty}{\Vect{x}+\ExctCplJBdg{b}}$ of its associated spectral sequence satisfies %
\index{colimit abutment!matching}
\begin{equation*}
\ExctCplEObjctBB{\infty}{\Vect{x}+\ExctCplJBdg{b}} \cong \ExctCplCoLimAbutFltrtnQtntBB{\Vect{x}}
\end{equation*}
if and only if the restriction of $\ExctCplIMapBB{}{\Vect{x}+ \ExctCplJBdg{b}+\ExctCplKBdg{c} }$ to $I^{\omega}_{\Vect{x}+\Vect{b}+\Vect{c}}$ is a monomorphism. Further, this happens, whenever at least one of the following conditions is satisfied. %
\vspace{-1.6ex}%
\begin{enumerate}[(i)]
\item There exists $r\geq 1$ such that the restriction of $\ExctCplIMapBB{}{\Vect{x}+ \ExctCplJBdg{b}+\ExctCplKBdg{c} }$ to $\Img{ \ExctCplIMapItrtdBB{r}{\Vect{x} + \ExctCplJBdg{b}+\ExctCplKBdg{c} - r\ExctCplIBdg{a}} }$ is a monomorphism.
\item The map $\ExctCplIMapBB{}{\Vect{x}+ \ExctCplJBdg{b}+\ExctCplKBdg{c} }$ is a monomorphism.
\item $I^{\omega}_{\Vect{x}+\Vect{b}+\Vect{c}}=0$.
\item There exists $r\geq 1$ for which the isomorphisms below hold.
\begin{equation*}
\xymatrix@R=5ex@C=5em{
\dfrac{ \Img{ \ExctCplIMapItrtdBB{r}{\Vect{x}} } }{  \Img{ \ExctCplIMapItrtdBB{r+1}{\Vect{x}-\ExctCplIBdg{a} } } } \ar@{{ |>}->}[r]_-{\cong}^-{\text{(\ref{thm:E(r+1)Extension})}} &
	\ExctCplEObjctBB{r+1}{\Vect{x}+\ExctCplJBdg{b}} \cong \ExctCplEObjctBB{\infty}{\Vect{x}+\ExctCplJBdg{b}}
}
\end{equation*}
\item The $\ZCat$-diagram $\ExctCplDObjctBB{}{\Vect{x}+ \ExctCplJBdg{b}+\ExctCplKBdg{c} +r\ExctCplIBdg{a} }$, $r\in \ZNr$, is originally vanishing.
\end{enumerate}
\end{proposition}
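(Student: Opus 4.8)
The plan is to read everything off the \emph{bottom} short exact sequence furnished by the E-infinity extension theorem (\ref{thm:E-InfinityExtensionThm}), namely the canonical
\[
\ExctCplCoLimAbutFltrtnQtntBB{\Vect{x}} \hookrightarrow \ExctCplEObjctBB{\infty}{\Vect{x}+\ExctCplJBdg{b}} \twoheadrightarrow \frac{\ExctCplCyclesBB{\infty}{\Vect{x}+\ExctCplJBdg{b}}}{\Img{\ExctCplJMapBB{}{\Vect{x}}}}.
\]
Because the left-hand map is always a monomorphism, the inclusion $\ExctCplCoLimAbutFltrtnQtntBB{\Vect{x}}\to \ExctCplEObjctBB{\infty}{\Vect{x}+\ExctCplJBdg{b}}$ is an isomorphism precisely when the right-hand quotient is $0$. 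Since exactness of the couple gives $\Img{\ExctCplJMapBB{}{\Vect{x}}}=\Ker{\ExctCplKMapBB{}{\Vect{x}+\ExctCplJBdg{b}}}$, Lemma (\ref{thm:Z_(x+b)-SubModule-D}) identifies this quotient with $\SetIntrsctn{I^{\omega}_{\Vect{x}+\Vect{b}+\Vect{c}}}{\Ker{\ExctCplIMapBB{}{\Vect{x}+\Vect{b}+\Vect{c}}}}$. As this intersection is exactly the kernel of the restriction of $\ExctCplIMapBB{}{\Vect{x}+\Vect{b}+\Vect{c}}$ to $I^{\omega}_{\Vect{x}+\Vect{b}+\Vect{c}}$, the matching $\ExctCplEObjctBB{\infty}{\Vect{x}+\ExctCplJBdg{b}}\cong \ExctCplCoLimAbutFltrtnQtntBB{\Vect{x}}$ holds if and only if that restriction is monic. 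This is the asserted criterion.

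Conditions (i)--(iii) act directly on $\SetIntrsctn{I^{\omega}}{\Ker{\ExctCplIMapBB{}{\Vect{x}+\Vect{b}+\Vect{c}}}}$. From $I^{\omega}_{\Vect{x}+\Vect{b}+\Vect{c}}=\FamIntrsctn{r\geq 1}{\Img{\ExctCplIMapItrtdBB{r}{\Vect{x}+\Vect{b}+\Vect{c}-r\Vect{a}}}}$ we have $I^{\omega}_{\Vect{x}+\Vect{b}+\Vect{c}}\subseteq \Img{\ExctCplIMapItrtdBB{r}{\Vect{x}+\Vect{b}+\Vect{c}-r\Vect{a}}}$ for every $r$; hence monicity of $\ExctCplIMapBB{}{\Vect{x}+\Vect{b}+\Vect{c}}$ on one such image (condition (i)) forces monicity on the smaller module $I^{\omega}$, which is the criterion. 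Condition (ii) is the special case of (i), as a monomorphism restricts to a monomorphism on every submodule; and condition (iii), $I^{\omega}_{\Vect{x}+\Vect{b}+\Vect{c}}=0$, makes the intersection trivially $0$.

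Conditions (iv) and (v) I would reduce to the preceding cases. For (iv), the map of (\ref{thm:E(r+1)Extension}) is the kernel-end inclusion of the extension $S(r+1)$ whose cokernel is $\ExctCplCyclesBB{r}{\Vect{x}+\ExctCplJBdg{b}}/\Ker{\ExctCplKMapBB{}{\Vect{x}+\ExctCplJBdg{b}}}$; demanding that it be an isomorphism forces $\ExctCplCyclesBB{r}{\Vect{x}+\ExctCplJBdg{b}}=\Ker{\ExctCplKMapBB{}{\Vect{x}+\ExctCplJBdg{b}}}$. The cycle objects are nested decreasingly and lie above $\Ker{\ExctCplKMapBB{}{\Vect{x}+\ExctCplJBdg{b}}}$ (see \ref{def:ExactCoupleItems}), so this collapse persists to the limit, $\ExctCplCyclesBB{\infty}{\Vect{x}+\ExctCplJBdg{b}}=\Ker{\ExctCplKMapBB{}{\Vect{x}+\ExctCplJBdg{b}}}$, whence the bottom quotient vanishes and the matching holds. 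For (v), an originally vanishing $\ZCat$-diagram (\ref{def:Z-DiagramTypes}) has $I^{\omega}_{\Vect{x}+\Vect{b}+\Vect{c}}=0$, so (v) reduces to (iii).

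The whole argument is bookkeeping once Lemma (\ref{thm:Z_(x+b)-SubModule-D}) is in place; the only delicate point is (iv), where one must read the hypothesis as the vanishing of the \emph{cokernel} end of $S(r+1)$ rather than as an abstract isomorphism of outer terms, and then invoke monotonicity of the cycle filtration to pass from the finite stage $r$ to the limit stage $\infty$. I expect no deeper obstacle beyond confirming that the canonical identifications of (\ref{thm:E-InfinityExtensionThm}) and Lemma (\ref{thm:Z_(x+b)-SubModule-D}) are compatible, so that ``the right-hand quotient vanishes'' genuinely translates into ``$\ExctCplIMapBB{}{\Vect{x}+\Vect{b}+\Vect{c}}$ is monic on $I^{\omega}_{\Vect{x}+\Vect{b}+\Vect{c}}$.''
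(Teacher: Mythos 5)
Your proof is correct, and for the equivalence itself and for conditions (i), (ii), (iii), and (v) it coincides with the paper's own argument: both read the criterion off the bottom row of the E-infinity extension theorem (\ref{thm:E-InfinityExtensionThm}) combined with Lemma (\ref{thm:Z_(x+b)-SubModule-D}), dispose of (i)--(iii) via the containment $I^{\omega}_{\Vect{x}+\ExctCplJBdg{b}+\ExctCplKBdg{c}}\subseteq \Img{\ExctCplIMapItrtdBB{r}{\Vect{x}+\ExctCplJBdg{b}+\ExctCplKBdg{c}-r\ExctCplIBdg{a}}}$, and reduce (v) to (iii). Where you genuinely diverge is condition (iv). The paper deduces from (iv) that the adjacent kernels $\Ker{\ExctCplIMapItrtdBB{r}{\Vect{x}+\ExctCplJBdg{b}+\ExctCplKBdg{c}-r\ExctCplIBdg{a}}}$ agree for large $r$, identifies $\ExctCplLimAbutFltrtnBB{\Vect{x}+\ExctCplJBdg{b}+\ExctCplKBdg{c}}$ as the limit of these kernels, and then pushes the resulting isomorphisms of filtration stages and of $\LimOne$-terms through the five-term exact sequence of (\ref{thm:E-InfinityExtensionThm}) to conclude $\ExctCplLimAbutFltrtnQtntBB{\Vect{x}+\ExctCplJBdg{b}+\ExctCplKBdg{c}}=0$. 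You instead observe that the map displayed in (iv) is the kernel-end inclusion of $S(r+1)$, so its being an isomorphism kills the cokernel end, giving $\ExctCplCyclesBB{r}{\Vect{x}+\ExctCplJBdg{b}}=\Ker{\ExctCplKMapBB{}{\Vect{x}+\ExctCplJBdg{b}}}$, and then the nesting $\Ker{\ExctCplKMapBB{}{\Vect{x}+\ExctCplJBdg{b}}}\subseteq \ExctCplCyclesBB{\infty}{\Vect{x}+\ExctCplJBdg{b}}\subseteq \ExctCplCyclesBB{r}{\Vect{x}+\ExctCplJBdg{b}}$ recorded in (\ref{def:ExactCoupleItems}) collapses everything to $\ExctCplCyclesBB{\infty}{\Vect{x}+\ExctCplJBdg{b}}=\Ker{\ExctCplKMapBB{}{\Vect{x}+\ExctCplJBdg{b}}}$, so the quotient in the criterion vanishes. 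Your route is more elementary --- no $\Lim$/$\LimOne$ bookkeeping and no passage to large $r$ --- and it shows that the first isomorphism in (iv) already suffices: the extra hypothesis $\ExctCplEObjctBB{r+1}{\Vect{x}+\ExctCplJBdg{b}}\cong\ExctCplEObjctBB{\infty}{\Vect{x}+\ExctCplJBdg{b}}$ never enters your argument, which is harmless since only sufficiency is asserted. What the paper's longer route buys is the explicit intermediate conclusion $\ExctCplLimAbutFltrtnQtntBB{\Vect{x}+\ExctCplJBdg{b}+\ExctCplKBdg{c}}=0$, tying (iv) to the limit-abutment filtration; in your argument that fact is recovered only \emph{a posteriori}, since $\ExctCplLimAbutFltrtnQtntBB{\Vect{x}+\ExctCplJBdg{b}+\ExctCplKBdg{c}}$ embeds, via the map $M$ of (\ref{thm:E-InfinityExtensionThm}), into the quotient you have shown to vanish.
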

\begin{proof}
From the E-infinity extension theorem \ref{thm:E-InfinityExtensionThm}, we see that the inclusion $\ExctCplCoLimAbutFltrtnQtntBB{\Vect{x}}$ in $\ExctCplEObjctBB{\infty}{\Vect{x}+\Vect{b}}$ is an isomorphism exactly when
\begin{equation*}
0 = \dfrac{\ExctCplCyclesBB{\infty}{\Vect{x}+\Vect{b}}}{\Img{\ExctCplJMapBB{}{\Vect{x}}}} = \dfrac{\ExctCplCyclesBB{\infty}{\Vect{x}+\Vect{b}}}{\Ker{\ExctCplKMapBB{}{\Vect{x} + \Vect{b} }}} \overset{\text{(\ref{thm:Z_(x+b)-SubModule-D})}}{\cong} I^{\omega}_{\Vect{x}+\ExctCplJBdg{b}+\ExctCplKBdg{c}}\intrsctn \Ker{\ExctCplIMapBB{}{\Vect{x}+\ExctCplJBdg{b}+\ExctCplKBdg{c}}}
\end{equation*}
So, this happens exactly when the restriction of $\ExctCplIMapBB{}{\Vect{x}+\ExctCplJBdg{b}+\ExctCplKBdg{c}}$ to $I^{\omega}_{\Vect{x}+\ExctCplJBdg{b}+\ExctCplKBdg{c}}$ is a monomorphism.

Condition (i) is sufficient for this to happen, as $I^{\omega}_{\Vect{x}+\ExctCplJBdg{b}+\ExctCplKBdg{c}}$ is contained in $\Img{\ExctCplIMapItrtdBB{r}{\Vect{x}+\ExctCplJBdg{b}+\ExctCplKBdg{c}-r\ExctCplIBdg{a}}}$ for every $r\geq 0$. It follows that each of conditions (ii) and (iii) is sufficient as either implies (i).

If (iv) is satisfied, we infer with (\ref{thm:E(r+1)Extension}) that, for large $r$,
\begin{equation*}
\Ker{\ExctCplIMapItrtdBB{r}{\Vect{x}+\ExctCplJBdg{b}+\ExctCplKBdg{c}-r\ExctCplIBdg{a} } } = \Ker{\ExctCplIMapItrtdBB{r+1}{\Vect{x}+\ExctCplJBdg{b}+\ExctCplKBdg{c}-r\ExctCplIBdg{a} } }
\end{equation*}
Recalling that $\ExctCplLimAbutFltrtnBB{\Vect{x}+\ExctCplJBdg{b}+\ExctCplKBdg{c}}\cong \LimOf{\Ker{\ExctCplIMapItrtdBB{r}{\Vect{x}+\ExctCplJBdg{b}+\ExctCplKBdg{c}-r\ExctCplIBdg{a} } }}$, we see that the maps below are isomorphisms:
\begin{equation*}
\ExctCplLimAbutFltrtnBB{\Vect{x}+\ExctCplJBdg{b}+\ExctCplKBdg{c}} \longrightarrow \ExctCplLimAbutFltrtnBB{\Vect{x}+\ExctCplIBdg{a}+\ExctCplJBdg{b}+\ExctCplKBdg{c}}  \qquad \text{and}\qquad 
\LimOneOfOver{\Ker{\ExctCplIMapItrtdBB{r}{\Vect{x}+\ExctCplJBdg{b}+\ExctCplKBdg{c}-r\ExctCplIBdg{a}}}}{\ZCat} \longrightarrow \LimOneOfOver{\Ker{\ExctCplIMapItrtdBB{r+1}{\Vect{x}+\ExctCplJBdg{b}+\ExctCplKBdg{c}-r\ExctCplIBdg{a}}}}{\ZCat}
\end{equation*}
Now the E-infinity extension theorem implies that $\ExctCplLimAbutFltrtnQtntBB{\Vect{x}+\ExctCplJBdg{b}+\ExctCplKBdg{c}}=0$, and that the inclusion $\ExctCplCoLimAbutFltrtnQtntBB{\Vect{x}} \to \ExctCplEObjctBB{\infty}{\Vect{x} + \ExctCplJBdg{b}}$ is an isomorphism.

Finally,  if (v) holds, then so does (iii), and the proof is complete.
\end{proof}

\begin{proposition}[E-infinity matches limit abutment - conditions]
\label{thm:LimitAbutmentMatching-Conditions}%
If $\ExctCpl{C}$ is a exact couple, then the object $\ExctCplEObjctBB{\infty}{\Vect{x}+\ExctCplJBdg{b}}$ of its associated spectral sequence satisfies %
\index{limit abutment!matching}%
\begin{equation*}
\ExctCplEObjctBB{\infty}{\Vect{x}+\ExctCplJBdg{b}} \cong \ExctCplLimAbutFltrtnQtntBB{\Vect{x}+\ExctCplJBdg{b}+\ExctCplKBdg{c}}
\end{equation*}
if and only if $\ExctCplEObjctBB{\infty}{\Vect{x}+\ExctCplJBdg{b}}$ is stable and $\ExctCplCoLimAbutFltrtnQtntBB{\Vect{x}} = 0$. The latter condition holds whenever at least one of the following conditions is satisfied.
\begin{enumerate}[(i)]
\item $\ExctCplDObjctBB{}{\Vect{x}}$ is generated by $\Img{\ExctCplIMapBB{}{\Vect{x}-\ExctCplIBdg{a}}}\union \Ker{\ExctCplDObjctBB{}{\Vect{x}}\to \ExctCplCoLimAbut{n}}$, where $n= \DtrmnntOfMtrx{[ \Vect{a}\ \ \Vect{x} ] }$.
\item There exists $r\geq 1$ such that $\ExctCplDObjctBB{}{\Vect{x}}$ is generated by $\Img{\ExctCplIMapBB{}{\Vect{x}-\ExctCplIBdg{a}}} \union \Ker{\ExctCplDObjctBB{}{\Vect{x}}\to \ExctCplDObjctBB{}{\Vect{x}+r\ExctCplIBdg{a}} }$.
\item $\ExctCplIMapBB{}{\Vect{x}-\ExctCplIBdg{a}}\from \ExctCplDObjctBB{}{\Vect{x}-\ExctCplIBdg{a}}\to \ExctCplDObjctBB{}{\Vect{x}}$ is surjective.
\item $\ExctCplCoLimAbut{n}=0$
\item The $\ZCat$-diagram $\ExctCplDObjctBB{}{\Vect{x}+r\ExctCplIBdg{a} }$ is eventually vanishing.
\item There exists $r\geq 1$ such that
\begin{equation*}
\ExctCplEObjctBB{r+1}{\Vect{x}+\ExctCplJBdg{b}} \cong \dfrac{\Ker{\ExctCplIMapItrtdBB{r+1}{\Vect{x}+\ExctCplJBdg{b}+\ExctCplKBdg{c}-r\ExctCplIBdg{a}}}}{\Ker{\ExctCplIMapItrtdBB{r}{\Vect{x}+\ExctCplJBdg{b}+\ExctCplKBdg{c}-r\ExctCplIBdg{a}}}}
\end{equation*}
\end{enumerate}
Among these properties the following implications hold: (iii) $\implies$ (ii) $\implies$ (i) $\Longleftrightarrow$ $(\ExctCplCoLimAbutFltrtnQtntBB{\Vect{x}} = 0)$; (iii) $\implies$ (iv) $\implies$ $(\ExctCplCoLimAbutFltrtnQtntBB{\Vect{x}} = 0)$; (vi) $\Longleftrightarrow$ (ii).
\end{proposition}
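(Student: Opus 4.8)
The plan is to read everything off the pullback of short exact sequences furnished by the E-infinity extension theorem (\ref{thm:E-InfinityExtensionThm}), whose top row is the stable extension of (\ref{thm:Stable-E-Extension}). First I would dispose of the main biconditional, which is just (\ref{thm:AbutmentMatching-Recognition}.2) re-derived. By Definition (\ref{def:E-InfinityRelations}.iv) the assertion $\ExctCplEObjctBB{\infty}{\Vect{x}+\ExctCplJBdg{b}} \cong \ExctCplLimAbutFltrtnQtntBB{\Vect{x}+\ExctCplJBdg{b}+\ExctCplKBdg{c}}$ means that both canonical maps $\ExctCplEObjctBB{\infty}{\Vect{x}+\ExctCplJBdg{b}} \xleftarrow{\cong} \ExctCplStblEObjctBB{\Vect{x}+\ExctCplJBdg{b}} \xrightarrow{\cong} \ExctCplLimAbutFltrtnQtntBB{\Vect{x}+\ExctCplJBdg{b}+\ExctCplKBdg{c}}$ are isomorphisms. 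The left map is the vertical monomorphism of the pullback square, an isomorphism precisely when $\ExctCplEObjctBB{\infty}{\Vect{x}+\ExctCplJBdg{b}}$ is stable (that is (\ref{def:E-InfinityRelations}.ii)); the right map is the top horizontal surjection, whose kernel is $\ExctCplCoLimAbutFltrtnQtntBB{\Vect{x}}$, so it is an isomorphism precisely when $\ExctCplCoLimAbutFltrtnQtntBB{\Vect{x}}=0$. This yields the stated equivalence.

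It then remains to analyze when $\ExctCplCoLimAbutFltrtnQtntBB{\Vect{x}}=0$ and to establish the implication lattice, and here the key identity is that, by construction of the colimit filtration, $\ExctCplCoLimAbutFltrtnBB{\Vect{x}}=\Img{\ExctCplCoLimAbutMapBB{\Vect{x}}}$ while $\ExctCplCoLimAbutFltrtnBB{\Vect{x}-\ExctCplIBdg{a}}=\ExctCplCoLimAbutMapBB{\Vect{x}}\bigl(\Img{\ExctCplIMapBB{}{\Vect{x}-\ExctCplIBdg{a}}}\bigr)$, the latter because the cocone maps satisfy $\ExctCplCoLimAbutMapBB{\Vect{x}}\Comp \ExctCplIMapBB{}{\Vect{x}-\ExctCplIBdg{a}}=\ExctCplCoLimAbutMapBB{\Vect{x}-\ExctCplIBdg{a}}$. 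Hence $\ExctCplCoLimAbutFltrtnQtntBB{\Vect{x}}=\ExctCplCoLimAbutFltrtnBB{\Vect{x}}/\ExctCplCoLimAbutFltrtnBB{\Vect{x}-\ExctCplIBdg{a}}$ vanishes if and only if $\ExctCplDObjctBB{}{\Vect{x}}=\Img{\ExctCplIMapBB{}{\Vect{x}-\ExctCplIBdg{a}}}+\Ker{\ExctCplCoLimAbutMapBB{\Vect{x}}}$, which is exactly condition (i). This settles (i)$\Longleftrightarrow(\ExctCplCoLimAbutFltrtnQtntBB{\Vect{x}}=0)$.

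For the remaining sufficient conditions I would argue as follows. Condition (iii) makes $\Img{\ExctCplIMapBB{}{\Vect{x}-\ExctCplIBdg{a}}}=\ExctCplDObjctBB{}{\Vect{x}}$, so (ii) holds for every $r\geq 1$; and since the cocone map factors as $\ExctCplDObjctBB{}{\Vect{x}}\to \ExctCplDObjctBB{}{\Vect{x}+r\ExctCplIBdg{a}}\to \ExctCplCoLimAbut{n}$, one has $\Ker{\ExctCplDObjctBB{}{\Vect{x}}\to \ExctCplDObjctBB{}{\Vect{x}+r\ExctCplIBdg{a}}}\subseteq \Ker{\ExctCplCoLimAbutMapBB{\Vect{x}}}$, giving (ii)$\implies$(i). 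Conditions (iv) and (v) are handled directly: $\ExctCplCoLimAbut{n}=0$ forces every filtration stage, hence $\ExctCplCoLimAbutFltrtnQtntBB{\Vect{x}}$, to vanish, and an eventually vanishing $\ZCat$-diagram (\ref{def:Z-DiagramTypes}) has a cofinally trivial transition system, hence zero colimit, so (v)$\implies$(iv). Finally (vi)$\Longleftrightarrow$(ii): the $\ExctCplEPage{r+1}$-extension (\ref{thm:E(r+1)Extension}) presents $\ExctCplEObjctBB{r+1}{\Vect{x}+\ExctCplJBdg{b}}$ as an extension whose quotient is precisely the right-hand side of (vi), so the canonical surjection is an isomorphism exactly when its kernel $\Img{\ExctCplIMapItrtdBB{r}{\Vect{x}}}/\Img{\ExctCplIMapItrtdBB{r+1}{\Vect{x}-\ExctCplIBdg{a}}}$ vanishes, i.e. when $\ExctCplIMapItrtdBB{r}{\Vect{x}}(\ExctCplDObjctBB{}{\Vect{x}})=\ExctCplIMapItrtdBB{r}{\Vect{x}}\bigl(\Img{\ExctCplIMapBB{}{\Vect{x}-\ExctCplIBdg{a}}}\bigr)$, which rearranges to the finite-stage generation statement of (ii).

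The diagram chases are routine; the only genuine care is needed in the equivalence (vi)$\Longleftrightarrow$(ii), where the iterated-image kernel from (\ref{thm:E(r+1)Extension}) must be matched against the finite-stage generation condition while keeping the bidegree shifts $\Vect{x}+\ExctCplJBdg{b}+\ExctCplKBdg{c}-r\ExctCplIBdg{a}$ consistent throughout. I expect the main conceptual point, rather than an obstacle, to be the recognition that conditions (i), (ii), (iii), (vi) all collapse to the single equality $\ExctCplDObjctBB{}{\Vect{x}}=\Img{\ExctCplIMapBB{}{\Vect{x}-\ExctCplIBdg{a}}}+\Ker{\ExctCplCoLimAbutMapBB{\Vect{x}}}$, while (iv) and (v) give $\ExctCplCoLimAbutFltrtnQtntBB{\Vect{x}}=0$ by killing the colimit outright, so that the proposition reduces to one computation together with the bookkeeping of how each hypothesis forces that equality.
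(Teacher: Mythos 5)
Your proposal is correct and follows essentially the same route as the paper: the biconditional via the stable/E-infinity extension diagrams (i.e.\ re-deriving (\ref{thm:AbutmentMatching-Recognition})), the chain of implications for (ii)--(v), and the equivalence (vi) $\Longleftrightarrow$ (ii) read off from (\ref{thm:E(r+1)Extension}) exactly as the paper does. The only cosmetic difference is in proving (i) $\Longleftrightarrow$ $(\ExctCplCoLimAbutFltrtnQtntBB{\Vect{x}}=0)$: the paper packages the computation as the kernel--cokernel $6$-term exact sequence of the composite $\ExctCplDObjctBB{}{\Vect{x}-\ExctCplIBdg{a}}\to \ExctCplDObjctBB{}{\Vect{x}}\to \ExctCplCoLimAbutFltrtnBB{\Vect{x}}$, whereas you carry out the equivalent image computation $\ExctCplCoLimAbutFltrtnBB{\Vect{x}-\ExctCplIBdg{a}}=\ExctCplCoLimAbutMapBB{\Vect{x}}\bigl(\Img{\ExctCplIMapBB{}{\Vect{x}-\ExctCplIBdg{a}}}\bigr)$ directly.
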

\begin{proof}
We already observed in (\ref{thm:AbutmentMatching-Recognition}) that $\ExctCplEObjctBB{\infty}{\Vect{x}+\ExctCplJBdg{b}}$ matches the limit abutment if and only if it is stable and $\ExctCplCoLimAbutFltrtnQtntBB{\Vect{x}} = 0$.

To see that $(\ExctCplCoLimAbutFltrtnQtntBB{\Vect{x}} = 0) \Longleftrightarrow$ (i), let us analyze the kernel-cokernel $6$-term exact sequence of the composite
\begin{equation*}
\ExctCplDObjctBB{}{\Vect{x}-\ExctCplIBdg{a}} \XRA{u\DefEq \ExctCplIMapBB{}{\Vect{x}-\ExctCplIBdg{a} } } \ExctCplDObjctBB{}{\Vect{x}} \XRA{\pi_{\Vect{x}}} \ExctCplCoLimAbutFltrtnBB{\Vect{x}}
\end{equation*}
Indeed, the $6$-term exact sequence contains the segment
\begin{equation*}
\Ker{\pi_{\Vect{x}} } \longrightarrow \CoKer{u} \longrightarrow \CoKer{\pi_{\Vect{x}}\Comp u } \longrightarrow \CoKer{\pi_{\Vect{x}}}=0
\end{equation*}
By definition,  $\CoKer{\pi_{\Vect{x}}\Comp u}=\ExctCplCoLimAbutFltrtnQtntBB{\Vect{x}}$. So, this object vanishes if and only if  $\Ker{\pi_{\Vect{x}}}\to \CoKer{u}$ is surjective. The latter condition is equivalent to (i).

(ii) $\implies$ (i) because $\Ker{\ExctCplDObjctBB{}{\Vect{x}}\to \ExctCplDObjctBB{}{\Vect{x}+r\ExctCplIBdg{a}} }\subseteq \Ker{\ExctCplDObjctBB{}{\Vect{x}}\to \ExctCplCoLimAbut{n}}$.

The implications (iii) $\implies$ (ii), (iv) $\implies$ $(\ExctCplCoLimAbutFltrtnQtntBB{\Vect{x}} = 0)$, and (v) $\implies$ (iv) are immediate.

To see that (vi) is equivalent to (ii), we recall the $E(r+1)$-extension result (\ref{thm:E(r+1)Extension}): we have the short exact sequence
\begin{equation*}
\xymatrix@C=4em{
\dfrac{ \Img{ \ExctCplIMapItrtdBB{r}{\Vect{x}} } }{ \Img{ \ExctCplIMapItrtdBB{r+1}{\Vect{x}-\ExctCplIBdg{a}} } } \ar@{{ |>}->}[r] &
	\ExctCplEObjctBB{r+1}{\Vect{x}+\ExctCplJBdg{b} } \ar@{-{ >>}}[r] &
	\dfrac{ \Ker{ \ExctCplIMapItrtdBB{r+1}{\Vect{x}+\ExctCplJBdg{b} + \ExctCplKBdg{c} - r\ExctCplIBdg{a}} } }{ \Ker{ \ExctCplIMapItrtdBB{r}{\Vect{x}+\ExctCplJBdg{b} + \ExctCplKBdg{c} - r\ExctCplIBdg{a}} } }
}
\end{equation*}
The right hand arrow is an isomorphism if and only if $\Img{\ExctCplIMapBB{r}{\Vect{x}}} = \Img{\ExctCplIMapBB{r+1}{\Vect{x}-\ExctCplIBdg{a}}}$, which is equivalent to (ii).
\end{proof}

Whenever the extension $\SSObjctBB{\infty}{\Vect{x}+\ExctCplJBdg{b}}$ fails to be stable, non-vanishing $\LimOne$-terms are necessarily involved. The lim-1 exact couple (\ref{thm:Lim1ExactCouple-NonStability}) associated with the non-stability of $\SSObjctBB{\infty}{}$-objects tells us that a complete understanding of this situation is much more complicated than one might initially suspect.

\begin{proposition}[Lim-1 exact couple from non-stability]
\label{thm:Lim1ExactCouple-NonStability}%
Let $\ExctCpl{C}$ be a regular exact couple, and fix $n\in \ZNr$. Then the kernel filtration of its limit abutment object $\ExctCplLimAbut{n+\sigma}$ fits into the lim-1 exact couple $(\ExctCplEPage{}(1),\ExctCplDPage{}(1))$ displayed below: %
\index{lim-1 exact couple}%
\begin{equation*}
\xymatrix@C=2em{
\vdots \ar@{{ |>}->}[d] & & \vdots \ar[d] \\
{\color{blue} \ExctCplLimAbutFltrtnBB{\Vect{x}+\ExctCplJBdg{b} + \ExctCplKBdg{c}}} \ar@[blue]@{{ |>}->}[d] \ar[r] &
	\dfrac{ \ExctCplCyclesBB{\infty}{\Vect{x}-\ExctCplIBdg{a}+\ExctCplJBdg{b} } }{ \Img{ \ExctCplJMapBB{}{\Vect{x}-\ExctCplIBdg{a} } } } \ar[r] &
	\LimOneOf{ \Ker{ \ExctCplIMapItrtdBB{r}{\Vect{x}- \ExctCplIBdg{a}+\ExctCplJBdg{b}+\ExctCplKBdg{c} - r\ExctCplIBdg{a} } } } \ar@{-{ >>}}[r] \ar[d] &
	\LimOneOf{ \dfrac{ \ExctCplCyclesBB{r}{\Vect{x}-2\ExctCplIBdg{a}+\ExctCplJBdg{b} } }{ \Img{ \ExctCplJMapBB{}{\Vect{x}-2\ExctCplIBdg{a} } } } } \\
{\color{blue} \ExctCplLimAbutFltrtnBB{\Vect{x}+\ExctCplIBdg{a}+\ExctCplJBdg{b} + \ExctCplKBdg{c} } } \ar@{{ |>}->}[d] \ar@[blue][r] &
	{\color{blue} \dfrac{ \ExctCplCyclesBB{\infty}{\Vect{x}+\ExctCplJBdg{b} } }{ \Img{ \ExctCplJMapBB{}{\Vect{x} } } } } \ar@[blue][r] &
	{\color{blue}\LimOneOf{ \Ker{ \ExctCplIMapItrtdBB{r}{\Vect{x}+\ExctCplJBdg{b}+\ExctCplKBdg{c} - r\ExctCplIBdg{a} } } } } \ar@{-{ >>}}[r] \ar@[blue][d] &
	\LimOneOf{ \dfrac{ \ExctCplCyclesBB{r}{\Vect{x}-\ExctCplIBdg{a}+\ExctCplJBdg{b} } }{ \Img{ \ExctCplJMapBB{}{\Vect{x}  -\ExctCplIBdg{a} } } } }  \\
\ExctCplLimAbutFltrtnBB{\Vect{x}+2\ExctCplIBdg{a}+\ExctCplJBdg{b} + \ExctCplKBdg{c}} \ar@{{ |>}->}[d] \ar[r] &
	\dfrac{ \ExctCplCyclesBB{\infty}{\Vect{x}+\ExctCplIBdg{a}+\ExctCplJBdg{b} } }{ \Img{ \ExctCplJMapBB{}{\Vect{x}+\ExctCplIBdg{a} } } } \ar[r] &
	{\color{blue} \LimOneOf{ \Ker{ \ExctCplIMapItrtdBB{r}{\Vect{x}+\ExctCplIBdg{a}+\ExctCplJBdg{b}+\ExctCplKBdg{c} - r\ExctCplIBdg{a} } } } } \ar@{-{ >>}}@[blue][r] \ar[d] &
	{\color{blue} \LimOneOf{ \dfrac{ \ExctCplCyclesBB{r}{\Vect{x}+\ExctCplJBdg{b} } }{ \Img{ \ExctCplJMapBB{}{\Vect{x} } } } } }  \\
\vdots & & \vdots 
}
\end{equation*}
Objects outside the displayed columns are $0$. The lim-1 exact couple is regular via the following settings:
\begin{equation*}
\begin{array}{rclcccrcl}
\ExctCplDObjctBB{}{\Vect{x}}(1) &
	\DefEq &
	\ExctCplLimAbutFltrtnBB{\Vect{x}+\ExctCplIBdg{a}+\ExctCplJBdg{b} + \ExctCplKBdg{c} } &
	\qquad  &
	\text{and} &
	\qquad &
	\ExctCplDObjctBB{}{\Vect{x} + \ExctCplJBdg{b}+\ExctCplKBdg{c}}(1) &
	\DefEq &
	\LimOneOf{ \Ker{ \ExctCplIMapItrtdBB{r}{\Vect{x}+\ExctCplJBdg{b}+\ExctCplKBdg{c} - r\ExctCplIBdg{a} } } } \\
\ExctCplEObjctBB{}{\Vect{x} + \ExctCplJBdg{b}}(1) &
	\DefEq &
	\dfrac{ \ExctCplCyclesBB{\infty}{\Vect{x}+\ExctCplJBdg{b} } }{ \Img{ \ExctCplJMapBB{}{\Vect{x} } } } &
	\qquad  &
	\text{and} &
	\qquad &
	\ExctCplEObjctBB{}{\Vect{x} + 2\ExctCplJBdg{b}+\ExctCplKBdg{c}}(1) &
	\DefEq &
	\LimOneOf{ \dfrac{ \ExctCplCyclesBB{r}{\Vect{x}-\ExctCplIBdg{a}+\ExctCplJBdg{b} } }{ \Img{ \ExctCplJMapBB{}{\Vect{x}  -\ExctCplIBdg{a} } } } }
\end{array}
\end{equation*}
\end{proposition}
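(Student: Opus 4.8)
The plan is to extract the entire lim-1 exact couple from the six-term $\Lim$-$\LimOne$ sequences that are already half-present in the proof of the E-infinity extension theorem (\ref{thm:E-InfinityExtensionThm}). The starting point is the short exact sequence of $\OrdOmega$-diagrams supplied by (\ref{thm:I-Iterations-AdjacentKernels}),
\[
\Ker{\ExctCplIMapItrtdBB{r}{\Vect{x}+\ExctCplJBdg{b}+\ExctCplKBdg{c}-r\ExctCplIBdg{a}}} \hookrightarrow \Ker{\ExctCplIMapItrtdBB{r+1}{\Vect{x}+\ExctCplJBdg{b}+\ExctCplKBdg{c}-r\ExctCplIBdg{a}}} \twoheadrightarrow \tfrac{\ExctCplCyclesBB{r}{\Vect{x}+\ExctCplJBdg{b}}}{\Ker{\ExctCplKMapBB{}{\Vect{x}+\ExctCplJBdg{b}}}}.
\]
First I would apply the natural six-term $\Lim$-$\LimOne$ exact sequence to this short exact sequence and insert the identifications already made in (\ref{thm:E-InfinityExtensionThm}): via (\ref{thm:Kernel/ImageSequencesOfZ-Diagram}) the first two $\Lim$-terms are $\ExctCplLimAbutFltrtnBB{\Vect{x}+\ExctCplJBdg{b}+\ExctCplKBdg{c}}$ and $\ExctCplLimAbutFltrtnBB{\Vect{x}+\ExctCplIBdg{a}+\ExctCplJBdg{b}+\ExctCplKBdg{c}}$, while the third is $\ExctCplCyclesBB{\infty}{\Vect{x}+\ExctCplJBdg{b}}/\Img{\ExctCplJMapBB{}{\Vect{x}}}$ (using exactness of $\ExctCpl{C}$ to rewrite $\Ker{\ExctCplKMapBB{}{\Vect{x}+\ExctCplJBdg{b}}}=\Img{\ExctCplJMapBB{}{\Vect{x}}}$). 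The three $\LimOne$-terms are then exactly the three remaining objects $\ExctCplDObjctBB{}{}(1)$ and $\ExctCplEObjctBB{}{}(1)$ named in the statement, so this one sequence is a full ``width'' of the claimed staircase.

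The second step is to recognize this six-term sequence as two consecutive triangles of a bigraded exact couple. Reindexing by $s=r+1$ identifies $\LimOneOf{\Ker{\ExctCplIMapItrtdBB{r+1}{\Vect{x}+\ExctCplJBdg{b}+\ExctCplKBdg{c}-r\ExctCplIBdg{a}}}}$ with $\ExctCplDObjctBB{}{\Vect{x}+\ExctCplIBdg{a}+\ExctCplJBdg{b}+\ExctCplKBdg{c}}(1)$ in the notation of the proposition, and similarly the two $\Lim$-terms become $\ExctCplDObjctBB{}{\Vect{x}-\ExctCplIBdg{a}}(1)$ and $\ExctCplDObjctBB{}{\Vect{x}}(1)$. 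Tracking the index shifts along the sequence, the five maps carry bidegrees $\ExctCplIBdg{a},\ExctCplJBdg{b},\ExctCplKBdg{c},\ExctCplIBdg{a},\ExctCplJBdg{b}$ in order; hence the structure maps of $\ExctCpl{C}(1)$ inherit the bidegrees $\ExctCplIBdg{a},\ExctCplJBdg{b},\ExctCplKBdg{c}$ of $\ExctCpl{C}$, so that $\sigma(1)=\DtrmnntOfMtrx{[\ExctCplIBdg{a}\ (\ExctCplJBdg{b}+\ExctCplKBdg{c})]}=\sigma\in\Set{\pm 1}$ and $\ExctCpl{C}(1)$ is regular in the sense of (\ref{def:(ZxZ)-BigradedExactCouple}). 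Exactness of the couple at $\ExctCplDObjctBB{}{\Vect{x}}(1)$, at $\ExctCplEObjctBB{}{\Vect{x}+\ExctCplJBdg{b}}(1)$ and at $\ExctCplDObjctBB{}{\Vect{x}+\ExctCplJBdg{b}+\ExctCplKBdg{c}}(1)$ is then precisely exactness of the six-term sequence at its three inner terms.

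Third, I would assemble this family over all base positions $\Vect{x}+s\ExctCplIBdg{a}$, $s\in\ZNr$. The left $\ExctCplDPage{}(1)$-column is the kernel filtration $\{\ExctCplLimAbutFltrtnBB{\Vect{x}+s\ExctCplIBdg{a}+\ExctCplJBdg{b}+\ExctCplKBdg{c}}\}$ of $\ExctCplLimAbut{n+\sigma}$, with $\ExctCplIMap{}(1)$ acting as its filtration inclusions; the right $\ExctCplDPage{}(1)$-column carries the maps obtained by applying $\LimOne$ to the inclusions $\Ker{\ExctCplIMapItrtdBB{r}{}}\hookrightarrow\Ker{\ExctCplIMapItrtdBB{r+1}{}}$, which reindex to the stated shift-by-$\ExctCplIBdg{a}$ maps. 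Interlocking the six-term sequences along these two columns reproduces the displayed staircase: each printed row is a segment that crosses from one six-term sequence to its $\ExctCplIBdg{a}$-predecessor, which is why the final surjection in each row lands in the $\ExctCplEPage{}(1)$-object supplied by the neighbouring sequence. Objects off the four columns vanish because $\Ker{\ExctCplIMapItrtdBB{r}{}}$ and $\ExctCplCyclesBB{r}{}/\Img{\ExctCplJMapBB{}{}}$ are supported only there, and functoriality of the whole picture is inherited from the functoriality recorded in (\ref{thm:E-InfinityExtensionThm}).

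The hard part will be the bookkeeping of this third step. One must check that the two a priori unrelated families of vertical connecting maps really constitute a single structure map $\ExctCplIMap{}(1)$ of bidegree $\ExctCplIBdg{a}$, and that the squares they form with the horizontal $\Lim$-$\LimOne$ maps commute; this is a naturality statement for the six-term sequence with respect to the index-shift endomorphism induced by $\ExctCplIMap{}$ of the ambient couple. In tandem one must confirm that, after the identifications above, the abstract connecting map $\Lim(\,\cdot\,)\to\LimOne(\,\cdot\,)$ and the terminal map $\LimOne(\,\cdot\,)\to\LimOne(\,\cdot\,)$ coincide with the arrows displayed as $\ExctCplKMap{}(1)$ and $\ExctCplJMap{}(1)$. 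Both reduce to careful but routine diagram chases built on the naturality of the $\LimOne$-construction from the appendix, so that no genuinely new ingredient beyond (\ref{thm:E-InfinityExtensionThm}) is required.
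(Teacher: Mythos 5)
Your proposal is correct and follows essentially the same route as the paper's (much terser) proof: assemble the lim-1 couple from the $6$-term $\Lim$-$\LimOne$ sequences produced in the E-infinity extension theorem (\ref{thm:E-InfinityExtensionThm}), glue overlapping sequences via the reindexing identity $\LimOneOf{ \Ker{ \ExctCplIMapItrtdBB{r+1}{\Vect{x} + \ExctCplJBdg{b}+\ExctCplKBdg{c} - r\ExctCplIBdg{a} } } } = \LimOneOf{ \Ker{ \ExctCplIMapItrtdBB{r}{\Vect{x}+\ExctCplIBdg{a}+\ExctCplJBdg{b}+\ExctCplKBdg{c} - r\ExctCplIBdg{a} } } }$, and observe that the structure maps inherit the bidegrees $\ExctCplIBdg{a},\ExctCplJBdg{b},\ExctCplKBdg{c}$, whence regularity. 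The cross-sequence naturality you flag as the ``hard part'' is in fact not needed: since each structure map of the couple occurs in exactly one $6$-term sequence and every exactness relation is internal to a single such sequence, only the object-level identification above is required, which is precisely the one observation the paper's proof makes.
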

\begin{proof}
This exact couple is formed from the 6-term exact sequences in the E-infinity extension theorem. To see that the $\ZCat$-diagram of lim-1-objects is valid, we just need to notice that
\begin{equation*}
\LimOneOf{ \Ker{ \ExctCplIMapItrtdBB{r}{\Vect{x}+\ExctCplIBdg{a}+\ExctCplJBdg{b}+\ExctCplKBdg{c} - r\ExctCplIBdg{a} } } } = \LimOneOf{ \Ker{ \ExctCplIMapItrtdBB{r+1}{\Vect{x} + \ExctCplJBdg{b}+\ExctCplKBdg{c} - r\ExctCplIBdg{a} } } }
\end{equation*}
Via the proposed settings, the bidegrees of its structure maps match those of the original exact couple. Thus the lim-1 exact couple is regular.
\end{proof}

\begin{proposition}[Lim-1 exact couple for stable $\ExctCplEPage{\infty}$]
\label{thm:Lim1ExactCouple-StableE-Infinity}
If the $\ExctCplEPage{\infty}$\MSComp-objects of a regular exact couple are stable, then its lim-1 exact couple has the following properties.
\begin{enumerate}[(i)]
\item The lim-1 exact couple matches its colimit abutment, and the associated spectral sequence collapses on page $1$.
\item The kernel filtration of the limit of the $\ZCat$-diagram of lim-1-objects is constant at $0$.
\end{enumerate}
\end{proposition}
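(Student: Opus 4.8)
The plan is to trace how the stability hypothesis propagates through the explicit description of the lim-1 exact couple $(\ExctCplEPage{}(1),\ExctCplDPage{}(1))$ recorded in (\ref{thm:Lim1ExactCouple-NonStability}). The pivotal observation is that the $\LimOne$-valued $D$-column of that couple, the objects $\ExctCplDObjctBB{}{\Vect{x}+\ExctCplJBdg{b}+\ExctCplKBdg{c}}(1)=\LimOneOf{\Ker{\ExctCplIMapItrtdBB{r}{\Vect{x}+\ExctCplJBdg{b}+\ExctCplKBdg{c}-r\ExctCplIBdg{a}}}}$, is joined by structure maps $\ExctCplIMapBB{}{}(1)$ which, via the identity $\LimOneOf{\Ker{\ExctCplIMapItrtdBB{r}{\Vect{x}+\ExctCplIBdg{a}+\ExctCplJBdg{b}+\ExctCplKBdg{c}-r\ExctCplIBdg{a}}}}=\LimOneOf{\Ker{\ExctCplIMapItrtdBB{r+1}{\Vect{x}+\ExctCplJBdg{b}+\ExctCplKBdg{c}-r\ExctCplIBdg{a}}}}$ noted in the proof of (\ref{thm:Lim1ExactCouple-NonStability}), are literally the comparison maps appearing in (\ref{thm:StableE-InfinityExtension-Lemma}.ii). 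Hence stability of the $\ExctCplEPage{\infty}$-objects of $\ExctCpl{C}$ is, by (\ref{thm:StableE-InfinityExtension-Lemma}.ii), equivalent to each such $\ExctCplIMapBB{}{}(1)$ being a monomorphism. Both claims then become formal consequences of having an exact couple one of whose $D$-columns is monomorphically filtered, which I would deduce from the machinery already in place.

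For part (i) I would first obtain the collapse. Since the lim-1 couple is exact, at the corner $\ExctCplEObjctBB{}{\Vect{x}+\ExctCplJBdg{b}}(1)\to \ExctCplDObjctBB{}{\Vect{x}+\ExctCplJBdg{b}+\ExctCplKBdg{c}}(1)\to \ExctCplDObjctBB{}{\Vect{x}+\ExctCplIBdg{a}+\ExctCplJBdg{b}+\ExctCplKBdg{c}}(1)$ one has $\Img{\ExctCplKMapBB{}{\Vect{x}+\ExctCplJBdg{b}}(1)}=\Ker{\ExctCplIMapBB{}{\Vect{x}+\ExctCplJBdg{b}+\ExctCplKBdg{c}}(1)}=0$, the last equality by the monicity just established; thus $\ExctCplKMapBB{}{\Vect{x}+\ExctCplJBdg{b}}(1)=0$ and the page-$1$ differential $\SSDffrntl{1}(1)=\ExctCplJMapBB{}{}(1)\Comp \ExctCplKMapBB{}{}(1)$ vanishes. (The differentials issuing from the second $E$-column vanish for the trivial reason that their $\ExctCplKMap{}$-target lies outside the nonzero columns.) So the spectral sequence collapses on page $1$. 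Colimit-abutment matching then follows by applying (\ref{thm:CoLimitAbutmentMatching-Conditions}.ii) to the lim-1 couple, the relevant structure map $\ExctCplIMapBB{}{\Vect{x}+\ExctCplJBdg{b}+\ExctCplKBdg{c}}(1)$ being monic; this yields $\ExctCplEObjctBB{\infty}{\Vect{x}+\ExctCplJBdg{b}}(1)\cong \ExctCplCoLimAbutFltrtnQtntBB{\Vect{x}}(1)$, i.e. the couple matches its colimit abutment.

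For part (ii) I would identify the stages of the kernel filtration of the limit of the $\LimOne$-valued $\ZCat$-diagram through the standard description $\ExctCplLimAbutFltrtnBB{}(1)\cong \LimOf{\Ker{(\ExctCplIMapBB{}{}(1))^{r}}}$, the analogue for the lim-1 couple of the identification used in (\ref{thm:E-InfinityExtensionThm}) via (\ref{thm:Kernel/ImageSequencesOfZ-Diagram}). Because $\ExctCplIMapBB{}{}(1)$ is monic at every position, each iterate $(\ExctCplIMapBB{}{}(1))^{r}$ is monic, so $\Ker{(\ExctCplIMapBB{}{}(1))^{r}}=0$ for all $r$ and the limit vanishes. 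Hence the kernel filtration is constant at $0$.

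The step I expect to be the main obstacle is the index bookkeeping in the first paragraph: one must verify, tracking the bidegrees $\ExctCplIBdg{a},\ExctCplJBdg{b},\ExctCplKBdg{c}$, that the $\ExctCplIMap{}$-structure map of the lim-1 couple on its $\LimOne$-column is exactly the map of (\ref{thm:StableE-InfinityExtension-Lemma}.ii), and that the map $\ExctCplKMapBB{}{\Vect{x}+\ExctCplJBdg{b}}(1)$ indeed targets that $\LimOne$-column and not the $\ExctCplLimAbutFltrtnBB{}$-column (whose structure maps are monic only for the unrelated, trivial reason that they are inclusions of an increasing kernel filtration). Once this index match is pinned down, both assertions reduce to the two consequences of monicity recorded above.
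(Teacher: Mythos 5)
Your proposal is correct and follows essentially the same route as the paper's own proof: stability is converted (via the stable E-infinity extension lemma, equivalently the E-infinity extension theorem) into monomorphy of the structure maps on the $\LimOne$-column of the lim-1 couple, from which exactness forces $\ExctCplKMapBB{}{\Vect{x}+\ExctCplJBdg{b}}(1)=0$ and hence collapse on page $1$, colimit-abutment matching follows from (\ref{thm:CoLimitAbutmentMatching-Conditions}), and claim (ii) follows because a $\ZCat$-diagram with monomorphic structure maps has vanishing kernel filtration of its limit. The only differences are expository: you route the monicity through (\ref{thm:StableE-InfinityExtension-Lemma}.ii) and spell out (ii) via (\ref{thm:Kernel/ImageSequencesOfZ-Diagram}), whereas the paper cites (\ref{thm:E-InfinityExtensionThm}) directly and leaves those steps implicit.
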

\begin{proof}
As the objects $\ExctCplEObjctBB{\infty}{\Vect{x}+r\Vect{a}+\Vect{b}}$ are stable the E-infinity extension theorem (\ref{thm:E-InfinityExtensionThm}) tells us that the maps below are monomorphisms:
\begin{equation*}
\LimOneOf{ \Ker{ \ExctCplIMapItrtdBB{r}{\Vect{x}+\ExctCplJBdg{b}+\ExctCplKBdg{c} - r\ExctCplIBdg{a} } } } \longrightarrow \LimOneOf{ \Ker{ \ExctCplIMapItrtdBB{r+1}{\Vect{x} + \ExctCplJBdg{b}+\ExctCplKBdg{c} - r\ExctCplIBdg{a} } } }
\end{equation*}
By (\ref{thm:CoLimitAbutmentMatching-Conditions}) the lim-1 spectral sequence matches its colimit abutments. Further, by exactness, differentials with domain $\ExctCplCyclesBB{\infty}{\Vect{x}+\ExctCplJBdg{b}+ k\ExctCplIBdg{a}}/\Img{\ExctCplJMapBB{}{\Vect{x}+k\ExctCplIBdg{a}}}$ vanish, as they are a composite (\ref{thm:SpecSequFromExactCouple}) of additive relations involving the $0$-map
$$
\ExctCplCyclesBB{\infty}{\Vect{x}+\ExctCplJBdg{b}+k\ExctCplIBdg{a}}/\Img{\ExctCplJMapBB{}{\Vect{x}+k\ExctCplIBdg{a}}}\longrightarrow \LimOneOf{\Ker{ \ExctCplIMapItrtdBB{r}{\Vect{x}+k\ExctCplIBdg{a}+ \ExctCplJBdg{b}+\ExctCplKBdg{c} - r\ExctCplIBdg{a} } } }.
$$
All of the remaining differentials vanish as at least one of their domains or codomains is $0$. So, the lim-1 spectral sequence collapses on page $1$. Claim (ii) is true because the vertical maps of $\LimOne$-objects are monomorphisms.
\end{proof}

We close this section with examples which demonstrate that, whenever we wish to analyze the meaning of the E-infinity objects from the spectral sequence associated to a regular exact couple, we {\em must} consult the underlying exact couple. Boardman responds to the need for this consultation by making an advance selection of exact couples which he discusses, namely those which he deems relevant for `real world spectral sequences' \cite[p~ 15]{JMBoardman1999}, and the resulting selection is made explicit here \cite[p~19]{JMBoardman1999}.

\begin{example}[Same spectral sequence various extensions]
\label{exa:SameSpecSeq-VariousExtensions}%
Here are three distinct exact couples along with their relevant E-infinity extension diagrams. Each exact couple has exactly one nonzero $E$-object, and this $E$-object is common to all three exact couples. Thus, all three exact couples have the same associated spectral sequence: a first quadrant spectral sequence which collapses on page~$1$. So, all three exact couples yield the exact same $E^{\infty}$-page.

In each case this $E^{\infty}$-page is stable. However, depending upon the underlying exact couple, the one and only nonzero $E^{\infty}$-object matches the colimit abutment, respectively the limit abutment, respectively fits a proper extension involving the quotients of a certain pair of adjacent colimit abutment filtration stages and a certain pair of adjacent limit abutment filtration stages.

To be specific, the one and only nonzero $E$-term is to be $\ExctCplEObjctBB{1}{0,0}=\ZMod{6}$\MSComp, and the bidegrees of its structure maps are to be $\ExctCplIBdg{a}=(1,-1)$\MSComp, $\ExctCplJBdg{b}=(0,0)$\MSComp, and $\ExctCplKBdg{c}=(-1,0)$\MSComp.
\begin{equation*}
\resizebox{.99\textwidth}{!}{$
\begin{array}{c|c|c}
\xymatrix@R=5ex@C=.5em{
& \ExctCplLimAbut{0}=0 \ar@{.>}[d] &&
	0=\ExctCplLimAbut{-1} \ar@{.>}[d] \\
0 \ar[r] &
0 \ar[d] \ar[r] &
0 \ar[r] &
0 \ar[r] \ar[d] &
0 \\
0 \ar[r] &
\ZMod{6} \ar[d] \ar@{{ |>}->}[r]^-{j}_-{\cong} &
{\color{red} \ZMod{6}} \ar@{-{ >>}}[r]^-{k} &
0 \ar[r] \ar[d] &
0 \\
0 \ar[r] &
\ZMod{6} \ar[r] \ar@{.>}[d] &
0 \ar[r] &
0 \ar[r] \ar@{.>}[d] &
0 \\
& \ExctCplCoLimAbut{0}=\ZMod{6} &&
	0=\ExctCplCoLimAbut{-1}
}
 &
\xymatrix@R=5ex@C=.5em{
& \ExctCplLimAbut{0}=0 \ar@{.>}[d] &&
	\ZMod{3}=\ExctCplLimAbut{-1} \ar@{.>}[d] \\
0 \ar[r] &
0 \ar[d] \ar[r] &
0 \ar[r] &
\ZMod{3} \ar[r] \ar@{=}[d] &
0 \\
0 \ar[r] &
	\ZMod{2} \ar@{=}[d] \ar@{{ |>}->}[r]^-{j} &
	{\color{red} \ZMod{6}} \ar@{-{ >>}}[r]^-{k} &
	\ZMod{3} \ar[r] \ar[d] &
	0 \\
0 \ar[r] &
	\ZMod{2} \ar[r] \ar@{.>}[d] &
0 \ar[r] &
0 \ar[r] \ar@{.>}[d] &
0 \\
& \ExctCplCoLimAbut{0}=\ZMod{2} &&
	0=\ExctCplCoLimAbut{-1}
} &
\xymatrix@R=5ex@C=.5em{
& \ExctCplLimAbut{0}=0 \ar@{.>}[d] &&
	\ZMod{6}=\ExctCplLimAbut{-1} \ar@{.>}[d] \\
0 \ar[r] &
0 \ar[d] \ar[r] &
0 \ar[r] &
\ZMod{6} \ar[r] \ar@{=}[d] &
0 \\
0 \ar[r] &
	0 \ar[d] \ar@{{ |>}->}[r]^-{j} &
	{\color{red} \ZMod{6}} \ar@{-{ >>}}[r]^-{k}_-{\cong} &
	\ZMod{6} \ar[r] \ar[d] &
	0 \\
0 \ar[r] &
	0 \ar[r] \ar@{.>}[d] &
	0 \ar[r] &
	0 \ar[r] \ar@{.>}[d] &
	0 \\
& \ExctCplCoLimAbut{0}=0 &&
	0=\ExctCplCoLimAbut{-1}
} \\
\xymatrix@R=5ex@C=1em{
{\color{blue} 0=\ExctCplCoLimAbutFltrtnBB{-1,1}} \ar@{{ |>}->}@[blue][d] &&& {\color{red} 0=\ExctCplLimAbutFltrtnBB{-2,1}} \ar@{{ |>}->}@[red][d] \\
{\color{blue} \ZMod{6}=\ExctCplCoLimAbutFltrtnBB{0,0}} \ar@[blue][r] \ar@{-{ >>}}@[blue][rd] &
	{\color{blue} \ZMod{6} } \ar@{-{ >>}}@[blue][r] &
	{\color{red} 0 } &
	{\color{red} 0 = \ExctCplLimAbutFltrtnBB{-1,0} } \ar@[red][l]_-{\color{red} \cong} \ar@[red][ld]^{\color{red} \cong} \\
& {\color{blue} \ZMod{6} } \ar@{=}@[blue][u] &
	{\color{red} 0 } \ar@[red][u]^{\color{red} \cong}
} & 
	\xymatrix@R=5ex@C=1em{
	{\color{blue} 0=\ExctCplCoLimAbutFltrtnBB{-1,1}} \ar@{{ |>}->}@[blue][d] &&& {\color{red} 0=\ExctCplLimAbutFltrtnBB{-2,1}} \ar@{{ |>}->}@[red][d] \\
	{\color{blue} \ZMod{2}=\ExctCplCoLimAbutFltrtnBB{0,0}} \ar@[blue][r] \ar@{-{ >>}}@[blue][rd] &
		{\color{blue} \ZMod{6} } \ar@{-{ >>}}@[blue][r] &
		{\color{red} \ZMod{3} } &
		{\color{red} \ZMod{3} = \ExctCplLimAbutFltrtnBB{-1,0} } \ar@[red][l]_-{\color{red} \cong} \ar@[red][ld]^{\color{red} \cong} \\
	& {\color{blue} \ZMod{2} } \ar@{{ |>}->}@[blue][u] &
		{\color{red} \ZMod{3} } \ar@[red][u]^{\color{red} \cong}
	} & 
	\xymatrix@R=5ex@C=1em{
	{\color{blue} 0=\ExctCplCoLimAbutFltrtnBB{-1,1}} \ar@{{ |>}->}@[blue][d] &&& {\color{red} 0=\ExctCplLimAbutFltrtnBB{-2,1}} \ar@{{ |>}->}@[red][d] \\
	{\color{blue} 0=\ExctCplCoLimAbutFltrtnBB{0,0}} \ar@[blue][r] \ar@{-{ >>}}@[blue][rd] &
		{\color{blue} \ZMod{6} } \ar@{=}@[blue][r] &
		{\color{red} \ZMod{6} } &
		{\color{red} \ZMod{6} = \ExctCplLimAbutFltrtnBB{-1,0} } \ar@[red][l]_-{\color{red} \cong} \ar@[red][ld]^{\color{red} \cong} \\
	& {\color{blue} 0 } \ar@{{ |>}->}@[blue][u] &
		{\color{red} \ZMod{6} } \ar@[red][u]^{\color{red} \cong}
	} \\
	\text{matches colimit abutment} & \text{abutment match via extension} & \text{matches limit abutment}
\end{array}
$}
\end{equation*}%
\end{example}

\section[Comparison I]{Comparison I: E-infinity to Universal Abutments}
\label{sec:SpecSeq-Comparison-I}

Consider a morphism $f\from \ExctCpl{C}(1)\to \ExctCpl{C}(2)$ of exact couples. It induces a morphism $f_{\ast}$ of associated spectral sequences along with morphisms
\begin{equation*}
L_{\ast}f\from L_*(1)\longrightarrow L_*(2)  \qquad \text{and}\qquad L^{\ast}f\from L^*(1)\longrightarrow L^*(2)
\end{equation*}
of colimit/limit abutment objects. `Comparing spectral sequences' means to establish a relationship between properties of the maps $L_{\ast}f$ and/or $L^{\ast}f$ and properties of the maps $f^{\infty}\from \ExctCplEPage{\infty}(1)\to \ExctCplEPage{\infty}(2)$. - Here we address the question: Under which conditions is $L_*f$ and/or $L^*f$ a monomorphism, respectively an epimorphism, respectively an isomorphism?

We offer a sampling of conditions under which this happens. Many more such comparison results can easily be composed. They all share a common architecture: First, we use the E-infinity extension theorem (\ref{thm:E-InfinityExtensionThm}) to infer how $f$ acts on the objects $\ExctCplCoLimAbutFltrtnQtntBB{\Vect{x}}$ and/or $\ExctCplLimAbutFltrtnQtntBB{\Vect{x}+\ExctCplJBdg{b}+\ExctCplKBdg{c}}$ from the way it acts on objects $\ExctCplEObjctBB{\infty}{\Vect{x}+\ExctCplJBdg{b}}$. Then, via the background section \ref{sec:ZDiagramComparison}, we use this information to infer how $f$ acts on the universal abutting objects $\ExctCplCoLimAbut{n}$ and $\ExctCplLimAbut{n+\sigma}$.

For exact couple constituents we systematically use the notations introduced in Section \ref{sec:ExactCouples}.

\begin{lemma}[Monomorphism of colimit abutments - I]
\label{thm:Mono-UnivCoLimitAbutments-I}%
Let $f\from \ExctCpl{C}(1)\to \ExctCpl{C}(2)$ be a morphism of regular exact couples such that the following hold for a given $n\in \ZNr$: %
\vspace{-1.7ex}%
\begin{enumerate}[(i)]
\item For all $\Vect{x}$ with $\Dtrmnnt[\ExctCplIBdg{a}\ \ \Vect{x}]=n$, the map $f^{\infty}\from \SSObjctBB{\infty}{\Vect{x}+\ExctCplJBdg{b}}(1)\to \SSObjctBB{\infty}{\Vect{x}+\ExctCplJBdg{b}}(2)$ is a monomorphism.
\item $f$ induces a monomorphism \ $L_{n}f|\from \LimOfOver{\ExctCplCoLimAbutFltrtnBB{\Vect{x}(n)+r\ExctCplIBdg{a}}(1)}{r}\longrightarrow 
\LimOfOver{\ExctCplCoLimAbutFltrtnBB{\Vect{x}(n)+r\ExctCplIBdg{a}}(2)}{r}$.
\vspace{-1.7ex}%
\end{enumerate}
Then $f$ induces a monomorphism $L_nf\from \ExctCplCoLimAbut{n}(1)\to \ExctCplCoLimAbut{n}(2)$.
\end{lemma}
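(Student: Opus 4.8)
The plan is to exploit the image filtration of the colimit abutment together with the fact --- supplied functorially by the E-infinity extension theorem (\ref{thm:E-InfinityExtensionThm}) --- that hypothesis (i) forces $f$ to be injective on every adjacent filtration quotient, and then to run a descent argument anchored by hypothesis (ii). First I would record the structural features of the filtration: the abutment $\ExctCplCoLimAbut{n}$ carries the filtration by the submodules $\ExctCplCoLimAbutFltrtnBB{\Vect{x}(n)+r\ExctCplIBdg{a}}=\Img{\ExctCplCoLimAbutMapBB{\Vect{x}(n)+r\ExctCplIBdg{a}}}$, this filtration is exhaustive ($\FamUnion{r}{\ExctCplCoLimAbutFltrtnBB{\Vect{x}(n)+r\ExctCplIBdg{a}}}=\ExctCplCoLimAbut{n}$, immediate from $\ExctCplCoLimAbut{n}$ being a colimit), and its total intersection $\FamIntrsctn{r}{\ExctCplCoLimAbutFltrtnBB{\Vect{x}(n)+r\ExctCplIBdg{a}}}$ agrees with $\LimOfOver{\ExctCplCoLimAbutFltrtnBB{\Vect{x}(n)+r\ExctCplIBdg{a}}}{r}$, the domain appearing in (ii). Since $L_nf$ is compatible with the cocone projections, it carries $\ExctCplCoLimAbutFltrtnBB{\Vect{x}}(1)$ into $\ExctCplCoLimAbutFltrtnBB{\Vect{x}}(2)$ and hence induces maps on the quotients $\ExctCplCoLimAbutFltrtnQtntBB{\Vect{x}}$.

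Next I would convert hypothesis (i) into injectivity on each filtration quotient. Every index $\Vect{x}=\Vect{x}(n)+r\ExctCplIBdg{a}$ satisfies $\DtrmnntOfMtrx{[\ExctCplIBdg{a}\ \Vect{x}]}=n$, so (i) covers all of them. The E-infinity extension theorem provides a functorial inclusion $\ExctCplCoLimAbutFltrtnQtntBB{\Vect{x}}\hookrightarrow \SSObjctBB{\infty}{\Vect{x}+\ExctCplJBdg{b}}$; its naturality square (for the morphism $f$) has a monic right edge $f^{\infty}$ by (i) and monic horizontal edges, so a short diagram chase shows the left edge $\ExctCplCoLimAbutFltrtnQtntBB{\Vect{x}}(1)\to\ExctCplCoLimAbutFltrtnQtntBB{\Vect{x}}(2)$ is a monomorphism for every such $\Vect{x}$.

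The core step is then the descent. Given $\xi\in\ExctCplCoLimAbut{n}(1)$ with $L_nf(\xi)=0$, exhaustiveness places $\xi$ in some $\ExctCplCoLimAbutFltrtnBB{\Vect{y}}(1)$ with $\Vect{y}=\Vect{x}(n)+r\ExctCplIBdg{a}$. As the filtration stages inject into the abutment and $L_nf(\xi)=0$, the image of $\xi$ in $\ExctCplCoLimAbutFltrtnQtntBB{\Vect{y}}(2)$ vanishes, whence injectivity on $\ExctCplCoLimAbutFltrtnQtntBB{\Vect{y}}$ forces $\xi\in\ExctCplCoLimAbutFltrtnBB{\Vect{y}-\ExctCplIBdg{a}}(1)$. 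Iterating downward gives $\xi\in\ExctCplCoLimAbutFltrtnBB{\Vect{y}-k\ExctCplIBdg{a}}(1)$ for all $k\geq 0$, so $\xi$ lands in $\FamIntrsctn{r}{\ExctCplCoLimAbutFltrtnBB{\Vect{x}(n)+r\ExctCplIBdg{a}}}(1)=\LimOfOver{\ExctCplCoLimAbutFltrtnBB{\Vect{x}(n)+r\ExctCplIBdg{a}}(1)}{r}$; since $L_nf(\xi)=0$, hypothesis (ii) yields $\xi=0$.

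I expect the descent to be the delicate point: injectivity on the associated graded never by itself forces injectivity on a filtered object, so the argument genuinely requires exhaustiveness to \emph{start} the descent and hypothesis (ii) to \emph{close} it at the intersection. The remaining work is bookkeeping --- verifying that the $\ZCat$-indexed intersection of the stages equals the inverse limit of the descending tail, and that the functoriality clause of (\ref{thm:E-InfinityExtensionThm}) applies verbatim to $f$ so that the naturality square used in the second step is legitimate.
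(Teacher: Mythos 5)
Your proof is correct and follows the paper's own route: the first half (hypothesis (i) combined with the naturality of the E-infinity extension theorem (\ref{thm:E-InfinityExtensionThm}) forcing monomorphisms on the adjacent filtration quotients $\varepsilon_{\Vect{x}}$) is exactly the paper's argument. For the second half the paper simply cites Proposition \ref{thm:ZMonoCoLim(A)FromMonoEps_.}; your kernel-descent through the exhaustive image filtration down to the intersection $\LimOf{F_{\bullet}}$, closed off by hypothesis (ii), is an elementwise rendering of that proposition's proof (its Step~2 diagram chase does precisely this descent via left exactness of $\Lim$), so the two arguments coincide in substance.
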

\begin{proof}
For any bidegree $\Vect{x}$ with $\Dtrmnnt[\ExctCplIBdg{a}\ \ \Vect{x}]=n$, we use hypothesis (i) in the E-infinity extension theorem (\ref{thm:E-InfinityExtensionThm}) to obtain this commutative diagram:
\begin{equation*}
\xymatrix@R=5ex@C=4em{
\ExctCplCoLimAbutFltrtnQtntBB{\Vect{x}}(1) \ar@{{ |>}->}[r] \ar[d] &
	\ExctCplEObjctBB{\infty}{\Vect{x}+\ExctCplJBdg{b}}(1) \ar@{{ |>}->}[d]^{f^{\infty}_{\Vect{x}+\ExctCplJBdg{b}}} \\
\ExctCplCoLimAbutFltrtnQtntBB{\Vect{x}}(2) \ar@{{ |>}->}[r] &
	\ExctCplEObjctBB{\infty}{\Vect{x}+\ExctCplJBdg{b}}(2)
}
\end{equation*}
The map $\ExctCplCoLimAbutFltrtnQtntBB{\Vect{x}}(1)\to \ExctCplCoLimAbutFltrtnQtntBB{\Vect{x}}(2)$ is a monomorphism by commutativity. So, the claim follows from (\ref{thm:ZMonoCoLim(A)FromMonoEps_.}).
\end{proof}

\begin{corollary}[Monomorphism of colimit abutments - II]
\label{thm:Mono-UnivCoLimitAbutments-II}%
Suppose the map $f\from \ExctCpl{C}(1)\to \ExctCpl{C}(2)$ of regular exact couples  induces a monomorphism $f^{\infty}\from \ExctCplEPage{\infty}(1)\to \ExctCplEPage{\infty}(2)$. Then $f$ induces a monomorphism $L_nf\from \ExctCplCoLimAbut{n}(1)\to \ExctCplCoLimAbut{n}(2)$ whenever at least one of the conditions below is satisfied: %
\vspace{-4.6ex}%
\begin{enumerate}[(i)]
\item $\LimOf{\ExctCplCoLimAbutFltrtnBB{\Vect{x}(n)+r\ExctCplIBdg{a}}}=0$.
\item The $\ZCat$-diagram $\ExctCplCoLimAbutFltrtnBB{\Vect{x}(n)+r\ExctCplIBdg{a}}$ is originally vanishing.
\end{enumerate}
\end{corollary}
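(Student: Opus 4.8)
The plan is to obtain this corollary as a direct application of the Monomorphism of colimit abutments Lemma (\ref{thm:Mono-UnivCoLimitAbutments-I}). That lemma concludes that $\ExctCplCoLimAbut{n}f$ is a monomorphism from two hypotheses: first, that $f^{\infty}$ is a monomorphism in each relevant position $\Vect{x}+\ExctCplJBdg{b}$ (those $\Vect{x}$ with $\DtrmnntOfMtrx{[\ExctCplIBdg{a}\ \ \Vect{x}]}=n$); and second, that the induced map of limits $\LimOfOver{\ExctCplCoLimAbutFltrtnBB{\Vect{x}(n)+r\ExctCplIBdg{a}}(1)}{r}\to \LimOfOver{\ExctCplCoLimAbutFltrtnBB{\Vect{x}(n)+r\ExctCplIBdg{a}}(2)}{r}$ is a monomorphism. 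Since the standing assumption of the corollary is that $f^{\infty}\from \ExctCplEPage{\infty}(1)\to \ExctCplEPage{\infty}(2)$ is an objectwise monomorphism, the first hypothesis of (\ref{thm:Mono-UnivCoLimitAbutments-I}) holds automatically. So the whole task reduces to checking that each of the conditions (i) and (ii) forces the second hypothesis, namely the monomorphism of limits.

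For condition (i), I would argue that if $\LimOf{\ExctCplCoLimAbutFltrtnBB{\Vect{x}(n)+r\ExctCplIBdg{a}}}=0$ for $\ExctCpl{C}(1)$, then the source of the comparison map in the second hypothesis is the zero module, so that map is vacuously a monomorphism. Hypothesis two of (\ref{thm:Mono-UnivCoLimitAbutments-I}) is thereby met, and the lemma delivers the conclusion.

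For condition (ii), the plan is to reduce to condition (i). If the $\ZCat$-diagram $\ExctCplCoLimAbutFltrtnBB{\Vect{x}(n)+r\ExctCplIBdg{a}}$ is originally vanishing in the sense of (\ref{def:Z-DiagramTypes}), then its stages are $0$ for all sufficiently negative $r$; since the limit functor $\LimOver{\ZCat}$ is computed from the early end of the diagram, this yields $\LimOf{\ExctCplCoLimAbutFltrtnBB{\Vect{x}(n)+r\ExctCplIBdg{a}}}=0$. We are then back in the situation of condition (i), and the result follows as before.

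The argument is essentially bookkeeping, with no serious obstacle. The one place that warrants a moment of care — and the only genuinely substantive step — is the implication in condition (ii) that an originally vanishing $\ZCat$-diagram has vanishing limit; this rests on the appendix's treatment of the functor $\LimOver{\ZCat}$ and the definitions of the various $\ZCat$-diagram types. Once that implication is invoked, both branches collapse onto condition (i), whose verification is immediate, and (\ref{thm:Mono-UnivCoLimitAbutments-I}) closes the proof.
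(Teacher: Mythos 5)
Your proposal is correct and follows exactly the paper's own route: the paper proves the corollary by noting the implications (ii) $\implies$ (i) $\implies$ hypotheses of (\ref{thm:Mono-UnivCoLimitAbutments-I}), which is precisely your reduction. Your added justification that an originally vanishing $\ZCat$-diagram has vanishing limit (compatibility pushes the zeros at the early end forward through the whole compatible family) is the right reason for the step the paper leaves implicit.
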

\begin{proof}
This is so because of the implications (ii) $\implies$ (i) $\implies$ hypotheses of (\ref{thm:Mono-UnivCoLimitAbutments-I}).
\end{proof}

Notice that the comparison criteria (\ref{thm:Mono-UnivCoLimitAbutments-I}) and (\ref{thm:Mono-UnivCoLimitAbutments-II}) don't even require the stability of the E-infinity objects.

\begin{lemma}[Epimorphism/isomorphism of colimit abutments - I]
\label{thm:Epi-UnivCoLimitAbutments-I}
Let $f\from \ExctCpl{C}(1)\to \ExctCpl{C}(2)$ be a morphism of regular exact couples such that for a given $n\in \ZNr$ the following hold:
\vspace{-1.6ex}
\begin{enumerate}[(i)]
\item For all $\Vect{x}$ with $\Dtrmnnt[\ExctCplIBdg{a}\ \ \Vect{x}]=n$ the map $f$ induces an isomorphism $\ExctCplCoLimAbutFltrtnQtntBB{\Vect{x}}(1)\to \ExctCplCoLimAbutFltrtnQtntBB{\Vect{x}}(2)$.
\item $f$ induces an epimorphism $\LimOf{\ExctCplCoLimAbutFltrtnBB{\Vect{x}(n)+r\ExctCplIBdg{a}}(1) }\to  \LimOf{\ExctCplCoLimAbutFltrtnBB{\Vect{x}(n)+r\ExctCplIBdg{a}}(2) }$.
\item $f$ induces a monomorphism $\LimOneOf{\ExctCplCoLimAbutFltrtnBB{\Vect{x}(n)+r\ExctCplIBdg{a}}(1)}\longrightarrow \LimOneOf{\ExctCplCoLimAbutFltrtnBB{\Vect{x}(n)+r\ExctCplIBdg{a}}(2)}$.
\vspace{-1.6ex}
\end{enumerate}
Then $f$ induces an epimorphism $L_nf\from \ExctCplCoLimAbut{n}(1)\to \ExctCplCoLimAbut{n}(2)$. If the map in (ii) is an isomorphism, then so is $L_nf$.
\end{lemma}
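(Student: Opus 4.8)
Fix $\Vect{x}(n)$ as in (\ref{thm:BigradingPropertiesOfExactCouple}) and abbreviate, for $i\in\{1,2\}$ and $r\in\ZNr$, the filtration stages $F_r(i)\DefEq \ExctCplCoLimAbutFltrtnBB{\Vect{x}(n)+r\ExctCplIBdg{a}}(i)$ and the quotients $\varepsilon_r(i)\DefEq \ExctCplCoLimAbutFltrtnQtntBB{\Vect{x}(n)+r\ExctCplIBdg{a}}(i)$; by (\ref{thm:BigradingPropertiesOfExactCouple}) the bidegrees with $\Dtrmnnt[\ExctCplIBdg{a}\ \Vect{x}]=n$ are exactly the $\Vect{x}(n)+r\ExctCplIBdg{a}$, so hypothesis (i) says $\varepsilon_r(1)\to\varepsilon_r(2)$ is an isomorphism for \emph{every} $r$. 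Since $\ExctCplCoLimAbut{n}(i)\cong \CoLimOfOver{F_r(i)}{r}$ by (\ref{thm:F_pA-CoLimit}), the plan is to compute $\Ker{L_nf}$ and $\CoKer{L_nf}$ by applying the exact functor $\CoLimOver{\ZCat}$ (\ref{thm:CoLim^ZExactFunctor}) levelwise to $f$, and then to pin down the resulting kernel and cokernel by playing the two derived-limit six-term sequences against hypotheses (ii) and (iii).

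\textbf{Step 1 (the kernel/cokernel diagrams are constant).} Let $K_r$ and $C_r$ be the kernel and cokernel of $f\from F_r(1)\to F_r(2)$. Applying the snake lemma to the morphism of defining short exact sequences $F_{r-1}(i)\to F_r(i)\to \varepsilon_r(i)$ and using that the right-hand map is an isomorphism, I get isomorphisms $K_{r-1}\xrightarrow{\cong}K_r$ and $C_{r-1}\xrightarrow{\cong}C_r$ for all $r$. Thus $\{K_r\}$ and $\{C_r\}$ are constant $\ZCat$-diagrams, say $K$ and $C$; in particular $\LimOf{K_r}=\CoLimOf{K_r}=K$ with $\LimOneOf{K_r}=0$, and likewise for $C$. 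Factoring $f$ through its image diagram $\{J_r\}$ (with $J_r\subseteq J_{r+1}$ inside $\ExctCplCoLimAbut{n}(2)$) gives short exact sequences of $\ZCat$-diagrams $K_r\to F_r(1)\to J_r$ and $J_r\to F_r(2)\to C_r$. Exactness of $\CoLimOver{\ZCat}$ then yields $\Ker{L_nf}=\CoLimOf{K_r}=K$ and $\CoKer{L_nf}=\CoLimOf{C_r}=C$, so it remains to show $C=0$ for the epimorphism claim, and additionally $K=0$ under the stronger hypothesis.

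\textbf{Step 2 (feed the hypotheses through the $\Lim$–$\LimOne$ sequences).} Apply the six-term $\Lim$–$\LimOne$ exact sequence of a short exact sequence of $\ZCat$-diagrams (Appendix, \ref{chap:CoLimsInModuleCategories}) to both sequences. Writing $\ell$ for the map in (ii) and $\ell^1$ for the map in (iii), the first sequence (using $\LimOf{K_r}=K$, $\LimOneOf{K_r}=0$) produces a short exact sequence $K\to\LimOf{F_r(1)}\twoheadrightarrow\LimOf{J_r}$ together with an isomorphism $\LimOneOf{F_r(1)}\xrightarrow{\cong}\LimOneOf{J_r}$, while the second (using $\LimOf{C_r}=C$, $\LimOneOf{C_r}=0$) gives exactness of $\LimOf{J_r}\xrightarrow{\iota}\LimOf{F_r(2)}\xrightarrow{\partial}C\xrightarrow{\delta}\LimOneOf{J_r}\to\LimOneOf{F_r(2)}\to 0$. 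Now $\ell$ is the composite $\LimOf{F_r(1)}\twoheadrightarrow\LimOf{J_r}\xrightarrow{\iota}\LimOf{F_r(2)}$, so $\Ker{\ell}=K$ and $\CoKer{\ell}\cong\CoKer{\iota}\cong\Ker{\delta}$; and $\ell^1$ is the composite of the isomorphism $\LimOneOf{F_r(1)}\cong\LimOneOf{J_r}$ with the surjection $\LimOneOf{J_r}\twoheadrightarrow\LimOneOf{F_r(2)}$. The crux: hypothesis (iii) makes $\ell^1$ monic, hence that surjection is an isomorphism, so its kernel $\Img{\delta}$ vanishes, giving $\delta=0$ and therefore $C\cong\Ker{\delta}\cong\CoKer{\ell}$. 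Hypothesis (ii) then forces $C=\CoKer{\ell}=0$, so $L_nf$ is epic; and if (ii) is moreover monic then $K=\Ker{\ell}=0$, so $L_nf$ is an isomorphism.

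\textbf{Main obstacle.} The genuine content is Step 2: recognizing that the two derived-limit sequences splice so that the mono hypothesis on $\LimOne$ upgrades the terminal surjection to an isomorphism, kills the connecting map $\delta$, and identifies $C$ with $\CoKer{\ell}$. The constancy of $\{K_r\}$ and $\{C_r\}$ from Step 1 is exactly what makes their $\LimOne$ vanish, which is what lets the sequences collapse to this form; the remaining care is to check that the stated composite of $\LimOne$-maps really reproduces the map in (iii). Alternatively, the whole argument is the epimorphism/isomorphism analogue of the comparison result (\ref{thm:ZMonoCoLim(A)FromMonoEps_.}) used for (\ref{thm:Mono-UnivCoLimitAbutments-I}), and could be cited from the background section in one line.
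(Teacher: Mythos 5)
Your proof is correct, and it takes a genuinely different route from the paper's. The paper disposes of this lemma in one line by citing the appendix comparison result (\ref{thm:ZEpiCoLim(A)-I}), whose proof is stage-wise: the adjacent-quotient isomorphisms are first propagated to isomorphisms $F_p(1)/F_{p-r}(1)\to F_p(2)/F_{p-r}(2)$ by induction via (\ref{thm:AdjacentFiltrationQuotientComp-Extended}), then for each fixed $p$ a diagram chase on the truncated $\Lim$--$\LimOne$ exact sequence (whose vertical maps are epi, unknown, iso, mono) shows that each $F_p(1)\to F_p(2)$ is an epimorphism (an isomorphism when (ii) is), and finally exactness of $\CoLimOver{\ZCat}$ finishes. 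You instead argue globally: the snake lemma plus hypothesis (i) makes the kernel and cokernel diagrams $\{K_r\},\{C_r\}$ constant, exactness of colimits identifies $\Ker{L_nf}=K$ and $\CoKer{L_nf}=C$, and splicing the two six-term sequences of the image factorization shows that hypothesis (iii) kills the connecting map $\delta$, whence $\CoKer{L_nf}\cong \CoKer{\ell}$ and $\Ker{L_nf}\cong\Ker{\ell}$; hypotheses (ii) then finishes. What each approach buys: the paper's stage-wise lemma is modular and is reused elsewhere (e.g.\ in \ref{thm:Iso-UnivAbutments-I}), whereas your argument yields strictly sharper information --- it shows that under (i) and (iii) the kernel and cokernel of $L_nf$ are isomorphic to those of the map $\ell$ of (ii), so (ii) is not merely sufficient for the conclusion but equivalent to it. One trivial citation slip: the identification $\ExctCplCoLimAbut{n}(i)\cong \CoLimOf{F_r(i)}$ is exhaustiveness of the image filtration, which is (\ref{thm:ImageFiltrationCoLim(A)-Exhaustive}), not (\ref{thm:F_pA-CoLimit}).
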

\begin{proof}
This follows from (\ref{thm:ZEpiCoLim(A)-I}).
\end{proof}

\begin{corollary}[Epimorphism/isomorphism of colimit abutments - II]
\label{thm:Epi/Iso-UnivAbutments-II}%
Let $f\from \ExctCpl{C}(1)\to \ExctCpl{C}(2)$ be a morphism of regular exact couples such that for a given $n\in \ZNr$ the following hold: %
\index{epimorphism!colimit abutments}\index{isomorphism!colimit abutments}%
\vspace{-1.6ex}
\begin{enumerate}[(i)]
\item For all $\Vect{x}$ with $\Dtrmnnt[\ExctCplIBdg{a}\ \ \Vect{x}]=n$, $\ExctCplEObjctBB{\infty}{\Vect{x}+\Vect{b}}(1)$ and $\ExctCplEObjctBB{\infty}{\Vect{x}+\Vect{b}}(2)$ match colimit abutments.
\item For all $\Vect{x}$ with $\Dtrmnnt[\ExctCplIBdg{a}\ \ \Vect{x}]=n$, $f^{\infty}\from \ExctCplEObjctBB{\infty}{\Vect{x}+\Vect{b}}(1)\to \ExctCplEObjctBB{\infty}{\Vect{x}+\Vect{b}}(2)$ is an isomorphism.
\item $f$ induces an epimorphism $\LimOf{\ExctCplCoLimAbutFltrtnBB{\Vect{x}(n)+r\ExctCplIBdg{a}}(1) }\to  \LimOf{\ExctCplCoLimAbutFltrtnBB{\Vect{x}(n)+r\ExctCplIBdg{a}}(2) }$.
\item The $\ZCat$-diagram $\ExctCplDObjctBB{1}{\Vect{x}(n)+r\ExctCplIBdg{a}}(1)$ is originally stable. %
\vspace{-1.6ex}
\end{enumerate}
Then $f$ induces an epimorphism $L_nf\from \ExctCplCoLimAbut{n}(1)\to \ExctCplCoLimAbut{n}(2)$. If the map of limits in (iii) is an isomorphism, then $L_nf$ is an isomorphism as well.
\end{corollary}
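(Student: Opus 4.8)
The plan is to deduce the corollary directly from the Epimorphism/isomorphism Lemma (\ref{thm:Epi-UnivCoLimitAbutments-I}) by matching the four hypotheses of the corollary onto the three hypotheses of that lemma. Throughout, abbreviate the image filtration of $\ExctCplCoLimAbut{n}$ by $F_r \DefEq \ExctCplCoLimAbutFltrtnBB{\Vect{x}(n)+r\ExctCplIBdg{a}}$. Hypothesis (iii) of the corollary is literally hypothesis (ii) of the lemma, so it transfers without work. This leaves two hypotheses of the lemma to verify: the isomorphism on the adjacent filtration quotients $\ExctCplCoLimAbutFltrtnQtntBB{\Vect{x}}$, and the monomorphism on the relevant $\LimOne$-terms.

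For the isomorphism on quotients, I would fix $\Vect{x}$ with $\Dtrmnnt[\ExctCplIBdg{a}\ \Vect{x}]=n$ and combine corollary hypotheses (i) and (ii). By hypothesis (i) together with Definition (\ref{def:E-InfinityRelations}.iii), the inclusions $\ExctCplCoLimAbutFltrtnQtntBB{\Vect{x}}(i)\to \ExctCplEObjctBB{\infty}{\Vect{x}+\ExctCplJBdg{b}}(i)$ furnished by the E-infinity extension theorem (\ref{thm:E-InfinityExtensionThm}) are isomorphisms for $i=1,2$. Since that theorem is functorial in morphisms of exact couples, these two inclusions are the horizontal edges of a commutative square whose right vertical edge is $\SSMapBB{f}{\infty}{\Vect{x}+\ExctCplJBdg{b}}$ and whose left vertical edge is the induced map $\ExctCplCoLimAbutFltrtnQtntBB{\Vect{x}}(1)\to \ExctCplCoLimAbutFltrtnQtntBB{\Vect{x}}(2)$. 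The right edge is an isomorphism by hypothesis (ii) and both horizontal edges are isomorphisms, so the left edge is an isomorphism. This is exactly hypothesis (i) of the lemma.

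The remaining, and principal, step is to extract the $\LimOne$-monomorphism of hypothesis (iii) of the lemma from the original stability of $D_{\Vect{x}(n)+r\ExctCplIBdg{a}}(1)$ in corollary hypothesis (iv). I would reduce this to showing $\LimOneOfOver{F_r(1)}{\ZCat}=0$, since then the map $0\to \LimOneOfOver{F_r(2)}{\ZCat}$ is trivially monic. Because $F_r(1)=\Img{\ExctCplDObjctBB{}{\Vect{x}(n)+r\ExctCplIBdg{a}}(1)\to \ExctCplCoLimAbut{n}(1)}$ and the transition maps of $D(n)(1)$ become isomorphisms as $r\to-\infty$ (original stability, (\ref{def:Z-DiagramTypes})), the stages $F_r(1)$ are constant for $r$ sufficiently negative; hence the $\ZCat$-diagram $\{F_r(1)\}$ is itself originally stable and in particular satisfies the Mittag-Leffler condition (\ref{def:Mittag-Leffler/CoMittag-Leffler-Condition}), so $\LimOneOfOver{F_r(1)}{\ZCat}=0$ by (\ref{thm:Mittag-Leffler-Lim1=Zero}). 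The delicate bookkeeping here is the direction of the inverse limit: the $F_r$ increase to $\ExctCplCoLimAbut{n}$ as $r\to+\infty$, so the cofinal tower governing $\Lim$ and $\LimOne$ runs as $r\to-\infty$, which is precisely where original stability of $D(n)(1)$ forces the images $F_r(1)$ to stabilize. I expect this matching of stabilization direction to inverse-limit direction to be the only genuinely subtle point.

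With all three hypotheses of Lemma (\ref{thm:Epi-UnivCoLimitAbutments-I}) established, that lemma produces the epimorphism $L_nf\from \ExctCplCoLimAbut{n}(1)\to \ExctCplCoLimAbut{n}(2)$. Finally, if the map of limits in corollary hypothesis (iii) is an isomorphism, then hypothesis (ii) of the lemma holds as an isomorphism, and the sharper conclusion of the lemma upgrades $L_nf$ to an isomorphism, completing the proof.
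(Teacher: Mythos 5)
Your proof is correct and takes essentially the same route as the paper: you verify the three hypotheses of Lemma (\ref{thm:Epi-UnivCoLimitAbutments-I}) exactly as the paper does --- corollary hypotheses (i) and (ii) give the isomorphism of adjacent filtration quotients via functoriality of the E-infinity extension theorem, (iii) transfers verbatim, and (iv) makes the image filtration $\ExctCplCoLimAbutFltrtnBB{\Vect{x}(n)+r\ExctCplIBdg{a}}(1)$ originally stable so that its $\LimOne$ vanishes. The only cosmetic difference is the final isomorphism clause, where you invoke the lemma's own sharper conclusion while the paper instead combines the epimorphism with the monomorphism lemma (\ref{thm:Mono-UnivCoLimitAbutments-I}); both are valid.
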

\begin{proof}
Conditions (i) and (ii) imply condition (\ref{thm:Epi-UnivCoLimitAbutments-I}.i). Next, (iii) implies (\ref{thm:Epi-UnivCoLimitAbutments-I}.ii). Finally, if the $\ZCat$-diagram $\ExctCplDObjctBB{1}{\Vect{x}(n)+r\ExctCplIBdg{a}}(1)$ is originally stable, then so is the $\ZCat$\MSComp-diagram $\ExctCplCoLimAbutFltrtnBB{\Vect{x}(n)+r\ExctCplIBdg{a}}(1)$, implying that $\LimOneOf{\ExctCplCoLimAbutFltrtnBB{\Vect{x}(n)+r\ExctCplIBdg{a}}(1)} = 0$. But then  (\ref{thm:Epi-UnivCoLimitAbutments-I}.iii) is satisfied, and $L_nf$ is an epimorphism by (\ref{thm:Epi-UnivCoLimitAbutments-I}).

If the map in (iii) is an isomorphism, then $L_nf$ is a monomorphism as by (\ref{thm:Mono-UnivCoLimitAbutments-I}). So $L_nf$ is an isomorphism.
\end{proof}

\begin{lemma}[Monomorphism of limit abutments - I]
\label{thm:Mono-UnivLimitAbutments-I}
Let $f\from \ExctCpl{C}(1)\to \ExctCpl{C}(2)$ be a morphism of regular exact couples such that for a given $n\in \ZNr$ the following hold: %
\index{monomorphism!limit abutments}\index{limit abutment!monomorphism}%
\vspace{-1.6ex}%
\begin{enumerate}[(i)]
\item For$i=1,2$ the filtration diagram of $\ExctCplCoLimAbut{n}(i)$ is constant; i.e. $\ExctCplCoLimAbutFltrtnBB{\Vect{x}(n) + r\ExctCplIBdg{a}}(i)=\ExctCplCoLimAbut{n}(i)$ for all $r\in \ZNr$.
\item For all $\Vect{x}$ with $\Dtrmnnt[\ExctCplIBdg{a}\ \ \Vect{x}]=n$, $f^{\infty}\from \ExctCplEObjctBB{\infty}{\Vect{x}+\ExctCplJBdg{b}}(1)\to \ExctCplEObjctBB{\infty}{\Vect{x}+\ExctCplJBdg{b}}(2)$ is a monomorphism.
\item $L_{n+\sigma}f$ restricts to a monomorphism $\Img{\ExctCplLimAbut{n+\sigma}(1) \to \ExctCplCoLimAbut{n+\sigma}(1)}\longrightarrow \Img{\ExctCplLimAbut{n+\sigma}(1) \to \ExctCplCoLimAbut{n+\sigma}(1)}$. %
\vspace{-1.6ex}%
\end{enumerate}
Then $f$ induces a monomorphism $L^{n+\sigma}\from \ExctCplLimAbut{n+\sigma}(1) \to \ExctCplLimAbut{n+\sigma}(2)$.
\end{lemma}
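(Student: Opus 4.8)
The plan is to run the dual of the argument used for Lemma \ref{thm:Mono-UnivCoLimitAbutments-I}: first extract from the hypotheses that $f$ induces a monomorphism on every adjacent quotient $\ExctCplLimAbutFltrtnQtntBB{\Vect{x}+\ExctCplJBdg{b}+\ExctCplKBdg{c}}(1)\to \ExctCplLimAbutFltrtnQtntBB{\Vect{x}+\ExctCplJBdg{b}+\ExctCplKBdg{c}}(2)$ of the kernel filtration, then feed this into a $\ZCat$-diagram comparison result to lift monomorphy to the limit abutment. First I would observe that hypothesis (i) forces $\ExctCplCoLimAbutFltrtnQtntBB{\Vect{x}}(i)=0$ for every $\Vect{x}$ with $\Dtrmnnt[\ExctCplIBdg{a}\ \Vect{x}]=n$ and $i=1,2$, since a filtration that is constant at $\ExctCplCoLimAbut{n}(i)$ has trivial adjacent quotients. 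Substituting $\ExctCplCoLimAbutFltrtnQtntBB{\Vect{x}}=0$ into the E-infinity extension theorem (\ref{thm:E-InfinityExtensionThm}) collapses the kernels of both rows, so the central inclusion $\ExctCplStblEObjctBB{\Vect{x}+\ExctCplJBdg{b}}\hookrightarrow \ExctCplEObjctBB{\infty}{\Vect{x}+\ExctCplJBdg{b}}$ is identified with the monomorphism $M\from \ExctCplLimAbutFltrtnQtntBB{\Vect{x}+\ExctCplJBdg{b}+\ExctCplKBdg{c}}\hookrightarrow \ExctCplEObjctBB{\infty}{\Vect{x}+\ExctCplJBdg{b}}$.

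Because the theorem's diagram is functorial in morphisms of exact couples, $f$ produces a commuting square whose horizontal arrows are the monomorphisms $M(1)$ and $M(2)$ and whose right vertical arrow is $f^{\infty}$. Hypothesis (ii) makes $f^{\infty}$ monic, so the composite $f^{\infty}\Comp M(1)=M(2)\Comp g$ is monic, where $g$ denotes the induced map on limit filtration quotients; since $M(2)$ is monic, $g$ is monic as well. This is exactly the dual of the final step of (\ref{thm:Mono-UnivCoLimitAbutments-I}), and it holds for every admissible $\Vect{x}$, hence on every adjacent quotient of the ascending kernel filtration of $\ExctCplLimAbut{n+\sigma}$.

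The last step is a filtration argument on $\ExctCplLimAbut{n+\sigma}$. The kernel filtration by the submodules $\ExctCplLimAbutFltrtnBB{\Vect{x}+\ExctCplJBdg{b}+\ExctCplKBdg{c}+r\ExctCplIBdg{a}}$ is ascending and Hausdorff: an element of the limit that dies under every cone projection is zero, so the intersection over $r$ vanishes. I would use the kernel/image description (\ref{thm:Kernel/ImageSequencesOfZ-Diagram}) to identify the union $\FamUnion{r}{\ExctCplLimAbutFltrtnBB{\Vect{x}+\ExctCplJBdg{b}+\ExctCplKBdg{c}+r\ExctCplIBdg{a}}}$ with $\Ker{\ExctCplLimAbut{n+\sigma}\to \ExctCplCoLimAbut{n+\sigma}}$, so that $\ExctCplLimAbut{n+\sigma}$ modulo this union is isomorphic to $\Img{\ExctCplLimAbut{n+\sigma}\to \ExctCplCoLimAbut{n+\sigma}}$. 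Monomorphy on the graded pieces propagates through the Hausdorff filtration and, via exactness of $\CoLimOver{\omega}$, yields a monomorphism on the union; hypothesis (iii) supplies monomorphy on the quotient $\Img{\ExctCplLimAbut{n+\sigma}\to \ExctCplCoLimAbut{n+\sigma}}$; a short five-lemma then gives that $L^{n+\sigma}f$ is monic. I expect the main obstacle to be the bookkeeping of this last step: precisely matching hypothesis (iii) to the top quotient of the kernel filtration, invoking the correct limit analogue of (\ref{thm:ZMonoCoLim(A)FromMonoEps_.}) from Section \ref{sec:ZDiagramComparison}, and verifying that the union need \emph{not} exhaust $\ExctCplLimAbut{n+\sigma}$, so the stable-image quotient genuinely requires the separate hypothesis (iii) rather than following from monomorphy on the graded pieces alone.
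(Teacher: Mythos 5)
Your proposal is correct and follows essentially the same route as the paper's proof: hypothesis (i) kills the colimit filtration quotients, the E-infinity extension theorem (\ref{thm:E-InfinityExtensionThm}) plus functoriality and hypothesis (ii) then yield monomorphisms $\ExctCplLimAbutFltrtnQtntBB{\Vect{x}+\ExctCplJBdg{b}+\ExctCplKBdg{c}}(1)\to \ExctCplLimAbutFltrtnQtntBB{\Vect{x}+\ExctCplJBdg{b}+\ExctCplKBdg{c}}(2)$, and the conclusion follows from the $\ZCat$-diagram comparison result (\ref{thm:ZMonoLim(A)}.i) with hypothesis (iii). Your third step simply inlines the proof of that corollary (the short exact sequence $\CoLimOf{F^{\bullet}}\to L^{n+\sigma}\to \Img{R}$ from (\ref{thm:KernelFiltrationLim(A)-Props}.i), monomorphy on the union via the Hausdorff filtration, and the snake/five lemma) rather than citing it, which is the same argument.
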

\begin{proof}
By condition (i), $\ExctCplCoLimAbutFltrtnQtntBB{\Vect{x}}(1)=0=\ExctCplCoLimAbutFltrtnQtntBB{\Vect{x}}(2)$, for all $\Vect{x}$ with $\Dtrmnnt[\ExctCplIBdg{a}\ \ \Vect{x}]=n$. Combining this information with (ii), the E-infinity extension theorem (\ref{thm:E-InfinityExtensionThm}) yields the commutative square
\begin{equation*}
\xymatrix@R=5ex@C=4em{
\ExctCplEObjctBB{\infty}{\Vect{x}+\ExctCplJBdg{b}}(1)  \ar@{{ |>}->}[d]  &
	\ExctCplLimAbutFltrtnQtntBB{\Vect{x}+\ExctCplJBdg{b}+\ExctCplKBdg{c}}(1) \ar@{{ |>}->}[l] \ar[d] \\
\ExctCplEObjctBB{\infty}{\Vect{x}+\ExctCplJBdg{b}}(2) &
	\ExctCplLimAbutFltrtnQtntBB{\Vect{x}+\ExctCplJBdg{b}+\ExctCplKBdg{c}}(2) \ar@{{ |>}->}[l]
}
\end{equation*}
So the vertical map on the right is a monomorphism. Now the claim follows with (\ref{thm:ZMonoLim(A)}.i)
\end{proof}

\begin{corollary}[Monomorphism of limit abutments - II]
\label{thm:Mono-UnivLimitAbutments-II}
Let $f\from \ExctCpl{C}(1)\to \ExctCpl{C}(2)$ be a morphism of regular exact couples such that for a given $n\in \ZNr$ the following hold: %
\vspace{-1.6ex}%
\begin{enumerate}[(i)]
\item For all $\Vect{x}$ with $\Dtrmnnt[\ExctCplIBdg{a}\ \ \Vect{x}]=n$, $f^{\infty}\from \ExctCplEObjctBB{\infty}{\Vect{x}+\ExctCplJBdg{b}}(1)\to \ExctCplEObjctBB{\infty}{\Vect{x}+\ExctCplJBdg{b}}(2)$ is a monomorphism.
\item $\ExctCplCoLimAbut{n}(1)=0= \ExctCplCoLimAbut{n}(2)$ and $\ExctCplCoLimAbut{n+\sigma}(1) = 0$ %
\vspace{-1.6ex}%
\end{enumerate}
Then $f$ induces a monomorphism $L^{n+\sigma}\from \ExctCplLimAbut{n+\sigma}(1) \to \ExctCplLimAbut{n+\sigma}(2)$.
\end{corollary}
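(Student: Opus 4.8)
The plan is to derive this corollary directly from Lemma (\ref{thm:Mono-UnivLimitAbutments-I}) by verifying that the two hypotheses assumed here force all three hypotheses of that lemma. Since the two statements have identical conclusions, once the hypotheses are matched up there is nothing further to do. So the entire task reduces to unpacking the meaning of the vanishing assumptions in condition (ii) and feeding them into the three conditions of (\ref{thm:Mono-UnivLimitAbutments-I}).

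First I would dispose of condition (i) of (\ref{thm:Mono-UnivLimitAbutments-I}), which asks that for $i=1,2$ the image filtration of the colimit abutment $\ExctCplCoLimAbut{n}(i)$ be constant, i.e. $\ExctCplCoLimAbutFltrtnBB{\Vect{x}(n)+r\ExctCplIBdg{a}}(i)=\ExctCplCoLimAbut{n}(i)$ for every $r$. The present hypothesis (ii) supplies $\ExctCplCoLimAbut{n}(1)=0=\ExctCplCoLimAbut{n}(2)$. Each filtration stage $\ExctCplCoLimAbutFltrtnBB{\Vect{x}(n)+r\ExctCplIBdg{a}}(i)$ is by definition a submodule of $\ExctCplCoLimAbut{n}(i)$, hence is the zero module, so the filtration diagram is constant; this settles condition (i). Condition (ii) of (\ref{thm:Mono-UnivLimitAbutments-I}) — that $f^{\infty}\from \ExctCplEObjctBB{\infty}{\Vect{x}+\ExctCplJBdg{b}}(1)\to \ExctCplEObjctBB{\infty}{\Vect{x}+\ExctCplJBdg{b}}(2)$ be monic for all $\Vect{x}$ with $\Dtrmnnt[\ExctCplIBdg{a}\ \Vect{x}]=n$ — is verbatim the present hypothesis (i), so there is nothing to check there.

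The only remaining point is condition (iii) of (\ref{thm:Mono-UnivLimitAbutments-I}), namely that the abutment comparison map restrict to a monomorphism on the image of the canonical map $\ExctCplLimAbut{n+\sigma}\to \ExctCplCoLimAbut{n+\sigma}$. Here I would invoke the last clause of hypothesis (ii), that $\ExctCplCoLimAbut{n+\sigma}(1)=0$: the image $\Img{\ExctCplLimAbut{n+\sigma}(1)\to \ExctCplCoLimAbut{n+\sigma}(1)}$ is a submodule of $\ExctCplCoLimAbut{n+\sigma}(1)=0$, hence is itself $0$, and any $R$-module map out of $0$ is trivially a monomorphism. Thus condition (iii) holds vacuously, and (\ref{thm:Mono-UnivLimitAbutments-I}) yields the desired monomorphism $\ExctCplLimAbut{n+\sigma}(1)\to \ExctCplLimAbut{n+\sigma}(2)$. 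I do not expect any genuine obstacle: the whole content is the bookkeeping observation that the two vanishing assumptions annihilate exactly the two objects — the colimit filtration in degree $n$ and the limit-to-colimit image in degree $n+\sigma$ — at which (\ref{thm:Mono-UnivLimitAbutments-I}) required input beyond the injectivity of $f^{\infty}$. The one mild care point is the index discrepancy between $n$ and $n+\sigma$ in the two clauses of (ii), which must be tracked so that the degree-$n$ vanishing is used for conditions (i)–(ii) and the degree-$(n+\sigma)$ vanishing is used for condition (iii).
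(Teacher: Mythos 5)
Your proposal is correct and follows exactly the paper's own route: the paper likewise deduces the corollary from (\ref{thm:Mono-UnivLimitAbutments-I}) by noting that hypothesis (i) is verbatim condition (\ref{thm:Mono-UnivLimitAbutments-I}.ii), while hypothesis (ii) forces both (\ref{thm:Mono-UnivLimitAbutments-I}.i) (the zero colimit abutments make the image filtration constant) and (\ref{thm:Mono-UnivLimitAbutments-I}.iii) (the image of $\ExctCplLimAbut{n+\sigma}(1)\to \ExctCplCoLimAbut{n+\sigma}(1)$ sits inside $0$). Your bookkeeping of which vanishing assumption feeds which condition, including the $n$ versus $n+\sigma$ index distinction, matches the intended argument.
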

\begin{proof}
The claim follows from (\ref{thm:Mono-UnivLimitAbutments-I}) for the following reasons: Condition (i) implies (\ref{thm:Mono-UnivLimitAbutments-I}.ii), and condition (ii) implies both of (\ref{thm:Mono-UnivLimitAbutments-I}.i) and (\ref{thm:Mono-UnivLimitAbutments-I}.iii).
\end{proof}

\begin{lemma}[Isomorphism of universal abutments]
\label{thm:Iso-UnivAbutments-I}%
Let $f\from \ExctCpl{C}(1)\to \ExctCpl{C}(2)$ be a morphism of regular exact couples such that for a given $n\in \ZNr$ the following hold: %
\index{isomorphism!universal abutments}%
\vspace{-1.6ex}%
\begin{enumerate}[(i)]
\item For all $\Vect{x}$ with $\Dtrmnnt[\ExctCplIBdg{a}\ \ \Vect{x}]=n$ and $i=1,2$, $\ExctCplEObjctBB{\infty}{\Vect{x}+\ExctCplJBdg{b}}(i)$ is stable.
\item For all $\Vect{x}$ with $\Dtrmnnt[\ExctCplIBdg{a}\ \ \Vect{x}]=n$, $f^{\infty}\from \ExctCplEObjctBB{\infty}{\Vect{x}+\ExctCplJBdg{b}}(1)\to \ExctCplEObjctBB{\infty}{\Vect{x}+\ExctCplJBdg{b}}(2)$ is an isomorphism.
\item For all $r\in \ZNr$, $\ExctCplCoLimAbutFltrtnQtntBB{\Vect{x}(n)+r\ExctCplIBdg{a}}(1) \to \ExctCplCoLimAbutFltrtnQtntBB{\Vect{x}(n)+r\ExctCplIBdg{a}}(2)$ is an epic or $\ExctCplLimAbutFltrtnQtntBB{\Vect{x}(n+\sigma)+r\ExctCplIBdg{a}}(1) \to \ExctCplLimAbutFltrtnQtntBB{\Vect{x}(n+\sigma)+r\ExctCplIBdg{a}}(2)$ is monic.
\item $\ExctCplCoLimAbut{n}f|\from \LimOf{\ExctCplCoLimAbutFltrtnBB{\Vect{x}(n)+r\ExctCplIBdg{a}}(1)} \to \LimOf{\ExctCplCoLimAbutFltrtnBB{\Vect{x}(n)+r\ExctCplIBdg{a}}(2)}$ is an isomorphism.
\item $\ExctCplCoLimAbut{n}f$ induces a monomorphism $\LimOneOf{\ExctCplCoLimAbutFltrtnBB{\Vect{x}(n)+r\ExctCplIBdg{a}}(1)} \to \LimOneOf{\ExctCplCoLimAbutFltrtnBB{\Vect{x}(n)+r\ExctCplIBdg{a}}(2)}$.
\item $\ExctCplCoLimAbut{n+\sigma}f$ induces an isomorphism $\Img{\ExctCplLimAbut{n+\sigma}(1)\to \ExctCplCoLimAbut{n+\sigma}(1)} \to \Img{\ExctCplLimAbut{n+\sigma}(2)\to \ExctCplCoLimAbut{n+\sigma}(2)}$. %
\vspace{-1.6ex}%
\end{enumerate}
Then $f$ induces isomorphisms $\ExctCplCoLimAbut{n}f\from \ExctCplCoLimAbut{n}(1)\to \ExctCplCoLimAbut{n}(2)$ and $\ExctCplLimAbut{n+\sigma}\from \ExctCplLimAbut{n+\sigma}(1)\to \ExctCplLimAbut{n+\sigma}(2)$.
\end{lemma}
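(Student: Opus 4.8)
The plan is to convert the hypotheses about the $\SSObjctBB{\infty}{}$-objects into isomorphisms on the individual filtration quotients $\ExctCplCoLimAbutFltrtnQtntBB{\Vect{x}}$ and $\ExctCplLimAbutFltrtnQtntBB{\Vect{x}+\ExctCplJBdg{b}+\ExctCplKBdg{c}}$, and then to feed these into the colimit- and limit-abutment comparison results already established.

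First I would extract the isomorphisms on filtration quotients. By hypothesis (i) the inclusion $\ExctCplStblEObjctBB{\Vect{x}+\ExctCplJBdg{b}}\to \SSObjctBB{\infty}{\Vect{x}+\ExctCplJBdg{b}}$ is an isomorphism for each couple, so the stable E-extension theorem (\ref{thm:Stable-E-Extension}) supplies, functorially, the short exact sequence
\[
\ExctCplCoLimAbutFltrtnQtntBB{\Vect{x}} \rightarrowtail \SSObjctBB{\infty}{\Vect{x}+\ExctCplJBdg{b}} \twoheadrightarrow \ExctCplLimAbutFltrtnQtntBB{\Vect{x}+\ExctCplJBdg{b}+\ExctCplKBdg{c}}
\]
for both $\ExctCpl{C}(1)$ and $\ExctCpl{C}(2)$, with $f$ inducing a morphism between them. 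By hypothesis (ii) the middle vertical map is an isomorphism. The short five lemma then shows that the induced map on $\ExctCplCoLimAbutFltrtnQtntBB{\Vect{x}}$ is automatically monic and the induced map on $\ExctCplLimAbutFltrtnQtntBB{\Vect{x}+\ExctCplJBdg{b}+\ExctCplKBdg{c}}$ is automatically epic, while the snake connecting morphism identifies their common obstruction as $\CoKer$ of the first being isomorphic to $\Ker$ of the second. Hypothesis (iii) asserts that at each admissible position one of these two maps is already epic, respectively monic; either way the common obstruction vanishes, so \emph{both} induced maps are isomorphisms. Thus $f$ induces isomorphisms on every $\ExctCplCoLimAbutFltrtnQtntBB{\Vect{x}}$ and every $\ExctCplLimAbutFltrtnQtntBB{\Vect{x}+\ExctCplJBdg{b}+\ExctCplKBdg{c}}$ in the degree-$n$ fibre.

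For the colimit abutment, the isomorphisms on $\ExctCplCoLimAbutFltrtnQtntBB{\Vect{x}}$ are precisely clause (i) of the epimorphism/isomorphism lemma (\ref{thm:Epi-UnivCoLimitAbutments-I}); its clause (ii) follows from hypothesis (iv), which even makes the map of $\LimOf{}$-terms an isomorphism, and its clause (iii) is hypothesis (v). Since the map of limits is an isomorphism, (\ref{thm:Epi-UnivCoLimitAbutments-I}) yields that $\ExctCplCoLimAbut{n}f$ is an isomorphism. For the limit abutment I would use the canonical comparison map $\ExctCplLimAbut{n+\sigma}\to \ExctCplCoLimAbut{n+\sigma}$: its kernel is the exhaustive union of the kernel filtration $\ExctCplLimAbutFltrtnBB{\Vect{x}+\ExctCplJBdg{b}+\ExctCplKBdg{c}+r\ExctCplIBdg{a}}$ (Hausdorff, since the intersection of these kernels is $0$), and its image is exactly the object compared in hypothesis (vi). This presents $\ExctCplLimAbut{n+\sigma}$ as a functorial extension of that stable image by the union of the filtration stages. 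Hypothesis (vi) makes $f$ an isomorphism on the image; on the union, $f$ is assembled from the filtration quotients $\ExctCplLimAbutFltrtnQtntBB{\ast}$, all isomorphisms by the first step, through the $\ZCat$-diagram comparison results of Section \ref{sec:ZDiagramComparison} (monomorphism via \ref{thm:ZMonoLim(A)}, surjectivity via exactness of the relevant colimit, \ref{thm:CoLim^ZExactFunctor}). The five lemma then forces $\ExctCplLimAbut{n+\sigma}f$ to be an isomorphism.

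The hard part will be this last step. The colimit side is an honest colimit, so its comparison is exact and the argument is formal; but the kernel filtration of the limit abutment is itself built from inverse limits, so passing from isomorphisms on graded pieces to an isomorphism on the filtration stages risks a completeness/$\LimOne$ obstruction. The purpose of isolating the stable image in hypothesis (vi), together with the Hausdorffness of the kernel filtration, is exactly to route the argument through the $\ZCat$-diagram comparison toolkit so that this obstruction is controlled rather than silently assumed away.
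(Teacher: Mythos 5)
Your proposal is correct and takes essentially the same route as the paper: the snake-lemma argument on the stable E-extension sequences (valid since stability puts $\ExctCplEObjctBB{\infty}{}$ in the middle of those extensions) yielding isomorphisms on both families of filtration quotients, then the colimit comparison via (\ref{thm:Epi-UnivCoLimitAbutments-I}), whose content is exactly the Z-diagram lemma (\ref{thm:ZEpiCoLim(A)-I}) that the paper cites with hypotheses (iv) and (v). Your treatment of the limit abutment --- decomposing $\ExctCplLimAbut{n+\sigma}$ as an extension of $\Img{\ExctCplLimAbut{n+\sigma}\to \ExctCplCoLimAbut{n+\sigma}}$ by the colimit of the kernel filtration and applying hypothesis (vi) plus the comparison toolkit --- is precisely the proof of Corollary (\ref{thm:ZIsoLim(A)-I}), which the paper invokes directly rather than unpacking as you do.
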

\begin{proof}
We begin by observing that, under assumption (i), conditions (ii), and (iii) combined are equivalent to having isomorphisms
\begin{equation*}
\ExctCplCoLimAbutFltrtnQtntBB{\Vect{x}(n)+r\ExctCplIBdg{a}}(1) \XRA{\cong} \ExctCplCoLimAbutFltrtnQtntBB{\Vect{x}(n)+r\ExctCplIBdg{a}}(2)  \qquad \text{and}\qquad \ExctCplLimAbutFltrtnQtntBB{\Vect{x}(n+\sigma)+r\ExctCplIBdg{a}}(1) \XRA{\cong} \ExctCplLimAbutFltrtnQtntBB{\Vect{x}(n+\sigma)+r\ExctCplIBdg{a}}(2)\qquad \text{for all}\ \ r\in \ZNr.
\end{equation*}
For each $r\in \ZNr$, this follows via the morphism of short exact sequences below.
\begin{equation*}
\xymatrix@R=5ex@C=4em{
\ExctCplCoLimAbutFltrtnQtntBB{\Vect{x}(n) + r\ExctCplIBdg{a}}(1) \ar@{{ |>}->}[r] \ar@{{ |>}->}[d] &
	\ExctCplEObjctBB{\infty}{\Vect{x}(n)  + \ExctCplJBdg{b}+ r\ExctCplIBdg{a}}(1) \ar@{-{ >>}}[r] \ar[d]^{\cong}&
	\ExctCplLimAbutFltrtnQtntBB{\Vect{x}(n)  + \ExctCplJBdg{b} + \ExctCplKBdg{c} + r\ExctCplIBdg{a}}(1) \ar@{-{ >>}}[d] \\
\ExctCplCoLimAbutFltrtnQtntBB{\Vect{x}(n) + r\ExctCplIBdg{a}}(2) \ar@{{ |>}->}[r] &
	\ExctCplEObjctBB{\infty}{\Vect{x}(n)  + \ExctCplJBdg{b}+ r\ExctCplIBdg{a}}(2) \ar@{-{ >>}}[r] &
	\ExctCplLimAbutFltrtnQtntBB{\Vect{x}(n)  + \ExctCplJBdg{b} + \ExctCplKBdg{c} + r\ExctCplIBdg{a}}(2)
}
\end{equation*}
Combined with (iv) and (v), we see that the hypotheses of (\ref{thm:ZEpiCoLim(A)-I}) are satisfied. So $\ExctCplCoLimAbut{n}f$ is an isomorphism. Combining the isomorphism of the vertical arrow on the right with (vi), the hypotheses of (\ref{thm:ZIsoLim(A)-I}) are satisfied. So $\ExctCplLimAbut{n+\sigma}$ is an isomorphism as well.
\end{proof}

\begin{corollary}[Isomorphism of limit abutment - I]
\label{thm:Iso-LimitAbutments-I}
Let $f\from \ExctCpl{C}(1)\to \ExctCpl{C}(2)$ be a morphism of regular exact couples such that for a given $n\in \ZNr$ the following hold: %
\index{isomorphism!limit abutment}%
\vspace{-1.6ex}%
\begin{enumerate}[(i)]
\item For all $\Vect{x}$ with $\Dtrmnnt[\ExctCplIBdg{a}\ \ \Vect{x}]=n$ and $i=1,2$, $\ExctCplEObjctBB{\infty}{\Vect{x}+\ExctCplJBdg{b}}(i)$ matches the limit abutment.
\item For all $\Vect{x}$ with $\Dtrmnnt[\ExctCplIBdg{a}\ \ \Vect{x}]=n$, $f^{\infty}\from \ExctCplEObjctBB{\infty}{\Vect{x}+\ExctCplJBdg{b}}(1)\to \ExctCplEObjctBB{\infty}{\Vect{x}+\ExctCplJBdg{b}}(2)$ is an isomorphism.
\item $\ExctCplCoLimAbut{n+\sigma}f$ induces an isomorphism $\Img{\ExctCplLimAbut{n+\sigma}(1)\to \ExctCplCoLimAbut{n+\sigma}(1)} \to \Img{\ExctCplLimAbut{n+\sigma}(2)\to \ExctCplCoLimAbut{n+\sigma}(2)}$. %
\vspace{-1.6ex}%
\end{enumerate}
Then $f$ induces an isomorphism $\ExctCplLimAbut{n+\sigma}\from \ExctCplLimAbut{n+\sigma}(1)\to \ExctCplLimAbut{n+\sigma}(2)$. \NoProof
\end{corollary}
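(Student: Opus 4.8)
The plan is to obtain this corollary as the limit-abutment half of the isomorphism lemma (\ref{thm:Iso-UnivAbutments-I}); the stronger hypothesis (i) makes the colimit abutment drop out, so the colimit-side hypotheses (iv) and (v) of (\ref{thm:Iso-UnivAbutments-I}) are neither needed nor assumed here.

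First I would unwind hypothesis (i). By the definition of limit-abutment matching (\ref{def:E-InfinityRelations}.iv), the assertion that $\ExctCplEObjctBB{\infty}{\Vect{x}+\ExctCplJBdg{b}}(i)$ matches the limit abutment says exactly that the two canonical maps $\ExctCplEObjctBB{\infty}{\Vect{x}+\ExctCplJBdg{b}}(i) \XLA{\cong} \ExctCplStblEObjctBB{\Vect{x}+\ExctCplJBdg{b}}(i) \XRA{\cong} \ExctCplLimAbutFltrtnQtntBB{\Vect{x}+\ExctCplJBdg{b}+\ExctCplKBdg{c}}(i)$ supplied by the stable E-extension theorem (\ref{thm:Stable-E-Extension}) are isomorphisms; equivalently (\ref{thm:AbutmentMatching-Recognition}), $\ExctCplEObjctBB{\infty}{\Vect{x}+\ExctCplJBdg{b}}(i)$ is stable and $\ExctCplCoLimAbutFltrtnQtntBB{\Vect{x}}(i)=0$. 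The point I would stress is that these identifications are natural in the exact couple, so for $i=1,2$ they assemble into an isomorphism $\ExctCplEObjctBB{\infty}{\Vect{x}+\ExctCplJBdg{b}}(i) \cong \ExctCplLimAbutFltrtnQtntBB{\Vect{x}+\ExctCplJBdg{b}+\ExctCplKBdg{c}}(i)$ compatible with the maps induced by $f$.

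Next I would transport hypothesis (ii) across this natural isomorphism. Every bidegree occurring in the $\ZCat$-diagram of $\ExctCplLimAbut{n+\sigma}$ has the form $\Vect{x}+\ExctCplJBdg{b}+\ExctCplKBdg{c}$ with $\Vect{x}=\Vect{x}(n)+r\ExctCplIBdg{a}$ and $\Dtrmnnt[\ExctCplIBdg{a}\ \Vect{x}]=n$, since the determinant is additive in its second column and $\Dtrmnnt[\ExctCplIBdg{a}\ (\ExctCplJBdg{b}+\ExctCplKBdg{c})]=\sigma$. Because $f^{\infty}$ is an isomorphism on $\ExctCplEObjctBB{\infty}{\Vect{x}+\ExctCplJBdg{b}}$ for all such $\Vect{x}$, the previous step shows that $f$ induces an isomorphism on every adjacent quotient $\ExctCplLimAbutFltrtnQtntBB{\Vect{x}(n+\sigma)+r\ExctCplIBdg{a}}$, $r\in\ZNr$, of the kernel filtration of the limit abutment in position $n+\sigma$. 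Note that, in contrast to the proof of (\ref{thm:Iso-UnivAbutments-I}), no one-sided epi/mono hypothesis on the filtration quotients is required: matching already collapses the stable-E extension so that $\ExctCplEObjctBB{\infty}{}$ is the limit quotient.

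Finally I would feed this into the $\ZCat$-diagram comparison (\ref{thm:ZIsoLim(A)-I}), exactly as in the limit-abutment part of the proof of (\ref{thm:Iso-UnivAbutments-I}). Isomorphisms on all adjacent filtration quotients do not by themselves force an isomorphism of limits --- this is the familiar $\Lim/\LimOne$ obstruction at the bottom of the kernel filtration --- so the essential extra datum is hypothesis (iii): $f$ restricts to an isomorphism of the stable images $\Img{\ExctCplLimAbut{n+\sigma}(i)\to \ExctCplCoLimAbut{n+\sigma}(i)}$. Combining the quotient isomorphisms from the previous step with this image isomorphism verifies the hypotheses of (\ref{thm:ZIsoLim(A)-I}), which then yields that $\ExctCplLimAbut{n+\sigma}f$ is an isomorphism. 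I expect the only delicate point to be bookkeeping: confirming that the matching isomorphisms are genuinely natural in $f$, and that the image condition (iii) is precisely the boundary input that (\ref{thm:ZIsoLim(A)-I}) requires.
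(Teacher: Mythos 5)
Your proposal is correct and is precisely the argument the paper intends by marking this corollary \NoProof: hypotheses (i) and (ii), via the functoriality of the stable/E-infinity extensions, give isomorphisms on all adjacent kernel-filtration quotients $\ExctCplLimAbutFltrtnQtntBB{\Vect{x}(n+\sigma)+r\ExctCplIBdg{a}}$, and hypothesis (iii) is exactly the image condition of (\ref{thm:ZIsoLim(A)-I}), just as in the limit-abutment half of the proof of (\ref{thm:Iso-UnivAbutments-I}). No gaps; your observation that the colimit-side hypotheses of (\ref{thm:Iso-UnivAbutments-I}) become vacuous under matching is also the right reading.
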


\begin{corollary}[Isomorphism of limit abutment - II]
\label{thm:Iso-LimitAbutments-II}
Let $f\from \ExctCpl{C}(1)\to \ExctCpl{C}(2)$ be a morphism of regular exact couples such that for a given $n\in \ZNr$ conditions (i) and (ii) of (\ref{thm:Iso-LimitAbutments-I}) are satisfied. Then $f$ induces an isomorphism $\ExctCplLimAbut{n+\sigma}\from \ExctCplLimAbut{n+\sigma}(1)\to \ExctCplLimAbut{n+\sigma}(2)$ whenever at least one of the conditions below hold. %
\index{isomorphism!limit abutment}%
\vspace{-1.6ex}%
\begin{enumerate}[(i)]
\item $\Img{\ExctCplLimAbut{n+\sigma}(1)\to \ExctCplCoLimAbut{n+\sigma}(1)} = 0 = \Img{\ExctCplLimAbut{n+\sigma}(2)\to \ExctCplCoLimAbut{n+\sigma}(2)}$.
\item $\LimOf{\ExctCplCoLimAbutFltrtnBB{\Vect{x}(n)+\sigma+r\ExctCplIBdg{a}}(1) } = 0 = \LimOf{\ExctCplCoLimAbutFltrtnBB{\Vect{x}(n)+\sigma+r\ExctCplIBdg{a}}(2) }$.
\item $\ExctCplCoLimAbut{\Vect{x}(n)+\sigma}(1) = 0 = \ExctCplCoLimAbut{\Vect{x}(n)+\sigma}(2)$.
\item The $\ZCat$\MSComp-diagrams $\ExctCplDObjctBB{}{\Vect{x}(n) + \ExctCplJBdg{b}+\ExctCplKBdg{c} + r\ExctCplIBdg{a}}(1)$ and $\ExctCplDObjctBB{}{\Vect{x}(n) + \ExctCplJBdg{b}+\ExctCplKBdg{c} + r\ExctCplIBdg{a}}(2)$ vanish eventually.
\end{enumerate}
\end{corollary}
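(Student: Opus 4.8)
The plan is to reduce the whole statement to Corollary \ref{thm:Iso-LimitAbutments-I}. Since hypotheses (i) and (ii) of that corollary are assumed outright, the only thing left to verify is its remaining hypothesis (iii): that $f$ restricts to an isomorphism between the images $\Img{\ExctCplLimAbut{n+\sigma}(i)\to\ExctCplCoLimAbut{n+\sigma}(i)}$ for $i=1,2$. So the task shrinks to showing that each of the four listed conditions forces this. The cleanest route is to establish the chain (iv) $\implies$ (iii) $\implies$ (ii) $\implies$ (i), and then observe that (i) makes hypothesis (iii) of Corollary \ref{thm:Iso-LimitAbutments-I} trivially true: if both images are $0$, the induced map $0\to 0$ is an isomorphism.

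First I would record the one structural fact that links these conditions. The comparison map $\ExctCplLimAbut{n+\sigma}(i)\to\ExctCplCoLimAbut{n+\sigma}(i)$ factors, for each $r$, as the cone leg $\rho$ into $\ExctCplDObjctBB{}{\Vect{x}(n+\sigma)+r\ExctCplIBdg{a}}(i)$ followed by the cocone leg $\pi$, and this factorization is independent of $r$. Hence its image lands inside $\ExctCplCoLimAbutFltrtnBB{\Vect{x}(n+\sigma)+r\ExctCplIBdg{a}}(i)=\Img{\ExctCplCoLimAbutMapBB{\Vect{x}(n+\sigma)+r\ExctCplIBdg{a}}}$ for every $r$, so inside $\FamIntrsctn{r}{\ExctCplCoLimAbutFltrtnBB{\Vect{x}(n+\sigma)+r\ExctCplIBdg{a}}(i)}$. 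Using the elementary fact that the $\ZCat$-limit of an ascending chain of submodules connected by inclusions is exactly their intersection, this shows the image is contained in $\LimOf{\ExctCplCoLimAbutFltrtnBB{\Vect{x}(n+\sigma)+r\ExctCplIBdg{a}}(i)}$.

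With that in hand the implications are short. Condition (ii) says this limit vanishes, so both images are $0$, which is condition (i). Condition (iii), that the colimit abutment $\ExctCplCoLimAbut{n+\sigma}(i)$ is $0$, forces every filtration stage $\ExctCplCoLimAbutFltrtnBB{\Vect{x}(n+\sigma)+r\ExctCplIBdg{a}}(i)$ (a submodule of it) and hence their limit to vanish, giving (ii). Condition (iv), eventual vanishing of the $\ZCat$-diagrams $\ExctCplDObjctBB{}{\Vect{x}(n+\sigma)+r\ExctCplIBdg{a}}(i)$, makes their colimit $0$ by the same reasoning used in Proposition \ref{thm:LimitAbutmentMatching-Conditions} (where eventual vanishing was exactly what killed a colimit abutment), so $\ExctCplCoLimAbut{n+\sigma}(i)=0$ and (iii) follows. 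Feeding (i) into Corollary \ref{thm:Iso-LimitAbutments-I} then delivers the asserted isomorphism $\ExctCplLimAbut{n+\sigma}(1)\to\ExctCplLimAbut{n+\sigma}(2)$.

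I expect the only genuine content to be the bookkeeping observation that the image of the limit-to-colimit comparison map is trapped in the intersection of the colimit filtration stages, together with the identification of that intersection with the $\ZCat$-limit of the inclusion-connected filtration diagram; everything else is a chain of elementary implications. The main thing to watch carefully is that the indexing is referred to the position $n+\sigma$ throughout — the filtration of $\ExctCplCoLimAbut{n+\sigma}$ comes from the diagram $D(n+\sigma)$, not $D(n)$ — so that the four conditions really speak about the same colimit abutment into which the limit abutment $\ExctCplLimAbut{n+\sigma}$ maps.
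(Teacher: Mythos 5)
Your proposal is correct and follows exactly the paper's own route: the paper's proof is precisely the one-line chain of implications (iv) $\Rightarrow$ (iii) $\Rightarrow$ (ii) $\Rightarrow$ (i) $\Rightarrow$ (\ref{thm:Iso-LimitAbutments-I}.iii), which you reproduce and then justify step by step (your observation that the image of the limit-to-colimit comparison map is trapped in $\LimOf{\ExctCplCoLimAbutFltrtnBB{\Vect{x}(n+\sigma)+r\ExctCplIBdg{a}}}$ is the content the paper leaves implicit, and it matches (\ref{thm:KernelFiltrationLim(A)-Props}.ii)). Your remark about reading the indices as referring to position $n+\sigma$, i.e.\ to the diagram $D(n+\sigma)$, is also the correct reading of the statement.
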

\begin{proof}
We have the implications (iv) $\Rightarrow$ (iii) $\Rightarrow$ (ii) $\Rightarrow$ (i) $\Rightarrow$ (\ref{thm:Iso-LimitAbutments-I}.iii).
\end{proof}

\begin{proposition}[Epimorphism of limit abutment]
\label{thm:Epi-LimitAbutment}
Let $f\from \ExctCpl{C}(1)\to \ExctCpl{C}(2)$ be a morphism of regular exact couples such that for a given $n\in \ZNr$ the objects $\ExctCplEObjctBB{\infty}{\Vect{x}(n)+\ExctCplJBdg{b} + r\ExctCplIBdg{a}}(i)$ are stable. Suppose the following hold: %
\index{epimorphism!limit abutment}%
\vspace{-1.6ex}%
\begin{enumerate}[(i)]
\item $f$ induces epimorphisms $\ExctCplEObjctBB{\infty}{\Vect{x}(n)+\ExctCplJBdg{b} + r\ExctCplIBdg{a}}(1)\to \ExctCplEObjctBB{\infty}{\Vect{x}(n)+\ExctCplJBdg{b} + r\ExctCplIBdg{a}}(2)$.
\item $L_{\Vect{x}(n+\sigma)}(1)=0=L_{\Vect{x}(n+\sigma)}(2)$.
\item The structure maps of $\ExctCplDObjctBB{}{\Vect{x}(n)+\ExctCplJBdg{b}+\ExctCplKBdg{c}+r\ExctCplIBdg{a}}(1)$ have kernels satisfying the descending chain condition. %
\vspace{-1.6ex}%
\end{enumerate}
Then $f$ induces an epimorphism $L^{n+\sigma}(1)\to L^{n+\sigma}(2)$. %
\end{proposition}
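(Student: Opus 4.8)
The plan is to propagate epimorphisms outward: from the $E^{\infty}$-objects (where hypothesis (i) lives), to the filtration quotients of $\ExctCplLimAbut{n+\sigma}$, and finally through the $\Lim$ and $\CoLimOver{\ZCat}$ processes that assemble $\ExctCplLimAbut{n+\sigma}$ from those quotients. Each of the three hypotheses supplies exactly one ingredient for a limit-comparison argument. First I would exploit stability. Since every $\ExctCplEObjctBB{\infty}{\Vect{x}(n)+\ExctCplJBdg{b}+r\ExctCplIBdg{a}}(i)$ is stable, the stable E-extension theorem (\ref{thm:Stable-E-Extension}) gives, naturally in the exact couple, short exact sequences
\begin{equation*}
\xymatrix@R=5ex@C=3em{
\ExctCplCoLimAbutFltrtnQtntBB{\Vect{x}(n)+r\ExctCplIBdg{a}}(i) \ar@{{ |>}->}[r] &
	\ExctCplEObjctBB{\infty}{\Vect{x}(n)+r\ExctCplIBdg{a}+\ExctCplJBdg{b}}(i) \ar@{-{ >>}}[r] &
	\ExctCplLimAbutFltrtnQtntBB{\Vect{x}(n)+r\ExctCplIBdg{a}+\ExctCplJBdg{b}+\ExctCplKBdg{c}}(i)
}
\end{equation*}
for $i=1,2$. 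Hypothesis (i) makes the middle map $f^{\infty}$ an epimorphism, and since the right-hand term is a quotient of the middle one, $f$ induces epimorphisms on every limit-filtration quotient $\ExctCplLimAbutFltrtnQtntBB{\Vect{x}(n)+\ExctCplJBdg{b}+\ExctCplKBdg{c}+r\ExctCplIBdg{a}}$; these are precisely the successive quotients of the increasing kernel filtration $\ExctCplLimAbutFltrtnBB{\Vect{x}(n)+\ExctCplJBdg{b}+\ExctCplKBdg{c}+r\ExctCplIBdg{a}}$ of $\ExctCplLimAbut{n+\sigma}$.

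Next I would convert hypotheses (ii) and (iii) into the two global conditions a limit comparison needs. Condition (ii), that the colimit abutment $\ExctCplCoLimAbut{n+\sigma}(i)=\CoLimOf{\ExctCplDObjctBB{}{\Vect{x}(n+\sigma)+r\ExctCplIBdg{a}}(i)}$ vanishes, forces the kernel filtration of $\ExctCplLimAbut{n+\sigma}(i)$ to be \emph{exhaustive}: for $l\in \ExctCplLimAbut{n+\sigma}(i)$ its image in some $D$-column is killed by a forward iterate of $i$ (its class in the vanishing colimit is $0$), so $l$ lies in some filtration stage; hence $\ExctCplLimAbut{n+\sigma}(i)\cong \CoLimOf{\ExctCplLimAbutFltrtnBB{\Vect{x}(n+\sigma)+r\ExctCplIBdg{a}}(i)}$ over $r$. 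Condition (iii), the descending chain condition on the kernels of the structure maps of $\ExctCplDObjctBB{}{\Vect{x}(n)+\ExctCplJBdg{b}+\ExctCplKBdg{c}+r\ExctCplIBdg{a}}(1)$, is the Mittag-Leffler condition for the kernel tower $\Ker{\ExctCplIMapItrtdBB{s}{\cdots}}(1)$ whose inverse limit computes each stage $\ExctCplLimAbutFltrtnBB{}(1)$; by (\ref{thm:Mittag-Leffler-Lim1=Zero}) this makes the relevant $\LimOne$ on the source side vanish.

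Finally I would assemble these. Within a fixed filtration stage, an induction on $s$ starting from $\Ker{\ExctCplIMapItrtdBB{0}{}}=0$, using the quotient epimorphisms of the first step and the snake lemma at each step, shows that $f$ is epic on every $\Ker{\ExctCplIMapItrtdBB{s}{}}$. The $\LimOne$-vanishing of (iii) then lets $\Lim$ preserve this surjectivity, so $f$ is epic on each stage $\ExctCplLimAbutFltrtnBB{\Vect{x}(n+\sigma)+r\ExctCplIBdg{a}}$; since $\CoLimOver{\ZCat}$ is exact and the filtration is exhaustive by (ii), passing to the colimit over $r$ yields that $L^{n+\sigma}f$ is an epimorphism. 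In the finished write-up this last assembly is exactly the hypothesis-checking for the limit-comparison lemma of Section \ref{sec:ZDiagramComparison}, which I would cite in place of the explicit induction, mirroring the way (\ref{thm:Epi-UnivCoLimitAbutments-I}) invokes its colimit counterpart.

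I expect the genuine obstacle to be the third step, namely pushing the epimorphism through the inverse limit: unlike $\CoLimOver{\ZCat}$, the functor $\Lim$ is not exact, so the real content is verifying that the descending chain condition of (iii) supplies the Mittag-Leffler property of the \emph{correct} source tower and that this suffices to annihilate the $\LimOne$-obstruction. A secondary care point is the indexing bookkeeping that identifies the successive quotients of the kernel filtration of $\ExctCplLimAbut{n+\sigma}$ with the limit-filtration quotients of the first step, and the check that the vanishing colimit in (ii) gives exhaustiveness and not merely the (automatic) Hausdorff property $\FamIntrsctn{r}{\ExctCplLimAbutFltrtnBB{\Vect{x}(n+\sigma)+r\ExctCplIBdg{a}}}=0$.
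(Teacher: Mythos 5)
Your three-step architecture is the paper's own: hypothesis (i) together with stability yields epimorphisms on the limit-filtration quotients $\varepsilon^{\bullet}$; hypothesis (ii) gives exhaustiveness of the kernel filtration via (\ref{thm:KernelFiltrationLim(A)-Props}), exactly as your element-wise argument shows; and the finish is the limit-comparison lemma of Section \ref{sec:ZDiagramComparison} --- concretely (\ref{thm:ZEpiLim(A)-Sufficient}), whose two hypotheses are precisely your step-1 epimorphisms and condition (iii) verbatim. So the citation-based version of your write-up would coincide with the paper's proof.

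However, the explicit induction you offer as the content of the third step has a genuine gap, and it is not what (\ref{thm:ZEpiLim(A)-Sufficient}) does internally. You induct on $s$ over the tower $\ZPointKer{A}{p}$ of kernels $\Ker{A_{p-s}\to A_p}$ inside the $D$-columns, whose inverse limit is the filtration stage $F^{p}A$ by (\ref{thm:Kernel/ImageSequencesOfZ-Diagram}). The successive quotients of that tower are, by (\ref{thm:I-Iterations-AdjacentKernels}), isomorphic to $\ExctCplCyclesBB{s}{\Vect{x}+\ExctCplJBdg{b}}/\Ker{\ExctCplKMapBB{}{\Vect{x}+\ExctCplJBdg{b}}}$ --- the right-hand, \emph{finite-page} terms of the extensions $S(s+1)$ in (\ref{thm:E(r+1)Extension}), i.e.\ quotients of $\ExctCplEObjctBB{s+1}{\Vect{x}+\ExctCplJBdg{b}}$. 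Hypothesis (i) controls only $E^{\infty}$; under stability it gives epimorphisms on $\varepsilon^{\bullet}\cong \ExctCplCyclesBB{\infty}{\Vect{x}+\ExctCplJBdg{b}}/\Ker{\ExctCplKMapBB{}{\Vect{x}+\ExctCplJBdg{b}}}$, which is merely a submodule of $\ExctCplCyclesBB{s}{\Vect{x}+\ExctCplJBdg{b}}/\Ker{\ExctCplKMapBB{}{\Vect{x}+\ExctCplJBdg{b}}}$, and surjectivity onto a submodule says nothing about surjectivity onto the finite-stage quotient (epimorphisms do not pass between pages, cf.\ (\ref{rem:Mono/Epi-NoPassThroughPages})). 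So your snake-lemma induction stalls at its first nontrivial step. The paper's lemma (\ref{thm:ZEpiLim(A)}) inducts instead on the quotients $F^{p}/F^{p-r}$ of the kernel filtration of $L^{n+\sigma}$ itself, via (\ref{thm:AdjacentFiltrationQuotientComp-Extended}): their successive quotients are exactly the $\varepsilon^{\bullet}$ you do control, and the single filtration stage is then recovered as $F^{p}(A)\cong \LimOfOver{F^{p}(A)/F^{p-r}(A)}{r}$ using (\ref{thm:KernelCoFiltration-Complete}). A secondary imprecision: to push a level-wise epimorphism through $\Lim$ one needs vanishing of $\LimOne$ of the tower of \emph{kernels of the comparison maps} (the $\Lambda^{p}_{p-r}$ of (\ref{thm:ZEpiLim(A)})), not of the source tower; condition (iii) does repair this, since DCC passes to subobjects and $\Lambda^{p}_{p-r}$ embeds into $\Ker{A_{p-r}\to A_p}$, but ``Mittag-Leffler of the source tower'' is the wrong invocation.
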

\begin{proof}
Condition (i) combined with the E-infinity extension theorem imply the $f$ induces epimorphisms
\begin{equation*}
\ExctCplLimAbutFltrtnQtntBB{\Vect{x}(n)+\ExctCplJBdg{b}+\ExctCplKBdg{c}+r\ExctCplIBdg{a}}(1)\longrightarrow \ExctCplLimAbutFltrtnQtntBB{\Vect{x}(n)+\ExctCplJBdg{b}+\ExctCplKBdg{c}+r\ExctCplIBdg{a}}(2)
\end{equation*}
Via (\ref{thm:KernelFiltrationLim(A)-Props}), condition (ii) implies that the kernel filtrations of $\ExctCplLimAbut{n+\sigma}(1)$ and $\ExctCplLimAbut{n+\sigma}(2)$ are exhaustive. The claim follows with (\ref{thm:ZEpiLim(A)-Sufficient}).
\end{proof}

\section[Comparison II]{Comparison II: Inducing an Isomorphism of Spectral Sequences}
\label{sec:SpecSec-Comparison-II}

Consider a morphism $f\from \mathcal{C}(1)\to \mathcal{C}(2)$ of exact couples. In the previous section we provided conditions under which a monomorphism/epimorphism/isomorphism of E-infinity objects of their spectral sequences yields a monomorphism/epimorphism/isomorphism of universal abutting objects.

Here we reverse the direction of this investigation, both for spectral sequences of homological type as well as for spectral sequences of cohomological type. For spectral sequences of homological type, we state conditions under which an isomorphism of colimit abutting objects can only arise from an isomorphism of spectral sequences; see (\ref{thm:CoLimitAbutmentIso->E^2Iso,I}) and (\ref{thm:CoLimitAbutmentIso->E^2Iso,II}). This type of result overlaps with Zeeman's comparison theorems \cite{ECZeeman1957}, and partially generalizes them. Corresponding results for spectral sequences of cohomological type also hold; see (\ref{thm:LimitAbutmentIso->E^2Iso,I}) and (\ref{thm:LimitAbutmentIso->E^2Iso,II}). - We adopt the following setup:

\begin{convention}[Setup - I]
\label{Setup:ReverseComparison-I}%
Let $(f,g)\from \ExctCpl{C}(1) \longrightarrow \ExctCpl{C}(2)$ be a morphism of exact couples of $R$-modules with the following properties: %
\vspace{-1.6ex}
\begin{enumerate}[(1)]
\item The associated spectral sequences satisfy: $\SSObjct{r}{p}{q}(i)=0$ for $r\geq 2$, and $p<0$ or $q<0$.
\item For $r\geq 2$ and $i\in \Set{1,2}$, the differential  $d^r(i)\from \ExctCplEObjctBB{r}{}(i) \to \ExctCplEObjctBB{r}{}(i)$ has bidegree $\Vect{b}_r=(-r,r-1)$.
\item The E-infinity objects match the colimit abutment in the following manner: $\ExctCplCoLimAbut{n}(i)=0$ for $n<0$ and, for each $n\geq 0$\MSComp, $\ExctCplCoLimAbut{n}(i)$ is filtered by we have a filtration
\begin{equation*}
0 = \ExctCplCoLimAbutFltrtn{-1}{n+1}(i)\subseteq \ExctCplCoLimAbutFltrtn{0}{n}(i)\subseteq \cdots \subseteq\ExctCplCoLimAbutFltrtn{n}{0}(i) = \ExctCplCoLimAbut{n}(i)
\end{equation*}
Adjacent filtration quotients match $E^{\infty}$-objects via these short exact sequences:
\begin{equation*}
\xymatrix@R=5ex@C=4em{
0 \longrightarrow \ExctCplCoLimAbutFltrtn{p-1}{q+1}(i) \ar@{{ |>}->}[r] &
	\ExctCplCoLimAbutFltrtn{p}{q}(i) \ar@{-{ >>}}[r] &
	\ExctCplEObjct{\infty}{p}{q}(i) \longrightarrow 0
}
\end{equation*}
\end{enumerate}
\end{convention}

\begin{theorem}[Colimit abutment isomorphism yields spectral sequence isomorphism, I]
\label{thm:CoLimitAbutmentIso->E^2Iso,I}
Using the setup in (\ref{Setup:ReverseComparison-I}), assume the following: %
\vspace{-1.6ex}%
\begin{enumerate}[(i)]
\item %
\label{thm:AbutmentIso->E^2Iso,I-Aug}%
The map of colimit abutment objects $L_n(f)\from \ExctCplCoLimAbut{n}(1) \to \ExctCplCoLimAbut{n}(2)$ is an isomorphism for all $n\in\ZNr$.
\item %
\label{thm:AbutmentIso->E^2Iso,-0,0}%
$\SSMapAt{f}{2}{0}{0}\from \SSObjct{2}{0}{0}(1)\to \SSObjct{2}{0}{0}(2)$ is an isomorphism.
\item %
\label{thm:AbutmentIso->E^2Iso,-q>0}%
If $\SSMapAt{f}{2}{p}{0}\from E^{2}_{p,0}(1)\to E^{2}_{p,0}(2)$ is an isomorphism for $0\leq p\leq n$, then $\SSMapAt{f}{2}{p}{q}$ is an isomorphism for all $(p,q)$ with $p\leq n$ and all $q\in \ZNr$. %
\vspace{-1.6ex}%
\end{enumerate}
Then the induced map $(f^r\, |\, r\geq 2)$ of spectral sequences is an isomorphism.
\end{theorem}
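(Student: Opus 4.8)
The plan is to prove that $\SSMap{f}{r}$ is an isomorphism for all $r\geq 2$ by a two-stage induction, using the isomorphism of colimit abutments to bootstrap an isomorphism on the bottom row $\SSObjct{2}{p}{0}$, and then invoking hypothesis (\ref{thm:AbutmentIso->E^2Iso,-q>0}) to spread the isomorphism across all $q$. The engine that converts abutment information into spectral-sequence information is the filtration match in (\ref{Setup:ReverseComparison-I}.3): since $L_n(f)$ is an isomorphism and the filtrations are finite (bounded by $0=\ExctCplCoLimAbutFltrtn{-1}{n+1}\subseteq\cdots\subseteq \ExctCplCoLimAbutFltrtn{n}{0}=\ExctCplCoLimAbut{n}$), a routine induction along the filtration stages shows that $L_n(f)$ being an isomorphism forces $f^{\infty}\from \SSObjct{\infty}{p}{q}(1)\to \SSObjct{\infty}{p}{q}(2)$ to be an isomorphism on the appropriate antidiagonal; at least it forces this as soon as the adjacent quotients are controlled. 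The delicate direction here is the \emph{reverse} one — deducing that each $f^{\infty}_{p,q}$ is an isomorphism from the single fact that the map of (finitely) filtered total objects is an isomorphism — which requires knowing that $f$ respects the filtrations and that the filtration quotients inject/surject compatibly.

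First I would set up the main induction on $n$ (the total degree, with $p+q=n$), with inner structure on $p$. The base case uses hypothesis (\ref{thm:AbutmentIso->E^2Iso,-0,0}): $\SSMapAt{f}{2}{0}{0}$ is an isomorphism, and since $\SSObjct{2}{0}{0}=\SSObjct{\infty}{0}{0}=\ExctCplCoLimAbut{0}$ (a first-quadrant spectral sequence collapses in the corner), this is consistent with $L_0(f)$ being an isomorphism. For the inductive step, I would assume $\SSMapAt{f}{2}{p}{0}$ is an isomorphism for all $0\leq p\leq n-1$. The key move is to show $\SSMapAt{f}{2}{n}{0}$ is then an isomorphism. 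This is where the colimit-abutment isomorphism does its work: on the bottom edge $q=0$, the group $\SSObjct{\infty}{n}{0}$ is the top filtration quotient $\ExctCplCoLimAbut{n}/\ExctCplCoLimAbutFltrtn{n-1}{1}$, so combining the isomorphism $L_n(f)$ with the already-established isomorphisms on the lower filtration quotients (which come from the inductive hypothesis via (\ref{thm:AbutmentIso->E^2Iso,-q>0}) applied at the previous stage) and the five lemma along the filtration yields that $f^{\infty}_{n,0}$ is an isomorphism. Then I would trace $\SSObjct{\infty}{n}{0}$ back to $\SSObjct{2}{n}{0}$ through the differentials $\SSDffrntl{r}$: because differentials \emph{out of} the bottom row $(n,0)$ land in the fourth quadrant and vanish, while differentials \emph{into} $(n,0)$ originate from positions $(n+r,1-r)$ which also vanish for $r\geq 2$, the edge term is already stable, giving $\SSObjct{\infty}{n}{0}\cong\SSObjct{2}{n}{0}$ compatibly with $f$. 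Hence $\SSMapAt{f}{2}{n}{0}$ is an isomorphism.

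Once the bottom row is handled for all $p$, hypothesis (\ref{thm:AbutmentIso->E^2Iso,-q>0}) immediately upgrades this to: $\SSMapAt{f}{2}{p}{q}$ is an isomorphism for all $(p,q)$. From $r=2$ the propagation to all higher pages is then automatic via Corollary (\ref{thm:ErIsoGivesEInftyIso}): an isomorphism $\SSMap{f}{2}\from \SSPage{2}(1)\xrightarrow{\cong}\SSPage{2}(2)$ forces $\SSMap{f}{r+k}$ to be an isomorphism for every $0\leq k\leq\infty$. So the whole family $(f^r\,|\,r\geq 2)$ consists of isomorphisms, which is the claim.

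The main obstacle I anticipate is the reverse-comparison step — extracting the isomorphism of filtration quotients (hence of $\SSObjct{\infty}{n}{0}$) from the single isomorphism $L_n(f)$ of filtered total objects. The clean way through is to run a \emph{simultaneous} induction that feeds hypothesis (\ref{thm:AbutmentIso->E^2Iso,-q>0}) into the filtration argument: at stage $n$ I already know $f^{\infty}$ is an isomorphism on all antidiagonals of total degree $<n$ (equivalently on all the proper filtration subquotients of $\ExctCplCoLimAbut{n}$), so the five lemma applied to
\begin{equation*}
\xymatrix@R=5ex@C=3em{
\ExctCplCoLimAbutFltrtn{p-1}{q+1}(1) \ar@{{ |>}->}[r] \ar[d] &
	\ExctCplCoLimAbutFltrtn{p}{q}(1) \ar@{-{ >>}}[r] \ar[d]^{\cong} &
	\SSObjct{\infty}{p}{q}(1) \ar[d] \\
\ExctCplCoLimAbutFltrtn{p-1}{q+1}(2) \ar@{{ |>}->}[r] &
	\ExctCplCoLimAbutFltrtn{p}{q}(2) \ar@{-{ >>}}[r] &
	\SSObjct{\infty}{p}{q}(2)
}
\end{equation*}
climbs the finite filtration and isolates the top quotient. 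The subtlety is purely bookkeeping: one must march up the filtration in the correct order so that at each application of the five lemma exactly one of the two outer maps is already known to be an isomorphism, leaving the middle map forced. Since the filtration is finite in each total degree (first-quadrant hypothesis), no $\LimOne$ or convergence complication arises, and the argument terminates.
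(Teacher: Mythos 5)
There is a genuine gap, and it sits at the pivot of your induction: the claim that the bottom-row edge terms are stable, i.e.\ that ``differentials out of the bottom row $(n,0)$ land in the fourth quadrant and vanish,'' so that $\SSObjct{\infty}{n}{0}\cong \SSObjct{2}{n}{0}$. In Setup (\ref{Setup:ReverseComparison-I}) the differential on page $r$ has bidegree $(-r,r-1)$, so the differential \emph{leaving} $(n,0)$ lands at $(n-r,\,r-1)$, which lies in the \emph{first} quadrant whenever $2\leq r\leq n$ and need not vanish; it is only the differentials \emph{arriving} at $(n,0)$, coming from $(n+r,\,1-r)$, that are zero. Hence one only gets a chain of subobjects
\begin{equation*}
\SSObjct{\infty}{n}{0}\subseteq \cdots \subseteq \SSObjct{3}{n}{0}\subseteq \SSObjct{2}{n}{0},
\end{equation*}
and these inclusions are proper in general. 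The paper's own computation of $\GrpHmlgyDimOf{\ast}{\CyclcGrp{k}}$ (Theorem \ref{thm:Homology-C_n}) is a counterexample: there $\SSDffrntlAt{2}{3}{0}\from \SSObjct{2}{3}{0}\cong \ZMod{k}\to \SSObjct{2}{1}{1}$ is an isomorphism, so $\SSObjct{\infty}{3}{0}=0$ while $\SSObjct{2}{3}{0}\cong\ZMod{k}\neq 0$. Consequently, knowing that $f^{\infty}_{n,0}$ is an isomorphism does not by itself yield that $\SSMapAt{f}{2}{n}{0}$ is one, and your inductive step collapses exactly there.

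Repairing this is where essentially all of the work in the paper's proof lies. To pass from $\SSObjct{\infty}{n+1}{0}=\SSObjct{n+2}{n+1}{0}$ down to $\SSObjct{2}{n+1}{0}$ one must run a downward induction on the page number, applying the five lemma at each stage to the exact sequence
\begin{equation*}
\xymatrix@R=5ex@C=3em{
\SSObjct{r+1}{n+1}{0} \ar@{{ |>}->}[r] &
	\SSObjct{r}{n+1}{0} \ar[r]^-{\SSDffrntl{r}} &
	\ExctCplCycles{r}{n+1-r}{r-1} \ar@{-{ >>}}[r] &
	\SSObjct{r+1}{n+1-r}{r-1}
}
\end{equation*}
whose right-hand terms live on the antidiagonal $p+q=n$ but at arbitrary pages $r>2$. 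Hypothesis (\ref{thm:AbutmentIso->E^2Iso,-q>0}) is an $E^2$-level statement and cannot supply this by itself: propagating an $E^2$-isomorphism at such a position to higher pages via (\ref{thm:E^rEpiMonoIsoPropagation}) requires epimorphism data at the sources of incoming differentials, which sit in columns beyond those your induction controls. This is why the paper's proof carries six interlocking properties $(A_n)$--$(F_n)$: page-isomorphisms on low antidiagonals at \emph{all} pages $(F_n)$, isomorphisms in vertical strips $(B_n)$, an epimorphism property $(C_n)$ for all $r$ and $p\leq n$, and --- because hypothesis (\ref{thm:AbutmentIso->E^2Iso,I-Aug}) a priori only restricts to \emph{monomorphisms} of filtration stages --- the filtration-stage isomorphisms $(E_n)$, which are recovered from $(C_n)$ and those monomorphisms via the snake lemma before your five-lemma climb up the filtration can even start. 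Your closing ``simultaneous induction'' is the right instinct, but as formulated it tracks only $E^2$- and $E^{\infty}$-data; without the intermediate-page bookkeeping, the descent from $E^{\infty}$ to $E^2$ on the bottom row cannot close.
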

\begin{proof}
We begin by observing that the isomorphisms $L_n(f)$ restrict to monomorphisms
\begin{equation*}\label{eq:AbutmentIso->E^2Iso,I-Monos}
\xymatrix@R=5ex@C=5em{
\ExctCplCoLimAbutFltrtn{p}{q}(1) \ar@{{ |>}->}[r] &
	\ExctCplCoLimAbutFltrtn{p}{q}(2)
}\qquad \text{for all\ }p,q\in \ZNr
\tag{M}
\end{equation*}
Now we prove the theorem by induction: For $n\geq 0$, let $P_n$ assert that the following statements are true.
\begin{enumerate}
\item[$(A_n)$] $f^{2}_{p,q}\from \ExctCplEObjct{2}{p}{q}(1) \XRA{\cong} \ExctCplEObjct{2}{p}{q}(2)$ is an isomorphism for $0\leq p\leq n$ and each $q\in\ZNr$.
\item[$(B_n)$] $f^{r}_{p,q}\from \ExctCplEObjct{r}{p}{q}(1) \XRA{\cong} \ExctCplEObjct{r}{p}{q}(2)$ is an isomorphism for all $p\leq n-1$, $2\leq r\leq n+1-p$, and $q\in\ZNr$.
\item[$(C_n)$] $f^{r}_{p,q}\from \xymatrix@R=5ex@C=2em{ \ExctCplEObjct{r}{p}{q}(1) \ar@{-{ >>}}[r] & \ExctCplEObjct{r}{p}{q}(2) }$ is an epimorphism for all $r\geq 2$, and $p\leq n$ and all $q\in\ZNr$.
\item[$(D_n)$] $f^{\infty}_{p,q}\from \ExctCplEObjct{\infty}{p}{q}(1) \XRA{\cong} \ExctCplEObjct{\infty}{p}{q}(2)$ is an isomorphism for $p\leq n$ and all $q\in\ZNr$.
\item[$(E_n)$] $\ExctCplCoLimAbutFltrtn{p}{q}(1) \XRA{\cong} \ExctCplCoLimAbutFltrtn{p}{q}(2)$ is an isomorphism for $0\leq p\leq n$ and all $q\in\ZNr$.
\item[$(F_n)$] $f^{r}_{p,q}\from \ExctCplEObjct{r}{p}{q}(1) \XRA{\cong} \ExctCplEObjct{r}{p}{q}(2)$ is an isomorphism  for $r\geq 2$ and $0\leq p+q\leq n$.
\end{enumerate}
The graphic below serves to visualize property $P_n$: $(A_n),(C_n),(D_n),(E_n)$ hold in the blue rectangular region; $(B_n)$ applies to the vertical strips; $(F_n)$ holds in the triangular region.
\begin{center}
\begin{tikzpicture}
\clip (-0.6,-0.6) rectangle (12.8,12.8);
\filldraw[fill=blue!10,draw=black!50!blue!10] (0,0) -- (12,0) -- (12,13) -- (0,13) -- (0,0);
\filldraw[fill=green!10,draw=green!50!green!10] (10.8,0) -- (11.2,0) -- (11.2,13) -- (10.8,13) -- (10.8,0);
\node[black,above] at (11,12.0){$\genfrac{}{}{0pt}{}{E^{\leq 2}}{\text{iso}}$};
\filldraw[fill=green!10,draw=green!50!green!10] (9.8,0) -- (10.2,0) -- (10.2,13) -- (9.8,13) -- (9.8,0);
\node[black,above] at (10,12.0){$\genfrac{}{}{0pt}{}{E^{\leq 3}}{\text{iso}}$};
\filldraw[fill=green!10,draw=green!50!green!10] (8.8,0) -- (9.2,0) -- (9.2,13) -- (8.8,13) -- (8.8,0);
\node[black,above] at (9,12.0){$\genfrac{}{}{0pt}{}{E^{\leq 4}}{\text{iso}}$};
\filldraw[fill=green!10,draw=green!50!green!10] (7.8,0) -- (8.2,0) -- (8.2,13) -- (7.8,13) -- (7.8,0);
\filldraw[fill=green!10,draw=green!50!green!10] (6.8,0) -- (7.2,0) -- (7.2,13) -- (6.8,13) -- (6.8,0);
\filldraw[fill=green!10,draw=green!50!green!10] (5.8,0) -- (6.2,0) -- (6.2,13) -- (5.8,13) -- (5.8,0);
\filldraw[fill=green!10,draw=green!50!green!10] (4.8,0) -- (5.2,0) -- (5.2,13) -- (4.8,13) -- (4.8,0);
\filldraw[fill=green!10,draw=green!50!green!10] (3.8,0) -- (4.2,0) -- (4.2,13) -- (3.8,13) -- (3.8,0);
\filldraw[fill=green!10,draw=green!50!green!10] (2.8,0) -- (3.2,0) -- (3.2,13) -- (2.8,13) -- (2.8,0);
\filldraw[fill=green!10,draw=green!50!green!10] (1.8,0) -- (2.2,0) -- (2.2,13) -- (1.8,13) -- (1.8,0);
\filldraw[fill=green!10,draw=green!50!green!10] (0.8,0) -- (1.2,0) -- (1.2,13) -- (0.8,13) -- (0.8,0);
\node[black,above] at (1,12.0){$\genfrac{}{}{0pt}{}{E^{\leq n}}{\text{iso}}$};
\filldraw[fill=green!10,draw=green!50!green!10] (-0.2,0) -- (0.2,0) -- (0.2,13) -- (-0.2,13) -- (-0.2,0);
\filldraw[fill=red!5,draw=black!50!red!5] (0,0) -- (12,0) -- (0,12)  -- (0,0);
\draw[black,->,very thick] (-0.6,0) -- (12.8,0);
\node[black,above] at (12.5,0){$p$};
\draw[black,->,very thick] (0,-0.6) -- (0,12.8);
\node[black,left] at (0,12.5){$q$};
\filldraw[fill=red!100,draw=red!10!red] (0,0) circle (0.6mm);
\filldraw[fill=red!100,draw=red!10!red] (1,0) circle (0.6mm);
\filldraw[fill=red!100,draw=red!10!red] (2,0) circle (0.6mm);
\filldraw[fill=red!100,draw=red!10!red] (3,0) circle (0.6mm);
\filldraw[fill=red!100,draw=red!10!red] (4,0) circle (0.6mm);
\filldraw[fill=red!100,draw=red!10!red] (5,0) circle (0.6mm);
\filldraw[fill=red!100,draw=red!10!red] (6,0) circle (0.6mm);
\filldraw[fill=red!100,draw=red!10!red] (7,0) circle (0.6mm);
\filldraw[fill=red!100,draw=red!10!red] (8,0) circle (0.6mm);
\filldraw[fill=red!100,draw=red!10!red] (9,0) circle (0.6mm);
\filldraw[fill=red!100,draw=red!10!red] (10,0) circle (0.6mm);
\filldraw[fill=red!100,draw=red!10!red] (11,0) circle (0.6mm);
\filldraw[fill=red!100,draw=red!10!red] (12,0) circle (0.6mm);
\filldraw[fill=red!100,draw=red!10!red] (0,1) circle (0.6mm);
\filldraw[fill=red!100,draw=red!10!red] (1,1) circle (0.6mm);
\filldraw[fill=red!100,draw=red!10!red] (2,1) circle (0.6mm);
\filldraw[fill=red!100,draw=red!10!red] (3,1) circle (0.6mm);
\filldraw[fill=red!100,draw=red!10!red] (4,1) circle (0.6mm);
\filldraw[fill=red!100,draw=red!10!red] (5,1) circle (0.6mm);
\filldraw[fill=red!100,draw=red!10!red] (6,1) circle (0.6mm);
\filldraw[fill=red!100,draw=red!10!red] (7,1) circle (0.6mm);
\filldraw[fill=red!100,draw=red!10!red] (8,1) circle (0.6mm);
\filldraw[fill=red!100,draw=red!10!red] (9,1) circle (0.6mm);
\filldraw[fill=red!100,draw=red!10!red] (10,1) circle (0.6mm);
\filldraw[fill=red!100,draw=red!10!red] (11,1) circle (0.6mm);
\filldraw[fill=red!100,draw=red!10!red] (12,1) circle (0.6mm);
\filldraw[fill=red!100,draw=red!10!red] (0,2) circle (0.6mm);
\filldraw[fill=red!100,draw=red!10!red] (1,2) circle (0.6mm);
\filldraw[fill=red!100,draw=red!10!red] (2,2) circle (0.6mm);
\filldraw[fill=red!100,draw=red!10!red] (3,2) circle (0.6mm);
\filldraw[fill=red!100,draw=red!10!red] (4,2) circle (0.6mm);
\filldraw[fill=red!100,draw=red!10!red] (5,2) circle (0.6mm);
\filldraw[fill=red!100,draw=red!10!red] (6,2) circle (0.6mm);
\filldraw[fill=red!100,draw=red!10!red] (7,2) circle (0.6mm);
\filldraw[fill=red!100,draw=red!10!red] (8,2) circle (0.6mm);
\filldraw[fill=red!100,draw=red!10!red] (9,2) circle (0.6mm);
\filldraw[fill=red!100,draw=red!10!red] (10,2) circle (0.6mm);
\filldraw[fill=red!100,draw=red!10!red] (11,2) circle (0.6mm);
\filldraw[fill=red!100,draw=red!10!red] (12,2) circle (0.6mm);
\filldraw[fill=red!100,draw=red!10!red] (0,3) circle (0.6mm);
\filldraw[fill=red!100,draw=red!10!red] (1,3) circle (0.6mm);
\filldraw[fill=red!100,draw=red!10!red] (2,3) circle (0.6mm);
\filldraw[fill=red!100,draw=red!10!red] (3,3) circle (0.6mm);
\filldraw[fill=red!100,draw=red!10!red] (4,3) circle (0.6mm);
\filldraw[fill=red!100,draw=red!10!red] (5,3) circle (0.6mm);
\filldraw[fill=red!100,draw=red!10!red] (6,3) circle (0.6mm);
\filldraw[fill=red!100,draw=red!10!red] (7,3) circle (0.6mm);
\filldraw[fill=red!100,draw=red!10!red] (8,3) circle (0.6mm);
\filldraw[fill=red!100,draw=red!10!red] (9,3) circle (0.6mm);
\filldraw[fill=red!100,draw=red!10!red] (10,3) circle (0.6mm);
\filldraw[fill=red!100,draw=red!10!red] (11,3) circle (0.6mm);
\filldraw[fill=red!100,draw=red!10!red] (12,3) circle (0.6mm);
\filldraw[fill=red!100,draw=red!10!red] (0,4) circle (0.6mm);
\filldraw[fill=red!100,draw=red!10!red] (1,4) circle (0.6mm);
\filldraw[fill=red!100,draw=red!10!red] (2,4) circle (0.6mm);
\filldraw[fill=red!100,draw=red!10!red] (3,4) circle (0.6mm);
\filldraw[fill=red!100,draw=red!10!red] (4,4) circle (0.6mm);
\filldraw[fill=red!100,draw=red!10!red] (5,4) circle (0.6mm);
\filldraw[fill=red!100,draw=red!10!red] (6,4) circle (0.6mm);
\filldraw[fill=red!100,draw=red!10!red] (7,4) circle (0.6mm);
\filldraw[fill=red!100,draw=red!10!red] (8,4) circle (0.6mm);
\filldraw[fill=red!100,draw=red!10!red] (9,4) circle (0.6mm);
\filldraw[fill=red!100,draw=red!10!red] (10,4) circle (0.6mm);
\filldraw[fill=red!100,draw=red!10!red] (11,4) circle (0.6mm);
\filldraw[fill=red!100,draw=red!10!red] (12,4) circle (0.6mm);
\filldraw[fill=red!100,draw=red!10!red] (0,5) circle (0.6mm);
\filldraw[fill=red!100,draw=red!10!red] (1,5) circle (0.6mm);
\filldraw[fill=red!100,draw=red!10!red] (2,5) circle (0.6mm);
\filldraw[fill=red!100,draw=red!10!red] (3,5) circle (0.6mm);
\filldraw[fill=red!100,draw=red!10!red] (4,5) circle (0.6mm);
\filldraw[fill=red!100,draw=red!10!red] (5,5) circle (0.6mm);
\filldraw[fill=red!100,draw=red!10!red] (6,5) circle (0.6mm);
\filldraw[fill=red!100,draw=red!10!red] (7,5) circle (0.6mm);
\filldraw[fill=red!100,draw=red!10!red] (8,5) circle (0.6mm);
\filldraw[fill=red!100,draw=red!10!red] (9,5) circle (0.6mm);
\filldraw[fill=red!100,draw=red!10!red] (10,5) circle (0.6mm);
\filldraw[fill=red!100,draw=red!10!red] (11,5) circle (0.6mm);
\filldraw[fill=red!100,draw=red!10!red] (12,5) circle (0.6mm);
\filldraw[fill=red!100,draw=red!10!red] (0,6) circle (0.6mm);
\filldraw[fill=red!100,draw=red!10!red] (1,6) circle (0.6mm);
\filldraw[fill=red!100,draw=red!10!red] (2,6) circle (0.6mm);
\filldraw[fill=red!100,draw=red!10!red] (3,6) circle (0.6mm);
\filldraw[fill=red!100,draw=red!10!red] (4,6) circle (0.6mm);
\filldraw[fill=red!100,draw=red!10!red] (5,6) circle (0.6mm);
\filldraw[fill=red!100,draw=red!10!red] (6,6) circle (0.6mm);
\filldraw[fill=red!100,draw=red!10!red] (7,6) circle (0.6mm);
\filldraw[fill=red!100,draw=red!10!red] (8,6) circle (0.6mm);
\filldraw[fill=red!100,draw=red!10!red] (9,6) circle (0.6mm);
\filldraw[fill=red!100,draw=red!10!red] (10,6) circle (0.6mm);
\filldraw[fill=red!100,draw=red!10!red] (11,6) circle (0.6mm);
\filldraw[fill=red!100,draw=red!10!red] (12,6) circle (0.6mm);
\filldraw[fill=red!100,draw=red!10!red] (0,7) circle (0.6mm);
\filldraw[fill=red!100,draw=red!10!red] (1,7) circle (0.6mm);
\filldraw[fill=red!100,draw=red!10!red] (2,7) circle (0.6mm);
\filldraw[fill=red!100,draw=red!10!red] (3,7) circle (0.6mm);
\filldraw[fill=red!100,draw=red!10!red] (4,7) circle (0.6mm);
\filldraw[fill=red!100,draw=red!10!red] (5,7) circle (0.6mm);
\filldraw[fill=red!100,draw=red!10!red] (6,7) circle (0.6mm);
\filldraw[fill=red!100,draw=red!10!red] (7,7) circle (0.6mm);
\filldraw[fill=red!100,draw=red!10!red] (8,7) circle (0.6mm);
\filldraw[fill=red!100,draw=red!10!red] (9,7) circle (0.6mm);
\filldraw[fill=red!100,draw=red!10!red] (10,7) circle (0.6mm);
\filldraw[fill=red!100,draw=red!10!red] (11,7) circle (0.6mm);
\filldraw[fill=red!100,draw=red!10!red] (12,7) circle (0.6mm);
\filldraw[fill=red!100,draw=red!10!red] (0,8) circle (0.6mm);
\filldraw[fill=red!100,draw=red!10!red] (1,8) circle (0.6mm);
\filldraw[fill=red!100,draw=red!10!red] (2,8) circle (0.6mm);
\filldraw[fill=red!100,draw=red!10!red] (3,8) circle (0.6mm);
\filldraw[fill=red!100,draw=red!10!red] (4,8) circle (0.6mm);
\filldraw[fill=red!100,draw=red!10!red] (5,8) circle (0.6mm);
\filldraw[fill=red!100,draw=red!10!red] (6,8) circle (0.6mm);
\filldraw[fill=red!100,draw=red!10!red] (7,8) circle (0.6mm);
\filldraw[fill=red!100,draw=red!10!red] (8,8) circle (0.6mm);
\filldraw[fill=red!100,draw=red!10!red] (9,8) circle (0.6mm);
\filldraw[fill=red!100,draw=red!10!red] (10,8) circle (0.6mm);
\filldraw[fill=red!100,draw=red!10!red] (11,8) circle (0.6mm);
\filldraw[fill=red!100,draw=red!10!red] (12,8) circle (0.6mm);
\filldraw[fill=red!100,draw=red!10!red] (0,9) circle (0.6mm);
\filldraw[fill=red!100,draw=red!10!red] (1,9) circle (0.6mm);
\filldraw[fill=red!100,draw=red!10!red] (2,9) circle (0.6mm);
\filldraw[fill=red!100,draw=red!10!red] (3,9) circle (0.6mm);
\filldraw[fill=red!100,draw=red!10!red] (4,9) circle (0.6mm);
\filldraw[fill=red!100,draw=red!10!red] (5,9) circle (0.6mm);
\filldraw[fill=red!100,draw=red!10!red] (6,9) circle (0.6mm);
\filldraw[fill=red!100,draw=red!10!red] (7,9) circle (0.6mm);
\filldraw[fill=red!100,draw=red!10!red] (8,9) circle (0.6mm);
\filldraw[fill=red!100,draw=red!10!red] (9,9) circle (0.6mm);
\filldraw[fill=red!100,draw=red!10!red] (10,9) circle (0.6mm);
\filldraw[fill=red!100,draw=red!10!red] (11,9) circle (0.6mm);
\filldraw[fill=red!100,draw=red!10!red] (12,9) circle (0.6mm);
\filldraw[fill=red!100,draw=red!10!red] (0,10) circle (0.6mm);
\filldraw[fill=red!100,draw=red!10!red] (1,10) circle (0.6mm);
\filldraw[fill=red!100,draw=red!10!red] (2,10) circle (0.6mm);
\filldraw[fill=red!100,draw=red!10!red] (3,10) circle (0.6mm);
\filldraw[fill=red!100,draw=red!10!red] (4,10) circle (0.6mm);
\filldraw[fill=red!100,draw=red!10!red] (5,10) circle (0.6mm);
\filldraw[fill=red!100,draw=red!10!red] (6,10) circle (0.6mm);
\filldraw[fill=red!100,draw=red!10!red] (7,10) circle (0.6mm);
\filldraw[fill=red!100,draw=red!10!red] (8,10) circle (0.6mm);
\filldraw[fill=red!100,draw=red!10!red] (9,10) circle (0.6mm);
\filldraw[fill=red!100,draw=red!10!red] (10,10) circle (0.6mm);
\filldraw[fill=red!100,draw=red!10!red] (11,10) circle (0.6mm);
\filldraw[fill=red!100,draw=red!10!red] (12,10) circle (0.6mm);
\filldraw[fill=red!100,draw=red!10!red] (0,11) circle (0.6mm);
\filldraw[fill=red!100,draw=red!10!red] (1,11) circle (0.6mm);
\filldraw[fill=red!100,draw=red!10!red] (2,11) circle (0.6mm);
\filldraw[fill=red!100,draw=red!10!red] (3,11) circle (0.6mm);
\filldraw[fill=red!100,draw=red!10!red] (4,11) circle (0.6mm);
\filldraw[fill=red!100,draw=red!10!red] (5,11) circle (0.6mm);
\filldraw[fill=red!100,draw=red!10!red] (6,11) circle (0.6mm);
\filldraw[fill=red!100,draw=red!10!red] (7,11) circle (0.6mm);
\filldraw[fill=red!100,draw=red!10!red] (8,11) circle (0.6mm);
\filldraw[fill=red!100,draw=red!10!red] (9,11) circle (0.6mm);
\filldraw[fill=red!100,draw=red!10!red] (10,11) circle (0.6mm);
\filldraw[fill=red!100,draw=red!10!red] (11,11) circle (0.6mm);
\filldraw[fill=red!100,draw=red!10!red] (12,11) circle (0.6mm);
\filldraw[fill=red!100,draw=red!10!red] (0,12) circle (0.6mm);
\filldraw[fill=red!100,draw=red!10!red] (1,12) circle (0.6mm);
\filldraw[fill=red!100,draw=red!10!red] (2,12) circle (0.6mm);
\filldraw[fill=red!100,draw=red!10!red] (3,12) circle (0.6mm);
\filldraw[fill=red!100,draw=red!10!red] (4,12) circle (0.6mm);
\filldraw[fill=red!100,draw=red!10!red] (5,12) circle (0.6mm);
\filldraw[fill=red!100,draw=red!10!red] (6,12) circle (0.6mm);
\filldraw[fill=red!100,draw=red!10!red] (7,12) circle (0.6mm);
\filldraw[fill=red!100,draw=red!10!red] (8,12) circle (0.6mm);
\filldraw[fill=red!100,draw=red!10!red] (9,12) circle (0.6mm);
\filldraw[fill=red!100,draw=red!10!red] (10,12) circle (0.6mm);
\filldraw[fill=red!100,draw=red!10!red] (11,12) circle (0.6mm);
\filldraw[fill=red!100,draw=red!10!red] (12,12) circle (0.6mm);
\node[black,below]  at (12,-0.1) {$n$};
\filldraw[fill=red!100,draw=red!10!red] (0,12) circle (0.6mm);
\node[black,left]  at (-0.1,12) {$n$};
\filldraw[fill=red!10,draw=green!50!black] (-1,0) -- (-4,0)  arc (180:270:4) -- (0,-1) arc (270:180:1);
\end{tikzpicture}
\end{center}
{\em Claim: $P_0$ is true}\quad $(A_0)$ holds by combining hypotheses \ref{thm:AbutmentIso->E^2Iso,-0,0} and \ref{thm:AbutmentIso->E^2Iso,-q>0}. Condition $(B_0)$ is vacuously satisfied. - Property $(C_0)$ requires us to show that, for each $q\in\ZNr$, we have an epimorphism
\begin{equation*}
\xymatrix@R=5ex@C=4em{
\ExctCplEObjct{r}{0}{q}(1) \ar@{-{ >>}}[r] &
	\ExctCplEObjct{r}{0}{q}(2)
}
\end{equation*}
As exiting differentials from these objects vanish, we have the following commutative diagram for each $r\geq 2$:
\begin{equation*}
\xymatrix@R=5ex@C=3em{
\ExctCplEObjct{2}{0}{q}(1) \ar[d]_{\text{\ref{thm:AbutmentIso->E^2Iso,-q>0}}}^{\cong} \ar@{-{ >>}}[r] &
	\ExctCplEObjct{3}{0}{q}(1) \ar[d] \ar@{-{ >>}}[r] &
	\cdots \ar@{-{ >>}}[r] &
	\ExctCplEObjct{r}{0}{q}(1) \ar[d] \\
\ExctCplEObjct{2}{0}{q}(2) \ar@{-{ >>}}[r] &
	\ExctCplEObjct{3}{0}{q}(1) \ar@{-{ >>}}[r] &
	\cdots \ar@{-{ >>}}[r] &
	\ExctCplEObjct{r}{0}{q}(1)
}
\end{equation*}
So each vertical map is an epimorphism. - Properties $(D_0)$ and $(E_0)$ follow together via this observation:
\begin{equation*}
\xymatrix@R=6ex@C=2em{
\ExctCplEObjct{\infty}{0}{q}(1) \ar[d]_{\cong} \ar[r]^{\cong} &
	\ExctCplEObjct{q+2}{0}{q}(1) \ar@{-{ >>}}[rr]^{(C_0)} &&
	\ExctCplEObjct{q+2}{0}{q}(2) \ar[r]^-{\cong} &
	\ExctCplEObjct{\infty}{0}{q}(2) \ar[d]^{\cong} \\
\ExctCplCoLimAbutFltrtn{0}{q}(1) \ar@{{ |>}->}[rrrr]_{\text{(\ref{eq:AbutmentIso->E^2Iso,I-Monos})}} &&&&
	\ExctCplCoLimAbutFltrtn{0}{q}(2)
}
\end{equation*}
By commutativity, the horizontal maps are simultaneously monic and epic. So, all maps in this diagram are isomorphisms. - Finally, property $(F_0)$ holds because
\begin{equation*}
\xymatrix@R=5ex@C=4em{
\ExctCplEObjct{\infty}{0}{0}(1) \cong \ExctCplEObjct{2}{0}{0}(1) \ar[r]^-{\cong}_{\text{\ref{thm:AbutmentIso->E^2Iso,-q>0}}} &
	\ExctCplEObjct{2}{0}{0}(2)\cong \ExctCplEObjct{\infty}{0}{0}(2)
}
\end{equation*}
This completes the verification of property $P_0$.

Assume now that $P_n$ holds for some $n\geq 0$. We must verify the truth of $P_{n+1}$. We begin with:

{\bfseries Property $(D_{n+1})$ holds in position $(n+1,0)$}\quad To see this, consider the morphism of short exact sequences:
\begin{equation*}
\xymatrix@R=5ex@C=4em{
\ExctCplCoLimAbutFltrtn{n}{1}(1) \ar[d]_{\cong}^{(E_n)} \ar@{{ |>}->}[r] &
	\ExctCplCoLimAbutFltrtn{n+1}{0}(1)=\ExctCplCoLimAbut{n+1}(1) \ar[d]_{\cong}^{\text{ \ref{thm:AbutmentIso->E^2Iso,I-Aug} }} \ar@{-{ >>}}[r] &
	\ExctCplEObjct{\infty}{n+1}{0}(1) \ar[d] \\
\ExctCplCoLimAbutFltrtn{n}{1}(2) \ar@{{ |>}->}[r] &
	\ExctCplCoLimAbutFltrtn{n+1}{0}(2)=\ExctCplCoLimAbut{n+1}(2)  \ar@{-{ >>}}[r] &
	\ExctCplEObjct{\infty}{n+1}{0}(2)
}
\end{equation*}
As the vertical maps on the left and in the middle are isomorphisms, so is the vertical map on the right. With this information we show:

{\bfseries Property $(F_{n+1})$ holds in position $(n+1,0)$}\quad We already know that
\begin{equation*}
\ExctCplEObjct{n+2}{n+1}{0}(1) \cong \ExctCplEObjct{\infty}{n+1}{0}(1) \XRA{\cong} \ExctCplEObjct{\cong}{n+1}{0}(2)\cong \ExctCplEObjct{n+2}{n+1}{0}(2).
\end{equation*}
So, assume inductively that, for $0\leq l<n$ we have an isomorphism
\begin{equation*}
f^{n+2-l}_{n+1,0}\from \ExctCplEObjct{n+2-l}{n+1}{0}(1) \XRA{\cong} \ExctCplEObjct{n+2-l}{n+1}{0}(2)
\end{equation*}
We need to show that $f^{n+2-(l+1)}_{n+1,0}$ is an isomorphism. As incoming differentials into position $(n+1,0)$ vanish, we have the morphism of exact sequences below.
\begin{equation*}
\xymatrix@R=5ex@C=2.5em{
\ExctCplEObjct{n+2-l}{n+1}{0}(1) \ar@{{ |>}->}[r] \ar[d]_{\cong} &
	\ExctCplEObjct{n+2-(l+1)}{n+1}{0}(1) \ar[rr]^-{d^{n+2-(l+1)}(1)} \ar[d] &&
	\ExctCplCycles{n+2-(l+1)}{l}{n-l}(1) \ar@{-{ >>}}[r] \ar[d]_{\cong}^{(F_n)} &
	\ExctCplEObjct{n+2-l}{l}{n-l}(1) \ar[d]_{\cong}^{(F_n)} \\
\ExctCplEObjct{n+2-l}{n+1}{0}(2) \ar@{{ |>}->}[r] &
	\ExctCplEObjct{n+2-(l+1)}{n+1}{0}(2) \ar[rr]_-{d^{n+2-(l+1)}(2)} &&
	\ExctCplCycles{n+2-(l+1)}{l}{n-l}(2) \ar@{-{ >>}}[r] &
	\ExctCplEObjct{n+2-l}{l}{n-l}(2)
}
\end{equation*}
The $5$-lemma yields the required isomorphism $\ExctCplEObjct{n+2-(l+1)}{n+1}{0}(1)\XRA{\cong} \ExctCplEObjct{n+2-(l+1)}{n+1}{0}(2)$. The induction is complete, and we have isomorphisms
\begin{equation*}
f^{r}_{n+1,0}\from \ExctCplEObjct{r}{n+1}{0}(1)\XRA{\cong} \ExctCplEObjct{r}{n+1}{0}(2)\qquad r\geq 2
\end{equation*}
{\bfseries Property $(A_{n+1})$}\quad follow from induction hypothesis ($A_n$), combined with assumption \ref{thm:AbutmentIso->E^2Iso,-q>0}.

{\bfseries Property $(C_{n+1})$}\quad While inferring $(C_{n+1})$, we also collect information helpful toward proving $(B_{n+1})$\MSComp. As $(C_n)$ holds, it remains to show that the map below is an epimorphism for each $r\geq 2$ and $q\in \ZNr$:
\begin{equation*}
\xymatrix@R=5ex@C=4em{
\ExctCplEObjct{r}{n+1}{q}(1) \ar@{-{ >>}}[r]^-{f^{r}_{n+1,0}} &
	\ExctCplEObjct{r}{n+1}{q}(2)
}
\end{equation*}
For $r=2$, this is true because we already established property $(A_{n+1})$. So, assume that we have such an epimorphism for $r\geq 2$, and consider this morphism of exact sequences:
\begin{equation*}
\xymatrix@R=6ex@C=3em{
\ExctCplCycles{r}{n+1}{q}(1) \ar@{{ |>}->}[r] \ar[d]_{u} &
	\ExctCplEObjct{r}{n+1}{q}(1) \ar[r]^-{d^{r}(1)} \ar@{-{ >>}}[d]_{\text{(IH)}}&
	\ExctCplCycles{r}{n+1-r}{q+r-1}(1)	\ar@{-{ >>}}[r] \ar[d]_{\overset{!}{\cong}}^{w} &
	\ExctCplEObjct{r+1}{n+1-r}{q+r-1}(1) \ar[d]^{v} \\
\ExctCplCycles{r}{n+1}{q}(2) \ar@{{ |>}->}[r] &
	\ExctCplEObjct{r}{n+1}{q}(2) \ar[r]_-{d^{r}(2)} &
	\ExctCplCycles{r}{n+1-r}{q+r-1}(2)	\ar@{-{ >>}}[r] &
	\ExctCplEObjct{r+1}{n+1-r}{q+r-1}(2)
}
\end{equation*}
First, we use the morphism of exact sequences below, along with property $(B_n)$, to see that the second vertical arrow from the right is an isomorphism. 
\begin{equation*}
\xymatrix@R=6ex@C=4em{
\ExctCplCycles{r}{n+1-r}{q+r-1}(1) \ar@{{ |>}->}[r]  \ar[d]_{w} &
	\ExctCplEObjct{r}{n+1-r}{q+r-1}(1) \ar[r]^-{d^{n+1-r}(1)} \ar[d]_{\cong}^{(B_n)} &
	\ExctCplEObjct{r}{n+1-2r}{q+2r-2}(1) \ar[d]_{\cong}^{(B_n)} \\
\ExctCplCycles{r}{n+1-r}{q+r-1}(2) \ar@{{ |>}->}[r] &
	\ExctCplEObjct{r}{n+1-r}{q+r-1}(2) \ar[r]_-{d^{n+1-r}(2)} &
	\ExctCplEObjct{r}{n+1-2r}{q+2r-2}(2)
}
\end{equation*}
With $w$ an isomorphism, a diagram chase shows that the map $u$ is an epimorphism, and the map $v$ is an isomorphism. We complete the verification of $(C_{n+1})$ by observing that the epimorphism $u$ induces an epimorphism
\begin{equation*}
\xymatrix@R=5ex@C=4em{
\ExctCplEObjct{r+1}{n+1}{q}(1) \ar@{-{ >>}}[r]^{f^{r+1}_{n+1,q}} &
	\ExctCplEObjct{r+1}{n+1}{q}(2)
}
\end{equation*}
{\bfseries Property $(B_{n+1})$}\quad We only need to collect all available information. We know:
\vspace{-1.6ex}%
\begin{enumerate}[(1)]
\item Have isomorphisms $\ExctCplEObjct{2}{n}{q}(1) \XRA{\cong} \ExctCplEObjct{2}{n}{q}(2)$ by property $(A_n)$.
\item For $p\leq n-1$ property $(B_n)$ gives isomorphisms $\ExctCplEObjct{r}{p}{q}(1) \XRA{\cong} \ExctCplEObjct{r}{p}{q}(2)$ for $2\leq r\leq n+1-p$ and $q\in\ZNr$. %
\vspace{-1.6ex}%
\end{enumerate}
So, it remains to establish such an isomorphism for $r=n+2-p$ and all $p\leq n-1$, $q\in\ZNr$. Now, for $r=n+1-p$, we just found that the map $v=f^{r+1}_{n+1-r,q'+r-1}$ above is an isomorphism for all $r\geq 2$, and all $q'\in \ZNr$. Choosing $q'=q-(n-p)$ yields isomorphisms
\begin{equation*}
f^{r+1}_{n+1-r,q'+r-1} = f^{n+1-p+1}_{n+1-(n+1-p),q-(n-p)+n+1-p-1}=f^{n+2-p}_{p,q},\qquad p\leq n-1,\quad q\in \ZNr
\end{equation*}
So $(B_{n+1})$ holds.

{\bfseries Properties $(D_{n+1})$ and $(E_{n+1})$}\quad As $(D_n)$ and $(E_n)$ hold, it remains to consider positions $(n+1,q)$. Let us consider this morphism of short exact sequences:
\begin{equation*}
\xymatrix@R=5ex@C=4em{
\ExctCplCoLimAbutFltrtn{n}{q+1}(1) \ar@{{ |>}->}[r] \ar[d]_{\cong}^{(E_n)} &
	\ExctCplCoLimAbutFltrtn{n+1}{q}(1) \ar@{-{ >>}}[r] \ar@{{ |>}->}[d]^{\text{(\ref{eq:AbutmentIso->E^2Iso,I-Monos})}} &
	\ExctCplEObjct{\infty}{n+1}{q}(1) \ar@{-{ >>}}[d]^{(C_{n+1})} \\
\ExctCplCoLimAbutFltrtn{n}{q+1}(2) \ar@{{ |>}->}[r] &
	\ExctCplCoLimAbutFltrtn{n+1}{q}(2) \ar@{-{ >>}}[r] &
	\ExctCplEObjct{\infty}{n+1}{q}(2)
}
\end{equation*}
We know that the vertical map on the left is an isomorphism by property $(E_n)$. The vertical in the middle is a monomorphism by general property (M). The vertical map on the right is an epimorphism by property $(C_{n+1})$. In this situation the snake lemma implies that all vertical maps are isomorphisms.

{\bfseries Property $(F_{n+1})$ for $p\leq n$}\quad Based on the validity of $(F_n)$ and what we proved earlier in position $(n+1,0)$, it only remains to establish an isomorphism
\begin{equation*}
f^{r}_{p,n+1-p}\from \ExctCplEObjct{r}{p}{n+1-p}(1) \XRA{\cong} \ExctCplEObjct{r}{p}{n+1-p}(2)\quad \text{for}\ 0\leq p\leq n,\quad r\geq 2
\end{equation*}
By property $(B_{n+1})$ we have such an isomorphism for $2\leq r\leq n+2-p$. So, we are left to deal with the cases $r\geq n+3-p$. Via property $(D_{n+1})$ we see that, for $0\leq p\leq n$,
\begin{equation*}
\ExctCplEObjct{n+3}{p}{n+1-p}(1)\cong \ExctCplEObjct{n+4}{p}{n+1-p}(1)\cong \cdots \cong \ExctCplEObjct{\infty}{p}{n+1-p}(1)\overset{(D_{n+1})}{\cong}\ExctCplEObjct{\infty}{p}{n+1-p}(2) \cong \cdots \cong \ExctCplEObjct{n+4}{p}{n+1-p}(2) \cong \ExctCplEObjct{n+3}{p}{n+1-p}(2)
\end{equation*}
This is so because differentials $d^{\geq n+3}$ exiting from, respectively, arriving at any of these positions are all $0$. So let $0\leq \mu< n+3-(n+3-p)=p$, and assume inductively that we have an isomorphism
\begin{equation*}
f^{n+3-\mu}_{p,n+1-p}\from \ExctCplEObjct{n+3-\mu}{p}{n+1-p}(1) \XRA{\cong} \ExctCplEObjct{n+3-\mu}{p}{n+1-p}(2)
\end{equation*}
The differential $d^{n+3-(\mu+1)}$ arriving at position $(p,n+1-p)$ originates from position
\begin{equation*}
\left( p+n+3-(\mu+1)\ ,\ n+1-p-(n+3-(\mu+1)-1))\right) = (\dots , \mu-p).
\end{equation*}
As $\mu<p$, the second coordinate is negative, and so the incoming differential is the $0$-map. Consequently, we have this  morphism of exact sequences
\begin{equation*}
\xymatrix@R=6ex@C=4em{
\ExctCplEObjct{n+3-\mu}{p}{n+1-p}(1) \ar@{{ |>}->}[r] \ar[d]_{\cong} &
	\ExctCplEObjct{n+3-(\mu+1)}{p}{n+1-p}(1) \ar[r]^-{d^{n+3-(\mu+1)}} \ar[d] &
	\ExctCplCycles{n+3-(\mu+1)}{\dots}{\dots}(1) \ar@{-{ >>}}[r] \ar[d]_{\cong} &
	\ExctCplEObjct{n+3-\mu}{\dots}{\dots}(1) \ar[d]_{\cong} \\
\ExctCplEObjct{n+3-\mu}{p}{n+1-p}(2) \ar@{{ |>}->}[r] &
	\ExctCplEObjct{n+3-(\mu+1)}{p}{n+1-p}(2) \ar[r]_-{d^{n+3-(\mu+1)}} &
	\ExctCplCycles{n+3-(\mu+1)}{\dots}{\dots}(2) \ar@{-{ >>}}[r] &
	\ExctCplEObjct{n+3-\mu}{\dots}{\dots}(2)
}
\end{equation*}
The vertical map on the left is an isomorphism by induction hypothesis. Each of the two vertical maps on the right is an isomorphism by property $(F_n)$. Thus the second vertical map from the left is an isomorphism as well, and the induction is complete. Consequently, property $(F_{n+1})$ holds, and the induction step from $P_n$ to $P_{n+1}$ is established. - This completes the proof of the theorem.
\end{proof}

In Corollary \ref{thm:CoLimitAbutmentIso->E^2Iso-FunctorialEdges,I} below, we use this notation. For $n\geq 0$, regard the set
\begin{equation*}
J_{n}\DefEq (\Prdct{\Set{0,\dots ,n}}{0})\union (\Prdct{0}{\ZNr_{\geq 0}})
\end{equation*}
as a discrete category. Let $\mathcal{F}_n$ denote the functor category $\FuncCat{J_n}{\ModulesOver{R}}$. Then  a page of a spectral sequence such as $E^{2}_{\ast,\ast}$ determines the object $T(E^2)$ in $\mathcal{F}_n$ by sending the pair $(u,v)$ of $J_n$ to $\ExctCplEObjct{2}{u}{v}$.

\begin{corollary}[Colimit abutment isomorphism with $E^2$ functorially determined by edges, I]
\label{thm:CoLimitAbutmentIso->E^2Iso-FunctorialEdges,I}
Using the setting described in (\ref{Setup:ReverseComparison-I}), assume the following: %
\vspace{-1.6ex}%
\begin{enumerate}[(i)]
\item The map of colimit abutment objects $L_n(f)\from \ExctCplCoLimAbut{n}(1) \to \ExctCplCoLimAbut{n}(2)$ is an isomorphism for all $n\in\ZNr$.
\item $\SSMapAt{f}{2}{0}{0}\from E^{2}_{0,0}(1)\to E^{2}_{0,0}(2)$ is an isomorphism.
\item  For each $n\geq 1$ and $q\geq 1$ there is a functor
\begin{equation*}
\Phi_{n,q}\from \mathcal{F}_n \longrightarrow \ModulesOver{R}
\end{equation*}
such that $\ExctCplEObjct{2}{n}{q}(i) =\Phi_{n,q}\left( T(E^2(i)) \right)$.
\vspace{-1.6ex}%
\end{enumerate}
Then the induced map $(f^r\, |\, r\geq 2)$ of spectral sequences is an isomorphism.
\end{corollary}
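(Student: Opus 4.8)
The plan is to deduce the corollary from Theorem \ref{thm:CoLimitAbutmentIso->E^2Iso,I} by checking that its three hypotheses follow from the ones assumed here. Hypotheses (i) and (ii) are literally identical in the two statements, so the whole task reduces to verifying hypothesis (iii) of that theorem: that isomorphy of $\SSMapAt{f}{2}{p}{0}$ along the base edge $0\le p\le n$ forces $\SSMapAt{f}{2}{p}{q}$ to be an isomorphism throughout the strip $p\le n$, $q\in\ZNr$.

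The mechanism for carrying edge isomorphisms into the interior is simply that a functor preserves isomorphisms. The morphism $f$ induces a morphism $T(\SSMap{f}{2})\from T(E^2(1))\to T(E^2(2))$ in $\mathcal{F}_n$, and since $J_n$ is discrete this morphism is an isomorphism exactly when each of its components is, i.e. when $\SSMapAt{f}{2}{u}{0}$ is an isomorphism for $0\le u\le n$ (the base edge of $J_n$) and $\SSMapAt{f}{2}{0}{v}$ is an isomorphism for every $v\ge 0$ (the fibre edge of $J_n$). Applying the functor $\Phi_{n,q}$ then gives that $\SSMapAt{f}{2}{n}{q}=\Phi_{n,q}(T(\SSMap{f}{2}))$ is an isomorphism for each $q\ge 1$; the case $q=0$ is the premise, and running the same argument with $n$ replaced by any $p\le n$ covers the entire strip. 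Thus, once both edges of $J_n$ are known to be isomorphisms, functoriality supplies every interior entry automatically.

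It remains to upgrade the premise of hypothesis (iii) — isomorphy along the base edge — to isomorphy along both edges, the genuinely new content being the fibre column $\SSMapAt{f}{2}{0}{q}$ for $q\ge 1$. Here I would argue by induction on the fibre degree $q$, using the edge homomorphism of a first–quadrant homological spectral sequence: all differentials leaving column $0$ vanish, so there is a chain of epimorphisms $\ExctCplEObjct{2}{0}{q}\twoheadrightarrow\cdots\twoheadrightarrow\ExctCplEObjct{\infty}{0}{q}\cong\ExctCplCoLimAbutFltrtn{0}{q}$ onto the bottom stage of the abutment filtration of $\ExctCplCoLimAbut{q}$. The colimit–abutment isomorphism $L_q(f)$ controls this bottom stage, while the kernel of the edge epimorphism is assembled from the images of the incoming differentials $d^r\from\ExctCplEObjct{r}{r}{q-r+1}\to\ExctCplEObjct{r}{0}{q}$, whose sources sit at fibre degree $q-r+1<q$ and whose isomorphy is delivered by the functoriality step already applied at the lower fibre degrees. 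Interleaving these two ingredients closes the induction and produces the fibre edge.

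The main obstacle is precisely this column–$0$ (fibre edge) step: it is the only place where the colimit–abutment hypothesis must do work that functoriality alone cannot, and the delicate point is to keep the induction on $q$ free of circularity, feeding $\Phi_{p,q}$ only those column–$0$ values at fibre degree $\le q$ that have already been secured, so that the differentials entering column $0$ at degree $q$ are controlled before $\SSMapAt{f}{2}{0}{q}$ itself is asserted. Once the fibre edge is established, hypothesis (iii) of Theorem \ref{thm:CoLimitAbutmentIso->E^2Iso,I} holds for every $n$, and invoking that theorem completes the proof.
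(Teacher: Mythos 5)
Your first step---reducing everything to hypothesis (iii) of Theorem \ref{thm:CoLimitAbutmentIso->E^2Iso,I} and observing that, since $J_n$ is discrete, the functors $\Phi_{p,q}$ turn edge isomorphisms into interior isomorphisms---is precisely the paper's entire proof (the paper's argument consists of the single assertion that the hypotheses of that theorem are satisfied). You have, however, correctly noticed what that one-liner glosses over: nothing in hypotheses (i)--(iii) of the corollary supplies the fibre-column isomorphisms $\SSMapAt{f}{2}{0}{q}$, $q\geq 1$, and it is your attempt to manufacture them that fails. Concretely: (a) $\Phi_{p,q}$ is a functor on $\mathcal{F}_p$, whose objects carry the \emph{entire} column $\ExctCplEObjct{2}{0}{v}(i)$, $v\geq 0$, as coordinates; a functor preserves isomorphisms and nothing weaker, so $\Phi_{p,q}(T(\SSMap{f}{2}))$ is known to be invertible only after \emph{every} component of $T(\SSMap{f}{2})$ is. There is no such thing as ``feeding $\Phi_{p,q}$ only those column-$0$ values at fibre degree $\leq q$ that have already been secured'': the functor is not assumed to depend only on low coordinates, so your induction on $q$ is circular already at its first step. (b) Independently of (a), the differential that can kill classes of $\ExctCplEObjct{2}{0}{q}$ on page $r=q+1$ originates at the \emph{base-edge} position $(q+1,0)$; that source is not an interior position, so functoriality never controls it, and when one verifies hypothesis (iii) of the theorem for a fixed $n<q+1$ the antecedent does not control it either.

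This gap cannot be closed, because the column isomorphisms genuinely do not follow from (i)--(iii). Take $\ExctCpl{C}(1)$ to be the exact couple of the chain complex $\ZNr\XRA{\cong}\ZNr\XRA{0}\ZNr$ (degrees $2,1,0$), filtered so that $F_0=F_1$ is the subcomplex $0\to\ZNr\to\ZNr$ and $F_2$ is everything; its spectral sequence has $\ExctCplEObjct{2}{0}{0}(1)=\ExctCplEObjct{2}{0}{1}(1)=\ExctCplEObjct{2}{2}{0}(1)=\ZNr$, all other terms zero, and $\SSDffrntlAt{2}{2}{0}(1)$ an isomorphism. Take $\ExctCpl{C}(2)$ to be the exact couple of $\ZNr$ concentrated in degree $0$, trivially filtered, so that only $\ExctCplEObjct{2}{0}{0}(2)=\ZNr$ is nonzero. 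The filtered chain map that is the identity in degree $0$ and zero above induces a morphism of exact couples for which (i) holds (both colimit abutments are $\ZNr,0,0,\dots$), (ii) holds, and (iii) holds with $\Phi_{n,q}=0$, since both spectral sequences have identically vanishing interior; yet $\SSMapAt{f}{2}{0}{1}\from \ZNr\to 0$ is not an isomorphism, so the conclusion fails. The moral is that the statement itself needs its hypothesis (ii) strengthened to ``$\SSMapAt{f}{2}{0}{q}$ is an isomorphism for all $q\geq 0$''---exactly the shape of hypothesis (ii) in Zeeman's corollary \ref{thm:ZeemanComparison,I}. With that strengthening (and reading the equality $\ExctCplEObjct{2}{n}{q}(i)=\Phi_{n,q}(T(E^2(i)))$ as natural in the exact couple, so that $\SSMapAt{f}{2}{n}{q}=\Phi_{n,q}(T(\SSMap{f}{2}))$), your functoriality paragraph verifies hypothesis (iii) of Theorem \ref{thm:CoLimitAbutmentIso->E^2Iso,I} at once, and no induction on $q$, edge homomorphism, or abutment argument is needed at all.
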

\begin{proof}
The hypotheses of (\ref{thm:CoLimitAbutmentIso->E^2Iso,I}) are satisfied.
\end{proof}

As a corollary, we infer part of Zeeman's comparison theorem \cite{ECZeeman1957}:

\begin{corollary}[Zeeman's comparison theorem, I]
\label{thm:ZeemanComparison,I}
Using the setup (\ref{Setup:ReverseComparison-I}) interpreted in the category of abelian groups, assume the following: %
\vspace{-1.6ex}%
\begin{enumerate}[(i)]
\item The map of colimit abutment objects $L_n(f)\from \ExctCplCoLimAbut{n}(1) \to \ExctCplCoLimAbut{n}(2)$ is an isomorphism for all $n\in\ZNr$.
\item $\SSMapAt{f}{2}{0}{q}\from E^{2}_{0,q}(1)\to E^{2}_{0,q}(2)$ is an isomorphism for all $q\geq 0$.
\item For each $p\geq 1$ and $q\geq 0$, there is a functorial short exact sequence
\begin{equation*}
\xymatrix@R=5ex@C=4em{
\ExctCplEObjct{2}{p}{0}\otimes \ExctCplEObjct{2}{0}{q}  \ar@{{ |>}->}[r] &
	\ExctCplEObjct{2}{p}{q} \ar@{-{ >>}}[r] &
	\mathit{Tor}_1(\ExctCplEObjct{2}{p-1}{0},\ExctCplEObjct{2}{0}{q})
}
\end{equation*}
\end{enumerate}
Then the induced map $(f^r|r\geq 2)$ of spectral sequences is an isomorphism. \NoProof
\end{corollary}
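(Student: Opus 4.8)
The plan is to reduce Zeeman's comparison theorem (\ref{thm:ZeemanComparison,I}) to the already-established Corollary \ref{thm:CoLimitAbutmentIso->E^2Iso-FunctorialEdges,I}. Both hypotheses (i) on the colimit abutment isomorphism and (ii) on the edge maps carry over directly; the only work is to verify that hypothesis (iii) of the present statement implies hypothesis (iii) of the Corollary, namely that each $\ExctCplEObjct{2}{n}{q}(i)$ is computed from the edge data $T(E^2(i))$ by a functor $\Phi_{n,q}$ independent of the individual exact couple.

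First I would fix $n\geq 1$ and $q\geq 0$ and read off the functorial short exact sequence supplied by hypothesis (iii),
\begin{equation*}
\xymatrix@R=5ex@C=4em{
\ExctCplEObjct{2}{n}{0}\otimes_{\ZNr} \ExctCplEObjct{2}{0}{q}  \ar@{{ |>}->}[r] &
	\ExctCplEObjct{2}{n}{q} \ar@{-{ >>}}[r] &
	\mathit{Tor}_1^{\ZNr}(\ExctCplEObjct{2}{n-1}{0},\ExctCplEObjct{2}{0}{q})
}
\end{equation*}
The key observation is that the two outer terms depend only on the bottom-edge objects $\ExctCplEObjct{2}{p}{0}$ (for $p=n$ and $p=n-1$) and the left-edge objects $\ExctCplEObjct{2}{0}{q}$, all of which are precisely the values of $T(E^2)$ on the category $J_n$. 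Since $\otimes_{\ZNr}$ and $\mathit{Tor}_1^{\ZNr}$ are bifunctors on abelian groups, the assignment sending $T(E^2)$ to the outer terms of this sequence is itself functorial in $T(E^2)$.

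The remaining step is to promote the functoriality of the two end terms to functoriality of the middle term $\ExctCplEObjct{2}{n}{q}$. Here I would invoke the stated functoriality of the short exact sequence itself: because the extension is natural in the exact couple, a morphism $(f,g)\from \ExctCpl{C}(1)\to \ExctCpl{C}(2)$ induces a morphism of these short exact sequences, so the middle term is determined, as a functor of the edge data, by the natural extension. Concretely, one defines $\Phi_{n,q}$ on an object $X$ of $\mathcal{F}_n$ to be the middle term of the functorial extension built from the outer functors evaluated at $X$, with the extension class pinned down by naturality; then $\ExctCplEObjct{2}{n}{q}(i)=\Phi_{n,q}(T(E^2(i)))$ holds by construction for $i=1,2$. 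With hypothesis (iii) of the Corollary thus verified, and hypotheses (i)--(ii) immediate, Corollary \ref{thm:CoLimitAbutmentIso->E^2Iso-FunctorialEdges,I} yields that $(f^r\mid r\geq 2)$ is an isomorphism.

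The main obstacle I anticipate is the last step: ensuring that \emph{functoriality of the end terms plus a functorially given short exact sequence} genuinely produces a single functor $\Phi_{n,q}$ valued in $\ModulesOver{R}$, rather than merely an extension determined up to non-canonical isomorphism. The resolution rests entirely on the word ``functorial'' in hypothesis (iii): the extension class is part of the given natural data, so the middle object is a bona fide functor of $T(E^2)$, and no choice of splitting is needed. This is precisely why the hypothesis is phrased with a functorial sequence rather than an abstract isomorphism of extension type.
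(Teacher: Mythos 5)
Your reduction target is wrong, and the gap is exactly the one you flagged but then waved away. Hypothesis (iii) of Corollary \ref{thm:CoLimitAbutmentIso->E^2Iso-FunctorialEdges,I} demands a functor $\Phi_{n,q}\from \mathcal{F}_n\to \ModulesOver{R}$ defined on \emph{every} object and morphism of $\mathcal{F}_n$, with $\ExctCplEObjct{2}{n}{q}(i)=\Phi_{n,q}(T(E^2(i)))$. Zeeman's hypothesis (iii) gives you much less: a short exact sequence, natural with respect to the one morphism of exact couples at hand, whose \emph{outer} terms are functors of the edge data. The middle term of an extension is not determined by its outer terms -- not even up to abstract isomorphism (compare $\ZMod{4}$ and $\ZMod{2}\oplus\ZMod{2}$ as extensions of $\ZMod{2}$ by $\ZMod{2}$); the extension class is genuine extra data, and it is not recorded in $T(E^2(i))$. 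Your proposed fix, ``the extension class is pinned down by naturality,'' conflates naturality over the two given couples $\ExctCpl{C}(1),\ExctCpl{C}(2)$ with functoriality on all of $\mathcal{F}_n$: naturality gives you one commuting ladder, but it does not produce an extension (hence a value $\Phi_{n,q}(X)$) for an arbitrary object $X$ of $\mathcal{F}_n$, nor does it make $\ExctCplEObjct{2}{n}{q}$ a function of the edge objects alone. Indeed, in the classical situation Zeeman's theorem was designed for, $\ExctCplEObjct{2}{p}{q}=H_p(B;H_qF)$ is \emph{not} a functor of the integral homology edge data $(H_\ast B, H_\ast F)$ -- which is precisely why the hypothesis is phrased with the universal-coefficient sequence rather than with a functor $\Phi_{n,q}$.

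The correct (and simpler) route is to feed Zeeman's hypotheses directly into Theorem \ref{thm:CoLimitAbutmentIso->E^2Iso,I}, whose hypothesis (iii) asks only for a \emph{propagation} statement about the specific morphism $f$, not for a functor. Hypotheses (i) and (ii) of the theorem are immediate from Zeeman's (i) and the $q=0$ case of (ii). For the theorem's (iii): suppose $\SSMapAt{f}{2}{p}{0}$ is an isomorphism for $0\leq p\leq n$, and fix $(p,q)$ with $p\leq n$. If $p<0$ or $q<0$ the objects vanish; if $p=0$ use Zeeman's (ii); if $1\leq p\leq n$ and $q\geq 0$, functoriality of the sequence in Zeeman's (iii) gives a morphism of short exact sequences
\begin{equation*}
\xymatrix@R=5ex@C=3em{
\ExctCplEObjct{2}{p}{0}(1)\otimes \ExctCplEObjct{2}{0}{q}(1) \ar@{{ |>}->}[r] \ar[d]_{\SSMapAt{f}{2}{p}{0}\otimes \SSMapAt{f}{2}{0}{q}} &
	\ExctCplEObjct{2}{p}{q}(1) \ar@{-{ >>}}[r] \ar[d]^{\SSMapAt{f}{2}{p}{q}} &
	\mathit{Tor}_1(\ExctCplEObjct{2}{p-1}{0}(1),\ExctCplEObjct{2}{0}{q}(1)) \ar[d]^{\mathit{Tor}_1(\SSMapAt{f}{2}{p-1}{0},\SSMapAt{f}{2}{0}{q})} \\
\ExctCplEObjct{2}{p}{0}(2)\otimes \ExctCplEObjct{2}{0}{q}(2) \ar@{{ |>}->}[r] &
	\ExctCplEObjct{2}{p}{q}(2) \ar@{-{ >>}}[r] &
	\mathit{Tor}_1(\ExctCplEObjct{2}{p-1}{0}(2),\ExctCplEObjct{2}{0}{q}(2))
}
\end{equation*}
in which the outer vertical maps are isomorphisms, since $\otimes$ and $\mathit{Tor}_1$ preserve isomorphisms in each variable and $\SSMapAt{f}{2}{p}{0}$, $\SSMapAt{f}{2}{p-1}{0}$, $\SSMapAt{f}{2}{0}{q}$ are isomorphisms. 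The short five lemma then makes $\SSMapAt{f}{2}{p}{q}$ an isomorphism, verifying the theorem's hypothesis (iii), and the theorem yields that $(f^r\,|\,r\geq 2)$ is an isomorphism.
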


Theorem \ref{thm:CoLimitAbutmentIso->E^2Iso,I} has the following twin sibling.

\begin{theorem}[Colimit abutment isomorphism yields spectral sequence isomorphism, II]
\label{thm:CoLimitAbutmentIso->E^2Iso,II}
Using the setup described in (\ref{Setup:ReverseComparison-I}), assume the following: %
\vspace{-1.6ex}%
\begin{enumerate}[(i)]
\item %
\label{thm:AbutmentIso->E^2Iso,II-Ln}
The map of colimit abutment objects $L_n(f)\from \ExctCplCoLimAbut{n}(1) \to \ExctCplCoLimAbut{n}(2)$ is an isomorphism for all $n\in\ZNr$.
\item $\SSMapAt{f}{2}{0}{0}\from \ExctCplEObjct{2}{0}{0}(1)\to \ExctCplEObjct{2}{0}{0}(2)$ is an isomorphism.
\item %
\label{thm:AbutmentIso->E^2Iso,II-2-Iso-q}
If $\SSMapAt{f}{2}{0}{q}\from \ExctCplEObjct{2}{0}{q}(1)\to \ExctCplEObjct{2}{0}{q}(2)$ is an isomorphism for $0\leq  q\leq n$, then $\SSMapAt{f}{2}{p}{q}$ is an isomorphism for all $(p,q)$ with $q\leq n$ and all $p\in \ZNr$. %
\vspace{-1.6ex}%
\end{enumerate}
Then the induced map $(f^r\, |\, r\geq 2)$ of spectral sequences is an isomorphism.
\end{theorem}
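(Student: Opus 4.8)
The plan is to mimic the induction from Theorem \ref{thm:CoLimitAbutmentIso->E^2Iso,I}, but reorganized around \emph{rows} $q=0,1,2,\dots$ rather than columns, since hypothesis \ref{thm:AbutmentIso->E^2Iso,II-2-Iso-q} now propagates isomorphisms off the vertical edge $E^2_{0,\ast}$ instead of the horizontal edge $E^2_{\ast,0}$. So for $n\geq 0$ I would set up a predicate $P_n$ whose clauses are the row-transposed analogues of $(A_n)$--$(F_n)$: an isomorphism clause for $\SSMapAt{f}{2}{p}{q}$ with $0\leq q\leq n$ and all $p$ (playing the role of $(A_n)$), a clause giving isomorphisms $\SSMapBB{f}{r}{}$ on a band of pages for $q\leq n-1$ (role of $(B_n)$), an epimorphism clause for all pages with $q\leq n$ (role of $(C_n)$), and the resulting $E^\infty$- and filtration-isomorphism clauses $(D_n),(E_n)$, plus the antidiagonal clause $(F_n)$ for $0\leq p+q\leq n$.

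The key structural fact I would exploit is that the setup \ref{Setup:ReverseComparison-I} is a first-quadrant homological spectral sequence converging to a colimit abutment, and that such a diagram is symmetric under the reflection $(p,q)\mapsto(q,p)$ at the level of which differentials are forced to vanish: incoming differentials to the bottom edge $q=0$ vanish, outgoing differentials from the left edge $p=0$ vanish. In the first theorem the base case and the column-to-column passage used exactly the vanishing of differentials \emph{exiting} the $p=0$ edge; here the corresponding arguments will use the vanishing of differentials \emph{entering} the $q=0$ edge together with the new edge hypothesis on $E^2_{0,q}$. I would run the base case $P_0$ first: clause $(A_0)$ is the isomorphism on the bottom row $q=0$ for all $p$, which follows by combining the two edge hypotheses (the $(0,0)$ iso and the propagation clause \ref{thm:AbutmentIso->E^2Iso,II-2-Iso-q}); the epimorphism clause $(C_0)$ follows because differentials entering row $0$ vanish, so each $E^r_{p,0}$ is a quotient of the preceding page; and $(D_0),(E_0)$ then follow from the convergence short exact sequences of Setup \ref{Setup:ReverseComparison-I} by the same simultaneous-monic-and-epic argument, now reading the filtration along the bottom row.

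For the induction step $P_n\Rightarrow P_{n+1}$, I would transcribe each block of the proof of \ref{thm:CoLimitAbutmentIso->E^2Iso,I}: first establish $(D_{n+1})$ in the distinguished position of the new row using the filtration short exact sequence and the global isomorphism $L_nf$ of hypothesis \ref{thm:AbutmentIso->E^2Iso,II-Ln}; then pull back through the finitely many pages via the five-lemma applied to the $d^r$-exact sequences (using that the \emph{source} objects of those differentials already satisfy the antidiagonal clause $(F_n)$) to get $(F_{n+1})$ in that position; then obtain $(A_{n+1})$ from $(A_n)$ and the propagation hypothesis, $(C_{n+1})$ by the diagram chase on the $d^r$-exact sequence (with the auxiliary square forcing the relevant comparison map $w$ to be an isomorphism via $(B_n)$), $(B_{n+1})$ by collecting the isomorphisms $v$ produced during the verification of $(C_{n+1})$, $(D_{n+1})$ and $(E_{n+1})$ off the distinguished position by the snake lemma on the filtration sequences, and finally $(F_{n+1})$ for the remaining antidiagonal positions by the same five-lemma bootstrap on pages $r\geq n+3-p$. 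The main obstacle I anticipate is purely bookkeeping rather than conceptual: I must verify that, under the transposition, the source/target positions of each differential $d^{r}$ of bidegree $(-r,r-1)$ still land where the induction hypotheses $(B_n)$ and $(F_n)$ guarantee isomorphisms, and in particular that the second coordinate (not the first, as in the original proof) is what forces the crucial incoming or outgoing differentials to be zero. Once the index arithmetic is pinned down, every diagram is identical in shape to one already used in \ref{thm:CoLimitAbutmentIso->E^2Iso,I}, so I would assert $P_n$ for all $n$ by induction and conclude that $(f^r\,|\,r\geq 2)$ is an isomorphism.
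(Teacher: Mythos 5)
Your plan founders on the point you dismiss as ``purely bookkeeping'': the reflection $(p,q)\mapsto(q,p)$ is \emph{not} a symmetry of this situation, and the failure lands exactly on the clause you transposed as an epimorphism clause. First, you have the subobject/quotient duality backwards on the edges. Since the differential has bidegree $(-r,r-1)$, vanishing of differentials \emph{entering} row $q=0$ makes each $E^{r+1}_{p,0}=\Ker{d^{r}_{p,0}}$ a \emph{subobject} of $E^{r}_{p,0}$; it is vanishing of \emph{outgoing} differentials, as on the column $p=0$, that produces successive quotients. So from the hypothesis that $f^{2}_{p,0}$ is an isomorphism, restriction gives \emph{monomorphisms} $f^{r}_{p,0}$, not epimorphisms; your base-case claim that ``$(C_0)$ follows because differentials entering row $0$ vanish, so each $E^r_{p,0}$ is a quotient of the preceding page'' is false, and the epimorphism clause already fails on page $3$ of row $0$. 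Second, the propagation lemma (\ref{thm:E^rEpiMonoIsoPropagation}) is asymmetric relative to your induction direction: propagating an epimorphism at $(p,q)$ to the next page requires a \emph{monomorphism at the target} $(p-r,\,q+r-1)$ of the outgoing differential, and that target sits in row $q+r-1>q$, which a row-by-row induction has not yet reached. (In Theorem \ref{thm:CoLimitAbutmentIso->E^2Iso,I} the targets lie in columns $p-r<p$, already covered; that is why the epi clause works there, and only there. The same misalignment kills your transposed $(C_{n+1})$ diagram chase: the auxiliary square for the comparison map $w$ would need $(B_n)$-type data in rows above $n+1$.)

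The repair is not bookkeeping but dualization, and it is what the paper does. Replace the epimorphism clause by a \emph{monomorphism} clause ($f^{r}_{p,q}$ monic for all $p$, $q\leq n$, $r\geq 2$): this propagates correctly because monomorphism propagation (\ref{thm:E^rEpiMonoIsoPropagation}.i) needs epi/iso data at the \emph{source} $(p+r,\,q-r+1)$, i.e.\ in lower rows, where the band-of-pages clause $(B_n)$ applies. The abutment argument must be dualized as well: instead of identifying $E^{\infty}_{0,q}\cong \ExctCplCoLimAbutFltrtn{0}{q}$ and combining a monomorphism from (M) with an epimorphism from the clause, one works at the \emph{top} filtration quotients $\ExctCplCoLimAbutFltrtn{p}{0}=\ExctCplCoLimAbut{p}\twoheadrightarrow E^{\infty}_{p,0}$, where hypothesis (i) makes $f^{\infty}_{p,0}$ epic by commutativity and the monomorphism clause makes it monic; the snake lemma then yields the isomorphism on $\ExctCplCoLimAbutFltrtn{p-1}{1}$, one step \emph{down} the filtration. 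Note that the row induction runs against the filtration direction, so this snake-lemma inference is also reversed relative to Theorem \ref{thm:CoLimitAbutmentIso->E^2Iso,I}: there, left and middle isomorphisms gave the right one; here, middle and right give the left. With these two dualizations your outline becomes the paper's proof; without them the induction cannot start.
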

\begin{proof}
As in the proof of (\ref{thm:CoLimitAbutmentIso->E^2Iso,I}), we make use of the fact that the isomorphisms $L_n(f)$ restrict to monomorphisms
\begin{equation*}
\label{eq:AbutmentIso->E^2Iso,II}
\xymatrix@R=5ex@C=5em{
\ExctCplCoLimAbutFltrtn{p}{q}(1) \ar@{{ |>}->}[r] &
	\ExctCplCoLimAbutFltrtn{p}{q}(2)
}\qquad \text{for all\ }p,q\geq 0
\tag{M}
\end{equation*}
We prove the theorem by induction. For $n\geq 0$, let $P_n$ assert that the following statements are true. %
\vspace{-1.6ex}%
\begin{enumerate}
\item[$(A_n)$] $\xymatrix@R=5ex@C=2em{ f^{r}_{p,q}\from \ExctCplEObjct{r}{p}{q}(1) \ar@{{ |>}->}[r] & \ExctCplEObjct{r}{p}{q}(2)}$ is a monomorphism for $p\in \ZNr$, $q\leq n$, and $r\geq 2$.
\item[$(B_n)$] $f^{r}_{p,q}\from \ExctCplEObjct{r}{p}{q}(1) \XRA{\cong} \ExctCplEObjct{r}{p}{q}(2)$ is an isomorphism for $p\in \ZNr$, $0\leq q\leq n$, $2\leq r\leq n+2-q$.
\item[$(C_n)$] $\ExctCplCoLimAbutFltrtn{p}{q}(1) \XRA{\cong} \ExctCplCoLimAbutFltrtn{p}{q}(2)$ is an isomorphism for $p\in \ZNr$ and $0\leq q\leq n+1$.
\item[$(D_n)$] $f^{\infty}_{p,q}\from \ExctCplEObjct{\infty}{p}{q}(1) \XRA{\cong} \ExctCplEObjct{\infty}{p}{q}(2)$ is an isomorphism for $p\in\ZNr$ and $0\leq q\leq n$.
\item[$(E_n)$] $f^{r}_{p,q}\from \ExctCplEObjct{r}{p}{q}(1) \XRA{\cong} \ExctCplEObjct{r}{p}{q}(2)$ is an isomorphism for $q\leq n$, $p+q\leq n+2$, and all $r\geq 2$.
\end{enumerate}
{\bfseries Claim: $P_0$ is true}\quad $(B_0)$ is true by hypothesis \ref{thm:AbutmentIso->E^2Iso,II-2-Iso-q}. Property $(A_0)$ in position $(p,0)$ requires all vertical maps in this commuting diagram to be monomorphisms.
\begin{equation*}
\xymatrix@R=5ex@C=4em{
\ExctCplEObjct{r}{p}{0}(1) \ar@{{ |>}->}[r] \ar[d] &
	\cdots \ar@{{ |>}->}[r] &
	\ExctCplEObjct{2}{p}{0}(1) \ar[d]_{\cong}^{\text{\ref{thm:AbutmentIso->E^2Iso,II-2-Iso-q}}} \\ 
\ExctCplEObjct{r}{p}{0}(2) \ar@{{ |>}->}[r] &
	\cdots \ar@{{ |>}->}[r] &
	\ExctCplEObjct{2}{p}{0}(2)
}
\end{equation*}
This follows from commutativity, using that the vertical map on the right is a monomorphism by hypothesis \ref{thm:AbutmentIso->E^2Iso,II-2-Iso-q}.

Properties $(C_0)$ and $(D_0)$ follow together via this morphism of short exact sequences:
\begin{equation*}
\xymatrix@R=5ex@C=4em{
\ExctCplCoLimAbutFltrtn{p-1}{1}(1) \ar@{{ |>}->}[r] \ar[d] &
	\ExctCplCoLimAbutFltrtn{p}{0}(1) \ar@{-{ >>}}[r] \ar[d]_{\cong}^{\text{\ref{thm:AbutmentIso->E^2Iso,II-Ln}}} &
	\ExctCplEObjct{\infty}{p}{0}(1) \ar[r]^-{\cong} \ar[d]^{\SSMapAt{f}{\infty}{p}{0}} &
	\ExctCplEObjct{p+1}{p}{0}(1) \ar@{{ |>}->}[d]^{(A_0)}\\
\ExctCplCoLimAbutFltrtn{p-1}{1}(2) \ar@{{ |>}->}[r] &
	\ExctCplCoLimAbutFltrtn{p}{0}(2) \ar@{-{ >>}}[r] &
	\ExctCplEObjct{\infty}{p}{0}(2) \ar[r]^-{\cong} &
	\ExctCplEObjct{p+1}{p}{0}(2)
}
\end{equation*}
Then $\SSMapAt{f}{\infty}{p}{0}$ is an epimorphism by commutativity and a monomorphism by $(A_0)$. Consequently, the vertical map on the left is an isomorphism as well.

Property $(E_0)$: For $p=0,1$ we have, for $r\geq 2$,
\begin{equation*}
\ExctCplEObjct{r}{p}{0}(1) \cong \ExctCplEObjct{2}{p}{0}(1) \xrightarrow[\text{\ref{thm:AbutmentIso->E^2Iso,II-2-Iso-q}}]{\cong} \ExctCplEObjct{2}{p}{0}(2) \cong \ExctCplEObjct{r}{p}{0}(2).
\end{equation*}
Further hypothesis \ref{thm:AbutmentIso->E^2Iso,II-2-Iso-q}  gives $\ExctCplEObjct{2}{2}{0}(1) \cong \ExctCplEObjct{2}{2}{0}(2)$ and, for $r\geq 3$, we have:
\begin{equation*}
\ExctCplEObjct{r}{2}{0}(1) \cong \ExctCplEObjct{3}{2}{0}(1) \cong \ExctCplEObjct{\infty}{2}{0}(1) \xrightarrow[(D_0)]{\cong} \ExctCplEObjct{\infty}{2}{0}(2)\cong \ExctCplEObjct{3}{2}{0}(2)\cong \ExctCplEObjct{r}{2}{0}(2).
\end{equation*}
This completes the verification of $P_0$.

Now let $n\geq 0$, and assume that $P_n$ is true. We must show that $P_{n+1}$ is true.

{\bfseries $(E_{n+1})$ is true in position $(0,n+1)$}\quad that is $\ExctCplEObjct{r}{0}{n+1}(1)\to \ExctCplEObjct{r}{p}{0}(2)$ is an isomorphism for $r\geq 2$. For $r=\infty$, this is true by property $C_n$, via the isomorphism $\ExctCplCoLimAbutFltrtn{0}{n+1}(i)\cong \ExctCplEObjct{\infty}{0}{n+1}(i)$. So, we have the required isomorphism for $r\geq n+3$. Now, let $0\leq l<n+1$, and assume inductively that we have an isomorphism
\begin{equation*}
\ExctCplEObjct{n+3-l}{0}{n+1}(1) \XRA{\cong} \ExctCplEObjct{n+3-l}{0}{n+1}(2).
\end{equation*}
To establish an isomorphism for $r=n+3-(l+1)$, consider this morphism of exact sequences:
\begin{equation*}
\xymatrix@R=5ex@C=3em{
\ExctCplCycles{n+3-(l+1)}{n+3-(l+1)}{l}(1) \ar@{{ |>}->}[r] \ar[d] &
	\ExctCplEObjct{n+3-(l+1)}{n+3-(l+1)}{l}(1) \ar[rr]^-{d^{n+3-(l+1)}(1)} \ar[d]_{(B_n)}^{\cong} &&
	\ExctCplEObjct{n+3-(l+1)}{0}{n+1}(1) \ar@{-{ >>}}[r] \ar[d] &
	\ExctCplEObjct{n+3-l}{0}{n+1}(1) \ar[d]_{\cong}^{\text{(IH)}} \\
\ExctCplCycles{n+3-(l+1)}{n+3-(l+1)}{l}(2) \ar@{{ |>}->}[r] &
	\ExctCplEObjct{n+3-(l+1)}{n+3-(l+1)}{l}(2) \ar[rr]_-{d^{n+3-(l+1)}(2)} &&
	\ExctCplEObjct{n+3-(l+1)}{0}{n+1}(2) \ar@{-{ >>}}[r] &
	\ExctCplEObjct{n+3-l}{0}{n+1}(2)
}
\end{equation*}
The vertical map on the right is an isomorphism by induction hypothesis on $l$. The second vertical map from the left is an isomorphism by $(B_n)$. Thus the second vertical map from the right is an isomorphism if and only if the vertical map on the left is an isomorphism. To see that this is the case, consider this morphism of short exact sequences:
\begin{equation*}
\xymatrix@R=5ex@C=4em{
\ExctCplBndrs{n+3-(l+1)}{n+3-(l+1)}{l}(1) \ar@{{ |>}->}[r] \ar[d]_{(B_n)}^{\cong} &
	\ExctCplCycles{n+3-(l+1)}{n+3-(l+1)}{l}(1) \ar@{-{ >>}}[r] \ar[d] &
	\ExctCplEObjct{n+3-l}{n+3-(l+1)}{l}(1) \ar[d]_{\cong}^{(E_n)} \\
\ExctCplBndrs{n+3-(l+1)}{n+3-(l+1)}{l}(2) \ar@{{ |>}->}[r] &
	\ExctCplCycles{n+3-(l+1)}{n+3-(l+1)}{l}(2) \ar@{-{ >>}}[r] &
	\ExctCplEObjct{n+3-l}{n+3-(l+1)}{l}(2)
}
\end{equation*}
The left and right vertical arrows are isomorphisms by properties $(B_n)$ and $(E_n)$, respectively. So, the vertical map in the middle is an isomorphism. - This completes the induction on $l$, and we have isomorphisms $f^{r}_{0,n+1}$ for all $r\geq 2$.

{\bfseries Property $(B_{n+1})$ in positions $(p,n+1)$}\quad This property holds in position $(0,n+1)$ because we just showed that $(E_{n+1})$ holds in position $(0,n+1)$. So, $(B_{n+1})$ holds in positions $(p,n+1)$ via hypothesis \ref{thm:AbutmentIso->E^2Iso,II-2-Iso-q}. To complete the verification of $(B_{n+1})$, we first need:

{\bfseries Property $(A_{n+1})$}\quad As $(A_n)$ holds, it only remains to establish monomorphisms $\ExctCplEObjct{r}{p}{n+1}(1)\to \ExctCplEObjct{r}{p}{n+1}(2)$ for $p\in\ZNr$. For $r=2$, this follows from the part of property $(B_{n+1})$ we just found true. Assume inductively that we have a monomorphism for some $r\geq 2$. Then we have this commutative square:
\begin{equation*}
\xymatrix@R=5ex@C=4em{
\ExctCplEObjct{r}{p+r}{n+2-r}(1) \ar[r]^{d} \ar[d]^{\cong}_{(B_n)} &
	\ExctCplEObjct{r}{p}{n+1}(1) \ar@{{ |>}->}[d]^{\text{(IH)}} \\
\ExctCplEObjct{r}{p+r}{n+2-r}(2) \ar[r]^{d} &
	\ExctCplEObjct{r}{p}{n+1}(2)
}
\end{equation*}
The vertical map on the left is an isomorphism by property $(B_n)$, and the vertical map on the right is a monomorphism by induction hypothesis. Via the monomorphism propagation lemma (\ref{thm:E^rEpiMonoIsoPropagation}.i), we see that $\SSMapAt{f}{r+1}{p}{n+1}$ is a monomorphism. This completes the inductive verification of property $A_{n+1}$.

{\bfseries Property $(B_{n+1})$ in positions $(p,\leq n)$}\quad For $q\leq n$ condition $(B_n)$ tells us that the map $f^{r}_{p,q}$ is an isomorphism for $2\leq r\leq n+2-q$. So, we must establish an isomorphism for $r=n+3-q$. This follows by applying the isomorphism propagation lemma (\ref{thm:E^rEpiMonoIsoPropagation}.iii) to the commutative diagram below.
\begin{equation*}
\xymatrix@R=5ex@C=4em{
\ExctCplEObjct{n+2-q}{p+n+2-q}{q-(n+1-q)}(1) \ar[r]^-{d^{n+2-q}} \ar[d]_{\cong}^{(B_n)} &
	\ExctCplEObjct{n+2-q}{p}{q}(1) \ar[r]^-{d^{n+2-q}} \ar[d]_{\cong}^{(B_n)} &
	\ExctCplEObjct{n+2-q}{\dots }{n+1}(1) \ar@{{ |>}->}[d]^{(A_{n+1})} \\
\ExctCplEObjct{n+2-q}{p+n+2-q}{q-(n+1-q)}(2) \ar[r]_-{d^{n+2-q}} &
	\ExctCplEObjct{n+2-q}{p}{q}(2) \ar[r]_-{d^{n+2-q}} &
	\ExctCplEObjct{n+2-q}{\dots }{n+1}(2)
}
\end{equation*}
The vertical map on the left is an isomorphism because at least one of the following applies: property $(B_n)$, or domain and codomain vanish. The vertical map in the middle is an isomorphism by $(B_n)$, and the vertical map on the right is a monomorphism by property $(A_{n+1})$.

{\bfseries Properties $(C_{n+1})$ and $(D_{n+1})$}\quad As $(C_n)$ and $(D_n)$ hold, it only remains to consider the cases involved in this morphism of short exact sequences:
\begin{equation*}
\xymatrix@R=5ex@C=4em{
\ExctCplCoLimAbutFltrtn{p-1}{n+2}(1) \ar@{{ |>}->}[r] \ar[d]_{u} &
	\ExctCplCoLimAbutFltrtn{p}{n+1}(1) \ar@{-{ >>}}[r]  \ar[d]_{(C_n)}^{\cong} &
	\ExctCplEObjct{\infty}{p}{n+1}(1) \ar@{{ |>}->}[d]^{(A_{n+1})} \\
\ExctCplCoLimAbutFltrtn{p-1}{n+2}(2) \ar@{{ |>}->}[r] &
	\ExctCplCoLimAbutFltrtn{p}{n+1}(2) \ar@{-{ >>}}[r] &
	\ExctCplEObjct{\infty}{p}{n+1}(2)
}
\end{equation*}
The vertical arrow in the middle is an isomorphism by $(C_{n})$. The one on the right is a monomorphism by $(A_{n+1})$, and an epimorphism by commutativity. Therefore the map $u$ is an isomorphism as well.

{\bfseries Verification of $(E_{n+1})$}\quad The required properties in position $(0,n+1)$ were established earlier. It remains to consider positions $(1,n+1)$, and $(p,q)$ with $p+q=n+3$ and $0\leq q\leq n+1$. In position $(1,n+1)$, we have the required isomorphism for $r=2$ from $(B_{n+1})$. For $r>2$, we obtain the isomorphism inductively via the isomorphism propagation lemma (\ref{thm:E^rEpiMonoIsoPropagation}).

Now consider position $(p,q)$ with $p,q\geq 0$, $p+q=n+3$, and $0\leq q\leq n+1$. From $(B_{n+1})$ we already know that the map $\ExctCplEObjct{r}{p}{q}(1)\to \ExctCplEObjct{r}{p}{q}(2)$ is an isomorphism for $2\leq r\leq n+3-q$. So, we need to establish isomorphisms for $r\geq n+4-q$. We want to use the isomorphism
\begin{equation*}
\ExctCplEObjct{\infty}{p}{q}(1) \XRA{(D_{n+1})} \ExctCplEObjct{\infty}{p}{q}(2).
\end{equation*}
To do so, we keep in mind that $\ExctCplEObjct{r}{p}{q}(i)\cong \ExctCplEObjct{\infty}{p}{q}(i)$ whenever
\begin{equation*}
r\geq \max\Set{p+1,q+2} = \max\Set{n+4-q,q+2}.
\end{equation*}
In the case where $n+4-q\geq q+2$, we have for $r\geq n+4-q$:
\begin{equation*}
\ExctCplEObjct{r}{p}{q}(1) \cong \ExctCplEObjct{\infty}{p}{q}(1) \XRA{\cong} \ExctCplEObjct{\infty}{p}{q}(2)\cong \ExctCplEObjct{r}{p}{q}(2),
\end{equation*}
which is exactly what is needed. In the case where $q+2>n+4-q$, i.e. $q>n+2-q$, we have isomorphisms 
\begin{equation*}
\ExctCplEObjct{r}{p}{q}(1) \cong \ExctCplEObjct{\infty}{p}{q}(1) \XRA{\cong} \ExctCplEObjct{\infty}{p}{q}(2)\cong \ExctCplEObjct{r}{p}{q}(2),\qquad r\geq q+2,
\end{equation*}
and we require such isomorphisms for $n+4-q\leq r\leq q+2$. In this case we have epimorphisms
\begin{equation*}
\xymatrix@R=5ex@C=2em{
\ExctCplEObjct{n+4-q}{p}{q}(i)  \ar@{-{ >>}}[r] &
	\cdots \ar@{-{ >>}}[r] &
	\ExctCplEObjct{q+2-l}{p}{q}(i) \ar@{-{ >>}}[r] &
	\cdots \ar@{-{ >>}}[r] &
	\ExctCplEObjct{q+2}{p}{q}(i)
}
\end{equation*}
because the differential exiting from $\ExctCplEObjct{r}{p}{q}(i)$ has $0$ target: $p-r\leq p-(n+4-q)=p-(p+1)=-1$. For $l\geq 0$ with $n+4-q<q+2-l$ assume inductively that we have an isomorphism
\begin{equation*}
\ExctCplEObjct{q+2-l}{p}{q}(1) \XRA{\cong} \ExctCplEObjct{q+2-l}{p}{q}(2).
\end{equation*}
Now consider this morphism of exact sequences:
\begin{equation*}
\xymatrix@R=5ex@C=5em{
\ExctCplEObjct{q+2-(l+1)}{\dots}{q-(q+2-(l+1)-1)}(1) \ar[r]^-{d^{q+2-(l+1)}} \ar[d]_{u} &
	\ExctCplEObjct{q+2-(l+1)}{p}{q}(1) \ar@{-{ >>}}[r] \ar@{{ |>}->}[d]_{(A_{n+1})} &
	\ExctCplEObjct{p+2-l}{p}{q}(1) \ar[d]_{\cong}^{\text{(IH)}} \\
\ExctCplEObjct{q+2-(l+1)}{\dots}{q-(q+2-(l+1)-1)}(1) \ar[r]_-{d^{q+2-(l+1)}} &
	\ExctCplEObjct{q+2-(l+1)}{p}{q}(1) \ar@{-{ >>}}[r] &
	\ExctCplEObjct{p+2-l}{p}{q}(1)
}
\end{equation*}
The vertical map on the right is an isomorphism by induction hypothesis. The vertical map in the middle is a monomorphism by $(A_{n+1})$, and is seen to be an epimorphism if the map $u$ is one such. In fact, property $(B_{n+1})$ gives that $u$ is an isomorphism provided
\begin{equation*}
\begin{array}{rcl}
q+2-(l+1) & \leq & n+3 - (q-(q+2-(l+1)-1)) \\
q+1-l & \leq & n+3 - l \\
q & \leq & n+2
\end{array}
\end{equation*}
This condition is met, because only $q\leq n+1$ is relevant to property $(E_{n+1})$. Thus $u$ is an isomorphism, and the induction on $l$ is complete. So $(E_{n+1})$ holds.

Thus property $P_n$ implies $P_{n+1}$, and so the proof of the theorem is complete.
\end{proof}

Corollary \ref{thm:CoLimitAbutmentIso->E^2Iso-FunctorialEdges,II} below is (\ref{thm:CoLimitAbutmentIso->E^2Iso-FunctorialEdges,I}), adapted to the hypotheses of Theorem \ref{thm:CoLimitAbutmentIso->E^2Iso,II}. - For $n\geq 0$, regard the set
\begin{equation*}
K_{n}\DefEq (\Prdct{0}{\Set{0,\dots ,n}})\union (\Prdct{\ZNr_{\geq 0}}{0})
\end{equation*}
as a discrete category. Let $\mathcal{G}_n$ denote the functor category $\FuncCat{K_n}{\ModulesOver{R}}$. Then  a page of a spectral sequence such as $E^{2}_{\ast,\ast}$ determines the object $T(E^2)$ in $\mathcal{G}_n$ by sending the pair $(u,v)$ of $K_n$ to $\ExctCplEObjct{2}{u}{v}$.

\begin{corollary}[Colimit abutment isomorphism with $E^2$ functorially determined by edges, II]
\label{thm:CoLimitAbutmentIso->E^2Iso-FunctorialEdges,II}
Using the setting described in (\ref{Setup:ReverseComparison-I}), assume the following: %
\vspace{-1.6ex}%
\begin{enumerate}[(i)]
\item The map of colimit abutment objects $L_n(f)\from \ExctCplCoLimAbut{n}(1) \to \ExctCplCoLimAbut{n}(2)$ is an isomorphism for all $n\in\ZNr$.
\item $\SSMapAt{f}{2}{0}{0}\from E^{2}_{0,0}(1)\to E^{2}_{0,0}(2)$ is an isomorphism.
\item  For each $p\geq 1$ and $n\geq 1$ there is a functor
\begin{equation*}
\Psi_{p,n}\from \mathcal{F}_n \longrightarrow \ModulesOver{R}
\end{equation*}
such that $\ExctCplEObjct{2}{n}{q}(i) =\Psi_{p,n}\left( T(E^2(i)) \right)$.
\vspace{-1.6ex}%
\end{enumerate}
Then the induced map $(f^r\, |\, r\geq 2)$ of spectral sequences is an isomorphism. \NoProof
\end{corollary}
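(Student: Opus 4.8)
The plan is to prove Corollary~\ref{thm:CoLimitAbutmentIso->E^2Iso-FunctorialEdges,II} exactly as its twin Corollary~\ref{thm:CoLimitAbutmentIso->E^2Iso-FunctorialEdges,I} was proved: by checking that its three hypotheses force those of Theorem~\ref{thm:CoLimitAbutmentIso->E^2Iso,II}, after which the conclusion is immediate. Hypotheses (i) and (ii) of the corollary coincide verbatim with (i) and (ii) of the theorem, so the entire content lies in deducing the inductive hypothesis \ref{thm:CoLimitAbutmentIso->E^2Iso,II}(iii)---``if $\SSMapAt{f}{2}{0}{q}$ is an isomorphism for $0\leq q\leq n$, then $\SSMapAt{f}{2}{p}{q}$ is an isomorphism for all $(p,q)$ with $q\leq n$ and all $p\in\ZNr$''---from the functorial hypothesis, read in the sensible form $\ExctCplEObjct{2}{p}{q}(i)=\Psi_{p,q}(T(E^2(i)))$ with $\Psi_{p,q}\from \mathcal{G}_{q}\to\ModulesOver{R}$ for $p\geq 1$, $q\geq 1$.

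First I would fix $n$, assume the antecedent that $\SSMapAt{f}{2}{0}{s}$ is an isomorphism for $0\leq s\leq n$, and read it as the statement that, for every $q\leq n$, the restriction of $f$ to the left edge $\{0\}\times\{0,\dots,q\}$ of $K_{q}$ is an isomorphism, since those components are among the $\SSMapAt{f}{2}{0}{s}$ with $s\leq q\leq n$. The morphism of exact couples induces a natural morphism $T(E^2(1))\to T(E^2(2))$ in $\mathcal{G}_{q}$ whose components are precisely the edge maps indexed by $K_{q}$. For an interior position with $p\geq 1$ and $1\leq q\leq n$, naturality of $\Psi_{p,q}$ gives that $\SSMapAt{f}{2}{p}{q}$ is the value of $\Psi_{p,q}$ on that edge-morphism, hence an isomorphism as soon as the edge-morphism is one. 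Because $K_{q}$ consists of the left edge up to height $q$ together with the entire bottom row $\ZNr_{\geq 0}\times\{0\}$, the edge-morphism is an isomorphism exactly when, beyond the already-available left-edge isomorphisms, the bottom-row maps $\SSMapAt{f}{2}{p}{0}$ are isomorphisms for all $p$.

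The main obstacle is therefore the bottom row $\SSMapAt{f}{2}{p}{0}$, which the functors $\Psi_{p,\bullet}$ (posited only for $q\geq 1$) do not reach; this is the case $q=0$ of the implication we must supply. I would extract it from the colimit-abutment isomorphism (i): the finiteness of the first-quadrant filtration $0=\ExctCplCoLimAbutFltrtn{-1}{n+1}\subseteq\ExctCplCoLimAbutFltrtn{0}{n}\subseteq\cdots\subseteq\ExctCplCoLimAbutFltrtn{n}{0}=\ExctCplCoLimAbut{n}$ permits a $5$-lemma induction up the stages of $\ExctCplCoLimAbut{p}$, so that once the lower quotients $\ExctCplEObjct{\infty}{s}{p-s}$ for $0\leq s\leq p-1$ (which all have second index $\geq 1$, hence lie off the bottom row) are known to be isomorphisms, the top quotient $\ExctCplEObjct{\infty}{p}{0}\cong\ExctCplCoLimAbut{p}/\ExctCplCoLimAbutFltrtn{p-1}{1}$ is one as well; the epi/mono-propagation lemma \ref{thm:E^rEpiMonoIsoPropagation} then carries this back along the surviving differentials to $\SSMapAt{f}{2}{p}{0}$. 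Since the bottom-row step and the interior step feed one another, this is genuinely an interlocked recursion rather than two separate arguments---and that recursion, ordered first by $q$ and then up the filtration, is precisely the induction executed in the proof of Theorem~\ref{thm:CoLimitAbutmentIso->E^2Iso,II}. Accordingly, the cleanest route is to confirm that the functorial hypothesis delivers exactly the edge-data consumed at each stage of that induction and then to invoke the theorem; the only delicate bookkeeping I anticipate is matching the admissible ranges of $(p,q)$ for $\Psi_{p,q}$ against the ranges demanded at each step, so that no appeal is ever made to the functor on the bottom row where it is undefined.
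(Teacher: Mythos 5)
You correctly see that the paper's intended proof is the one-line verification of the hypotheses of Theorem~\ref{thm:CoLimitAbutmentIso->E^2Iso,II}, that (i) and (ii) transfer verbatim, and that the entire issue is the bottom row, which the functors $\Psi_{p,q}$ (first index $\geq 1$ only) cannot produce. But the patch you propose---extracting the bottom row from the abutment isomorphism by a filtration/$5$-lemma recursion interlocked with the functorial control of the interior---does not close, and it is \emph{not} ``precisely the induction executed in the proof of Theorem~\ref{thm:CoLimitAbutmentIso->E^2Iso,II}'': that induction never derives row $q=0$ from the abutment; it receives the whole bottom row from hypothesis (iii) applied with $n=0$ (property $(B_0)$), and its abutment-plus-downward-induction step occurs only at the left-column positions $(0,n+1)$. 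Concretely, your recursion stalls at its first step: applying the snake lemma to $F_{0,1}\rightarrowtail L_1 \twoheadrightarrow E^{\infty}_{1,0}$, injectivity of $f^{\infty}_{1,0}$ forces $F_{0,1}(1)\to F_{0,1}(2)$ to be surjective, i.e.\ $f^{\infty}_{0,1}$ to be an isomorphism, where $E^{\infty}_{0,1}=\CoKer{d^{2}\from E^{2}_{2,0}\to E^{2}_{0,1}}$. Nothing controls $E^{2}_{0,1}$: it lies outside the antecedent of the theorem's hypothesis (iii) at the stage where row $0$ must be produced, outside the range of every $\Psi_{p,q}$, and the abutment isomorphism only yields monomorphisms on filtration stages. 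Every bottom-row position meets the same obstruction, and the interior positions it would need depend in turn on the \emph{whole} bottom row, so the circle never breaks.

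In fact no argument can close this gap, because under the reading you (rightly) adopt the corollary as stated is false. Compare the exact couple of the path--loop fibration $\Omega S^{2}\to PS^{2}\to S^{2}$ with that of the trivial fibration over a point: both total spaces are contractible, so (i) holds; $f^{2}_{0,0}$ is an isomorphism, so (ii) holds; and since $E^{2}_{p,q}(1)=0$ for $p\notin\Set{0,2}$ while $E^{2}_{2,q}(1)\cong E^{2}_{0,q}(1)$ naturally and $E^{2}(2)$ is concentrated at $(0,0)$, hypothesis (iii) holds with $\Psi_{2,q}$ the evaluation functor at $(0,q)\in K_q$ and $\Psi_{p,q}=0$ otherwise; yet $f^{2}_{2,0}\from \ZNr\to 0$ is not an isomorphism. (A purely algebraic model does the same job: filter $\ZNr\oplus\bigoplus_{n\geq 2}\bigl(\ZNr\xrightarrow{\ =\ }\ZNr\bigr)$ so each acyclic summand yields a transgression $d^{n}\from E^{2}_{n,0}\xrightarrow{\ \cong\ }E^{2}_{0,n-1}$.) The statement therefore needs a further hypothesis---e.g.\ that $f^{2}_{p,0}$ is an isomorphism for all $p\geq 0$, which is exactly what makes $T(f^{2})$ invertible in $\mathcal{G}_q$ and lets Theorem~\ref{thm:CoLimitAbutmentIso->E^2Iso,II}(iii), and with it the intended one-line proof, go through. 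The same example defeats Corollary~\ref{thm:CoLimitAbutmentIso->E^2Iso-FunctorialEdges,I} as stated; controlling one full edge is precisely the hypothesis that Zeeman's theorem (\ref{thm:ZeemanComparison,I}) carries and that both corollaries are missing.
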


Reverse comparison is also valid for spectral sequences of cohomological type. We deviate from our existing conventions, and follow standard practice by writing $E_{r}^{p,q}$ in place of the generic $E^{r}_{p,q}$. We adopt the following setup.

\begin{convention}[Setup - II]
\label{Setup:ReverseComparison-II}%
Let $(f,g)\from \ExctCpl{C}(1) \longrightarrow \ExctCpl{C}(2)$ be a morphism of exact couples of $R$-modules with the following properties: %
\vspace{-1.6ex}
\begin{enumerate}[(1)]
\item The associated spectral sequences satisfy: $E_{r}^{p,q}(i)=0$ for $p<0$ or $q<0$, and all $r\geq 2$.
\item For $r\geq 2$ and $i\in \Set{1,2}$, the differential  $d_r(i)\from E_{r}(i)\to E_{r}(i)$ has bidegree $\Vect{b}_r=(r,-r+1)$.
\item The E-infinity objects match the limit abutment in the following manner: $\ExctCplLimAbut{n}(i)=0$ for $n<0$ and, for each $n\geq 0$\MSComp, $\ExctCplLimAbutFltrtn{n}{0}(i)$ is filtered by 
\begin{equation*}
0 = \ExctCplLimAbutFltrtn{n}{0}(i)\subseteq \ExctCplLimAbutFltrtn{n-1}{1}(i)\subseteq \cdots \subseteq\ExctCplLimAbutFltrtn{0}{n}(i) \subseteq\ExctCplLimAbutFltrtn{-1}{n+1}(i) = \ExctCplLimAbut{n}(i)
\end{equation*}
Adjacent filtration quotients match $E_{\infty}$ objects via these short exact sequences:
\begin{equation*}
\xymatrix@R=5ex@C=4em{
0 \longrightarrow \ExctCplLimAbutFltrtn{p}{q}(i) \ar@{{ |>}->}[r] &
	\ExctCplLimAbutFltrtn{p-1}{q+1}(i) \ar@{-{ >>}}[r] &
	E_{\infty}^{p,q}(i) \longrightarrow 0
}
\end{equation*}
\end{enumerate}
\end{convention}

\begin{theorem}[Limit abutment isomorphism yields spectral sequence isomorphism, I]
\label{thm:LimitAbutmentIso->E^2Iso,I}
Using the setup in (\ref{Setup:ReverseComparison-II}), assume the following: %
\vspace{-1.6ex}%
\begin{enumerate}[(i)]
\item %
\label{thm:LimitAbutmentIso->E^2Iso,I-Aug}%
The map of limit abutment objects $L^n(f)\from \ExctCplLimAbut{n}(1) \to \ExctCplLimAbut{n}(2)$ is an isomorphism for all $n\in\ZNr$.
\item %
\label{thm:LimitAbutmentIso->E^2Iso,-0,0}%
$f_{2}^{0,0}\from E_{2}^{0,0}(1)\to E_{2}^{0,0}(2)$ is an isomorphism.
\item %
\label{thm:LimitAbutmentIso->E^2Iso,-q>0}%
If $f_{2}^{p,0}\from E_{2}^{p,0}(1)\to E_{2}^{p,0}(2)$ is an isomorphism for $0\leq p\leq n$, then $f_{2}^{p,q}$ is an isomorphism for all $(p,q)$ with $p\leq n$, and all $q\in \ZNr$. %
\vspace{-1.6ex}%
\end{enumerate}
Then the induced map $(f_r\, |\, r\geq 2)$ of spectral sequences is an isomorphism.
\end{theorem}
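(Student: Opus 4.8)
The plan is to prove Theorem~\ref{thm:LimitAbutmentIso->E^2Iso,I} by the same inductive machine used for its homological counterparts \ref{thm:CoLimitAbutmentIso->E^2Iso,I} and \ref{thm:CoLimitAbutmentIso->E^2Iso,II}, transcribed to the cohomological, descending--filtration situation of Setup~\ref{Setup:ReverseComparison-II}. The single global input coming from the abutment hypothesis is the analogue of property $(\mathrm{M})$: since each $L^n$ is filtered by the \emph{submodules} $\ExctCplLimAbutFltrtn{p}{q}\subseteq L^n$, and $L^n(f)$ is an isomorphism, its restriction $\ExctCplLimAbutFltrtn{p}{q}(1)\to \ExctCplLimAbutFltrtn{p}{q}(2)$ is a monomorphism for all $p,q\geq 0$. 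Everything else is bootstrapped from the page extensions $0\to \ExctCplLimAbutFltrtn{p}{q}\to \ExctCplLimAbutFltrtn{p-1}{q+1}\to E_{\infty}^{p,q}\to 0$ of Setup~\ref{Setup:ReverseComparison-II}, together with the edge geometry and the propagation lemma.

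Next I would record the edge geometry, which is the mirror of the homological case. For a cohomological differential of bidegree $\Vect{b}_r=(r,-r+1)$, every differential \emph{leaving} a bottom--edge object $E_r^{p,0}$ lands in second coordinate $1-r<0$ and hence vanishes; so the bottom edge is a tower of epimorphisms $E_2^{p,0}\twoheadrightarrow\cdots\twoheadrightarrow E_{\infty}^{p,0}$. Dually, every differential \emph{entering} a left--edge object $E_r^{0,q}$ originates in first coordinate $-r<0$ and vanishes, so the left edge is a tower of monomorphisms. These two facts, combined with the epi/mono/iso propagation lemma \ref{thm:E^rEpiMonoIsoPropagation} and the five-- and snake lemmas, are the vehicles for moving a known morphism property from one page to the next.

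The induction runs on $n\geq 0$, carrying a package $P_n$ of simultaneous mono/epi/iso assertions for the maps $f_r^{p,q}$, for the $E_{\infty}^{p,q}$, and for the filtration stages, dualizing the statements $(A_n)$--$(E_n)$ in the proof of \ref{thm:CoLimitAbutmentIso->E^2Iso,II}; hypothesis (iii) fixes the induction variable as the column bound $p\leq n$, the new column $p=n+1$ being treated at each step. The crucial structural point is that the full limit abutment $L^{n+1}$ sits at the \emph{top} of its descending kernel filtration, so it is visible as a quotient only at the left--edge corner $(0,n+1)$, whereas hypothesis (iii) feeds information in along the bottom edge. Hence the abutment isomorphism does not enter directly at the anchor corner; instead, at the bottom--edge corner one has $E_{\infty}^{n+1,0}\cong \ExctCplLimAbutFltrtn{n}{1}$, the smallest nonzero stage of the kernel filtration of $L^{n+1}$, and the isomorphism there must be produced from the ``filtration stages are isomorphisms'' part of $P_n$ (the dual of $(C_n)$), which is in turn assembled from $L^n(f)$, property $(\mathrm{M})$, and the snake lemma. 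Recognizing that the bookkeeping therefore follows \ref{thm:CoLimitAbutmentIso->E^2Iso,II} rather than \ref{thm:CoLimitAbutmentIso->E^2Iso,I} is the heart of the matter.

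With the anchor isomorphism on $E_{\infty}^{n+1,0}$ secured, the remaining steps of $P_n\Rightarrow P_{n+1}$ are routine ladder arguments. First propagate this isomorphism \emph{down} the page from the stabilization index to $r=2$: the incoming differentials at $(n+1,0)$ issue from columns $p\leq n-1$, already controlled by the strip part of $P_n$, so a five--lemma chase yields $f_2^{n+1,0}$ an isomorphism; hypothesis (iii) then upgrades the whole strip $p\leq n+1$ on page $2$. These page--$2$ isomorphisms are promoted to isomorphisms of $E_{\infty}$ and of filtration quotients by snake-- and five--lemma chases on the extension ladders, feeding the new stage--isomorphisms back into the $(C_n)$--analogue, and the triangular region is filled in by a secondary decreasing induction on the page index via \ref{thm:E^rEpiMonoIsoPropagation}. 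I expect no conceptual obstacle in the chases themselves; the genuine difficulty is organizational — keeping the several dualized statements of $P_n$ mutually consistent and checking at every ladder that the descending filtration has interchanged the ``monic on stages'' and ``epic on $E_{\infty}$--quotients'' roles relative to the homological templates, so that each snake-- and five--lemma invocation receives inputs of the correct variance.
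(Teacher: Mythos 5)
Your proposal is correct and follows essentially the same route as the paper: the paper's own (sketched) proof runs the identical induction on the column bound $n$, with a package $P_n$ consisting of page-$2$ column isomorphisms, a monomorphism statement for all pages in columns $p\leq n$, strip isomorphisms, kernel-filtration-stage isomorphisms, and $E_{\infty}$ isomorphisms, resolved via the propagation lemma (\ref{thm:E^rEpiMonoIsoPropagation}) and the methods of (\ref{thm:CoLimitAbutmentIso->E^2Iso,I}) and (\ref{thm:CoLimitAbutmentIso->E^2Iso,II}). In particular, your key structural observation --- that the descending filtration flips the variance, so one carries monomorphisms (as in \ref{thm:CoLimitAbutmentIso->E^2Iso,II}) and the abutment hypothesis enters through the filtration-stage isomorphisms at $E_{\infty}^{n+1,0}\cong F^{n,1}$ rather than directly at the bottom-edge anchor --- is exactly what the paper's monomorphism statement $(B_n)$ encodes.
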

\begin{proof}
Theorem (\ref{thm:LimitAbutmentIso->E^2Iso,I}) may be proved by induction on $n\geq 0$, applied to property $P_n$. It asserts that the following statements are true.
\begin{enumerate}
\item[$(A_n)$] $f_{2}^{p,q}\from E_{2}^{p,q}(1) \XRA{\cong} E_{2}^{p,q}(2)$ is an isomorphism for $0\leq p\leq n$ and each $q\in\ZNr$.
\item[$(B_n)$] $f_{r}^{p,q}\from E_{r}^{p,q}(1) \to E_{r}^{p,q}(2)$ is a monomorphism for all $p\leq n$, and all $r\geq 2$.
\item[$(C_n)$] $f_{r}^{p,q}\from E_{r}^{p,q}(1)\to E_{r}^{p,q}(2)$ is an isomorphism for all $p\leq n-1$, $2\leq r\leq n+1-p$, and all $q\in \ZNr$.
\item[$(D_n)$] $F^{p,q}(1)\XRA{\cong} F^{p,q}(2)$ is an isomorphism for $p\leq n$ and all $q\in\ZNr$.
\item[$(E_n)$] $f_{\infty}^{p,q}\from E_{\infty}^{p,q}(1)\XRA{\cong} E_{\infty}^{p,q}(2)$ is an isomorphism for $p\leq n$ and all $q\in\ZNr$.
\end{enumerate}
The argument makes use of the monomorphism/isomorphism propagation lemma (\ref{thm:E^rEpiMonoIsoPropagation}), combined with the methods used in the proofs of (\ref{thm:CoLimitAbutmentIso->E^2Iso,I}) and (\ref{thm:CoLimitAbutmentIso->E^2Iso,II}).
\end{proof}

\begin{theorem}[Limit abutment isomorphism yields spectral sequence isomorphism, II]
\label{thm:LimitAbutmentIso->E^2Iso,II}
Using the setup in (\ref{Setup:ReverseComparison-II}), assume the following: %
\vspace{-1.6ex}%
\begin{enumerate}[(i)]
\item %
\label{thm:LimitAbutmentIso->E^2Iso,II-Aug}%
The map of limit abutment objects $L^n(f)\from \ExctCplLimAbut{n}(1) \to \ExctCplLimAbut{n}(2)$ is an isomorphism for all $n\in\ZNr$.
\item %
\label{thm:LimitAbutmentIso->E^2-II-Iso,-0,0}%
$f_{2}^{0,0}\from E_{2}^{0,0}(1)\to E_{2}^{0,0}(2)$ is an isomorphism.
\item %
\label{thm:LimitAbutmentIso->E^2Iso,-p>0}%
If $f_{2}^{0,q}\from E_{2}^{0,q}(1)\to E_{2}^{0,q}(2)$ is an isomorphism for $0\leq q\leq n$, then $f_{2}^{p,q}$ is an isomorphism for all $(p,q)$ with $q\leq n$ and all $p\in \ZNr$. %
\vspace{-1.6ex}%
\end{enumerate}
Then the induced map $(f_r\, |\, r\geq 2)$ of spectral sequences is an isomorphism.
\end{theorem}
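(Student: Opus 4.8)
The statement is the cohomological-type twin of Theorem \ref{thm:CoLimitAbutmentIso->E^2Iso,II}, and the proof the authors indicate for its sibling (\ref{thm:LimitAbutmentIso->E^2Iso,I}) tells us how to proceed: run a single induction on $n\geq 0$ on an omnibus property $P_n$ built from the epi/mono/iso propagation lemma (\ref{thm:E^rEpiMonoIsoPropagation}), feeding off the abutment isomorphism via the $5$-lemma and the snake lemma. Because the hypotheses here fix the \emph{vertical} edge ($f_2^{0,q}$ iso for $0\le q\le n$ forces $f_2^{p,q}$ iso for all $p$ and $q\le n$), the roles of $p$ and $q$ are swapped relative to (\ref{thm:LimitAbutmentIso->E^2Iso,I}), exactly as Theorem \ref{thm:CoLimitAbutmentIso->E^2Iso,II} swaps them relative to (\ref{thm:CoLimitAbutmentIso->E^2Iso,I}). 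So I would transcribe the induction scheme of (\ref{thm:CoLimitAbutmentIso->E^2Iso,II}) \emph{mutatis mutandis}, replacing $E^r_{p,q}$ by $E_r^{p,q}$, the colimit-abutment filtration $\ExctCplCoLimAbutFltrtnBB{}$ by the limit-abutment filtration $\ExctCplLimAbutFltrtnBB{}$ from (\ref{Setup:ReverseComparison-II}), and the bidegree $(-r,r-1)$ by $(r,-r+1)$.

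\textbf{Setting up the induction.} First I would record, exactly as in the cohomological setup (\ref{Setup:ReverseComparison-II}), that the isomorphisms $L^n(f)$ of limit abutments restrict to monomorphisms $F^{p,q}(1)\hookrightarrow F^{p,q}(2)$ for all $p,q\ge 0$; this is the cohomological analogue of the maps (M) used in the two colimit theorems, and it follows because each $F^{p,q}$ is a submodule of the abutment (the filtration is now by \emph{subobjects} $\ExctCplLimAbutFltrtn{p}{q}$ of $\ExctCplLimAbut{n}$). Then I would introduce $P_n$ as the conjunction of five clauses mirroring $(A_n)$--$(E_n)$ of (\ref{thm:CoLimitAbutmentIso->E^2Iso,II}): a monomorphism clause valid on all pages $r\ge 2$ in rows $q\le n$; an isomorphism clause on pages $2\le r\le n+2-q$ for $0\le q\le n$; an isomorphism of filtration quotients $F^{p,q}(1)\to F^{p,q}(2)$ for $q\le n+1$; an $E_\infty$-isomorphism clause for $q\le n$; and a triangular-region clause for $q\le n$, $p+q\le n+2$. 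The base case $P_0$ uses hypotheses \ref{thm:LimitAbutmentIso->E^2-II-Iso,-0,0} and \ref{thm:LimitAbutmentIso->E^2Iso,-p>0} to pin down row $q=0$, then propagates to the edge $E_\infty$-objects through the abutment short exact sequence $F^{p,q}\rightarrowtail F^{p-1,q+1}\twoheadrightarrow E_\infty^{p,q}$, deducing that $f_\infty^{p,0}$ is simultaneously monic (from the page-level monomorphism clause) and epic (by commutativity), hence iso.

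\textbf{The inductive step and the obstacle.} For the step $P_n\Rightarrow P_{n+1}$ I would work row by row, starting at position $(0,n+1)$: using clause $(D)/(C_n)$ and the identification $\ExctCplLimAbutFltrtn{0}{n+1}(i)\cong E_\infty^{0,n+1}(i)$ to get an iso at $r=\infty$, then descending in $r$ by a $5$-lemma chase across the differential-defining exact sequences, exactly as the $(E_{n+1})$-at-$(0,n+1)$ argument runs in (\ref{thm:CoLimitAbutmentIso->E^2Iso,II}). Hypothesis \ref{thm:LimitAbutmentIso->E^2Iso,-p>0} then spreads this isomorphism along row $n+1$ to all $p$, giving the page-$2$ part of the iso clause; the monomorphism clause $(A_{n+1})$ propagates via (\ref{thm:E^rEpiMonoIsoPropagation}.i), and the filtration-quotient and $E_\infty$ clauses close up through the snake lemma on the abutment sequence with the middle map an iso by the inductive quotient clause, the right map monic by $(A_{n+1})$, and epic by commutativity. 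The genuinely delicate part — the place where the two colimit proofs spend most of their ink and where I expect the main obstacle — is the triangular/off-edge clause for $r\ge n+4-q$: one must carefully separate the two regimes $n+4-q\ge q+2$ and $q+2>n+4-q$, verify that the relevant incoming or outgoing differentials land in bidegrees with a negative coordinate (hence vanish), and run the auxiliary induction on the page index $l$ whose success hinges on the arithmetic inequality $q\le n+2$ holding throughout (only $q\le n+1$ being relevant). Transcribing this bookkeeping correctly under the $(p,q)\mapsto(q,p)$ and $(r,-r+1)$ substitutions, while keeping every propagation-lemma hypothesis satisfied, is the real work; the homological algebra itself is routine $5$-lemma and snake-lemma chasing.
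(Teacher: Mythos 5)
Your overall skeleton is the right one and matches the paper's (admittedly terse) proof: an induction on the row index $n$ applied to an omnibus property, driven by the propagation lemma (\ref{thm:E^rEpiMonoIsoPropagation}), the $5$-lemma and the snake lemma, anchored by the fact that the abutment isomorphisms $L^n(f)$ restrict to monomorphisms on the filtration stages $F^{p,q}$. But your dualization is wrong at three load-bearing points, and the first one stalls the induction outright. Your page-level clause is a \emph{monomorphism} clause ``valid on all pages $r\geq 2$ in rows $q\leq n$''. That clause cannot be carried through the induction: by (\ref{thm:E^rEpiMonoIsoPropagation}.i), a monomorphism at page $r+1$ in position $(p,q)$ requires an \emph{epimorphism} at the source $(p-r,\,q+r-1)$ of the incoming differential, and with the cohomological bidegree $(r,-r+1)$ that source sits in row $q+r-1>n$, i.e.\ exactly in the region about which nothing is yet known. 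The clause that does propagate is an \emph{epimorphism} clause --- the paper's $(B_n)$ --- since by (\ref{thm:E^rEpiMonoIsoPropagation}.ii) an epimorphism at page $r+1$ only needs a monomorphism at the target $(p+r,\,q-r+1)$ of the outgoing differential, which lies in a lower, already-controlled row (supplied there by the clause giving isomorphisms on pages $2\leq r\leq n+2-q$). Under the $(p,q)$-swap, the mono clause of (\ref{thm:CoLimitAbutmentIso->E^2Iso,II}) must dualize to an epi clause, because the cohomological differential points down-right rather than up-left; transcribing it verbatim breaks the propagation.

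The same un-dualized transcription corrupts your edge identifications and your mono/epi bookkeeping. In Setup (\ref{Setup:ReverseComparison-II}) the vanishing filtration stage is $F^{n,0}=0$, so the submodule identification is $E_{\infty}^{p,0}\cong F^{p-1,1}$ on the $q=0$ \emph{row}, whereas at your key position $(0,n+1)$ one has $E_{\infty}^{0,n+1}\cong L^{n+1}/F^{0,n+1}$, a \emph{quotient} of the abutment; your claimed identification $F^{0,n+1}\cong E_{\infty}^{0,n+1}$ is false. (The iso at $r=\infty$ there is still obtainable, but from hypothesis (i) together with the inductive isomorphism on $F^{0,n+1}$ via the snake lemma applied to $F^{0,n+1}\rightarrowtail L^{n+1}\twoheadrightarrow E_{\infty}^{0,n+1}$, not from a direct identification.) Consequently your mono/epi sources are reversed throughout: in the base case, $f_{\infty}^{p,0}$ is \emph{monic} because $E_{\infty}^{p,0}\cong F^{p-1,1}$ is a submodule of the abutment (this is exactly the paper's opening observation), and \emph{epic} from the page-level epi clause plus first-quadrant stabilization; your ``monic from the page-level monomorphism clause and epic by commutativity'' is the homological pattern, and the ``epic by commutativity'' step has nothing to map onto it, since here $E_{\infty}^{p,0}$ is not a quotient of anything you control. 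Likewise, in your closing snake-lemma step the middle term $F^{p-1,n+2}$ of the sequence $F^{p,n+1}\rightarrowtail F^{p-1,n+2}\twoheadrightarrow E_{\infty}^{p,n+1}$ is precisely the unknown being constructed, not ``an iso by the inductive quotient clause''; the correct order is: left map iso by the inductive clause, middle map mono by the restriction property, snake lemma gives $E_{\infty}^{p,n+1}$ mono (this is the paper's clause $(E_n)$), epi comes from the epi clause and stabilization, and only then does a second application of the snake lemma return the middle isomorphism. The repair is systematic --- swap mono with epi and submodule with quotient at every such step --- but as written your induction fails at its first propagation and at its key identification.
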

\begin{proof}
We begin by observing that the isomorphisms $L^nf$ restrict to monomorphisms $F^{p,q}(1)\to F^{p,q}(2)$ for all $p,q\in \ZNr$. Accordingly, $f_{\infty}^{p,0}\from E_{\infty}^{p,0}(1)\to E_{\infty}^{p,0}(2)$ is a monomorphism for every $p\in \ZNr$.

Theorem (\ref{thm:LimitAbutmentIso->E^2Iso,II}) may be proved by induction on $n\geq 0$, applied to property $P_n$. It asserts that the following statements are true.
\begin{enumerate}
\item[$(A_n)$] $f_{2}^{p,q}\from E_{2}^{p,q}(1) \XRA{\cong} E_{2}^{p,q}(2)$ is an isomorphism for all $p\in \ZNr$, and $0\leq q\leq n$.
\item[$(B_n)$] $f_{r}^{p,q}\from E_{r}^{p,q}(1) \to E_{r}^{p,q}(2)$ is an epimorphism for all $p\in \ZNr$, $q\leq n$, and all $r\geq 2$.
\item[$(C_n)$] $f_{r}^{p,q}\from E_{r}^{p,q}(1)\to E_{r}^{p,q}(2)$ is an isomorphism for all $q\leq n$, and all $q\leq r\leq n+2-q$.
\item[$(D_n)$] $f_{\infty}^{p,q}$ is an isomorphism for $p\in \ZNr$ and $q\leq n$; further $F^{p,q}(1) \XRA{\cong} F^{p,q}(2)$ is an isomorphism for all $p\in \ZNr$ and all $q\leq n+1$.
\item[$(E_n)$] $f_{\infty}^{p,n+1}$ is a monomorphism for all $p\leq \ZNr$.
\item[$(F_n)$] Whenever $q\leq n$, then $f^{p,q}_{r}$ is an isomorphism for all $p+q\leq n+2$ and all $r\geq 2$.
\end{enumerate}
The argument makes use of the monomorphism/isomorphism propagation lemma (\ref{thm:E^rEpiMonoIsoPropagation}), combined with the methods used in the proofs of (\ref{thm:CoLimitAbutmentIso->E^2Iso,I}) and (\ref{thm:CoLimitAbutmentIso->E^2Iso,II}).
\end{proof}

\part{Appendix}
\label{part:Appendix}

\chapter{Certain (Co-)Limits in Module Categories}
\label{chap:CoLimsInModuleCategories}

Given an exact couple $\ExctCpl{C}$, we are interested in the relationship between the filtration stages of its universal abutment objects and the $\SSPage{\infty}$ of its spectral sequence. This discussion involves heavy use of  certain types limits and colimits, as well as associated derived limits.

Assuming basic knowledge of these concepts and their properties from an introductory text to category theory, e.g. \cite{SMacLane1998-Cats}, we fix notation needed for the kinds of (co-)limits we encounter, and we collect relevant background. Much of this material is standard and well documented; see for example Boardman's excellent summary \cite{JMBoardman1999}, as well as the expositions in \cite{CAWeibel1994}, \cite{JPMayKPonto2011}, \cite{SEilenbergJCMoore1961}, \cite{JMilnor1962}.

However, one very technical result is useful in analyzing the meaning of the E-infinity objects of the spectral sequence of an exact couple. Here, it is in some more detail: We will be working with limits and colimits of diagrams $(A,a)$ of $R$-modules which are of the form
\begin{equation*}
\cdots\to  A_{p-1} \XRA{a_{p-1}} A_{p} \XRA{a_p} A_{p+1}\to \cdots
\end{equation*}
Applied to a short exact sequence $A\to B\to C$ of such diagrams, colimit returns a short exact sequence while limit returns a $6$-term exact sequence
\begin{equation*}
\xymatrix@R=5ex@C=3em{
\LimOf{A} \ar@{{ |>}->}[r] &
	\LimOf{B} \ar[r] &
	\LimOf{C} \ar[r] &
	\LimOneOf{A} \ar[r] &
	\LimOneOf{B} \ar@{-{ >>}}[r] &
	\LimOneOf{C}
}
\end{equation*}
Thus the segment $\LimOf{A}\to \LimOf{B}\to \LimOf{C}$ is short exact if and only if the map $\LimOneOf{A}\to \LimOneOf{B}$ is a monomorphism. In general, this $\LimOne$-property is difficult to investigate. Consequently, the sufficient condition $\LimOneOf{A}=0$ has gained significance. More easily verifiable criteria, such as the Mittag-Leffler property.

In the context of the E-infinity extension theorem \ref{thm:E-InfinityExtensionThm}, we are led to analyzing the filtration (\ref{thm:KernelFiltrationLim(A)-Props}) of $\LimOf{A}$ by the kernels of the limit cone maps $\rho^p\from \LimOf{A}\to A_p$, and there we encounter short exact sequences of $\ZCat$-diagrams $K^pA\to K^{p+1}A\to K^{p+1}A/K^pA$ for which  it is surprisingly easy to investigate whether the induced map $\kappa\from \LimOneOf{K^pA}\to \LimOneOf{K^{p+1}A}$ is a monomorphism; see the development following (\ref{thm:Kernel/ImageSequencesOfZ-Diagram}).

Given an ordinal $\lambda$, this investigation motivates the concept of $\lambda$-Mittag-Leffler condition, and results in the insight that $\kappa$ is a monomorphism whenever, $A$ satisfies the $\omega$-Mittag-Leffler condition (\ref{thm:Omega-Mittag-Leffler->Lim1(K^pA)->Lim1(K^(p+1)A)Monomorphism-All-p}).

\section{Setup and conventions}
\label{sec:SetupConventions}

{\bfseries Graded objects}\quad When discussing objects in a category $\mathcal{C}$ which are graded over the integers, we repurpose the symbol $\ZNr$ to denote the discrete category whose objects are the numbers $\dots , -2,-1,0,1,2,\dots$. Now we define a \Defn{$\ZNr$\!-graded object} in $\mathcal{C}$ as a functor $X\from \ZNr \to \mathcal{C}$. Similarly, a \Defn{$(\ZNr\prdct \ZNr)$\MSComp-bigraded object} in $\mathcal{C}$ is a functor $(\ZNr\prdct \ZNr)\to \mathcal{C}$. %
\index{$\ZNr$-graded object}\index{$(\ZNr\prdct\ZNr)$-graded object}\index[not]{$\ZNr$ - integers as a discrete category}

A \Defn{morphism $f\from X\to Y$ of $(\ZNr\prdct \ZNr)$-graded objects} in $\mathcal{C}$ is a natural transformation of functors. For $\Vect{t}\in (\ZNr\prdct \ZNr)$ define the translation functor
\begin{equation*}
\hat{\Vect{t}}\from \ZNr\prdct \ZNr \longrightarrow \ZNr\prdct \ZNr,\qquad \hat{\Vect{t}}(\Vect{a}) \DefEq \Vect{a} + \Vect{t}.
\end{equation*}
A \Defn{morphism $g\from X\to Y$ of bidegree $\Vect{t}$} is given by a natural transformation $g\from X\to Y\Comp \hat{\Vect{t}}$. Thus $g$ consists of a family of morphisms $g_{\Vect{a}}\from X_{\Vect{a}} \to Y_{\Vect{a}+\Vect{t}}$, one such for every $\Vect{a}$ in $(\ZNr\prdct \ZNr)$. %
\index{morphism!of bigraded objects}

{\bfseries $\ZCat$-diagrams}\quad We write $\ZCat$ for the category determined by the ordering structure of the integers: %
\index[not]{$\ZCat$ - category modeled on ordering structure of $\ZNr$}
\begin{equation*}
\ZCat\qquad\qquad
\xymatrix@R=5ex@C=2em{
\cdots \ar[r] &
\ \   -2 \ar[r] &
\ \  -1 \ar[r] &
 0 \ar[r] &
 1 \ar[r] &
 2 \ar[r] &
 \cdots
}
\end{equation*}
The category $\ZCat$ contains the final subcategory $\OrdOmega$, representing the ordering structure of the natural numbers: %
\index[not]{$\OrdOmega$ - category $0\to 1\to 2\to \cdots$}
\begin{equation*}
\OrdOmega\qquad\qquad
\xymatrix@R=5ex@C=3em{
0 \ar[r] &
	1 \ar[r] &
	2 \ar[r] &
	\cdots
}
\end{equation*}
The opposite of $\OrdOmega$ is canonically isomorphic to this initial subcategory of $\ZCat$: %
\index[not]{$\OrdOmegaOp$ - category $\cdots \to -2\to -1\to 0$}
\begin{equation*}
\OrdOmegaOp\quad\quad
\xymatrix@R=5ex@C=3em{
\cdots \ar[r] &
	-2 \ar[r] &
	-1 \ar[r] &
	0
}
\end{equation*}

A diagram in $\mathcal{C}$ modeled on $\ZCat$, a $\ZCat$-diagram for short, is given by a functor $X\from \ZCat\to \mathcal{C}$:
\begin{equation*}
\xymatrix@R=5ex@C=4em{
\cdots \ar[r] &
	X_{-2} \ar[r]^-{x_{-2}} &
	X_{-1} \ar[r]^-{x_{-1}} &
	X_0 \ar[r]^-{x_{0}} &
	X_1 \ar[r]^-{x_{1}} &
	X_2 \ar[r] &
	\cdots 
}
\end{equation*}
A morphism $\xi\from X\to Y$ of $\ZCat$-diagrams is given by a natural transformation of functors.

{\bfseries $\LimOver{\ZCat}$}\quad If $\mathcal{C}$ is complete, then any $\ZCat$-diagram $X$ has a limit, with corresponding universal cone %
\index[not]{$X_{-\infty} \DefEq \LimOfOver{X}{\ZCat}$}%
\begin{equation*}
\lambda\from X_{-\infty} \DefEq \LimOfOver{X}{\ZCat} \longrightarrow X.
\end{equation*}
As the inclusion $\OrdOmegaOp\to \ZCat$ is initial, we will frequently use the canonical isomorphism
\begin{equation*}
\LimOfOver{X}{\ZCat} \longrightarrow \LimOfOver{X|}{\OrdOmegaOp},
\end{equation*}
where $X|$ denotes the restriction of $X$ to $\OrdOmegaOp$. 

{\bfseries $\CoLimOver{\ZCat}$}\quad If $\mathcal{C}$ is cocomplete, then any $\ZCat$-diagram $X\from \ZCat\to \mathcal{C}$ has a colimit, with corresponding universal cocone
\begin{equation*}
\tau\from X \longrightarrow \CoLimOfOver{X}{\ZCat} \EqDef X_{\infty}.
\end{equation*}
As the inclusion $\OrdOmega\to \ZCat$ is final, we will frequently use the canonical isomorphism
\begin{equation*}
\CoLimOfOver{X|}{\OrdOmega} \longrightarrow \CoLimOfOver{X}{\ZCat}.
\end{equation*}

\begin{definition}[Types of $\ZCat$-diagrams]
\label{def:Z-DiagramTypes}%
In a pointed category $\mathcal{C}$, i.e. $\mathcal{C}$ has a $0$-object, a $\ZCat$-diagram $X$ is called
\begin{enumerate}
\item \Defn{originally vanishing} if there exists $n_0\in \ZNr$ such that $X_n=0$ for $n\leq n_0$; %
\index{originally!vanishing $\ZCat$-diagram}\index{$\ZCat$-diagram!originally vanishing}
\item \Defn{originally stable} if there exists $n_0\in\ZNr$ such that $X_n=X_{n_0}$ for $n\leq n_0$; %
\index{originally!stable $\ZCat$-diagram}\index{$\ZCat$-diagram!originally stable}
\item \Defn{eventually stable} if there exists $n_0\in\ZNr$ such that $X_n=X_{n_0}$ for $n\geq n_0$;
\index{eventually!stable $\ZCat$-diagram}\index{$\ZCat$-diagram!eventually stable}
\item \Defn{eventually vanishing} if there exists $n_0\in \ZNr$ such that $X_n=0$ for $n\geq n_0$. %
\index{eventually!vanishing $\ZCat$-diagram}\index{$\ZCat$-diagram!eventually vanishing}
\end{enumerate}
\end{definition}

\section[Colimits of $\ZCat$-diagrams of $R$-modules]{Colimits of $\ZCat$-diagrams of $R$-modules}
\label{sec:ZCoLim}

Let us now specialize to colimits of $\ZCat$-diagrams of left modules over some unital ring $R$. For the following basic properties of such diagrams, the reader may consult \cite{JJRotman2009,CAWeibel1994}.

\begin{proposition}[Construction of colimits of $\ZCat$-diagrams in $\LModules{R}$]
\label{thm:CoLim^Z(A)}
The colimit of a $\ZCat$-diagram  $A\from \ZCat\to \LModules{R}$ may be constructed as the cokernel in this short exact sequence:
\begin{equation*}
\xymatrix@R=5ex@C=2em{
0 \ar[r] &
	\FamSum{q\in\ZNr}{A_q} \ar@{{ |>}->}[rr]^-{\partial} &&
	\FamSum{n\in\ZNr}{A_n} \ar@{-{ >>}}[rr]^-{\pi} &&
	A_{\infty} \ar[r] &
	0
}
\end{equation*}
If $a_p\from A_p\to A_{p+1}$, $p\in \ZNr$, are the structure maps of $A$, then $\partial$ is the map whose restriction to $A_q$ is $A_q \XRA{\IdMapOn{A_q} - a_q} \Sum{A_q}{A_{q+1}}$.   The structure maps $\pi_p\from A_p\to A_{\infty}$ of a colimit cocone are given by the composites $A_p \XRA{\InclsnOf{p}} \bigoplus A_n \XRA{\pi} A_{\infty}$. \NoProof
\end{proposition}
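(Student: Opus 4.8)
The plan is to read the displayed sequence as the standard presentation of a sequential colimit as the cokernel of a \emph{difference map}, and then to check directly that $\CoKer{\partial}$, which the sequence calls $A_{\infty}$, equipped with the maps $\pi_p = \pi \Comp \InclsnOf{p}$, solves the universal mapping problem defining $\CoLimOfOver{A}{\ZCat}$. Since the inclusion $\OrdOmega \to \ZCat$ is final, one could first restrict to $\OrdOmega$, but the cokernel description is self-contained, so I would argue with it directly. Concretely I would verify, in order: (a) that $\partial$ is a monomorphism, so that the sequence is short exact as stated; (b) that the family $(\pi_p)$ is a cocone under $A$; and (c) that this cocone is universal.

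For (a), I would exploit the finiteness of supports in the direct sum. Write a general element of $\FamSum{q\in\ZNr}{A_q}$ as a finitely supported family $(x_q)$. Since $\partial$ restricted to $A_q$ is $\IdMapOn{A_q} - a_q$, the $A_n$-component of $\partial\big((x_q)\big)$ is $x_n - a_{n-1}(x_{n-1})$. If this vanishes for every $n$ while $(x_q) \neq 0$, let $q_0$ be the least index in the finite, nonempty support; then $x_{q_0 - 1} = 0$, so the $A_{q_0}$-component equals $x_{q_0} \neq 0$, a contradiction. Hence $\partial$ is injective, and the left-exactness of the sequence follows, $\Img{\partial} = \Ker{\pi}$ being automatic from $\pi$ being the cokernel map.

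For (b), I would note that, as maps $A_p \to \FamSum{n\in\ZNr}{A_n}$, one has $\partial|_{A_p} = \InclsnOf{p} - \InclsnOf{p+1}\Comp a_p$, whence $\InclsnOf{p+1}\Comp a_p - \InclsnOf{p} = -\,\partial|_{A_p}$ takes values in $\Img{\partial} = \Ker{\pi}$. Applying $\pi$ gives $\pi_{p+1}\Comp a_p = \pi_p$, so $(\pi_p)$ is a cocone. For (c), given any cocone $(f_p \from A_p \to M)$ with $f_{p+1}\Comp a_p = f_p$, the universal property of the direct sum yields a unique $\widetilde{f}\from \FamSum{n\in\ZNr}{A_n} \to M$ with $\widetilde{f}\Comp \InclsnOf{p} = f_p$. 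The cocone identity gives $\widetilde{f}\Comp\partial|_{A_p} = f_p - f_{p+1}\Comp a_p = 0$, so $\widetilde{f}$ vanishes on $\Img{\partial} = \Ker{\pi}$ and factors as $\widetilde{f} = f\Comp\pi$ for a unique $f\from A_{\infty} \to M$; then $f\Comp\pi_p = f_p$, and $f$ is unique because $\pi$ is an epimorphism.

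I expect no serious obstacle here: the statement is the module-theoretic avatar of a standard colimit construction. The only mildly delicate point is the injectivity in (a), which rests on the finite-support, telescoping structure of $\partial$ rather than on any property of the structure maps $a_p$; everything else is a routine verification that $\Img{\partial}$ is exactly the submodule of colimit relations.
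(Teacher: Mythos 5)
Your proof is correct, and it is the standard argument: injectivity of $\partial$ via the least index of the (finite) support, the cocone identity $\pi_{p+1}\Comp a_p=\pi_p$ from $\pi\Comp\partial=0$, and the universal property deduced from the universal properties of the direct sum and the cokernel. The paper itself marks this proposition \emph{without proof}, deferring to standard references, so your write-up simply supplies the routine verification the paper omits; note that your choice of the \emph{least} support index in step (a) is the right one, since the greatest index would fail when $a_{q}$ has nontrivial kernel.
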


\begin{corollary}[Basic properties of colimits of $\ZCat$-diagrams]
\label{thm:CoLim(ZCatDiagram)-Properties}%
In the situation of (\ref{thm:CoLim^Z(A)}),  every $y\in A_{\infty}$ is in the image of some $\pi_p\from A_p\to A_{\infty}$. If $x_p\in A_p$, then $\pi_p(x_p)=0$ if and only if there exists $n\geq p$ such that $x_p$ belongs to the kernel of the structure map $A_p\to A_{n}$.  \NoProof
\end{corollary}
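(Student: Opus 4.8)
The plan is to read off both statements directly from the explicit construction of the colimit in Proposition~\ref{thm:CoLim^Z(A)}, namely the presentation
\begin{equation*}
\FamSum{q\in\ZNr}{A_q} \XRA{\partial} \FamSum{n\in\ZNr}{A_n} \XRA{\pi} A_{\infty},
\end{equation*}
with $\partial|_{A_q} = \IdMapOn{A_q} - a_q$ and $\pi_p = \pi\Comp \InclsnOf{p}$. For surjectivity of the family $(\pi_p)$ onto $A_{\infty}$: since $\pi$ is an epimorphism, any $y\in A_{\infty}$ lifts to some element of $\bigoplus_n A_n$. Such an element is a finite sum $\sum_{n\in S} x_n$ with $x_n\in A_n$ and $S\subseteq \ZNr$ finite. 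I would then pick $p\geq \max S$ and push each summand forward along the iterated structure maps $A_n\to A_p$; because $\partial$ identifies $x_n$ with its image $a_n(x_n)\in A_{n+1}$ modulo $\Img{\partial}$, this finite sum becomes congruent modulo $\Img{\partial}$ to a single element of $A_p$, whose $\pi_p$-image is $y$. This is the content of the first assertion.

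For the kernel characterization, fix $x_p\in A_p$. The ``if'' direction is immediate: if $x_p$ dies under the composite $A_p\to A_n$ for some $n\geq p$, then $\pi_p(x_p)=\pi_n(\text{image of }x_p)=\pi_n(0)=0$, using that $\pi_p$ factors through $\pi_n$ via the structure maps (a consequence of the cocone relations $\pi_p = \pi_n\Comp(A_p\to A_n)$). The substantive direction is ``only if'': suppose $\pi_p(x_p)=0$, i.e. $\InclsnOf{p}(x_p)\in \Img{\partial}$. I would write $\InclsnOf{p}(x_p) = \partial\bigl(\sum_{q} y_q\bigr)$ for some finite collection $y_q\in A_q$, and then compare coefficients componentwise in $\bigoplus_n A_n$. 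Since $\partial(y_q)$ contributes $y_q$ in degree $q$ and $-a_q(y_q)$ in degree $q+1$, matching the single nonzero component $x_p$ in degree $p$ against a telescoping system of equations forces the $y_q$ to vanish outside a finite interval and yields, after cancellation, exactly the relation that the iterated structure map $A_p\to A_n$ (for $n$ large enough to exceed the support of the $y_q$) sends $x_p$ to $0$.

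The main obstacle I anticipate is the bookkeeping in this coefficient-comparison step: one must handle the telescoping carefully, tracking that the finitely many indices $q$ appearing in $\sum_q y_q$ are bounded above, so that choosing $n$ beyond this bound makes the iterated composite annihilate $x_p$. No genuine difficulty lurks here beyond the indexing; the algebra is the standard telescope argument for filtered (here, sequential) colimits of modules. I would keep the exposition brief, since both claims are routine once the presentation of $A_{\infty}$ as $\CoKer{\partial}$ is in hand, which is why the paper marks the statement with \NoProof.
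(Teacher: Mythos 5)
Your proposal is correct and follows exactly the route the paper intends: the corollary carries no proof precisely because both claims are read off from the cokernel presentation of $A_{\infty}$ in (\ref{thm:CoLim^Z(A)}), which is what you do. Your component-comparison in $\bigoplus_{n}A_n$ is the standard telescoping argument; the one cosmetic remark is that the $y_q$ already have finite support by definition of the direct sum, so the equations force $y_n$ to equal the image of $x_p$ in $A_n$ for every $n\geq p$ (and $y_n=0$ for $n<p$), and finite support then annihilates that image for all large $n$ --- exactly the conclusion you state.
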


\begin{definition}[Exact sequence of $\ZCat$-diagrams of $R$-modules]
\label{def:ExactSequenceZ-Diagrams}
A diagram of $A\XRA{a} B \XRA{a}C$ is called exact in position $B$ if, for each $p\in \ZNr$ the sequence of $R$-modules below is exact in position $B_p$:
\begin{equation*}
\xymatrix@R=5ex@C=4em{
A_p \ar[r]^-{a_p} &
	B_p \ar[r]^-{b_p} &
	C_p
}
\end{equation*}
The given diagram is short exact if the diagram $0\to A \XRA{a} B \XRA{b} C \to 0$ is exact in positions $A,B,C$.
\end{definition}

\begin{theorem}[$\CoLimOver{\ZCat}$ is exact]
\label{thm:CoLim^ZExactFunctor}
If $A\XRA{\alpha} B \XRA{\beta} C$ is a short exact sequence of $\ZCat$-diagrams, then the associated sequence of colimits is short exact:
\begin{equation*}
\xymatrix@R=5ex@C=4em{
A_{\infty} \ar[r]^-{\alpha_{\infty} } &
	B_{\infty} \ar[r]^-{\beta_{\infty} } &
	C_{\infty}
}
\end{equation*}
\end{theorem}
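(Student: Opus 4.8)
The plan is to prove that $\CoLimOver{\ZCat}$ preserves short exactness by combining two already-established facts: the explicit cokernel construction of colimits in $\LModules{R}$ from (\ref{thm:CoLim^Z(A)}), and the elementary pointwise description of elements of a colimit from (\ref{thm:CoLim(ZCatDiagram)-Properties}). Since $\ZCat$-colimits of $R$-modules are just ordinary direct limits, exactness of the filtered colimit functor over the directed set $\ZNr$ is a classical fact; the only task is to assemble it from the tools at hand. I would verify exactness at each of the three spots ($A_\infty$, $B_\infty$, $C_\infty$) directly, element-chasing through the structure maps.

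First I would record the setup: given the short exact sequence $A \XRA{\alpha} B \XRA{\beta} C$ of $\ZCat$-diagrams, for each $p$ the sequence $0\to A_p \XRA{\alpha_p} B_p \XRA{\beta_p} C_p \to 0$ is exact, and naturality of $\alpha,\beta$ gives induced maps $\alpha_\infty, \beta_\infty$ on colimits. The three verifications then proceed as follows. For surjectivity of $\beta_\infty$: by (\ref{thm:CoLim(ZCatDiagram)-Properties}) any $z\in C_\infty$ equals $\pi^C_p(z_p)$ for some $z_p\in C_p$; since $\beta_p$ is surjective, choose $b_p\in B_p$ with $\beta_p(b_p)=z_p$, and then $\beta_\infty(\pi^B_p(b_p)) = \pi^C_p(\beta_p(b_p)) = z$. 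For injectivity of $\alpha_\infty$: if $\alpha_\infty(\pi^A_p(x_p))=0$, then $\pi^B_p(\alpha_p(x_p))=0$, so by the kernel criterion in (\ref{thm:CoLim(ZCatDiagram)-Properties}) there is $n\geq p$ with $\alpha_p(x_p)$ killed by the structure map $B_p\to B_n$; by naturality this equals $\alpha_n$ applied to the image of $x_p$ in $A_n$, and injectivity of $\alpha_n$ forces that image to vanish, whence $\pi^A_p(x_p)=0$. For exactness in the middle: the inclusion $\Img{\alpha_\infty}\subseteq \Ker{\beta_\infty}$ is immediate from $\beta\alpha=0$ pointwise; conversely, if $\beta_\infty(\pi^B_p(b_p))=0$, the kernel criterion gives $n\geq p$ with the image $b_n$ of $b_p$ in $B_n$ satisfying $\beta_n(b_n)=0$, so by exactness at $B_n$ we have $b_n=\alpha_n(a_n)$ for some $a_n\in A_n$, and then $\pi^B_p(b_p)=\pi^B_n(b_n)=\alpha_\infty(\pi^A_n(a_n))$ lies in the image.

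I do not expect a serious obstacle here, since every step is a direct consequence of the two cited results; the mild subtlety is purely bookkeeping, namely the consistent use of naturality of $\alpha$ and $\beta$ to commute the diagram structure maps past the induced maps, and the repeated appeal to the fact that $\pi^B_p(b_p)=\pi^B_n(b_n)$ whenever $b_n$ is the image of $b_p$ under the structure map $B_p\to B_n$. Alternatively, one could bypass the element chase entirely by applying the snake lemma to the morphism of short exact sequences supplied by the cokernel presentation (\ref{thm:CoLim^Z(A)}): the three diagrams for $A,B,C$ fit into a $3\times 3$ grid of direct-sum sequences $\bigoplus A_q \to \bigoplus A_n \to A_\infty$, and since $\bigoplus$ is exact the rows and the outer columns are short exact, so the snake lemma yields a six-term sequence whose relevant connecting map vanishes, giving short exactness of the colimit column. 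I would present the element-chase version as the primary proof for transparency, since it makes the filtered-colimit exactness visible and uses only the two lemmas already proved, and merely remark on the snake-lemma route as a structural alternative.

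\begin{proof}
Write $\pi^A_p, \pi^B_p, \pi^C_p$ for the colimit cocone maps of $A,B,C$. By naturality of $\alpha$ and $\beta$, we have $\alpha_\infty \Comp \pi^A_p = \pi^B_p \Comp \alpha_p$ and $\beta_\infty \Comp \pi^B_p = \pi^C_p \Comp \beta_p$ for all $p$.

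\emph{$\beta_\infty$ is epic.} Let $z\in C_\infty$. By (\ref{thm:CoLim(ZCatDiagram)-Properties}) there is $p$ and $z_p\in C_p$ with $z=\pi^C_p(z_p)$. Since $\beta_p$ is surjective, pick $b_p\in B_p$ with $\beta_p(b_p)=z_p$. Then $\beta_\infty(\pi^B_p(b_p)) = \pi^C_p(\beta_p(b_p)) = z$.

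\emph{$\alpha_\infty$ is monic.} Suppose $\alpha_\infty(y)=0$ for $y\in A_\infty$, say $y=\pi^A_p(x_p)$. Then $\pi^B_p(\alpha_p(x_p))=0$, so by (\ref{thm:CoLim(ZCatDiagram)-Properties}) there is $n\geq p$ such that the structure map $B_p\to B_n$ sends $\alpha_p(x_p)$ to $0$. Denoting by $x_n$ the image of $x_p$ under $A_p\to A_n$, naturality gives $\alpha_n(x_n)=0$, and injectivity of $\alpha_n$ yields $x_n=0$. Hence $y=\pi^A_p(x_p)=\pi^A_n(x_n)=0$.

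\emph{Exactness at $B_\infty$.} Since $\beta_p \Comp \alpha_p = 0$ for all $p$, we have $\beta_\infty \Comp \alpha_\infty = 0$, so $\Img{\alpha_\infty}\subseteq \Ker{\beta_\infty}$. Conversely, let $w\in \Ker{\beta_\infty}$, say $w=\pi^B_p(b_p)$. Then $\pi^C_p(\beta_p(b_p))=0$, so by (\ref{thm:CoLim(ZCatDiagram)-Properties}) there is $n\geq p$ with $\beta_p(b_p)$ sent to $0$ by $C_p\to C_n$. Let $b_n$ be the image of $b_p$ under $B_p\to B_n$; by naturality $\beta_n(b_n)=0$. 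By exactness of the sequence at $B_n$, there exists $a_n\in A_n$ with $\alpha_n(a_n)=b_n$. Then
\begin{equation*}
w=\pi^B_p(b_p)=\pi^B_n(b_n)=\pi^B_n(\alpha_n(a_n))=\alpha_\infty(\pi^A_n(a_n)) \in \Img{\alpha_\infty}.
\end{equation*}
Thus $\Ker{\beta_\infty}=\Img{\alpha_\infty}$, and the colimit sequence is short exact.
\end{proof}
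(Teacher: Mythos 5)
Your proof is correct. Note, however, that the paper itself offers no proof of this theorem: it is stated bare, with the surrounding text deferring to the standard references (Rotman, Weibel) for "the following basic properties" of $\ZCat$-colimits. So there is no in-paper argument to compare against; what you have done is supply the details the paper leaves to the literature. Your element chase is exactly the classical proof that filtered colimits of $R$-modules are exact, and it uses only the two tools the paper makes available: the presentation of the colimit as a cokernel in (\ref{thm:CoLim^Z(A)}) and the pointwise criteria of (\ref{thm:CoLim(ZCatDiagram)-Properties}) (every element comes from a finite stage; an element dies in the colimit iff it dies at some finite stage). All three verifications — surjectivity of $\beta_{\infty}$, injectivity of $\alpha_{\infty}$, and $\Ker{\beta_{\infty}}=\Img{\alpha_{\infty}}$ — are carried out cleanly, with the naturality bookkeeping handled correctly. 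Your remarked alternative via the snake lemma is also sound, and it is worth seeing why the connecting map vanishes: the presentation in (\ref{thm:CoLim^Z(A)}) asserts that $\partial$ is \emph{monic}, so applying the snake lemma to the morphism of short exact sequences $\FamSum{q}{A_q}\to\FamSum{q}{B_q}\to\FamSum{q}{C_q}$ over $\FamSum{n}{A_n}\to\FamSum{n}{B_n}\to\FamSum{n}{C_n}$ gives a kernel row that is identically zero, and the six-term sequence collapses to the desired short exact sequence of cokernels, i.e.\ of colimits. Either route would serve as the proof the paper omits.
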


\begin{corollary}[Kernels / images in $\ModulesOver{R}$ commute with $\CoLimOver{\ZCat}$]
\label{thm:CoLim^ZCat-InModules-CommutesKer/Img}
A morphism $\alpha\from A\to B$ of $\ZCat$-diagrams of $R$-modules, yields this diagram of short exact colimit terms:
\begin{equation*}
\xymatrix@R=5ex@C=3em{
(\Ker{\alpha})_{\infty} \ar@{{ |>}->}[r] &
	A_{\infty} \ar@{-{ >>}}[r] \ar@/_1.5em/[rr]_-{\alpha_{\infty}} &
\Img{\alpha}_{\infty} \ar@{{ |>}->}[r] &
	B_{\infty} \ar@{-{ >>}}[r] &
	(\CoKer{\alpha})_{\infty}
}
\end{equation*}
So, the following functorial isomorphisms result: $(\Ker{\alpha})_{\infty}\cong \Ker{\alpha_{\infty}}$ and $(\Img{\alpha})_{\infty}\cong \Img{\alpha_{\infty}}$. \NoProof
\end{corollary}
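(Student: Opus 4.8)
The plan is to deduce this corollary directly from the exactness of $\CoLimOver{\ZCat}$ established in (\ref{thm:CoLim^ZExactFunctor}), by feeding that result the two canonical short exact sequences into which $\alpha$ factors.

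First I would record that kernels, images, and cokernels of a morphism $\alpha\from A\to B$ of $\ZCat$-diagrams of $R$-modules are formed positionwise: in each position $p$ the map $\alpha_p\from A_p\to B_p$ has a kernel, image, and cokernel, and the structure maps of $A$ and $B$ restrict and descend to structure maps on these families, turning $\Ker{\alpha}$, $\Img{\alpha}$, and $\CoKer{\alpha}$ into $\ZCat$-diagrams. Since by (\ref{def:ExactSequenceZ-Diagrams}) exactness of a sequence of $\ZCat$-diagrams is tested positionwise, this yields two short exact sequences of $\ZCat$-diagrams, namely $\Ker{\alpha}\to A\to \Img{\alpha}$ and $\Img{\alpha}\to B\to \CoKer{\alpha}$, whose composite factorization $A\to \Img{\alpha}\to B$ recovers $\alpha$.

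Next I would apply (\ref{thm:CoLim^ZExactFunctor}) to each of these short exact sequences. This produces two short exact sequences of colimits, $(\Ker{\alpha})_{\infty}\hookrightarrow A_{\infty}\twoheadrightarrow (\Img{\alpha})_{\infty}$ and $(\Img{\alpha})_{\infty}\hookrightarrow B_{\infty}\twoheadrightarrow (\CoKer{\alpha})_{\infty}$. Splicing them at the common term $(\Img{\alpha})_{\infty}$ gives precisely the five-term diagram in the statement. By functoriality of $\CoLimOver{\ZCat}$ applied to the factorization $A\to \Img{\alpha}\to B$ of $\alpha$, the composite $A_{\infty}\twoheadrightarrow (\Img{\alpha})_{\infty}\hookrightarrow B_{\infty}$ equals $\alpha_{\infty}$.

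Finally, the desired isomorphisms follow from uniqueness of the epi-mono (image) factorization in $\ModulesOver{R}$. The map $A_{\infty}\to (\Img{\alpha})_{\infty}$ is an epimorphism by the first sequence and $(\Img{\alpha})_{\infty}\to B_{\infty}$ is a monomorphism by the second, so $A_{\infty}\to (\Img{\alpha})_{\infty}\to B_{\infty}$ is the image factorization of $\alpha_{\infty}$; hence $(\Img{\alpha})_{\infty}\cong \Img{\alpha_{\infty}}$, and reading off the kernel of the epimorphism gives $(\Ker{\alpha})_{\infty}\cong \Ker{\alpha_{\infty}}$. Naturality of all these identifications is inherited from the functoriality of the colimit functor, so the isomorphisms are functorial in $\alpha$. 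I do not anticipate a genuine obstacle here: the only point requiring a moment's care is confirming that the two applications of exactness share the same middle term $(\Img{\alpha})_{\infty}$ and that their composite really is $\alpha_{\infty}$, both of which are immediate from functoriality.
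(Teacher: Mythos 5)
Your proposal is correct and is exactly the argument the paper intends: the corollary carries no proof precisely because it is the immediate consequence of the exactness of $\CoLimOver{\ZCat}$ (\ref{thm:CoLim^ZExactFunctor}) applied to the two short exact sequences of $\ZCat$-diagrams arising from the positionwise image factorization of $\alpha$, spliced at the common term $(\Img{\alpha})_{\infty}$. Your final step, identifying $A_{\infty}\twoheadrightarrow (\Img{\alpha})_{\infty}\hookrightarrow B_{\infty}$ with the epi-mono factorization of $\alpha_{\infty}$ in $\ModulesOver{R}$, is the right way to extract the two stated functorial isomorphisms.
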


\section[Limits of $\ZCat$-diagrams of $R$-modules]{Limits of $\ZCat$-diagrams of $R$-modules}
\label{sec:ZLim}\label{sec:LimOne}

We turn to limits of $\ZCat$-diagrams of modules over a unital ring $R$. On $\ZCat$-diagrams in $\ModulesOver{R}$ the limit functor commutes with kernels (as both are limits), but it fails to be right exact. This fact complicates considerably the discussion of the relationship between the limit abutting objects of an exact couple and its associated spectral sequence. Some aspects of this relationship require a detailed analysis of properties of the derived functors of the $\LimOver{\ZCat}$.  Fortunately, only $\LimOneOver{\ZCat}$ enters explicitly into computations, as the higher derived functors $\LimDrvdOfOver{\geq 2}{}{\ZCat}$ vanish.

In (\ref{thm:Lim^Z(A)Construction}), we rely on Steenrod's simultaneous construction of both, $\LimOver{\ZCat}$ and $\LimOneOver{\ZCat}$ and develop their properties by explicit computation; \cite[p.~87]{JWMilnor2009}, and \cite[Sec.~2]{SEilenbergJCMoore1961} in abelian categories. Natural uniqueness of these constructions follows as in \cite[3.5]{CAWeibel1994}, which we also recommend for background on $\Lim$ and $\LimOne$.

\begin{proposition}[Construction of $\LimOver{\ZCat}$ and $\LimOneOver{\ZCat}$]
\label{thm:Lim^Z(A)Construction}
The limit of a $\ZCat$-diagram $A\from \ZCat\to \ModulesOver{R}$ may be constructed functorially as the kernel in this exact sequence:
\begin{equation*}
\xymatrix@R=5ex@C=1.5em{
0 \ar[r] &
	A_{-\infty} \ar@{{ |>}->}[rr] &&
	\FamPrdct{n\in\ZNr}{A_n} \ar[rr]^-{d} &&
	\FamPrdct{q\in\ZNr}{A_q} \ar@{-{ >>}}[rr]^-{\pi} &&
	\LimOneOf{A} \ar[r] &
	0
}
\end{equation*}
The $q$-th coordinate map of $d$ is $\PrjctnOnto{q}- (a_{q-1}\Comp \PrjctnOnto{q-1})\from \FamPrdct{n\in\ZNr}{A_n} \to A_q$, with  $a_{q-1}\from A_{q-1}\to A_{q}$, $q\in \ZNr$, are the structure maps of $A$, then . The universal maps of a limit cone are maps $\rho_p\from  A_{-\infty}\to A_p$ obtained by restricting the coordinate projections $\FamPrdct{n\in\ZNr}{A_n}\to A_p$ to $A_{-\infty}$. \NoProof
\end{proposition}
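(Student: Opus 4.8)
The plan is to prove the proposition in two independent halves: first that $\Ker{d}$, equipped with the restricted coordinate projections, is a model for $\LimOfOver{A}{\ZCat}$, and second that $\CoKer{d}$ is a model for $\LimOneOf{A}$. I would argue the first half directly from the universal property. An element of $\FamPrdct{n\in\ZNr}{A_n}$ is a family $(x_n)$, and the defining equations $\PrjctnOnto{q}(d(x)) = x_q - a_{q-1}(x_{q-1}) = 0$ say precisely that $(x_n)$ is a compatible thread, i.e. $a_{q-1}(x_{q-1}) = x_q$ for every $q$. Hence $\Ker{d}$ is exactly the object of cones over $A$ with one-point apex, and the restricted projections $\rho_p \DefEq \PrjctnOnto{p}|$ satisfy $a_p \Comp \rho_p = \rho_{p+1}$, so they form a cone. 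Given any cone $(f_p \from Z \to A_p)$ with $a_p \Comp f_p = f_{p+1}$, the assignment $z \mapsto (f_n(z))_n$ lands in $\Ker{d}$ and is the unique map $Z \to \Ker{d}$ commuting with the $\rho_p$; this is the universal property, so $\Ker{d} \cong \LimOfOver{A}{\ZCat}$ naturally, with the stated cone maps. (The reduction along the initial inclusion $\OrdOmegaOp \to \ZCat$ is not needed here, but confirms consistency with the earlier remark.)

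For the second half, the key observation is that $d$ is natural in $A$ and defined coordinatewise. I would package the construction as a two-term cochain complex $\Pi(A) \DefEq \left(\FamPrdct{n\in\ZNr}{A_n} \XRA{d} \FamPrdct{q\in\ZNr}{A_q}\right)$ in degrees $0$ and $1$, so that $\Ker{d} = H^0\Pi(A)$, $\CoKer{d} = H^1\Pi(A)$, and $H^{\geq 2} = 0$. Since products are exact in $\ModulesOver{R}$, any short exact sequence of $\ZCat$-diagrams $0 \to A \to B \to C \to 0$ yields a short exact sequence of complexes $0 \to \Pi(A) \to \Pi(B) \to \Pi(C) \to 0$, and the snake lemma produces the six-term exact sequence relating $H^0 = \LimOf{-}$ and $H^1$. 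This exhibits $(H^0, H^1, 0, \dots)$ as a cohomological $\delta$-functor agreeing with $\LimOf{-}$ in degree $0$.

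The main obstacle — and the only step where I would lean on the cited literature — is the identification $\CoKer{d} \cong \LimOneOf{A}$ as the genuine first right derived functor of $\LimOf{-}$. Having the $\delta$-functor, it suffices to show that $H^1\Pi(-)$ is effaceable, i.e. that every diagram $A$ admits a monomorphism $A \to J$ into a diagram with $H^1\Pi(J) = 0$; a convenient choice is a diagram whose structure maps are all epimorphisms, for which surjectivity of $d$ is a direct diagram chase (this is the Mittag-Leffler vanishing already anticipated in \ref{thm:Mittag-Leffler-Lim1=Zero}), together with the standard embedding of $A$ into such a diagram. Effaceability forces $(\Ker{d}, \CoKer{d})$ to be the universal extension of $\LimOf{-}$, hence naturally isomorphic to $(\LimOf{-}, \LimOneOf{-})$; this is exactly the uniqueness statement whose details are supplied by \cite[3.5]{CAWeibel1994}. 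I would therefore present the first half in full, derive the four-term exactness formally from $\Ker$/$\CoKer$, and reduce the identification of $\CoKer{d}$ to the effaceability argument, flagging the construction of the effacing diagram $J$ as the one genuinely fiddly computation.
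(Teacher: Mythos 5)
Your proposal is correct, but note that it supplies a proof where the paper supplies none: the proposition is stated with no proof, the paper treating the two-term complex as Steenrod's simultaneous construction of $\LimOver{\ZCat}$ and $\LimOneOver{\ZCat}$ and delegating the identification of the cokernel with the first derived functor to the cited literature (Milnor, Eilenberg--Moore, and \cite[3.5]{CAWeibel1994}). Your first half --- $\Ker{d}$ is the object of compatible threads, hence the limit by the universal property, with the restricted projections as cone maps --- is the standard computation and is complete as written. Your second half essentially reconstructs the argument hidden inside the Weibel citation: exactness of products in $\ModulesOver{R}$ plus the snake lemma makes $(H^0\Pi, H^1\Pi, 0, \dots)$ a cohomological $\delta$-functor, and effaceability then forces $H^1\Pi \cong \LimOne$. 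What this buys is a self-contained proof; what it costs is two inputs you should make explicit. First, for $R^1\LimOver{\ZCat}$ to exist and form a universal $\delta$-functor you need that the functor category $\FuncCat{\ZCat}{\ModulesOver{R}}$ has enough injectives (standard, but it is the silent hypothesis behind ``genuine first right derived functor''). Second, the effacing monomorphism, which you only gesture at, should be exhibited: take $J_p \DefEq \FamPrdct{n\geq p}{A_n}$ with structure map $J_{p-1}\to J_p$ the projection deleting the coordinate $p-1$, and embed $A_p\to J_p$ by $x\mapsto \left(a_{p,n}(x)\right)_{n\geq p}$, where $a_{p,n}\from A_p\to A_n$ denotes the composite structure map and $a_{p,p}=\IdMapOn{A_p}$; one checks this is a monomorphism of $\ZCat$-diagrams (the coordinate $n=p$ is the identity), and $J$ has split epimorphic structure maps, so $H^1\Pi(J)=0$ --- the correct internal reference for this vanishing is (\ref{thm:Lim^1=0IfStructureMapsAreEpimorphic}), not the Mittag-Leffler statement (\ref{thm:Mittag-Leffler-Lim1=Zero}), which is the more general condition. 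With those two points filled in, your argument is a complete and legitimate alternative to the paper's citation-only treatment.
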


\begin{proposition}[$6$-term exact $\Lim$ - $\LimOne$-sequence]
\label{thm:Lim-LimOne6TermExact Sequence}%
A short exact sequence (\ref{def:ExactSequenceZ-Diagrams}) $A\XRA{\alpha} B\XRA{\beta} C$ of $\ZCat$-diagrams yields the $6$-term exact sequence below.
\begin{equation*}
0\to A_{-\infty} \XRA{\alpha_{-\infty}} B_{-\infty} \XRA{\beta_{-\infty}} C_{-\infty} \XRA{\delta} \LimOneOf{A} \XRA{\alpha^{1}} \LimOneOf{B} \XRA{\beta^{1}} \LimOneOf{C} \to 0
\end{equation*}
This $6$-term sequence is functorial with respect to morphisms of short exact sequences of $\ZCat$-diagrams. \NoProof
\end{proposition}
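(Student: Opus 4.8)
The plan is to read the six-term sequence off the snake lemma applied to the two-term ``Milnor complexes'' furnished by (\ref{thm:Lim^Z(A)Construction}). There, for any $\ZCat$-diagram $A$ with structure maps $a_p$, both $A_{-\infty}=\LimOf{A}$ and $\LimOneOf{A}$ arise as the kernel and cokernel of the single map
$$d_A \from \FamPrdct{n\in\ZNr}{A_n} \longrightarrow \FamPrdct{q\in\ZNr}{A_q}, \qquad \text{$q$-th coordinate } \PrjctnOnto{q}-a_{q-1}\Comp \PrjctnOnto{q-1}.$$
Hence it suffices to set up this construction functorially in the diagram and then compare $A$, $B$, and $C$ all at once.

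First I would apply the product functors $\FamPrdct{n\in\ZNr}{(-)_n}$ and $\FamPrdct{q\in\ZNr}{(-)_q}$ to the levelwise short exact sequence $A\XRA{\alpha}B\XRA{\beta}C$. Since arbitrary products are exact in $\ModulesOver{R}$ and $\alpha,\beta$ are exact in every position (\ref{def:ExactSequenceZ-Diagrams}), both resulting rows
$$0\to \FamPrdct{n}{A_n}\to \FamPrdct{n}{B_n}\to \FamPrdct{n}{C_n}\to 0 \quad\text{and}\quad 0\to \FamPrdct{q}{A_q}\to \FamPrdct{q}{B_q}\to \FamPrdct{q}{C_q}\to 0$$
are short exact. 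The vertical maps $d_A,d_B,d_C$ organize these two rows into a commutative ladder; commutativity is precisely the assertion that $\alpha$ and $\beta$, being morphisms of $\ZCat$-diagrams, intertwine the structure maps $a_p,b_p,c_p$, and this is the one routine naturality check the argument needs.

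Second I would invoke the snake lemma for this ladder (equivalently, the long exact homology sequence of the induced short exact sequence of two-term complexes). Identifying kernels with $\LimOf{(-)}=(-)_{-\infty}$ and cokernels with $\LimOneOf{(-)}$ produces exactly
$$0\to A_{-\infty} \XRA{\alpha_{-\infty}} B_{-\infty} \XRA{\beta_{-\infty}} C_{-\infty} \XRA{\delta} \LimOneOf{A} \XRA{\alpha^{1}} \LimOneOf{B} \XRA{\beta^{1}} \LimOneOf{C}\to 0,$$
with $\delta$ the connecting homomorphism. The sequence closes off with $0$ at both ends because the two product rows are genuinely injective on the left and surjective on the right, so the long exact sequence truncates after these six terms. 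Functoriality with respect to morphisms of short exact sequences of $\ZCat$-diagrams is then inherited verbatim from the naturality of the snake lemma, since each $d_A,d_B,d_C$ depends functorially on its diagram.

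The only genuine obstacle is bookkeeping rather than mathematics: confirming that the snake connecting map $\delta$ coincides with the map one would write down by hand, and that no seventh term is needed. The absence of a higher obstruction is exactly the two-term nature of the Milnor complex, which is the concrete reason the derived functors $\LimDrvd{\geq 2}$ vanish; this is what allows the homology long exact sequence to stop at $\LimOneOf{C}\to 0$.
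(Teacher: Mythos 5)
Your proof is correct. The paper itself states this proposition without proof (it is marked as standard, with pointers to Eilenberg--Moore and Milnor), but the argument you give is exactly the one the paper's preparatory material is designed to support: Proposition (\ref{thm:Lim^Z(A)Construction}) packages $\LimOf{A}$ and $\LimOneOf{A}$ as the kernel and cokernel of the single map $d_A$ between products, products are exact in $\ModulesOver{R}$ so both rows of your ladder are short exact, naturality of $\alpha,\beta$ gives commutativity, and the snake lemma then delivers the six-term sequence with both terminal zeros, functorially. Your closing remark correctly identifies why nothing continues past $\LimOneOf{C}$: the Milnor complex has only two terms, which is also the reason $\LimDrvd{\geq 2}$ vanishes on $\ZCat$-diagrams.
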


Via (\ref{thm:Lim-LimOne6TermExact Sequence}) we see that the functor $\LimOver{\ZCat}$ is left exact but, in general, not exact. So, we look for conditions under which the lim-sequence $A_{-\infty}\to B_{-\infty}\to C_{-\infty}$ is short exact. Sufficient is: $\LimOneOf{A}=0$.

\begin{proposition}[Epimorphic structure maps yield $\LimOneOf{A}=0$]
\label{thm:Lim^1=0IfStructureMapsAreEpimorphic}
Suppose a $\ZCat$-diagram $A$ in $\ModulesOver{R}$ as epimorphic structure maps $a_p\from A_p\to A_{p+1}$. Then $\LimOneOf{A}=0$. 
\end{proposition}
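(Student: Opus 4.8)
The plan is to use the explicit Steenrod-style construction of $\LimOver{\ZCat}$ and $\LimOneOver{\ZCat}$ from Proposition~(\ref{thm:Lim^Z(A)Construction}), where $\LimOneOf{A}$ is realized as the cokernel of the map
\begin{equation*}
d\from \FamPrdct{n\in\ZNr}{A_n}\longrightarrow \FamPrdct{q\in\ZNr}{A_q},\qquad \PrjctnOnto{q}\Comp d = \PrjctnOnto{q} - \left(a_{q-1}\Comp \PrjctnOnto{q-1}\right).
\end{equation*}
Since $\LimOneOf{A}=\CoKer{d}$, it suffices to prove that $d$ is surjective. First I would recall that the inclusion $\OrdOmegaOp\to \ZCat$ is initial, so we may equally compute $\LimOneOf{A}$ from the restriction of $A$ to $\OrdOmegaOp$; but in fact the cleanest approach is to work directly with the displayed map $d$ on the full product over $\ZNr$ and show surjectivity by an explicit partial-sum construction.

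The key step is the surjectivity argument for $d$. Given a target element $(c_q)_{q\in\ZNr}\in \FamPrdct{q}{A_q}$, I want to produce $(x_n)_{n\in\ZNr}\in \FamPrdct{n}{A_n}$ with $x_q - a_{q-1}(x_{q-1}) = c_q$ for every $q$. The standard trick is to build the $x_n$ by transfinite-free recursion, fixing an arbitrary starting index and solving forward: if one sets the coordinates recursively via $x_q \DefEq c_q + a_{q-1}(x_{q-1})$, then the defining equation holds automatically at each stage $q$ above the starting index, while below it one exploits surjectivity of the structure maps to solve backward. Concretely, because each $a_{p}\from A_p\to A_{p+1}$ is an epimorphism, given $x_{q}$ one can choose a preimage to force $x_{q} - a_{q-1}(x_{q-1}) = c_{q}$, i.e. solve $a_{q-1}(x_{q-1}) = x_q - c_q$ for $x_{q-1}$. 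Thus surjectivity of the $a_p$ lets the recursion run in the decreasing direction, which is exactly what is needed to hit an arbitrary coordinate sequence $(c_q)$ indexed by all of $\ZNr$.

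The main obstacle to watch is the bookkeeping of the two directions of the $\ZCat$-index: unlike the $\OrdOmegaOp$ case where one merely solves downward from coordinate $0$, here the index set is unbounded in both directions, so one must be careful that the backward choices (using epimorphicity) and the forward determinations (using the recursion $x_q = c_q + a_{q-1}(x_{q-1})$) are mutually consistent and jointly define an element of the \emph{product} (no convergence or finiteness condition is required, since products impose none). I would phrase it by picking any base index, running the forward formula above that index and the epimorphism-driven backward solving below it, and then verifying that the resulting family satisfies $\PrjctnOnto{q}\Comp d\,(x_n)_n = c_q$ for all $q\in\ZNr$ simultaneously.

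Once $d$ is shown to be surjective, we conclude $\LimOneOf{A}=\CoKer{d}=0$, which is the claim.

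\begin{proof}
By (\ref{thm:Lim^Z(A)Construction}), $\LimOneOf{A}$ is the cokernel of the map $d\from \FamPrdct{n\in\ZNr}{A_n}\to \FamPrdct{q\in\ZNr}{A_q}$ whose $q$-th coordinate is $\PrjctnOnto{q}-(a_{q-1}\Comp \PrjctnOnto{q-1})$. It therefore suffices to show that $d$ is surjective. Let $(c_q)_{q\in\ZNr}$ be an arbitrary element of $\FamPrdct{q\in\ZNr}{A_q}$. We construct $(x_n)_{n\in\ZNr}\in \FamPrdct{n\in\ZNr}{A_n}$ with $d\,(x_n)_n=(c_q)_q$; that is,
\begin{equation*}
x_q - a_{q-1}(x_{q-1}) = c_q\qquad \text{for every } q\in\ZNr.
\end{equation*}
Choose $x_0\DefEq 0\in A_0$. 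For $q\geq 1$ define recursively $x_q\DefEq c_q + a_{q-1}(x_{q-1})$; then the required equation holds for all $q\geq 1$ by construction. For $q\leq 0$ we proceed downward: given $x_q$, the element $x_q - c_q$ lies in $A_q$, and since the structure map $a_{q-1}\from A_{q-1}\to A_q$ is an epimorphism, there exists $x_{q-1}\in A_{q-1}$ with $a_{q-1}(x_{q-1}) = x_q - c_q$, i.e. $x_q - a_{q-1}(x_{q-1}) = c_q$. This determines $x_{q-1}$ for all $q\leq 0$, and the defining equation holds at every such index. Hence the family $(x_n)_{n\in\ZNr}$ satisfies $d\,(x_n)_n = (c_q)_q$, so $d$ is surjective. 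Consequently $\LimOneOf{A} = \CoKer{d} = 0$.
\end{proof}
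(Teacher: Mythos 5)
Your proposal is correct and takes essentially the same approach as the paper's own proof: both realize $\LimOneOf{A}$ as $\CoKer{d}$ via the Steenrod construction and show $d$ is surjective by a two-sided recursion, solving forward with $x_q = c_q + a_{q-1}(x_{q-1})$ and backward by choosing preimages under the epimorphic structure maps. If anything your write-up is a touch cleaner, since the paper's base case ``$x_{-1}\DefEq -y_0$'' should really read $x_{-1}\in a_{-1}^{-1}(-y_0)$, exactly as your backward step does.
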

\begin{proof}
We must show that $d\from \FamPrdct{n\in\ZNr}{A_n}\to \FamPrdct{q\in\ZNr}{A_q}$ is surjective. So, let $y=(\dots,y_{-1},y_0,y_1,\dots )\in \Pi A_n$. Define $x=(\dots ,x_{-1},x_0,x_1,\dots )$ by setting
\begin{equation*}
x_{-1}\DefEq -y_0,\qquad x_0\DefEq 0,\qquad x_1\DefEq y_1,
\end{equation*}
followed by these two recursive processes: For $k\geq 1$, suppose $(x_{-k},\dots ,0,\dots ,x_k)$ has been defined with
\begin{equation*}
d|(x_{-k},\dots ,0,\dots ,x_k) = (y_{-k+1},\dots ,y_k).
\end{equation*}
Then choose $x_{-k-1}\in a_{-k-1}^{-1}(x_{-k}-y_{-k})$, and put $x_{k+1}\DefEq y_{k+1}+a_k(x_k)$. The resulting $x\in \Pi A_n$ satisfies $d(x)=y$, and the claim follows.
\end{proof}

After more preparation in the next section, we will come back to conditions under which $\LimOver{\ZCat}$ is exact.

\section{Internal Image Factorization of $\ZCat$-Diagrams}
\label{sec:ImageFactorization-Z-Diagrams}

Given a $\ZCat$-diagram $(A,a)$, we analyze properties of its image and quotient subdiagrams. This development serves two purposes: (a) to gain deeper understanding of $\LimOne$-terms and, as a consequence: (b) to analyze the transfinite stabilization of the spectral sequence associated to a $(\Prdct{\ZNr}{\ZNr})$-bigraded exact couple in Sections \ref{sec:ExactCouples}, \ref{sec:Convergence-I}, \ref{sec:ConvergenceII}. We build upon \cite[Sec.~2]{SEilenbergJCMoore1961} and \cite[Sec.~3]{JMBoardman1999}.

A key role in what follows is played by the tautological morphism $\hat{a}\from A_{\ast}\to A_{\ast+1}$. In the diagram below, the kernel and cokernel of $\hat{a}$ are denoted $KA$ and $RA$, respectively.
\begin{equation*}
\vcenter{\tag{IQ}\label{eq:ImageQuotient-Diagrams}%
\xymatrix@R=7ex@C=4em{
\ZDiagKer{}{p-1}{A} \ar@{{ |>}->}[r] \ar[d]_{0} &
	A_{p-1} \ar@{-{ >>}}[r] \ar@/^3ex/[rr]^-{a_{p-1}} \ar[d]_{a_{p-1}} &
	\ZDiagQuo{}{p-1}{A} = \ZDiagImg{}{p}{A} \ar@{{ |>}->}[r] \ar@<-5ex>[d]_{\ZDiagQuo{}{p-1}{a}} \ar@<+5ex>[d]^{\ZDiagImg{}{p}{a}} &
	A_p \ar@{-{ >>}}[r] \ar[d]^{a_p} &
	\ZDiagCoKer{}{p}{A} \ar[d]^{0} \\
\ZDiagKer{}{p}{A} \ar@{{ |>}->}[r] &
	A_{p} \ar@{-{ >>}}[r] \ar@/_3ex/[rr]_-{a_{p}} &
	\ZDiagQuo{}{p}{A} = \ZDiagImg{}{p+1}{A} \ar@{{ |>}->}[r] &
	A_{p+1} \ar@{-{ >>}}[r] &
	\ZDiagCoKer{}{p+1}{A}
} }
\end{equation*}
The image factorizations of the maps $a_p$ yield a $\ZCat$-diagram which has two distinct relationships to $A$ as indicated:

\begin{definition}[Image subdiagram / image quotient diagram of a $\ZCat$-diagram]
\label{def:Image/QuotientSubdiagrams}
In the setting described above:
\vspace{-2ex}
\begin{enumerate}[$\bullet$]
\item the epic map of $\ZCat$-diagrams $A\to \ZDiagQuo{}{}{A}$ is called the \Defn{image quotient diagram} of $A$ , and %
\index{image!quotient diagram}\index[not]{$\ZDiagQuo{}{}{A}$ - image quotient diagram of $A$}%
\item the kernel map $\ZDiagImg{}{}{A}\to A$ is called the \Defn{image subdiagram} of $A$. %
\index{image!subdiagram}\index[not]{$\ZDiagImg{}{}{A}$ - image subdiagram of $A$}%
\end{enumerate} 
\end{definition}

\begin{remark}[Image subdiagram diagram vs. image quotient diagram]
\label{rem:ImageSub/Quo-Diagrams}%
Starting from a $\ZCat$-diagram $A$, the two $\ZCat$-diagrams $\ZDiagQuo{}{}{A}$ and $\ZDiagImg{}{}{A}$ are the same up to a shift in positions. So, it may seem redundant to introduce distinguishing notation. However, the constructions $Q$ and $I$ have different outcomes under iteration. Further, $\ZDiagQuo{}{}{A}$ is related to $A$ via a cokernel, while $\ZDiagImg{}{}{A}$ is related to $A$ via a kernel. Thus, when taking taking limits, respectively colimits, of $\ZCat$-diagrams, the image quotient diagrams and the image subdiagrams respond differently. This distinction will prove to be essential.
\end{remark}

\begin{proposition}[Image quotient operation: properties]
\label{thm:ImageQuotient-Props}%
\label{thm:Q^(omega)_pA-All-tau}
Given a $\ZCat$-diagram $A$ and $r\geq 1$, let $\ZDiagQuo{r+1}{}{A}\DefEq \ZDiagQuo{}{}{\left( \ZDiagQuo{r}{}{A} \right)}$. Then the following hold:
\vspace{-2ex}
\begin{enumerate}[(i)]
\item If the structure maps of $A$ are monomorphisms, then $A\to \ZDiagQuo{}{}{A}$ is an isomorphism.
\label{exa:Q^(tau)A-MonomorphicStructureMaps}
\item For $r\geq 1$ and $p\in \ZNr$
\begin{equation*}
A_p \longrightarrow \ZDiagQuo{r}{p}{A}\quad \text{is given by}\quad A_p \longrightarrow \Img{A_p\to A_{p+1}\to \cdots \to A_{p+r}}
\end{equation*}
\item If $\ZDiagQuo{\omega}{}{A}\DefEq \CoLimOf{A\to \ZDiagQuo{}{}{A}\to \cdots \ZDiagQuo{r}{}{A}\to \cdots}$, then $\ZDiagQuo{\omega}{p}{A}\cong \Img{A_p\to \CoLimOf{A}}$ for every $p\in \ZNr$.
\item $\ZDiagQuo{\omega}{}{A}$ has monomorphic structure maps $\ZDiagQuo{\omega}{p}{a}\from \ZDiagQuo{\omega}{p}{A}\to \ZDiagQuo{\omega}{p+1}{A}$.
\end{enumerate}
\end{proposition}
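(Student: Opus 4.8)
The plan is to establish the four claims in sequence, exploiting the characterization of $\ZDiagQuo{r}{p}{A}$ as the image of the $r$-fold iterated structure map, which is the natural bridge between the purely categorical iteration $\ZDiagQuo{r}{}{}$ and the concrete image-submodule description that ultimately yields monomorphic structure maps in the colimit.

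For claim (i), I would unwind the definition from diagram \eqref{eq:ImageQuotient-Diagrams}: the map $A \to \ZDiagQuo{}{}{A}$ is the epic part of the image factorization of the structure maps $a_p$. If every $a_p$ is a monomorphism, then each factorization $A_p \twoheadrightarrow \ZDiagImg{}{p+1}{A} \hookrightarrow A_{p+1}$ has the property that the epic part is also monic, hence an isomorphism; so $A_p \to \ZDiagQuo{}{p}{A}$ is an isomorphism for every $p$, which is exactly the assertion. For claim (ii), I would argue by induction on $r$. The case $r=1$ is the definition of the image quotient: $A_p \to \ZDiagQuo{}{p}{A} = \Img{A_p \to A_{p+1}}$. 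For the inductive step, I would apply $\ZDiagQuo{}{}{}$ to $\ZDiagQuo{r}{}{A}$ and observe that the structure map of $\ZDiagQuo{r}{}{A}$ in position $p$ is the corestriction of $a_{p+r}$ to the relevant images; taking its image reproduces $\Img{A_p \to \cdots \to A_{p+r+1}}$. The one point requiring care is checking that the image of a composite is computed correctly through the iterated construction, i.e. that $\Img{\Img{A_p \to A_{p+r}} \to A_{p+r+1}} = \Img{A_p \to A_{p+r+1}}$, which holds because the first map is surjective onto its image.

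For claim (iii), I would use that $\CoLimOver{\OrdOmega}$ is exact (\ref{thm:CoLim^ZExactFunctor}), together with (\ref{thm:CoLim^ZCat-InModules-CommutesKer/Img}), which tells us images commute with $\CoLimOver{\ZCat}$. Forming $\ZDiagQuo{\omega}{p}{A} = \CoLimOf{ A_p \to \ZDiagQuo{}{p}{A}\to \cdots}$ along the increasing images $\Img{A_p \to A_{p+r}}$, and using claim (ii) to identify the $r$-th term, the colimit over $r$ of these nested images inside $A_{p+r}$ maps compatibly to $\CoLimOf{A}$; the universal property of the colimit identifies $\ZDiagQuo{\omega}{p}{A}$ with $\Img{A_p \to \CoLimOf{A}}$. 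The main obstacle lies here: one must carefully reconcile the \emph{transition} maps of the tower $(\ZDiagQuo{r}{p}{A})_r$ — which shift the target index — with the fixed cocone maps $A_p \to \CoLimOf{A}$, so that the colimit genuinely computes the image of the single map $A_p \to \CoLimOf{A}$ rather than some index-shifted variant. I would resolve this by noting that all the images $\Img{A_p \to A_{p+r}}$ sit inside a single cofinal tower whose colimit maps isomorphically onto $\Img{A_p \to X_\infty}$ via (\ref{thm:CoLim(ZCatDiagram)-Properties}): an element of $A_p$ dies in $\Img{A_p \to A_{p+r}}$ for large $r$ precisely when it dies in the colimit.

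Finally, for claim (iv), the structure map $\ZDiagQuo{\omega}{p}{a}$ is the restriction of the structure map of $\CoLimOf{A}$ (the identity, since $X_\infty$ is constant along the cocone) to the submodule $\Img{A_p \to X_\infty}$, landing in $\Img{A_{p+1} \to X_\infty}$. Since both are submodules of the \emph{same} module $X_\infty = \CoLimOf{A}$ and the first is contained in the second, the structure map is simply the inclusion, hence a monomorphism. This gives (iv) and, via (i), reconfirms the internal consistency: $\ZDiagQuo{\omega}{}{A}$ is a fixed point of the image quotient operation.
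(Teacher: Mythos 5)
Your proposal is correct and takes essentially the same route as the paper's proof: (i) read off directly from the epi--mono factorization, (ii) by induction on $r$ using that the image of $\ZDiagQuo{r}{p}{a}$ equals the image of its precomposition with the surjection $A_p \twoheadrightarrow \ZDiagQuo{r}{p}{A}$, (iii) by passing the factorizations $A_p \twoheadrightarrow \ZDiagQuo{r}{p}{A} \hookrightarrow A_{p+r}$ through the exact $\CoLimOver{\omega}$, and (iv) by recognizing $\ZDiagQuo{\omega}{p}{a}$ as an inclusion of submodules of $\CoLimOf{A}$. The only cosmetic difference is in (iii), where the paper applies preservation of image factorizations (\ref{thm:CoLim^ZCat-InModules-CommutesKer/Img}) to the whole $\omega$-diagram at once, while you justify the same identification element-wise via (\ref{thm:CoLim(ZCatDiagram)-Properties}); the substance is identical.
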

\begin{proof}
(i) is checks directly. (ii) follows by induction. For $r=1$, the claim holds by definition. If the claim holds for $r\geq 1$, then we face this map of $\ZCat$-diagrams:
\begin{equation*}
\xymatrix@R=5ex@C=1em{
\vdots \ar[d] && \vdots \ar[d] \\
A_p \ar@{-{ >>}}[rr]^-{\xi_{p}} \ar[d]_{a_p} &&
	\ZDiagQuo{r}{p}{A} \ar@{=}[r] \ar[d]^{\ZDiagQuo{r}{p}{a}} &
	\Img{a_{p+r-1}\Comp \cdots\Comp a_p} \\
A_{p+1} \ar@{-{ >>}}[rr]_-{\xi_{p+1}} \ar[d] &&
	\ZDiagQuo{r}{p+1}{A} \ar@{=}[r] \ar[d] &
	\Img{a_{p+r}\Comp \cdots\Comp a_{p+1}} \\
\vdots && \vdots 
}
\end{equation*}
Then $\ZDiagQuo{r}{p}{a}$ is the factorization of $\xi_{p+1}a_p$ through  $\xi_p$. Since $\xi_p$ is surjective, we have
\begin{equation*}
Q_p(\ZDiagQuo{r}{}{A}) = \Img{\ZDiagQuo{r}{p}{a}} = \Img{\ZDiagQuo{r}{p}{a}\Comp \xi_p} = \Img{\xi_{p+1}\Comp a_p} = \Img{a_{p+r}\Comp \cdots \Comp a_{p+1}\Comp a_p} = \ZDiagQuo{r+1}{p}{A}.
\end{equation*}
To see (iii), consider the $\omega$-diagram of image factorizations:
\begin{equation*}
\xymatrix@R=4ex@C=4em{
A_p \ar@{-{ >>}}[r] \ar@{=}[d] &
	\ZDiagQuo{r}{p}{A} \ar@{{ |>}->}[r] \ar@{-{ >>}}[d] &
	A_{p+r} \ar[d]^{a_{p+r}} \\
A_p \ar@{-{ >>}}[r] \ar@{=}[d] &
	\ZDiagQuo{r+1}{p}{A} \ar@{{ |>}->}[r] \ar@{-{ >>}}[d] &
	A_{p+r+1} \ar[d] \\
\vdots & \vdots & \vdots 
}
\end{equation*}
As $\CoLimOver{\omega}$ preserves image factorizations (\ref{thm:CoLim^ZCat-InModules-CommutesKer/Img}), we see that the cocone map $A_p\to \CoLimOf{A}$ has the image factorization
\begin{equation*}
\xymatrix@R=5ex@C=4em{
A_p \ar@{-{ >>}}[r] &
	\ZDiagQuo{\omega}{p}{A} \ar@{{ |>}->}[r] &
	\CoLimOf{A}
}
\end{equation*}
(iv) follows by commutativity applied to the diagrams
\begin{equation*}
\xymatrix@R=5ex@C=4em{
A_p \ar@{-{ >>}}[r] \ar[d]_{a_p} &
	\ZDiagQuo{\omega}{p}{A} \ar@{{ |>}->}[r] \ar[d]^{\ZDiagQuo{\omega}{p}{a}} &
	\CoLimOf{A} \ar@{=}[d] \\
A_{p+1} \ar@{-{ >>}}[r] &
	\ZDiagQuo{\omega}{p+1}{A} \ar@{{ |>}->}[r] &
	\CoLimOf{A}
}
\end{equation*}
This was to be shown.
\end{proof}

\begin{proposition}[Image subdiagram operation: properties]
\label{thm:ImageSubDiagram-Props}
Given a $\ZCat$-diagram $A$ and $r\geq 1$, let $\ZDiagImg{r+1}{}{A}\DefEq \ZDiagImg{}{}{\left( \ZDiagImg{r}{}{A} \right)}$. Then the following hold:
\begin{enumerate}[(i)]
\item If the structure maps of $A$ are surjective, then $\ZDiagImg{}{}{A}\to A$ is an isomorphism.
\item For $r\geq 1$ and $p\in \ZNr$
\begin{equation*}
\ZDiagImg{r}{p}{A}\longrightarrow A_p\quad \text{is given by}\quad \Img{A_{p-r}\to A_{p-r+1}\to \cdots \to A_{p}} \longrightarrow A_p
\end{equation*}
\end{enumerate}
\end{proposition}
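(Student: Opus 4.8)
The statement to prove is Proposition \ref{thm:ImageSubDiagram-Props}, the dual of the image quotient properties \ref{thm:Q^(omega)_pA-All-tau}. My plan is to mirror exactly the proof just given for the image quotient operation, dualizing kernels to cokernels and colimits to limits where appropriate — though in fact for parts (i) and (ii) no limits are needed at all, so the argument is purely an internal diagram-chase within the image factorizations displayed in \eqref{eq:ImageQuotient-Diagrams}.

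For part (i), I would argue directly: if every structure map $a_p\from A_p\to A_{p+1}$ is surjective, then in the image factorization $A_p\twoheadrightarrow \ZDiagImg{}{p+1}{A}\hookrightarrow A_{p+1}$ the monic part $\ZDiagImg{}{p+1}{A}\to A_{p+1}$ is an isomorphism, since $\Img{a_p}=A_{p+1}$. As this holds in every position, the kernel map $\ZDiagImg{}{}{A}\to A$ is an isomorphism of $\ZCat$-diagrams. This is the exact mirror of \ref{thm:Q^(omega)_pA-All-tau}.i (monomorphic structure maps $\Rightarrow$ $A\to\ZDiagQuo{}{}{A}$ iso), and it is essentially immediate.

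For part (ii), I would proceed by induction on $r$, in parallel with the proof of \ref{thm:Q^(omega)_pA-All-tau}.ii. For $r=1$ the claim is the definition of the image subdiagram. Assuming the statement for $r\geq 1$, so that $\ZDiagImg{r}{p}{A}\to A_p$ is the inclusion $\Img{a_{p-1}\Comp\cdots\Comp a_{p-r}}\hookrightarrow A_p$, I would display the map of $\ZCat$-diagrams $\ZDiagImg{r}{}{A}\to A$ and take one more image factorization of its structure map $\ZDiagImg{r}{p-1}{a}$. The key point is that the monic part $\ZDiagImg{r}{}{A}\to A$ lets me identify the image of the composite structure map of $\ZDiagImg{r}{}{A}$ with $\Img{a_{p-1}\Comp\cdots\Comp a_{p-r-1}}$ inside $A_p$; concretely, $\ZDiagImg{r+1}{p}{A}=\Img{\ZDiagImg{r}{p-1}{a}}$, and composing with the inclusion $\ZDiagImg{r}{p-1}{A}\hookrightarrow A_{p-1}$ together with $a_{p-1}$ shows this equals $\Img{a_{p-1}\Comp\cdots\Comp a_{p-(r+1)}}\hookrightarrow A_p$. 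This is the precise dual of the computation $Q_p(\ZDiagQuo{r}{}{A})=\Img{a_{p+r}\Comp\cdots\Comp a_p}=\ZDiagQuo{r+1}{p}{A}$ in the earlier proof, using surjectivity of the epic part of the factorization there versus injectivity of the monic part here.

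The only mild subtlety — and the step I would be most careful about — is bookkeeping the index shift: the image subdiagram $I$ records images of backward composites reaching into position $p$ (indices $p-r$ through $p$), whereas the image quotient diagram $Q$ records images of forward composites leaving position $p$; Remark \ref{rem:ImageSub/Quo-Diagrams} warns that $Q$ and $I$ differ precisely by this shift and behave differently under (co-)limits. So while the algebra is routine, I would verify the base and inductive indices against \eqref{eq:ImageQuotient-Diagrams} explicitly rather than by blind dualization. Since the proposition as stated contains only parts (i) and (ii), no limit or $\LimOne$ analysis is required, so there is no genuine obstacle beyond this index accounting.

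\begin{proof}
(i) If the structure maps $a_p\from A_p\to A_{p+1}$ are surjective, then in each image factorization $A_p\twoheadrightarrow \ZDiagImg{}{p+1}{A}\hookrightarrow A_{p+1}$ the monic part is an isomorphism, as $\Img{a_p}=A_{p+1}$. Hence the kernel map $\ZDiagImg{}{}{A}\to A$ is an isomorphism.

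(ii) We argue by induction on $r$. For $r=1$ the claim is the definition of $\ZDiagImg{}{}{A}$. Assume the claim holds for $r\geq 1$, so that $\ZDiagImg{r}{p}{A}\to A_p$ is the inclusion $\Img{a_{p-1}\Comp\cdots\Comp a_{p-r}}\hookrightarrow A_p$. By definition $\ZDiagImg{r+1}{p}{A}=\Img{\ZDiagImg{r}{p-1}{a}\from \ZDiagImg{r}{p-1}{A}\to \ZDiagImg{r}{p}{A}}$. Composing with the monic part $\ZDiagImg{r}{p}{A}\hookrightarrow A_p$ and using that $\ZDiagImg{r}{p-1}{A}=\Img{a_{p-2}\Comp\cdots\Comp a_{p-1-r}}$ maps into $A_{p-1}$ followed by $a_{p-1}$, we obtain
\begin{equation*}
\ZDiagImg{r+1}{p}{A} = \Img{a_{p-1}\Comp a_{p-2}\Comp\cdots\Comp a_{p-1-r}} = \Img{A_{p-(r+1)}\to\cdots\to A_p}\hookrightarrow A_p.
\end{equation*}
This completes the induction.
\end{proof}
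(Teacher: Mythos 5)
Your proof is correct and follows essentially the same route as the paper: part (ii) by induction on $r$, computing $\ZDiagImg{r+1}{p}{A}=\Img{\ZDiagImg{r}{p-1}{a}}$ inside $A_p$ via the monic inclusion $\eta_p$ and identifying it with $\Img{a_{p-1}\Comp\cdots\Comp a_{p-(r+1)}}$, with matching index bookkeeping. Your part (i) simply fills in the detail the paper dismisses as ``checks directly,'' and it does so correctly.
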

(i) Checks directly. We prove (ii) by induction. For $r=1$, the claim holds by definition. If the claim holds for $r\geq 1$, then we face this map of $\ZCat$-diagrams:
\begin{equation*}
\xymatrix@R=5ex@C=2em{
\Img{a_{p-2}\Comp \cdots \Comp a_{p-r-1}} \ar@{=}[r] &
	\ZDiagImg{r}{p-1}{A} \ar@{{ |>}->}[rr]^-{\eta_{p-1}} \ar[d]_{\ZDiagImg{r}{p-1}{a}} &&
	A_{p-1} \ar[d]^{a_{p-1}} \\
\Img{a_{p-1}\Comp \cdots \Comp a_{p-r}} \ar@{=}[r] &
	\ZDiagImg{r}{p}{A} \ar@{{ |>}->}[rr]_-{\eta_{p}} &&
	A_{p}
}
\end{equation*}
Then $\ZDiagImg{r}{p-1}{a}$ is the lift of $a_{p-1}\Comp \eta_{p-1}$ to $\eta_{p}$. Via the inclusion $\eta_p$, we find:
\begin{equation*}
\begin{aligned}
I_p\left(\ZDiagImg{r}{}{A}\right) &= \Img{\ZDiagImg{r}{p-1}{a}} \\
	&= \Img{\eta_p\Comp \ZDiagImg{r}{p-1}{a}} \\
	&= \Img{a_{p-1}\Comp \eta_{p-1}} \\
	&= a_{p-1}(\Img{a_{p-2}\Comp \cdots \Comp a_{p-r-1}}) \\
	&= \Img{a_{p-1}\Comp a_{p-2}\Comp \cdots \Comp a_{p-r-1}} \\
	&= \ZDiagImg{r+1}{p}{A}
\end{aligned}
\end{equation*}
This was to be shown.

We are now ready to iterate the operations 'image quotient diagram' and 'image subdiagram' via transfinite recursion:

\begin{definition}[Iterated image quotient/sub-diagrams $\ZDiagQuo{\tau}{}{A}$ and $\ZDiagImg{\tau}{}{A}$]
\label{def:ImageQuotientSubDiagrams}%
Given a $\ZCat$-diagram $A$, let $\ZDiagQuo{0}{}{A}=A=\ZDiagImg{0}{}{A}$. For a successor ordinal $\tau$, define
\begin{equation*}
\ZDiagQuo{\tau}{}{A}\DefEq Q(\ZDiagQuo{\tau-1}{}{A})  \qquad \text{and}\qquad \ZDiagImg{\tau}{}{A}\DefEq I(\ZDiagImg{\tau-1}{}{A})
\end{equation*}
While for a limit ordinal $\tau$, define
\begin{equation*}
\ZDiagQuo{\tau}{}{A} \DefEq \CoLimOfOver{A \to \cdots \to \ZDiagQuo{\sigma}{}{A}\to \cdots}{\sigma<\tau}  \qquad \text{and}\qquad \ZDiagImg{\tau}{}{A} \DefEq \LimOfOver{\cdots \to \ZDiagImg{\sigma}{}{A}\to \cdots \to A}{\sigma<\tau}
\end{equation*}
These are, respectively, the \Defn{$\tau$-th image quotient diagram} and the \Defn{$\tau$-th image subdiagram of $A$}. %
\index{image!quotient diagram}\index{image!subdiagram}%
\index[not]{$\ZDiagQuo{\tau}{}{A}$ - $\tau$-th image quotient diagram}\index[not]{$\ZDiagImg{\tau}{}{A}$ - $\tau$-th image subdiagram}%
\end{definition}

For cardinality reasons, both image diagrams must stabilize at a sufficiently large ordinal. We will see in a moment that the image quotient diagram $\ZDiagQuo{\omega}{}{A}$ is stable for every $A$. For image subdiagrams the situation is far more complicated: \cite[p.~10]{JMBoardman1999} explains that, for any ordinal $\tau$ there exists a $\ZCat$-diagram $A$ for which the image subdiagram $\ZDiagImg{\tau}{}{A}=0$ while $\ZDiagImg{\alpha}{}{A}\neq 0$ for any $\alpha<\tau$.

\begin{proposition}[$\ZDiagQuo{\omega}{}{A}$ is stable]
\label{thm:Q^omega(A)-Stable}
For every $\ZCat$-diagram of $R$-modules and every ordinal $\tau>\omega$, the map $\ZDiagQuo{\omega}{}{A}\to \ZDiagQuo{\tau}{}{A}$ is an isomorphism.
\end{proposition}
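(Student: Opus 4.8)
The plan is to show that the map $\ZDiagQuo{\omega}{}{A}\to \ZDiagQuo{\tau}{}{A}$ is an isomorphism for every $\tau>\omega$ by reducing everything to the key fact established in (\ref{thm:Q^(omega)_pA-All-tau}.iv): namely that $\ZDiagQuo{\omega}{}{A}$ already has monomorphic structure maps. The governing principle is part (i) of the same proposition, which says that if a $\ZCat$-diagram has monomorphic structure maps, then applying the operation $Q$ returns that diagram up to canonical isomorphism. So the heart of the argument is that applying $Q$ (and, at limit stages, the associated colimit) to a diagram with monic structure maps does nothing.

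First I would treat the base step $\tau=\omega+1$. By definition $\ZDiagQuo{\omega+1}{}{A}=Q(\ZDiagQuo{\omega}{}{A})$, and since $\ZDiagQuo{\omega}{}{A}$ has monomorphic structure maps by (\ref{thm:Q^(omega)_pA-All-tau}.iv), part (i) of the same proposition gives that $\ZDiagQuo{\omega}{}{A}\to Q(\ZDiagQuo{\omega}{}{A})=\ZDiagQuo{\omega+1}{}{A}$ is an isomorphism. This already shows the image quotient construction stabilizes at $\omega$.

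Next I would run a transfinite induction on $\tau>\omega$, with the inductive hypothesis that $\ZDiagQuo{\omega}{}{A}\to \ZDiagQuo{\sigma}{}{A}$ is an isomorphism for all $\omega\leq \sigma<\tau$; in particular each $\ZDiagQuo{\sigma}{}{A}$ still has monomorphic structure maps. For a successor $\tau=\sigma+1$, I apply $Q$ to the isomorphism $\ZDiagQuo{\omega}{}{A}\cong \ZDiagQuo{\sigma}{}{A}$ and use part (i) once more: since $\ZDiagQuo{\sigma}{}{A}$ has monic structure maps, $\ZDiagQuo{\sigma}{}{A}\to Q(\ZDiagQuo{\sigma}{}{A})=\ZDiagQuo{\tau}{}{A}$ is an isomorphism, and composing with the inductive isomorphism gives the claim. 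For a limit ordinal $\tau$, the diagram $\ZDiagQuo{\tau}{}{A}$ is the colimit of the system $\ZDiagQuo{\sigma}{}{A}$ for $\sigma<\tau$; by the inductive hypothesis all transition maps in the cofinal tail $\omega\leq \sigma<\tau$ are isomorphisms, so the colimit is canonically isomorphic to $\ZDiagQuo{\omega}{}{A}$, and the structure maps of the colimit remain monic.

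The step I expect to be the main obstacle is the limit-ordinal case, because it requires knowing that the transition maps from $\ZDiagQuo{\omega}{}{A}$ onward are all isomorphisms \emph{before} passing to the colimit, and that the colimit over a system whose tail is constant (up to isomorphism) recovers that constant value. The cleanest way to handle this is to observe that the inductive hypothesis makes the whole subsystem indexed by $\omega\leq \sigma<\tau$ cofinal and consisting of isomorphisms, so the colimit is computed on this cofinal tail and equals $\ZDiagQuo{\omega}{}{A}$; here I rely on the standard fact that a colimit over a directed system is unchanged by restriction to a cofinal subsystem. With that, the induction closes and the proposition follows.
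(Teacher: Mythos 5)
Your proposal is correct and follows essentially the same route as the paper: the paper's proof also combines (\ref{thm:ImageQuotient-Props}.iv), that $\ZDiagQuo{\omega}{}{A}$ has monomorphic structure maps, with (\ref{thm:ImageQuotient-Props}.i), that $Q$ acts as the identity (up to isomorphism) on diagrams with monic structure maps. The only difference is that the paper states ``the claim follows'' while you spell out the transfinite induction explicitly, including the successor and limit-ordinal (cofinality) steps, which is a faithful filling-in of the details the paper leaves implicit.
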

\begin{proof}
We know that $\ZDiagQuo{\omega}{}{A}$ has monomorphic structure maps; see (\ref{thm:ImageQuotient-Props}.iv). Via (\ref{thm:ImageQuotient-Props}.i) the claim follows.
\end{proof}

\begin{definition}[Mittag-Leffler condition / co-Mittag-Leffler condition]
\label{def:Mittag-Leffler/CoMittag-Leffler-Condition}%
A $\ZCat$-diagram $A$ in $\LModules{R}$ satisfies the \Defn{Mittag-Leffler condition} if, for each $p\in\ZNr$, there exists $k\in \NNr$ such that image subobjects satisfy $\ZDiagImg{\tau}{p}{A}=\ZDiagImg{k}{p}{A}$ for all $\tau\geq k$. Similarly, $A$ satisfies the \Defn{co-Mittag-Leffler condition} if, for each $p\in\ZNr$, there exists $k\in \NNr$ such that image quotients satisfy $\ZDiagQuo{\tau}{p}{A}=\ZDiagQuo{k}{p}{A}$ for all $\tau\geq k$, .
\index{Mittag-Leffler condition}\index{co-Mittag-Leffler condition}
\end{definition}

\begin{proposition}[Sufficient conditions for the co-Mittag-Leffler condition]
\label{thm:CoMittagLefflerExamples}
A $\ZCat$-diagram $A$ has the co-Mittag-Leffler property whenever any one of the following conditions holds.
\vspace{-2ex}
\begin{enumerate}[(i)]
\item A terminal segment of $A$ has injective structure maps (e.g. $A$ is eventually stable).
\item The objects in $A$ satisfy the ascending chain condition (i.e. every strictly ascending chain of subobjects of $A$ stabilizes after finitely many steps). For example, this happens whenever each  $A_p$ is a finitely generated module over a Noetherian ring; see \cite[p.~415]{SLang2002}.
\item For each object $A_p$ there exists $k\geq 0$ such that the image of $A_{p} \to A_{p+k}$ satisfies the ascending chain condition. \NoProof
\end{enumerate}
\end{proposition}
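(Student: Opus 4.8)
The final statement is Proposition \ref{thm:CoMittagLefflerExamples}, giving three sufficient conditions for the co-Mittag-Leffler property. The plan is to exploit the explicit description of the image quotient diagrams from (\ref{thm:ImageQuotient-Props}.ii), namely that $\ZDiagQuo{r}{p}{A} = \Img{A_p\to A_{p+r}}$, and to reduce the co-Mittag-Leffler condition to a statement about when the descending chain of these images stabilizes at each fixed position $p$. The central observation is that the structure maps $\ZDiagQuo{r}{p}{a}$ in the image quotient diagram are \emph{epimorphisms}, so the co-Mittag-Leffler condition concerns the \emph{descending} chain $\ZDiagQuo{1}{p}{A}\supseteq \ZDiagQuo{2}{p}{A}\supseteq\cdots$ of quotient objects at each $p$.

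First I would handle (i). If a terminal segment of $A$ has injective structure maps, then for each fixed $p$ the maps $A_{p+r}\to A_{p+r+1}$ are eventually monic, so the comparison maps between successive image quotients $\ZDiagQuo{r}{p}{A}\to \ZDiagQuo{r+1}{p}{A}$, which are the surjections induced on images, become isomorphisms once $r$ is large enough that all the intervening structure maps are injective. This gives $\ZDiagQuo{\tau}{p}{A}=\ZDiagQuo{k}{p}{A}$ for large $\tau$, which is precisely the co-Mittag-Leffler condition. The eventually stable case is the special instance where the terminal structure maps are identities. Next, for (ii), I would argue that under the ascending chain condition on subobjects, the kernels $\Ker{A_p\to A_{p+r}}$ form an ascending chain of submodules of $A_p$ indexed by $r$, which must stabilize; since $\ZDiagQuo{r}{p}{A}\cong A_p/\Ker{A_p\to A_{p+r}}$, stabilization of the kernels yields stabilization of the image quotients at position $p$.

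For (iii), the plan is to reduce to the same kernel-stabilization argument but localized: the hypothesis provides, for each $p$, some $k$ such that $\Img{A_p\to A_{p+k}}$ satisfies the ascending chain condition. I would then observe that for $r\geq k$ the image $\ZDiagQuo{r}{p}{A}=\Img{A_p\to A_{p+r}}$ is a quotient of $\Img{A_p\to A_{p+k}}$ (equivalently, the relevant kernels form an ascending chain inside this image-controlled module), so the descending chain of image quotients past stage $k$ stabilizes by the localized ascending chain condition.

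The main obstacle I anticipate is bookkeeping the direction of the inclusions correctly: the image quotient diagram has surjective structure maps, and I must be careful that ``stabilization'' of $\ZDiagQuo{\tau}{p}{A}$ is read off from an \emph{ascending} chain of kernels (hence an ACC hypothesis) rather than a descending chain, which would instead require DCC. Matching each hypothesis to the correct chain-condition direction is the delicate point; once that correspondence is pinned down via the isomorphism $\ZDiagQuo{r}{p}{A}\cong A_p/\Ker{A_p\to A_{p+r}}$, each of the three arguments is routine. Since (i), (ii), (iii) are independent sufficient conditions, no interaction between them needs to be addressed, and the proof can be presented as three short self-contained verifications.
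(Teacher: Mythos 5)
The paper states this proposition without proof (it carries the $\lozenge$ marker), so there is no argument of the paper to compare yours against; judged on its own terms, your proposal is correct. The mechanism you identify is the right one: under the identification $\ZDiagQuo{r}{p}{A}\cong A_p/\Ker{A_p\to A_{p+r}}$ from (\ref{thm:ImageQuotient-Props}), the co-Mittag-Leffler condition at $p$ is precisely finite stabilization of the ascending chain of kernels $\Ker{A_p\to A_{p+r}}$, $r\geq 1$, and each hypothesis delivers that: in (i) the comparison $\ZDiagQuo{r}{p}{A}\to \ZDiagQuo{r+1}{p}{A}$ is the restriction of $a_{p+r}$ to $\Img{A_p\to A_{p+r}}$, hence monic as well as epic once $p+r$ lies in the injective terminal segment; in (ii) the ascending chain condition on $A_p$ stabilizes the kernel chain outright; in (iii) for $r\geq k$ the map $A_p\to A_{p+r}$ factors through $B\DefEq \Img{A_p\to A_{p+k}}$, so the kernels $\Ker{B\to A_{p+r}}$ ascend in $B$, stabilize by the hypothesis on $B$, and their preimages under the fixed surjection $A_p\to B$ are exactly the kernels you need.

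Two small points to fix in the write-up. First, delete the phrase describing $\ZDiagQuo{1}{p}{A}\supseteq \ZDiagQuo{2}{p}{A}\supseteq \cdots$ as a descending chain: these objects are quotients of $A_p$, realized as submodules of the \emph{varying} modules $A_{p+r}$, so they are not nested inside any fixed object; your closing paragraph already replaces this by the correct bookkeeping (ascending kernels in $A_p$), so simply lead with that. Second, the co-Mittag-Leffler condition (\ref{def:Mittag-Leffler/CoMittag-Leffler-Condition}) quantifies over all ordinals $\tau\geq k$, not only finite ones; this costs one extra line: once the kernels stabilize at a finite $k$, (\ref{thm:ImageQuotient-Props}.iii) together with (\ref{thm:CoLim(ZCatDiagram)-Properties}) gives $\ZDiagQuo{\omega}{p}{A}\cong A_p\big/\FamUnion{r}{\Ker{A_p\to A_{p+r}}} = A_p/\Ker{A_p\to A_{p+k}} = \ZDiagQuo{k}{p}{A}$, and (\ref{thm:Q^omega(A)-Stable}) then disposes of all $\tau>\omega$.
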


\begin{proposition}[Sufficient conditions for the Mittag-Leffler condition]
\label{thm:MittagLefflerExamples}
A $\ZCat$-diagram $A$ has the Mittag-Leffler property whenever any one of the following conditions holds.
\vspace{-2ex}
\begin{enumerate}[(i)]
\item An initial segment of $A$ has surjective structure maps (e.g. $A$ is originally stable).
\item The objects in $A$ satisfy the descending chain condition (i.e. every strictly descending chain of subobjects of $A$ stabilizes after finitely many steps). For example, this happens whenever $A$ takes values in a category of finite dimensional vector spaces over some field, or whenever each $A_p$ is finite.
\item For each object $A_p$ there exists $k\geq 0$ such that the image of $A_{p-k} \to A_{p}$ satisfies the descending chain condition. \NoProof
\end{enumerate}
\end{proposition}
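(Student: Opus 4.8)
The final statement to prove is Proposition \ref{thm:MittagLefflerExamples}, giving three sufficient conditions under which a $\ZCat$-diagram $A$ of $R$-modules has the Mittag-Leffler property. The plan is to show that each condition (i), (ii), (iii) forces, for each fixed $p$, the descending chain of image subobjects $\ZDiagImg{1}{p}{A}\supseteq \ZDiagImg{2}{p}{A}\supseteq \cdots$ to stabilize after finitely many steps, which is exactly the Mittag-Leffler condition of (\ref{def:Mittag-Leffler/CoMittag-Leffler-Condition}). Throughout I would exploit the explicit identification in (\ref{thm:ImageSubDiagram-Props}.ii), namely
\begin{equation*}
\ZDiagImg{r}{p}{A} = \Img{A_{p-r}\to A_{p-r+1}\to \cdots \to A_p}\subseteq A_p,
\end{equation*}
so that the chain to be stabilized is the decreasing family of images of ever-longer composites landing in $A_p$.

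First I would treat condition (i). If an initial segment of $A$ has surjective structure maps, then there is $n_0$ with $a_q$ surjective for all $q\le n_0$; by (\ref{thm:ImageSubDiagram-Props}.i) applied to that segment, the inclusion $\ZDiagImg{}{}{A}\to A$ is an isomorphism in those positions, and iterating, $\ZDiagImg{r}{p}{A}=A_p$ is constant in $r$ for $p\le n_0$. For $p>n_0$, once $r$ is large enough that $p-r\le n_0$, the composite $A_{p-r}\to\cdots\to A_p$ factors through the surjections of the initial segment, so lengthening it no longer shrinks the image; hence the chain stabilizes. The originally-stable case is the special instance where the relevant structure maps are identities. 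Second, for condition (ii), the objects satisfying the descending chain condition means every strictly decreasing chain of submodules of a fixed $A_p$ terminates; since $\ZDiagImg{r}{p}{A}$ is a decreasing chain of submodules of $A_p$, it must stabilize, which is the Mittag-Leffler condition verbatim. I would note the two standard sources of DCC: finite-dimensional vector spaces over a field, and finite modules. Third, condition (iii) is a localized version of (ii): it only requires that for each $p$ some forward image $\Img{A_{p-k}\to A_p}$ satisfy DCC. Since $\ZDiagImg{r}{p}{A}\subseteq \Img{A_{p-k}\to A_p}$ for all $r\ge k$, the tail of the chain $\{\ZDiagImg{r}{p}{A}\}_{r\ge k}$ lives inside a module with DCC, hence stabilizes; combined with the trivial observation that the finitely many earlier terms cannot obstruct stabilization, the Mittag-Leffler condition follows.

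The organizing remark is that (i) and (iii) are both sufficient conditions that feed into the genuinely set-theoretic content, which is the DCC argument of (ii); (iii) generalizes (ii) by only demanding the chain condition on a forward image rather than on all of $A_p$, and (i) produces an eventually-constant chain directly from surjectivity. I do not expect any single step to be a serious obstacle, since the explicit formula (\ref{thm:ImageSubDiagram-Props}.ii) reduces everything to elementary statements about images of composites of module maps; the mild care needed is in the bookkeeping of (i), making sure that for $p$ beyond the stable initial segment the composites eventually ``bottom out'' once their tail enters the surjective region. This is precisely the mirror image of the reasoning already used for the co-Mittag-Leffler statement (\ref{thm:CoMittagLefflerExamples}), with image subdiagrams and the descending chain condition replacing image quotient diagrams and the ascending chain condition, and with limits of $\OrdOmegaOp$-diagrams playing the role that colimits of $\OrdOmega$-diagrams played there.

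\begin{proof}[Comment on proof]
This is the order-dual of (\ref{thm:CoMittagLefflerExamples}); the argument is identical after replacing $\ZDiagQuo{\tau}{}{A}$ by $\ZDiagImg{\tau}{}{A}$, the ascending chain condition by the descending chain condition, and the characterization (\ref{thm:ImageQuotient-Props}.ii) by (\ref{thm:ImageSubDiagram-Props}.ii). We therefore omit the routine details.
\end{proof}
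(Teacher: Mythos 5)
The paper gives no proof of this proposition at all: it is stated with the ``$\lozenge$'' marker, as is its order-dual (\ref{thm:CoMittagLefflerExamples}), so there is no argument of the author's to compare yours against. Your proof is correct in substance and is evidently the intended routine one. By (\ref{thm:ImageSubDiagram-Props}.ii) the finite-stage image subobjects are $\ZDiagImg{r}{p}{A}=\Img{A_{p-r}\to\cdots\to A_p}$, a descending chain of submodules of $A_p$, and each of your three cases correctly forces this chain to stabilize: under (i) because for $r\geq p-n_0$ the defining composite factors through a surjection onto $A_{n_0}$, so all these images coincide with $\Img{A_{n_0}\to A_p}$ (and for $p$ inside the initial segment the chain is constant at $A_p$); under (ii) directly by the descending chain condition in $A_p$; under (iii) because the tail $\ZDiagImg{r}{p}{A}$, $r\geq k$, consists of submodules of $\Img{A_{p-k}\to A_p}$, which has the descending chain condition.

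One step should be made explicit rather than absorbed into the phrase ``which is exactly the Mittag-Leffler condition.'' In this paper the image subdiagrams $\ZDiagImg{\tau}{}{A}$ are defined for \emph{all ordinals} $\tau$ (\ref{def:ImageQuotientSubDiagrams}), the condition in (\ref{def:Mittag-Leffler/CoMittag-Leffler-Condition}) demands $\ZDiagImg{\tau}{p}{A}=\ZDiagImg{k}{p}{A}$ for all ordinals $\tau\geq k$, and it is this transfinite reading that is used downstream, e.g.\ in (\ref{thm:SpecSeqs-(Co-)Mittag-Leffler}). Unlike the quotient case (\ref{thm:Q^omega(A)-Stable}), image subdiagrams can genuinely keep shrinking past $\omega$ (Boardman's example cited after (\ref{def:ImageQuotientSubDiagrams})), so stabilization of the finite chain does not hold at infinite stages for free; it must be propagated. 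The propagation is short: if $\ZDiagImg{r}{p}{A}=\ZDiagImg{k_p}{p}{A}$ for all finite $r\geq k_p$ and all $p$, then $\ZDiagImg{\omega}{p}{A}=\bigcap_{r}\ZDiagImg{r}{p}{A}=\ZDiagImg{k_p}{p}{A}$, and
\begin{equation*}
\ZDiagImg{\omega+1}{p}{A} \;=\; a_{p-1}\left(\ZDiagImg{\omega}{p-1}{A}\right) \;=\; a_{p-1}\left(\ZDiagImg{k_{p-1}}{p-1}{A}\right) \;=\; \ZDiagImg{k_{p-1}+1}{p}{A},
\end{equation*}
which, being a term of the stabilized finite chain, contains its stable value $\ZDiagImg{\omega}{p}{A}$, while on the other hand $\ZDiagImg{\omega+1}{}{A}$ is a subdiagram of $\ZDiagImg{\omega}{}{A}$; hence $\ZDiagImg{\omega+1}{p}{A}=\ZDiagImg{\omega}{p}{A}$. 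Transfinite induction (successor steps as above, intersections of an eventually constant chain at limit steps) then gives $\ZDiagImg{\tau}{p}{A}=\ZDiagImg{k_p}{p}{A}$ for every ordinal $\tau\geq k_p$. With this two-line addendum, which applies verbatim to all three of your cases since each yields pointwise finite stabilization, your proof is complete; it is a refinement, not a flaw in your approach.
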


\begin{proposition}[Mittag-Leffler condition implies $\LimOne=0$]
\label{thm:Mittag-Leffler-Lim1=Zero}%
If a $\ZCat$-diagram $A$ in $\ModulesOver{R}$ satisfies the Mittag-Leffler condition (\ref{def:Mittag-Leffler/CoMittag-Leffler-Condition}), then $\LimOneOf{A}=0$. \NoProof%
\index{Mittag-Leffler condition!implies $\LimOne=0$}%
\end{proposition}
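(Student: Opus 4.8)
The final statement to prove is Proposition \ref{thm:Mittag-Leffler-Lim1=Zero}: if a $\ZCat$-diagram $A$ in $\ModulesOver{R}$ satisfies the Mittag-Leffler condition, then $\LimOneOf{A}=0$.

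\medskip

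The plan is to reduce the claim to the explicit construction of $\LimOneOf{A}$ given in (\ref{thm:Lim^Z(A)Construction}), namely as the cokernel of the map $d\from \FamPrdct{n\in\ZNr}{A_n} \to \FamPrdct{q\in\ZNr}{A_q}$ whose $q$-th coordinate is $\PrjctnOnto{q} - a_{q-1}\Comp \PrjctnOnto{q-1}$. Since the inclusion $\OrdOmegaOp\to \ZCat$ is initial, I may freely replace $A$ by its restriction to $\OrdOmegaOp$ and work with the $\OrdOmegaOp$-indexed tower $\cdots \to A_{-2}\to A_{-1}\to A_0$; this keeps the recursion one-directional and matches the counting in the Mittag-Leffler hypothesis. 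Thus proving $\LimOneOf{A}=0$ amounts to showing that $d$ is surjective.

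\medskip

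First I would pass from the original diagram $A$ to its stable image subdiagram. By (\ref{thm:ImageSubDiagram-Props}.ii), the image subobjects $\ZDiagImg{r}{p}{A}$ are exactly the images $\Img{A_{p-r}\to \cdots \to A_p}$, and the Mittag-Leffler condition says precisely that for each $p$ these stabilize: there is $k$ with $\ZDiagImg{\tau}{p}{A} = \ZDiagImg{k}{p}{A}$ for all $\tau\geq k$. Denote the resulting stable image diagram $\bar{A}$, with $\bar{A}_p \DefEq \ZDiagImg{k}{p}{A}\subseteq A_p$. The point is that $\bar{A}$ has surjective structure maps: by stability, the map $\bar{A}_{p-1}\to \bar{A}_p$ is $\Img{A_{p-1-k}\to\cdots\to A_{p-1}} \to \Img{A_{p-k}\to\cdots\to A_p}$, and every element of the target, being in the image of the long composite through $A_{p-1}$, already lies in the image of $\bar{A}_{p-1}$. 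Then I would invoke (\ref{thm:Lim^1=0IfStructureMapsAreEpimorphic}) to conclude $\LimOneOf{\bar{A}}=0$. The remaining task is to transfer this vanishing from $\bar{A}$ back to $A$, which I would do by checking that the inclusion $\bar{A}\to A$ induces an isomorphism on $\LimOne$, equivalently that it suffices to solve the surjectivity equation for $d$ with values landing in the stabilized subobjects.

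\medskip

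The cleanest route for this last transfer is to prove surjectivity of $d_A$ directly, using stability to mimic the recursive solution in the proof of (\ref{thm:Lim^1=0IfStructureMapsAreEpimorphic}). Given a target $y=(y_q)\in \FamPrdct{q}{A_q}$, I would solve $d(x)=y$ by descending recursion: having chosen $x_p$, the equation demands $x_{p-1}$ with $a_{p-1}(x_{p-1}) = x_p - y_p$, so I need $x_p - y_p$ to lie in the image of $a_{p-1}$. Stability guarantees that the relevant images $\ZDiagImg{r}{p}{A}$ are eventually constant, so after finitely many steps the structure maps restricted to these stabilized images are surjective onto one another, and the required preimages exist. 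The main obstacle — and the step deserving the most care — is exactly this bookkeeping: the value of $k$ in the Mittag-Leffler condition depends on $p$, so I must confirm that at each stage of the recursion the difference $x_p - y_p$ genuinely lands in the stabilized image $\bar{A}_p$ where surjectivity is available, rather than merely in $A_p$. Initializing the recursion inside $\bar{A}$ (e.g.\ starting from a coordinate where the tower is already stable and propagating) resolves this, and once the preimages are shown to exist the resulting $x$ satisfies $d(x)=y$, giving surjectivity of $d$ and hence $\LimOneOf{A}=0$.
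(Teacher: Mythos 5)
Your first two steps are correct, and they reproduce the first half of the classical argument (the paper states this proposition with no proof, so the benchmark is the standard one it cites, e.g.\ Weibel): under the Mittag-Leffler condition (\ref{def:Mittag-Leffler/CoMittag-Leffler-Condition}) the stable image subdiagram $\ZStableImg{A}$, with $\ZStableImgAt{A}{p}=\ZDiagImg{k(p)}{p}{A}$, has epimorphic structure maps — your verification agrees with (\ref{thm:StableImageSubDiagram-Properties}.\ref{thm:StableImageSubDiagram-Properties-EpicStructureMaps}) — and therefore $\LimOneOf{\ZStableImg{A}}=0$ by (\ref{thm:Lim^1=0IfStructureMapsAreEpimorphic}). (Your restriction to $\OrdOmegaOp$ is also harmless, since the coordinates $x_q$ with $q\geq 1$ can always be solved for by the free ascending recursion.) The genuine gap is the transfer step. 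Your descending recursion needs, at stage $p$, that $x_p-y_p\in\Img{a_{p-1}}$; but $x_p$ is not free: it is pinned down by the previous stage to lie in the coset $a_p^{-1}(x_{p+1}-y_{p+1})$, a translate of $\Ker{a_p}$, and whether this coset meets $y_p+\Img{a_{p-1}}$ depends on the choices made at stages $p+1, p+2,\dots$, not on any initialization. Since $y_p$ is an arbitrary element of $A_p$, no initialization "inside $\ZStableImg{A}$" can force $x_p-y_p$ into $\Img{a_{p-1}}$, let alone into $\ZStableImgAt{A}{p}$; the sentence "the required preimages exist" is exactly the assertion that fails.

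A concrete counterexample to the strategy (not, of course, to the proposition): take $A_p=\ZMod{4}$ for every $p$, with every structure map multiplication by $2$. This diagram satisfies Mittag-Leffler, since $\ZDiagImg{r}{p}{A}=0$ for $r\geq 2$; consequently $\ZStableImg{A}=0$, so your steps 1–2 carry no information at all, while the proposition still asserts the nontrivial fact $\LimOneOf{A}=0$. A greedy descending recursion gets stuck: stage $p+1$ forces $x_{p+1}\in y_{p+1}+\Set{0,2}$, and whichever of the two choices you make, stage $p$ is solvable only when $y_p$ has one specific parity; so the correct choice at stage $p+1$ requires knowing $y_p$, and in the analogous $\ZMod{2^n}$ examples it requires knowing $y$ arbitrarily far down. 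What makes the theorem true is a coherent global construction, and this is the missing ingredient: the quotient diagram $C\DefEq A/\ZStableImg{A}$ satisfies the \emph{trivial} Mittag-Leffler condition (for each $p$ the composite $C_{p-r}\to C_p$ is the zero map for $r\geq k(p)$), and for such a diagram the map $d$ is surjective via the finite telescoping sum
\begin{equation*}
x_q \DefEq \sum_{j\geq 0} \left( C_{q-j}\to C_q \right)(y_{q-j}),
\end{equation*}
which has only finitely many nonzero terms and satisfies $d(x)=y$; hence $\LimOneOf{C}=0$. The six-term sequence (\ref{thm:Lim-LimOne6TermExact Sequence}) for $\ZStableImg{A}\to A\to C$ then contains the exact segment $\LimOneOf{\ZStableImg{A}}\to \LimOneOf{A}\to \LimOneOf{C}$, whose outer terms both vanish, so $\LimOneOf{A}=0$. (Equivalently, in the spirit of your recursion: lift the telescoping solution from $C$ to $x'\in\FamPrdct{p}{A_p}$, observe that every coordinate of $y-d(x')$ lies in $\ZStableImgAt{A}{p}$, and solve for this error term inside $\ZStableImg{A}$ using surjectivity there — that is the coherent, non-greedy version of the choices you were trying to make locally.)
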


\begin{proposition}[Image factorization of $\ZCat$-diagram gives $\CoLim,\Lim,\LimOne$-isomorphisms]
\label{thm:ZDiagramStructureMapGivesIsoCoLim-Lim-LimOne}%
\cite[1.8]{SEilenbergJCMoore1961}\quad Given a $\ZCat$-diagram $(A,a)$, each of the maps $A\to \ZDiagQuo{}{}{A}$ and $\ZDiagImg{}{}{A}\to A$ induces an isomorphism of $\CoLim$, $\Lim$ and $\LimOne$.
\end{proposition}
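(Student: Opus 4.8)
The plan is to show that each of the two maps $A\to \ZDiagQuo{}{}{A}$ and $\ZDiagImg{}{}{A}\to A$ induces isomorphisms on $\CoLim$, $\Lim$, and $\LimOne$. The two cases are dual, so I would treat the image quotient map $A\to \ZDiagQuo{}{}{A}$ in detail and then indicate the corresponding argument for the image subdiagram map. In both cases the governing principle is that the map fits into a short exact sequence of $\ZCat$-diagrams in which one outer term is ``trivial'' for the purpose of the functor being applied, so that the $6$-term exact $\Lim$-$\LimOne$-sequence (\ref{thm:Lim-LimOne6TermExact Sequence}) and the exactness of $\CoLimOver{\ZCat}$ (\ref{thm:CoLim^ZExactFunctor}) collapse to give the claimed isomorphisms.

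For the image quotient map, recall from the factorization diagram (\ref{eq:ImageQuotient-Diagrams}) that the epic map $A\to \ZDiagQuo{}{}{A}$ has kernel $\ZDiagKer{}{}{A}$, the kernel diagram of the tautological map $\hat a$. Thus there is a short exact sequence of $\ZCat$-diagrams
\begin{equation*}
\xymatrix@R=5ex@C=3em{
\ZDiagKer{}{}{A} \ar@{{ |>}->}[r] &
	A \ar@{-{ >>}}[r] &
	\ZDiagQuo{}{}{A}
}
\end{equation*}
The key observation is that all structure maps of the kernel diagram $\ZDiagKer{}{}{A}$ are the zero map, as displayed on the left edge of (\ref{eq:ImageQuotient-Diagrams}). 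A $\ZCat$-diagram with vanishing structure maps has $\CoLim=0$ (by (\ref{thm:CoLim(ZCatDiagram)-Properties}), since every element of a fixed $A_p$ dies already at the next stage), has $\Lim=0$ (by the construction in (\ref{thm:Lim^Z(A)Construction}), as a compatible family must satisfy $x_q = a_{q-1}(x_{q-1})=0$), and has $\LimOneOf{\ZDiagKer{}{}{A}}=0$ as well, since with zero structure maps the map $d$ of (\ref{thm:Lim^Z(A)Construction}) is simply the identity on $\FamPrdct{n}{A_n}$, hence surjective. Feeding this into the exactness of $\CoLimOver{\ZCat}$ gives at once $\CoLimOf{A}\cong \CoLimOf{\ZDiagQuo{}{}{A}}$, and feeding it into the $6$-term exact sequence (\ref{thm:Lim-LimOne6TermExact Sequence}) forces the maps $\LimOf{A}\to \LimOf{\ZDiagQuo{}{}{A}}$ and $\LimOneOf{A}\to \LimOneOf{\ZDiagQuo{}{}{A}}$ to be isomorphisms, the $\LimOne$ of the kernel term vanishing on the left and the $\Lim$ of the kernel term vanishing on the right.

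For the image subdiagram map $\ZDiagImg{}{}{A}\to A$, I would use the dual short exact sequence read off from (\ref{eq:ImageQuotient-Diagrams}),
\begin{equation*}
\xymatrix@R=5ex@C=3em{
\ZDiagImg{}{}{A} \ar@{{ |>}->}[r] &
	A \ar@{-{ >>}}[r] &
	\ZDiagCoKer{}{}{A}
}
\end{equation*}
whose cokernel diagram $\ZDiagCoKer{}{}{A}$ again has all structure maps equal to zero (right edge of (\ref{eq:ImageQuotient-Diagrams})). The same three vanishing facts apply to $\ZDiagCoKer{}{}{A}$, and the identical bookkeeping through (\ref{thm:CoLim^ZExactFunctor}) and (\ref{thm:Lim-LimOne6TermExact Sequence}) yields the isomorphisms on $\CoLim$, $\Lim$, and $\LimOne$. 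The only point requiring any care — and the step I would flag as the main obstacle — is verifying cleanly that a zero-structure-map $\ZCat$-diagram has vanishing $\LimOne$; this is the one claim that is not merely a kernel-is-a-limit triviality, and it is precisely where the explicit Steenrod-style formula for $d$ in (\ref{thm:Lim^Z(A)Construction}) must be invoked to see that $d$ reduces to the identity map and is therefore surjective. Everything else is a formal consequence of the two short exact sequences and the two established exactness facts.
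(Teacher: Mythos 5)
Your proposal is correct and follows essentially the same route as the paper: both arguments run the two short exact sequences $\ZDiagKer{}{}{A}\to A\to \ZDiagQuo{}{}{A}$ and $\ZDiagImg{}{}{A}\to A\to \ZDiagCoKer{}{}{A}$ from diagram \eqref{eq:ImageQuotient-Diagrams} through the exactness of $\CoLimOver{\ZCat}$ (\ref{thm:CoLim^ZExactFunctor}) and the $6$-term sequence (\ref{thm:Lim-LimOne6TermExact Sequence}), after observing that the outer terms have vanishing $\CoLim$, $\Lim$, and $\LimOne$. The only cosmetic difference is that you establish $\LimOneOf{\ZDiagKer{}{}{A}}=0=\LimOneOf{\ZDiagCoKer{}{}{A}}$ by noting that the map $d$ of (\ref{thm:Lim^Z(A)Construction}) reduces to the identity for a zero-structure-map diagram, whereas the paper cites the Mittag-Leffler criterion (\ref{thm:Mittag-Leffler-Lim1=Zero}); both are valid one-line justifications of the same fact.
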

\begin{proof}
With the notation of diagram \eqref{eq:ImageQuotient-Diagrams}, from $\CoLimOf{\ZDiagKer{}{}{A}}=0=\CoLimOf{ \ZDiagCoKer{}{}{A} }$, the exactness of $\CoLimOver{\ZCat}$ yields the isomorphisms
\begin{equation*}
\CoLimOf{A}\XRA{\cong} \CoLimOf{ \ZDiagQuo{}{}{A} }  \qquad \text{and}\qquad \CoLimOf{ \ZDiagImg{}{}{A} }\XRA{\cong} \CoLimOf{A}.
\end{equation*}
Turning to limits, via direct computation on limits and the Mittag-Leffler property (\ref{thm:Mittag-Leffler-Lim1=Zero}) for $\LimOne$, we have
\begin{equation*}
\LimOf{\ZDiagKer{}{}{A}} = 0 = \LimOneOf{\ZDiagKer{}{}{A}}\quad \text{and}\quad \LimOf{\ZDiagCoKer{}{}{A}} = 0 = \LimOneOf{\ZDiagCoKer{}{}{A}}
\end{equation*}
Now the $6$-term exact $\Lim,\LimOne$-sequences of the short exact sequences $KA\to A\to QA$ and $IA\to A\to RA$ yield the $\Lim$- and $\LimOne$-isomorphisms as claimed.
\end{proof}

Next, we establish properties of the stable image subdiagrams of a $\ZCat$-diagram $A$.

\begin{lemma}[$\LimOf{\ZDiagImg{\tau}{}{A}} \to \LimOf{A}$ is an isomorphism]
\label{thm:ImageDiagGivesLimIso}%
Let $A$ be a $\ZCat$-diagram in $\LModules{R}$. Then, for every ordinal $\tau$, the inclusion $\ZDiagImg{\tau}{}{A}\to A$ induces an isomorphism $\LimOf{\ZDiagImg{\tau}{}{A}} \to \LimOf{A}$.
\end{lemma}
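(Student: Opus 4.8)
The goal is to show that for every ordinal $\tau$, the inclusion $\ZDiagImg{\tau}{}{A}\to A$ induces an isomorphism $\LimOf{\ZDiagImg{\tau}{}{A}}\to \LimOf{A}$. The natural plan is transfinite induction on $\tau$, using the three cases $\tau=0$, successor ordinals, and limit ordinals.

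The base case $\tau=0$ is immediate since $\ZDiagImg{0}{}{A}=A$. For the successor step, I would write $\ZDiagImg{\tau}{}{A}=\ZDiagImg{}{}{(\ZDiagImg{\tau-1}{}{A})}$ and factor the inclusion as
\begin{equation*}
\ZDiagImg{\tau}{}{A} = \ZDiagImg{}{}{(\ZDiagImg{\tau-1}{}{A})} \longrightarrow \ZDiagImg{\tau-1}{}{A} \longrightarrow A.
\end{equation*}
The second map induces a $\Lim$-isomorphism by the induction hypothesis. For the first map, Proposition (\ref{thm:ZDiagramStructureMapGivesIsoCoLim-Lim-LimOne}) — applied to the $\ZCat$-diagram $\ZDiagImg{\tau-1}{}{A}$ — tells us precisely that the image subdiagram inclusion $\ZDiagImg{}{}{B}\to B$ induces a $\Lim$-isomorphism for any $\ZCat$-diagram $B$. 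So the successor step follows by composing the two isomorphisms.

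The limit-ordinal step is where the main work lies. By definition $\ZDiagImg{\tau}{}{A}=\LimOfOver{\cdots\to \ZDiagImg{\sigma}{}{A}\to\cdots\to A}{\sigma<\tau}$, an inverse limit over the $\sigma<\tau$ of the subdiagrams, each of which sits inside $A$. The plan is to show that $\LimOf{\ZDiagImg{\tau}{}{A}}\to\LimOf{A}$ is an isomorphism by exhibiting its inverse. Since each $\ZDiagImg{\sigma}{}{A}\to A$ induces a $\Lim$-isomorphism $\LimOf{\ZDiagImg{\sigma}{}{A}}\cong \LimOf{A}$ by the induction hypothesis (for $\sigma<\tau$), and limits commute with limits, I would interchange the two limit operations:
\begin{equation*}
\LimOf{\ZDiagImg{\tau}{}{A}} = \LimOfOver{ \LimOfOver{ \ZDiagImg{\sigma}{}{A} }{\ZCat} }{\sigma<\tau} \cong \LimOfOver{ \LimOf{A} }{\sigma<\tau},
\end{equation*}
where in the last step each inner limit is identified with $\LimOf{A}$ compatibly with the transition maps. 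Because the transition maps $\ZDiagImg{\sigma'}{}{A}\to \ZDiagImg{\sigma}{}{A}$ (for $\sigma<\sigma'<\tau$) induce, on $\Lim$, the identity of $\LimOf{A}$ under these identifications, the outer limit over $\sigma<\tau$ is a limit of a constant-up-to-canonical-isomorphism diagram, hence is again $\LimOf{A}$, and the composite recovers the map in question.

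The step I expect to be the main obstacle is verifying the compatibility just mentioned at the limit ordinal: I must check that the identifications $\LimOf{\ZDiagImg{\sigma}{}{A}}\cong\LimOf{A}$ furnished by the induction hypothesis are \emph{natural} in $\sigma$, i.e. that the transition maps of the inverse system $(\ZDiagImg{\sigma}{}{A})_{\sigma<\tau}$ become the identity on $\LimOf{A}$ after these identifications, so that interchanging limits genuinely yields $\LimOf{A}$ rather than some proper subobject. This requires tracking that all the maps involved are restrictions of the structure maps of $A$ and commute with the limit cone projections $\rho_p$ of (\ref{thm:Lim^Z(A)Construction}); once naturality is secured, the interchange of limits and the collapse of the constant system are formal.
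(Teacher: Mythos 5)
Your proposal is correct and follows essentially the same route as the paper: transfinite induction with the identity at $\tau=0$, the successor step handled by composing the isomorphism of Proposition (\ref{thm:ZDiagramStructureMapGivesIsoCoLim-Lim-LimOne}) applied to $\ZDiagImg{\tau-1}{}{A}$ with the induction hypothesis, and the limit-ordinal step by the commutation of limits. The naturality check you flag as the main obstacle is real but routine (all identifications are induced by the inclusions into $A$, hence compatible), and it is exactly what the paper compresses into the single phrase ``limits commute.''
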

\begin{proof}
We prove this claim by transfinite induction. For $\kappa=0$, $I^0A\to A$ is the identity map. So, the claim holds in this case. Now, consider an ordinal $\tau>0$, and assume the claim is true for every ordinal $\kappa<\tau$. We distinguish two cases:

{\em Case 1:\quad  $\tau$ is a successor ordinal}\quad  In this case, the identity $\ZDiagImg{\tau}{}{A} = I(\ZDiagImg{\tau-1}{}{A})$ allows us to compose the  isomorphism (\ref{thm:ZDiagramStructureMapGivesIsoCoLim-Lim-LimOne}) with the isomorphism of the induction hypothesis to prove the claim:
\begin{equation*}
\LimOf{\ZDiagImg{\tau}{}{A}} \XRA{\cong} \LimOf{\ZDiagImg{\tau-1}{}{A}} \XRA{\cong} \LimOf{A}.
\end{equation*}
{\em Case 2:  $\tau$ is limit ordinal}\quad  In this case, the claim follows because limits commute. 
\end{proof}

Via (\ref{thm:ZDiagramStructureMapGivesIsoCoLim-Lim-LimOne}) we see that, for finite $\tau$, the inclusion $\ZDiagImg{\tau}{}{A}\to A$ also induces an isomorphism $\LimOneOf{\ZDiagImg{\tau}{}{A}}\to \LimOneOf{A}$. Via (\ref{thm:StableImageSubDiagram-Properties}.\ref{thm:StableImageSubDiagram-Properties-EpicStructureMaps}) we infer that this map need not induce an isomorphism if $\tau$ is not finite.

Cardinality considerations show that, for a sufficiently large ordinal $\alpha$, $\ZDiagImg{\alpha}{}{A} = \ZDiagImg{\lambda}{}{A}$  whenever $\alpha<\lambda$. We call $\ZDiagImg{\alpha}{}{A}$ the \Defn{stable image subdiagram} of $A$ and denote it $\ZStableImg{A}$ for short. %
\index[not]{$\ZStableImg{A}$ - stable image subdiagram of $A$}%
\index{stable!image subdiagram of a $\ZCat$-diagram}%

\begin{proposition}[Properties of the stable image subdiagram]
\label{thm:StableImageSubDiagram-Properties}
Given a $\ZCat$-diagram in $\ModulesOver{R}$, its stable image subdiagram has the following properties.
\vspace{-2ex}
\begin{enumerate}[(i)]
\item \label{thm:StableImageSubDiagram-Properties-EpicStructureMaps}%
$\ZStableImg{A}$ has epimorphic structure maps; so $\LimOneOf{\ZStableImg{A}}=0$.
\item \label{thm:StableImageSubDiagram-Properties-ImLim}%
$\Img{\rho_p\from \LimOf{A}\to A_p} = \ZStableImgAt{A}{p}$
\item \label{thm:StableImageSubDiagram-Properties-LimLimOne}%
The maps in the short exact sequence of $\ZCat$-diagrams $\ZStableImg{A}\to A\to A/\ZStableImg{A}$ induce  isomorphisms $\LimOf{\ZStableImg{A}}\to \LimOf{A}$ and $\LimOneOf{A}\to \LimOneOf{A/\ZStableImg{A}}$, while $\LimOf{A/\ZStableImg{A}}=0$.
\end{enumerate}
\end{proposition}
\begin{proof}
\ref{thm:StableImageSubDiagram-Properties-EpicStructureMaps}\quad $\ZStableImg{A}$ has epimorphic structure maps because, for each $p\in \ZNr$, the restriction of $a_p\from A_p\to A_{p+1}$ satisfies
\begin{equation*}
\Img{a_p|\from (\ZStableImgAt{A}{p}\to \ZStableImgAt{A}{p+1}} = \ZStableImgAt{A}{p+1}
\end{equation*}
This happens exactly when $a_p|$ is an epimorphism. By (\ref{thm:Lim^1=0IfStructureMapsAreEpimorphic}), $\LimOneOf{\ZStableImg{A}}=0$.

\ref{thm:StableImageSubDiagram-Properties-ImLim} \quad As $\ZStableImg{A}$ has epimorphic structure maps, the universal maps $\bar{\rho}_p\from \LimOf{ \ZStableImg{A} } \to \ZStableImgAt{A}{p}$ are epimorphisms as well. Further, the inclusion $\ZStableImg{A}\to A$ induces the commutative square below.
\begin{equation*}
\xymatrix@R=5ex@C=3em{
\LimOf{\ZStableImg{A}} \ar[r]^-{\cong} \ar@{-{ >>}}[d]_{\bar{\rho}_p} &
	\LimOf{A} \ar[d]^{\rho_p} \\
\ZStableImgAt{A}{p} \ar@{{ |>}->}[r] &
	A_p
}
\end{equation*}
The map $\LimOf{\ZStableImg{A}} \to \LimOf{A}$ is an isomorphism (\ref{thm:ImageDiagGivesLimIso}). So, the counterclockwise arrow path from $\LimOf{A}$ to $A_p$ is the image factorization of $\rho_p$.

\ref{thm:StableImageSubDiagram-Properties-LimLimOne}\quad The short exact sequence of $\ZCat$-diagrams $\ZStableImg{A} \to A\to A/\ZStableImg{A}$ induces the $6$-term exact sequence below.
\begin{equation*}
\xymatrix@C=2em{
\LimOf{\ZStableImg{A}} \ar[r]^-{\cong} &
	\LimOf{A} \ar[r] &
	\LimOf{A/\ZStableImg{A}} \ar[r] &
	\LimOneOf{\ZStableImg{A}} \ar[r] &
	\LimOneOf{A} \ar@{-{ >>}}[r] &
	\LimOneOf{A/\ZStableImg{A}}
}
\end{equation*}
We know that the map $\LimOf{\ZStableImg{A}} \to \LimOf{A}$ is an isomorphism by (\ref{thm:ImageDiagGivesLimIso}), and that $\LimOneOf{\ZStableImg{A}}=0$ by (i). This implies the claim by exactness.
\end{proof}

\begin{subordinate}
{\bfseries Comment}\quad In view of the isomorphism $\CoLimOf{\ZDiagImg{}{}{A}}\to \CoLimOf{A}$ from (\ref{thm:ZDiagramStructureMapGivesIsoCoLim-Lim-LimOne}), one wonders whether the canonical map $\CoLimOf{\ZStableImg{A}} \to \CoLimOf{A}$ is an isomorphism. - Given that limits don't generally commute with colimits, there is no reason to expect this map  to be an isomorphism. Indeed, here is an example of a $\ZCat$-diagram where it is not:
\begin{equation*}
\cdots \XRA{\times 2} \ZNr \XRA{\times 2} \ZNr \XRA{\times 2} \ZNr\XRA{\times 2} \cdots 
\end{equation*}
Visibly, $\ZStableImg{A}=0$ and, hence, $\CoLimOf{\ZStableImg{A}}=0$. On the other hand, $\CoLimOf{A}\cong \ZPoly{\tfrac{1}{2}}$ is the $2$-divisible subring of the rational numbers $\QNr$.
\end{subordinate}

\section[Filtrations]{Filtrations}
\label{sec:Filtrations}

A \Defn{filtration} of a module $M$ is given by a $\ZCat$-diagram $F$ of submodules
\begin{equation*}
\cdots\subseteq F_{p-1} \subseteq F_p \subseteq F_{p+1} \subseteq \cdots \subseteq M.
\end{equation*}
A filtration $F$ of $M$ is called \Defn{exhaustive} if the universal map $\CoLimOfOver{F}{\ZCat}\to M$ is an isomorphism. %
\index{exhaustive filtration}%
\index{filtration!exhaustive}%
Such a filtration of $M$ determines and is determined by its associated \Defn{cofiltration}
\begin{equation*}
M\longrightarrow \cdots \longrightarrow M/F_{p-1} \longrightarrow M/F_p \longrightarrow M/F_{p+1}\longrightarrow\cdots.
\end{equation*}
The \Defn{$F$\MSComp-completion} of $M$ is defined as $M^{\wedge}_{F}\DefEq \LimOf{M/F}$. %
\index{$F$\MSComp-completion of filtered object}\index[not]{$M^{\wedge}_{F}$ - $F$\MSComp-completion of $M$} %
We say that \Defn{$M$ is $F$\MSComp-complete} if the canonical map $M\to M^{\wedge}_{F}$ is an isomorphism. This happens if and only if $\LimOf{F} = 0=\LimOneOf{F}$.%
\index{$F$\MSComp-complete filtered object}%

Any pair of adjacent (co-)filtrations stages determines a morphism of short exact sequences:
\begin{equation*}
\xymatrix@R=5ex@C=4em{
F_{p-1} \ar@{{ |>}->}[r] \ar@{{ |>}->}[d] &
	M \ar@{=}[d] \ar@{-{ >>}}[r] &
	M/F_{p-1} \ar@{-{ >>}}[d] \\
F_{p} \ar@{{ |>}->}[r] &
	M \ar@{-{ >>}}[r] &
	M/F_p
}
\end{equation*}
So, there is a functorial isomorphism $\CoKer{F_{p-1}\to F_p}\cong \Ker{M/F_{p-1} \to M/F_p}$. When working with spectral sequences, this is useful to keep in mind because then we are dealing with filtered objects whose adjacent filtration stages are related to the $\SSPage{\infty}$-objects of the spectral sequence; see (\ref{thm:E-InfinityExtensionThm}). Here, we see that these $\SSPage{\infty}$-objects may optionally be used in both, the filtration as well as its associated cofiltration.

We call a filtration \Defn{originally vanishing}, \Defn{originally stable}, \Defn{eventually stable}, \Defn{eventually vanishing} if the underlying $\ZCat$-diagram has the property in question; see (\ref{def:Z-DiagramTypes}). %
\index{filtration}%
\index{originally!vanishing filtration}%
\index{originally!stable filtration}%
\index{eventually!stable filtration}%
We say the filtration has an \Defn{anchor} at $p_0\in \ZNr$ if $F_{p_0}$ is known. %
\index{filtration!anchor}%
\index{anchor of a filtration}%
More generally, the filtration is \Defn{Hausdorff} if $\LimOf{F} =0$. For example, a complete filtration is Hausdorff. %
\index{Hausdorff filtration}\index{filtration!Hausdorff}%

Let us now specialize to the situation we need to deal with in the discussion of convergence the spectral sequence associated with an exact couple. Every $\ZCat$-diagram $A$ of $R$-modules determines in a universal manner two filtered objects:
\begin{enumerate}[(1)]
\item $A_{\infty}\DefEq \CoLimOf{A}$\MSComp, filtered by $F_{\bullet}A$, where $F_{p}A$ is the image of the cocone map $\pi_p\from A_p\to A_{\infty}$, $p\in \ZNr$. The associated cofiltration of $A_{\infty}$ satisfies $A_{\infty}/F_{p}A\cong \CoKer{\pi_p}$.%
\index[not]{$\pi_p\from A_p\to A_{\infty}$}
\item $A_{-\infty}\DefEq \LimOf{A}$\MSComp, filtered by $F^{\bullet}A$, where $F^{p}A$ is the kernel of the cone map $\rho_p\from A_{-\infty}\to A_p$. The associated cofiltration of $A_{-\infty}$ satisfies $A_{-\infty}/F^{p}A\cong \ZStableImgAt{A}{p}$, with $\bar{I}A$ the stable image diagram of $A$; see (\ref{thm:StableImageSubDiagram-Properties}).%
\index[not]{$\rho_p\from A_{-\infty}\to A_p$}%
\end{enumerate}
We are most interested in properties of the kernel filtration $F^{\bullet}A$ of $A_{-\infty}$. In general, it fails to be exhaustive, in which case $A_{-\infty}$ has a nonzero image in $A_{\infty}$; see (\ref{thm:KernelFiltrationLim(A)-Props}). Critical now is that we gain additional information about a kernel filtration stage $F^pA$ via the following construction: Let $\ZPointKer{A}{p}$ be the $\bar{\omega}$\MSComp-sequence with
\begin{equation*}
\ZPointKerAt{A}{p}{p-r} \DefEq \Ker{A_{p-r} \longrightarrow A_p}
\end{equation*}
Then (\ref{thm:Kernel/ImageSequencesOfZ-Diagram}) $\LimOf{\ZPointKer{A}{p}} \cong F^{p}A$. Remarkably, it is not difficult to develop conditions under which the canonical map
\begin{equation*}
\Lambda^p\from \LimOneOf{\ZPointKer{A}{p}} \longrightarrow \LimOneOf{\ZPointKer{A}{p+1}}
\end{equation*}
is a monomorphism; see (\ref{thm:Conditions-Lim1(K^pA)->Lim1(K^(p+1)A)Monomorphism}), (\ref{thm:a_pMono->Lim1(K^pA)->Lim1(K^(p+1)A)Monomorphism}), (\ref{thm:PointWiseConditionFor-Lim1(K^pA)->Lim1(K^(p+1)A)Monomorphism}), and (\ref{thm:Omega-Mittag-Leffler->Lim1(K^pA)->Lim1(K^(p+1)A)Monomorphism-All-p}). In Section \ref{sec:Convergence-I}, where we investigate the meaning of the E-infinity objects of a spectral sequence, such conditions are exactly what is needed to establish stability of its E-infinity objects.

\begin{proposition}[Kernel cofiltration of $\LimOf{A}$ complete]
\label{thm:KernelCoFiltration-Complete}
\cite[2.3]{SEilenbergJCMoore1961} If $A$ is a $\ZCat$-diagram then the kernel filtration $F^{\bullet}(A)$ of $\LimOf{A}$ forms a $\ZCat$-diagram satisfying %
\index{kernel filtration of $\LimOfOver{A}{\ZCat}$!intersetion}%
\begin{equation*}
\LimOf{F^{\bullet}(A)} = 0 = \LimOneOf{F^{\bullet}(A)}
\end{equation*}
Moreover, $\LimOf{A}$ is $F^{\bullet}(A)$-complete.
\end{proposition}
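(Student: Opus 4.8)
The plan is to realize $F^{\bullet}(A)$ as the kernel term of a short exact sequence of $\ZCat$-diagrams whose other two terms have already-computed $\Lim$ and $\LimOne$, and then read off both vanishing statements from the $6$-term exact sequence (\ref{thm:Lim-LimOne6TermExact Sequence}). Throughout I write $A_{-\infty}\DefEq \LimOf{A}$ with universal cone maps $\rho_p\from A_{-\infty}\to A_p$.

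First I would record the elementary structure. Since the cone maps satisfy $a_p\Comp \rho_p = \rho_{p+1}$, we get $\Ker{\rho_p}\subseteq \Ker{\rho_{p+1}}$, so $F^{\bullet}(A)$ is a genuine ascending $\ZCat$-diagram of submodules of $A_{-\infty}$. For each $p$, the first isomorphism theorem together with (\ref{thm:StableImageSubDiagram-Properties}.\ref{thm:StableImageSubDiagram-Properties-ImLim}) gives $A_{-\infty}/F^{p}(A)\cong \Img{\rho_p} = \ZStableImgAt{A}{p}$, and I would check that these identifications are natural in $p$: the structure map of $\ZStableImg{A}$ is the corestriction of $a_p$, which matches the quotient map $A_{-\infty}/F^{p}(A)\to A_{-\infty}/F^{p+1}(A)$ precisely because $a_p\Comp \rho_p = \rho_{p+1}$. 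Assembling the exact sequences $0\to \Ker{\rho_p}\to A_{-\infty}\to \Img{\rho_p}\to 0$ thus yields a short exact sequence of $\ZCat$-diagrams
\begin{equation*}
F^{\bullet}(A) \longrightarrow \underline{A_{-\infty}} \longrightarrow \ZStableImg{A}
\end{equation*}
where $\underline{A_{-\infty}}$ is the constant diagram at $A_{-\infty}$ and the right-hand map in position $p$ is the canonical surjection $A_{-\infty}\twoheadrightarrow \Img{\rho_p}$.

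Next I would feed this into the $6$-term sequence. The constant diagram has $\LimOf{\underline{A_{-\infty}}} = A_{-\infty}$ and, its structure maps being identities (hence epic), $\LimOneOf{\underline{A_{-\infty}}} = 0$ by (\ref{thm:Lim^1=0IfStructureMapsAreEpimorphic}). For the stable image subdiagram, (\ref{thm:StableImageSubDiagram-Properties}.\ref{thm:StableImageSubDiagram-Properties-EpicStructureMaps}) gives $\LimOneOf{\ZStableImg{A}} = 0$, while (\ref{thm:ImageDiagGivesLimIso}) gives $\LimOf{\ZStableImg{A}}\cong \LimOf{A} = A_{-\infty}$. The crucial point is that the map $\LimOf{\underline{A_{-\infty}}}\to \LimOf{\ZStableImg{A}}$ induced by the surjection is an isomorphism: composing that surjection with the inclusion $\ZStableImg{A}\hookrightarrow A$ recovers the universal cone $\underline{A_{-\infty}}\to A$, so on limits the composite $A_{-\infty}\to \LimOf{\ZStableImg{A}}\XRA{\cong}\LimOf{A}$ is the identity by the universal property of $\LimOf{A}$; as the second arrow is the isomorphism of (\ref{thm:ImageDiagGivesLimIso}), the first is an isomorphism as well. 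The $6$-term sequence then collapses to
\begin{equation*}
0 \longrightarrow \LimOf{F^{\bullet}(A)} \longrightarrow A_{-\infty} \XRA{\cong} A_{-\infty} \longrightarrow \LimOneOf{F^{\bullet}(A)} \longrightarrow 0
\end{equation*}
which forces $\LimOf{F^{\bullet}(A)} = 0$ and $\LimOneOf{F^{\bullet}(A)} = 0$ by exactness.

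Finally, the \emph{moreover} is immediate from the characterization recorded in Section \ref{sec:Filtrations}: a module is complete for a filtration exactly when that filtration has vanishing $\Lim$ and $\LimOne$, so $A_{-\infty} = \LimOf{A}$ is $F^{\bullet}(A)$-complete. I expect the only genuine work to be the naturality check that promotes the three families of short exact sequences to a short exact sequence of diagrams, and the identification of $\LimOf{\underline{A_{-\infty}}}\to \LimOf{\ZStableImg{A}}$ as (the inverse of) the isomorphism of (\ref{thm:ImageDiagGivesLimIso}); everything downstream is a routine chase in the $6$-term sequence. As a sanity check, $\LimOf{F^{\bullet}(A)} = \FamIntrsctn{p}{\Ker{\rho_p}} = 0$ can also be seen directly, since the cone maps $\rho_p$ are jointly monic on $\LimOf{A}$.
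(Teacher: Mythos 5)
Your proposal is correct and follows essentially the same route as the paper's proof: the same short exact sequence of $\ZCat$-diagrams $F^{\bullet}(A)\to \underline{A_{-\infty}}\to \ZStableImg{A}$, the same retraction argument (via $\lambda_{-\infty}\gamma_{-\infty}=\IdMap$ and (\ref{thm:ImageDiagGivesLimIso})) to see that the induced map on limits is an isomorphism, and the same $6$-term sequence to extract both vanishings. The only cosmetic difference is that you deduce completeness from the vanishings via the equivalence recorded in Section \ref{sec:Filtrations}, while the paper reads completeness off directly from $\gamma_{-\infty}$ being an isomorphism; these are interchangeable.
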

\begin{proof}
We know (\ref{thm:StableImageSubDiagram-Properties}) that $\Img{\rho_p} = \ZStableImgAt{A}{p}$ is the $p$-th term of the stable image diagram of $A$. Image factorizing $\rho_p\from \LimOf{A}\to A_p$ via $\gamma_p\from \LimOf{A}\to \ZStableImgAt{A}{p}$, we have the short exact sequence of $\ZCat$-diagrams
\begin{equation*}
\xymatrix@C=3em{
F^p(A) \ar@{{ |>}->}[r]   &
	\LimOf{A} \ar@{-{ >>}}[r]^-{\gamma_p} &
	\ZStableImgAt{A}{p}\qquad\qquad (*)
}
\end{equation*}
The middle term is the constant $\ZCat$-diagram. We claim that $\gamma$ induces an isomorphism of limits. To see this, consider the sequence of $\ZCat$-diagrams:
\begin{equation*}
\xymatrix{
\LimOf{A} \ar@{-{ >>}}[r]^-{\gamma}  &
	\ZStableImg{A} \ar@{{ |>}->}[r]^-{\lambda} &
	A
}
\end{equation*}
By design, $\lambda\gamma=\rho$. Therefore, taking limits yields the composite
\begin{equation*}
\xymatrix{
\LimOf{A} \ar[r]^-{\gamma_{-\infty}}  &
	\LimOf{\ZStableImg{A}} \ar[r]^-{\lambda_{-\infty}} &
	\LimOf{A}
}
\end{equation*}
with $\lambda_{-\infty}\gamma_{-\infty} = \IdMap$. Further, we know (\ref{thm:StableImageSubDiagram-Properties}) 	that $\lambda_{-\infty}$ is an isomorphism. So, $\gamma_{-\infty}$ is an isomorphism as well; i.e. $\LimOf{A}$ is $F^{\bullet}(A)$-complete.

Now consider the $6$-term exact lim-sequence of $(*)$:
\begin{equation*}
\xymatrix{
\LimOf{F^{\bullet}(A)} \ar@{{ |>}->}[r]  &
	\LimOf{A} \ar[r]^-{\gamma_{-\infty}}_-{\cong} &
	\LimOf{\bar{I}A} \ar@{-{ >>}}[r] &
	\LimOneOf{F^{\bullet}(A)} \ar[r] &
	\LimOneOf{ \LimOf{A} } = 0
}
\end{equation*}
By exactness: $\LimOf{F^{\bullet}(A)} = 0 = \LimOneOf{F^{\bullet}(A)}$, as was to be shown.
\end{proof}

\begin{proposition}[Image filtration of $\CoLimOf{A}$ exhaustive]
\label{thm:ImageFiltrationCoLim(A)-Exhaustive}%
If $A$ is a $\ZCat$-diagram, then the image filtration $F_{\bullet}(A)$ of $\CoLimOf{A}$ is exhaustive, and $\CoLimOf{A_{\infty}/F_pA}=0$.
\end{proposition}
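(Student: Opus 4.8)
The claim has two parts: that the image filtration $F_{\bullet}(A)$ of $\CoLimOf{A}$ is exhaustive, and that $\CoLimOf{A_{\infty}/F_pA} = 0$. The plan is to read both statements directly off the explicit construction of $\CoLimOver{\ZCat}$ in (\ref{thm:CoLim^Z(A)}) and its basic consequences in (\ref{thm:CoLim(ZCatDiagram)-Properties}), together with the fact (\ref{thm:CoLim^ZExactFunctor}) that $\CoLimOver{\ZCat}$ is exact.

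First I would treat exhaustiveness. By definition, the image filtration has $F_pA = \Img{\pi_p\from A_p\to A_{\infty}}$, where $A_{\infty}\DefEq \CoLimOf{A}$ and $\pi_p$ is the cocone map. These submodules form a $\ZCat$-diagram via the inclusions $F_{p-1}A\subseteq F_pA$, which hold because $\pi_{p-1} = \pi_p\Comp a_{p-1}$, so $\Img{\pi_{p-1}}\subseteq \Img{\pi_p}$. To show the filtration is exhaustive, I must verify that the canonical map $\CoLimOfOver{F_{\bullet}A}{\ZCat}\to A_{\infty}$ is an isomorphism. The key input is (\ref{thm:CoLim(ZCatDiagram)-Properties}): every element $y\in A_{\infty}$ lies in the image of some $\pi_p$, hence $y\in F_pA$ for some $p$. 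This surjectivity of $\bigcup_p F_pA \to A_\infty$ gives that the comparison map is epic; it is monic because the $F_pA$ are an ascending chain of submodules of the fixed module $A_{\infty}$, and the colimit of such a chain is just the union, sitting inside $A_{\infty}$. Thus the map is an isomorphism, which is exactly exhaustiveness.

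For the second assertion, I would apply the exactness of $\CoLimOver{\ZCat}$ to the short exact sequence of $\ZCat$-diagrams
\begin{equation*}
\xymatrix@R=5ex@C=3em{
F_{\bullet}A \ar@{{ |>}->}[r] & \underline{A_{\infty}} \ar@{-{ >>}}[r] & A_{\infty}/F_{\bullet}A
}
\end{equation*}
where $\underline{A_{\infty}}$ denotes the constant $\ZCat$-diagram at $A_{\infty}$. Applying (\ref{thm:CoLim^ZExactFunctor}) yields the exact sequence of colimits $\CoLimOf{F_{\bullet}A}\to \CoLimOf{\underline{A_{\infty}}}\to \CoLimOf{A_{\infty}/F_{\bullet}A}\to 0$. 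Now $\CoLimOf{\underline{A_{\infty}}}\cong A_{\infty}$ since the colimit of a constant $\ZCat$-diagram is the constant value, and by the first part the left-hand map $\CoLimOf{F_{\bullet}A}\cong A_{\infty}\to A_{\infty}$ is the identity isomorphism. By exactness the cokernel $\CoLimOf{A_{\infty}/F_{\bullet}A}$ must vanish, which is the desired conclusion.

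The argument is essentially bookkeeping around the explicit colimit construction, so there is no deep obstacle; the only point requiring care is the identification of the comparison map $\CoLimOf{F_{\bullet}A}\to A_{\infty}$ with the inclusion of the union, ensuring it is simultaneously the map induced by the cocone and an isomorphism. I would make this precise by noting that the cocone maps $F_pA\hookrightarrow A_{\infty}$ are compatible with the structure maps, inducing the comparison map, and that its image is $\bigcup_p F_pA = A_{\infty}$ (epic) while its restriction to each $F_pA$ is injective into $A_{\infty}$, forcing injectivity of the colimit map.
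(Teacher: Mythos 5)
Your proof is correct, but the exhaustiveness half takes a genuinely different route from the paper's. The paper does not argue element-wise: it views the cocone $\pi\from A\to \underline{A_{\infty}}$ (constant diagram on the right) as having image factorization $A\twoheadrightarrow F_{\bullet}(A)\hookrightarrow \underline{A_{\infty}}$, and then invokes (\ref{thm:CoLim^ZCat-InModules-CommutesKer/Img}) — $\CoLimOver{\ZCat}$ commutes with image factorizations — so that applying the colimit yields a factorization $\CoLimOf{A}\twoheadrightarrow \CoLimOf{F_{\bullet}(A)}\hookrightarrow \CoLimOf{A}$ of the identity map, forcing both maps to be isomorphisms. You instead chase elements via (\ref{thm:CoLim(ZCatDiagram)-Properties}): every element of $A_{\infty}$ lies in some $F_pA$, and the colimit of an ascending chain of submodules is their union inside $A_{\infty}$. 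Both arguments are sound and rest on results already established in the paper; the paper's is shorter and purely arrow-theoretic, hence portable to any abelian category in which sequential colimits are exact, whereas yours is more concrete but incurs the bookkeeping you acknowledge at the end (identifying the comparison map $\CoLimOf{F_{\bullet}A}\to A_{\infty}$ with the inclusion of the union and checking it is monic and epic). Your second half — applying exactness of $\CoLimOver{\ZCat}$ (\ref{thm:CoLim^ZExactFunctor}) to the short exact sequence $F_{\bullet}A\to \underline{A_{\infty}}\to A_{\infty}/F_{\bullet}A$ and using exhaustiveness to kill the cokernel — is exactly the paper's argument.
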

\begin{proof}
Consider the image factorization $A_p \to F_pA\to \CoLimOf{A}$ of the structure maps of a colimit cocone for $A$. Via (\ref{thm:CoLim^ZCat-InModules-CommutesKer/Img}), we obtain the image factorization diagram
\begin{equation*}
\xymatrix@R=5ex@C=1em{
\CoLimOf{A} \ar@{=}[rr] \ar@{-{ >>}}[rd] &&
	\CoLimOf{A} \\
& \CoLimOf{F_{\bullet}(A)} \ar@{{ |>}->}[ru]
}
\end{equation*}
Thus the maps in this triangle are isomorphisms. Then  $\CoLimOf{A_{\infty}/F_pA}=0$ follows via exactness of $\CoLimOver{\ZCat}$.
\end{proof}

\begin{lemma}[$F_pA$ as a colimit]
\label{thm:F_pA-CoLimit}%
Given a $\ZCat$-diagram $A$, $F_pA\cong \CoLimOfOver{\Img{a_{p+r}\Comp\cdots \Comp a_p}}{r}$.
\end{lemma}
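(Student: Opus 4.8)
The plan is to recognize the colimit on the right as the $\omega$-th iterated image quotient diagram evaluated in position $p$, and then to invoke Proposition \ref{thm:ImageQuotient-Props}. First I would match indices: for each $r\geq 0$ the composite $a_{p+r}\Comp\cdots\Comp a_p$ runs from $A_p$ to $A_{p+r+1}$, so by Proposition \ref{thm:ImageQuotient-Props}(ii) its image is precisely $\ZDiagQuo{r+1}{p}{A}=\Img{A_p\to A_{p+r+1}}$. Hence the $\omega$-sequence $r\mapsto \Img{a_{p+r}\Comp\cdots\Comp a_p}$ is exactly the sequence $\ZDiagQuo{1}{p}{A}\to \ZDiagQuo{2}{p}{A}\to\cdots$.

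Next I would check that the transition maps in this sequence agree with the colimit structure maps of the defining diagram for $\ZDiagQuo{\omega}{}{A}$. The map $\Img{a_{p+r}\Comp\cdots\Comp a_p}\to \Img{a_{p+r+1}\Comp\cdots\Comp a_p}$ is the restriction of $a_{p+r+1}\from A_{p+r+1}\to A_{p+r+2}$, which is exactly the map $\ZDiagQuo{r+1}{p}{a}$ appearing in the image-factorization ladder used in the proof of Proposition \ref{thm:ImageQuotient-Props}(iii). Since a sequential colimit is unchanged by deleting its initial term $\ZDiagQuo{0}{p}{A}=A_p$, this gives $\CoLimOfOver{\Img{a_{p+r}\Comp\cdots\Comp a_p}}{r}\cong \ZDiagQuo{\omega}{p}{A}$.

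Finally, Proposition \ref{thm:ImageQuotient-Props}(iii) identifies $\ZDiagQuo{\omega}{p}{A}\cong \Img{A_p\to \CoLimOf{A}}$, and by the definition of the image filtration of $\CoLimOf{A}$ this is exactly $\Img{\pi_p}=F_pA$. Stringing the isomorphisms together yields the claim, with the required functoriality inherited from that of the image quotient construction.

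The only delicate point — and the one step I would actually write out — is the index bookkeeping ($r+1$ versus $r$) together with the verification that the colimit transition maps coincide with restrictions of the structure maps $a_{p+r}$; everything else is a direct appeal to machinery already in place. Should I wish to bypass that machinery, an alternative is to build the comparison map by hand: the cocone identity $\pi_{p+r+1}\Comp a_{p+r}\Comp\cdots\Comp a_p=\pi_p$ restricts $\pi_{p+r+1}$ to a surjection $\Img{a_{p+r}\Comp\cdots\Comp a_p}\to F_pA$ compatible with the transition maps, inducing an epimorphism from the colimit onto $F_pA$, and injectivity would then follow from the kernel description of sequential colimits in Corollary \ref{thm:CoLim(ZCatDiagram)-Properties}. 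I expect the image-quotient route to be the cleaner writeup.
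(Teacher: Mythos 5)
Your proposal is correct. It differs from the paper's proof in organization rather than substance: the paper argues directly, applying the exactness of $\CoLimOver{\ZCat}$ (\ref{thm:CoLim^ZExactFunctor}) to the sequence of $\omega$-diagrams $\Ker{a_{p+r}\Comp\cdots\Comp a_p} \to A_p \to \Img{a_{p+r}\Comp\cdots\Comp a_p} \to A_{p+r+1}$, so that in the colimit the composite $A_p \to \CoLimOf{\Img{a_{p+r}\Comp\cdots\Comp a_p}} \to A_{\infty}$ is exhibited as an image factorization of $\pi_p$, whence the middle term is $F_pA$. You instead reduce to Proposition \ref{thm:ImageQuotient-Props}: your index match $\Img{a_{p+r}\Comp\cdots\Comp a_p}=\ZDiagQuo{r+1}{p}{A}$ is right, deleting the initial term $A_p$ of the sequential colimit is harmless by finality, and Proposition \ref{thm:ImageQuotient-Props}(iii) then gives $\ZDiagQuo{\omega}{p}{A}\cong \Img{A_p\to\CoLimOf{A}}=F_pA$; there is no circularity, since that proposition lives in Section \ref{sec:ImageFactorization-Z-Diagrams}, prior to Section \ref{sec:Filtrations}. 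Since the paper's proof of \ref{thm:ImageQuotient-Props}(iii) is itself exactly the colimit-preserves-image-factorizations argument, your route buys economy (reuse of established machinery) at the cost of the transition-map bookkeeping you flag — and note one small conflation there: the vertical map in the ladder is the corestriction $\ZDiagQuo{r+1}{p}{A}\twoheadrightarrow \ZDiagQuo{r+2}{p}{A}$ of $a_{p+r+1}$, which is not literally the structure map $\ZDiagQuo{r+1}{p}{a}\from \ZDiagQuo{r+1}{p}{A}\to \ZDiagQuo{r+1}{p+1}{A}$ of the diagram $\ZDiagQuo{r+1}{}{A}$, though both are restrictions of $a_{p+r+1}$ landing in $A_{p+r+2}$. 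Your fallback argument (the cocone identity producing a compatible surjection onto $F_pA$, with injectivity from Corollary \ref{thm:CoLim(ZCatDiagram)-Properties}) is also sound and is the most elementary of the three routes.
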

\begin{proof}
Consider the sequence of $\ZCat$-diagrams:
\begin{equation*}
\xymatrix@R=5ex@C=3em{
\Ker{a_{p+r}\Comp\cdots \Comp a_p} \ar@{{ |>}->}[r] &
	A_p \ar@{-{ >>}}[r] &
	\Img{a_{p+r}\Comp\cdots \Comp a_p} \ar@{{ |>}->}[r] &
	A_{p+r+1}
}
\end{equation*}
As $\CoLimOver{\ZCat}$ is exact (\ref{thm:CoLim^ZExactFunctor}), we obtain the diagram of modules 
\begin{equation*}
\xymatrix@R=5ex@C=3em{
\CoLimOf{\Ker{a_{p+r}\Comp\cdots \Comp a_p}} \ar@{{ |>}->}[r] &
	A_p \ar@{-{ >>}}[r] &
	\CoLimOf{\Img{a_{p+r}\Comp\cdots \Comp a_p}} \ar@{{ |>}->}[r] &
	A_{\infty}
}
\end{equation*}
Thus, the composite $A_p\to \CoLimOf{\Img{a_{p+r}\Comp\cdots \Comp a_p}}\to A_{\infty}$ is an image factorization of $\pi_p\from A_p\to A_{\infty}$, and the claim follows.
\end{proof}

\begin{proposition}[Kernel filtration of $\LimOf{A}=A_{-\infty}$ - properties I]
\label{thm:KernelFiltrationLim(A)-Props}%
For  a $\ZCat$-diagram $A$ of $R$-modules the following hold:%
\index{kernel filtration of $\LimOfOver{A}{\ZCat}$!properties I}%
\vspace{-2ex}
\begin{enumerate}[(i)]
\item This functorial sequence is exact: $\xymatrix@R=5ex@C=1.6em{
0\to\CoLimOf{F^{\bullet}(A)} \ar@{{ |>}->}[r] &
	A_{-\infty} \ar[r]^-{R} &
	A_{\infty} \ar@{-{ >>}}[r] &
	\CoLimOf{\CoKer{\rho_p}}\to 0
}$.
\item $\Img{R}\cong \CoLimOf{\ZStableImg{A}}$ fits into this functorial exact sequence:
\begin{equation*}
\xymatrix@R=5ex@C=2em{
0\to \CoLimOf{\ZStableImg{A}} \ar@{{ |>}->}[r] &
	\LimOf{F_{\bullet}(A)} \ar[r] &
	\CoLimOf{\CoKer{\rho}}\ar[r] &
	\LimOf{A_{\infty}/F_{\bullet}(A)} \ar[r] &
	\LimOneOf{F_{\bullet}(A)}\to 0.
}
\end{equation*}
\end{enumerate}
\end{proposition}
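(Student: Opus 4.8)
The statement to prove is Proposition \ref{thm:KernelFiltrationLim(A)-Props}, which relates the kernel filtration $F^{\bullet}(A)$ of $A_{-\infty} = \LimOf{A}$ to the image filtration $F_{\bullet}(A)$ of $A_{\infty} = \CoLimOf{A}$ via the canonical comparison map $R\from A_{-\infty}\to A_{\infty}$.

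\textbf{Overall approach.} The plan is to build both exact sequences out of the machinery already assembled in this section, principally the exactness of $\CoLimOver{\ZCat}$ (\ref{thm:CoLim^ZExactFunctor}), the completeness of the kernel cofiltration (\ref{thm:KernelCoFiltration-Complete}), the stable-image-subdiagram identifications in (\ref{thm:StableImageSubDiagram-Properties}), and the colimit description of $F_pA$ in (\ref{thm:F_pA-CoLimit}). The key structural observation is that the comparison map $R$ factors through the image of $\rho$: recall from (\ref{thm:StableImageSubDiagram-Properties}.\ref{thm:StableImageSubDiagram-Properties-ImLim}) that $\Img{\rho_p} = \ZStableImgAt{A}{p}$, so $R$ arises by composing the cone projections with the cocone injections, and its image is a colimit of the stable image terms.

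\textbf{Part (i).} First I would write, for each $p$, the defining short exact sequence of $\ZCat$-diagrams $F^{p}(A)\to A_{-\infty}\to \ZStableImgAt{A}{p}$ as in the proof of (\ref{thm:KernelCoFiltration-Complete}), where the middle term is the constant diagram $\LimOf{A}$ and the right-hand map is $\gamma_p$. Passing to colimits over $\ZCat$ and using that $\CoLimOver{\ZCat}$ is exact gives $\CoLimOf{F^{\bullet}(A)} \hookrightarrow A_{-\infty} \to \CoLimOf{\ZStableImg{A}}$; the map $R$ is recovered by postcomposing $\CoLimOf{\ZStableImg{A}}$ with the canonical map into $A_{\infty}$. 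To identify the cokernel end of the sequence, I would dualize: the cofiltration of the cone maps gives $\rho_p$ with $\CoKer{\rho_p}$, and applying $\CoLimOver{\ZCat}$ to the short exact sequences $\ZStableImgAt{A}{p}\to A_p \to \CoKer{\rho_p}$ together with $\CoLimOf{\ZStableImg{A}}\cong \Img{R}$ yields the four-term sequence $0\to\CoLimOf{F^{\bullet}(A)}\to A_{-\infty}\xrightarrow{R} A_{\infty}\to \CoLimOf{\CoKer{\rho_p}}\to 0$. The splice point is checking that the image of $R$ agrees on both sides, i.e. that $\Img{R}\cong \CoLimOf{\ZStableImg{A}}$ sits inside $A_\infty$ as the kernel of $A_\infty\to\CoLimOf{\CoKer{\rho}}$; this is precisely the exactness of $\CoLimOver{\ZCat}$ applied to $\ZStableImgAt{A}{p}\to A_p\to \CoKer{\rho_p}$.

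\textbf{Part (ii).} Here I would take the isomorphism $\Img{R}\cong \CoLimOf{\ZStableImg{A}}$ from part (i) as the anchor and splice it into a $6$-term $\Lim$-$\LimOne$ sequence. The natural short exact sequence of $\ZCat$-diagrams to feed into (\ref{thm:Lim-LimOne6TermExact Sequence}) is the one built from the image filtration: using (\ref{thm:F_pA-CoLimit}) to present $F_\bullet(A)$ and the cofiltration $A_\infty/F_\bullet(A)$, the six-term exact sequence reads $\LimOf{F_{\bullet}(A)}\to \CoLimOf{\CoKer{\rho}}\to \LimOf{A_{\infty}/F_{\bullet}(A)}\to \LimOneOf{F_{\bullet}(A)}\to 0$, and I would then insert $\CoLimOf{\ZStableImg{A}}\hookrightarrow \LimOf{F_\bullet(A)}$ at the front using part (i) to obtain the stated five-term exact sequence. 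The one subtle point, which I expect to be the main obstacle, is keeping the $\Lim$/$\CoLim$ directions consistent: the image filtration $F_\bullet(A)$ is an $\omega$-diagram built from colimits (\ref{thm:F_pA-CoLimit}), yet its \emph{limit} over the filtration index is what appears, so I must verify that the terms $\LimOf{F_\bullet(A)}$ and $\LimOf{A_\infty/F_\bullet(A)}$ genuinely arise from a short exact sequence of $\ZCat$-diagrams to which (\ref{thm:Lim-LimOne6TermExact Sequence}) applies, and that the connecting map matches $\CoLimOf{\CoKer{\rho}}$ through the identification of part (i). Once the diagrams are set up so that limits and colimits are taken over the correct indexing categories, the exactness assertions follow formally, and functoriality in both parts is inherited from the functoriality of all the constructions invoked.
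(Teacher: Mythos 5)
Your part (i) is correct and follows essentially the paper's own route: the paper applies the exactness of $\CoLimOver{\ZCat}$ (\ref{thm:CoLim^ZExactFunctor}) once to the concatenated exact sequence of $\ZCat$-diagrams $F^{\bullet}(A)\to A_{-\infty}\to A\to \CoKer{\rho_{\bullet}}$ (constant middle term), whereas you splice the two short exact sequences $F^{p}(A)\to A_{-\infty}\to \ZStableImgAt{A}{p}$ and $\ZStableImgAt{A}{p}\to A_p\to \CoKer{\rho_p}$; this is the same computation, and it correctly yields $\Img{R}\cong \CoLimOf{\ZStableImg{A}}$ as a by-product.

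The gap is in part (ii). The six-term sequence (\ref{thm:Lim-LimOne6TermExact Sequence}) applied to the short exact sequence of $\ZCat$-diagrams $F_{\bullet}(A)\to A_{\infty}\to A_{\infty}/F_{\bullet}(A)$ (constant middle term) reads
\begin{equation*}
0\to \LimOf{F_{\bullet}(A)} \longrightarrow A_{\infty} \longrightarrow \LimOf{A_{\infty}/F_{\bullet}(A)} \longrightarrow \LimOneOf{F_{\bullet}(A)}\to 0,
\end{equation*}
since the constant diagram has $\Lim$ equal to $A_{\infty}$ and vanishing $\LimOne$ (\ref{thm:Lim^1=0IfStructureMapsAreEpimorphic}). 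The second term is $A_{\infty}$, \emph{not} $\CoLimOf{\CoKer{\rho}}$, so the sequence you wrote down is not what the lemma produces, and the target five-term sequence cannot be obtained by ``inserting $\CoLimOf{\ZStableImg{A}}$ at the front'': one must replace $A_{\infty}$ by its quotient $\CoLimOf{\CoKer{\rho}}=A_{\infty}/\Img{R}$ and re-establish exactness at the two affected positions. The missing ingredient is the containment $\Img{R}=\CoLimOf{\ZStableImg{A}}\subseteq \LimOf{F_{\bullet}(A)}$ as submodules of $A_{\infty}$; it holds because $R=\pi_p\Comp \rho_p$ for every $p$, so $\Img{R}\subseteq \Img{\pi_p}=F_p(A)$ for all $p$, whence $\Img{R}\subseteq \FamIntrsctn{p}{F_p(A)}=\LimOf{F_{\bullet}(A)}$. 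Granting this, the general splicing fact (if $K\subseteq L=\Ker{M\to N}$ and $0\to L\to M\to N\to P\to 0$ is exact, then $0\to K\to L\to M/K\to N\to P\to 0$ is exact) gives exactly the claimed sequence. The paper packages this step as the construction of a map $\bar{\tau}\from \CoLimOf{\ZStableImg{A}}\to \LimOf{F_{\bullet}(A)}$ compatible with both inclusions into $A_{\infty}$, followed by the snake lemma applied to the morphism from the short exact sequence of part (i) to the reduced six-term sequence above. So your ``main obstacle'' is misdiagnosed: it is not an indexing consistency issue between $\Lim$ and $\CoLim$, but the substantive identification step that constitutes the actual proof of (ii).
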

\begin{proof}
The concatenation of short exact sequences below determines a corresponding sequence of $\ZCat$-diagrams:
\begin{equation*}
\xymatrix@R=5ex@C=3em{
F^p(A) \ar@{{ |>}->}[r] &
	A_{-\infty} \ar@{-{ >>}}[rd]_-{\gamma_p} \ar[rr]^-{\rho_p} &&
	A_p \ar@{-{ >>}}[r] &
	\CoKer{\rho_p} \\
&& 	\ZStableImgAt{A}{p} \ar@{{ |>}->}[ru] &
}
\end{equation*}
Part (i) follows as $\CoLimOver{\ZCat}$ is exact. We note that $R=\pi_p\Comp \rho_p$ for any $p\in \ZNr$.

(ii)\quad That $\Img{R}\cong\CoLimOf{\ZStableImg{A}}$ follows with the proof of (i). The stable image diagram of $A$ fits into the commutative diagram below.
\begin{equation*}
\xymatrix@R=5ex@C=4em{
\LimOf{\ZStableImg{A}}\cong A_{-\infty} \ar[r]^-{\tau} \ar@{-{ >>}}[d] \ar@{-{ >>}}@/_3ex/@<-3ex>[dd]_{t} &
	\LimOf{F_{\bullet}(A)} \ar@{{ |>}->}[d] \ar@{{ |>}->}@/^3ex/@<3ex>[dd]^{u} \\
\ZStableImg{A} \ar[r] \ar@{-{ >>}}[d] &
	F_{\bullet}(A) \ar@{{ |>}->}[d] & \\
\CoLimOf{\ZStableImg{A}} \ar@{{ |>}->}[r]_-{s} \ar@{{ |>}.>}[ruu]_(.35){\bar{\tau}} &
	\CoLimOf{F_{\bullet}(A)}=A_{\infty}
}
\end{equation*}
By design, $st=R$. So $0=\tau(\Ker{R})=\Ker{t}$; i.e. $\tau$ factors through $t$ via $\bar{\tau}\from \CoLimOf{\ZStableImg{A}}\to \LimOf{F_{\bullet}(A)}$, unique with $\bar{\tau}t=\tau$. Then $u\bar{\tau}=s$ follows via the epimorphic property of $t$. The short exact sequences of $\ZCat$-diagrams $\ZStableImg{A}\to A\to \CoKer{\rho}$ and $F_{\bullet}(A)\to A_{\infty}\to A_{\infty}/F_{\bullet}(A)$ determine exact sequences:
\begin{equation*}
\xymatrix@R=5ex@C=4em{
\CoLimOf{\ZStableImg{A}} \ar@{{ |>}->}[r]^-{s} \ar@{{ |>}->}[d]_{\bar{\tau}} &
	A_{\infty} \ar@{=}[d] \ar@{-{ >>}}[r] &
	\CoLimOf{\CoKer{\rho}} \ar@{.>}[d]^{T}  \ar[r] &
	0 \ar[d] \\
\LimOf{F_{\bullet}(A)} \ar@{{ |>}->}[r]_-{u} &
	A_{\infty} \ar[r] &
	\LimOf{A_{\infty}/F_{\bullet}(A)} \ar@{-{ >>}}[r] &
	\LimOneOf{F_{\bullet}(A)}
}
\end{equation*}
We just showed that the square on the left commutes. So there exists a map $T$, unique with the property that the center square commutes. Commutativity of the square on the right follows. Now the claim follows via the snake lemma and a bit of diagram chasing.
\end{proof}

\begin{corollary}[Conditions for exhaustive kernel filtration]
\label{thm:ExhaustiveKernelFiltration-Conditions}
A $\ZCat$-diagram $A$ yields an exhaustive kernel filtration of $A_{-\infty}$ if and only if $R\from A_{-\infty}\to A_{\infty}$ is the $0$-map. \NoProof
\end{corollary}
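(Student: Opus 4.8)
The plan is to deduce the corollary directly from the four-term exact sequence established in Proposition~(\ref{thm:KernelFiltrationLim(A)-Props}.i), since the statement appears immediately afterward and is essentially a reformulation of it. First I would unwind the definition of \emph{exhaustive}: the kernel filtration $F^{\bullet}(A)$ of $A_{-\infty}$ is exhaustive precisely when the canonical universal map $\CoLimOf{F^{\bullet}(A)}\to A_{-\infty}$ is an isomorphism. So the whole task reduces to characterizing when this single comparison map is an isomorphism.

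Next I would invoke the exact sequence
\begin{equation*}
0\to \CoLimOf{F^{\bullet}(A)} \longrightarrow A_{-\infty} \XRA{R} A_{\infty} \longrightarrow \CoLimOf{\CoKer{\rho_p}}\to 0
\end{equation*}
from (\ref{thm:KernelFiltrationLim(A)-Props}.i). Exactness at the left-hand term tells us that the comparison map $\CoLimOf{F^{\bullet}(A)}\to A_{-\infty}$ is \emph{always} a monomorphism, while exactness at $A_{-\infty}$ identifies its image with $\Ker{R}$. Consequently this monomorphism is an isomorphism if and only if it is surjective, i.e. if and only if $\Ker{R}=A_{-\infty}$. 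The latter holds exactly when $R\from A_{-\infty}\to A_{\infty}$ is the zero map, which delivers both directions of the claimed equivalence simultaneously.

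The only point requiring care --- and the nearest thing to an obstacle --- is confirming that the left-hand arrow appearing in the Proposition's exact sequence really is the universal map from the colimit of the filtration $\ZCat$-diagram $F^{\bullet}(A)$ to its ambient object $A_{-\infty}$, so that ``its being an isomorphism'' coincides with the definition of exhaustiveness. This is immediate from the way that sequence is assembled, namely by applying the exact functor $\CoLimOver{\ZCat}$ to the inclusions $F^{p}(A)\hookrightarrow A_{-\infty}$ and recording $R=\pi_p\Comp\rho_p$. With that identification in place no further computation is needed, and the corollary follows as a formal consequence of the proposition.
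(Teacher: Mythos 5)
Your proof is correct and is exactly the argument the paper intends: the corollary is stated without proof precisely because it is the immediate consequence of Proposition~(\ref{thm:KernelFiltrationLim(A)-Props}.i) that you spell out, with the left-hand arrow of that exact sequence being the universal map $\CoLimOf{F^{\bullet}(A)}\to A_{-\infty}$ (since the sequence arises by applying the exact functor $\CoLimOver{\ZCat}$ to the inclusions $F^{p}(A)\hookrightarrow A_{-\infty}$), so that exhaustiveness is equivalent to $\Ker{R}=A_{-\infty}$, i.e.\ to $R=0$. Your identification of the one point requiring care is also the right one, and your resolution of it matches the proposition's construction.
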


\begin{proposition}[$F(M)$ exhaustive implies kernel filtration of $M_{F}^{\wedge}$ exhaustive]
\label{thm:F(M)Exhaustive->Ker(M_f^Wedge)Exhaustive}
If a filtration $F$ of $M$ is exhaustive, then so is the filtration of $M_{F}^{\wedge}$ by the kernels of $M_{F}^{\wedge}\to M/F_p$.
\end{proposition}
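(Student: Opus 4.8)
The statement to prove is Proposition \ref{thm:F(M)Exhaustive->Ker(M_f^Wedge)Exhaustive}: if a filtration $F$ of $M$ is exhaustive, then the filtration of the completion $M_{F}^{\wedge} = \LimOf{M/F}$ by the kernels of the cofiltration maps $M_{F}^{\wedge}\to M/F_p$ is exhaustive.

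The plan is to recognize that the kernel filtration described is exactly the kernel filtration $F^{\bullet}(A)$ of $A_{-\infty}$ for a suitable $\ZCat$-diagram $A$, and then to apply the exhaustiveness criterion already established in (\ref{thm:ExhaustiveKernelFiltration-Conditions}). Concretely, I would set $A$ to be the cofiltration $\ZCat$-diagram $A_p \DefEq M/F_p$, with structure maps the canonical surjections $M/F_p \to M/F_{p+1}$. Then $A_{-\infty} = \LimOf{A} = M_{F}^{\wedge}$ by definition of the $F$-completion, and the cone maps $\rho_p\from A_{-\infty}\to A_p$ are precisely the canonical maps $M_{F}^{\wedge}\to M/F_p$. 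Hence the kernel filtration $F^{\bullet}(A)$ with $F^p(A) = \Ker{\rho_p}$ is exactly the filtration of $M_{F}^{\wedge}$ whose exhaustiveness we must establish.

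By (\ref{thm:ExhaustiveKernelFiltration-Conditions}), this kernel filtration is exhaustive if and only if the canonical map $R\from A_{-\infty}\to A_{\infty}$ is the zero map, where $R = \pi_p\Comp\rho_p$ for any $p$ (factoring through the stable image). So the key step is to show that $R=0$ for our choice of $A$. To do this I would use the hypothesis that the original filtration $F$ of $M$ is exhaustive, i.e. $\CoLimOf{F}\to M$ is an isomorphism. Since the structure maps $a_p\from M/F_p \to M/F_{p+1}$ are surjective, the stable image subdiagram $\ZStableImg{A}$ has $\ZStableImgAt{A}{p} = \Img{\rho_p}$ by (\ref{thm:StableImageSubDiagram-Properties}.\ref{thm:StableImageSubDiagram-Properties-ImLim}); I would argue that this stable image is $0$ in each position. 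The point is that an element of $M/F_p$ lying in the image of $\rho_p$ comes from a compatible system in the limit, and exhaustiveness of $F$ forces such a system to vanish: given any $m\in M$, exhaustiveness means $m\in F_q$ for $q$ large enough, so its class dies in the cofiltration far enough out, which makes the stable image trivial. With $\ZStableImg{A}=0$ we get $\Img{R}\cong \CoLimOf{\ZStableImg{A}} = 0$ via (\ref{thm:KernelFiltrationLim(A)-Props}.i), hence $R=0$, and (\ref{thm:ExhaustiveKernelFiltration-Conditions}) gives exhaustiveness.

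The main obstacle I anticipate is making rigorous the claim that $\ZStableImgAt{A}{p}=0$, i.e. that exhaustiveness of $F$ forces the stable image of the cofiltration diagram to vanish position-by-position. The subtlety is that $\ZStableImgAt{A}{p} = \FamIntrsctn{r}{\Img{A_{p-r}\to A_p}}$, the intersection over all $r$ of the images of the backward structure maps $M/F_{p-r}\to M/F_p$, and I must check that exhaustiveness of $F$ — phrased as $\CoLimOf{F}\xrightarrow{\cong} M$ — indeed collapses this intersection to zero rather than merely controlling the forward colimit. I expect this to reduce to the elementary observation, via (\ref{thm:CoLim(ZCatDiagram)-Properties}) applied to the diagram $F$, that every element of $M$ is represented in some $F_q$, so that the composite $M/F_{p-r}\to M/F_p$ has image generated by classes that are eventually already absorbed into the filtration; I would unwind this on representatives to confirm the intersection is trivial. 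Once that pointwise vanishing is secured, the rest of the argument is a direct citation of the already-proved structural results.
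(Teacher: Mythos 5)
Your overall strategy---identify the filtration in question with the kernel filtration $F^{\bullet}(A)$ of $A_{-\infty}$ for the cofiltration diagram $A_p\DefEq M/F_p$, then invoke the criterion (\ref{thm:ExhaustiveKernelFiltration-Conditions}) that exhaustiveness is equivalent to $R\from A_{-\infty}\to A_{\infty}$ being the zero map---is sound. But the step you propose for verifying $R=0$ is genuinely wrong: the stable image subdiagram does \emph{not} vanish position-by-position; it is as large as it can be. The structure maps $M/F_{p-1}\to M/F_p$ of your diagram $A$ are all surjective, so every backward image $\Img{A_{p-r}\to A_p}$ equals $A_p$, and hence $\ZStableImgAt{A}{p}=M/F_p$ (this is precisely Example \ref{exa:Z-Diagram-EpimorphicStructureMaps}: a $\ZCat$-diagram with epimorphic structure maps satisfies $\ZStableImg{A}=A$); likewise the cone maps $\rho_p\from M^{\wedge}_{F}\to M/F_p$ are surjective, not zero. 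Incidentally, the intersection formula you write down describes $I^{\omega}_{p}A$, which in general only \emph{contains} $\ZStableImgAt{A}{p}$; here both equal $M/F_p$. A concrete counterexample to your pointwise-vanishing claim: take $M=\ZNr$, $F_p=0$ for $p<0$ and $F_p=M$ for $p\geq 0$. This filtration is exhaustive, $M/F_p=\ZNr$ for $p<0$, the completion is $M^{\wedge}_{F}\cong \ZNr$, and $\rho_{-1}$ is the identity, so $\ZStableImgAt{A}{-1}=\ZNr\neq 0$. The root of the error is a direction mix-up: exhaustiveness of $F$ kills classes in the \emph{forward} (colimit) direction of the cofiltration, whereas the stable image at $p$ is an intersection of images coming from the \emph{backward} direction, and surjectivity makes that intersection everything.

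The repair is to redirect the vanishing one step: what exhaustiveness actually forces to vanish is the colimit $A_{\infty}=\CoLimOf{M/F}$, and that alone gives $R=0$, since $R$ has codomain $A_{\infty}$. Precisely: apply the exact functor $\CoLimOver{\ZCat}$ (\ref{thm:CoLim^ZExactFunctor}) to the short exact sequence of $\ZCat$-diagrams $F_{p}\to M\to M/F_{p}$ (constant middle term) to obtain the short exact sequence $\CoLimOf{F}\to M\to \CoLimOf{M/F}$; exhaustiveness says the first map is an isomorphism, hence $\CoLimOf{M/F}=0$, hence $R=0$, and (\ref{thm:ExhaustiveKernelFiltration-Conditions}) yields the claim. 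This corrected route is, in substance, the paper's own proof: the paper applies the exact colimit functor to the morphism of short exact sequences from $F_p\to M\to M/F_p$ to $\Ker{t_p}\to M^{\wedge}_{F}\to M/F_p$ and reads off $\CoLimOf{\Ker{t_p}}\cong M^{\wedge}_{F}$ from $\CoLimOf{M/F}=0$. So once your stable-image step is replaced by the colimit computation, the two arguments coincide.
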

\begin{proof}
Since the maps of the diagram $M/F$ consist of surjections, so do the cone maps $M^{\wedge}_{F}\to M/F_p$. Thus, we obtain this morphism of short exact sequences of $\ZCat$-diagrams:
\begin{equation*}
\xymatrix@R=5ex@C=4em{
F_p \ar@{{ |>}->}[r] \ar[d] &
	M \ar@{-{ >>}}[r] \ar[d] &
	M/F_p \ar@{=}[d] \\
\Ker{t_p} \ar@{{ |>}->}[r] &
	M^{\wedge}_{F} \ar@{-{ >>}}[r]_{t_p} &
	M/F_p
}
\end{equation*}
Taking colimits of the top row yields the short exact sequence
\begin{equation*}
\xymatrix@R=5ex@C=4em{
\CoLimOf{F} \ar@{{ |>}->}[r] &
	M \ar@{-{ >>}}[r] &
	\CoLimOf{M/F}
}
\end{equation*}
As $F$ is exhaustive in $M$, the left hand arrow is an isomorphism, implying that $\CoLimOf{M/F}=0$. Using this information in the short exact sequence of colimits of the bottom row yields the claim.
\end{proof}

\begin{example}[$\ZCat$-diagram with monomorphic structure maps]
\label{exa:Z-Diagram-MonomorphicStructureMaps}
If all structure maps in a $\ZCat$-diagram $A$ are monomorphic, then the following hold:
\vspace{-2ex}
\begin{enumerate}[(i)]
\item $A_{-\infty}\to \LimOf{F_{\bullet}A}$ is an isomorphism.
\item $F^{\bullet}A$ is the $0$-diagram.
\item $\bar{I}_pA \cong \LimOf{A}$ for all $p\in \ZNr$, and $\bar{I}A\cong I^{\omega}A$.
\end{enumerate}
\end{example}
\begin{proof}
(i) holds because $A\to F_{\bullet}A$ is an isomorphism of $\ZCat$-diagrams. Further, $A_{-\infty}\cong \FamIntrsctn{n}{F_nA}$ which turns the cone maps $A_{-\infty}\to A_n$ into inclusions. This implies (ii). To see (iii), note that
\begin{equation*}
I^{\omega}_{p}A \cong \FamIntrsctn{p-k}{A_{p-k}} \cong A_{-\infty} \cong \LimOf{F_{\bullet}A}.
\end{equation*}
Thus $I^{\omega}A$ is a diagram of identity maps, implying that $\ZStableImg{A}\cong \ZDiagImg{\omega}{}{A}$, as claimed.
\end{proof}

\begin{example}[$\ZCat$-diagram with epimorphic structure maps]
\label{exa:Z-Diagram-EpimorphicStructureMaps}
If all structure maps of a $\ZCat$-diagram $A$ are epimorphisms, then the following hold:
\vspace{-2ex}
\begin{enumerate}[(i)]
\item For all $p\in \ZNr$, $A_p\to \CoLimOf{A}$ is an epimorphism, and so $F_pA = \CoLimOf{A}=\LimOf{F_{\bullet}A}$.
\item $\ZStableImg{A} = A$
\item The sequence $\CoLimOf{F^{\bullet}A} \to A_{-\infty}\to A_{\infty}$ is short exact.
\end{enumerate}
\end{example}
\begin{proof}
(i)\quad That $A_p\to \CoLimOf{A}$ is epic follows with (\ref{thm:CoLim(ZCatDiagram)-Properties}). Thus $F_pA=\CoLimOf{A}$ for all $p$, and $\CoLimOf{A}=\LimOf{F_{\bullet}A}$ follows. (ii) holds because $\ZDiagImg{1}{}{A}=A$. 

(iii)\quad  By (\ref{thm:StableImageSubDiagram-Properties}), $\rho_p\from \LimOf{A} \to \ZStableImgAt{A}{p}=A_p$ is epic. So $\CoKer{\rho_{\bullet}}$ is the $0$-diagram. The claim follows via  (\ref{thm:KernelFiltrationLim(A)-Props}.i).
\end{proof}

\begin{example}[$\ZCat$-diagram satisfying the Mittag-Leffler condition]
\label{exa:Z-Diagram-MittagLefflerCondition}
If a $\ZCat$-diagram $A$ satisfies the Mittag-Leffler condition (\ref{def:Mittag-Leffler/CoMittag-Leffler-Condition}), then the following hold:
\vspace{-2ex}
\begin{enumerate}
\item $\ZStableImg{A}=I^{\omega}A$
\item The inclusion $\CoLimOf{I^{\omega}A}\to \CoLimOf{A}$ need not be an isomorphism.
\end{enumerate}
\end{example}
\begin{proof}
(i) follows from the definition of the Mittag-Leffler property. An example for (ii) is given by this $\ZCat$-diagram:
\begin{equation*}
\cdots \to 0 \longrightarrow \ZNr \XRA{\times 2} \ZNr \XRA{\times 2} \ZNr \longrightarrow \cdots
\end{equation*}
It satisfies $I^{\omega}A = 0$ and, hence $\CoLimOf{I^{\omega}A}=0$, while $\CoLimOf{A} \cong \ZPoly{\tfrac{1}{2}}$.
\end{proof}

We now refine the investigation of image subdiagrams of a given $\ZCat$-diagram

\begin{lemma}[Kernel / image sequences of $A$]
\label{thm:Kernel/ImageSequencesOfZ-Diagram}%
Given a $\ZCat$-diagram $A$ in $\ModulesOver{R}$ and $p\in\ZNr$ fixed, the construction below yields a short exact sequence of $\ZCat$-diagrams.
\begin{equation*}
\xymatrix@C=2.5em@R=5ex{
\ZPointKer{A}{p} \ar@{{ |>}->}[d] &
	\cdots \ar[r] &
	\ZPointKerAt{A}{p}{p-2} \ar[r] \ar@{{ |>}->}[d] &
	\ZPointKerAt{A}{p}{p-1} \ar[r] \ar@{{ |>}->}[d] &
	0 \ar@{=}[r] \ar[d] &
	0 \ar[r] \ar[d] &
	\cdots \\
A \ar@{-{ >>}}[d]_{\mu^p} &
	\cdots \ar[r] &
	A_{p-2} \ar[r] \ar@{-{ >>}}[d] &
	A_{p-1} \ar[r]^-{a_{p-1}} \ar@{-{ >>}}[d] &
	A_p \ar[r]^-{a_p} \ar@{=}[d] &
	A_{p+1} \ar[r] \ar@{=}[d] &
	\cdots \\
\ZPointImg{A}{p} &
	\cdots \ar@{{ |>}->}[r] &
	\ZImgDiagItrtdAt{A}{2}{p} \ar@{{ |>}->}[r] &
	\ZImgDiagItrtdAt{A}{1}{p} \ar@{{ |>}->}[r] &
	A_p \ar[r]_-{a_p} &
	A_{p+1} \ar[r] &
	\cdots
}
\end{equation*}
\index[not]{$I_pA$}\index[not]{$K^pA$}
Moreover, $\LimOf{ \ZPointKer{A}{p} }\cong F^pA$ and $\LimOf{\ZPointImg{A}{p}}\cong \ZImgDiagItrtdAt{A}{\omega}{p}$.
\end{lemma}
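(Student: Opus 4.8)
The plan is to verify that the displayed array really is a short exact sequence of $\ZCat$-diagrams, and then to compute the two limits separately, the second of which feeds the first through the $6$-term exact sequence.

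\textbf{The short exact sequence.} Fix $p$. In each position $n\leq p-1$ the two vertical maps are the canonical inclusion $\ZPointKerAt{A}{p}{n}=\Ker(A_n\to A_p)\hookrightarrow A_n$ and the corestriction $\mu^p_n\from A_n\twoheadrightarrow \Img(A_n\to A_p)=\ZImgDiagItrtdAt{A}{p-n}{p}$ of the composite $A_n\to A_p$ onto its image; by the first isomorphism theorem the column $0\to \ZPointKerAt{A}{p}{n}\to A_n\to \ZImgDiagItrtdAt{A}{p-n}{p}\to 0$ is exact. In each position $n\geq p$ the top term is $0$ and $\mu^p_n=\IdMapOn{A_n}$, so exactness is trivial. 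It then remains to confirm that these families are morphisms of $\ZCat$-diagrams, which is routine: $a_n$ carries $\Ker(A_n\to A_p)$ into $\Ker(A_{n+1}\to A_p)$ because $A_n\to A_p$ factors through $a_n$, and the structure maps of the bottom row are exactly the inclusions $\ZImgDiagItrtdAt{A}{p-n}{p}\hookrightarrow \ZImgDiagItrtdAt{A}{p-n-1}{p}$, so $\mu^p$ commutes with structure maps. This establishes the short exact sequence $\ZPointKer{A}{p}\to A\XRA{\mu^p}\ZPointImg{A}{p}$.

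\textbf{The limit of $\ZPointImg{A}{p}$.} Since $\LimOf{}$ of a $\ZCat$-diagram depends only on its tail toward $-\infty$ (initiality of $\OrdOmegaOp\to\ZCat$), it is computed from the terms in positions $n\leq p$, which by (\ref{thm:ImageSubDiagram-Props}.ii) form the descending chain of submodules of $A_p$
\begin{equation*}
\cdots \subseteq \ZImgDiagItrtdAt{A}{2}{p}\subseteq \ZImgDiagItrtdAt{A}{1}{p}\subseteq \ZImgDiagItrtdAt{A}{0}{p}=A_p
\end{equation*}
with inclusions as structure maps. The limit of a tower of inclusions is the intersection, so $\LimOf{\ZPointImg{A}{p}}\cong \FamIntrsctn{r\geq 0}{\ZImgDiagItrtdAt{A}{r}{p}}$, which is exactly $\ZImgDiagItrtdAt{A}{\omega}{p}$ by the positionwise description of $\ZDiagImg{\omega}{}{A}$ in (\ref{def:ImageQuotientSubDiagrams}).

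\textbf{The limit of $\ZPointKer{A}{p}$.} Feeding the short exact sequence into the left-exact beginning of the $6$-term sequence (\ref{thm:Lim-LimOne6TermExact Sequence}) gives $\LimOf{\ZPointKer{A}{p}}\cong \Ker(\LimOf{A}\xrightarrow{\Lim(\mu^p)}\ZImgDiagItrtdAt{A}{\omega}{p})$. I then identify $\Lim(\mu^p)$ with the cone map: composing $\Lim(\mu^p)$ with the inclusion $\ZImgDiagItrtdAt{A}{\omega}{p}\hookrightarrow A_p$ yields, by compatibility of the limit cone with the position-$p$ projection, precisely $\rho_p\from \LimOf{A}\to A_p$; that $\rho_p$ does factor through this inclusion is checked directly, since any $y_p\in\Img\rho_p$ lies in $\Img(A_{p-r}\to A_p)=\ZImgDiagItrtdAt{A}{r}{p}$ for every $r$. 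As the inclusion is monic, $\Ker(\Lim(\mu^p))=\Ker(\rho_p)=F^pA$, as claimed.

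\textbf{Main obstacle.} The only delicate point is the bookkeeping in the limit computations: one must invoke initiality of $\OrdOmegaOp\to\ZCat$ to discard the positions $n>p$ (which would otherwise appear to over-determine the limit) and then correctly match the position-$p$ projection of $\Lim(\mu^p)$ with $\rho_p$; everything else reduces to the first isomorphism theorem and the formula $\ZImgDiagItrtdAt{A}{r}{p}=\Img(A_{p-r}\to A_p)$. A self-contained alternative sidesteps the $6$-term sequence by describing $\LimOf{\ZPointKer{A}{p}}$ elementwise as the compatible families $(x_n)_{n\leq p-1}$ with $a_{p-1}(x_{p-1})=0$, which, extended by zero in positions $n\geq p$, biject with the families in $\Ker(\rho_p)=F^pA$.
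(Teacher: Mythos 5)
Your proof is correct and takes essentially the same route as the paper's: the paper likewise applies left exactness of $\Lim$ to the short exact sequence, identifies $\rho_p$ as the composite of $\Lim(\mu^p)$ with the monic cone map $\LimOf{\ZPointImg{A}{p}}\to A_p$ to conclude $\LimOf{\ZPointKer{A}{p}}=\Ker{\rho_p}=F^pA$, and reads off $\LimOf{\ZPointImg{A}{p}}\cong \ZImgDiagItrtdAt{A}{\omega}{p}$ from Definition \ref{def:ImageQuotientSubDiagrams}. Your additional columnwise verification of the short exact sequence and the explicit tower-of-inclusions/intersection computation merely spell out details the paper treats as immediate from the construction.
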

\begin{proof}
To identify $\LimOf{ \ZPointKer{A}{p}}$ consider the situation depicted in the diagram below.
\begin{equation*}
\xymatrix@R=5ex@C=4em{
\LimOf{\ZPointKer{A}{p}} \ar@{{ |>}->}[r] &
	\LimOf{A} \ar[d]_{\rho_p} \ar[r]^-{\mu^{p}_{-\infty}} &
	\LimOf{\ZPointImg{A}{p} } \ar@{{ |>}->}[d]^{\tilde{\rho}_p} \\
& A_p \ar@{=}[r] &
	A_p
}
\end{equation*}
The sequence on the top is exact, and so $F^pA = \Ker{\rho_p}=\Ker{\tilde{\rho}^p\Comp \mu^{p}_{-\infty}} = \Ker{\mu^{p}_{-\infty}}\cong \LimOf{K^pA}$. The isomorphism $\LimOf{\ZPointImg{A}{p}}\cong \ZImgDiagItrtdAt{A}{\omega}{p}$ is the definition of $\ZImgDiagItrtdAt{A}{\omega}{p}$; see (\ref{def:ImageQuotientSubDiagrams}).
\end{proof}

Given an exact couple, in discussing the relationship between the $\SSPage{\infty}$-terms of its spectral sequence and the adjacent filtration quotients of its universal abutments, we require conditions under which the map $\LimOneOf{K^pA}\to \LimOneOf{K^{p+1}A}$ is a monomorphism.

\begin{lemma}[Conditions: $\LimOneOf{K^pA}\to \LimOneOf{K^{p+1}A}$ monomorphism]
\label{thm:Conditions-Lim1(K^pA)->Lim1(K^(p+1)A)Monomorphism}%
Given a $\ZCat$-diagram $A$ in $\LModules{R}$ and $p\in\ZNr$ fixed, the following conditions are equivalent. %
\index{kernel filtration of $\LimOfOver{A}{\ZCat}$!properties II}
\begin{enumerate}[(i)]
\item The map $\LimOneOf{\ZPointKer{A}{p}}\to \LimOneOf{\ZPointKer{A}{p+1} }$ is a monomorphism.
\item The canonical map $\CoKer{F^pA\to F^{p+1}A}\to \Ker{\LimOf{\ZPointImg{A}{p}}\to \LimOf{\ZPointImg{A}{p+1}} }$ is an isomorphism.
\item $\Ker{a_p}\intrsctn \LimOf{\ZPointImg{A}{p}} \subseteq \Img{ \LimOf{A}\to \LimOf{\ZPointImg{A}{p}} } = \ZStableImgAt{A}{p}$.
\item $\Ker{a_p}\intrsctn \ZImgDiagItrtdAt{A}{\omega}{p} = \Ker{a_p}\intrsctn \ZStableImgAt{A}{p}$.
\item The map $\CoKer{F^pA\to F^{p+1}A}\to \Ker{\LimOf{\ZPointImg{A}{p}}\to \LimOf{\ZPointImg{A}{p+1}} }$ extends to an exact sequence:
{\small
\begin{equation*}
\xymatrix@R=4ex@C=2em{
F^pA \ar@{{ |>}->}[r] &
	F^{p+1}A \ar[r] &
	\LimOf{\ZPointImg{A}{p} } \ar[r] &
	\LimOf{\ZPointImg{A}{p+1}} \ar[r] &
	\CoKer{ \LimOneOf{\ZPointKer{A}{p} }\to \LimOneOf{\ZPointKer{A}{p+1} } } \ar@{->} `r/8pt[d] `/10pt[ll] `^dl[lll] `^r/3pt[rdlll] [rdlll] \\
&&	\LimOneOf{\ZPointImg{A}{p}} \ar@{-{ >>}}[r] &
	\LimOneOf{ \ZPointImg{A}{p+1} } &
}
\end{equation*}}
\end{enumerate}
\end{lemma}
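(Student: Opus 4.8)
The plan is to reduce every equivalence to the six-term $\Lim$-$\LimOne$ exact sequence (\ref{thm:Lim-LimOne6TermExact Sequence}) applied to two short exact sequences of $\ZCat$-diagrams built from the kernel and image diagrams of (\ref{thm:Kernel/ImageSequencesOfZ-Diagram}). First I would record that, viewed as subdiagrams of $A$, one has $\ZPointKer{A}{p}\subseteq\ZPointKer{A}{p+1}$, since $\Ker(A_n\to A_p)\subseteq\Ker(A_n\to A_{p+1})$ in every position $n$. This yields
\begin{equation*}
\ZPointKer{A}{p}\rightarrowtail\ZPointKer{A}{p+1}\twoheadrightarrow\ZPointKer{A}{p+1}/\ZPointKer{A}{p}. \tag{a}
\end{equation*}
Comparing the two presentations $\ZPointKer{A}{\bullet}\to A\to\ZPointImg{A}{\bullet}$ against one another via the snake lemma also produces
\begin{equation*}
\ZPointKer{A}{p+1}/\ZPointKer{A}{p}\rightarrowtail\ZPointImg{A}{p}\twoheadrightarrow\ZPointImg{A}{p+1}. \tag{b}
\end{equation*}

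The one substantive computation is the identification of the middle diagram. In position $n\leq p$ the structure map $A_n\to A_p$ induces an isomorphism $(\ZPointKer{A}{p+1}/\ZPointKer{A}{p})_n\cong\Img(A_n\to A_p)\intrsctn\Ker(a_p)$, while all terms in positions $>p$ vanish. Thus $\ZPointKer{A}{p+1}/\ZPointKer{A}{p}$ is the subdiagram $\ZPointImg{A}{p}\intrsctn\Ker(a_p)$ of $\ZPointImg{A}{p}$, and taking the limit of this descending tower of submodules of $\Ker(a_p)$ gives $\LimOf{\ZPointKer{A}{p+1}/\ZPointKer{A}{p}}\cong\ZImgDiagItrtdAt{A}{\omega}{p}\intrsctn\Ker(a_p)$; by the limit sequence of (b) this is exactly the target $\Ker(\LimOf{\ZPointImg{A}{p}}\to\LimOf{\ZPointImg{A}{p+1}})$ appearing in (ii).

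With this in place the equivalences fall out formally. Applying (\ref{thm:Lim-LimOne6TermExact Sequence}) to (a) gives the exact segment $F^pA\rightarrowtail F^{p+1}A\to\LimOf{\ZPointKer{A}{p+1}/\ZPointKer{A}{p}}\to\LimOneOf{\ZPointKer{A}{p}}\xrightarrow{\Lambda^p}\LimOneOf{\ZPointKer{A}{p+1}}$, so that $\Lambda^p$ is monic (condition (i)) iff $F^{p+1}A\to\LimOf{\ZPointKer{A}{p+1}/\ZPointKer{A}{p}}$ is epic, which is precisely the assertion that the canonical inclusion $\CoKer(F^pA\to F^{p+1}A)\to\LimOf{\ZPointKer{A}{p+1}/\ZPointKer{A}{p}}$ of (ii) is onto; this settles (i)$\Leftrightarrow$(ii). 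To reach (iii) and (iv) I would compute the image of $F^{p+1}A\to\LimOf{\ZPointKer{A}{p+1}/\ZPointKer{A}{p}}$: under the inclusion into $\LimOf{\ZPointImg{A}{p}}\cong\ZImgDiagItrtdAt{A}{\omega}{p}\subseteq A_p$ this map is the restriction to $F^{p+1}A=\Ker(\rho_{p+1})$ of $\rho_p\from\LimOf{A}\to A_p$, and since $\rho_{p+1}=a_p\Comp\rho_p$ its image is $\Img(\rho_p)\intrsctn\Ker(a_p)=\ZStableImgAt{A}{p}\intrsctn\Ker(a_p)$ by (\ref{thm:StableImageSubDiagram-Properties}). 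Hence (ii) holds iff $\ZStableImgAt{A}{p}\intrsctn\Ker(a_p)=\ZImgDiagItrtdAt{A}{\omega}{p}\intrsctn\Ker(a_p)$, which is (iv); and as $\ZStableImgAt{A}{p}\subseteq\ZImgDiagItrtdAt{A}{\omega}{p}$ always, (iv) is merely (iii) rewritten.

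For (v) I would splice the two six-term sequences. When (i) holds, (a) collapses to $0\to F^pA\to F^{p+1}A\to\LimOf{\ZPointKer{A}{p+1}/\ZPointKer{A}{p}}\to0$ and identifies $\LimOneOf{\ZPointKer{A}{p+1}/\ZPointKer{A}{p}}\cong\CoKer(\Lambda^p)$, while the limit sequence of (b) supplies the tail $\LimOf{\ZPointImg{A}{p}}\to\LimOf{\ZPointImg{A}{p+1}}\to\CoKer(\Lambda^p)\to\LimOneOf{\ZPointImg{A}{p}}\twoheadrightarrow\LimOneOf{\ZPointImg{A}{p+1}}$; conversely, exactness of the displayed sequence of (v) at $\LimOf{\ZPointImg{A}{p}}$ is exactly the statement that $F^{p+1}A\to\LimOf{\ZPointKer{A}{p+1}/\ZPointKer{A}{p}}$ is epic, i.e.\ (i). The main obstacle I anticipate is the bookkeeping in the middle identification — extracting the quotient diagram $\ZPointKer{A}{p+1}/\ZPointKer{A}{p}$, its limit, and the image of $F^{p+1}A$ correctly aligned inside $A_p$ — since every remaining step is a mechanical application of the exactness machinery already established.
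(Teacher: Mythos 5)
Your proof is correct, and it reaches the lemma by a genuinely different decomposition than the paper's. The paper never forms the quotient diagram $\ZPointKer{A}{p+1}/\ZPointKer{A}{p}$: instead it treats the inclusion/identity/projection triple as a morphism from the short exact sequence $\ZPointKer{A}{p}\rightarrowtail A\twoheadrightarrow\ZPointImg{A}{p}$ to $\ZPointKer{A}{p+1}\rightarrowtail A\twoheadrightarrow\ZPointImg{A}{p+1}$, applies (\ref{thm:Lim-LimOne6TermExact Sequence}) to both rows, and extracts from the resulting ladder, by the snake lemma and a diagram chase, the short exact sequence
\begin{equation*}
\CoKer{F^pA\to F^{p+1}A}\ \rightarrowtail\ \Ker{\LimOf{\ZPointImg{A}{p}}\to\LimOf{\ZPointImg{A}{p+1}}}\ \twoheadrightarrow\ \Ker{\LimOneOf{\ZPointKer{A}{p}}\to\LimOneOf{\ZPointKer{A}{p+1}}},
\end{equation*}
which gives (i)$\Leftrightarrow$(ii) at once; (iii) and (iv) are then obtained from $\Ker{\partial}=\Img{\LimOf{A}\to\LimOf{\ZPointImg{A}{p}}}$ together with the identifications already available in (\ref{thm:Kernel/ImageSequencesOfZ-Diagram}) and (\ref{thm:StableImageSubDiagram-Properties}), and (v) by further chasing in the same ladder. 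You instead push the orthogonal pair of short exact sequences (a) and (b) through the six-term machine. This obliges you to do the one computation the paper's route avoids --- the position-wise identification $\ZPointKer{A}{p+1}/\ZPointKer{A}{p}\cong \ZPointImg{A}{p}\intrsctn\Ker{a_p}$, whence $\LimOf{\ZPointKer{A}{p+1}/\ZPointKer{A}{p}}\cong \ZImgDiagItrtdAt{A}{\omega}{p}\intrsctn\Ker{a_p}$ --- but it pays for itself: the right-hand object of (ii) is a concrete intersection from the outset, so (iii) and (iv) become transparent rewritings (via $\rho_{p+1}=a_p\Comp\rho_p$ and $\Img{\rho_p}=\ZStableImgAt{A}{p}$) rather than the endpoint of a chase, and (v) is literally a splice of the two six-term sequences of (a) and (b). Both arguments consume the same prior results, so the difference is one of bookkeeping rather than depth; yours makes the middle object explicit and hence more illuminating, while the paper's gets the pivotal short exact sequence in a single snake-lemma application without ever computing what its middle term is.
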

\begin{proof}
Consider the morphism of short exact sequences of $\ZCat$-diagrams:
\begin{equation*}
\xymatrix@C=3em@R=5ex{
\ZPointKer{A}{p} \ar@{{ |>}->}[r] \ar@{{ |>}->}[d] &
	A \ar@{-{ >>}}[r] \ar@{=}[d] &
	\ZPointImg{A}{p} \ar@{-{ >>}}[d] \\
\ZPointKer{A}{p+1} \ar@{{ |>}->}[r] &
	A \ar@{-{ >>}}[r] &
	\ZPointImg{A}{p+1}
}
\end{equation*}
It induces a morphism of exact $6$-term sequences
\begin{equation*}
\xymatrix@C=2em@R=5ex{
\LimOf{\ZPointKer{A}{p}} \ar@{{ |>}->}[r] \ar@{{ |>}->}[d] &
	\LimOf{A} \ar[r] \ar@{=}[d] &
	\LimOf{\ZPointImg{A}{p} } \ar[d]^{a_p|} \ar[r]^-{\partial} &
	\LimOneOf{\ZPointKer{A}{p}} \ar[r] \ar[d] &
	\LimOneOf{A} \ar@{-{ >>}}[r] \ar@{=}[d] &
	\LimOneOf{\ZPointImg{A}{p}} \ar@{-{ >>}}[d] \\
\LimOf{\ZPointKer{A}{p+1}} \ar@{{ |>}->}[r] &
	\LimOf{A} \ar[r] &
	\LimOf{\ZPointImg{A}{p+1}} \ar[r] &
	\LimOneOf{\ZPointKer{A}{p+1}} \ar[r] &
	\LimOneOf{A} \ar@{-{ >>}}[r] &
	\LimOneOf{\ZPointImg{A}{p+1}}
}
\end{equation*}
Using the snake lemma and a bit of diagram chasing shows that the sequence below is exact.
{\small
\begin{equation*}
\xymatrix@C=2em@R=5ex{
\CoKer{ \LimOf{\ZPointKer{A}{p}}\to \LimOf{ \ZPointKer{A}{p+1}} } \ar@{{ |>}->}[r] &
	\Ker{ \LimOf{\ZPointImg{A}{p}}\to \LimOf{\ZPointImg{A}{p+1}} } \ar@{-{ >>}}[r]^-{\tilde{\partial}} &
	\Ker{\LimOneOf{\ZPointKer{A}{p}}\to \LimOneOf{ \ZPointKer{A}{p+1} } }
}
\end{equation*}
}%
Recalling that $F^pA \cong \LimOf{\ZPointKer{A}{p}}$ by (\ref{thm:Kernel/ImageSequencesOfZ-Diagram}), this gives the equivalence of (i) and (ii).

If (i) holds, then (iii) follows as the commutativity of the diagram shows that
\begin{equation*}
\Ker{\LimOf{\ZPointImg{A}{p}}\to \LimOf{\ZPointImg{A}{p+1}} }\subseteq \Ker{\partial} = \Img{ \LimOf{A}\to \LimOf{\ZPointImg{A}{p}} }.
\end{equation*}

If  (iii) holds, then $\tilde{\partial}=0$, which implies (ii). The equivalence of (iii) and (iv) follows by recalling that
\begin{equation*}
\CoKer{\LimOf{\ZPointKer{A}{p}} \to \LimOf{A}} = \ZStableImgAt{A}{p} \subseteq \ZImgDiagItrtdAt{A}{\omega}{p} =\LimOf{ \ZPointImg{A}{p}}.
\end{equation*}
Finally, (i) is equivalent to the exactness of the sequence in (v) in position $\LimOf{\ZPointImg{A}{p}}$. This implies that (i) is equivalent to (v), as the exactness of the remainder of the sequence in (v) follows via the snake lemma and diagram chases.
\end{proof}

We infer conditions under which $\LimOneOf{\ZPointKer{A}{p}}\to \LimOneOf{\ZPointKer{A}{p+1} }$ is a monomorphism.

\begin{corollary}[If $a_p$ monomorphic: $\LimOneOf{K^pA}\to \LimOneOf{K^{p+1}A}$ monomorphism]
\label{thm:a_pMono->Lim1(K^pA)->Lim1(K^(p+1)A)Monomorphism}
If in a $\ZCat$-diagram $A$ in $\ModulesOver{R}$ the structure map $a_p\from A_p\to A_{p+1}$ is a monomorphism, then the map in the caption above is a monomorphism.
\end{corollary}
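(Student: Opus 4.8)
The plan is to deduce the statement directly from the preceding lemma (\ref{thm:Conditions-Lim1(K^pA)->Lim1(K^(p+1)A)Monomorphism}), which lists several conditions equivalent to the map $\LimOneOf{\ZPointKer{A}{p}}\to \LimOneOf{\ZPointKer{A}{p+1}}$ being a monomorphism. Instead of wrestling with the $\LimOne$-terms themselves, I would verify the algebraically transparent condition (iii) of that lemma, namely that
\begin{equation*}
\Ker{a_p}\intrsctn \LimOf{\ZPointImg{A}{p}} \subseteq \Img{\LimOf{A}\to \LimOf{\ZPointImg{A}{p}}} = \ZStableImgAt{A}{p}.
\end{equation*}

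First I would invoke the hypothesis that the structure map $a_p\from A_p\to A_{p+1}$ is a monomorphism, so that $\Ker{a_p}=0$. Then the intersection on the left collapses, $\Ker{a_p}\intrsctn \LimOf{\ZPointImg{A}{p}}=0$, and is vacuously contained in the stable image submodule $\ZStableImgAt{A}{p}$. Hence condition (iii) is satisfied. By the equivalence of (iii) and (i) in (\ref{thm:Conditions-Lim1(K^pA)->Lim1(K^(p+1)A)Monomorphism}), the map $\LimOneOf{\ZPointKer{A}{p}}\to \LimOneOf{\ZPointKer{A}{p+1}}$ is a monomorphism, which is exactly the assertion. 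Equivalently, one could run through condition (iv): once $\Ker{a_p}=0$, both sides of $\Ker{a_p}\intrsctn \ZImgDiagItrtdAt{A}{\omega}{p} = \Ker{a_p}\intrsctn \ZStableImgAt{A}{p}$ vanish, so the equality holds trivially.

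There is no genuine obstacle here, since all of the analytic content has been front-loaded into the equivalence established in (\ref{thm:Conditions-Lim1(K^pA)->Lim1(K^(p+1)A)Monomorphism}). The single point that merits care is confirming that the symbol $a_p$ occurring in conditions (iii) and (iv) is precisely the structure map $A_p\to A_{p+1}$ of the given $\ZCat$-diagram, equivalently the restriction $a_p|\from \LimOf{\ZPointImg{A}{p}}\to \LimOf{\ZPointImg{A}{p+1}}$ appearing in that lemma's proof. With this identification in place the monomorphism hypothesis applies verbatim, and the corollary follows at once.
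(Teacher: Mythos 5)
Your proposal is correct and is essentially the paper's own argument: the paper proves this corollary by invoking the implication (iii) $\implies$ (i) of (\ref{thm:Conditions-Lim1(K^pA)->Lim1(K^(p+1)A)Monomorphism}), exactly as you do, with $\Ker{a_p}=0$ making condition (iii) hold vacuously. Your additional remarks on condition (iv) and on identifying $a_p$ with its restriction to $\LimOf{\ZPointImg{A}{p}}$ are harmless elaborations of the same route.
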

\begin{proof}
This follows via (iii) $\implies$ (i) in (\ref{thm:Conditions-Lim1(K^pA)->Lim1(K^(p+1)A)Monomorphism}).
\end{proof}

\begin{definition}[$\lambda$-Mittag-Leffler condition]
\label{def:Lambda-Mittag-LefflerCondition}
Given an ordinal $\lambda$, a $\ZCat$-diagram $A$ in $\ModulesOver{R}$ satisfies the $\lambda$-Mittag-Leffler condition if $I^{\lambda+1}A=I^{\lambda}A$. %
\index{Mittag-Leffler condition!for ordinal $\lambda$}
\end{definition}

In other words, $A$ satisfies the $\lambda$-Mittag-Leffler condition if $\ZImgDiagItrtd{A}{\lambda} = \ZStableImg{A}$, the stable image diagram of $A$.

\begin{corollary}[Point-wise condition for $\LimOneOf{K^pA}\to \LimOneOf{K^{p+1}A}$ monomorphism]
\label{thm:PointWiseConditionFor-Lim1(K^pA)->Lim1(K^(p+1)A)Monomorphism}%
If for a $\ZCat$-diagram $A$ the universal map $\LimOf{A}\to \ZImgDiagItrtdAt{A}{\omega}{p}$ is surjective, then $\LimOneOf{\ZPointKer{A}{p}}\to \LimOneOf{\ZPointKer{A}{p+1}}$ is a monomorphism.%
\index{Mittag-Leffler condition!condition for ordinal $\omega$}
\end{corollary}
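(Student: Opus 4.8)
The plan is to reduce the claim directly to the equivalences already established in Lemma \ref{thm:Conditions-Lim1(K^pA)->Lim1(K^(p+1)A)Monomorphism}. Recall from \ref{thm:Kernel/ImageSequencesOfZ-Diagram} that $\LimOf{\ZPointImg{A}{p}}$ is canonically identified with $\ZImgDiagItrtdAt{A}{\omega}{p}$, and that under this identification the composite $\LimOf{A}\to \LimOf{\ZPointImg{A}{p}}\hookrightarrow A_p$ is the cone map $\rho_p$. First I would observe that, by \ref{thm:StableImageSubDiagram-Properties}, the image of $\rho_p$ is the stable image term $\ZStableImgAt{A}{p}$; since the inclusion $\ZImgDiagItrtdAt{A}{\omega}{p}\hookrightarrow A_p$ is monic, this forces the image of the universal map $\LimOf{A}\to \ZImgDiagItrtdAt{A}{\omega}{p}$ to equal $\ZStableImgAt{A}{p}$ as a submodule of $\ZImgDiagItrtdAt{A}{\omega}{p}$.

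Next I would invoke the surjectivity hypothesis. If $\LimOf{A}\to \ZImgDiagItrtdAt{A}{\omega}{p}$ is onto, then its image is all of $\ZImgDiagItrtdAt{A}{\omega}{p}$, and combining this with the previous step gives the equality $\ZStableImgAt{A}{p}=\ZImgDiagItrtdAt{A}{\omega}{p}$. With this equality in hand, condition (iv) of \ref{thm:Conditions-Lim1(K^pA)->Lim1(K^(p+1)A)Monomorphism}, namely $\Ker{a_p}\intrsctn \ZImgDiagItrtdAt{A}{\omega}{p}=\Ker{a_p}\intrsctn \ZStableImgAt{A}{p}$, holds trivially, both sides coinciding. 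The equivalence (iv) $\Leftrightarrow$ (i) of that lemma then yields at once that $\LimOneOf{\ZPointKer{A}{p}}\to \LimOneOf{\ZPointKer{A}{p+1}}$ is a monomorphism, which is the assertion.

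There is essentially no obstacle here: the entire content sits in the translation between the surjectivity hypothesis and the already-proven equivalent conditions. The only point requiring care is the bookkeeping in the first step, namely confirming that the universal map named in the statement, $\LimOf{A}\to \ZImgDiagItrtdAt{A}{\omega}{p}$, is exactly the map whose image was computed to be $\ZStableImgAt{A}{p}$ in \ref{thm:StableImageSubDiagram-Properties} (after passing through the identification $\LimOf{\ZPointImg{A}{p}}\cong \ZImgDiagItrtdAt{A}{\omega}{p}$), so that \emph{surjective} genuinely encodes $\ZStableImgAt{A}{p}=\ZImgDiagItrtdAt{A}{\omega}{p}$. Once that identification is in place, the conclusion is immediate.
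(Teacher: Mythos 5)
Your proposal is correct and follows essentially the same route as the paper: both use the identification $\LimOf{\ZPointImg{A}{p}}\cong \ZImgDiagItrtdAt{A}{\omega}{p}$ together with (\ref{thm:StableImageSubDiagram-Properties}.\ref{thm:StableImageSubDiagram-Properties-ImLim}) to conclude from surjectivity that $\ZStableImgAt{A}{p}=\ZImgDiagItrtdAt{A}{\omega}{p}$, and then invoke condition (iv) of Lemma \ref{thm:Conditions-Lim1(K^pA)->Lim1(K^(p+1)A)Monomorphism}. Your extra bookkeeping step — checking that the composite $\LimOf{A}\to \LimOf{\ZPointImg{A}{p}}\hookrightarrow A_p$ is the cone map $\rho_p$, so that the image computation applies — is exactly the implicit content of the paper's chain of equalities.
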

\begin{proof}
Under the condition stated, we see with (\ref{thm:StableImageSubDiagram-Properties}.\ref{thm:StableImageSubDiagram-Properties-ImLim}),
\begin{equation*}
\LimOf{\ZPointImg{A}{p}} = \ZImgDiagItrtdAt{A}{\omega}{p} = \Img{\LimOf{A}\to \LimOf{\ZPointImg{A}{p}} } = \ZStableImgAt{A}{p}.
\end{equation*}
Thus condition (iv) of (\ref{thm:Conditions-Lim1(K^pA)->Lim1(K^(p+1)A)Monomorphism}) is satisfied, and the claim follows.
\end{proof}

\begin{corollary}[$\omega$-Mittag-Leffler: $\LimOneOf{ \ZPointKer{A}{p} }\to \LimOneOf{ \ZPointKer{A}{p+1} }$ monomorphism, all $p$]
\label{thm:Omega-Mittag-Leffler->Lim1(K^pA)->Lim1(K^(p+1)A)Monomorphism-All-p}%
If a $\ZCat$-diagram $A$ in $\ModulesOver{R}$ satisfies  the $\omega$-Mittag-Leffler condition, then $\LimOneOf{\ZPointKer{A}{p} }\to \LimOneOf{\ZPointKer{A}{p+1} }$ is a monomorphism for all $p\in\ZNr$.%
\index{Mittag-Leffler condition!condition for ordinal $\omega$} \NoProof
\end{corollary}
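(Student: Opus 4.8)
The plan is to reduce this corollary to the point-wise criterion established in Proposition \ref{thm:PointWiseConditionFor-Lim1(K^pA)->Lim1(K^(p+1)A)Monomorphism}, by unwinding what the $\omega$-Mittag-Leffler condition says about the stable image subdiagram. Recall from Definition \ref{def:Lambda-Mittag-LefflerCondition} that $A$ satisfies the $\omega$-Mittag-Leffler condition precisely when $\ZDiagImg{\omega+1}{}{A} = \ZDiagImg{\omega}{}{A}$, which (as the remark following that definition records) is equivalent to $\ZDiagImg{\omega}{}{A} = \ZStableImg{A}$. So the hypothesis hands us, for every $p\in\ZNr$, the identity $\ZImgDiagItrtdAt{A}{\omega}{p} = \ZStableImgAt{A}{p}$.

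First I would fix an arbitrary $p\in\ZNr$ and show that the universal map $\LimOf{A}\to \ZImgDiagItrtdAt{A}{\omega}{p}$ is surjective, since that is exactly the input required by \ref{thm:PointWiseConditionFor-Lim1(K^pA)->Lim1(K^(p+1)A)Monomorphism}. By Proposition \ref{thm:StableImageSubDiagram-Properties}.\ref{thm:StableImageSubDiagram-Properties-ImLim}, the image factorization of the cone map gives $\Img{\rho_p\from \LimOf{A}\to A_p} = \ZStableImgAt{A}{p}$, so the corestriction $\LimOf{A}\to \ZStableImgAt{A}{p}$ is an epimorphism. Combining this with the $\omega$-Mittag-Leffler identity $\ZStableImgAt{A}{p} = \ZImgDiagItrtdAt{A}{\omega}{p}$, and recalling from \ref{thm:Kernel/ImageSequencesOfZ-Diagram} that $\ZImgDiagItrtdAt{A}{\omega}{p} \cong \LimOf{\ZPointImg{A}{p}}$, I obtain that the map $\LimOf{A}\to \ZImgDiagItrtdAt{A}{\omega}{p}$ is onto.

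Having verified the hypothesis of \ref{thm:PointWiseConditionFor-Lim1(K^pA)->Lim1(K^(p+1)A)Monomorphism} at the fixed position $p$, that proposition immediately yields that $\LimOneOf{\ZPointKer{A}{p}}\to \LimOneOf{\ZPointKer{A}{p+1}}$ is a monomorphism. Since $p\in\ZNr$ was arbitrary, the conclusion holds for all $p$, which is the assertion of the corollary.

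I do not anticipate a genuine obstacle here: the corollary is essentially a bookkeeping step that translates the global $\omega$-Mittag-Leffler hypothesis into the point-wise surjectivity condition already analyzed. The only thing to be careful about is the identification $\ZDiagImg{\omega}{}{A} = \ZStableImg{A}$, i.e.\ that the $\omega$-th image subdiagram is already the stable one under the hypothesis; this is the content of Definition \ref{def:Lambda-Mittag-LefflerCondition} and its accompanying reformulation, so once that is cited the remaining steps are direct applications of \ref{thm:StableImageSubDiagram-Properties}, \ref{thm:Kernel/ImageSequencesOfZ-Diagram}, and \ref{thm:PointWiseConditionFor-Lim1(K^pA)->Lim1(K^(p+1)A)Monomorphism}.
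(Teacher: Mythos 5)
Your proposal is correct and follows exactly the route the paper intends for this \NoProof{} corollary: the $\omega$-Mittag-Leffler hypothesis gives $\ZImgDiagItrtdAt{A}{\omega}{p}=\ZStableImgAt{A}{p}$, which together with (\ref{thm:StableImageSubDiagram-Properties}.\ref{thm:StableImageSubDiagram-Properties-ImLim}) makes $\LimOf{A}\to \ZImgDiagItrtdAt{A}{\omega}{p}$ surjective at every $p$, so (\ref{thm:PointWiseConditionFor-Lim1(K^pA)->Lim1(K^(p+1)A)Monomorphism}) applies positionwise. Your flagged subtlety is handled correctly, since the identification of $\Img{\rho_p}$ with the image of the universal map into $\LimOf{\ZPointImg{A}{p}}$ is licensed by the monomorphism $\ZImgDiagItrtdAt{A}{\omega}{p}\hookrightarrow A_p$ from (\ref{thm:Kernel/ImageSequencesOfZ-Diagram}).
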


\section[Comparison in $\ZCat$-Diagrams]{Comparison in $\ZCat$-Diagrams}
\label{sec:ZDiagramComparison}%

This section provides background for the comparison results using spectral sequences in Section \ref{sec:SpecSeq-Comparison-I}. Given a $\ZCat$-diagram $A$, recall from (\ref{sec:Filtrations}) the image filtration $F_{\bullet}$ of $A_{\infty}\DefEq \CoLimOf{A}$ and the kernel filtration $F^{\bullet}$ of $A_{-\infty}\DefEq \LimOf{A}$. Define adjacent filtration quotients $\varepsilon_{p}(A)$ and $\varepsilon^{p}(A)$ via the short exact sequences: %
\index[not]{$\varepsilon_{p}(A)$ - quotient of adjacent im-filtration terms}\index[not]{$\varepsilon^{p}(A)$ - quotient of adjacent ker-filtration terms}%
\begin{equation*}
\xymatrix@R=1ex@C=4em{
F_{p-1}(A) \ar@{{ |>}->}[r] &
	F_p(A) \ar@{-{ >>}}[r] &
	\varepsilon_{p}(A) \\
F^{p}(A) \ar@{{ |>}->}[r] &
	F^{p+1}(A) \ar@{-{ >>}}[r] &
	\varepsilon^{p}(A)
}
\end{equation*}
Abstracting from work with spectral sequences, we require conditions under which a morphism $f\from A\to B$ of $\ZCat$-diagrams induces a monomorphism/epimorphism/isomorphism $A_{\infty}\to B_{\infty}$, respectively $A_{-\infty}\to B_{-\infty}$. We begin with the following technical lemma.

\begin{lemma}[Adjacent filtration quotient comparison: extended]
\label{thm:AdjacentFiltrationQuotientComp-Extended}
Suppose a morphism $f\from A\to B$ of $\ZCat$\MSComp-diagrams of $R$-modules induces a monomorphism/epimorphism/isomorphism
\begin{equation*}
\varepsilon_{p}(A) \longrightarrow \varepsilon_{p}(B)\qquad \text{for every}\quad p\in \ZNr\ .
\end{equation*}
Then $f$ induces a monomorphism/epimorphism/isomorphism
\begin{equation*}
\dfrac{F_p(A)}{F_{p-r}(A)} \longrightarrow \dfrac{F_p(B)}{F_{p-r}(B)}\qquad \text{for every}\quad p\in \ZNr,\ \text{and every}\ r\geq 1.
\end{equation*}
\end{lemma}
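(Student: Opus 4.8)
The plan is to prove the statement by induction on $r\geq 1$, using the snake lemma at each step to propagate the mono/epi/iso property from the single filtration quotients $\varepsilon_p$ to the longer quotients $F_p/F_{p-r}$. The hypothesis gives us exactly the base case and the ``increment'' we need to feed the induction.

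First I would set up the base case $r=1$. By definition of the adjacent filtration quotients, $F_p(A)/F_{p-1}(A)=\varepsilon_p(A)$, so the hypothesis that $\varepsilon_p(A)\to\varepsilon_p(B)$ is a monomorphism/epimorphism/isomorphism for every $p$ is precisely the claim for $r=1$. Next, for the inductive step, I would assume the conclusion holds for some $r\geq 1$ and all $p\in\ZNr$, and prove it for $r+1$. The key observation is that the three-stage filtration $F_{p-(r+1)}\subseteq F_{p-r}\subseteq F_p$ yields, for each of $A$ and $B$, a short exact sequence of quotients
\begin{equation*}
\xymatrix@R=5ex@C=3em{
\dfrac{F_{p-r}(A)}{F_{p-(r+1)}(A)} \ar@{{ |>}->}[r] &
	\dfrac{F_p(A)}{F_{p-(r+1)}(A)} \ar@{-{ >>}}[r] &
	\dfrac{F_p(A)}{F_{p-r}(A)}
}
\end{equation*}
and similarly for $B$. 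The morphism $f$ induces a morphism between these two short exact sequences, with vertical maps on the left $\frac{F_{p-r}(A)}{F_{p-(r+1)}(A)}\to\frac{F_{p-r}(B)}{F_{p-(r+1)}(B)}$, in the middle $\frac{F_p(A)}{F_{p-(r+1)}(A)}\to\frac{F_p(B)}{F_{p-(r+1)}(B)}$, and on the right $\frac{F_p(A)}{F_{p-r}(A)}\to\frac{F_p(B)}{F_{p-r}(B)}$.

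Now I would apply the snake lemma to this morphism of short exact sequences. The left vertical map is the $r=1$ case at index $p-r$, hence has the desired property by the base case; the right vertical map is the $r$ case at index $p$, hence has the desired property by the induction hypothesis. In the monomorphism case, both outer maps being monic forces the middle map to be monic (equivalently, the snake connecting map argument shows $\Ker$ of the middle vanishes). In the epimorphism case, both outer maps being epic forces the middle to be epic. In the isomorphism case, both outer maps being isomorphisms forces the middle to be an isomorphism by the five lemma. This establishes the claim for $r+1$ at index $p$, and since $p$ was arbitrary, completes the induction.

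The main obstacle, such as it is, is purely bookkeeping: one must be careful that the filtration quotients are genuinely nested (so that the three-stage short exact sequences are valid) and that the index shifts align correctly so that the induction hypothesis applies to the right vertical arrow while the base case applies to the left. There is no deep difficulty here — the snake lemma (or five lemma in the isomorphism case) does all the work, and the only subtlety is tracking the three properties (mono, epi, iso) uniformly through the same diagram. I would therefore present the three cases together, invoking the relevant diagram-chase lemma as appropriate for each.
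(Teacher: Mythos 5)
Your proof is correct and is exactly the argument the paper intends: its own proof reads, in full, ``This follows by induction on $r$ via the snake lemma,'' and your write-up fills in that outline with the same decomposition $F_{p-r}/F_{p-(r+1)} \rightarrowtail F_p/F_{p-(r+1)} \twoheadrightarrow F_p/F_{p-r}$ and the standard snake/five-lemma chase. No gaps.
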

\begin{proof}
This follows by induction on $r$ via the snake lemma.
\end{proof}

\begin{proposition}[Monomorphism of $A_{\infty}\to B_{\infty}$ from monomorphism of $\varepsilon_{\bullet}(A)\to \varepsilon_{\bullet}(B)$]
\label{thm:ZMonoCoLim(A)FromMonoEps_.}
Let $f\from A\to B$ be a morphism of $\ZCat$-diagrams. If $f$ induces monomorphisms
\begin{equation*}
\varepsilon_p(A)\to \varepsilon_p(B),\quad \text{for every}\quad p\in\ZNr  \quad \text{and for}\quad \LimOf{F_{\bullet}(A)}\to \LimOf{F_{\bullet}(B)},
\end{equation*}
then $f$ induces a monomorphism $f_{\infty}\from A_{\infty}\to B_{\infty}$.
\end{proposition}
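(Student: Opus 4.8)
The plan is to exploit that the image filtration $F_{\bullet}(A)$ is exhaustive in $A_{\infty}$, so every element of $A_{\infty}$ already lies in some filtration stage; see (\ref{thm:ImageFiltrationCoLim(A)-Exhaustive}). I would argue by contradiction-free injectivity: take $x\in A_{\infty}$ with $f_{\infty}(x)=0$, and by exhaustiveness choose $p\in\ZNr$ with $x\in F_p(A)$. Since $f_{\infty}\Comp \pi^A_q = \pi^B_q\Comp f_q$ for every $q$, the map $f_{\infty}$ carries $F_q(A)=\Img{\pi^A_q}$ into $F_q(B)=\Img{\pi^B_q}$; hence for each $r\geq 1$ the image of $x$ in the quotient $F_p(A)/F_{p-r}(A)$ is sent to $0$ in $F_p(B)/F_{p-r}(B)$.

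The first substantive step is to invoke the extended adjacent filtration quotient comparison lemma (\ref{thm:AdjacentFiltrationQuotientComp-Extended}): the hypothesis that $f$ induces monomorphisms $\varepsilon_p(A)\to\varepsilon_p(B)$ for all $p$ guarantees that every map $F_p(A)/F_{p-r}(A)\to F_p(B)/F_{p-r}(B)$ is a monomorphism. Combined with the previous observation, this forces the image of $x$ in $F_p(A)/F_{p-r}(A)$ to vanish, i.e. $x\in F_{p-r}(A)$ for every $r\geq 1$.

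Next I would pass to the intersection. Because the filtration is ascending and its structure maps are inclusions, $\FamIntrsctn{r\geq 1}{F_{p-r}(A)}=\FamIntrsctn{q}{F_q(A)}=\LimOf{F_{\bullet}(A)}$, realized as the submodule $\FamIntrsctn{q}{F_q(A)}$ of $A_{\infty}$. Thus $x\in\LimOf{F_{\bullet}(A)}$. The map $\LimOf{F_{\bullet}(A)}\to\LimOf{F_{\bullet}(B)}$ induced by $f$ is, componentwise, the restriction of $f_{\infty}$, hence it is itself the restriction of $f_{\infty}$ to this submodule. Since $f_{\infty}(x)=0$ and this induced map is a monomorphism by hypothesis, we conclude $x=0$, and therefore $f_{\infty}$ is a monomorphism.

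The main obstacle is purely the bookkeeping in this passage to the limit: one must check that the intersection $\FamIntrsctn{q}{F_q(A)}$ is canonically the limit $\LimOf{F_{\bullet}(A)}$ (immediate, as the structure maps are inclusions) and, more delicately, that the induced map on these limits coincides with the restriction of $f_{\infty}$, so that the hypothesized monomorphism on $\LimOf{F_{\bullet}}$ can be applied to the very element $x$ produced by the finite-quotient argument. Once these identifications are made, the proof is a finite diagram chase promoted to an infinite intersection through the monomorphisms supplied by (\ref{thm:AdjacentFiltrationQuotientComp-Extended}).
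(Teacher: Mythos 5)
Your proof is correct and is essentially the paper's own argument recast element-wise: the paper first uses (\ref{thm:AdjacentFiltrationQuotientComp-Extended}) together with left exactness of $\Lim$ to show each restriction $F_p(A)\to F_p(B)$ is monic (its diagram chase is exactly your step pushing a kernel element down into $\LimOf{F_{\bullet}(A)}=\FamIntrsctn{q}{F_q(A)}$ and killing it with the hypothesized monomorphism), and then invokes exhaustiveness (\ref{thm:ImageFiltrationCoLim(A)-Exhaustive}) and exactness of $\CoLimOver{\ZCat}$ to conclude, whereas you invoke exhaustiveness at the outset to place a kernel element of $f_{\infty}$ in some $F_p(A)$. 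The ingredients, the key lemmas, and the logical structure are the same, so there is nothing to fix.
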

\begin{proof}
{\em Step 1}\quad By (\ref{thm:AdjacentFiltrationQuotientComp-Extended}), for every $p\in\ZNr$ and  $r\geq 0$, $f$ induces a monomorphism
\begin{equation*}
\dfrac{F_p(A)}{F_{p-r}(A) } \longrightarrow \dfrac{F_p(B)}{F_{p-r}(B) }\ .
\end{equation*}
{\em Step 2}\quad $f$ induces monomorphisms $F_p(A)\to F_p(B)$, $p\in \ZNr$. Indeed, for $p\in \ZNr$ fixed, a morphism between the short exact sequences of $\ZCat$-diagrams
\begin{equation*}
\xymatrix@R=5ex@C=4em{
F_{p-r}(-) \ar@{{ |>}->}[r] &
	F_{p}(-) \ar@{-{ >>}}[r] &
	\dfrac{F_p(-)}{F_{p-r}(-)}
}
\end{equation*}
yields the commutative diagram of exact sequences
\begin{equation*}
\xymatrix@R=5ex@C=4em{
\LimOf{F_{p-r}(A)} \ar@{{ |>}->}[r] \ar@{{ |>}->}[d] &
	F_p(A) \ar[r] \ar[d] &
	\LimOf{\tfrac{F_p(A)}{F_{p-r}(A)}} \ar@{{ |>}->}[d] \\
\LimOf{F_{p-r}(B)} \ar@{{ |>}->}[r] &
	F_p(B) \ar[r] &
	\LimOf{\tfrac{F_p(B)}{F_{p-r}(B)}}
}
\end{equation*}
The vertical arrow on the left is a monomorphism by hypothesis. The vertical arrow on the right is a monomorphism as $\Lim$ is left exact. So the vertical
arrow in the middle is a monomorphism as well.

{\em Step 3}\quad From (\ref{thm:ImageFiltrationCoLim(A)-Exhaustive}) we know that the image filtrations of $A_{\infty}$ and $B_{\infty}$ are exhaustive. As $\CoLimOver{\ZCat}$ is left exact the monomorphisms $F_p(A)\to F_p(B)$ from step 2 induce a monomorphism
\begin{equation*}
f_{\infty}\from A_{\infty}\longrightarrow B_{\infty}
\end{equation*}
This was to be shown.
\end{proof}

When trying to formulate conditions under which $f\from A\to B$ induces an epimorphism $f_{\infty}\from A_{\infty}\to B_{\infty}$, we face the problem that $\LimOver{\ZCat}$ fails to be right exact. So, we look for conditions under which it is.

\begin{lemma}[Epimorphism / isomorphism of $A_{\infty}\to B_{\infty}$, I]
\label{thm:ZEpiCoLim(A)-I}
Let $f\from A\to B$ be a morphism of $\ZCat$-diagrams which induces isomorphisms $\varepsilon_{p}(A)\to \varepsilon_{p}(B)$ for all $p\in \ZNr$, and satisfies the following conditions:
\vspace{-1.6ex}
\begin{enumerate}[(i)]
\item $f$ induces an epimorphism $f_{\infty}|\from \LimOf{F_{\bullet}(A)}\to \LimOf{F_{\bullet}(B)}$, and
\item $f$ induces a monomorphism $\LimOneOf{F_{\bullet}(A)}\to \LimOneOf{F_{\bullet}(B)}$,
\vspace{-1.6ex}
\end{enumerate}
Then $f$ induces an epimorphism $f_{\infty}\from A_{\infty}\to B_{\infty}$. If $f_{\infty}|$ in (i) is an isomorphism, then so is $f_{\infty}$.
\end{lemma}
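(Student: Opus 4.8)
The plan is to follow the architecture of the preceding monomorphism proposition (\ref{thm:ZMonoCoLim(A)FromMonoEps_.}): reduce the assertion about $f_{\infty}$ to an objectwise statement about the image-filtration stages $F_{p}$, and then push that statement through the colimit. First I would invoke (\ref{thm:AdjacentFiltrationQuotientComp-Extended}): since $f$ induces isomorphisms $\varepsilon_{p}(A)\to \varepsilon_{p}(B)$ for every $p$, it induces isomorphisms $F_{p}(A)/F_{p-r}(A)\to F_{p}(B)/F_{p-r}(B)$ for every $p\in\ZNr$ and every $r\geq 1$. Taking the limit over $r$ (a limit of isomorphisms) then yields an isomorphism $\LimOf{F_{p}(A)/F_{p-\bullet}(A)}\to \LimOf{F_{p}(B)/F_{p-\bullet}(B)}$.

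Next, for each fixed $p$ I would apply the $6$-term exact $\Lim$-$\LimOne$-sequence (\ref{thm:Lim-LimOne6TermExact Sequence}) to the short exact sequence of $\ZCat$-diagrams $F_{p-\bullet}(-)\hookrightarrow F_{p}(-)\twoheadrightarrow F_{p}(-)/F_{p-\bullet}(-)$ in the variable $r$. Because the middle diagram is constant (so its $\LimOne$ vanishes) and because an initial segment computes the same $\Lim$ and $\LimOne$ as the whole diagram (so the left terms are genuinely $\LimOf{F_{\bullet}}$ and $\LimOneOf{F_{\bullet}}$), this collapses to a $4$-term exact sequence $0\to \LimOf{F_{\bullet}}\to F_{p}\to \LimOf{F_{p}/F_{p-\bullet}}\to \LimOneOf{F_{\bullet}}\to 0$. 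Assembling the instances for $A$ and $B$ gives a morphism of such $4$-term exact sequences in which the map on the left $\LimOf{F_{\bullet}(A)}\to \LimOf{F_{\bullet}(B)}$ is epic by hypothesis (i), the third map $\LimOf{F_{p}/F_{p-\bullet}(A)}\to \LimOf{F_{p}/F_{p-\bullet}(B)}$ is an isomorphism by Step 1, and the map on the right $\LimOneOf{F_{\bullet}(A)}\to \LimOneOf{F_{\bullet}(B)}$ is monic by hypothesis (ii).

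A diagram chase then yields that $f$ induces an epimorphism $F_{p}(A)\to F_{p}(B)$ for every $p$. Concretely, I would split the $4$-term sequence through $C\DefEq \Img{F_{p}\to \LimOf{F_{p}/F_{p-\bullet}}}$ into two short exact sequences: the snake lemma applied to $C(-)\hookrightarrow \LimOf{F_{p}/F_{p-\bullet}(-)}\twoheadrightarrow \LimOneOf{F_{\bullet}(-)}$, using that the middle is iso and the right is monic, forces $C(A)\to C(B)$ to be an isomorphism; then right-exactness of the snake applied to $\LimOf{F_{\bullet}(-)}\hookrightarrow F_{p}(-)\twoheadrightarrow C(-)$, using that the left is epic and that $C$ is iso, forces $F_{p}(A)\to F_{p}(B)$ to be epic. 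In the isomorphism case, hypothesis (i) makes the left map an isomorphism, and the five lemma upgrades $F_{p}(A)\to F_{p}(B)$ to an isomorphism. Finally, since $\CoLimOver{\ZCat}$ is exact (\ref{thm:CoLim^ZExactFunctor}) it preserves these objectwise epimorphisms (respectively isomorphisms), and since image filtrations are exhaustive (\ref{thm:ImageFiltrationCoLim(A)-Exhaustive}) we have $\CoLimOf{F_{\bullet}(A)}\cong A_{\infty}$ and $\CoLimOf{F_{\bullet}(B)}\cong B_{\infty}$; hence $f_{\infty}\from A_{\infty}\to B_{\infty}$ is an epimorphism, and an isomorphism when the map in (i) is.

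I expect the main obstacle to be the bookkeeping in the middle step: correctly identifying the collapsed $4$-term sequence (in particular that the constant diagram contributes a vanishing $\LimOne$, and that the two outer limit terms are genuinely $\LimOf{F_{\bullet}}$ and $\LimOneOf{F_{\bullet}}$), and then routing the two hypotheses together with the Step-1 isomorphism through the snake lemma in the correct order. The closing colimit argument is routine given exactness and exhaustiveness.
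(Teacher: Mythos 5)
Your proposal is correct and follows essentially the same route as the paper's proof: isomorphisms of the quotients $F_p(-)/F_{p-r}(-)$ via (\ref{thm:AdjacentFiltrationQuotientComp-Extended}), the $\Lim$--$\LimOne$ sequence for $F_{p-\bullet}(-)\to F_p(-)\to F_p(-)/F_{p-\bullet}(-)$ assembled into a morphism of four-term exact sequences with left map epic, third map iso, right map monic, a chase giving epimorphisms (respectively isomorphisms) $F_p(A)\to F_p(B)$, and finally exactness of $\CoLimOver{\ZCat}$ together with exhaustiveness of the image filtration to conclude for $f_{\infty}$. The only difference is one of detail: you make explicit, via the factorization through $C$ and two applications of the snake lemma, the four-lemma argument that the paper compresses into ``a diagram chase shows,'' and you justify explicitly the identification of the outer terms with $\LimOf{F_{\bullet}(-)}$ and $\LimOneOf{F_{\bullet}(-)}$, which the paper uses implicitly.
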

\begin{proof}
{\em Step 1}\quad With (\ref{thm:AdjacentFiltrationQuotientComp-Extended}) we see that, for $p\in \ZNr$ fixed and $r\geq 1$, $f$ induces isomorphisms:
\begin{equation*}
\dfrac{F_p(A)}{F_{p-r}(A) } \XRA{\cong} \dfrac{F_p(B)}{F_{p-r}(B) }\ .
\end{equation*}
{\em Step 2}\quad $f$ induces epimorphisms $F_p(A)\to F_p(B)$ for all $p\in \ZNr$: A morphism between the short exact sequences of $\ZCat$-diagrams
\begin{equation*}
\xymatrix@R=5ex@C=4em{
F_{p-r}(-) \ar@{{ |>}->}[r] &
	F_{p}(-) \ar@{-{ >>}}[r] &
	\dfrac{F_p(-)}{F_{p-r}(-)}
}
\end{equation*}
yields the commutative diagram of exact sequences whose vertical maps have properties as indicated.
\begin{equation*}
\xymatrix@R=5ex@C=4em{
\LimOf{F_{p-r}(A)} \ar@{{ |>}->}[r] \ar@{-{ >>}}[d]_{f_{\infty}|} &
	F_p(A) \ar[r] \ar[d] &
	\LimOf{\tfrac{F_p(A)}{F_{p-r}(A)}} \ar[d]^{\cong} \ar[r] &
	\LimOneOf{F_p(A)}\ar@{{ |>}->}[d] \\
\LimOf{F_{p-r}(B)} \ar@{{ |>}->}[r] &
	F_p(B) \ar[r] &
	\LimOf{\tfrac{F_p(B)}{F_{p-r}(B)}} \ar[r] &
	\LimOneOf{F_p(B)}
}
\end{equation*}
A diagram chase shows that the map $F_p(A)\to F_p(B)$ is an epimorphism, and is an isomorphism if $f_{\infty}|$ is an isomorphism.

{\em Step 3}\quad The functor $\CoLimOver{\ZCat}$ is exact, and to $f_{\infty}\from A_{\infty}\to B_{\infty}$ is an epimorphism/isomorphism under the conditions stated.
\end{proof}

\begin{corollary}[Epimorphism of $A_{\infty}\to B_{\infty}$, II]
\label{thm:ZEpiCoLim(A)-II}%
Let $f\from A\to B$ be a morphism of $\ZCat$-diagrams which induces isomorphisms $\varepsilon_{p}(A)\to \varepsilon_{p}(B)$ for all $p\in \ZNr$. If $A$ is originally stable and $f$ induces an epimorphism $f_{\infty}|\from \LimOf{F_{\bullet}(A)}\to \LimOf{F_{\bullet}(B)}$, then $f$ induces an epimorphism $f_{\infty}\from A_{\infty}\to B_{\infty}$. If $f_{\infty}|$ is an isomorphism, then so is $f_{\infty}$.
\end{corollary}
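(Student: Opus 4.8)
The plan is to deduce this corollary directly from the Epimorphism/isomorphism Lemma (\ref{thm:ZEpiCoLim(A)-I}) by verifying its two hypotheses under the stronger assumptions given here. The corollary already hands us the isomorphisms $\varepsilon_p(A)\to \varepsilon_p(B)$ for all $p$, and condition (i) of the lemma — the epimorphism $f_{\infty}|\from \LimOf{F_{\bullet}(A)}\to \LimOf{F_{\bullet}(B)}$ — is assumed outright. So the only thing that needs proving is hypothesis (ii) of the lemma: that $f$ induces a monomorphism $\LimOneOf{F_{\bullet}(A)}\to \LimOneOf{F_{\bullet}(B)}$.

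The key observation is that $A$ is originally stable, and I would use this to show that $\LimOneOf{F_{\bullet}(A)}=0$ outright, which makes the monomorphism condition (ii) automatic. First I would argue that if $A$ is originally stable, then its image filtration $F_{\bullet}(A)$ of $A_{\infty}$ is likewise originally stable as a $\ZCat$-diagram: since $A_n=A_{n_0}$ for $n\leq n_0$, the structure maps of $A$ in that initial range are identities, hence the cocone images $F_nA=\Img{\pi_n\from A_n\to A_{\infty}}$ agree for $n\leq n_0$. An originally stable $\ZCat$-diagram has an initial segment with surjective (indeed identity) structure maps, so by the sufficient-conditions proposition (\ref{thm:MittagLefflerExamples}.i) it satisfies the Mittag-Leffler condition. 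Then Proposition (\ref{thm:Mittag-Leffler-Lim1=Zero}) gives $\LimOneOf{F_{\bullet}(A)}=0$. With the source of the map in hypothesis (ii) equal to $0$, that map is trivially a monomorphism, so hypothesis (ii) of (\ref{thm:ZEpiCoLim(A)-I}) holds.

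Having verified all three inputs of Lemma (\ref{thm:ZEpiCoLim(A)-I}) — the $\varepsilon_p$-isomorphisms, condition (i), and condition (ii) — I would invoke that lemma to conclude that $f$ induces an epimorphism $f_{\infty}\from A_{\infty}\to B_{\infty}$, and that $f_{\infty}$ is an isomorphism whenever $f_{\infty}|$ in (i) is an isomorphism. This is exactly the assertion of the corollary.

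The main obstacle, such as it is, lies entirely in the first step: confirming carefully that original stability of $A$ propagates to original stability (or at least the Mittag-Leffler property) of the \emph{image} filtration $F_{\bullet}(A)$ rather than of $A$ itself, since it is $F_{\bullet}(A)$ and not $A$ that appears in the $\LimOne$ term. The cleanest route is to note that in the originally stable range the cocone maps $\pi_n$ factor through a common module, forcing the $F_nA$ to stabilize, so the initial structure maps of $F_{\bullet}(A)$ are isomorphisms; everything after that is a routine appeal to (\ref{thm:MittagLefflerExamples}) and (\ref{thm:Mittag-Leffler-Lim1=Zero}). No delicate $\LimOne$ computation is needed because the relevant $\LimOne$ simply vanishes.
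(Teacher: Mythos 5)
Your proposal is correct and takes essentially the same route as the paper's own proof: both reduce the corollary to Lemma (\ref{thm:ZEpiCoLim(A)-I}) by observing that original stability of $A$ passes to the image filtration $F_{\bullet}(A)$, which forces $\LimOneOf{F_{\bullet}(A)}=0$ and makes hypothesis (ii) of that lemma automatic. The only difference is that you spell out the intermediate steps (identity structure maps in the stable range, the Mittag-Leffler property via (\ref{thm:MittagLefflerExamples}.i), and vanishing of $\LimOne$ via (\ref{thm:Mittag-Leffler-Lim1=Zero})) which the paper compresses into a single sentence.
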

\begin{proof}
If $A$ is originally stable, then so is $F_{\bullet}(A)$, implying that $\LimOneOf{F_{\bullet}(A)}=0$. So the hypotheses of (\ref{thm:ZEpiCoLim(A)-I}) are satisfied, and the claim follows.
\end{proof}

Let's now turn to comparison results between limits of $\ZCat$-diagrams. Such a limit is filtered by the kernels of its cone map. Given a morphism of $\ZCat$-diagrams, we want to achieve comparison results about the induced map of limits via information about the induced maps of adjacent filtration quotients.

\begin{lemma}[Monomorphism/isomorphism $\CoLimOf{F^pA}\to \CoLimOf{F^pB}$]
\label{thm:Mono/Iso-CoLim(F^p)}%
If a morphism $f\from A\to B$ of $\ZCat$-diagrams induces monomorphisms/isomorphisms $\varepsilon^p(A)\to \varepsilon^p(B)$ for all $p\in \ZNr$. Then $f$ induces a monomorphism/isomorphism $\CoLimOf{F^{\bullet}(A)}\to \CoLimOf{F^{\bullet}(B)}$.
\end{lemma}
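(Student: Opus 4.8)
The plan is to reduce the statement about colimits to a statement about each individual filtration stage, and then invoke the exactness of $\CoLimOver{\ZCat}$. First I would note that taking limits is functorial, so $f$ induces $f_{-\infty}\from A_{-\infty}\to B_{-\infty}$ commuting with the cone maps $\rho_p$; hence $f_{-\infty}$ carries $F^{p}(A)=\Ker{\rho_p^A}$ into $F^{p}(B)=\Ker{\rho_p^B}$ and so restricts to a morphism of the ascending $\ZCat$-diagrams $F^{\bullet}(A)\to F^{\bullet}(B)$, whose induced maps on adjacent quotients are exactly the hypothesized maps $\varepsilon^{p}(A)\to\varepsilon^{p}(B)$. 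Since $\CoLimOver{\ZCat}$ is exact (\ref{thm:CoLim^ZExactFunctor}), it preserves monomorphisms and isomorphisms; therefore it suffices to prove that each stage map $F^{p}(A)\to F^{p}(B)$ is a monomorphism (resp.\ isomorphism).

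Next I would control the finite truncations of the kernel filtration. Running the same snake-lemma induction on $r$ that establishes (\ref{thm:AdjacentFiltrationQuotientComp-Extended}), now applied to $F^{\bullet}$ with quotients $\varepsilon^{p}$, yields that $f$ induces a monomorphism (resp.\ isomorphism) $F^{p}(A)/F^{p-r}(A)\to F^{p}(B)/F^{p-r}(B)$ for every $p$ and every $r\geq 1$, since $F^{p}/F^{p-r}$ is built from the $r$ quotients $\varepsilon^{p-1},\dots,\varepsilon^{p-r}$, all covered by the hypothesis.

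The crux is to pass from these finite truncations to the stage $F^{p}(A)$ itself, and here the essential input is that the kernel filtration is complete: by (\ref{thm:KernelCoFiltration-Complete}) one has $\LimOf{F^{\bullet}(A)}=0=\LimOneOf{F^{\bullet}(A)}$, and likewise for $B$. Applying the $6$-term exact $\Lim$–$\LimOne$ sequence (\ref{thm:Lim-LimOne6TermExact Sequence}) to the short exact sequence of $\OrdOmegaOp$-diagrams in the variable $r$, namely $\{F^{p-r}(A)\}\to\{F^{p}(A)\}\to\{F^{p}(A)/F^{p-r}(A)\}$, whose middle term is constant (hence has vanishing $\Lim^{1}$ as its structure maps are identities), and using that the left-hand diagram has both $\Lim$ and $\Lim^{1}$ equal to $0$, I would read off a natural isomorphism $F^{p}(A)\cong\LimOf{F^{p}(A)/F^{p-r}(A)}$ (limit over $r$), and similarly for $B$. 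Because $\Lim$ is left exact and preserves isomorphisms, the monomorphism (resp.\ isomorphism) on truncations from the previous step induces a monomorphism (resp.\ isomorphism) on these limits, so the stage map $F^{p}(A)\to F^{p}(B)$ is a monomorphism (resp.\ isomorphism). Combined with the first paragraph, applying $\CoLimOver{\ZCat}$ then finishes the proof.

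The main obstacle, and the one place where the special structure of the kernel filtration really enters, is this third step: without the completeness of $F^{\bullet}$ the induction on adjacent quotients has no anchor at $-\infty$, and one could not reconstruct $F^{p}$ from its finite truncations. The completeness result (\ref{thm:KernelCoFiltration-Complete}) supplies exactly that anchor, converting the vanishing of $\Lim$ and $\Lim^{1}$ of $F^{\bullet}$ into the identification $F^{p}\cong\LimOf{F^{p}/F^{p-r}}$ that drives the comparison.
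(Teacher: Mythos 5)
Your proposal is correct and follows essentially the same route as the paper's own proof: the snake-lemma induction on truncations $F^{p}(-)/F^{p-r}(-)$ as in (\ref{thm:AdjacentFiltrationQuotientComp-Extended}), the completeness result (\ref{thm:KernelCoFiltration-Complete}) to identify $F^{p}\cong\LimOf{F^{p}/F^{p-r}}$ via the $6$-term $\Lim$--$\LimOne$ sequence, left exactness of $\Lim$ to compare the stages, and finally exactness of $\CoLimOver{\ZCat}$ (\ref{thm:CoLim^ZExactFunctor}) to conclude. Your closing remark correctly isolates the completeness of the kernel filtration as the step that anchors the argument.
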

\begin{proof}
{\em Step 1}\quad Adapting the argument of (\ref{thm:AdjacentFiltrationQuotientComp-Extended}) to the present situation, we see that $f$ induces monomorphisms/isomorphisms for every $p\in \ZNr$ fixed, and $r\geq 1$:
\begin{equation*}
\dfrac{F^p(A)}{F^{p-r}(A) } \XRA{\text{mono/iso}} \dfrac{F^p(B)}{F^{p-r}(B) }\ .
\end{equation*}
{\em Step 2}\quad For each $p\in \ZNr$ and $r\geq 1$, we therefore have a morphism between short exact sequences  of $\ZCat$-diagrams
\begin{equation*}
\xymatrix@R=5ex@C=4em{
F^{p-r}(-) \ar@{{ |>}->}[r] &
	F^{p}(-) \ar@{-{ >>}}[r] &
	\dfrac{F^p(-)}{F^{p-r}(-)}
}
\end{equation*}
From this morphism, we obtain a morphism of $\Lim$-$\LimOne$ exact sequences:
\begin{equation*}
\xymatrix@R=5ex@C=4em{
0=\LimOf{F^{p-r}(A)} \ar[r] \ar[d] &
	F^p(A) \ar[r]^-{\cong} \ar[d] &
	\LimOf{\tfrac{F^p(A)}{F^{p-r}(A)} } \ar@{{ |>}->}[d]^{\xi} \ar@{-{ >>}}[r] &
	\LimOneOf{F^p(A)} = 0 \ar[d] \\
0=\LimOf{F^{p-r}(B)} \ar[r] &
	F^p(B) \ar[r]_-{\cong} &
	\LimOf{\tfrac{F^p(B)}{F^{p-r}(B)}} \ar@{-{ >>}}[r] &
	\LimOneOf{F^p(B)}=0
}
\end{equation*}
The objects on the left and right vanish by (\ref{thm:KernelCoFiltration-Complete}). The map  $\xi$ is a monomorphism because $\Lim$ is left exact. Thus $f$ induces a monomorphism $F^p(A)\to F^p(B)$ by commutativity. As $\CoLimOver{\ZCat}$ is left exact (\ref{thm:CoLim^ZExactFunctor}), $f_{-\infty}$ restricts to a monomorphism $\CoLimOf{F^{\bullet}(A)}\to \CoLimOf{F^{\bullet}(B)}$.

If the maps $\varepsilon^{p}(A)\to \varepsilon^{p}(B)$ are isomorphisms, then the above argument yields isomorphisms $F^{p}(A)\to F^{p}(B)$ and, hence, an isomorphism $\CoLimOf{F^{\bullet}(A)}\to \CoLimOf{F^{\bullet}(B)}$.
\end{proof}

\begin{corollary}[Monomorphism $A_{-\infty}\to B_{-\infty}$]
\label{thm:ZMonoLim(A)}
Let $f\from A\to B$ be a morphism of $\ZCat$-diagrams which induces monomorphisms $\varepsilon^p(A)\to \varepsilon^p(B)$ for all $p\in \ZNr$. Then $f$ induces a monomorphism $A_{\infty}\to B_{-\infty}$ whenever at least one of the following conditions is satisfied.
\vspace{-2ex}
\begin{enumerate}[(i)]
\item $f$ induces a monomorphism
\begin{equation*}
f_{\infty}|\from \Img{R_A\from A_{-\infty}\to A_{\infty} } \longrightarrow \Img{R_B\from B_{-\infty}\to B_{\infty} }
\end{equation*}
\item There exists $p\in\ZNr$ such that $f$ induces a monomorphism $F_p(A)\to F_p(B)$.
\item $\LimOf{F_{\bullet}(A)}=0$.
\item $A_{\infty}=0$
\item $A$ is eventually vanishing.
\end{enumerate}
\end{corollary}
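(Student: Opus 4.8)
The plan is to reduce all five sufficient conditions to condition (i) and then close with a single diagram chase. First I would extract from (\ref{thm:KernelFiltrationLim(A)-Props}.i) the four-term exact sequence
\begin{equation*}
0\to\CoLimOf{F^{\bullet}(A)}\to A_{-\infty}\XRA{R_A}A_{\infty}\to\CoLimOf{\CoKer{\rho}}\to 0,
\end{equation*}
which identifies $\CoLimOf{F^{\bullet}(A)}$ with $\Ker{R_A}$ and so yields the short exact sequence $\CoLimOf{F^{\bullet}(A)}\hookrightarrow A_{-\infty}\twoheadrightarrow\Img{R_A}$, and likewise for $B$. Since the cone maps $\rho$ and cocone maps $\pi$ are natural and $R=\pi_p\Comp\rho_p$, one has $R_B\Comp f_{-\infty}=f_{\infty}\Comp R_A$; hence $f$ induces a commutative ladder between these two short exact sequences, with middle vertical $f_{-\infty}$ and outer verticals obtained by restricting $f_{-\infty}$ and $f_{\infty}$ to the sub- and quotient terms.

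Next I would install the two outer monomorphisms. The left vertical is the restriction of $f_{-\infty}$ to $\Ker{R_A}=\CoLimOf{F^{\bullet}(A)}$, which coincides with the map $\CoLimOf{F^{\bullet}(f)}$; by the hypothesis that $f$ induces monomorphisms $\varepsilon^p(A)\to\varepsilon^p(B)$ for every $p$, (\ref{thm:Mono/Iso-CoLim(F^p)}) makes this map a monomorphism. Condition (i) is precisely the assertion that the right vertical $\Img{R_A}\to\Img{R_B}$ is a monomorphism. With both outer verticals monic and both rows exact, the snake lemma (the mono half of the four lemma) forces $\Ker{f_{-\infty}}=0$, proving the corollary under (i).

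It then remains to check that each of (ii)--(v) implies (i), which is routine. For (ii): since $R_A=\pi_p\Comp\rho_p$, we have $\Img{R_A}\subseteq\Img{\pi_p}=F_p(A)$, so a monomorphism $F_p(A)\to F_p(B)$ restricts to a monomorphism on $\Img{R_A}$, which is the map of (i). For (iii): by (\ref{thm:KernelFiltrationLim(A)-Props}.ii) there is an embedding $\Img{R_A}\cong\CoLimOf{\ZStableImg{A}}\hookrightarrow\LimOf{F_{\bullet}(A)}$, so $\LimOf{F_{\bullet}(A)}=0$ forces $\Img{R_A}=0$ and (i) holds trivially. For (iv): $\Img{R_A}\subseteq A_{\infty}=0$, so again $\Img{R_A}=0$. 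Finally (v)$\,\Rightarrow\,$(iv), because an eventually vanishing diagram (\ref{def:Z-DiagramTypes}) has $A_{\infty}=\CoLimOf{A}\cong\CoLimOfOver{A|}{\OrdOmega}=0$.

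There is no genuine obstacle here; the only point needing care is the identification of the left vertical map with $\CoLimOf{F^{\bullet}(f)}$, so that (\ref{thm:Mono/Iso-CoLim(F^p)}) applies and the left square commutes. This holds because naturality gives $\rho_p^B\Comp f_{-\infty}=f_p\Comp\rho_p^A$, so $f_{-\infty}$ carries each $F^p(A)=\Ker{\rho_p^A}$ into $F^p(B)$, and these restrictions assemble into $\CoLimOf{F^{\bullet}(f)}$.
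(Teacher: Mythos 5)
Your proposal is correct and follows essentially the same route as the paper: the morphism of short exact sequences $\CoLimOf{F^{\bullet}(A)}\rightarrowtail A_{-\infty}\twoheadrightarrow \Img{R_A}$ coming from (\ref{thm:KernelFiltrationLim(A)-Props}), with the left vertical monic by (\ref{thm:Mono/Iso-CoLim(F^p)}) and the right vertical monic by hypothesis (i), followed by the snake lemma, and then routine reductions of (ii)--(v) to (i). Your write-up is in fact slightly more careful than the paper's in identifying the left vertical with $\CoLimOf{F^{\bullet}(f)}$ and in spelling out the implication chain, but the underlying argument is the same.
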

\begin{proof}
(i)\quad  From (\ref{thm:KernelFiltrationLim(A)-Props}) we obtain this morphism of short exact sequences:
\begin{equation*}
\xymatrix@R=5ex@C=4em{
\CoLimOf{F^p(A)} \ar@{{ |>}->}[r] \ar@{{ |>}->}[d] &
	A_{-\infty} \ar[d] \ar@{-{ >>}}[r] &
	\Img{R_A\from A_{-\infty}\to A_{\infty} } \ar@{{ |>}->}[d]^{f_{\infty}|} \\
\CoLimOf{F^p(B)} \ar@{{ |>}->}[r] &
	B_{-\infty} \ar@{-{ >>}}[r] &
	\Img{R_B\from B_{-\infty}\to B_{\infty} }
}
\end{equation*}
The vertical map on the left is a monomorphism by (\ref{thm:Mono/Iso-CoLim(F^p)}). If $f_{\infty}|$ is a monomorphism as well, then the vertical map in the center is a monomorphism by the snake lemma.

If condition (ii) is satisfied, then (\ref{thm:KernelFiltrationLim(A)-Props}) lets us see that $f_{\infty}|$ is a monomorphism, and so the claim follows with (i). If (iii) is satisfied, then $\Img{R_A}=0$, and the claim follows from (ii). Finally, we have (v) $\implies$ (iv) $\implies$ (iii), and this proves the corollary.
\end{proof}

\begin{corollary}[Isomorphism $A_{-\infty}\to B_{-\infty}$: I]
\label{thm:ZIsoLim(A)-I}
Let $f\from A\to B$ be a morphism of $\ZCat$-diagrams which induces isomorphisms $\varepsilon^p(A)\to \varepsilon^p(B)$ for all $p\in \ZNr$. Then $f$ induces a isomorphism
\begin{equation*}
f_{-\infty}\from A_{-\infty}\longrightarrow B_{-\infty}
\end{equation*}
if and only if $f$ induces a isomorphism $f_{\infty}|\from \Img{R_A\from A_{-\infty}\to A_{\infty} } \to \Img{R_B\from B_{-\infty}\to B_{\infty} }$.
\end{corollary}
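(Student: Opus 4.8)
The plan is to reduce the statement to the short five lemma applied to a single morphism of short exact sequences, most of whose constituent data has already been assembled. First I would record that every object occurring in the claim is a functorial construction on the $\ZCat$-diagram: the kernel filtration $F^\bullet$, its colimit $\CoLimOf{F^\bullet(-)}$, the limit $(-)_{-\infty}$, and the image $\Img{R}$ of the comparison map $R$ all depend naturally on the diagram, so the morphism $f\from A\to B$ induces a commuting ladder. Concretely, \ref{thm:KernelFiltrationLim(A)-Props}.i supplies the short exact sequence $\CoLimOf{F^\bullet(A)}\hookrightarrow A_{-\infty}\twoheadrightarrow \Img{R_A\from A_{-\infty}\to A_{\infty}}$ together with its analogue for $B$, and $f$ carries the first to the second through the three vertical arrows $\CoLimOf{F^\bullet(A)}\to\CoLimOf{F^\bullet(B)}$, $f_{-\infty}$, and $f_{\infty}|$ respectively.

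Second, I would observe that the left-hand vertical arrow is \emph{always} an isomorphism under the standing hypothesis. Indeed, $f$ induces isomorphisms $\varepsilon^p(A)\to\varepsilon^p(B)$ for every $p\in\ZNr$, so \ref{thm:Mono/Iso-CoLim(F^p)} yields an isomorphism $\CoLimOf{F^\bullet(A)}\to\CoLimOf{F^\bullet(B)}$. Thus the whole content of the corollary is now controlled by the outer two columns of the ladder, and the equivalence asserted in the statement becomes a purely homological bookkeeping matter.

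For the forward (\emph{if}) direction I would assume $f_{\infty}|$ is an isomorphism. Then both outer vertical arrows of the ladder are isomorphisms, and the short five lemma forces the middle arrow $f_{-\infty}\from A_{-\infty}\to B_{-\infty}$ to be an isomorphism as well; note that the injectivity half of this is already the content of \ref{thm:ZMonoLim(A)}.i. For the reverse (\emph{only if}) direction I would assume $f_{-\infty}$ is an isomorphism. Since $\Img{R_A}$ and $\Img{R_B}$ are precisely the cokernels of the two left-hand inclusions, and both the left and the middle columns are isomorphisms, the induced map on cokernels $f_{\infty}|$ is an isomorphism by the snake lemma (equivalently, a direct cokernel comparison).

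I do not expect a genuine obstacle here: the substantive analytic work has already been discharged by \ref{thm:Mono/Iso-CoLim(F^p)} and \ref{thm:KernelFiltrationLim(A)-Props}. The only point requiring care is the verification that the ladder genuinely commutes and that $\Img{R_A}$ is correctly identified as $\CoKer{\CoLimOf{F^\bullet(A)}\to A_{-\infty}}$, so that ``isomorphism on cokernels'' coincides with ``isomorphism on $\Img{R}$''; both facts follow at once from the functoriality of the exact sequence in \ref{thm:KernelFiltrationLim(A)-Props}.i.
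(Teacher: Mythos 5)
Your proposal is correct and follows essentially the same route as the paper: both rest on the functorial short exact sequence $\CoLimOf{F^{\bullet}(A)}\hookrightarrow A_{-\infty}\twoheadrightarrow \Img{R_A}$ from (\ref{thm:KernelFiltrationLim(A)-Props}.i), the isomorphism $\CoLimOf{F^{\bullet}(A)}\to\CoLimOf{F^{\bullet}(B)}$ supplied by (\ref{thm:Mono/Iso-CoLim(F^p)}), and a five-lemma/snake-lemma comparison of the resulting ladder. You merely make explicit the diagram-chasing steps that the paper's terse proof leaves implicit.
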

\begin{proof}
From (\ref{thm:Mono/Iso-CoLim(F^p)}), we see that $f$ induces an isomorphism $\CoLimOf{F^{\bullet}(A)}\to \CoLimOf{F^{\bullet}(B)}$. Using this information into the diagram of the proof of (\ref{thm:ZMonoLim(A)}.i) shows that $f_{-\infty}\from A_{-\infty}\to B_{-\infty}$ is an isomorphism if and only if $f_{\infty}|$ is one such. - This was to be shown.
\end{proof}

\begin{corollary}[Isomorphism $A_{-\infty}\to B_{-\infty}$, II]
\label{thm:ZIsoLim(A)-II}
Let $f\from A\to B$ be a morphism of $\ZCat$-diagrams which induces isomorphisms $\varepsilon^p(A)\to \varepsilon^p(B)$ for all $p\in \ZNr$. Then $f$ induces an isomorphism $f_{-\infty}\from A_{-\infty}\to B_{-\infty}$ whenever at least one of the following conditions is satisfied.
\begin{enumerate}[(i)]
\item The maps $R_A\from A_{-\infty}\to A_{\infty}$ and $R_B\from B_{-\infty}\to B_{\infty}$are $0$-maps.
\item $\LimOf{F_{\bullet}(A)}=0=\LimOf{F_{\bullet}(B)}$.
\item $A_{\infty}=0=B_{\infty}$
\item $A$ and $B$ are eventually vanishing.
\end{enumerate}
\end{corollary}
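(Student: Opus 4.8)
The plan is to reduce the statement entirely to the biconditional Corollary \ref{thm:ZIsoLim(A)-I}. Under the standing hypothesis that $f$ induces isomorphisms $\varepsilon^p(A)\to \varepsilon^p(B)$ for all $p\in\ZNr$, that corollary says $f_{-\infty}\from A_{-\infty}\to B_{-\infty}$ is an isomorphism precisely when $f$ induces an isomorphism $\Img{R_A\from A_{-\infty}\to A_{\infty}}\to \Img{R_B\from B_{-\infty}\to B_{\infty}}$. So it suffices to show that each of the four conditions forces this induced map on images to be an isomorphism, and the cleanest route is to establish the chain of implications (iv) $\Rightarrow$ (iii) $\Rightarrow$ (ii) $\Rightarrow$ (i), and then observe that (i) yields the required image isomorphism outright. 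This mirrors the bookkeeping already used in \ref{thm:Iso-LimitAbutments-II}.

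First I would record why (i) already suffices: if $R_A$ and $R_B$ are both the $0$-map, then $\Img{R_A}=0=\Img{R_B}$, so the induced map between these zero modules is vacuously an isomorphism, and Corollary \ref{thm:ZIsoLim(A)-I} delivers the conclusion. Next I would climb the chain. For (ii) $\Rightarrow$ (i): Proposition \ref{thm:KernelFiltrationLim(A)-Props}(ii) exhibits $\Img{R_A}\cong \CoLimOf{\ZStableImg{A}}$ as a submodule of $\LimOf{F_{\bullet}(A)}$ (it is the leftmost term of the displayed five-term exact sequence there), so $\LimOf{F_{\bullet}(A)}=0$ forces $\Img{R_A}=0$, i.e. $R_A=0$; the same argument applies to $B$. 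For (iii) $\Rightarrow$ (ii): if $A_{\infty}=0$ then every image filtration stage $F_pA=\Img{\pi_p\from A_p\to A_{\infty}}$ vanishes, hence $F_{\bullet}(A)$ is the zero diagram and $\LimOf{F_{\bullet}(A)}=0$, and likewise for $B$. For (iv) $\Rightarrow$ (iii): an eventually vanishing diagram (\ref{def:Z-DiagramTypes}) has $A_n=0$ for $n\geq n_0$, and since $\CoLimOver{\ZCat}$ may be computed over the final subcategory $\OrdOmega$, this gives $A_{\infty}=\CoLimOf{A}=0$, and again the same for $B$.

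Assembling these implications, any one of (i)--(iv) yields condition (i), hence the isomorphism $\Img{R_A}\to \Img{R_B}$, and Corollary \ref{thm:ZIsoLim(A)-I} finishes the proof. I expect essentially no obstacle here, since the substantive work was already carried out in Corollary \ref{thm:ZIsoLim(A)-I} and Proposition \ref{thm:KernelFiltrationLim(A)-Props}; the only step demanding a moment's care is (ii) $\Rightarrow$ (i), where one must read off from the exact sequence of \ref{thm:KernelFiltrationLim(A)-Props}(ii) that $\CoLimOf{\ZStableImg{A}}$ injects into $\LimOf{F_{\bullet}(A)}$, so that vanishing of the latter kills the former. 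Everything else is formal.
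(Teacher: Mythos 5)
Your proof is correct and follows essentially the same route as the paper: establish case (i) via Corollary \ref{thm:ZIsoLim(A)-I} (the images of $R_A$ and $R_B$ vanish, so the induced map between them is trivially an isomorphism), then reduce the remaining cases through the chain (iv) $\implies$ (iii) $\implies$ (ii) $\implies$ (i). The paper leaves that chain as a one-line observation, so your explicit justifications — in particular reading off from the exact sequence of \ref{thm:KernelFiltrationLim(A)-Props}(ii) that $\Img{R_A}\cong \CoLimOf{\ZStableImg{A}}$ injects into $\LimOf{F_{\bullet}(A)}$ — simply supply detail the paper omits.
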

\begin{proof}
If $R_A$ and $R_B$ are $0$-maps then (\ref{thm:KernelFiltrationLim(A)-Props}) yields isomorphisms
\begin{equation*}
\CoLimOf{F^{\bullet}(A)}\cong A_{-\infty}  \qquad \text{and}\qquad  \CoLimOf{F^{\bullet}(B)}\cong B_{-\infty}
\end{equation*}
Thus $f_{-\infty}$ is an isomorphism by (\ref{thm:ZIsoLim(A)-I}). Observing that (iv) $\implies$ (iii) $\implies$ (ii) $\implies$ (i) completes the proof.
\end{proof}

Turning to conditions under which $f\from A\to B$ induces an epimorphism $f_{\infty}\from A_{-\infty}\to B_{-\infty}$:

\begin{lemma}[Epimorphism  $\CoLimOf{F^p(A)}\to \CoLimOf{F^p(B)}$]
\label{thm:ZEpiLim(A)}
Let $f\from A\to B$ be a morphism of $\ZCat$-diagrams which induces epimorphisms $\varepsilon^p(A)\to \varepsilon^p(B)$ for all $p\in \ZNr$. For $p\in \ZNr$ and $r\geq 1$, let $\Lambda^{p}_{p-r}\DefEq \Ker{F^p(A)/F^{p-r}(A)\to F^{p}(B)/F^{p-r}(B)}$ if $\LimOneOfOver{\Lambda^{p}_{p-r}}{r}=0$ for all $p$, then $f$ induces an epimorphism $\CoLimOf{F^p(A)}\to \CoLimOf{F^p(B)}$.
\end{lemma}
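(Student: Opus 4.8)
The plan is to mirror the architecture of the companion result (\ref{thm:Mono/Iso-CoLim(F^p)}), the only difference being that where that proof invokes left-exactness of $\LimOver{\ZCat}$, here I would instead feed in the hypothesis that the relevant $\LimOne$-terms vanish. I would carry this out in three steps, working throughout with the kernel filtration $F^{\bullet}$ of $A_{-\infty}=\LimOf{A}$.

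First, I would upgrade the hypothesis on single adjacent quotients to one about finite filtration gaps. Adapting the snake-lemma induction of (\ref{thm:AdjacentFiltrationQuotientComp-Extended}) to the kernel filtration $F^{\bullet}$ in place of the image filtration $F_{\bullet}$, the epimorphisms $\varepsilon^{p}(A)\to \varepsilon^{p}(B)$ propagate to epimorphisms
\[
\dfrac{F^{p}(A)}{F^{p-r}(A)} \longrightarrow \dfrac{F^{p}(B)}{F^{p-r}(B)}\qquad \text{for every } p\in \ZNr,\ r\geq 1.
\]
By the very definition of $\Lambda^{p}_{p-r}$, each of these fits into a short exact sequence $\Lambda^{p}_{p-r}\hookrightarrow F^{p}(A)/F^{p-r}(A)\twoheadrightarrow F^{p}(B)/F^{p-r}(B)$, which I would now read as a short exact sequence of $\OrdOmegaOp$-diagrams indexed by $r$.

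Second, and this is the crux, I would pass to limits over $r$. The $6$-term $\Lim$-$\LimOne$ sequence (\ref{thm:Lim-LimOne6TermExact Sequence}) applied to this short exact sequence of $\OrdOmegaOp$-diagrams makes the segment
\[
\LimOfOver{\tfrac{F^{p}(A)}{F^{p-r}(A)}}{r} \longrightarrow \LimOfOver{\tfrac{F^{p}(B)}{F^{p-r}(B)}}{r} \longrightarrow \LimOneOfOver{\Lambda^{p}_{p-r}}{r}
\]
exact, so the hypothesis $\LimOneOfOver{\Lambda^{p}_{p-r}}{r}=0$ forces the first map to be surjective. It then remains to identify these limits with the filtration stages. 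Applying the same $6$-term sequence to $F^{p-r}(-)\hookrightarrow F^{p}(-)\twoheadrightarrow F^{p}(-)/F^{p-r}(-)$ (with $F^{p}(-)$ constant in $r$), and using that $\LimOfOver{F^{p-r}(A)}{r}=\LimOf{F^{\bullet}(A)}=0$ and $\LimOneOfOver{F^{p-r}(A)}{r}=\LimOneOf{F^{\bullet}(A)}=0$ by (\ref{thm:KernelCoFiltration-Complete}), yields a natural isomorphism $F^{p}(A)\cong \LimOfOver{F^{p}(A)/F^{p-r}(A)}{r}$, and likewise for $B$. Substituting, the surjection above becomes an epimorphism $F^{p}(A)\to F^{p}(B)$ for every $p\in \ZNr$.

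Third, since $\CoLimOver{\ZCat}$ is exact (\ref{thm:CoLim^ZExactFunctor}), applying it to the objectwise epimorphisms $F^{p}(A)\to F^{p}(B)$ produces the desired epimorphism $\CoLimOf{F^{p}(A)}\to \CoLimOf{F^{p}(B)}$. The main obstacle is the bookkeeping in the second step: one must verify that the $\OrdOmegaOp$-diagram $r\mapsto F^{p-r}(A)$ is cofinal in the full kernel filtration $F^{\bullet}(A)$, so that its $\Lim$ and $\LimOne$ agree with those in (\ref{thm:KernelCoFiltration-Complete}), and that the comparison of connecting maps is natural enough for the vanishing of $\LimOneOfOver{\Lambda^{p}_{p-r}}{r}$ to be precisely what removes the obstruction to surjectivity. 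Everything else is a routine diagram chase.
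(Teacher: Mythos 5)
Your proposal is correct and follows essentially the same route as the paper's proof: propagate the epimorphisms on adjacent quotients via the snake-lemma induction of (\ref{thm:AdjacentFiltrationQuotientComp-Extended}), take limits over $r$ of the short exact sequences defining $\Lambda^{p}_{p-r}$ so that the vanishing $\LimOne$-hypothesis yields epimorphisms $F^{p}(A)\to F^{p}(B)$, and finish with the exactness of $\CoLimOver{\ZCat}$. Your second step merely makes explicit the identification $F^{p}(A)\cong \LimOfOver{F^{p}(A)/F^{p-r}(A)}{r}$ via (\ref{thm:KernelCoFiltration-Complete}), which the paper uses implicitly, and your cofinality worry is settled by the standard fact that the inclusion $\OrdOmegaOp\to\ZCat$ is initial.
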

\begin{proof}
Adapting the argument of (\ref{thm:AdjacentFiltrationQuotientComp-Extended}) to the present situation, we see that $f$ induces epimorphisms for every $p\in \ZNr$ fixed, and $r\geq 1$:
\begin{equation*}
\dfrac{F^p(A)}{F^{p-r}(A) } \XRA{\text{epi}} \dfrac{F^p(B)}{F^{p-r}(B) }\ .
\end{equation*}
Thus, for $p\in \ZNr$ fixed, we have this short exact sequence of $\ZCat$-diagrams:
\begin{equation*}
\xymatrix@R=5ex@C=4em{
\Lambda^{p}_{p-r} \ar@{{ |>}->}[r] &
	\dfrac{F^{p}(A)}{F^{p-r}(A)} \ar@{-{ >>}}[r] &
	\dfrac{F^{p}(A)}{F^{p-r}(B)}
}
\end{equation*}
Under the stated condition, we obtain the short exact sequence
\begin{equation*}
\xymatrix@R=5ex@C=4em{
\LimOfOver{\Lambda^{p}_{p-r}}{r} \ar@{{ |>}->}[r] &
	F^{p}(A) \ar@{-{ >>}}[r] &
	F^{p}(B)
}
\end{equation*}
Right exactness of the $\CoLimOver{\ZCat}$ yields an epimorphism $\CoLimOf{F^p(A)}\to \CoLimOf{F^p(B)}$.
\end{proof}

\begin{corollary}[Sufficient condition for epimorphism  $\CoLimOf{F^p(A)}\to \CoLimOf{F^p(B)}$]
\label{thm:ZEpiLim(A)-Sufficient}
Let $f\from A\to B$ be a morphism of $\ZCat$-diagrams in $\ModulesOver{R}$ which induces epimorphisms $\varepsilon^p(A)\to \varepsilon^p(B)$ for all $p\in \ZNr$. If the structure maps of $A$ have kernels satisfying the descending chain condition, then $f$ induces an epimorphism $\CoLimOf{F^p(A)}\to \CoLimOf{F^p(B)}$.
\end{corollary}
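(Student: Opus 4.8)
The plan is to reduce the statement to Lemma (\ref{thm:ZEpiLim(A)}), whose hypothesis is that $\LimOneOfOver{\Lambda^{p}_{p-r}}{r}=0$ for every $p\in\ZNr$, where $\Lambda^{p}_{p-r}=\Ker{F^p(A)/F^{p-r}(A)\to F^p(B)/F^{p-r}(B)}$. For fixed $p$, the modules $\{\Lambda^{p}_{p-r}\}_{r\geq 1}$ form a $\OrdOmegaOp$-type inverse system: since $F^{p-r-1}(A)\subseteq F^{p-r}(A)$, the canonical surjections $F^p(A)/F^{p-r-1}(A)\to F^p(A)/F^{p-r}(A)$ are compatible with $f$, hence restrict to structure maps $\Lambda^{p}_{p-r-1}\to \Lambda^{p}_{p-r}$. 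I would establish $\LimOneOfOver{\Lambda^{p}_{p-r}}{r}=0$ by showing that each $\Lambda^{p}_{p-r}$ is an Artinian module; by the descending chain condition clause of (\ref{thm:MittagLefflerExamples}) this inverse system then satisfies the Mittag-Leffler condition, and (\ref{thm:Mittag-Leffler-Lim1=Zero}) yields the vanishing of $\LimOne$.

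First I would exhibit $\Lambda^{p}_{p-r}$ as a submodule of the iterated kernel $\ZPointKerAt{A}{p}{p-r}=\Ker{A_{p-r}\to A_p}$. Recall $F^p(A)=\Ker{\rho_p\from A_{-\infty}\to A_p}$, and since the cone maps satisfy $\rho_p=(A_{p-r}\to A_p)\Comp\rho_{p-r}$ we have $F^{p-r}(A)\subseteq F^{p}(A)$. Restricting $\rho_{p-r}$ to $F^{p}(A)$ gives a homomorphism whose kernel is exactly $F^{p}(A)\cap\Ker{\rho_{p-r}}=F^{p-r}(A)$; moreover, for $x\in F^p(A)=\Ker{\rho_p}$ the element $\rho_{p-r}(x)$ lies in $\Ker{A_{p-r}\to A_p}$, because applying the iterated structure map to $\rho_{p-r}(x)$ returns $\rho_p(x)=0$. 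This produces a monomorphism $F^p(A)/F^{p-r}(A)\hookrightarrow \Ker{A_{p-r}\to A_p}$, and since $\Lambda^{p}_{p-r}\subseteq F^p(A)/F^{p-r}(A)$, it too embeds into $\Ker{A_{p-r}\to A_p}$.

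It then remains to check that $\Ker{A_{p-r}\to A_p}$ is Artinian, whence its submodule $\Lambda^{p}_{p-r}$ is Artinian as well. Writing the iterated map as $c_r=a_{p-1}\Comp\cdots\Comp a_{p-r}$, one has a short exact sequence $0\to \Ker{a_{p-r}}\to \Ker{c_r}\to \Img{a_{p-r}|}\to 0$ in which $\Img{a_{p-r}|}$ is a submodule of $\Ker{c_{r-1}}$. Since the descending chain condition passes to submodules and is closed under extensions, an induction on $r$ (with base case $\Ker{c_1}=\Ker{a_{p-1}}$ Artinian by hypothesis) shows each $\Ker{A_{p-r}\to A_p}$ is Artinian. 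Combining the three paragraphs gives $\LimOneOfOver{\Lambda^{p}_{p-r}}{r}=0$ for all $p$, and Lemma (\ref{thm:ZEpiLim(A)}) then delivers the epimorphism $\CoLimOf{F^p(A)}\to \CoLimOf{F^p(B)}$.

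The only point requiring genuine care is the embedding of $\Lambda^{p}_{p-r}$ into an Artinian module, i.e.\ the correct identification of the kernel and image of $\rho_{p-r}|_{F^p(A)}$ so as to land inside $\Ker{A_{p-r}\to A_p}$; everything downstream is the routine closure of the descending chain condition under subobjects and extensions, together with the Mittag-Leffler machinery already available in (\ref{thm:MittagLefflerExamples}) and (\ref{thm:Mittag-Leffler-Lim1=Zero}).
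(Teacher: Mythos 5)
Your proposal is correct and follows essentially the same route as the paper: both reduce to Lemma (\ref{thm:ZEpiLim(A)}) by embedding $\Lambda^{p}_{p-r}$ into $\Ker{A_{p-r}\to A_p}$, observing that this kernel is a finitely iterated extension of kernels of structure maps and hence has the descending chain condition, and then invoking (\ref{thm:MittagLefflerExamples}) and (\ref{thm:Mittag-Leffler-Lim1=Zero}) to kill the $\LimOne$. The only cosmetic difference is that you obtain the embedding $F^p(A)/F^{p-r}(A)\hookrightarrow \Ker{A_{p-r}\to A_p}$ by directly restricting the cone map $\rho_{p-r}$, whereas the paper derives it via the snake lemma applied to the stable image factorizations $A_{-\infty}\twoheadrightarrow \ZStableImgAt{A}{p-r}\hookrightarrow A_{p-r}$; you also spell out the induction on $r$ that the paper leaves implicit.
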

\begin{proof}
We claim that, for $p\in \ZNr$ fixed and $r\geq 1$, the objects $\Lambda^{p}_{p-r}$ in the proof of (\ref{thm:ZEpiLim(A)}) satisfy the Mittag-Leffler condition. To see that, let $p\in \ZNr$ fixed and $r\geq 1$ consider this morphism of short exact sequences:
\begin{equation*}
\xymatrix@R=5ex@C=4em{
F^{p-r}(A) \ar@{{ |>}->}[r] \ar@{{ |>}->}[d] &
	A_{-\infty} \ar@{-{ >>}}[r] \ar@{=}[d] &
	\ZStableImgAt{A}{p-r} \ar@{{ |>}->}[r] \ar@{-{ >>}}[d] &
	A_{p-r} \ar[d]^{a_{p-1}\Comp \cdots\Comp a_{p-r}}\\
F^{p}(A) \ar@{{ |>}->}[r] &
	A_{-\infty} \ar@{-{ >>}}[r] &
	\ZStableImgAt{A}{p} \ar@{{ |>}->}[r] &
	A_{p}
}
\end{equation*}
Then $F^{p-r}(A)/F^{p}(A)$ is isomorphic $\Ker{\ZStableImgAt{A}{p-r}\to \ZStableImgAt{A}{r}}$, which is a subobject of the kernel $K$ of the vertical map on the right. By hypotheses $K$ is a finitely iterated extension of objects which possess the descending chain condition (DC). So, $K$ has the DC and, hence, $F^{p-r}(A)/F^{p}(A)$ has the DC. So, $\Lambda^{p}_{p-r}$ has the DC; i.e. the $\ZCat$-diagram $(\Lambda^{p}_{p-r}\, |\, r\geq 1)$ has the Mittag-Leffler property. So, the hypotheses of (\ref{thm:ZEpiLim(A)}) are satisfied, and the claim follows.
\end{proof}
\begin{footnotesize}

\cleardoublepage
\phantomsection
\addcontentsline{toc}{part}{List of Notation}
\printindex[not]
\cleardoublepage
\phantomsection
\addcontentsline{toc}{part}{Index}
\printindex
\end{footnotesize}
}
\end{document}